\newtheoremstyle{mythm}
{3pt}
{3pt}
{\itshape}
{}
{\bfseries}
{.}
{.5em}
{\thmnote{#1 }#3}
\newtheorem{thm}{Theorem}[section]
\newtheorem*{thmx}{Theorem} 
\newtheorem{alphathm}{Theorem}
\newenvironment{mthm}[1]{%
	\mthminner
}{\endmthminner}
\newtheorem{lem}[thm]{Lemma}
\newtheorem{cor}[thm]{Corollary}
\newtheorem{pro}[thm]{Proposition}
\newtheorem{alphapro}[alphathm]{Proposition} 
\newtheorem*{qn*}{Question}
\newtheorem*{lem*}{Lemma}
\newtheorem{thm*}{Theorem}
\newtheorem{pro*}[thm*]{Proposition}
\newtheorem{cor*}[thm*]{Corollary}
\newtheorem*{thm**}{Theorem}
\theoremstyle{mythm}
\theoremstyle{definition}
\newtheorem{dfn}[thm]{Definition}
\theoremstyle{remark}
\newtheorem{rmk}[thm]{Remark}
\newtheorem*{claim*}{Claim}
\newtheorem*{fact*}{Fact}
\newenvironment{claim}[1]{%
    \claiminner
}{\endclaiminner}
\newenvironment{property}[1]{%
    \propertyinner
}{\endpropertyinner}
\newcommand{\ep}{
    \epsilon
}
\newcommand{\mc}[1]{
    \mathcal{#1}
}
\newcommand{\mb}[1]{
    \mathbb{#1}
}
\newcommand{\T}{
    \mc{T}
}
\newcommand{\psl}{
   {\rm PSL}(2,\mb{R})
}
\newcommand{\rp}{
	\mb{R}{\rm P}
}
\newcommand{\SOtwon}{
    {{\rm{SO}}_0{(2,n\!+\!1)}}
}
\newcommand{\hyp}{\mathbb{H}}
\newcommand{\R}{\mathbb{R}}
\newcommand{\N}{\mathbb{N}}
\newcommand{\Z}{\mathbb{Z}}
\DeclarePairedDelimiterX{\scal}[2]{\langle}{\rangle}{#1, #2}
\DeclarePairedDelimiterX{\scall}[2]{(}{)}{#1, #2}
\DeclareMathOperator{\supp}{supp}
\begin{document}
    
\title{$\SOtwon$-Maximal representations and hyperbolic surfaces}

\author[Filippo Mazzoli]{Filippo Mazzoli}
\address{Filippo Mazzoli: Department of Mathematics, University of Virginia, 
    141 Cabell Drive, 
    22904-4137 Charlottesville VA, USA.} \email{filippomazzoli@me.com}

\author[Gabriele Viaggi]{Gabriele Viaggi}
\address{Department of Mathematics, Heidelberg University, Heidelberg, Germany.}
\email{gviaggi@mathi.uni-heidelberg.de}

\thanks{DFG 427903332}
    
\begin{abstract}
	We study maximal representations of surface groups $\rho:\pi_1(\Sigma)\to\SOtwon$ via the introduction of $\rho$-invariant pleated surfaces inside the pseudo-Riemannian space $\mathbb{H}^{2,n}$ associated to maximal geodesic laminations of $\Sigma$.
	
	We prove that $\rho$-invariant pleated surfaces are always embedded, acausal, and possess an intrinsic pseudo-metric and a hyperbolic structure. We describe the latter by constructing a shear cocycle from the cross ratio naturally associated to $\rho$. The process developed to this purpose applies to a wide class of cross ratios, including examples arising from Hitchin and $\Theta$-positive representations in $\mathrm{SO}(p,q)$.  We also show that the length spectrum of $\rho$ dominates the ones of $\rho$-invariant pleated surfaces, with strict inequality exactly on curves that intersect the bending locus.
	
	We observe that the canonical decomposition of a $\rho$-invariant pleated surface into leaves and plaques corresponds to a decomposition of the Guichard-Wienhard domain of discontinuity of $\rho$ into standard fibered blocks, namely triangles and lines of photons. Conversely, we give a concrete construction of photon manifolds fibering over hyperbolic surfaces by gluing together triangles of photons. 
	
	The tools we develop allow to recover various results by Collier, Tholozan, and Toulisse on the (pseudo-Riemannian) geometry of $\rho$ and on the correspondence between maximal representations and fibered photon manifolds through a constructive and geometric approach, bypassing the use of Higgs bundles.
\end{abstract}
    
\maketitle

\tableofcontents

\section{Introduction}

The notion of maximal representations of the fundamental group $\Gamma$ of a compact hyperbolic surface into a semi-simple Lie group of Hermitian type $G$ was introduced by Burger, Iozzi, and Wienhard in their groundbreaking work \cite{BIW10}. It provides a vast generalization of the notion of \emph{Fuchsian representations}, namely discrete and faithful homomorphisms of $\Gamma$ into $\psl$, which naturally arise as holonomies of complete hyperbolic structures on surfaces. As already observed in \cite{BIW10}, multiple dynamical and geometric properties of Fuchsian representations extend to this wider context: Every maximal representation $\rho : \Gamma \to G$ is faithful, its image $\rho(\Gamma)$ is a discrete subgroup of $G$ acting freely and properly discontinuously on the Riemannian symmetric space associated to $G$, and the set of conjugacy classes of maximal representations constitutes a union of connected components of the character variety $\mathfrak{X}(\Gamma,G)$. 

In recent years, a great variety of results have further investigated and strengthened the relations between Fuchsian representations and geometric structures that naturally arise from maximal representations, and this article is no exception. In our exposition we will consider maximal representations of the fundamental group $\Gamma$ of a closed orientable surface $\Sigma$ of genus $g\ge 2$ into the connected Lie group $G = \SOtwon$. Moreover, rather than investigating the properties of the action of $\Gamma$ on the Riemannian symmetric space of $\SOtwon$, we will focus our attention on a class of pseudo-Riemannian and photon structures naturally associated to $\rho : \Gamma \to \SOtwon$, as previously done by Collier, Tholozan, and Toulisse in \cite{CTT19}.

The main aim of this paper is to provide a purely geometric approach to the study of $\SOtwon$-maximal representations, and establish a direct and explicit link with hyperbolic surfaces and classical Teichm\"uller theory. This gives a possible answer to the question addressed in \cite{CTT19}*{Remark~4.13}, and a suitable framework for generalizations to open surfaces.
In particular, inspired by Thurston's and Mess' works in the study of hyperbolic $3$-manifolds (see e.g. \cite{ThNotes}*{Chapter~8}, or Canary, Epstein, and Green \cite{CEG}*{Chapter~I.5} for a detailed exposition), and of constant curvature Lorentzian $3$-manifolds (see \cite{M07}), respectively, we will pursue this goal by introducing a notion of $\rho$-equivariant pleated surfaces inside $\hyp^{2,n}$, and we will investigate their topological, causal, and geometric properties.

We start by introducing the pseudo-Riemannian and photon spaces that we will be interested in. First, we recall that the group $\SOtwon$ is the identity component of the group of isometries of $\mb{R}^{2,n+1}$, which denotes the vector space $\mb{R}^{n+3}$ endowed with the quadratic form
\[
\langle\bullet,\bullet\rangle_{2,n+1}:=x_1^2+x_2^2-y_1^2-\ldots-y_{n+1}^2.
\]

There are multiple homogeneous spaces $X$ naturally associated with the Lie group $G=\SOtwon$, and each of them leads to a different class of $(G,X)$-structures in the sense of Thurston (see \cite{ThNotes}*{Chapter~3}). Here we will consider:
\begin{itemize}
	\item{The pseudo-Riemannian symmetric space $\mb{H}^{2,n}$ of negative lines of $\mb{R}^{2,n+1}$.}
	\item{The Photon space ${\rm Pho}^{2,n}$ of isotropic $2$-planes of $\mb{R}^{2,n+1}$.}
\end{itemize}

In both cases, every maximal representation $\rho:\Gamma\to\SOtwon$ has a natural domain of discontinuity $\Omega_\rho(X)\subset X$, as a consequence of the work of Guichard and Wienhard \cite{GW12} when $X={\rm Pho}^{2,n}$, and of Danciger, Guéritaud, and Kassel \cite{DGK17} when $X=\mb{H}^{2,n}$. Accordingly, any maximal representation $\rho$ gives rise to: 
\begin{itemize}
	\item{A pseudo-Riemannian manifold $M_\rho=\Omega_\rho(\mb{H}^{2,n})/\rho(\Gamma)$ of signature $(2,n)$.}
	\item{A closed photon manifold $E_\rho=\Omega_\rho({\rm Pho}^{2,n})/\rho(\Gamma)$.}
\end{itemize}

The geometries of these objects are strictly tied, as described by Collier, Tholozan, and Toulisse \cite{CTT19}. Our work parallels in many aspects the article \cite{CTT19} with a central difference: While in \cite{CTT19} the geometric and topological information is extracted by relating the theory of Higgs bundles to the immersion data of equivariant maximal surfaces in $\hyp^{2,n}$, our techniques rely on the study of specific $1$- and $2$-dimensional subsets of the pseudo-Riemannian manifold $M_\rho$, namely {\em geodesic laminations} and {\em pleated surfaces}, in analogy with the tools originally developed by Thurston in his investigation of the structure of the ends of hyperbolic 3-manifolds (see Chapters 8 and 9 of \cite{ThNotes}).  

A valuable feature of this approach, which is in many aspects explicit and constructive, is that it determines a concrete connection between maximal representations and hyperbolic structures on surfaces. Notice also that, if on the one hand the notion of equivariant pleated surfaces is well suited for generalizations to finite-type surfaces, the analytical techniques required for the study of Higgs bundles do not easily extend outside of the realm of closed orientable surface groups. 

For convenience of the reader, we now summarize the main results of the paper. We will then provide a detailed description of each of them, together with the techniques developed for their proof, in the remainder of the introduction:

\begin{enumerate}[(a)]
	\item{For any maximal representation $\rho$ and for any maximal geodesic lamination $\lambda$ of $\Sigma$, there exists a $\rho(\Gamma)$-invariant, {\em acausal}, and properly embedded Lipschitz disk, the {\em pleated set} of $\lambda$, 
		\[
		{\widehat{S}}_\lambda\cup\partial{\widehat{S}}_\lambda\subset\mb{H}^{2,n}\cup\partial\mb{H}^{2,n}.
		\]
		(see Theorem \ref{topology pleated surfaces h2n}). It is naturally decomposed as a union of spacelike geodesics and spacelike ideal triangles of $\mb{H}^{2,n}$ and is contained in the $\rho$-domain of discontinuity in $\mb{H}^{2,n}$. In particular, $S_\lambda={\widehat{S}}_\lambda/\rho(\Gamma)$ is a properly embedded subsurface of the pseudo-Riemannian manifold $M_\rho$. The decomposition of ${\widehat{S}}_\lambda$ into lines and triangles corresponds to an analogue decomposition of the $\rho$-domain of discontinuity in ${\rm Pho}^{2,n}$ into {\em lines} and {\em triangles of photons} and there exists a natural fibration $E_\rho\to S_\lambda$ (see Proposition \ref{decomposition domain}).}
	\item{The pleated set ${\widehat{S}}_\lambda$ has a natural intrinsic $\rho(\Gamma)$-invariant hyperbolic structure and a natural pseudo-metric induced by the pseudo-Riemannian metric of $\mb{H}^{2,n}$. The developing map ${\widehat{S}}_\lambda\to\mb{H}^2$ is 1-Lipschitz with respect to the pseudo-metric on ${\widehat{S}}_\lambda$ and the hyperbolic metric on $\mb{H}^2$ (see Theorem \ref{geometry pleated surfaces h2n}). This implies that the length spectrum of the hyperbolic surface $S_\lambda$ is {\em dominated} by the pseudo-Riemannian length spectrum of $\rho$, that is
		\[
		L_\rho(\bullet)\ge L_{S_\lambda}(\bullet).
		\]
		There is a simple characterization of those curves $\gamma\in\Gamma$ for which the strict inequality holds: They are exactly the curves that intersect essentially the {\em bending locus} of $S_\lambda$.}
	\item{The intrinsic hyperbolic structure on the pleated set $\widehat{S}_\lambda$ is described by a \emph{shear cocycle} $\sigma^\rho_\lambda$ through Bonahon's shear parametrization of Teichm\"uller space (see \cite{Bo96}). The definition of the cocycle $\sigma^\rho_\lambda$ uniquely relies on the data of the lamination $\lambda$ and of a $\Gamma$-invariant cross ratio on the Gromov boundary of $\Gamma$, naturally associated to the representation $\rho$. In fact, the construction applies in great generality, and associates to any \emph{(strictly) positive} and \emph{locally bounded} cross ratio $\beta$ on $\partial \Gamma$, and to any maximal lamination $\lambda$, an intrinsic hyperbolic structure $X_\lambda$ whose length spectrum coincides with the length spectrum of $\beta$ on all measured laminations with support contained in $\lambda$.}
	\item{The set of hyperbolic surfaces $S_\lambda$ arising as intrinsic hyperbolic structures on pleated surfaces lie on the boundary of the {\em dominated set} of $\rho$, namely the subset of Teichmüller space defined by
		\[
		\mc{P}_\rho:=\{Z\in\T\left|L_Z(\bullet)\le L_\rho(\bullet)\right.\}.
		\]
		The set $\mc{P}_\rho$ is convex for the Weil-Petersson metric and with respect to shear paths. Its interior ${\rm int}(\mc{P}_\rho)$ corresponds to those hyperbolic surfaces $Z$ that are {\em strictly dominated} by $\rho$, that is $L_\rho(\bullet)>c L_Z(\bullet)$ for some $c>1$. Combining a geometric construction in $\hyp^{2,n}$ with the convexity of length functions along Weil-Petersson geodesics, we observe that $\rho$ is not Fuchsian if and only if ${\rm int}(\mc{P}_\rho)$ is non-empty (Theorem \ref{structure pleated}). This allows to recover part of the results described in \cite{CTT19}.}
	\item{ We describe an elementary process to construct \emph{fibered photon manifolds} (see \cite{CTT19}*{\S~4.2}), namely photon manifolds $E$ that fiber over a closed hyperbolic surface $S$ homeomorphic to $\Sigma$. Any such fibered photon manifold $E \to S$ has maximal holonomy $\rho:\pi_1(E)\to\SOtwon$ and determines a natural $\rho$-equivariant pleated acausal embedding ${\widehat{S}}\to\mb{H}^{2,n}$ of the universal cover ${\widehat{S}}\to S$, with bending locus lying in some maximal geodesic lamination $\lambda$ of $S$.
		The construction of a fibered photon manifold $E\to S$ is completely analogous to the process by which a closed hyperbolic surface is obtained by first gluing together ideal hyperbolic triangles to form (incomplete) pair of pants, and then pasting the completions of the pair of pants along their boundaries. Here, instead of gluing ideal hyperbolic triangles, we will glue together triangles of photons forming (incomplete) fibered pairs of pants of photons, find suitable completions, and combine them to form closed manifolds (Theorem \ref{pants of photons} and Proposition \ref{maximal holonomy}).}
\end{enumerate}

We now describe more in detail each of the previous points.

\subsection{Topology and acausality of pleated surfaces}

Our discussion will heavily rely on the existence of equivariant boundary maps naturally associated to $\SOtwon$-maximal representations, which is guaranteed by the following result of Burger, Iozzi, Labourie, and Wienhard: We recall that the boundary at infinity $\partial\mb{H}^{2,n}$ of the pseudo-Riemannian symmetric space $\mb{H}^{2,n}$ identifies with the space of isotropic lines of $\mb{R}^{2,n+1}$.

\begin{thmx}[{\cite{BILW}*{\S~6}, see also \cite{CTT19}*{Theorem~2.5}}]
	\label{acausal limit curve}
	If $\rho:\Gamma\to\SOtwon$ is a maximal representation, then there exists a unique $\rho$-equivariant, continuous, and dynamics preserving embedding
	\[
	\xi:\partial\Gamma\to\partial\mb{H}^{2,n}
	\]
	such that the image of $\xi$ is an {\rm acausal} curve, meaning that for every triple of distinct points $a,b,c\in\partial\Gamma$, the subspace of $\mb{R}^{2,n+1}$ generated by the isotropic lines $\xi(a),\xi(b),\xi(c)$ has signature $(2,1)$.
\end{thmx}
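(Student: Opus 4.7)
The plan is to split the theorem into a ``soft'' existence-and-regularity part and a ``hard'' acausality part, using the Anosov property to handle the former and positivity of a bounded K\"ahler cocycle for the latter.

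First, I would invoke the $P_1$-Anosov property of maximal representations, where $P_1 \subset \SOtwon$ is the stabilizer of an isotropic line and $G/P_1$ is naturally identified with $\partial \hyp^{2,n}$. Either through the Burger-Iozzi-Wienhard bounded cohomology machinery, which produces a measurable equivariant boundary map from the Poisson boundary of $\Gamma$ to $G/P_1$, or directly from the Anosov framework of Guichard-Wienhard, one extracts a continuous, $\rho$-equivariant, dynamics preserving map $\xi:\partial\Gamma\to\partial\hyp^{2,n}$. The Anosov transversality condition guarantees that distinct points of $\partial\Gamma$ map to transverse isotropic lines (hence spanning a $(1,1)$-plane) and in particular that $\xi$ is injective; since $\partial\Gamma$ is compact, $\xi$ is a topological embedding. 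Uniqueness would follow from the density of attracting-repelling fixed-point pairs of hyperbolic elements in $\partial\Gamma\times\partial\Gamma$, because any two dynamics-preserving maps must agree on such pairs and hence agree everywhere by continuity.

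The substantive part is the acausality of $\xi(\partial\Gamma)$. For three pairwise distinct points $a,b,c\in\partial\Gamma$, pairwise transversality forces the vectors $\xi(a),\xi(b),\xi(c)$ to span a $3$-plane $W$; since $W$ contains three isotropic directions, it cannot be definite, so the restricted form is non-degenerate of signature $(2,1)$ or $(1,2)$. The configuration space of ordered distinct triples in $\partial\Gamma$ is connected, and the signature of $W$ is locally constant in the triple among non-degenerate values, so it suffices to verify the signature $(2,1)$ on a single triple. The cleanest route is via the bounded K\"ahler cocycle $c_K$ on triples of isotropic lines, which may be written explicitly in terms of a cross-ratio-like quantity and of the signature of $W$; maximality of $\rho$ is equivalent to the pullback $c_K\circ(\xi\times\xi\times\xi)$ attaining its maximum value almost everywhere, and this attainment forces the signature of $W$ to be $(2,1)$ on a full-measure set of triples, hence on every triple by local constancy.

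I expect the main obstacle to be precisely this last step: distinguishing the indefinite signatures $(2,1)$ and $(1,2)$ is invisible to the purely Anosov features of $\rho$ and essentially encodes the maximality hypothesis. Ruling out $(1,2)$ rigorously requires either an explicit analysis of the Bergmann/Maslov cocycle on triples of isotropic lines in $\mathbb{R}^{2,n+1}$, or an equivalent direct argument via Cartan projections: for a primitive hyperbolic $\gamma\in\Gamma$, the $P_1$-proximality of $\rho(\gamma)$ together with the extremality of the Toledo invariant pin down enough of its eigenvalue structure to produce an invariant $(2,1)$-subspace containing $\xi(\gamma^\pm)$, seeding the $(2,1)$ signature on one triple. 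Either presentation ultimately reduces to the refinement of boundary theory for Hermitian targets carried out by Burger-Iozzi-Labourie-Wienhard.
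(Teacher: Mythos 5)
There is nothing in the paper to compare your argument against: the statement you were asked to prove is quoted by the authors as an external input, attributed to Burger--Iozzi--Labourie--Wienhard (and restated as Theorem~\ref{thm:maximal limit curve} with the same attribution), and no proof of it appears anywhere in the article. Everything downstream (realizations of laminations, pleated sets, the cross ratio $\beta^\rho$) simply uses the existence, uniqueness, and acausality of $\xi$ as a black box. So your proposal is best judged as a reconstruction of the literature proof rather than against an in-paper argument -- and, read that way, it follows the standard route: Anosov theory for existence, continuity, transversality and uniqueness, and the maximality of the Toledo invariant (via the bounded K\"ahler/Maslov cocycle on triples in the Shilov boundary $\partial\hyp^{2,n}$) to pin down the causal type of the limit curve. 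Your diagnosis that the Anosov property alone cannot distinguish $(2,1)$ from $(1,2)$ is correct: a convex cocompact representation into $\mathrm{SO}(1,n+1)\subset\SOtwon$ is $P_1$-Anosov with a timelike limit curve, so maximality is genuinely what is used, and that is exactly the content of the cited work.

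Two points in your sketch need repair, and one is deferred rather than proved. First, ``$W$ contains isotropic vectors, hence is not definite, hence is non-degenerate of signature $(2,1)$ or $(1,2)$'' skips the degenerate possibilities; these can be excluded, but only by using pairwise transversality again (a degenerate $3$-plane of signature $(1,1,1)$ carries only two ``families'' of isotropic lines that are non-orthogonal to each other, and the other degenerate signatures carry at most one isotropic line, so no degenerate $3$-plane contains three pairwise non-orthogonal isotropic lines). Second, the space of \emph{ordered} distinct triples in $\partial\Gamma\cong S^1$ is not connected (the cyclic orientation separates it into two components); since the span is independent of the ordering you should run the local-constancy argument on unordered triples, where connectedness does hold. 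Finally, the step you yourself flag as the crux -- ruling out $(1,2)$ on one triple from maximality, via the explicit cocycle computation or an eigenvalue/Toledo argument -- is asserted to ``reduce to'' Burger--Iozzi--Labourie--Wienhard rather than carried out; since that implication (together with ``maximal $\Rightarrow$ Anosov'', which your existence step also invokes) \emph{is} the theorem being proved, the proposal as written is an accurate roadmap of the known proof but not a self-contained one.
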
		

Theorem \ref{acausal limit curve} has a simple interpretation in terms of the geometry of $\mb{H}^{2,n}$: Every pair of distinct points $a, b \in \partial \Gamma$ is sent by $\xi$ into the pair of endpoints of a unique spacelike geodesic of $\hyp^{2,n}$, and for every triple of distinct points $a, b, c \in \partial \Gamma$, the images $\xi(a), \xi(b), \xi(c)$ are the vertices of a unique ideal totally geodesic {\em spacelike} triangle in $\mb{H}^{2,n}$. In light of this phenomenon, the boundary map $\xi$ allows us to naturally realize geodesic laminations on the surface $\Sigma$ as $\rho$-invariant closed subsets of $\mb{H}^{2,n}$, and consequently in the pseudo-Riemannian manifold $M_\rho$.

To see this, we start by briefly recalling the notion of geodesic lamination, and the related terminology that will be used throughout our exposition. We will think of a geodesic $\ell$ in the universal cover $\widetilde{\Sigma}$ of $\Sigma$ as an element of the space
\[
\mc{G}:=(\partial\Gamma\times\partial\Gamma-\Delta)/(x,y)\sim(y,x),
\]
simply by identifying $\ell$ with the unordered pair of its endpoints. We say that two geodesics $\ell$ and $\ell'$ with endpoints $a,b$ and $a',b'$, respectively, are \emph{crossing} if $a'$ and $b'$ lie in distinct connected components of $\partial \Gamma - \{a,b\}$ (recall that $\partial \Gamma$ is a topological circle). Two geodesics that are not crossing will be said to be \emph{disjoint}. Within this framework, a geodesic lamination of $\Sigma$ is a $\Gamma$-invariant closed subset $\lambda\subset\mc{G}$ made of pairwise disjoint geodesics, and it is said to be {\em maximal} if every geodesic $\ell$ outside $\lambda$ crosses some $\ell'$ in $\lambda$. The elements of a lamination will be also called its \emph{leaves}, and the connected components $P$ of $\widetilde{\Sigma} - \lambda$ will be called its {\em plaques}. 

Let now $\rho$ be a maximal representation, and let $\xi : \partial \Gamma \to \partial\mb{H}^{2,n}$ be its associated boundary map. For any leaf $\ell = [a,b]$ in $\lambda$, we can find a unique spacelike geodesic $\hat{\ell}$ in $\hyp^{2,n}$ with endpoints $\xi(a),\xi(b)$, and similarly for any plaque $P=\Delta(a,b,c)$, we have a unique spacelike ideal triangle $\hat{P}$ with endpoints $\xi(a), \xi(b), \xi(c) \in \partial\mb{H}^{2,n}$. We then define the \emph{geometric realization of $\lambda$ in $\hyp^{2,n}$} to be
\[
\hat{\lambda} : = \bigcup_{\text{$\ell$ leaf of $\lambda$}} \hat{\ell} ,
\]
and its associated \emph{pleated set} as
\[
\widehat{S}_\lambda : = \hat{\lambda} \cup \bigcup_{\text{$P$ plaque of $\lambda$}} \hat{P} .
\]

Our first result establishes some structural properties about the topology and the causal features of these sets:

\begin{alphathm}
	\label{topology pleated surfaces h2n}
	Let $\rho:\Gamma\to\SOtwon$ be a maximal representation. For every maximal lamination $\lambda$, the pleated set ${\widehat{S}}_\lambda\subset\mb{H}^{2,n}$ is an embedded Lipschitz disk which is also {\rm acausal}, that is, every pair of points $x,y\in {\widehat{S}}_\lambda$ is joined by a spacelike geodesic.   
\end{alphathm}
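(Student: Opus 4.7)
The strategy is to establish acausality of $\widehat{S}_\lambda$ first, and then deduce the Lipschitz graph and disk properties from it. The starting point is Theorem~\ref{acausal limit curve}: the acausality of every triple $\xi(a), \xi(b), \xi(c)$ means that each ideal plaque triangle $\hat{P}$ lies in a totally geodesic copy of $\mathbb{H}^2 \subset \mathbb{H}^{2,n}$, namely the intersection of $\mathbb{H}^{2,n}$ with the signature $(2,1)$ subspace spanned by the vertices of $\hat{P}$, and is therefore internally acausal. The crux is to propagate acausality across distinct plaques, and this I would do through a finite approximation argument.

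I would first prove the statement for finite polygonal approximations. Given a cyclically ordered finite subset $F = \{a_1, \ldots, a_k\} \subset \partial \Gamma$ and any triangulation of the corresponding cyclic polygon, let $\widehat{S}_F \subset \mathbb{H}^{2,n}$ be the finite pleated polygon obtained as the union of the resulting spacelike ideal triangles. The key lemma is that $\widehat{S}_F$ is an acausal, embedded Lipschitz disk, proved by induction on $k$. The base case $k=3$ is a single spacelike ideal triangle and is handled by the totally geodesic $\mathbb{H}^2$ above. The inductive step glues a new triangle $\hat{T}$ to $\widehat{S}_F$ along a boundary edge $[\xi(a_i), \xi(a_{i+1})]$ and must verify that the new vertex $\xi(a_{k+1})$ is causally well-positioned with respect to $\widehat{S}_F$. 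The input is the acausality of every triple of endpoints involving $a_{k+1}$, which I would package, via an argument on the Gram matrix of $\xi(a_1), \ldots, \xi(a_{k+1})$, into the statement that the finite configuration realizes the maximal Lorentzian signature allowed by $\mathbb{R}^{2,n+1}$.

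Once the finite case is in hand, any two points $x, y \in \widehat{S}_\lambda$ lie in the plaques or leaves of a common finite pleated polygon $\widehat{S}_F$ --- take $F$ to contain the endpoints of the triangles supporting $x,y$ together with the endpoints of finitely many leaves of $\lambda$ separating them --- so $x, y$ are joined by a spacelike geodesic by the finite case, and acausality of $\widehat{S}_\lambda$ follows since it is a pairwise condition. The Lipschitz graph property then follows from the general fact that any acausal subset of $\mathbb{H}^{2,n}$ projects homeomorphically with Lipschitz inverse onto a suitable totally geodesic spacelike $\mathbb{H}^2$. Embeddedness is immediate from acausality, since distinct points are at positive spacelike distance. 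Finally, the plaque/leaf combinatorics of $\widehat{S}_\lambda$ is $\Gamma$-equivariantly isomorphic to that of $\widetilde{\Sigma} \cong \mathbb{R}^2$ via $\xi$, forcing $\widehat{S}_\lambda$ to be a topological disk.

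The main obstacle is the inductive step in the finite case: controlling how attaching a spacelike triangle to an acausal pleated polygon preserves acausality requires more than the triple condition on $\xi$, essentially a \emph{positivity} refinement of Theorem~\ref{acausal limit curve} to the effect that the Gram matrix of any finite subset of $\xi(\partial \Gamma)$ realizes the maximal possible Lorentzian signature. Once this finite-dimensional model is pinned down, both the limiting passage to $\widehat{S}_\lambda$ and the promotion of pairwise acausality to the Lipschitz disk conclusion are routine.
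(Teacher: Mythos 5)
There is a genuine gap, and it sits exactly where you flag it: the inductive step of your finite polygonal approximation is never carried out, and the tool you propose for it is not the right one. The statement you say you need --- a ``positivity refinement'' to the effect that the Gram matrix of any finite subset of $\xi(\partial\Gamma)$ realizes the maximal Lorentzian signature --- is both vague and insufficient: for more than $n+3$ points the span is all of $\mathbb{R}^{2,n+1}$, and knowing the signature of the span says nothing about the causal relation between points in the \emph{interiors} of two spacelike triangles or geodesics, which is what acausality of the glued polygon requires. The correct engine, and the one the paper uses, is a pairwise statement that needs no refinement of Theorem~\ref{acausal limit curve} at all: if $(a,b)$ and $(a',b')$ are disjoint (non-crossing) pairs on the acausal curve, then the union of the two spacelike geodesics $[\xi(a),\xi(b)]\cup[\xi(a'),\xi(b')]$ is acausal (Lemma~\ref{lem:four points}), proved by parametrizing the two geodesics, computing the minimum of $-\langle\ell(t),\ell'(s)\rangle$ explicitly, and showing it exceeds $1$ precisely when the pairs are disjoint --- using only that all pairwise products of suitably lifted endpoints are negative, i.e.\ only the triple condition. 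So the extra hypothesis you believe is needed does not exist in the paper and is not needed; but as written your induction has no proof of its key step, and no argument is offered that your Gram-matrix condition (even if established) would control the causal type of segments joining interior points of the attached triangle to interior points of the old polygon.

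A second, smaller gap: even granting acausality, your appeal to ``the general fact that any acausal subset projects homeomorphically with Lipschitz inverse onto a totally geodesic $\mathbb{H}^2$'' overstates what is true. An acausal subset is the graph of a $1$-Lipschitz map over the \emph{image} of its projection to $\mathbb{D}^2$ (Lemma~\ref{lem:acausal graph}); it need not surject onto $\mathbb{D}^2$ (a single spacelike geodesic is acausal), so the disk statement does not follow from acausality alone, nor from the remark that the plaque/leaf combinatorics matches that of $\widetilde{\Sigma}$. This is where maximality of $\lambda$ must enter: the paper (Proposition~\ref{pro:existence pleated sets}, Claims 1--2) shows that the complementary regions of $\pi(\hat{\lambda})$ in $\mathbb{D}^2$ are exactly the projections of the plaques, whence the projection of $\widehat{S}_\lambda$ is all of $\mathbb{D}^2$ and the pleated set is a global Lipschitz graph; it then upgrades achronal to acausal by showing a lightlike segment would have to lie inside a leaf, a pair of leaves, or a plaque, all of which are acausal. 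By contrast, the paper never needs your finite-approximation scheme: acausality of $\hat{\lambda}$ is immediate from the pairwise lemma, and acausality of the full pleated set is extracted from the graph description rather than from an induction over triangulations. If you want to salvage your route, replace the Gram-matrix step by Lemma~\ref{lem:four points} (or an equivalent computation) and supply the surjectivity argument for the projection; both are nontrivial and currently missing.
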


The basic idea behind Theorem \ref{topology pleated surfaces h2n} is the following: A pair of geodesics $\hat{\ell},\hat{\ell}'$ with endpoints on the limit curve $\Lambda_\rho=\xi(\partial\Gamma)$ form an acausal set $\hat{\ell} \cup \hat{\ell}'$ inside $\mb{H}^{2,n}$ if and only if the corresponding leaves $\ell, \ell'$ of $\lambda$ are disjoint. 

This property immediately implies that the geometric realization ${\hat \lambda}\subset\mb{H}^{2,n}$ of any lamination $\lambda$ is an acausal subset. In turn, working in the Poincaré model of $\mb{H}^{2,n}$, we show that the acausality property is preserved once the complementary triangles are added to ${\hat \lambda}$. By general properties of acausal subsets of $\mb{H}^{2,n}$, we deduce that ${\widehat{S}}_\lambda\cup\Lambda_\rho\subset\mb{H}^{2,n}\cup\partial\mb{H}^{2,n}$ is a properly embedded Lipschitz disk. 

The surface $S_\lambda={\widehat{S}}_\lambda/\rho(\Gamma)\subset M_\rho$ carries two natural geometries: It has an intrinsic {\em hyperbolic structure} and a {\em pseudo-metric} induced by the ambient space $\mb{H}^{2,n}$. We now focus our attention of the description of the former. 

\subsection{Cross ratio and shear cocycles}
As in the case of pleated surfaces in hyperbolic 3-space $\mb{H}^3$, the hyperbolic structure on the pleated set ${\widehat{S}}_\lambda$ can be recorded by a {\em shear cocycle} \cite{Bo96}. 

In order to define it, we again rely on the properties of the boundary map: The acausality condition satisfied by $\xi$ and the pseudo-Riemannian structure of the boundary $\partial\mb{H}^{2,n}$ uniquely determine a $\Gamma$-invariant {\em cross ratio} $\beta^\rho$ on $\partial\Gamma$, satisfying the following properties:
\begin{itemize}
	\item{It is {\em (strictly) positive} on positively ordered quadruples in $\partial \Gamma$. This follows from the acausal properties of the boundary map $\xi$ and implies, via general results of Martone and Zhang \cite{MZ19}, and Hamenstädt \cite{H99}, that $\beta^\rho$ induces a length function $L_{\rho}$ on the space of geodesic currents $\mc{C}$.}
	\item{It is {\em locally bounded}, meaning that there exists a hyperbolic structure $X$ on $\Sigma$ such that, for every compact subset $K$ in the space of distinct 4-tuples in $\partial \Gamma$, we can find constants $C, \alpha > 0$ such that 
		\[
		\abs{\log \abs{\beta^\rho(a,b,c,d)}} \leq C \abs{\log{\abs{\beta^X(a,b,c,d)}}}^\alpha
		\]	
		for every cyclically ordered $4$-tuples $(a,b,c,d) \in K$, where $\beta^X$ is the cross ratio on $\partial \Gamma$ determined by the structure $X \in \T$. This property is a consequence of the explicit definition of $\beta^\rho$ and of the H\"older continuity of the limit map $\xi$.}
\end{itemize}

Notice that examples of (strictly) positive and locally bounded cross ratios naturally arise also from other interesting contexts related to pseudo-Riemannian symmetric spaces $\mb{H}^{p,q}$ such as Hitchin representations in ${{\rm SO}{(p,p\!+\!1)}}$ or $\Theta$-positive representations in ${\rm SO}(p,q)$ where similar pleated surface construction might be possible (see also Appendix \ref{other cross ratios}). 

We also remark that positive cross ratios have been used by Martone and Zhang in \cite{MZ19}, by Labourie \cite{Lab08}, and Burger, Iozzi, Parreau, and Pozzetti \cite{BIPP21} to study common features of Higher Teichmüller Theories.

Making use of the cross ratio $\beta^\rho$, we then describe the intrinsic hyperbolic structure of a pleated set $S_\lambda$ through the data of a so-called \emph{H\"older cocycle} $\sigma^\rho_{\lambda}$ transverse to the maximal lamination $\lambda$, in the sense of \cite{Bo96}. 

Tranverse H\"older cocycles were introduced by Bonahon (see \cites{Bo97transv,Bo97geo}), who deployed them for instance to provide a parametrization of Teichmüller space $\T$ of a closed orientable surface $\Sigma$ in \cite{Bo96}, following ideas of Thurston \cite{T86}. Heuristically speaking, if $\lambda_Z$ is the geometric realization of a maximal lamination $\lambda$ on the hyperbolic surface $Z$, the shear cocycle $\sigma^Z_{\lambda}$ records how the ideal triangles in $Z-\lambda_Z$ are glued together along the leaves of $\lambda_Z$. The space $\mathcal{H}(\lambda;\R)$ of H\"older cocycles transverse to $\lambda$ has a natural structure of vector space of dimension $3|\chi(\Sigma)|$, and the map that associates to any hyperbolic structure $Z \in \T$ its shear cocycle $\sigma^Z_\lambda \in \mc{H}(\lambda;\mb{R})$ embeds Teichmüller space as an open convex cone with finitely many faces inside $\mc{H}(\lambda;\mb{R})$. The resulting set of coordinates is usually referred to as {\em shear coordinates} with respect to the maximal lamination $\lambda$. 

This point of view on Teichmüller space has proved to be fruitful also in the setting of Hitchin representations and, more generally, to analyze Anosov representations as witnessed by work of Bonahon and Dreyer \cite{BD17}, Alessandrini, Guichard, Rogozinnikov, and Wienhard \cite{AGRW22}, and Pfeil \cite{P21}.

The underlying principle for the construction of a shear cocycle starting from a cross ratio is very elementary: The classical shear between two adjacent ideal triangles $\Delta$ and $\Delta'$ in the hyperbolic plane is an explicit function of the $\rp^1$-cross ratio of the four ideal vertices of $\Delta \cup \Delta'$, and shears between triangles separated by finitely many leaves of $\lambda$ can be expressed as a finite sum of shears between adjacent plaques. One can then define the $\rho$-shear between two adjacent plaques $P, Q$ of $\lambda$ simply by replacing the role of the $\rp^1$-cross ratio with $\beta^\rho$. In fact with some additional (but elementary) work, this allows to introduce a natural notion of $\rho$-shear cocycle $\sigma^\rho_\lambda$ for a large class of maximal laminations, namely laminations on $\Sigma$ obtained by adding finitely many isolated leaves to a collection of disjoint simple closed curves (see Section \ref{subsec:shear finite case}).

The construction of the shear cocycle $\sigma^\rho_\lambda$ that we describe relies only on the properties of the cross ratio $\beta^\rho$ that we mentioned above, namely that $\beta^\rho$ is strictly positive and locally bounded. Consequently, our techniques allow to deduce the following general statement:

\begin{alphathm}
	\label{shear of cross ratio}
	Let $\beta:\partial\Gamma^4\to\mb{R}$ be a strictly positive locally bounded cross ratio. For every maximal lamination $\lambda$ there exists a transverse H\"older cocycle $\sigma^\beta_{\lambda}\in\mc{H}(\lambda;\mb{R})$ with the following properties:
	\begin{enumerate}[(i)]
		\item{The cocycle $\sigma^\beta_{\lambda}$ is the shear cocycle of a unique hyperbolic metric $X_\lambda$ on $\Sigma$.}
		\item{For every transverse measure $\mu$ on $\lambda$ we have $L_{X_\lambda}(\mu)=L_\beta(\mu)$.}
		\item{The map $\lambda \mapsto X_{\lambda}$ is continuous with respect to the Hausdorff topology on the space of maximal geodesic laminations.}
	\end{enumerate}
\end{alphathm}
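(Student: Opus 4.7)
The plan is to construct $\sigma^\beta_\lambda$ first for maximal laminations $\lambda$ with finitely many non-closed leaves by an explicit finite sum, and then to extend it to arbitrary maximal laminations by a Hausdorff approximation argument. Strict positivity of $\beta$ will be used to place the cocycle in the positive cone corresponding to Teichm\"uller space via Bonahon's shear parametrization, while local boundedness of $\beta$ will provide the H\"older regularity and convergence estimates needed to make the extension well-defined and continuous in $\lambda$.

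For a maximal lamination $\lambda$ with finitely many non-closed leaves, I would define the elementary shear between two adjacent plaques $P = \Delta(a,b,c)$ and $Q = \Delta(a,b,d)$ sharing a leaf $[a,b]$ by the same explicit logarithmic function of $\beta(a,b,c,d)$ that recovers Thurston's classical shear when $\beta$ is the $\rp^1$-cross ratio. For a transverse arc $k$, the cocycle value $\sigma^\beta_\lambda(k)$ is then the finite sum of elementary shears across the non-closed leaves met by $k$, suitably corrected at the endpoints, exactly as in the finite-leaf treatment recalled earlier in the paper. Strict positivity of $\beta$ places $\sigma^\beta_\lambda$ in the positive cone of $\mc{H}(\lambda;\mb{R})$ by a local check on each pair of adjacent plaques, and Bonahon's parametrization then identifies it with the shear cocycle of a unique hyperbolic metric $X_\lambda$.

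For a general maximal lamination $\lambda$, I would approximate it by a sequence $\lambda_n$ of maximal laminations with finitely many non-closed leaves converging to $\lambda$ in the Hausdorff topology, and compare the cocycles $\sigma^\beta_{\lambda_n}$ by evaluating on a common family of transverse arcs. The goal is to show that they converge to a transverse H\"older cocycle $\sigma^\beta_\lambda$ lying in the positive cone of $\mc{H}(\lambda;\mb{R})$. This is where local boundedness enters decisively: picking an auxiliary hyperbolic structure $X \in \T$, the inequality
\[
\bigl|\log|\beta(a,b,c,d)|\bigr| \leq C\, \bigl|\log|\beta^X(a,b,c,d)|\bigr|^\alpha
\]
dominates every elementary $\beta$-shear on a subarc by the corresponding elementary shear of $\sigma^X_\lambda$, which is already known to be a H\"older cocycle by Bonahon's work. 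Summing these bounds over any partition of a transverse arc $k$ yields uniform convergence and a uniform H\"older estimate, so the limit cocycle is well-defined and belongs to $\mc{H}(\lambda;\mb{R})$. By Bonahon's theorem it is realized by a unique hyperbolic metric $X_\lambda$.

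Statement (ii) then follows from the construction: integrating the elementary shear against a transverse measure $\mu$ on $\lambda$ gives, on the one hand, $L_{X_\lambda}(\mu)$ via Bonahon's length formula, and on the other hand, $L_\beta(\mu)$ via the Hamenst\"adt/Martone--Zhang construction of length from a cross ratio, and these integrals coincide term-by-term. Statement (iii) is a continuity consequence of the same approximation scheme: Hausdorff-close laminations share leaves within compact transverse regions, and the discrepant contributions are controlled by the same H\"older estimate, so the map $\lambda \mapsto X_\lambda \in \T$ is continuous. The main obstacle I expect is the convergence and H\"older control at the heart of the third paragraph: the cocycle must make sense on transverse arcs meeting potentially uncountably many leaves and be stable under Hausdorff limits, and it is precisely here that local boundedness must be quantitatively leveraged against the known regularity of the shear cocycle of a hyperbolic metric to carry the finite-leaf construction into the general setting.
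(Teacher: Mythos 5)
Your overall skeleton (explicit cross-ratio shears in the finite-leaved case, an approximation to reach general $\lambda$, then Bonahon's parametrization) matches the paper's strategy, but two of your key steps do not work as stated. First, the claim that strict positivity of $\beta$ places $\sigma^\beta_\lambda$ in the positive cone ``by a local check on each pair of adjacent plaques'' is not correct: Bonahon's cone (Theorem \ref{thm:thurston bonahon}) is cut out by the condition $\omega_\lambda(\sigma,\mu)>0$ for \emph{all} measured laminations $\mu$ supported in $\lambda$, not by positivity of individual elementary shears --- which need not be positive even for genuine hyperbolic metrics. Moreover, positivity of $\beta$ does not force elementary shears to be positive: the shear configuration $\beta(\ell^+,\ell^-,u_l,u_r)$ has $u_l,u_r$ on opposite sides of $\ell$, which is not the cyclically-ordered configuration to which Definition \ref{def:positive cross ratio} applies (for the classical cross ratio this quantity is $e^{\text{shear}}$ and takes any positive value). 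The bridge you actually need is the identity $\omega_\lambda(\sigma^\beta_\lambda,\mu)=L_\beta(\mu)$ (Proposition \ref{pro:length and thurston sympl}), proved by computing shears along closed leaves and pairing with the train-track formula for $\omega_\lambda$, combined with the Liouville-current representation of $L_\beta$ (Theorem \ref{thm:current of cross-ratio}) to get strict positivity of $L_\beta$ on nontrivial measured laminations; this is also what makes your statement (ii) precise, since $L_{X_\lambda}(\mu)$ is the symplectic pairing $\omega_\lambda(\sigma^{X_\lambda}_\lambda,\mu)$, not a term-by-term integral of elementary shears.

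Second, the convergence estimate at the heart of your third paragraph misuses local boundedness. That hypothesis bounds $\abs{\log\beta(h^+,\ell^+,\ell^-,h^-)}$ for \emph{coherently oriented disjoint} geodesics at bounded distance, i.e.\ a distance-type cross ratio comparable to $d_{\hyp^2}(\ell,h)^\alpha$; it does not dominate the elementary $\beta$-shears by the corresponding shears of $\sigma^X_\lambda$, and in any case the elementary shears of a hyperbolic metric across an arc meeting infinitely many plaques are not an absolutely summable family that one can majorize term by term. The actual convergence mechanism is different: one compares successive finite approximations and shows that inserting a plaque changes the total by a diagonal-exchange term $2\abs{\log\beta(\ell_P^+,\ell_Q^+,\ell_Q^-,\ell_P^-)}$ (Lemma \ref{lem:diagonal_exchange}), which \emph{is} of distance type and hence controlled by local boundedness by $C\,L_{\tilde X}(k\cap S)^\alpha$; summability of these errors, and the uniformity in $\lambda$ needed both for your Hausdorff-limit definition of $\sigma^\beta_\lambda$ and for continuity in (iii), comes from the divergence-radius function along the transverse arc (Lemma \ref{lem:auxiliary function r} and its uniform versions), which gives geometric decay with bounded multiplicity. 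Without this machinery your limit over finite-leaved approximations $\lambda_n$ is not shown to exist, nor to be independent of the approximation, so the passage from the finite-leaved case to general $\lambda$ --- the main analytic content of the theorem --- remains open in your proposal.
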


The process to construct $\beta$-shear cocycles $\sigma^\beta_\lambda$ for a generic maximal lamination is technically quite involved, and our strategy will heavily rely on multiple tools developed by Bonahon \cite{Bo96} in his construction of shear coordinates for Teichm\"uller space, such as the notion of \emph{divergence radius function} associated to the choice of a train track carrying $\lambda$ (see also Bonahon and Dreyer \cite{BD17}*{\S~8.2}). However, if $\lambda$ is a finitely leaved lamination and $\beta = \beta^\rho$ is the cross ratio associated to some maximal representation $\rho$, then the shear cocycle $\sigma_\lambda^\rho$ has a simple interpretation in terms of horocycle foliations on the plaques of the pleated set $S_\lambda$, in direct analogy with Bonahon's original description of shear coordinates (see e.g. \cite{Bo96}*{\S~2}).

We call the cocycle $\sigma^\rho_{\lambda}$ the {\em intrinsic shear cocycle} associated to $\lambda$ and $\rho$.

\subsection{Geometry of pleated surfaces}
The other intrinsic geometric structure carried by the pleated set ${\widehat{S}}_\lambda$ is a $\rho$-invariant pseudo-metric: Since any two points $x,y\in \widehat{S}_\lambda$ are connected by a unique spacelike geodesic segment $[x,y]$ (see Theorem \ref{topology pleated surfaces h2n}), we can define 
\[
d_{\mb{H}^{2,n}}(x,y):=\ell[x,y].
\]
It is worth to mention that the function 
\[
d_{\mb{H}^{2,n}}:{\widehat{S}}_\lambda\times{\widehat{S}}_\lambda\to[0,\infty)
\]
is not a distance in the traditional sense as it does not satisfy the triangle inequality nor its inverse (see also \cite{GM19} and \cite{CTT19}). However, it is continuous, it vanishes exactly on the diagonal, and its metric balls $B(x,r)=\{y\in{\widehat{S}_\lambda}\left|d_{\mb{H}^{2,n}}(x,y)\le r\right.\}$ form a fundamental system of neighborhoods.

Nevertheless, the pseudo-distance $d_{\hyp^{2,n}}$ naturally relates to the hyperbolic structure $X_\lambda$ associated to $\rho$ and the maximal lamination $\lambda$. To see this, let us introduce the following notion: We say that a function $f : S_\lambda={\widehat{S}}_\lambda/\rho(\Gamma)\to X$ with values in a hyperbolic surface $X$ is {\em $K$-Lipschitz} with respect to the intrinsic pseudo-metric if it lifts to a map ${\hat f}:{\widehat{S}}_\lambda\to\mb{H}^2$ that satisfies
\[
d_{\mb{H}^2}({\hat f}(x),{\hat f}(y))\le Kd_{\mb{H}^{2,n}}(x,y)
\]
for any $x, y \in \widehat{S}_\lambda$. Then we have:

\begin{alphathm}
	\label{geometry pleated surfaces h2n}
	Let $\rho:\Gamma\to\SOtwon$ be a maximal representation, and $\lambda$ be a maximal lamination. If $X_\lambda$ denotes the hyperbolic surface with {\rm intrinsic shear cocycle} $\sigma_\lambda^\rho$, then the pleated surface $S_\lambda\subset M_\rho$ admits a unique developing homeomorphism $f:S_\lambda\to X_\lambda$ which is $1$-Lipschitz with respect to the intrinsic pseudo-metric on $S_\lambda$. Furthermore, we have
	\[
	L_{X_\lambda}(\gamma)\le L_\rho(\gamma)
	\]
	for every $\gamma\in\Gamma$, where $L_{X_\lambda}, L_\rho:\Gamma\to(0,\infty)$ denote the length functions of the hyperbolic surface $X_\lambda$ and the representation $\rho$, respectively, with strict inequality if and only if $\gamma$ intersects the {\rm bending locus} of $S_\lambda$.
\end{alphathm}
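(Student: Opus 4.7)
The plan is to build $\hat f$ as a piecewise-isometric developing map, use the shear cocycle $\sigma^\rho_\lambda$ to show it realizes $X_\lambda$, and deduce the $1$-Lipschitz bound from a reverse triangle inequality for acausal spacelike curves in $\mb{H}^{2,n}$. The construction of $\hat f$ begins with the observation that for each plaque $\hat P$ of $\hat \lambda$, the linear span of $\hat P$ in $\mb{R}^{2,n+1}$ has signature $(2,1)$, so $\hat P$ is contained in a totally geodesic isometric copy of $\mb{H}^2$ inside $\mb{H}^{2,n}$. I pick any isometric embedding $\hat f_P : \hat P \to \mb{H}^2$ for a single plaque and propagate across adjacent plaques by gluing isometrically along shared leaves, with the shear parameter prescribed by the cross ratio $\beta^\rho$. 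By Theorem \ref{shear of cross ratio}, this shear datum is exactly the cocycle $\sigma^\rho_\lambda$ defining $X_\lambda$, so $\hat f$ is a $\rho$-equivariant developing map of $X_\lambda$ with the correct holonomy. For a general maximal $\lambda$ one approximates $\lambda$ by finite-leaved laminations and passes to the limit using Theorem \ref{shear of cross ratio}(iii). The map $\hat f$ is a homeomorphism because it is piecewise isometric, orientation-preserving, and extends continuously through $\xi$ on the boundary, while $\widehat S_\lambda \cup \xi(\partial\Gamma)$ is a Lipschitz disk by Theorem \ref{topology pleated surfaces h2n}. Taking quotients yields $f : S_\lambda \to X_\lambda$, which is unique up to $\isom(X_\lambda)$ by standard developing-map theory.

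The $1$-Lipschitz property hinges on the following reverse triangle inequality for acausal spacelike curves in $\mb{H}^{2,n}$: for every piecewise-$C^1$ acausal spacelike curve $c:[0,T]\to \mb{H}^{2,n}$ from $x$ to $y$,
\[
d_{\mb{H}^{2,n}}(x,y) \ge \ell(c),
\]
with strict inequality unless $c$ parametrizes the unique spacelike geodesic $[x,y]$. This is the pseudo-Riemannian counterpart of the Lorentzian reverse inequality, and I plan to extract it from \cite{GM19}. Given $x,y\in \widehat S_\lambda$, the $\hat f$-preimage of the $X_\lambda$-geodesic joining $\hat f(x)$ to $\hat f(y)$ is a piecewise spacelike geodesic $\alpha$ entirely contained in $\widehat S_\lambda$, with $\ell(\alpha)=d_{X_\lambda}(\hat f(x), \hat f(y))$, and $\alpha$ is acausal because $\widehat S_\lambda$ is acausal (Theorem \ref{topology pleated surfaces h2n}). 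The inequality above then yields $d_{X_\lambda}(\hat f(x), \hat f(y)) \le d_{\mb{H}^{2,n}}(x,y)$, i.e., $\hat f$ is $1$-Lipschitz. The length inequality $L_{X_\lambda}(\gamma)\le L_\rho(\gamma)$ follows by applying this to the pairs $(x,\rho(\gamma)^n x)$ and recovering translation lengths in the limit $n\to\infty$.

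For the equality case, if $\gamma$ is disjoint from the bending locus, its $X_\lambda$-axis lifts via $\hat f^{-1}$ to a spacelike geodesic contained in a single spacelike totally geodesic $\mb{H}^2 \subset \mb{H}^{2,n}$, where $\hat f$ is an ambient isometry; this lift then coincides with the axis of $\rho(\gamma)$ in $\mb{H}^{2,n}$, giving $L_{X_\lambda}(\gamma)=L_\rho(\gamma)$. Conversely, if $\gamma$ crosses the bending locus, the lifted $X_\lambda$-axis is genuinely bent at those leaves and does not parametrize a spacelike geodesic in $\mb{H}^{2,n}$, so the strict form of the reverse triangle inequality yields $L_{X_\lambda}(\gamma)<L_\rho(\gamma)$. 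The main obstacle I anticipate is precisely this global and strict version of the reverse triangle inequality: the local statement for small acausal spacelike segments is standard, but extending it to arbitrarily long piecewise paths inside $\widehat S_\lambda$ and certifying strict inequality at nontrivial bending will require either invoking the framework of \cite{GM19} or a direct first-variation computation across a bending leaf, where the two adjacent spacelike tangent planes meet at a nontrivial pseudo-angle.
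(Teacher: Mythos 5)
There is a genuine gap, and it sits exactly at the step you yourself flag as the main obstacle: the ``reverse triangle inequality for acausal spacelike curves'' is false. Any non-geodesic curve contained in a totally geodesic spacelike plane $H\cong\mb{H}^2\subset\mb{H}^{2,n}$ is spacelike and has acausal image (any two of its points are joined by a spacelike geodesic inside $H$), yet its length strictly exceeds the pseudo-distance between its endpoints, which for points of $H$ is just their hyperbolic distance in $H$. So no first-variation computation or appeal to \cite{GM19} can rescue the statement: spacelike curves are not length-maximized by geodesics, and in particular applying such an inequality to $\alpha=\hat f^{-1}(\text{$X_\lambda$-geodesic})$ does not give $d_{X_\lambda}(\hat f(x),\hat f(y))\le d_{\mb{H}^{2,n}}(x,y)$. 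What is true — and what the paper proves — is the opposite logical shape: one does not bound the length of \emph{every} acausal path by the chord, one \emph{exhibits one} path in $\widehat S_\lambda$ from $x$ to $y$ of length at most $d_{\mb{H}^{2,n}}(x,y)$. Concretely, choose the Poincaré model $\Psi:\mb{D}^2\times\mb{S}^n\to\widehat{\mb{H}}^{2,n}$ adapted to a spacelike plane through $x,y$, write $\widehat S_\lambda$ as the graph of a $1$-Lipschitz map $g:\mb{D}^2\to\mb{S}^n$, and lift the segment $[\pi(x),\pi(y)]$ (a subsegment of a diameter) through the graph map; the explicit metric $\bigl(\tfrac{2}{1-\norm{u}^2}\bigr)^2\abs{\dd u}^2-\bigl(\tfrac{1+\norm{u}^2}{1-\norm{u}^2}\bigr)^2 g_{\mb{S}^n}$ shows the lift is no longer than the hyperbolic length of the segment, which equals $d_{\mb{H}^{2,n}}(x,y)$, and the developing map does not increase the length of this lift. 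Since the intrinsic distance is an infimum over paths, this yields the $1$-Lipschitz bound; your argument would need the bound for the intrinsic minimizer itself, which you cannot get from a (false) statement about all acausal curves.

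Two further points where the proposal would not close. First, the strict inequality for $\gamma$ crossing the bending locus cannot be deduced from a ``strict'' pathwise inequality: translation lengths are limits over $\rho(\gamma)^n$-orbits, and a strict inequality for each finite path can degenerate to equality in the limit. The paper instead shows the axis of $\rho(\gamma)$ is \emph{not} contained in $\widehat S_\lambda$, takes the timelike segment of maximal length $T>0$ from the axis to the pleated set, and uses the spectral estimate $\abs{\scal{v}{A^n v}}<\cosh(nL)$ for the action of $\rho(\gamma)$ on the orthogonal of its axis (as in \cite{CTT19}, \cite{BPS19}) to force $\cosh(nL_{X_\lambda}(\gamma))<\cosh(nL_\rho(\gamma))$ for a single large $n$; some quantitative input of this kind is unavoidable. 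Second, in the construction of $\hat f$ for finite-leaved $\lambda$, plaques separated by a closed leaf are not connected by a finite chain of adjacent plaques, so ``propagating shears across adjacent plaques'' does not by itself define the map globally; the paper develops each component of $S_\lambda-\lambda_c$ isometrically, passes to metric completions, and glues the completions along the closed leaves, and for general $\lambda$ it needs convergence of the pleated sets and of the developing maps (Propositions \ref{pro:continuity pleated sets} and \ref{pro:continuity shear wrt lamination}, Ascoli--Arzel\`a), not only continuity of $\lambda\mapsto X_\lambda$.
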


The heuristic idea of Theorem \ref{geometry pleated surfaces h2n} is the following: The pleated set ${\widehat{S}}_\lambda$ has an intrinsic hyperbolic path metric (whose shear cocycle is exactly the intrinsic shear cocycle $\sigma^\rho_\lambda$). Using the fact that ${\widehat{S}}_\lambda$ is an acausal subset that can be represented as a graph in the Poincaré model of $\mb{H}^{2,n}$, one can show that for every pair of points $x,y\in{\widehat{S}}_\lambda$ there exists a path inside ${\widehat{S}}_\lambda$ joining them whose length is bounded by $d_{\hyp^{2,n}}(x,y)$. This immediately implies that the path metric on ${\widehat{S}}_\lambda$ is dominated by the intrinsic pseudo-metric $d_{\hyp^{2,n}}$.

We show that this picture is accurate in the case of finite leaved maximal laminations. The proof here is elementary and uses a cut-and-paste argument in the spirit of \cite{CEG}*{Theorem~I.5.3.6}. In order to deduce the statement of Theorem \ref{geometry pleated surfaces h2n} from the finite leaved case, we exploit continuity properties of pleated surfaces. 

We conclude here our discussion on the existence of $\rho$-equivariant pleated surfaces and the study of their topology and geometry. In what follows, we deploy the results just described to extract information on the maximal representation $\rho$.

\subsection{Length spectra of maximal representations}

We now focus on the study of the set of pleated surfaces $\{X_\lambda\}_{\lambda}$ associated to a given maximal representation $\rho$, considered as a subset of Teichmüller space $\T$. As it turns out, it can be described as a subset of the boundary of a set that is {\em convex} with respect to multiple natural structures on $\T$. More precisely, given a maximal representation $\rho$ let us define the {\em dominated set} of $\rho$ as 
\[
\mc{P}_\rho:=\{Z\in\T\left| L_Z(\gamma) \le L_\rho(\gamma)\text{ \rm for every }\gamma\in\Gamma\right.\}.
\]
We also define the companion $\mc{P}_\rho^{{\rm simple}}$ consisting of those hyperbolic surfaces whose {\em simple} length spectrum is dominated by the simple length spectrum of $\rho$. 

By Theorem \ref{geometry pleated surfaces h2n}, the set $\mc{P}_\rho$ is always non-empty as it contains all the hyperbolic structures of the pleated surfaces associated to $\rho$. Furthermore, it is convex with respect to the Weil-Petersson metric, by work of Wolpert \cite{W87}, and with respect to {\em shear paths}, by work of Bestvina, Bromberg, Fujiwara and Souto \cite{BBFS13} and Th\'eret \cite{The14} generalizing a result of Kerckhoff \cite{K83} (see also \cite{MV} for a different approach). 

We prove the following: 

\begin{alphathm}
	\label{structure pleated}
	Let $\rho:\Gamma\to\SOtwon$ be a maximal representation. For every $Z\in\T$ define
	\[
	\kappa(Z):=\sup_{\gamma\in\Gamma-\{1\}}{\frac{L_Z(\gamma)}{L_\rho(\gamma)}}.
	\]
	
	We have:
	\begin{enumerate}
		\item{$Z\in{\rm int}(\mc{P}_\rho)$ if and only if $\kappa(Z)<1$.}
		\item{If $\rho$ is not Fuchsian, then ${\rm int}(\mc{P}_\rho)\neq\emptyset$.}
		\item{If $X_\lambda$ is the hyperbolic structure with shear cocycle $\sigma^\rho_\lambda$, then $X_\lambda\in\partial\mc{P}_\rho$.}
	\end{enumerate}
	
	Furthermore,
	\begin{enumerate}
		\setcounter{enumi}{3}
		\item{If $Z\not\in{\rm int}(\mc{P}_\rho)$, then there exists $\mu\in\mc{ML}$ such that $\kappa(Z)=L_Z(\mu)/L_\rho(\mu)$.}
		\item{$\mc{P}_\rho=\mc{P}_\rho^{{\rm simple}}$.}
	\end{enumerate}
\end{alphathm}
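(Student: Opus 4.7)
I would approach the five items in the order $(1), (3), (2), (4), (5)$, using continuity of length functions on currents, Weil--Petersson convexity, and a Thurston-style realization argument. For $(1)$, I would extend $L_Z$ and $L_\rho$ continuously to the space $\mc{C}$ of geodesic currents on $\Sigma$: Bonahon's classical construction handles $L_Z$, and Martone--Zhang \cite{MZ19} provides the extension of $L_\rho$ via the strict positivity of $\beta^\rho$. Since $L_\rho$ does not vanish on any nonzero projective class, the ratio $L_Z/L_\rho$ descends to a continuous function on the compact projective space $\mb{P}\mc{C}$, so $\kappa(Z)$ is attained at some projective current and depends continuously on $Z \in \T$. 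The equivalence $\kappa(Z) < 1 \Leftrightarrow Z \in {\rm int}(\mc{P}_\rho)$ is then immediate, and by continuity one in fact has a uniform margin: $L_Z(c) \leq c_0 L_\rho(c)$ for some $c_0 < 1$ and all currents $c$.

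For $(3)$, Theorem \ref{geometry pleated surfaces h2n} gives $L_{X_\lambda}(\gamma)\le L_\rho(\gamma)$, hence $X_\lambda\in\mc{P}_\rho$; for any transverse measure $\mu$ on $\lambda$ the characterization of equality (since $\mu$ does not transversely meet $\lambda$) yields $L_{X_\lambda}(\mu)=L_\rho(\mu)$, so $\kappa(X_\lambda)=1$ and $(1)$ forces $X_\lambda\in\partial\mc{P}_\rho$. For $(2)$, if $\rho$ is not Fuchsian, the limit curve $\xi(\partial\Gamma)$ cannot be contained in any totally geodesic $\mb{H}^2 \subset \mb{H}^{2,n}$, so the bending locus $\lambda_1^b$ of some pleated set $\widehat{S}_{\lambda_1}$ is non-empty. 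Pick a simple closed curve $\gamma$ transverse to $\lambda_1^b$ and extend it to a maximal lamination $\lambda_2$ containing $\gamma$ as a leaf (e.g.\ by completing to a pants decomposition and filling in with ideal triangles). Theorem \ref{geometry pleated surfaces h2n} then gives $L_{X_{\lambda_1}}(\gamma) < L_\rho(\gamma) = L_{X_{\lambda_2}}(\gamma)$, hence $X_{\lambda_1} \neq X_{\lambda_2}$ in $\T$. Along the Weil--Petersson geodesic $(Z_t)_{t\in[0,1]}$ joining them, Wolpert's strict convexity of length functions of currents \cite{W87} together with $X_{\lambda_i}\in\mc{P}_\rho$ produces the strict inequality $L_{Z_t}(c)<L_\rho(c)$ for every nonzero projective current $c$; evaluating at a maximizer of $\kappa(Z_t)$ (available by $(1)$) yields $\kappa(Z_t) < 1$, placing each $Z_t$ with $t\in(0,1)$ in ${\rm int}(\mc{P}_\rho)$.

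For $(4)$, $(1)$ reduces the claim to showing that a maximizer $[c_0]$ of $L_Z/L_\rho$ over $\mb{P}\mc{C}$ can be chosen in $\mc{ML}$. I would argue this in Thurston style: if $c_0$ has positive self-intersection, a local smoothing of crossings (controlled by the strict positivity of $\beta^\rho$, which governs how $L_\rho$ decomposes under splitting of currents) yields a nearby current of strictly smaller self-intersection and of no smaller ratio, so a minimal-self-intersection maximizer must lie in $\mc{ML}$. Finally, $(5)$ is a consequence of $(4)$ and density. If $Z\in\mc{P}_\rho^{\rm simple}\setminus\mc{P}_\rho$, then by $(4)$ some $\mu\in\mc{ML}$ satisfies $L_Z(\mu)/L_\rho(\mu)>1$; approximating $\mu$ by weighted simple closed curves $t_n\gamma_n\to\mu$ and using continuity of $L_Z$ and $L_\rho$ on $\mc{C}$ gives $L_Z(\gamma_n)/L_\rho(\gamma_n) \to L_Z(\mu)/L_\rho(\mu) > 1$, so $L_Z(\gamma_n)>L_\rho(\gamma_n)$ for large $n$, contradicting $Z\in\mc{P}_\rho^{\rm simple}$.

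The main obstacle is step $(4)$: transplanting Thurston's realization of the asymmetric-metric supremum by a measured lamination from the Fuchsian setting (where it is proved via best-Lipschitz maps and stretch laminations) to the cross-ratio setting. The generalization requires a current-decomposition argument in which the behavior of $L_\rho$ under smoothing of self-intersections is controlled purely from the strict positivity and local boundedness of $\beta^\rho$ established earlier. A secondary delicate point arises in $(2)$: ensuring that the pointwise strict inequality $L_{Z_t}(c)<L_\rho(c)$ along the Weil--Petersson geodesic upgrades to a strict inequality of the supremum, which is where compactness of $\mb{P}\mc{C}$ and the attained maximizer from $(1)$ are essential.
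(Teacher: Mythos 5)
Your step (4) is the genuine gap, and your own ``main obstacle'' remark identifies it correctly: there is no canonical smoothing operation on geodesic currents, and nothing established earlier (strict positivity or local boundedness of $\beta^\rho$) gives any inequality for $L_\rho$ under resolving a self-intersection, so the claim that a maximizer of minimal self-intersection must lie in $\mc{ML}$ is not proved and it is unclear how to prove it along these lines. The paper's proof of (4) (Proposition~\ref{pro:outside max is lamination}) never touches currents at this stage: following Thurston, for $Z\in\T-\mc{P}_\rho$ one shows $\kappa(Z)=\kappa^{\rm simple}(Z)$ by working with closed curves. Given a non-simple $\gamma$ with $L_Z(\gamma)/L_\rho(\gamma)>1$, one takes the pair-of-pants cover $P\to Z$ determined by an immersed figure eight in $\gamma$, uses Scott's separability theorem to find a \emph{finite} cover $Z'\to Z$ in which the pants embeds, and then builds a pleated surface for the restriction of $\rho$ to $\pi_1(Z')$ realizing a finite leaved maximal lamination containing the boundary geodesics $\alpha_j$ of the pants. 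This yields a genuine hyperbolic pair of pants $P'$ with $L_\rho(\alpha_j)=L_{P'}(\alpha_j)$ and $L_\rho(\gamma)\ge L_{P'}(\gamma)$, so Thurston's pants inequality $L_P(\gamma)/L_{P'}(\gamma)\le\max_j L_P(\alpha_j)/L_{P'}(\alpha_j)$ applies and produces a strictly shorter curve with at least as large a ratio; hence $\kappa=\kappa^{\rm simple}$, the maximum is attained on $\mc{ML}$ by compactness of $\mb{P}\mc{ML}$, and boundary points of $\mc{P}_\rho$ are then handled by a limiting argument $Z_n\to Z$, $\mu_n\to\mu$. So the missing idea is precisely the use of pleated surfaces for restrictions of $\rho$ to finite-index subgroups (via Scott) as the substitute for the hyperbolic input in Thurston's argument, not a decomposition of $L_\rho$ under smoothing of currents.

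There is also a secondary gap in (1): the implication $Z\in{\rm int}(\mc{P}_\rho)\Rightarrow\kappa(Z)<1$ is not ``immediate''. Continuity of $\kappa$ only shows that $\{\kappa<1\}$ is open and contained in $\mc{P}_\rho$; it does not exclude an interior point with $\kappa(Z)=1$ (a continuous function can equal $1$ on an open set). The paper (Lemma~\ref{lem:interior points}) proves this direction by writing an interior point as the midpoint of a Weil--Petersson segment $[Z',Z'']\subset\mc{P}_\rho$ and applying strict convexity of $c\mapsto L_\bullet(c)$ along Weil--Petersson geodesics at a maximizing current, giving $L_Z(c)<(L_{Z'}(c)+L_{Z''}(c))/2\le L_\rho(c)$; you invoke exactly this convexity in (2), so the repair is short, but as written both (1)$\Leftarrow$ and (3) (where you use ``$\kappa=1$ forces boundary'') are unproved. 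In (3), the equality $L_{X_\lambda}(\mu)=L_\rho(\mu)$ for a transverse measure on $\lambda$ should be quoted from Theorem~\ref{shear of cross ratio}(ii) together with the identification of $\beta^\rho$-periods with $L_\rho$ on currents, rather than from the equality case of Theorem~\ref{geometry pleated surfaces h2n}, which is stated only for elements of $\Gamma$. Items (2) and (5) are fine: (2) is essentially the paper's argument with a slightly different choice of the two pleated surfaces, and your (5) --- deducing $\mc{P}_\rho^{{\rm simple}}\subset\mc{P}_\rho$ directly from (4) and density of weighted simple closed curves --- is actually more direct than the paper's topological-disk argument.
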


As a consequence of properties (1) and (2), we obtain that if $\rho$ is not Fuchsian, then there exist hyperbolic structures $Z\in\T$ whose length spectrum $L_Z(\bullet)$ is strictly dominated by the length spectrum of the maximal representation $L_\rho(\bullet)$. Thus, we recover the following:

\begin{thmx}[{Collier, Tholozan, and Toulisse \cite{CTT19}}]
	Let $\rho:\Gamma\to\SOtwon$ be a maximal representation with $n\ge 1$. We have the following: Either $\rho$ is Fuchsian or there exists a hyperbolic surface $Z$ and a constant $c>1$ such that $L_\rho(\bullet)>cL_Z(\bullet)$.
\end{thmx}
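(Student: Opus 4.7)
The plan is to deduce this statement as an immediate consequence of Theorem \ref{structure pleated}, specifically of parts (1) and (2). Suppose $\rho$ is not Fuchsian. Then by part (2) of Theorem \ref{structure pleated}, the interior of the dominated set is non-empty, so we may pick any hyperbolic structure $Z\in{\rm int}(\mc{P}_\rho)$. By part (1), this choice ensures
\[
\kappa(Z)=\sup_{\gamma\in\Gamma-\{1\}}\frac{L_Z(\gamma)}{L_\rho(\gamma)}<1.
\]
Setting $c:=\tfrac{1}{2}\bigl(1+\kappa(Z)^{-1}\bigr)$, we have $1<c<\kappa(Z)^{-1}$, which by definition of $\kappa(Z)$ translates to $L_\rho(\gamma)>cL_Z(\gamma)$ for every $\gamma\in\Gamma-\{1\}$. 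This yields the desired hyperbolic surface and multiplicative gap.

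In this sense the theorem is not really a new statement but a packaging of part of Theorem \ref{structure pleated}, and the actual content lies in proving that theorem: existence of pleated surfaces (Theorem \ref{topology pleated surfaces h2n}) provides, for any maximal lamination $\lambda$, a hyperbolic structure $X_\lambda$ with $L_{X_\lambda}\le L_\rho$ (Theorem \ref{geometry pleated surfaces h2n}), so $\mc{P}_\rho\neq\emptyset$; the main work then is to promote a boundary point of $\mc{P}_\rho$ to an interior point when $\rho$ is non-Fuchsian. The anticipated mechanism for this, and what I expect to be the genuinely substantive step, is the following: the Weil-Petersson (equivalently, shear-path) convexity of $\mc{P}_\rho$ combined with strict convexity of length functions along Weil-Petersson geodesics, applied to a non-Fuchsian configuration, forces the existence of a point where no length ratio is saturated, hence $\kappa(Z)<1$.

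Once Theorem \ref{structure pleated} is in hand, the deduction above is purely formal: it uses only the definition of $\kappa$ and the fact that ${\rm int}(\mc{P}_\rho)$ is characterized by $\kappa<1$. No further appeal to the boundary map, to the cross ratio $\beta^\rho$, or to the structure of pleated surfaces in $\mb{H}^{2,n}$ is needed at this final step. Therefore, the main obstacle in the overall argument is not here but upstream, in establishing the non-emptiness of ${\rm int}(\mc{P}_\rho)$ for non-Fuchsian $\rho$; this theorem itself is merely the corollary that reinterprets this non-emptiness as a uniform multiplicative domination of length spectra.
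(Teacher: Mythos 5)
Your proposal is correct and is exactly the paper's route: the statement is deduced as an immediate corollary of Theorem \ref{structure pleated}, i.e.\ of Lemma \ref{lem:interior points} (interior points are characterized by $\kappa(Z)<1$) and Proposition \ref{pro:non empty} (non-Fuchsian implies ${\rm int}(\mc{P}_\rho)\neq\emptyset$), with the uniform constant $c\in(1,\kappa(Z)^{-1})$ extracted just as you do. Your identification of the substantive work as lying upstream, in the Weil--Petersson strict-convexity argument producing an interior point from two distinct pleated surfaces, also matches the paper.
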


In particular, the inequality $L_\rho(\bullet)>cL_Z(\bullet)$ immediately implies that the {\em entropy} of $\rho$, defined by
\[
\delta_\rho:=\limsup_{R\to\infty}{\frac{\log\left|\{[\gamma]\in[\Gamma]\left|L_\rho(\gamma)\le R\right.\}\right|}{R}} ,
\]
where $[\Gamma]$ denotes the set of conjugacy classes of elements in $\Gamma$, is bounded by 
\[
\delta_\rho\le 1/c\le 1
\]
and the equality $\delta_\rho=1$ holds if and only if $\rho$ is Fuchsian (compare with \cite{CTT19}*{Corollary~5}). 

Let us briefly comment on properties (1), (2), and (3). Property (1) characterizes interior points of $\mc{P}_\rho$ as those points $Z$ whose length spectrum $L_Z(\bullet)$ is {\em strictly} dominated by the length spectrum $L_\rho(\bullet)$. The proof proceeds as follows. On the one hand, being strictly dominated is an open condition: For $Z\in\T$ and for every $K>1$ there is a neighborhood $U$ of $Z$ such that every surface $Z'\in U$ is $K$-biLipschitz to $Z$ and in particular $1/K\le L_Z/L_{Z'}\le K$. Therefore, if $\kappa(Z)<1$ and $K<1/\kappa(Z)$, then $\kappa(Z')<1$. On the other hand, interior points are strictly dominated due to the strict convexity of length functions along Weil-Petersson geodesics.

The idea of (2) is the following: In order to prove that ${\rm int}(\mc{P}_\rho)$ is non empty, it is enough to show that $\mc{P}_\rho$ contains at least two distinct points $X,Y$. Indeed, by the strict convexity of length functions with respect to the Weil-Petersson metric (see Wolpert \cite{W87} and \cite{W04}), the midpoint $Z \in \mathcal{P}_\rho$ of the Weil-Petersson segment $[X,Y]$ is strictly dominated and, hence, by property (1), is an interior point.

If $\rho$ is not Fuchsian, such pair of points $X,Y\in\mc{P}_\rho$ can be produced by taking two pleated surfaces $S_\alpha$ and $S_\beta$ realizing simple closed curves $\alpha$ and $\beta$ that intersect (completed to maximal laminations $\lambda_\alpha,\lambda_\beta$ by adding finitely many leaves spiraling around them). On the one hand, Theorem \ref{geometry pleated surfaces h2n} tells us that $L_{S_\alpha}(\alpha)=L_\rho(\alpha)$ and $L_{S_\beta}(\beta)=L_\rho(\beta)$. On the other hand, as $\rho$ is not Fuchsian, the bending locus of $S_\alpha,S_\beta$ is not empty. Since the bending loci are sublaminations of the maximal extensions $\lambda_\alpha,\lambda_\beta$, they contain the curves $\alpha,\beta$ respectively. Since $\alpha, \beta$ intersect essentially, the curve $\alpha$ intersects the bending locus of $S_\beta$ and $\beta$ intersects the bending locus of $S_\alpha$, therefore $L_{S_\alpha}(\beta)<L_\rho(\beta)$ and $L_{S_\beta}(\alpha)<L_\rho(\alpha)$, again by Theorem \ref{geometry pleated surfaces h2n}. In any case $S_\alpha\neq S_\beta$.

Property (3) follows from the fact that every measured lamination $\mu$ whose support does not intersect essentially the bending locus of $S_\lambda$ realizes $L_{S_\lambda}(\mu)=L_\rho(\mu)$ which implies $\kappa(S_\lambda)=1$ and, hence, by Property (1), $S_\lambda\in\partial\mc{P}_\rho$.

Lastly, let us also spend a couple of words on the simple length spectrum of $\rho$: It follows from (5) that the simple length spectrum alone completely determines the dominated set. This is an indication that there might be simple length spectrum rigidity for ${\SOtwon}$-maximal representations. 

The proof of (5) depends on (4): In fact, on the one hand, we always have $\mc{P}_\rho\subset\mc{P}_\rho^{{\rm simple}}$, by definition. On the other hand, properties (1) and (4) imply together that $\partial\mc{P}_\rho\subset\partial\mc{P}_\rho^{{\rm simple}}$. As both sets are topological disks, by convexity with respect to Weil-Petersson geometry or with respect to shear paths, we can conclude that they are equal. The proof of (4) follows arguments of Thurston \cite{T86} on the existence of a maximally stretched laminations. 

\subsection{Photon structures fibering over hyperbolic surfaces}
We now describe the picture from the perspective of photon structures. 

By work of Guichard and Wienhard \cite{GW12}, maximal representations $\rho:\Gamma\to\SOtwon$ parametrize certain geometric structures, in the sense of Thurston, on appropriate closed manifolds. More precisely, every representation $\rho$ determines a $(\SOtwon,{\rm Pho}^{2,n})$-structure, called a \emph{photon structure}, on a closed manifold $E_\rho$. By the Ehresmann-Thurston principle (see \cite{ThNotes}*{Chapter~3}), the topology of $E_\rho$ does not change over each connected component of the character variety $\mathfrak{X}(\Gamma,\SOtwon)$. However, different components can correspond to different topological types. 

Using maximal surfaces in $\mb{H}^{2,n}$, Collier, Tholozan, and Toulisse showed in \cite{CTT19} that $E_\rho$ can always be seen as a {\em fibered photon bundle} over $\Sigma$ with geometric fibers which are copies of ${\rm Pho}^{2,n-1}$ and, furthermore, its topology can be computed from some characteristic classes of $\rho$. 

A fibered photon bundle over a surface $\pi:E\to \Sigma$ is an object that comes together with a developing map $\delta:{\hat E}\to{\rm Pho}^{2,n}$ and a natural associated map $\iota:{\widetilde{\Sigma}}\to\mb{H}^{2,n}$, where ${\widetilde{\Sigma}}\to \Sigma$ is the universal covering and $\widetilde{E}\to{\widetilde{\Sigma}}$ is the pull-back bundle on the universal covering, with the property that $\delta(\pi^{-1}(x))={\rm Pho}(\iota(x)^\perp)$. Here $\iota(x)^\perp\subset\mb{R}^{2,n+1}$ is the orthogonal subspace of the negative line $\iota(x)\in\mb{H}^{2,n}$. In particular, the fibered photon bundle $E\to \Sigma$ has an associated {\em underlying vector bundle} $V_E\to \Sigma$ where the fiber over $x$ is the vector space $\iota(x)^\perp$.

In a similar spirit, using pleated surfaces, we show that every maximal lamination $\lambda\subset\Sigma$ induces a geometric decomposition of $E_\rho$ into standard fibered blocks called {\em triangles of photons} which we briefly describe: Let $\Delta\subset\mb{H}^{2,n}$ be an ideal spacelike triangle with vertices $a,b,c\in\partial\mb{H}^{2,n}$. The standard triangle of photons $E(\Delta)\subset{\rm Pho}^{2,n}$ is the codimension 0 submanifold with boundary 
\[
E(\Delta):=\{V\in{\rm Pho}^{2,n}\left|V\perp x\text{ {\rm for some }}x\in\Delta\right.\}.
\] 

The boundary $\partial E(\Delta)$ is a union of {\em lines of photons} $\partial E(\Delta)=E(\ell_a)\cup E(\ell_b)\cup E(\ell_c)$ where $\ell_a,\ell_b,\ell_c$ are the boundary spacelike geodesics of $\Delta$ opposite to the ideal vertices $a,b,c$ and 
\[
E(\ell):=\{V\in{\rm Pho}^{2,n}\left|V\perp x\text{ {\rm for some }}x\in\ell\right.\}
\]
if $\ell\subset\mb{H}^{2,n}$ is a spacelike geodesic. 

The triangle of photons $E(\Delta)$ naturally fibers over the ideal hyperbolic triangle $\Delta$. We denote by $\pi:E(\Delta)\to\Delta$ the natural fibration. The fiber $\pi^{-1}(x)$ over the point $x\in\Delta$ is given by ${\rm Pho}(x^\perp)\simeq{\rm Pho}^{2,n-1}$. 

We have:

\begin{alphapro}
	\label{decomposition domain}
	Let $\rho:\Gamma\to\SOtwon$ be a maximal representation. Let ${\widehat{S}}_\lambda\subset\mb{H}^{2,n}$ be the pleated set associated to the maximal lamination $\lambda$. Then the Guichard-Wienhard domain of discontinuity $\Omega_\rho\subset{\rm Pho}^{2,n}$ naturally decomposes as
	\[
	\Omega_\rho=\bigsqcup_{\ell\subset{\hat \lambda}}{E(\ell)}\sqcup\bigsqcup_{\Delta\subset {\widehat{S}}_\lambda-{\hat \lambda}}{E(\Delta)}
	\]
	and we have an equivariant bundle projection $\Omega_\rho\to {\widehat{S}}_\lambda$ induced by the standard projections $E(\Delta)\to\Delta$ and $E(\ell)\to\ell$.
\end{alphapro}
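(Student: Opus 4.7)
My plan is to construct an equivariant map $\pi \colon \Omega_\rho \to \widehat{S}_\lambda$ that sends each photon $V$ to the unique point $x$ of the pleated set with $V \subset x^\perp$, and to verify that this map recovers the claimed decomposition once restricted to leaves and plaques. The uniqueness of such an $x$ drives the entire argument: if $V \perp x$ and $V \perp y$ for distinct $x, y \in \widehat{S}_\lambda$, then $V \subset \mathrm{span}(x,y)^\perp$, but by acausality (Theorem~\ref{topology pleated surfaces h2n}) the geodesic from $x$ to $y$ is spacelike, so $\mathrm{span}(x,y)$ has signature $(1,1)$ and its orthogonal has signature $(1,n)$, which admits no isotropic $2$-plane. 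This also yields pairwise disjointness of the collection $\{E(\ell)\}_{\ell \subset \hat\lambda} \cup \{E(\Delta)\}_{\Delta \subset \widehat{S}_\lambda - \hat\lambda}$.

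The inclusion $\bigsqcup_\ell E(\ell) \sqcup \bigsqcup_\Delta E(\Delta) \subseteq \Omega_\rho$ relies on the orthogonality characterization of the Guichard--Wienhard domain: $V \in \Omega_\rho$ if and only if $V$ contains no isotropic line of $\Lambda_\rho$. If $V \perp x$ with $x \in \widehat{S}_\lambda$ and $V \supset p$ for some $p \in \Lambda_\rho$, then $\langle p, x \rangle = 0$, forcing $\mathrm{span}(x,p)$ to be degenerate; combining Theorems~\ref{acausal limit curve} and~\ref{topology pleated surfaces h2n}, every $x \in \widehat{S}_\lambda$ and every $p \in \Lambda_\rho$ span a non-degenerate plane of signature $(1,1)$ along which runs a spacelike geodesic from $x$ to $p$, giving $\langle p, x\rangle \neq 0$, a contradiction.

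For the reverse inclusion, I would introduce the incidence variety $I := \{(x,V) \in \widehat{S}_\lambda \times \mathrm{Pho}^{2,n} : V \subset x^\perp\}$, which is a continuous $\mathrm{Pho}^{2,n-1}$-bundle over the topological 2-disk $\widehat{S}_\lambda$, so $\dim I = 2 + (2n-3) = \dim \mathrm{Pho}^{2,n}$. The forgetful map $\Phi \colon I \to \mathrm{Pho}^{2,n}$ is continuous and injective by the uniqueness step, hence an open embedding by invariance of domain, and its image lies in $\Omega_\rho$ by the previous step. To verify that $\Phi(I)$ is also closed in $\Omega_\rho$, one takes a sequence $V_k = \Phi(x_k, V_k) \to V \in \Omega_\rho$ and argues that $(x_k)$ must remain in a compact subset of $\widehat{S}_\lambda$: otherwise a convergent subsequence would produce $x_\infty \in \Lambda_\rho$ with $x_\infty \subset V^\perp$, but since $V^\perp/V$ is negative definite, every isotropic line of $V^\perp$ lies in $V$, whence $x_\infty \subset V$, contradicting $V \in \Omega_\rho$. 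Hence $\Phi(I)$ is a $\Gamma$-invariant clopen subset of $\Omega_\rho$, and equals $\Omega_\rho$ once one establishes that $\Omega_\rho$ is connected (via the Guichard--Wienhard description, or equivalently via the connectedness of the bundle $I$).

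The map $\pi := \mathrm{pr}_{\widehat{S}_\lambda} \circ \Phi^{-1}$ is then an equivariant bundle projection $\Omega_\rho \to \widehat{S}_\lambda$ with fibers $\mathrm{Pho}(x^\perp) \cong \mathrm{Pho}^{2,n-1}$: equivariance follows from the $\SOtwon$-invariance of orthogonality, and local triviality follows from the continuous dependence of $x^\perp$ on $x$. Restricting $\pi$ to a leaf $\ell \subset \hat\lambda$ and to an open plaque $\Delta$ recovers the standard projections $E(\ell) \to \ell$ and $E(\Delta) \to \Delta$. The main obstacle is the final step of the reverse inclusion, namely matching the clopen image $\Phi(I)$ with the entire Guichard--Wienhard domain: while openness and closedness of $\Phi(I)$ fall out of invariance of domain and the compactness argument respectively, ruling out stray components of $\Omega_\rho$ requires a separate topological input drawn from the explicit structure of the Guichard--Wienhard construction.
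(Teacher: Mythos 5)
Your uniqueness/disjointness step and the inclusion $\bigsqcup E(\ell)\sqcup\bigsqcup E(\Delta)\subseteq\Omega_\rho$ are essentially the paper's properties (3) and (1) in the proof of Proposition \ref{pro:decomposition} (with one caveat: you need that a point $x\in\widehat{S}_\lambda$ and a point $p\in\Lambda_\rho$ are never joined by a lightlike geodesic, i.e.\ acausality of the pair $(x,p)$; this is not literally the statement of Theorem \ref{topology pleated surfaces h2n}, which concerns pairs inside $\widehat{S}_\lambda$, and the paper proves it via the $1$-Lipschitz-graph argument in the Poincar\'e model). The genuine gap is the reverse inclusion. Your plan is to show $\Phi(I)$ is clopen in $\Omega_\rho$ and then invoke connectedness of $\Omega_\rho$, and this last step cannot be repaired as suggested: connectedness of the incidence bundle $I$ only yields connectedness of the \emph{image} $\Phi(I)$, not of $\Omega_\rho$; deducing connectedness of $\Omega_\rho$ from the fibration $\Omega_\rho\to\widehat{S}_\lambda$ is circular, since that fibration is the statement being proved; and for $n=2$ the claim is simply false, because the fiber $\mathrm{Pho}(x^\perp)\cong\mathrm{Pho}^{2,1}\cong\mathcal{S}_2(\R^2)\cong S^1\sqcup S^1$ is disconnected, so (granting the proposition) $\Omega_\rho$ is a disconnected bundle over the disk $\widehat{S}_\lambda$ with two components. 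So no general "topological input from the Guichard--Wienhard construction" is available, and your argument does not establish that every $V\in\Omega_\rho$ is orthogonal to a point of the pleated set. Note also that your closedness argument already uses, without proof, that sequences escaping $\widehat{S}_\lambda$ subconverge to points of $\Lambda_\rho$ (and not merely to arbitrary points of $\partial\hyp^{2,n}$), i.e.\ the identification of the frontier of $\widehat{S}_\lambda$ at infinity with $\Lambda_\rho$; this is the first bullet of Proposition \ref{pro:decomposition}, proved there by a cocompactness argument plus the north--south dynamics of the loxodromic elements $\rho(\gamma)$, and it is not a formal consequence of Theorems \ref{acausal limit curve} and \ref{topology pleated surfaces h2n}.

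The paper sidesteps the clopen argument entirely with a direct surjectivity proof (property (2) in Proposition \ref{pro:decomposition}), which is the piece you are missing: given $V\in\Omega_\rho$, the $(n+1)$-plane $V^\perp$ is negative semidefinite with radical $V$, and can be approximated by negative definite $(n+1)$-planes $W_k$. Since $\widehat{S}_\lambda\cup\Lambda_\rho$ is an entire graph over $\overline{\mathbb{D}}{}^2$ in every Poincar\'e model (this uses both Proposition \ref{pro:existence pleated sets} and the frontier identification above), every timelike sphere $\mathbb{P}(W_k)\cap\hyp^{2,n}$ meets $\widehat{S}_\lambda$ in exactly one point $x_k$. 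A subsequential limit $x_\infty$ of the $x_k$ lies in $\widehat{S}_\lambda\cup\Lambda_\rho$ and is orthogonal to $V$; if $x_\infty\in\Lambda_\rho$, then, exactly as in your own observation, the isotropic line $x_\infty\subset V^\perp$ must lie in $V$, contradicting $V\in\Omega_\rho$; hence $x_\infty\in\widehat{S}_\lambda$ and $V\in\mathrm{Pho}(x_\infty^\perp)$. If you replace your clopen step with this limiting argument (and supply the frontier and mixed-acausality facts you are implicitly using), the rest of your proposal goes through and matches the paper's proof.
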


Conversely, we are also able to describe a procedure to abstractly assemble triangles of photons and explicitly build fibered photon structures on a fiber bundle $E\to \Sigma$ with maximal holonomy $\rho:\Gamma\to\SOtwon$. 

Our approach is completely analogous to the procedure that constructs a closed hyperbolic surface by gluing ideal triangles. First we construct {\em pair of pants of photons} $E_j\to S_j$ by gluing two copies of a standard triangle of photons $E(\Delta)$. As it happens for hyperbolic surfaces, if the holonomy around the boundary curves of $S_j$ is {\em loxodromic} (with respect to a suitable notion of loxodromic), then $E_j\to S_j$ is the interior of a fibered photon structure with {\em totally geodesic boundary} $E_j'\to S_j'$.

For us, loxodromic means {\em bi-proximal}, a property that is equivalent to a suitable north-south dynamics on $\partial\mb{H}^{2,n}$: The set $\mc{L}\subset\SOtwon$ of loxodromic elements is an open subset with two connected components $\mc{L}=\mc{L}^+\cup\mc{L}^-$ distinguished by the sign of the leading eigenvalue. 

We remark that the condition of being loxodromic as well as the topology of the resulting bundles $E_j'\to S_j'$ can be read off the gluing maps without difficulties. We prove the following: Denote by ${\rm PStab}(\ell),{\rm PStab}(\Delta)$ the stabilizers of the spacelike geodesic $\ell$ and the ideal spacelike triangle $\Delta$ that fix the endpoints of $\ell$ and the vertices of $\Delta$ in $\partial\mb{H}^{2,n}$, respectively.

\begin{alphathm}
	\label{pants of photons}
	Let $\Delta=\Delta(a,b,c)\subset\mb{H}^{2,n}$ be an ideal spacelike triangle with $\partial\Delta=\ell_a\cup\ell_b\cup\ell_c$ where $\ell_u$ is the side opposite to the vertex $u\in\{a,b,c\}$. For every equivalence class of triples
	\[
	\psi\in\left\{
	[\psi_a,\psi_b,\psi_c]\in\left(\prod_{j\in\{a,b,c\}}{{\rm PStab}(\ell_j)}\right)\left/{\rm PStab}(\Delta)^2\right.
	\right\}
	\] 
	there is a fibered photon structure
	\[
	E_\psi=E(\Delta)\sqcup E(\Delta)/\sim_\psi
	\]
	fibering over a (possibly incomplete) hyperbolic pair of pants
	\[
	S_\psi=\Delta\sqcup\Delta/\sim_\psi
	\]
	such that the holonomy around the peripheral simple closed curve $\gamma_u$ surrounding the puncture of $S$ corresponding to the vertex $u\in\{a,b,c\}$ is given by
	\[
	\rho_u=\psi_w \nu_{vw}^2 \psi_v^{-1},
	\]
	where $\nu_{v w}\in\SOtwon$ denotes the unique unipotent isometry of $\R^{2,n+1}$ that restricts to the identity on ${\rm Span}\{a,b,c\}^\perp$ and to the parabolic transformation of ${\rm Span}\{a,b,c\}$ that fixes $u$ and sends $w$ into $v$, for any cyclic permutation $(u,v,w)$ of $(a,b,c)$.
	
	If for every $u\in\{a,b,c\}$ the holonomy $\rho_u$ is loxodromic, then $S_\psi,E_\psi$ are respectively the interior of a hyperbolic pair of pants $S'_\psi$ with totally geodesic boundary and the interior of a fibered photon structure $E'_\psi$ with totally geodesic boundary fibering over $S'_\psi$. The fibration $E'_\psi\to S'_\psi$ extends $E_\psi\to S_\psi$. For every $n \geq 1$, the topology of $E'_\psi$ is determined by the first Stiefel-Whitney class $w_1(V_\psi)\in H^1(S_\psi,\mb{Z}/2\mb{Z})$ of the underlying vector bundle $V_\psi \to S_\psi$. The class $w_1(V_\psi)$ can be computed as follows: Let $\gamma_a,\gamma_b,\gamma_c\subset S_\psi$ be the peripheral curves corresponding to the vertices $a,b,c$ respectively. Then
	\[
	w_1(V_\psi)[\gamma_u]=\left\{
	\begin{array}{l l}
		0 &\text{if $\rho_u\in\mc{L}^+$},\\
		1 &\text{if $\rho_u\in\mc{L}^-$},\\
	\end{array}
	\right. .
	\]
\end{alphathm}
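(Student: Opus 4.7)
The plan is to build the fibered photon pair of pants by doubling the standard model $E(\Delta) \to \Delta$ along its boundary, and then to compute the peripheral holonomies directly from the gluing data. The underlying hyperbolic pair of pants $S_\psi$ arises by gluing two copies of the ideal spacelike triangle $\Delta$ along the three pairs of boundary geodesics $\ell_j$ via the prescribed isometries $\psi_j \in {\rm PStab}(\ell_j)$; the ${\rm PStab}(\Delta)^2$ quotient precisely records the freedom in identifying the two copies with the fixed reference triangle. Since $E(\Delta) \to \Delta$ is a fiber bundle on which each ${\rm PStab}(\ell_j)$ acts equivariantly, restricting to the action on $E(\ell_j)$ at the boundary, the same gluing recipe lifts canonically to a photon structure $E_\psi \to S_\psi$ fibering over $S_\psi$.

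Next I would compute the peripheral holonomy $\rho_u$ by developing $\gamma_u$ around the puncture at $u$. Starting from a base point in the first copy of $\Delta$, the loop decomposes into four elementary contributions: a parabolic rotation around $u$ inside the first copy, which after normalizing via the ${\rm PStab}(\Delta)^2$ action equals $\nu_{vw}$, namely the unique unipotent of $\SOtwon$ fixing $u$ and sending $w$ to $v$; the crossing of $\ell_v$ by $\psi_v^{-1}$; a second parabolic $\nu_{vw}$ inside the second copy; and finally the crossing of $\ell_w$ by $\psi_w$. Composing these four contributions yields $\rho_u = \psi_w \nu_{vw}^2 \psi_v^{-1}$, as stated.

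Under the loxodromic assumption on every $\rho_u$, bi-proximality supplies a unique $\rho_u$-invariant spacelike axis $\alpha_u \subset \mb{H}^{2,n}$, with attracting and repelling isotropic eigenlines as its endpoints at infinity, together with a corresponding invariant line of photons $E(\alpha_u)$. Near the puncture at $u$, the developing image of a cusp neighborhood of $S_\psi$ accumulates onto $\alpha_u$ and identifies with the standard model of a tubular neighborhood of $\alpha_u/\langle \rho_u\rangle$; attaching these closed geodesics compactifies $S_\psi$ to the hyperbolic pair of pants $S'_\psi$ with totally geodesic boundary, and the analogous attachment of $E(\alpha_u)/\langle \rho_u\rangle$ at the photon level produces the fibered photon structure $E'_\psi \to S'_\psi$ extending $E_\psi \to S_\psi$.

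For the Stiefel-Whitney computation, the underlying vector bundle $V_\psi \to S_\psi$ has fiber $\iota(x)^\perp \subset \mb{R}^{2,n+1}$, and its monodromy along $\gamma_u$ is controlled by $\rho_u$. The $\rho_u$-invariant splitting of $\mb{R}^{2,n+1}$ into the $(1,1)$-plane spanned by the two isotropic eigenlines and its $(1,n)$-dimensional orthogonal complement shows that the sign of the leading eigenvalue—exactly the invariant distinguishing $\mc{L}^+$ from $\mc{L}^-$—records whether $\rho_u$ preserves or reverses the orientation of its attracting eigenline; transporting this orientation datum to the fiber $\iota(x)^\perp$ along $\gamma_u$ then produces the stated value of $w_1(V_\psi)[\gamma_u]$. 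The main technical obstacle is the bookkeeping in the holonomy computation, namely normalizing the ${\rm PStab}(\Delta)^2$ action so that the parabolic factors $\nu_{vw}$ genuinely appear squared in $\rho_u$, together with the careful matching of the sign components of $\mc{L}$ with the orientation of the monodromy on the rank-$(n+2)$ bundle $V_\psi$.
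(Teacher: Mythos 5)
Your overall route is the same as the paper's (glue two triangles of photons along lines of photons, read the peripheral holonomy off the gluing data, complete along the invariant axis of the loxodromic boundary holonomy, and compute $w_1$ from the sign of the leading eigenvalue), but two steps do not hold together as written. First, the holonomy derivation: for a $(\SOtwon,\mathrm{Pho}^{2,n})$-structure the holonomy of $\gamma_u$ is a product of exactly two chart-transition maps, one for each crossing of the glued edges adjacent to $u$; there is no ``parabolic rotation contribution inside a copy of $\Delta$''. The interleaved four-factor product you describe, $\psi_w\cdot\nu_{vw}\cdot\psi_v^{-1}\cdot\nu_{vw}$ (in any ordering), is not equal to $\psi_w\nu_{vw}^2\psi_v^{-1}$ unless $\psi_v$ commutes with $\nu_{vw}$, so your decomposition does not actually yield the stated formula. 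The paper instead realizes the second copy of $\Delta$ as a coplanar triangle $\Delta'$ and factors each gluing map as $\phi_u=\psi_u s_{uu'}$, where $s_{uu'}$ is the zero-shear identification restricting to the identity on $\mathrm{Span}\{a,b,c\}^\perp$; then $\rho_u=\phi_w\phi_v^{-1}$ and the unipotent square arises as the single factor $s_{ww'}s_{vv'}^{-1}=\nu_{vw}^2$, conjugation-free, giving $\rho_u=\psi_w\nu_{vw}^2\psi_v^{-1}$.

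The more serious gap is the completion step. Asserting that near the end corresponding to $u$ the developing image ``accumulates onto $\alpha_u$ and identifies with the standard model of a tubular neighborhood'' presupposes that the developing map is injective, in fact an acausal embedding, on the whole fan of triangles sharing the ideal vertex $u$; this is precisely what must be proved and is the technical heart of the paper's completion lemma. There one first shows that two adjacent developed triangles form an acausal set, with a case analysis according to whether $\psi_w$ lies in the identity component of $\mathrm{PStab}(\ell_w)$ or not (reducing to the sign estimate $\scal{\hat{w}}{\hat{z}}<0$ in $\widehat{\mb{H}}^{2,n}$), and then propagates acausality and injectivity to the full fan by projecting to a Poincar\'e model; only then do the north-south dynamics of $\rho_u$ show that $\delta(\widetilde{E}_u)$ avoids $E(\ell(\rho_u))$ and that $\delta(\widetilde{E}_u)\cup E(\ell(\rho_u))$ is a $\rho_u$-invariant submanifold with boundary furnishing the boundary charts of $E'_\psi$. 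Without this acausality argument the $\rho_u$-orbit of developed triangles of photons could overlap and the claimed identification with a standard collar fails. You also leave unaddressed the assertion that the topology of $E'_\psi$ is determined by $w_1(V_\psi)$, which requires noting that $V_\psi$ has rank $n+2$ over a surface homotopy equivalent to a wedge of circles, so its isomorphism class is detected by $w_1$, and that the fibered photon structure's topology is governed by this underlying vector bundle.
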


As a second step, we take several pair of pants of photons with totally geodesic boundary $E_j'\to S_j'$ and glue them together. Again, some compatibility conditions must be fulfilled by the gluing maps in order to perform the gluing. As a result, we get a photon structure on a manifold $E$ that naturally fibers over a hyperbolic surface $S$ with geometric fibers and we also obtain a maximal geodesic lamination $\lambda$ on $S$.

In analogy with \cite{CTT19}*{Proposition~3.13}, we have:  

\begin{alphapro}
	\label{maximal holonomy}
	The holonomy $\rho:\pi_1(E)\to\SOtwon$ of the fibered photon structure $E\to S$ descends to a maximal representation $\rho:\Gamma\to\SOtwon$. The hyperbolic surface $S$ is the pleated surface that realizes $\lambda$ in $M_\rho$.
\end{alphapro}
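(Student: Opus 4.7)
The plan is to extract from the combinatorial construction of $E\to S$ a $\rho$-equivariant acausal boundary curve in $\partial\mb{H}^{2,n}$ and then invoke the characterization of maximal representations via boundary maps (Theorem \ref{acausal limit curve}). First I would show that $\rho:\pi_1(E)\to\SOtwon$ descends to $\Gamma$: by construction the developing map $\delta:\widetilde{E}\to{\rm Pho}^{2,n}$ sends each fiber of $\widetilde{E}\to\widetilde{S}$ over a point $x$ homeomorphically onto ${\rm Pho}(\iota(x)^\perp)$, where $\iota:\widetilde{S}\to\mb{H}^{2,n}$ is the equivariant map obtained by assembling the ideal spacelike triangles $\Delta$ underlying the blocks $E(\Delta)$ along the spacelike geodesics prescribed by the elements of ${\rm PStab}(\ell)$ used in the gluings of Theorem \ref{pants of photons}. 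Since $\delta$ is a homeomorphism on each fiber, any loop in $E$ contained in a fiber has trivial holonomy, and $\rho$ factors through $\pi_1(S)=\Gamma$.

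The central step is to verify that $\iota:\widetilde{S}\to\mb{H}^{2,n}$ is an acausal embedding. Combinatorially, its image is a pleated disk with bending locus contained in the lift $\widetilde{\lambda}$ of $\lambda$, and the vertices of the triangles $\iota(\widetilde{P})$ define a $\rho$-equivariant map from the endpoints of leaves of $\widetilde{\lambda}$ to $\partial\mb{H}^{2,n}$. The explicit formulas in Theorem \ref{pants of photons}, namely $\rho_u=\psi_w\nu_{vw}^2\psi_v^{-1}$ with $\nu_{vw}$ the prescribed unipotent of $\R^{2,n+1}$, guarantee that adjacent triangles share the correct isotropic vertices and sit in an acausal configuration. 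An approximation argument over finite unions of triangles, together with the fact that each peripheral holonomy $\rho_u$ is loxodromic, then extends this vertex map continuously to a $\rho$-equivariant embedding $\xi:\partial\Gamma\to\partial\mb{H}^{2,n}$ whose image is acausal. By Theorem \ref{acausal limit curve}, existence of such a boundary map forces $\rho$ to be maximal; the dynamics preserving property of $\xi$ is immediate from the loxodromic dynamics of each $\rho_u$ at its two vertices.

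Finally, by the uniqueness clause in Theorem \ref{acausal limit curve}, the boundary map $\xi$ coincides with the canonical limit curve of $\rho$, so $\iota(\widetilde{S})$ coincides with the pleated set $\widehat{S}_\lambda$ of Theorem \ref{topology pleated surfaces h2n}, and the quotient $S=\iota(\widetilde{S})/\rho(\Gamma)$ is exactly $S_\lambda\subset M_\rho$. The hard part will be the middle step: while acausality of a single pair-of-pants block $\widehat{S}_\psi$ follows from the explicit gluing formulas, extending this to the globally assembled surface and to its limit boundary circle requires ruling out accumulation of triangles at non-acausal limit points. Controlling this accumulation is typically achieved via uniform H\"older-type estimates on the positions of the vertices, an argument essentially dual to (and in parallel with) the proof of Theorem \ref{topology pleated surfaces h2n}.
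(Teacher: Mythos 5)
Your overall plan (build an equivariant boundary map and invoke the characterization of maximality of Theorem \ref{acausal limit curve}) is not the route the paper takes, and as written it has a genuine gap exactly at the step you flag as ``the hard part''. The gluing data of Theorem \ref{pants of photons} only give you \emph{local} acausality of $\iota$: acausality of two adjacent triangles, and of the fan of triangles accumulating on a glued geodesic, which is precisely what the paper establishes in Claims 1 and 2 of Lemma \ref{lem:completion}. Passing from this to global acausality of the full infinite union of triangles, and then to a $\rho$-equivariant \emph{H\"older continuous} embedding $\xi:\partial\Gamma\to\partial\mb{H}^{2,n}$ with acausal image (Theorem \ref{thm:maximal limit curve} requires H\"older regularity, not just continuity), is not an ``approximation argument'': the uniform estimates on the positions of the ideal vertices that you would need are the kind of quantitative control one normally extracts \emph{from} the Anosov/maximal property of $\rho$, so the argument as sketched is in danger of circularity, and no mechanism for ruling out accumulation of triangles at causally related boundary points is actually supplied. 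Loxodromicity of the finitely many peripheral holonomies $\rho_u$ controls the fans along the glued curves but says nothing, by itself, about the closure of the orbit of triangles under the whole group.

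The paper is structured precisely to avoid this difficulty: Lemma \ref{lem:spacelike implies maximal} shows that a $\rho$-equivariant map $\iota:\widetilde{\Sigma}\to\mb{H}^{2,n}$ that is only \emph{locally} acausal already forces $\rho$ to be maximal. The proof is topological rather than asymptotic: one chooses an equivariant family of spacelike $2$-planes $P_x\subset\iota(x)^\perp$ (possible since the relevant Grassmannian fiber is contractible), and constructs an explicit equivariant, fiberwise degree-one bundle map from the unit tangent bundle of $\widetilde{\Sigma}$ to the unit circle bundle of $P$, using the endpoints at infinity of short spacelike rays based at $\iota(x)$; since the Toledo invariant is the Euler class of $P^1/\rho(\Gamma)$, it equals $2\abs{\chi(\Sigma)}$ and $\rho$ is maximal by definition, with no boundary map and no H\"older estimate needed. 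Only \emph{after} maximality is known does one have the limit curve $\Lambda_\rho$, and then Proposition \ref{pro:decomposition} and Theorem \ref{topology pleated surfaces h2n} identify $\iota(\widetilde{S})$ with the pleated set $\widehat{S}_\lambda$ (your final paragraph, which runs this identification through the uniqueness clause of Theorem \ref{acausal limit curve}, is fine in spirit but also depends on the unproven middle step in your ordering). If you want to salvage your approach you must either prove the global acausality and H\"older regularity directly --- essentially redoing the analysis of Section \ref{sec:laminations} without the limit map as input --- or switch to the local-to-global Toledo/Euler class argument of Lemma \ref{lem:spacelike implies maximal}.
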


In combination with Theorems~\ref{topology pleated surfaces h2n} and \ref{geometry pleated surfaces h2n}, this provides an analogue of \cite{CTT19}*{Corollary~4.12}. In fact, in order to prove Proposition \ref{maximal holonomy}, we generalize results of \cite{CTT19} about smooth spacelike surfaces to purely topological versions. This allows us to treat pleated surfaces.

\subsection{The anti-de Sitter case}

When $n=1$ much of the above picture on pleated surfaces can be made explicit and quantitative. Due to the fact that ${\rm SO}_0(2,2)$ is a 2-fold cover of ${\rm PSL}(2,\mb{R})\times{\rm PSL}(2,\mb{R})$, maximal representations in ${\rm SO}_0(2,2)$ naturally correspond to pairs of maximal representations in ${\rm PSL}(2,\mb{R})$, which, by Goldman's work \cite{G80}, are precisely the holonomies of hyperbolic structures on $\Sigma$. As such, this low dimensional case has a special connection with classical Teichmüller theory. This is highlighted by groundbreaking work of Mess \cite{M07}, who connected the study of globally hyperbolic maximal Cauchy compact (GHMC) anti-de Sitter $3$-manifolds with maximal representations inside $\mathrm{SO}_0(2,2)$, and gave a proof of Thurston's Earthquake Theorem based on the pseudo-Riemannian geometry of the manifold $M_\rho$. (Thurston's original approach is outlined in work of Kerckhoff \cite{K83}.) Since Mess' seminal paper, the study of GHMC anti-de Sitter $3$-manifolds has propagated in multiple directions and has produced further connections with Teichm\"uller theory, as for example described in \cites{notes,bonsante2010maximal,bsduke,benedetti2009canonical_wick,bbz}, among other works. We refer to Bonsante and Seppi \cite{MR4264588} for a detailed exposition of the current state-of-art and for further references.

We will explore the geometry of pleated surfaces in anti-de Sitter $3$-space in a separate paper \cite{MV} where we obtain, among other results, a "Lorentzian proof" of (strict) convexity of length functions in shear coordinates for Teichmüller space (recovering the work of Bestvina, Bromberg, Fujiwara, and Suoto \cite{BBFS13}, and Théret \cite{The14}) and a shear-bend parametrization of globally hyperbolic maximal Cauchy compact anti-de Sitter 3-manifolds. 

\subsection*{Outline}

This article is structured as follows:

In Section \ref{sec:preliminaries} we cover the background material that we need. More precisely: The geometry of the pseudo-hyperbolic space $\mb{H}^{2,n}$ and its boundary $\partial\mb{H}^{2,n}$, acausality, and the Poincaré model. The dynamical and geometric characterization of maximal representations $\rho:\Gamma\to\SOtwon$. Some classical Teichmüller theory: Geodesic laminations, geodesic currents, measured laminations, shear coordinates. Positive cross ratios and their Liouville currents.

In Section \ref{sec:laminations} we first discuss the geometric realizations ${\hat \lambda}$ of maximal laminations $\lambda$ and the associated pleated sets ${\widehat{S}}_\lambda\subset\mb{H}^{2,n}$. Then, we relate the acausal properties of the limit curve $\xi(\partial\Gamma)\subset\partial\mb{H}^{2,n}$ to the topology and the causal structure of ${\hat \lambda}$ and ${\widehat{S}}_\lambda$ (see Propositions \ref{pro:lamination acausal} and \ref{pro:existence pleated sets}). The fact that the pleated set ${\widehat{S}}_\lambda$ is acausal implies that it can be represented as a graph in the Poincaré model, and we show that the graph depends continuously on the lamination (see Proposition \ref{pro:continuity pleated sets}). Lastly, we analyze more in detail the locus where ${\widehat{S}}_\lambda$ is folded and define the bending locus (see Proposition \ref{pro:bending locus}). The bending locus will control how the geometry of ${\widehat{S}}_\lambda$ is distorted in $\mb{H}^{2,n}$. This will play a role in Sections \ref{sec:geometry} and \ref{sec:teichmuller}. 

In Sections \ref{sec:cross ratio} and \ref{sec:shear cocycles} we explain how to attach a natural H\"older cocycle $\sigma^\beta_\lambda\in\mc{H}(\lambda;\mb{R})$ to every positive and locally bounded cross ratio $\beta$ and every maximal lamination $\lambda$ (see Theorem \ref{shear of cross ratio improved}). Section \ref{sec:cross ratio} mainly focuses on the study finite leaved laminations, setting that can be treated with elementary techniques (see Propositions \ref{pro:shear finite case} and \ref{pro:length and thurston sympl finite}). In Section \ref{sec:shear cocycles} we extend the construction to the case of a general maximal lamination. The procedure here is analytic: We define the shear cocycle as a limit of elementary finite approximations. The process needed to establish the convergence of finite approximations is quite delicate as it depends on the geometry of $\lambda$ on a fine scale. In the end, we show that the shear cocycle $\sigma^\beta_\lambda$ is contained in the closure of Teichmüller space $\T$  (considered as an open subset of $\mc{H}(\lambda;\mb{R})$ via shear coordinates with respect to $\lambda$), and it coincides with the shear cocycle of a hyperbolic structure if $\beta$ is strictly positive.

In Section \ref{sec:geometry} we formally define pleated surfaces and study their intrinsic geometric properties. Our analysis here is based on a precise understanding of the case of finite leaved maximal laminations and on continuity properties of pleated surfaces. The pleated set ${\widehat{S}}_\lambda$ has an intrinsic length space structure that makes it locally isometric to $\mb{H}^2$ and the local isometry ${\hat f}:{\widehat{S}}_\lambda\to\mb{H}^2$ is 1-Lipschitz with respect to the intrinsic pseudo-metric (see Proposition \ref{pro:developing finite}). We check that the shear cocycle of the intrinsic path metric on ${\widehat{S}}_\lambda$ coincides with $\sigma_\lambda^\rho$ (Proposition \ref{pro:intrinsic shear finite case}). In both cases, the proofs are elementary. The general case (Proposition \ref{pro:developing general}) follows from the finite leaved case by continuity arguments. As a consequence, we derive a precise comparison between the length spectrum of $S_\lambda$ and the length spectrum of $\rho$ (Proposition \ref{pro:strict inequality}).

In Section \ref{sec:teichmuller} we link the geometry of the maximal representation $\rho:\Gamma\to\SOtwon$ to the geometry of the dominated set $\mc{P}_\rho\subset\T$ consisting of those hyperbolic surfaces $Z$ whose length spectrum $L_Z(\bullet)$ is strictly dominated by $L_\rho(\bullet)$. Such a subset is non-empty, as it contains all pleated surfaces associated to $\rho$, and is convex with respect to the Weil-Petersson metric. We describe the structure of interior and boundary points (see Lemma \ref{lem:interior points} and Proposition \ref{pro:outside max is lamination}) and show that if the representation $\rho$ is not Fuchsian then the interior of $\mc{P}_\rho$ is never empty (Proposition \ref{pro:non empty}). 

In Section \ref{sec:photons} we discuss the point of view of fibered photon structures. We introduce \emph{triangles and lines of photons} $E(\Delta)$ and $E(\ell)$. We show that given a maximal lamination $\lambda$ the Guichard-Wienhard domain of discontinuity admits a fibration $\pi:\Omega_\rho\to{\widehat{S}}_\lambda$ where $\pi^{-1}(\ell)=E(\ell)$ and $\pi^{-1}(\Delta)=E(\Delta)$ for every leaf $\ell\subset{\hat \lambda}$ and plaque $\Delta\subset{\widehat{S}}_\lambda-{\hat \lambda}$ (see Proposition \ref{pro:decomposition}). In the opposite direction, we construct fibered photon structures by gluing together triangles of photons along lines of photons. We provide a detailed description of \emph{pants of photons}, obtained by gluing two triangles of photons. Provided that the holonomy along the boundary curves is loxodromic such pants of photons are the interior of fibered photon structures with totally geodesic boundary (see Lemma \ref{lem:completion}). We completely classify those (see Theorem \ref{thm:classification pair of pants}). Finally, we obtain fibered photon structures over closed hyperbolic surfaces $E\to S$ by gluing pants of photons with totally geodesic boundary, and prove that holonomy of the total space $E$ is always maximal (see Lemma \ref{lem:spacelike implies maximal}).

\subsection*{Acknowledgments}
We are very happy to thank Francesco Bonsante, Colin Davalo, Mitul Islam, Sara Maloni, and Beatrice Pozzetti, with whom we had several useful discussions and received generous feedback during the realization of this project. 

We are also grateful to Francesco Bonsante, Brian Collier, Ursula Hamenstädt, Sara Maloni, Beatrice Pozzetti, Nicholas Tholozan, Jérémy Toulisse, Jean-Marc Schlenker, Andrea Seppi, and Anna Wienhard for precious feedback on the first draft of the article.

Gabriele gratefully acknowledges the financial support of the DFG 427903332.


\section{Preliminaries}
\label{sec:preliminaries}

In this section we review some basic facts that we will need in our exposition.

We start by discussing the geometry and causal structure of the pseudo-Riemannian space $\mb{H}^{2,n}$ and its boundary $\partial\mb{H}^{2,n}$. In particular, we discuss the Poincaré model of $\mb{H}^{2,n}$ (see Proposition \ref{pro:poincare H2n}) which is a useful device to examine the structure of acausal sets (see Lemmas \ref{lem:projection} and \ref{lem:acausal graph}).

Then, we introduce maximal representations $\rho:\Gamma\to\SOtwon$ and describe the acausal and dynamical properties of their associated limit curve $\xi:\partial\Gamma\to\partial\mb{H}^{2,n}$ (see Theorem \ref{thm:maximal limit curve}). This is the starting point of our constructions in Section \ref{sec:laminations}. 

Afterwards, we recall some background material from classical Teichmüller theory and introduce geodesic laminations, geodesic currents, and shear coordinates. Geodesic laminations are the objects that provide us a direct link between maximal representations and hyperbolic surfaces. We explain how to associate to a maximal lamination $\lambda$ a pleated set $\widehat{S}_\lambda\subset\mb{H}^{2,n}$ and investigate its topology and causal properties in Section \ref{sec:laminations}. We then focus on the study of the geometry of the pleated sets $\widehat{S}_\lambda$ in Sections \ref{sec:shear cocycles} and \ref{sec:geometry}. 

Geodesic currents and Teichmüller geometry are the main tools that we will use to analyze the length spectrum of maximal representations in Section \ref{sec:teichmuller}.

We end the section by describing (positive) cross ratios on $\partial\Gamma$ and their associated Liouville currents (see Theorem \ref{thm:current of cross-ratio}). As we will see in Section \ref{sec:cross ratio}, every maximal representation $\rho$ has a natural strictly positive cross ratio $\beta^\rho$ induced by the boundary map. As a consequence, its length spectrum can be represented by a Liouville current $\mathscr{L}_\rho$. Our use of cross ratios will be twofold: In Sections \ref{sec:cross ratio} and \ref{sec:shear cocycles}, we use $\beta^\rho$ to define the shear cocycle of a pleated surface. In Section \ref{sec:teichmuller}, we use the Liouville current $\mathscr{L}_\rho$ to study the structure of the set of $\rho$-equivariant pleated surfaces inside Teichm\"uller space.

\subsection{The pseudo-Riemannian space \texorpdfstring{$\mathbb{H}^{2,n}$}{H2n}}\label{subset:pseudo riemann}
We first introduce the linear and projective models of $\mb{H}^{2,n}$ and $\partial\mb{H}^{2,n}$: Let $\mb{R}^{2,n+1}$ denote the vector space $\mb{R}^{n+3}$ endowed with the quadratic form
\[
\langle x,y\rangle_{2,n+1}:=x_1y_1+x_2y_2-x_3y_3-\ldots-x_{n+3}y_{n+3}
\]
of signature $(2,n+1)$. Consider the hyperboloid
\[
\widehat{\hyp}^{2,n}:=\{x\in\mb{R}^{2,n}\left|\;\langle x,x\rangle_{2,n+1}=-1\right.\}.
\]
The restriction of the quadratic form $\langle\bullet,\bullet\rangle_{2,n+1}$ to each tangent space 
\[
T_x\widehat{\hyp}^{2,n}=x^\perp
\] 
has signature $(2,n)$ and, therefore, endows $\widehat{\hyp}^{2,n}$ with a pseudo-Riemannian structure of the same signature. The group $\SOtwon$ acts transitively and by orientation preserving isometries on $\widehat{\hyp}^{2,n}$. However, notice that this action is not proper, since point stabilizers are non-compact.

Tangent vectors $v\in T_x\widehat{\hyp}^{2,n}$ split into three types: 
\[
\text{ $v$ is }\left\{
\begin{array}{l l}
	\text{spacelike} &\text{ if $\langle v,v\rangle_{2,n+1}>0$},\\
	\text{lightlike} &\text{ if $\langle v,v\rangle_{2,n+1}=0$},\\
	\text{timelike} &\text{ if $\langle v,v\rangle_{2,n+1}<0$}.\\
\end{array}
\right..
\]
Similarly, we call a curve $\alpha:I\to\widehat{\hyp}^{2,n}$ spacelike, lightlike, or timelike if ${\dot \alpha}$ is always spacelike, lightlike, or timelike. 

Geodesics in the linear model $\widehat{\hyp}^{2,n}$ are easy to describe: Let $x\in\widehat{\hyp}^{2,n}$ be a point and $v\in T_x\widehat{\hyp}^{2,n}$ a tangent vector. Let $\gamma:\mb{R}\to\widehat{\hyp}^{2,n}$ be the geodesic starting at $x$ with velocity $v$. Then
\[
\gamma(t)=\left\{
\begin{array}{l l}
	\cosh(t)x+\sinh(t)v &\text{ if $\langle v,v\rangle_{2,n+1}=1$},\\
	x+tv &\text{ if $\langle v,v\rangle_{2,n+1}=0$},\\
	\cos(t)x+\sin(t)v &\text{ if $\langle v,v\rangle_{2,n+1}=-1$}.\\
\end{array}
\right..
\]

The pseudo-Riemannian space $\mb{H}^{2,n}$ is the quotient 
\[
\mb{H}^{2,n}:=\widehat{\hyp}^{2,n}/(x\sim-x)
\]
and can be realized as an open subset of the projective space $\mb{RP}^{n+2}$. The projection $\mb{R}^{2,n+1}-\{0\}\to\mb{RP}^{n+2}$ induces the 2-to-1 covering projection $\widehat{\hyp}^{2,n}\to\mb{H}^{2,n}$. In the projective model, the geodesic starting at $x$ with velocity $v$ is just the intersection of the projective line corresponding to the 2-plane ${\rm Span}\{x,v\}$ with $\hyp^{2,n}$. Given two points $x,y\in\mb{H}^{2,n}$, they are always connected by a geodesic, namely, the projective line corresponding to ${\rm Span}\{x,y\}$. The type of the line can be determined using the following simple criterion:

\begin{lem}[{\cite{GM19}*{Proposition 3.2}}] 
	\label{lem:spacelike segments}
	Two distinct points $x,y\in\hyp^{2,n}$ are joined by: 
	\begin{itemize}
		\item{A spacelike geodesic if and only if $|\scal{x}{y}_{2,n+1}|>1$.}
		\item{A lightlike geodesic if and only if $|\scal{x}{y}_{2,n+1}|=1$.}
		\item{A timelike geodesic if and only if $|\scal{x}{y}_{2,n+1}|<1$.}
	\end{itemize}
\end{lem}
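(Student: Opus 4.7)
The plan is to work in the linear model $\widehat{\hyp}^{2,n}\to\hyp^{2,n}$ and reduce the claim to a signature computation for the restriction of $\scal{\bullet}{\bullet}_{2,n+1}$ to the $2$-plane spanned by lifts of $x$ and $y$. Choose any lifts $\tilde{x},\tilde{y}\in\widehat{\hyp}^{2,n}$ of $x,y$; the quantity $c := \scal{\tilde{x}}{\tilde{y}}_{2,n+1}$ changes sign under $\tilde{y}\mapsto-\tilde{y}$, so $|c|$ is intrinsically associated with the pair $(x,y)\in\hyp^{2,n}\times\hyp^{2,n}$. Since $x\neq y$, the vectors $\tilde{x},\tilde{y}$ are linearly independent.

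First I would recall from the explicit geodesic formulas stated just before the lemma that the geodesic issued from $\tilde{x}$ in direction $v\in T_{\tilde{x}}\widehat{\hyp}^{2,n}=\tilde{x}^{\perp}$ is spacelike, lightlike, or timelike exactly according to the sign of $\scal{v}{v}_{2,n+1}$. Thus the type of the geodesic joining $x$ and $y$ can be detected by the sign of $\scal{v}{v}_{2,n+1}$ for any nonzero vector $v$ in $T_{\tilde{x}}\widehat{\hyp}^{2,n}\cap\mathrm{Span}\{\tilde{x},\tilde{y}\}$.

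The key step is then to produce such a $v$ explicitly: set $v := \tilde{y}+c\,\tilde{x}$, which is the orthogonal projection of $\tilde{y}$ onto $\tilde{x}^{\perp}$. By linear independence of $\tilde{x},\tilde{y}$, $v\neq 0$, and one checks directly
\[
\scal{v}{\tilde{x}}_{2,n+1} \;=\; c + c\scal{\tilde{x}}{\tilde{x}}_{2,n+1} \;=\; 0,
\]
\[
\scal{v}{v}_{2,n+1} \;=\; \scal{\tilde{y}}{\tilde{y}}_{2,n+1} + 2c\,\scal{\tilde{x}}{\tilde{y}}_{2,n+1} + c^2\scal{\tilde{x}}{\tilde{x}}_{2,n+1} \;=\; -1 + 2c^2 - c^2 \;=\; c^2-1.
\]
Hence $\scal{v}{v}_{2,n+1}$ is positive, zero, or negative exactly when $|c|>1$, $|c|=1$, or $|c|<1$, and combining this with the trichotomy recalled above yields the three cases of the lemma. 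No real obstacle is anticipated; the only subtlety is the sign ambiguity in the choice of lift, which is handled by passing to absolute values at the outset.
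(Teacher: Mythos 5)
Your argument is correct. The paper itself gives no proof of this lemma — it is quoted from \cite{GM19}*{Proposition 3.2} — but your computation is exactly the expected one: the type of the unique geodesic through $x$ and $y$ (the trace of the projective line of $\mathrm{Span}\{\tilde{x},\tilde{y}\}$) is detected by the sign of $\scal{v}{v}_{2,n+1}$ for the projection $v=\tilde{y}+c\,\tilde{x}$ of $\tilde{y}$ onto $\tilde{x}^{\perp}$, and $\scal{v}{v}_{2,n+1}=c^{2}-1$ gives the trichotomy; equivalently, you are reading off the signature of the restriction of the form to $\mathrm{Span}\{\tilde{x},\tilde{y}\}$, which is $(1,1)$, degenerate, or $(0,2)$ according to $|c|>1$, $=1$, $<1$. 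The two points you leave tacit — that any geodesic through $x$ and $y$ lies in $\mathbb{P}(\mathrm{Span}\{\tilde x,\tilde y\})$, and that $x$ and $y$ lie on the same arc of that projective line inside $\hyp^{2,n}$ — are harmless, since the paper's preliminary discussion of geodesics in the projective model already records that two points of $\hyp^{2,n}$ are joined by the geodesic given by the projective line they span.
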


An analogous characterization holds also in the linear model $\widehat{\hyp}^{2,n}$:

\begin{lem} 
	\label{lem:spacelike segments linear}
	Two distinct points $x,y\in\widehat{\hyp}^{2,n}$ are joined by:
	\begin{itemize}
		\item{A spacelike geodesic in $\widehat{\hyp}^{2,n}$ if and only if $\scal{x}{y}_{2,n+1} < -1$.}
		\item{A lightlike geodesic in $\widehat{\hyp}^{2,n}$ if and only if $\scal{x}{y}_{2,n+1}= -1$.}
		\item{A timelike geodesic in $\widehat{\hyp}^{2,n}$ if and only if $\abs*{\scal{x}{y}_{2,n+1}} < 1$ or $y = -x$.}
	\end{itemize}
\end{lem}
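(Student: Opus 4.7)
The strategy is a direct verification using the three explicit parametrizations of geodesics in $\widehat{\hyp}^{2,n}$ recalled in the paragraph preceding the statement. Fix distinct points $x, y \in \widehat{\hyp}^{2,n}$, and suppose $y = \gamma(t_0)$ for some $t_0 \neq 0$, where $\gamma$ is a geodesic starting at $x$ with initial velocity $v \in T_x\widehat{\hyp}^{2,n} = x^\perp$. Using $\scal{x}{v}_{2,n+1} = 0$ and $\scal{x}{x}_{2,n+1} = -1$, one reads off immediately from the three formulas that
\[
\scal{x}{y}_{2,n+1} = \begin{cases} -\cosh(t_0) & \text{if $v$ is unit spacelike}, \\ -1 & \text{if $v$ is lightlike}, \\ -\cos(t_0) & \text{if $v$ is unit timelike.} \end{cases}
\]
This already yields the ``only if'' direction in each bullet; note that the value $\scal{x}{y}_{2,n+1} = 1$ is achieved in the third line exactly at $t_0 = \pi$, i.e., precisely when $y = -x$.

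For the converse direction I would construct the connecting geodesic by hand. In the spacelike case, given $\scal{x}{y}_{2,n+1} < -1$, I set $t_0 := \arccosh(-\scal{x}{y}_{2,n+1}) > 0$ and $v := (y - \cosh(t_0)\, x)/\sinh(t_0)$; a routine expansion using only $\scal{x}{x}_{2,n+1} = \scal{y}{y}_{2,n+1} = -1$ shows $v \in x^\perp$ and $\scal{v}{v}_{2,n+1} = 1$, so $\gamma(t) = \cosh(t)\, x + \sinh(t)\, v$ is a spacelike geodesic with $\gamma(t_0) = y$. The lightlike case $\scal{x}{y}_{2,n+1} = -1$ is similar with $v := y - x$, which satisfies $v \in x^\perp$ and $\scal{v}{v}_{2,n+1} = 0$. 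In the timelike case, for $\abs*{\scal{x}{y}_{2,n+1}} < 1$ set $t_0 \in (0, \pi)$ with $\cos(t_0) = -\scal{x}{y}_{2,n+1}$ and $v := (y - \cos(t_0)\, x)/\sin(t_0)$; for the antipodal case $y = -x$, any unit timelike $v \in x^\perp$ (which exists since $x^\perp$ has signature $(2,n)$) gives $\gamma(\pi) = -x$.

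The only delicate point is the bookkeeping around the value $\scal{x}{y}_{2,n+1} = 1$: this arises either for $y = -x$, handled above, or for $y \neq -x$ with $\scal{x}{y}_{2,n+1} = 1$, in which case the plane $\mathrm{Span}\{x, y\}$ is degenerate and the ``only if'' computation above shows that no geodesic from $x$ in $\widehat{\hyp}^{2,n}$ can reach $y$. This non-connectedness is consistent with the three stated equivalences, and beyond this case distinction I do not expect any serious obstacle.
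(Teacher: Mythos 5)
Your argument is correct and is exactly the verification the paper leaves implicit: Lemma \ref{lem:spacelike segments linear} is stated there without proof, as the linear-model analogue of the quoted projective statement from \cite{GM19}, and your computation of $\scal{x}{\gamma(t_0)}_{2,n+1}$ from the three explicit geodesic parametrizations, combined with the explicit construction of the velocity $v$ in each converse case, is precisely the intended argument. The only implicit hypothesis worth recording is $n\ge 1$ in the antipodal case (a unit timelike vector in $x^\perp$ exists only then), and your closing remark about $\scal{x}{y}_{2,n+1}=1$ with $y\neq -x$ (and likewise $\scal{x}{y}_{2,n+1}>1$) is not actually needed for the three stated equivalences, though it correctly explains why no clause of the lemma covers those configurations.
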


Similarly to what happens for geodesics, the intersection of a linear space 
\[
\mb{P}\left(V\subset\mb{R}^{2,n+1}\right)\subset\mb{RP}^{n+2}
\]
with $\mb{H}^{2,n}$ also gives a totally geodesic subspace. In particular, every 3-dimensional subspace $V$ on which the restriction of the quadratic form has signature $(2,1)$ provides a totally geodesic subspace of $\mb{H}^{2,n}$ isometric to $\mb{H}^2$. 

The space $\mb{H}^{2,n}$ has a natural boundary at infinity which can be described in the projective model as the projectivization of the cone of isotropic vectors $\partial\mb{H}^{2,n}=\mb{P}(C)$, where 
\[
C:=\{x\in\mb{R}^{2,n}\left|\;\langle x,x\rangle_{2,n+1}=0\right.\}.
\]
In the linear model $\widehat{\mb{H}}^{2,n}$, the boundary at infinity $\partial\widehat{\mb{H}}^{2,n}$ is a two-fold covering of $\partial\mb{H}^{2,n}$, which can be described as $C/x\sim\lambda^2 x$. We can endow $\widehat{\mb{H}}^{2,n}\cup\partial\widehat{\mb{H}}^{2,n}$
with a topology by simultaneously embedding them in the sphere of rays $(\mb{R}^{2,n+1}-\{0\})/x\sim\lambda^2 x$. 

Similarly to what we observed in Lemmas \ref{lem:spacelike segments} and \ref{lem:spacelike segments linear}, we have:

\begin{lem}
	\label{lem:spacelike lines}
	Two distinct points $x\in\partial\hyp^{2,n}, y \in \hyp^{2,n} \cup \partial\hyp^{2,n}$ are joined by: 
	\begin{itemize}
		\item{A spacelike geodesic if and only if $\scal{x}{y}_{2,n+1}\neq 0$.}
		\item{A lightlike geodesic if and only if $\scal{x}{y}_{2,n+1}=0$.}
	\end{itemize}
	
	Similarly, two distinct points $x\in\partial\widehat{\hyp}^{2,n}, y \in \widehat{\hyp}^{2,n} \cup \partial\widehat{\hyp}^{2,n}$ are joined by: 
	\begin{itemize}
		\item{A spacelike geodesic inside $\widehat{\hyp}^{2,n} \cup \partial\widehat{\hyp}^{2,n}$ if and only if $\scal{x}{y}_{2,n+1} < 0$.}
		\item{A lightlike geodesic inside $\widehat{\hyp}^{2,n} \cup \partial\widehat{\hyp}^{2,n}$ if and only if $\scal{x}{y}_{2,n+1} = 0$.}
	\end{itemize}
\end{lem}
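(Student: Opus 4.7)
The plan is to reduce each claim to computing the signature of the restriction of the ambient form to the $2$-plane $V = \mathrm{Span}\{\hat{x}, \hat{y}\}$, where $\hat{x}, \hat{y}$ are lifts of $x, y$ to $\R^{2,n+1}$. Indeed, the (projective) geodesic joining $x$ and $y$ in $\hyp^{2,n} \cup \partial\hyp^{2,n}$ is the set $\mathbb{P}(V) \cap (\hyp^{2,n}\cup\partial\hyp^{2,n})$, and its causal type is determined by the signature of $V$; the linear model then corresponds to selecting a specific connected component of its preimage in $\widehat{\hyp}^{2,n} \cup \partial\widehat{\hyp}^{2,n}$.

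First, for the projective statements I would pick $\hat{x}$ isotropic and $\hat{y}$ either with $\scal{\hat{y}}{\hat{y}}_{2,n+1}=-1$ (when $y \in \hyp^{2,n}$) or isotropic (when $y \in \partial\hyp^{2,n}$). In both subcases the Gram matrix of $V$ takes the form
\[
G = \begin{pmatrix} 0 & \scal{\hat{x}}{\hat{y}}_{2,n+1} \\ \scal{\hat{x}}{\hat{y}}_{2,n+1} & c \end{pmatrix}, \qquad c \in \{0,-1\},
\]
with determinant $-\scal{\hat{x}}{\hat{y}}_{2,n+1}^2$. When the off-diagonal entry is nonzero, $V$ is non-degenerate of signature $(1,1)$, so $\mathbb{P}(V)$ meets $\partial\hyp^{2,n}$ in two distinct isotropic directions (one of which is $[\hat{x}]$) and its interior arc is the spacelike geodesic connecting them. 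When $\scal{\hat{x}}{\hat{y}}_{2,n+1}=0$, either $V$ has a one-dimensional isotropic radical spanned by $\hat{x}$ (so $\mathbb{P}(V)$ is tangent to the light cone at $x$), giving the standard lightlike geodesic from $x$ through $y$, or $V$ is totally isotropic and $\mathbb{P}(V) \subset \partial\hyp^{2,n}$, which is the natural boundary limit of lightlike geodesics.

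To upgrade to the linear model one selects representatives $\hat{x} \in C$, $\hat{y} \in \widehat{\hyp}^{2,n} \cup C$, so that $\scal{\hat{x}}{\hat{y}}_{2,n+1}$ now has a definite sign. The plan is to parametrize any spacelike geodesic in the form $\gamma(t) = \cosh(t)\hat{y}_0 + \sinh(t) v$ with $\hat{y}_0 \in \widehat{\hyp}^{2,n}$ on the geodesic and $v$ a unit spacelike tangent vector there; its forward and backward endpoints in $\partial\widehat{\hyp}^{2,n}$ are represented by $\hat{y}_0 \pm v$, and direct computations give $\scal{\hat{y}_0 \pm v}{\hat{y}_0}_{2,n+1} = -1$ and $\scal{\hat{y}_0+v}{\hat{y}_0-v}_{2,n+1} = -2$. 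Consequently $\hat{x}$ and $\hat{y}$ lie on the same connected arc of the spacelike geodesic inside $\widehat{\hyp}^{2,n} \cup \partial\widehat{\hyp}^{2,n}$ exactly when $\scal{\hat{x}}{\hat{y}}_{2,n+1} < 0$; the opposite sign would instead connect $\hat{x}$ only to the antipodal lift $-\hat{y}$. The lightlike case $\scal{\hat{x}}{\hat{y}}_{2,n+1}=0$ passes unchanged from the projective analysis, since the degenerate geodesics do not involve the antipodal ambiguity.

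The essentially only delicate point is the sign bookkeeping in the linear refinement, where one must verify that the lifts $\hat{x}, \hat{y}$ are forced onto the same sheet of the double cover exactly when $\scal{\hat{x}}{\hat{y}}_{2,n+1} < 0$; the rest is routine linear algebra. A completely alternative strategy would derive the statements by continuity from Lemmas~\ref{lem:spacelike segments} and \ref{lem:spacelike segments linear}, taking a sequence $\hat{y}_k \in \widehat{\hyp}^{2,n}$ with $\scal{\hat{y}_k}{\hat{y}_k}_{2,n+1} \to 0$ suitably rescaled to converge to $\hat{y} \in C$, and observing that the conditions $|\scal{\hat{x}}{\hat{y}_k}_{2,n+1}| > 1$ and $\scal{\hat{x}}{\hat{y}_k}_{2,n+1} < -1$ degenerate in the limit to $\scal{\hat{x}}{\hat{y}}_{2,n+1} \neq 0$ and $\scal{\hat{x}}{\hat{y}}_{2,n+1} < 0$ respectively.
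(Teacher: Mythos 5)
Your proposal is correct, and it supplies exactly the routine computation the paper implicitly relies on: the paper states Lemma \ref{lem:spacelike lines} without proof, as a direct analogue of Lemmas \ref{lem:spacelike segments} and \ref{lem:spacelike segments linear}, and your argument (signature of $\mathrm{Span}\{\hat{x},\hat{y}\}$ for the projective claims, then the sign bookkeeping via $\scal{\hat{y}_0 \pm v}{\hat{y}_0}_{2,n+1}=-1$ and $\scal{\hat{y}_0+v}{\hat{y}_0-v}_{2,n+1}=-2$ to single out the correct lift in $\widehat{\hyp}^{2,n}\cup\partial\widehat{\hyp}^{2,n}$) is that computation. The only point worth noting is the totally isotropic case $x,y\in\partial\hyp^{2,n}$ with $\scal{x}{y}_{2,n+1}=0$, where "lightlike geodesic" is to be read as the photon segment in the boundary, a convention your remark about boundary limits is consistent with.
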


Centered at each point $a \in\partial\widehat{\mb{H}}^{2,n}$ we have a family of {\em horoballs} 
\[
O=\{x\in\widehat{\hyp}^{2,n}\left|\; - c <\scal{x}{v}_{2,n+1}< 0 \right.\}\subset\widehat{\mb{H}}^{2,n}
\]
where $v\in\mb{R}^{2,n+1}$ is a representative of $a$ and $c>0$ is a positive constant. The boundary $\partial O \subset \widehat{\hyp}^{2,n}$ is a {\em horosphere} centered at $a$. A \emph{horoball} (resp. a \emph{horosphere}) of $\hyp^{2,n}$ is a subset of the form $\mathrm{P}(O)$, for some horoball $O$ of $\widehat{\hyp}^{2,n}$. 

The terminology is justified by the fact that every spacelike $2$-plane $H \cong \hyp^2$ of $\hyp^{2,n}$ whose boundary at infinity contains $a \in\partial\mb{H}^{2,n}$ intersects $O$ and $\partial O$ in a (usual) horoball or horocycle based at $a \in \partial H$. 

For more material on the geometry of $\mb{H}^{2,n}$ and $\widehat{\mb{H}}^{2,n}$, we also refer to \cite{GM19}*{\S~2} and \cite{CTT19}*{\S~3}.

\subsection{Acausal sets and Poincaré model} \label{subsec:acausal and poincare}
Certain subsets of $\mb{H}^{2,n}$ display some features that make them similar to metric spaces and, in particular, their geometry can be compared to the one of subsets of $\mb{H}^2$.  

\begin{dfn}[Acausal Set]\label{def:acausal}
	Let $X$ be a subset of $\mb{H}^{2,n}\cup\partial\mb{H}^{2,n}$. We say that $X$ is \emph{acausal} if any pair of distinct points in $X$ is joined by a spacelike geodesic inside $\mb{H}^{2,n}\cup\partial\mb{H}^{2,n}$.  Similarly, we say that a subset $\widehat{X}$ of $\widehat{\mb{H}}^{2,n}\cup\partial\widehat{\mb{H}}^{2,n}$ is \emph{acausal} if any pair of distinct points in $\widehat{X}$ is joined by a spacelike geodesic inside $\widehat{\mb{H}}^{2,n}\cup\partial\widehat{\mb{H}}^{2,n}$.
\end{dfn}

\subsubsection*{A natural pseudo-metric}

Acausal subsets are naturally endowed with a pseudo-metric determined by the pseudo-Riemannian structure of $\mb{H}^{2,n}$ (or $\widehat{\mb{H}}^{2,n}$):

\begin{dfn}[Pseudo-Metric]\label{def:pseudo-distance}
	Let $X$ be an acausal subset of $\mb{H}^{2,n}$. For any $x, y \in X$, we define the \emph{pseudo-distance} $d_{\mb{H}^{2,n}}(x,y)$ to be equal to the length of the unique spacelike geodesic segment $[x,y]$ joining them. We will refer to the function $d_{\mb{H}^{2,n}} : X \times X \to [0,\infty)$ as the \emph{pseudo-metric} of $X$. We similarly define the pseudo-metric $d_{\widehat{\hyp}^{2,n}}$ for acausal subsets of $\widehat{\hyp}^{2,n}$.
\end{dfn}

The pseudo-distance between two points $x, y \in \mb{H}^{2,n}$ joined by a spacelike geodesic satisfies the identity
\[
\cosh (d_{\mb{H}^{2,n}}(x,y))= \abs{\langle x,y\rangle_{2,n+1}} ,
\]
and similarly, the pseudo-distance between two points $\hat{x}, \hat{y} \in \widehat{\mb{H}}^{2,n}$ joined by a spacelike geodesic satisfies the identity
\[
\cosh (d_{\widehat{\mb{H}}^{2,n}}(\hat{x},\hat{y}))=- \langle \hat{x},\hat{y}\rangle_{2,n+1} ,
\]

Using this formula, it is simple to check that $d_{\mb{H}^{2,n}}:X\times X\to[0,\infty)$ is continuous, and vanishes precisely on the diagonal $\Delta_X\subset X\times X$. However, we emphasize that the function $d_{\mb{H}^{2,n}}$ (or $d_{\widehat{\mb{H}}^{2,n}}$) does not satisfy the triangle inequality nor its inverse. Nevertheless, such pseudo-metric is compatible with the subset topology of any \emph{closed} acausal subset.

\begin{lem}\label{lem:same topology}
	Let $X$ be a closed acausal subset of $\mb{H}^{2,n} \cup \partial\mb{H}^{2,n}$. Then for any $x \in X \cap \mb{H}^{2,n}$ the family of open sets
	\[
	B_r(x) : = \{ y \in X \cap \mb{H}^{2,n} \mid d_{\mb{H}^{2,n}}(x,y)< r\} ,
	\]
	for $r \leq r_0$, form a fundamental system of neighborhoods for the subset topology of $X \cap \mb{H}^{2,n} \subset \mb{H}^{2,n}$. Up to replacing the role of $\mb{H}^{2,n}$ with $\widehat{\mb{H}}^{2,n}$, the same holds for acausal subsets of $\widehat{\mb{H}}^{2,n} \cup \partial \widehat{\mb{H}}^{2,n}$.
\end{lem}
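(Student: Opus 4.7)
The plan is to show two things: (1) every pseudo-metric ball $B_r(x)$ is open in the subset topology of $X\cap\mb{H}^{2,n}$, so the family $\{B_r(x)\}_{r\le r_0}$ consists of genuine neighborhoods of $x$; and (2) every subset-topology neighborhood $U$ of $x$ contains some $B_r(x)$. Together these give the fundamental system property. Point (1) is soft, while (2) is where the acausality hypothesis enters in an essential way and is what I expect to be the main obstacle.

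For (1), I would work locally with lifts to $\widehat{\mb{H}}^{2,n}$ and exploit the identity $\cosh(d_{\mb{H}^{2,n}}(x,y))=|\langle\hat x,\hat y\rangle_{2,n+1}|$. Fixing a lift $\hat x$, acausality of $X$ and Lemma \ref{lem:spacelike segments} guarantee that every $y\in X\cap\mb{H}^{2,n}$ distinct from $x$ has a unique lift $\hat y$ with $\langle\hat x,\hat y\rangle_{2,n+1}<-1$; this selection is locally continuous because $\langle\hat x,\hat x\rangle_{2,n+1}=-1$ and the inner product is continuous. Hence $y\mapsto d_{\mb{H}^{2,n}}(x,y)=\arccosh(-\langle\hat x,\hat y\rangle_{2,n+1})$ extends continuously to $X\cap\mb{H}^{2,n}$ by setting $d_{\mb{H}^{2,n}}(x,x):=0$, and $B_r(x)$ is the preimage of the open set $[0,r)$.

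For (2), I would proceed by contradiction: assume some subset-topology neighborhood $U$ of $x$ contains no $B_r(x)$ and extract a sequence $y_n\in X\cap\mb{H}^{2,n}$ with $d_{\mb{H}^{2,n}}(x,y_n)\to 0$ and $y_n\notin U$. Since $X$ is closed in the compact space $\mb{RP}^{n+2}$ it is compact, so up to subsequence $y_n\to y_\infty\in X$. If $y_\infty\in\mb{H}^{2,n}$, continuity of the pseudo-distance forces $d_{\mb{H}^{2,n}}(x,y_\infty)=0$, whence $y_\infty=x$, contradicting $y_n\notin U$. The remaining case $y_\infty\in\partial\mb{H}^{2,n}$ is ruled out by acausality: since $x\ne y_\infty$ and both lie in $X$, Lemma \ref{lem:spacelike lines} forces $\langle\hat x,\hat y_\infty\rangle_{2,n+1}\ne 0$ for any lift $\hat y_\infty$. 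Choosing lifts $\tilde y_n\to\hat y_\infty$ and normalized lifts $\hat y_n=t_n\tilde y_n$ satisfying $\langle\hat y_n,\hat y_n\rangle_{2,n+1}=-1$, the fact that $\langle\tilde y_n,\tilde y_n\rangle_{2,n+1}\to 0$ forces $|t_n|\to\infty$, hence $|\langle\hat x,\hat y_n\rangle_{2,n+1}|\to\infty$ and $d_{\mb{H}^{2,n}}(x,y_n)\to\infty$, contradicting $d_{\mb{H}^{2,n}}(x,y_n)\to 0$. The analogous statement for acausal subsets of $\widehat{\mb{H}}^{2,n}\cup\partial\widehat{\mb{H}}^{2,n}$ follows by the same argument, replacing $\mb{RP}^{n+2}$ with the sphere of rays and Lemma \ref{lem:spacelike segments} with Lemma \ref{lem:spacelike segments linear}.
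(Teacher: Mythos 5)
Your proposal is correct and follows essentially the same route as the paper's proof: openness of the balls via continuity of the pseudo-distance, and the reverse inclusion by a compactness/contradiction argument that splits the limit point into the interior case (where vanishing distance off the diagonal contradicts acausality) and the boundary case (handled with Lemma \ref{lem:spacelike lines} and the blow-up of normalized lifts). The only difference is cosmetic: the paper derives $\langle x, v\rangle = 0$ from $d\to 0$ and then contradicts acausality, while you invoke acausality first and contradict $d\to 0$ — the same computation read in the contrapositive direction.
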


\begin{proof}
	We will consider the case of a acausal subset $X$ of $\widehat{\mb{H}}^{2,n}$, the same argument applies to acausal subsets of $\hyp^{2,n}$.
	
	Since $d_{\widehat{\mb{H}}^{2,n}}$ is continuous with respect to the subset topology $\mathscr{T}$ of $X \cap \widehat{\mb{H}}^{2,n}$ and vanishes on the diagonal $\Delta \subset (X \cap \widehat{\mb{H}}^{2,n})^2$, every ball $B_r(x)$ contains a small $\mathscr{T}$-neighborhood of $x$. Hence, it is enough to show that every $\mathscr{T}$-neighborhood of $x$ contains a pseudo-ball $B_r(x)$ for a sufficiently small value of $r$.
	
	Assume by contradiction that there exist a $\mathscr{T}$-open set $U$ containing $x$ and a sequence $(x_n)_n$ in $(X \cap \widehat{\mb{H}}^{2,n}) - U$ such that $x_m \in B_{1/m}(x)$ for every $m \in \N$. Since $X$ is a closed subset of $\widehat{\mb{H}}^{2,n} \cup \partial\widehat{\mb{H}}^{2,n}$ (and hence compact), up to subsequence the sequence $(x_m)_m$ converges to some $y \in X - U \subset \widehat{\mb{H}}^{2,n} \cup \partial\widehat{\mb{H}}^{2,n}$. The limit point $y$ cannot lie inside $X \cap \widehat{\mb{H}}^{2,n}$, otherwise $y \neq x$ would satisfy $d_{\widehat{\mb{H}}^{2,n}}(y, x) = 0$, which contradicts the properties of the pseudo-metric $d_{\widehat{\mb{H}}^{2,n}}$. 
	
	If $y \in X \cap \partial \widehat{\mb{H}}^{2,n}$, then there exists a sequence of positive real numbers $(t_m)_m$ converging to $0$ and such that $\lim_m t_m x_m = v \in \R^{2,n+1}$ is an isotropic vector lying in the positive projective class $y \in X \cap \partial {\widehat{\mb{H}}^{2,n}}$. Therefore we must have
	\[
	\scal{x}{v}_{2,n+1} = \lim_{m \to \infty} \scal{x}{t_m x_m}_{2,n+1} = \lim_{m \to \infty} (- t_m \, \cosh(d_{\widehat{\mb{H}}^{2,n}}(x,x_m))) = 0 .
	\]
	Lemma \ref{lem:spacelike lines} implies that the points $x \in X \cap \widehat{\mb{H}}^{2,n}$ and $y = [v] \in X \cap \partial \widehat{\mb{H}}^{2,n}$ are joined by a lightlike geodesic, phenomenon that contradicts the acausality of $X$.
\end{proof}

\subsubsection*{A model for \texorpdfstring{$\widehat{\mb{H}}^{2,n}$}{H2n}}

We now describe the \emph{Poincaré model} of $\widehat{\hyp}^{2,n}$, previously introduced in a similar form by Bonsante and Schlenker \cite{bonsante2010maximal} for anti-de Sitter space (here $n = 1$), and by Collier, Tholozan, and Toulisse \cite{CTT19} for any $n \geq 1$. The Poincaré model will be particularly well suited for our study of acausal subsets (see e.g. Lemma \ref{lem:acausal graph}), and will be used extensively in our computations. 

We start by selecting a spacelike $2$-plane $E\subset\mb{R}^{2,n+1}$. Notice that the bilinear form $\langle\bullet,\bullet\rangle_{2,n+1}$ is negative definite on the orthogonal complement $E^\perp$. We then consider the Euclidean disk
\[
\mb{D}^2:=\{u\in E\mid\scal{u}{u}_{2,n+1}<1\} ,
\]
with closure $\overline{\mathbb{D}}{}^2 = \mb{D}^2 \cup \partial\mb{D}^2 \subset E$, and the negative definite round sphere
\[
\mb{S}^n:=\{v\in E^\perp\mid\scal{v}{v}_{2,n+1}=-1\}.
\]
The Poincaré model of $\hyp^{2,n}$ associated to $E$ is described by the map
\[
\begin{matrix}
	\Psi = \Psi_E: &\mb{D}^2\times \mb{S}^n & \longrightarrow & \widehat{\hyp}^{2,n} \\
	&(u,v) &\longmapsto &\frac{2}{1-\norm{u}^2}u + \frac{1+\norm{u}^2}{1-\norm{u}^2}v.
\end{matrix}
\]
We also introduce
\[
\begin{matrix}
	\partial\Psi: &\partial\mb{D}^2\times\mb{S}^n &\to &\partial\widehat{\mb{H}}^{2,n}\\
	&(u,v) &\longmapsto &u+v ,
\end{matrix}
\]
The main properties of $\Psi$ and $\partial \Psi$ are summarized in the following statement:

\begin{pro}[{\cite{CTT19}*{Proposition~3.5}}] 
	\label{pro:poincare H2n}
	For any spacelike 2-plane $E\subset\mb{R}^{2,n+1}$, we have the following
	\begin{enumerate}[(a)]
		\item{The map $\Psi=\Psi_E$ is a diffeomorphism.}
		\item{The pull-back pseudo-Riemannian metric can be written as
			\[
			\Psi^*g_{\widehat{\hyp}^{2,n}}=\left(\frac{2}{1-\norm{u}^2}\right)^2\abs{\dd u}^2-\left(\frac{1+\norm{u}^2}{1-\norm{u}^2}\right)^2g_{\mb{S}^n}.
			\]
		}
		\item{The map $\partial\Psi$ is a diffeomorphism and extends continuously $\Psi$.}
	\end{enumerate}
\end{pro}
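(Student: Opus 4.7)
The overall strategy relies on the orthogonal decomposition $\R^{2,n+1} = E \oplus E^\perp$, on which $\langle \cdot, \cdot\rangle_{2,n+1}$ is positive definite (of signature $(2,0)$) on $E$ and negative definite (of signature $(0,n+1)$) on $E^\perp$. By construction, the point $\Psi(u,v)$ has $E$-component $\tfrac{2}{1-\|u\|^2}\,u$ and $E^\perp$-component $\tfrac{1+\|u\|^2}{1-\|u\|^2}\,v$, and this splitting drives both the identification with $\widehat{\hyp}^{2,n}$ and the computation of the pullback metric.

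For (a), I first check that $\Psi$ takes values in $\widehat{\hyp}^{2,n}$ by computing $\langle \Psi, \Psi\rangle_{2,n+1}$: orthogonality of $E$ and $E^\perp$ together with $\langle v, v\rangle_{2,n+1} = -1$ reduce this to the scalar identity $4\|u\|^2 - (1+\|u\|^2)^2 = -(1-\|u\|^2)^2$. I then construct the inverse explicitly. Given $x \in \widehat{\hyp}^{2,n}$, decompose $x = x_E + x_{E^\perp}$ and set $s := \sqrt{-\langle x_{E^\perp}, x_{E^\perp}\rangle_{2,n+1}}$; the constraint $\langle x, x\rangle_{2,n+1} = -1$ forces $\langle x_E, x_E\rangle_{2,n+1} = s^2 - 1 \geq 0$, so $s \geq 1$. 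Define $v := x_{E^\perp}/s \in \mb{S}^n$ and $u := x_E/(s+1)$; then $\|u\|^2 = (s-1)/(s+1) < 1$, hence $u \in \mb{D}^2$, and a short back-substitution shows $\Psi(u,v)=x$. Smoothness of both $\Psi$ and its inverse is manifest, so $\Psi$ is a diffeomorphism.

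For (b), I differentiate $\Psi$ in $(u,v)$ using the identity $d\bigl(\tfrac{1+\|u\|^2}{1-\|u\|^2}\bigr) = \tfrac{4\langle u, du\rangle_{2,n+1}}{(1-\|u\|^2)^2}$ and the constraint $\langle v, \delta v\rangle_{2,n+1} = 0$ for tangent vectors $\delta v \in T_v \mb{S}^n$. The $E$- and $E^\perp$-components of $d\Psi$ are then
\[
d\Psi_E = \frac{2}{1-\|u\|^2}\,\delta u + \frac{4\langle u, \delta u\rangle_{2,n+1}}{(1-\|u\|^2)^2}\,u, \qquad d\Psi_{E^\perp} = \frac{4\langle u, \delta u\rangle_{2,n+1}}{(1-\|u\|^2)^2}\,v + \frac{1+\|u\|^2}{1-\|u\|^2}\,\delta v,
\]
and orthogonality gives $\Psi^*g_{\widehat{\hyp}^{2,n}} = \langle d\Psi_E, d\Psi_E\rangle_{2,n+1} + \langle d\Psi_{E^\perp}, d\Psi_{E^\perp}\rangle_{2,n+1}$. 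The key step is the cancellation of all $\langle u, \delta u\rangle_{2,n+1}^2$ contributions: writing $r = \|u\|^2$, the $E$-part produces the coefficients $\tfrac{16 r}{(1-r)^4} + \tfrac{16}{(1-r)^3}$, while the $E^\perp$-part produces $-\tfrac{16}{(1-r)^4}$ (the sign coming from $\langle v, v\rangle_{2,n+1} = -1$), and these sum to $\tfrac{16(r-1) + 16(1-r)}{(1-r)^4} = 0$. The surviving terms yield exactly the stated formula.

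For (c), the vector $\partial\Psi(u,v) = u+v$ with $\|u\|^2 = 1$ and $v \in \mb{S}^n$ satisfies $\langle u+v, u+v\rangle_{2,n+1} = 1 + 0 - 1 = 0$, so $\partial\Psi$ lands in $\partial \widehat{\hyp}^{2,n}$; its inverse is the boundary analogue of the one from (a), sending $x = x_E + x_{E^\perp}$ to $(x_E/s, x_{E^\perp}/s)$ with $s := \sqrt{\langle x_E, x_E\rangle_{2,n+1}}$ (which equals $\sqrt{-\langle x_{E^\perp}, x_{E^\perp}\rangle_{2,n+1}}$ because $\langle x, x\rangle_{2,n+1} = 0$). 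For the continuity of the extension, I use the topology on $\widehat{\hyp}^{2,n} \cup \partial \widehat{\hyp}^{2,n}$ induced from the sphere of rays $(\R^{2,n+1}-\{0\})/(x \sim \lambda^2 x)$: rescaling $\Psi(u,v)$ by the positive factor $(1-\|u\|^2)/2$ produces $u + \tfrac{1+\|u\|^2}{2}\,v$, which converges in $\R^{2,n+1}$ to $u+v = \partial\Psi(u,v)$ as $\|u\|^2 \to 1$. The main technical obstacle is the cancellation in (b); everything else reduces to elementary linear algebra on the splitting $E \oplus E^\perp$.
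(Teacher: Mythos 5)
Your proof is correct: the explicit inverse in (a), the cancellation of the $\langle u,\delta u\rangle^2$ terms in (b), and the sphere-of-rays rescaling argument for the continuity of the extension in (c) all check out (in (c) you should also note that an isotropic vector cannot have vanishing $E$-component, since $E^\perp$ is negative definite, so $s>0$ and the inverse is well defined on rays). The paper itself gives no proof of this proposition — it is quoted from Collier–Tholozan–Toulisse \cite{CTT19}*{Proposition~3.5} — so there is no in-paper argument to compare against; your direct computation via the splitting $\R^{2,n+1}=E\oplus E^\perp$ is the standard verification of that cited result.
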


The Poincaré model is especially useful when dealing with acausal subsets for the following reasons: Firstly, acausal subsets $X\subset\widehat{\mb{H}}^{2,n}$ can always be written as graphs of functions $g:U\subset\mb{D}^2\to\mb{S}^n$ that are $1$-Lipschitz with respect to the hemispherical metric
\[
g_{\mb{S}^2}:=\left(\frac{2}{1+\norm{u}^2}\right)^2\abs{\dd u}^2
\]
on $\mb{D}^2$ and the spherical metric on $\mb{S}^n$. Secondly, the graph map $u:U\subset\mb{D}^2\to X\subset\widehat{\mb{H}}^{2,n}$ is $1$-Lipschitz with respect to the hyperbolic metric
\[
g_{\mb{H}^2}:=\left(\frac{2}{1-\norm{u}^2}\right)^2\abs{\dd u}^2
\]
on $\mb{D}^2$ and the intrinsic pseudo-metric on $X\subset\widehat{\mb{H}}^{2,n}$. In both cases, for us, compactness properties of $1$-Lipschitz maps will translate in compactness properties of acausal subsets.

We start with the following lemma:

\begin{lem}
	\label{lem:projection}
	Let $E\subset\mb{R}^{2,n+1}$ be a spacelike $2$-plane, and let $$\overline{\Psi} = \Psi \cup \partial \Psi: \overline{\mathbb{D}}{}^2 \times \mathbb{S}^n \longrightarrow \widehat{\hyp}^{2,n} \cup \partial\widehat{\hyp}^{2,n}$$
	denote the associated Poincaré model. For any $x:=\overline{\Psi}(u,v), x':=\overline{\Psi}(u',v')$, with $x \neq x'$, we have the following:
	\begin{enumerate}
		\item{$x,x'$ are joined by a spacelike segment if and only if
			\[
			d_{\mb{S}^n}(v,v')<d_{\mb{S}^2}(u,u') ,
			\]
			where $d_{\mb{S}^2}$ stands for the hemispherical distance of $\mb{D}^2$.}
		\item{$x,x'$ are joined by a lightlike segment if and only if
			\[
			d_{\mb{S}^n}(v,v')=d_{\mb{S}^2}(u,u') .
			\]
		}
		\item{If $x,x' \in \widehat{\hyp}^{2,n}$ are joined by a spacelike geodesic, then
			\[
			d_{\mb{H}^2}(u,u')\ge d_{\widehat{\mb{H}}^{2,n}}(x,x'),
			\]
			where $d_{\mb{H}^2}$ stands for the hyperbolic distance of $\mb{D}^2$.}
	\end{enumerate}
\end{lem}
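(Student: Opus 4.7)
The plan is to reduce everything to a direct computation of $\scal{x}{x'}_{2,n+1}$ in the Poincar\'e coordinates and then invoke the sign/value criteria from Lemmas \ref{lem:spacelike segments linear} and \ref{lem:spacelike lines}. The orthogonal splitting $\R^{2,n+1} = E \oplus E^\perp$ kills all cross terms in the bilinear expansion of $\overline{\Psi}(u,v)$ against $\overline{\Psi}(u',v')$, and since $v, v' \in \mb{S}^n \subset E^\perp$ are unit negative vectors one has $\scal{v}{v'}_{2,n+1} = -\cos d_{\mb{S}^n}(v,v')$. A short calculation then yields, for $x, x' \in \widehat{\hyp}^{2,n}$,
\[
\scal{x}{x'}_{2,n+1} = \frac{4\, u \cdot u' - (1+\norm{u}^2)(1+\norm{u'}^2)\cos d_{\mb{S}^n}(v,v')}{(1-\norm{u}^2)(1-\norm{u'}^2)}.
\]

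The key identification comes from recognizing the hemispherical distance through stereographic projection: realizing $(\overline{\mb{D}}{}^2, g_{\mb{S}^2})$ as a round hemisphere parametrized by $u \mapsto \bigl(\tfrac{2 u}{1+\norm{u}^2}, \tfrac{1-\norm{u}^2}{1+\norm{u}^2}\bigr)$, one reads off the classical formula
\[
\cos d_{\mb{S}^2}(u,u') = \frac{4\, u \cdot u' + (1-\norm{u}^2)(1-\norm{u'}^2)}{(1+\norm{u}^2)(1+\norm{u'}^2)}.
\]
Substituting into Lemma \ref{lem:spacelike segments linear} and clearing the positive denominator, the criterion $\scal{x}{x'}_{2,n+1} < -1$ rearranges to $\cos d_{\mb{S}^n}(v,v') > \cos d_{\mb{S}^2}(u,u')$, proving (1) for interior points, while the equality case yields (2). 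When one or both of $x, x'$ lie in $\partial\widehat{\hyp}^{2,n}$, the Poincar\'e representatives reduce to $u + v$ with $\norm{u}=1$, and the same manipulation, combined with the boundary criteria of Lemma \ref{lem:spacelike lines} ($\scal{x}{x'}_{2,n+1} < 0$ for spacelike, $= 0$ for lightlike), reproduces the conclusions of (1) and (2).

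For (3), with $x, x' \in \widehat{\hyp}^{2,n}$ one has $\cosh d_{\widehat{\hyp}^{2,n}}(x,x') = -\scal{x}{x'}_{2,n+1}$, and bounding $\cos d_{\mb{S}^n}(v,v') \le 1$ in the first display gives
\[
\cosh d_{\widehat{\hyp}^{2,n}}(x,x') \le \frac{(1+\norm{u}^2)(1+\norm{u'}^2) - 4\, u \cdot u'}{(1-\norm{u}^2)(1-\norm{u'}^2)}.
\]
The algebraic identity $(1+\norm{u}^2)(1+\norm{u'}^2) - 4\, u \cdot u' = (1-\norm{u}^2)(1-\norm{u'}^2) + 2\norm{u-u'}^2$ then recognizes the right-hand side as $\cosh d_{\mb{H}^2}(u,u')$, and monotonicity of $\cosh$ yields $d_{\widehat{\hyp}^{2,n}}(x,x') \le d_{\mb{H}^2}(u,u')$. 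The only subtlety in the whole argument is tracking signs and vanishing factors in the boundary cases of (1)-(2), which is harmless because $(1-\norm{u}^2)(1-\norm{u'}^2) \geq 0$ and multiplication by a nonnegative scalar preserves the sign inequalities needed to invoke Lemma \ref{lem:spacelike lines}.
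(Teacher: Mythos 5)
Your proposal is correct and follows essentially the same route as the paper's proof: express $\scal{x}{x'}_{2,n+1}$ in Poincar\'e coordinates, recognize the result as a positive multiple of $\cos d_{\mb{S}^2}(u,u') - \cos d_{\mb{S}^n}(v,v')$ (shifted by $-1$ in the interior case), and invoke Lemmas \ref{lem:spacelike segments linear} and \ref{lem:spacelike lines}, with part (3) coming from bounding $\cos d_{\mb{S}^n}(v,v')\le 1$ against the disk formula for $\cosh d_{\mb{H}^2}$. The only loose phrase is the closing remark about multiplying by a nonnegative scalar (which would say nothing if the factor vanished), but it is inessential since you already recompute the boundary cases with the representatives $u+v$, $\norm{u}=1$, where the relevant prefactors are strictly positive, exactly as in the paper.
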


\begin{proof}
	The spherical distance between two points $v,v'\in\mb{S}^n$ is computed as follows
	\[
	\cos\left(d_{\mb{S}^n}(v,v')\right)=\scal{v}{v'}_{\mb{S}^n}=-\scal{v}{v'}_{2,n+1}. 
	\]
	Similarly, the hemispherical distance between $u,u'\in\overline{\mb{D}}{}^2$ is given by
	\[
	\cos\left(d_{\mb{S}^2}(u,u')\right)=\frac{2}{1+\norm{u}^2}\frac{2}{1+\norm{u'}^2}\scal{u}{u'}_{2,n+1}+\frac{1-\norm{u}^2}{1+\norm{u}^2}\frac{1-\norm{u'}^2}{1+\norm{u'}^2}.
	\]
	We start by providing expressions for $\scal{x}{x'}_{2,n+1} = \scal{\overline{\Psi}(u,v)}{\overline{\Psi}(u',v')}_{2,n+1}$, depending on whether $x, x'$ belong to $\widehat{\hyp}^{2,n}$ or $\partial\widehat{\hyp}^{2,n}$: If both $x, x' \in \widehat{\hyp}^{2,n}$, then
	\begin{align*}
		\scal{x}{x'}_{2,n+1} & = \frac{1+\norm{u}^2}{1-\norm{u}^2} \frac{1+\norm{u'}^2}{1-\norm{u'}^2}
		\left( \frac{2}{1+\norm{u}^2} \frac{2}{1+\norm{u'}^2} \scal{u}{u'}_{2,n+1}+\scal{v}{v'}_{2,n+1} \right) \\
		& = \frac{1+\norm{u}^2}{1-\norm{u}^2} \frac{1+\norm{u'}^2}{1-\norm{u'}^2} \left( \cos\left(d_{\mb{S}^2}(u,u')\right) - \cos\left(d_{\mb{S}^n}(v,v')\right) \right) - 1 .
	\end{align*}
	If $x \in \widehat{\hyp}^{2,n}$ and $x' \in \partial\widehat{\hyp}^{2,n}$, then
	\begin{align*}
		\scal{x}{x'}_{2,n+1} & = \frac{1+\norm{u}^2}{1-\norm{u}^2} 
		\left( \frac{2}{1+\norm{u}^2} \scal{u}{u'}_{2,n+1}+\scal{v}{v'}_{2,n+1} \right) \\
		& = \frac{1+\norm{u}^2}{1-\norm{u}^2} \left( \cos\left(d_{\mb{S}^2}(u,u')\right) - \cos\left(d_{\mb{S}^n}(v,v')\right) \right) .
	\end{align*}
	Finally, if both $x, x' \in \partial\widehat{\hyp}^{2,n}$, then
	\begin{align*}
		\scal{x}{x'}_{2,n+1} & = \scal{u}{u'}_{2,n+1}+\scal{v}{v'}_{2,n+1} \\
		& = \cos\left(d_{\mb{S}^2}(u,u')\right) - \cos\left(d_{\mb{S}^n}(v,v')\right) .
	\end{align*}

	\begin{property}{\it (1)}
		If both $x$ and $x'$ lie in $\widehat{\hyp}^{2,n}$, then there exists a spacelike geodesic segment joining them if and only if $\scal{x}{x'}_{2,n+1} < - 1$ by Lemma \ref{lem:spacelike segments linear}. By the explicit expression for $\scal{x}{x'}_{2,n+1}$ that is provided above, we see that $\scal{x}{x'}_{2,n+1} < - 1$ if and only if
		\[
		\cos\left(d_{\mb{S}^2}(u,u')\right) < \cos\left(d_{\mb{S}^n}(v,v')\right) .
		\]
		
		If $x \in \widehat{\hyp}^{2,n}$ and $x' \in \partial \widehat{\hyp}^{2,n}$, then $x$ and $x'$ are joined by a spacelike geodesic if and only if $\scal{x}{x'}_{2,n+1} < 0$, again by Lemma \ref{lem:spacelike segments linear}. By the identity given above, this holds if and only if $\cos\left(d_{\mb{S}^2}(u,u')\right) < \cos\left(d_{\mb{S}^n}(v,v')\right)$. The same equivalence holds in the case of $x ,x' \in \partial\widehat{\hyp}^{2,n}$. 
		
		Since $d_{\mb{S}^2}(u,u') , d_{\mb{S}^n}(v,v') \in [0,\pi]$, in each case we conclude that $x$ and $x'$ are joined by a spacelike geodesic if and only if
		\[
		d_{\mb{S}^n}(v,v') < d_{\mb{S}^2}(u,u') .
		\]
	\end{property}
	
	\begin{property}{\it (2)}
		If $x, x' \in \widehat{\hyp}^{2,n}$, then $x,x'$ are joined by a lightlike geodesic if and only if $\langle x,x'\rangle_{2,n+1} = - 1$ by Lemma \ref{lem:spacelike segments linear}. On the other hand, if $x \in \partial \widehat{\hyp}^{2,n}$ and $x' \in \widehat{\hyp}^{2,n} \cup \partial\widehat{\hyp}^{2,n}$, then $x$ and $x'$ are joined by a lightlike geodesic if and only if $\scal{x}{x'}_{2,n+1} = 0$. The conclusion then follows from a computation analogous to the one of Property (1).
	\end{property}
	
	\begin{property}{\it (3)}
		If $x,x' \in \widehat{\hyp}^{2,n}$ are joined by a spacelike geodesic, then $\langle x,x'\rangle_{2,n+1}<-1$, and their pseudo-distance in $\widehat{\hyp}^{2,n}$ is given by $$\cosh\left(d_{\widehat{\mb{H}}^{2,n}}(x,x')\right) =- \scal{x}{x'}_{2,n+1}.$$
		
		On the other hand, the hyperbolic distance in $\mb{D}^2$ can be expressed as follows:
		\begin{align*}
			\cosh\left(d_{\mb{H}^2}(u,u')\right)=-\frac{2}{1-\norm{u}^2}\frac{2}{1-\norm{u'}^2}\scal{u}{u'}_{2,n+1}+\frac{1+\norm{u}^2}{1-\norm{u}^2}\frac{1+\norm{u'}^2}{1-\norm{u'}^2}.
		\end{align*}
		Since $v,v'\in\mb{S}^n$, we have $\abs{\scal{v}{v'}}\le 1$. Therefore, we conclude that
		\begin{align*}
			\cosh\left(d_{\widehat{\mb{H}}^{2,n}}(x,x')\right) & = - \scal{x}{x'}_{2,n+1} \\
			& = \cosh(d_{\mb{H}^2}(u,u')) - \frac{1+\norm{u}^2}{1-\norm{u}^2}\frac{1+\norm{u'}^2}{1-\norm{u'}^2} \left( 1 + \scal{v}{v'}_{2,n+1}\right) \\
			& \leq \cosh\left(d_{\mb{H}^2}(u,u')\right) ,
		\end{align*}
		with equality if and only if $v=v'$.
	\end{property}
	This concludes the proof of the statement.
\end{proof}

We will now show that acausal subsets can be described as graphs of $1$-Lipschitz functions. This is the content of the next lemma (appeared for the anti-de Sitter $3$-space case in \cite{MR4264588}*{Lemma~4.1.2}, and in the case of smooth spacelike surfaces inside $\widehat{\mathbb{H}}^{2,n}$ in \cite{CTT19}*{Proposition~3.8}):

\begin{lem}
	\label{lem:acausal graph}
	Let $E\subset\mb{R}^{2,n+1}$ be a spacelike 2-plane, and let $\overline{\Psi} : \overline{\mathbb{D}}{}^2 \times \mathbb{S}^n \to \widehat{\hyp}^{2,n} \cup \partial\widehat{\hyp}^{2,n}$
	be the associated Poincaré model. If $$\pi : \widehat{\hyp}^{2,n} \cup \partial\widehat{\hyp}^{2,n} \longrightarrow \overline{\mb{D}}{}^2$$
	denotes the composition of $\overline{\Psi}^{-1}$ with the projection onto the first factor, then for every acausal subset $X \subset \widehat{\hyp}^{2,n} \cup \partial\widehat{\hyp}^{2,n}$:
	\begin{enumerate}
		\item{The projection $\pi:X\to\overline{\mb{D}}{}^2$ is injective. In particular, we can write $X$ as the graph of a function $g:\pi(X)\subset\overline{\mb{D}}{}^2\to\mb{S}^n$.}
		\item{The function $g$ is strictly $1$-Lipschitz with respect to the hemispherical metric on $\overline{\mb{D}}^2$ and the standard spherical metric on $\mb{S}^n$, that is,
			\[
			d_{\mb{S}^n}(g(u), g(u'))<d_{\mb{S}^2}(u,u')
			\]
			for any distinct $u,u'\in\pi(X)$.}
		\item{Vice versa, the graph of any strictly $1$-Lipschitz function $g:U\subset\mb{D}^2\to\mb{S}^n$ defined on a connected subset $U\subset\mb{D}^2$ is an acausal subset.}
	\end{enumerate}
\end{lem}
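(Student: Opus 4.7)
The plan is to deduce all three items directly from Lemma~\ref{lem:projection}, which characterizes the causal type of the segment joining $\overline{\Psi}(u,v)$ and $\overline{\Psi}(u',v')$ purely by comparing $d_{\mb{S}^n}(v,v')$ with $d_{\mb{S}^2}(u,u')$. Each of the three items is essentially a translation of the strict inequality $d_{\mb{S}^n}(v,v') < d_{\mb{S}^2}(u,u')$ into the language of graphs.

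For item (1) I would take two distinct points $x = \overline{\Psi}(u,v)$ and $x' = \overline{\Psi}(u',v')$ of the acausal set $X$. Acausality together with Lemma~\ref{lem:projection}(1) yields $d_{\mb{S}^n}(v,v') < d_{\mb{S}^2}(u,u')$; if $\pi(x) = \pi(x')$, i.e. $u = u'$, then $d_{\mb{S}^2}(u,u') = 0$ would force $d_{\mb{S}^n}(v,v') < 0$, which is absurd. Hence $\pi|_X$ is injective, and setting $g := \mathrm{pr}_{\mb{S}^n} \circ \overline{\Psi}{}^{-1} \circ (\pi|_X)^{-1} : \pi(X) \to \mb{S}^n$ realizes $X$ as a graph. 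Statement (2) is then immediate: rewriting the same inequality as $d_{\mb{S}^n}(g(u),g(u')) < d_{\mb{S}^2}(u,u')$ for every pair of distinct $u,u' \in \pi(X)$ is precisely the strict $1$-Lipschitz condition.

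For the converse direction (3), given a strictly $1$-Lipschitz $g : U \to \mb{S}^n$, I would pick two distinct points $\overline{\Psi}(u,g(u))$ and $\overline{\Psi}(u',g(u'))$ on its graph; distinctness forces $u \neq u'$, and the strict Lipschitz hypothesis gives $d_{\mb{S}^n}(g(u),g(u')) < d_{\mb{S}^2}(u,u')$, so Lemma~\ref{lem:projection}(1) produces a spacelike geodesic joining them. The connectedness assumption on $U$ plays no role in this pairwise spacelike argument and is presumably kept only so that the resulting acausal set is itself connected, fitting the applications to pleated sets in Section~\ref{sec:laminations}. No serious obstacle is anticipated; the only mild subtlety is that in (1) one must invoke Lemma~\ref{lem:projection}(1) in its three guises (both points interior, one on $\partial\widehat{\hyp}^{2,n}$, both on the boundary) when $X$ meets $\partial\widehat{\hyp}^{2,n}$, but the projection argument is uniform across the three cases.
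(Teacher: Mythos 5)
Your proof is correct and follows essentially the same route as the paper: items (1) and (2) are read off directly from the strict inequality in Lemma~\ref{lem:projection}(1), and the converse (3) uses the reverse implication of that same equivalence (the paper phrases this via the scalar-product expressions from the proof of Lemma~\ref{lem:projection} together with Lemma~\ref{lem:spacelike segments linear}, which is the same computation). Your remarks on the boundary cases and on the irrelevance of connectedness are also consistent with the paper's argument.
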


\begin{proof}
	Any pair of distinct points $x = \overline{\Psi}(u,v), x' = \overline{\Psi}(u',v') \in X$ is joined by a spacelike geodesic. Hence, by Property (1) of Lemma \ref{lem:projection}, we must have
	\[
	d_{\mathbb{S}^2}(\pi(x), \pi(x')) > d_{\mathbb{S}^n}(v, v') \geq 0 ,
	\]
	and so the restriction of $\pi$ to $X$ is injective. In particular, for any $u\in\pi(X )\subseteq \overline{\mb{D}}{}^2$, there exists a unique $g(u)\in\mb{S}^n$ such that $\overline{\Psi}(u,g(u))\in X$.
	Again by property (1) of Lemma \ref{lem:projection}, the function $g : \overline{\mathbb{D}}{}^2 \to \mathbb{S}^n$ is strictly $1$-Lipschitz.
	
	Vice versa, if $g : \overline{\mathbb{D}}{}^2 \to \mathbb{S}^n$ is strictly $1$-Lipschitz function, then the expressions for $\scal{x}{x'}_{2,n+1}$ provided in the proof of Lemma \ref{lem:projection}, combined with Lemma \ref{lem:spacelike segments linear}, show that the graph of $g$ is an acausal subset of $\widehat{\mathbb{H}}^{2,n} \cup \partial\widehat{\mathbb{H}}^{2,n}$.
\end{proof}

We now restrict our attention to some special acausal subsets, namely spacelike geodesics and spacelike planes, and prove a couple of topological properties that will be useful later on:

\begin{lem}
	\label{lem:planes and lines}
	Let $E\subset\mb{R}^{2,n+1}$ be a spacelike 2-plane with associated Poincaré model $\Psi:\mb{D}^2\times\mb{S}^n\to\widehat{\hyp}^{2,n}$, and let $\pi:\widehat{\hyp}^{2,n}\to\mb{D}^2$ be the composition of $\Psi^{-1}$ with the projection onto the first factor. Then
	\begin{enumerate}
		\item{If $H\subset\widehat{\mb{H}}^{2,n}$ is a spacelike plane, then the restriction of $\pi:H\to\mb{D}^2$ is a diffeomorphism and extends continuously to $\partial H\to\partial\mb{D}^2$.}
		\item{If $\ell\subset\widehat{\mb{H}}^{2,n}$ is a spacelike geodesic, then $\pi(\ell)$ is a smooth properly embedded curve. Either it is a diameter in $\mathbb{D}^2$ or it intersects every diameter at most once.}
	\end{enumerate}
\end{lem}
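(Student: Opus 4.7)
The plan is to combine the algebraic description of $H$ and $\ell$ as intersections of $\widehat{\mb{H}}^{2,n}$ with linear subspaces of $\mb{R}^{2,n+1}$ with the graph structure of acausal subsets provided by Lemma \ref{lem:acausal graph}. First I would observe that both $H$ and $\ell$ are acausal: being totally geodesic and spacelike, any two of their points are joined by a spacelike geodesic contained in the submanifold itself, and the same argument applies to their closures $H \cup \partial H$ and $\ell \cup \partial \ell$ inside $\widehat{\mb{H}}^{2,n} \cup \partial \widehat{\mb{H}}^{2,n}$. Lemma \ref{lem:acausal graph} then guarantees that the extension $\overline{\pi} : \widehat{\mb{H}}^{2,n} \cup \partial \widehat{\mb{H}}^{2,n} \to \overline{\mb{D}}{}^2$ of $\pi$ (via $\overline{\Psi}^{-1}$) is injective on each closure. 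Moreover, the metric formula in Proposition \ref{pro:poincare H2n}(b) shows that $\ker d\pi$ is everywhere tangent to the $\mb{S}^n$-factor, on which $\langle \bullet, \bullet \rangle_{2,n+1}$ is negative definite; hence $\ker d\pi$ is timelike and meets the spacelike tangent spaces $TH$ and $T\ell$ trivially, making $\pi|_H$ a local diffeomorphism and $\pi|_\ell$ a smooth immersion.

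For part (1), I would then invoke compactness: $\overline{\pi}|_{H \cup \partial H}$ is a continuous injection from a compact space into a Hausdorff space, and is therefore a homeomorphism onto its image $K \subseteq \overline{\mb{D}}{}^2$. Since $\Psi$ sends $\widehat{\mb{H}}^{2,n}$ into $\mb{D}^2$ and $\partial \Psi$ sends $\partial \widehat{\mb{H}}^{2,n}$ into $\partial \mb{D}^2$, one has $K \cap \mb{D}^2 = \pi(H)$. This subset is open in $\mb{D}^2$ by the local diffeomorphism property, and closed in $\mb{D}^2$ because $K$ is compact in $\overline{\mb{D}}{}^2$. Connectedness of $\mb{D}^2$ then forces $\pi(H) = \mb{D}^2$, so $\pi|_H$ is a bijective local diffeomorphism and hence a diffeomorphism. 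It follows that $K = \overline{\mb{D}}{}^2$, and the restriction $\overline{\pi}|_{\partial H} : \partial H \to \partial \mb{D}^2$ furnishes the required continuous extension.

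For part (2), the same compactness argument applied to $\ell \cup \partial \ell$ shows that $\overline{\pi}$ restricts to a homeomorphism from $\ell \cup \partial \ell$ onto a compact arc in $\overline{\mb{D}}{}^2$ whose interior $\pi(\ell)$ lies in $\mb{D}^2$ and whose two endpoints lie on $\partial \mb{D}^2$; combined with the immersion property already established, this makes $\pi(\ell)$ a smooth properly embedded curve. To analyze intersections with diameters I would write the pair of antipodal branches $\ell, -\ell$ as $V \cap \widehat{\mb{H}}^{2,n}$ for a $2$-plane $V \subset \mb{R}^{2,n+1}$ of signature $(1,1)$, and note that a diameter $d \subset \mb{D}^2$ in the direction of a line $L \subset E$ satisfies $\pi^{-1}(d) = (L + E^\perp) \cap \widehat{\mb{H}}^{2,n}$, since $\pi(x) \in d$ precisely when the $E$-component of $x$ lies in $L$. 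If $V \subseteq L + E^\perp$, then $\pi(\ell) \subseteq d$; being a connected properly embedded arc of $\mb{D}^2$ whose two closure points on $\partial \mb{D}^2$ must lie in the two-point set $\partial \mb{D}^2 \cap \overline{d}$, the arc $\pi(\ell)$ must coincide with $d$. Otherwise $V \cap (L + E^\perp)$ has dimension at most one, so it meets $\widehat{\mb{H}}^{2,n}$ in at most a pair of antipodal points; since $\ell$ contains at most one point of each antipodal pair, $\pi(\ell) \cap d$ reduces to at most one point, which yields the stated dichotomy. The main obstacle I anticipate is establishing the surjectivity $\pi(H) = \mb{D}^2$ in part (1), which is handled by the open-and-closed argument above together with the fact that $\overline{\Psi}$ sends $\partial \widehat{\mb{H}}^{2,n}$ to $\partial \mb{D}^2$.
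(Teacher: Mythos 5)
Your argument is correct, and it shares the paper's first step — the local diffeomorphism/immersion property comes from transversality of $TH$ and $T\ell$ to the negative-definite fibers of $\pi$, exactly as in the paper via Proposition \ref{pro:poincare H2n} — but it globalizes differently. For part (1) the paper invokes Lemma \ref{lem:projection}(3): since $d_{\widehat{\mb{H}}^{2,n}}$ restricts on $H$ to its hyperbolic metric, $\pi|_H$ is distance non-decreasing to $(\mb{D}^2,g_{\mb{H}^2})$, hence proper and injective, and a proper injective local diffeomorphism onto the connected disk is a diffeomorphism. You instead obtain injectivity from acausality of $H\cup\partial H$ together with Lemma \ref{lem:acausal graph}, and surjectivity from compactness of the ideal closure plus an open-and-closed argument in $\mb{D}^2$; as a by-product you get that $\overline{\pi}$ restricts to a homeomorphism $\partial H\to\partial\mb{D}^2$, which is a bit more than the stated continuous extension and is left implicit in the paper. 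For part (2) the paper parametrizes $\ell(t)=e^{t}a+e^{-t}b$ and notes that the equation cutting out the intersection with a line through the origin, $e^{t}(pu_a^1+qu_a^2)+e^{-t}(pu_b^1+qu_b^2)=0$, has at most one solution in $t$ unless it vanishes identically; your coordinate-free version — intersecting the signature-$(1,1)$ plane $V$ with $L\oplus E^\perp$ and using that a spacelike geodesic contains no antipodal pair (indeed $\scal{\ell(t_1)}{\ell(t_2)}\le -1<0$ while antipodal points have product $+1$) — is an invariant reformulation of the same computation and equally valid. The only points you pass over quickly are that pairs of ideal points of $\partial H$, respectively the two points of $\partial\ell$, are joined by spacelike geodesics (immediate from Lemma \ref{lem:spacelike lines}, since the corresponding isotropic rays have negative pairwise products), and that the two endpoint projections of $\ell$ are distinct so that the closure of $\pi(\ell)$ reaches both ends of the diameter (again injectivity of $\overline{\pi}$ on the acausal set $\ell\cup\partial\ell$); both are immediate, so there is no gap.
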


\begin{proof}
	We prove the properties in order.
	
	\begin{property}{\it (1)}
		Since $H$ is a spacelike surface, it is transverse to the (negative definite) fibers $\Psi(\{x\}\times\mb{S}^n)$ of $\pi$, and the restriction $\pi:H\to\mb{D}^2$ is a local diffeomorphism. By Property (3) of Lemma \ref{lem:projection}, we also have that $\pi$ is distance non-decreasing when we endow $\mb{D}^2$ with the hyperbolic metric, since the pseudo-distance $d_{\widehat{\mb{H}}^{2,n}}$ restricts to the hyperbolic distance on $H$. In particular, $\pi$ is proper and injective. Together, the two facts imply that $\pi:H\to\mb{D}^2$ is a diffeomorphism. 
	\end{property}
	
	\begin{property}{\it (2)}
		By the previous point, $\pi:\ell\to\mb{D}^2$ is a smooth proper embedding. We claim that the projection $\pi(\ell)$ is either a diameter of $\mb{D}^2$, or it intersects every diameter of $\mb{D}^2$ at most once. In order to see this, parametrize $\ell$ as $\ell(t)=e^ta+e^{-t}b$ for some $a, b \in \partial \hyp^{2,n}$, and write $a=u_a+v_a$ and $b=u_b+v_b$ with $u_a,u_b\in\partial\mb{D}^2$ and $v_a,v_b\in\mb{S}^n$. The projection of $\ell(t)$ to $\mb{D}^2$ is a curve $u = u(t)$ satisfying 
		\[
		e^tu_a+e^{-t}u_b=\frac{2}{1-\norm{u(t)}^2}u(t).
		\]
		In particular, $u(t)$ intersects a line $pu^1+qu^2=0$ if and only if $p(e^tu_a^1+e^{-t}u_b^2)+q(e^tu_a^1+e^{-t}u_b^2)=0$. This equation has at most one solution unless $\pi(\ell)=\{pu^1+qu^2=0\}$, i.e. if $\pi(\ell)$ is a diameter.
	\end{property}
	This concludes the proof of the statement.
\end{proof}

We conclude our analysis by studying extension and convergence properties of families of continuous functions on acausal subsets of $\widehat{\hyp}^{2,n}$. We start with the following definition:

\begin{dfn}[Uniform Continuity on Acausal Sets] \label{def:uniformly continuous}
	Let $X$ be an acausal subset of $\widehat{\hyp}^{2,n}$, and let $(\mathcal{Y},d_{\mathcal{Y}})$ be a complete metric space. A function $f : X \to \mathcal{Y}$ is \emph{uniformly continuous} if for every $\varepsilon > 0$ there exists a $\delta > 0$ such that
	\[
	d_{\widehat{\mb{H}}^{2,n}}(x,y) < \delta \Rightarrow d_{\mathcal{Y}}(f(x), f(y)) < \varepsilon
	\] 
	for every $x, y \in X$. 
	
	A family of functions $\{f_i : X \to \mathcal{Y}\}_{i \in I}$ is \emph{equicontinuous} if for every $\varepsilon > 0$ there exists a $\delta > 0$ such that
	\[
	d_{\widehat{\mb{H}}^{2,n}}(x,y) < \delta \Rightarrow d_{\mathcal{Y}}(f_i(x), f_i(y)) < \varepsilon
	\] 
	for every $x, y \in X$ and $i \in I$.
\end{dfn}

With this notion, we can formulate and prove the following statement:

\begin{lem}
	\label{lem:extension and convergence}
	Let $X$ be a closed acausal subset of $\widehat{\mathbb{H}}^{2,n}$. Let $\mathcal{Y}$ be a complete metric space. 
	\begin{enumerate}
		\item{Let $f:D\subset X\to \mathcal{Y}$ be a uniformly continuous function defined on a dense subset $D\subset X$, then $f$ extends continuously to $X$.}
		\item{Let $f_n:X\to \mathcal{Y}$ be a sequence of equicontinuous functions that converges at some point $x\in X$. Then, up to subsequences, $f_n$ converges to $f:X\to \mathcal{Y}$ with the same modulus of continuity of $(f_n)_n$.}
	\end{enumerate}
\end{lem}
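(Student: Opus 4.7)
The plan is to reduce both statements to standard metric-space arguments by using Lemma \ref{lem:same topology}, which translates uniform continuity in the pseudo-metric $d_{\widehat{\mathbb{H}}^{2,n}}$ into a genuine topological notion on closed acausal sets.

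For Property (1), I would argue as follows. Fix $x \in X$ and choose a sequence $(x_n)$ in $D$ converging to $x$ in the subspace topology of $\widehat{\mathbb{H}}^{2,n}$. By Lemma \ref{lem:same topology}, this convergence is equivalent to $d_{\widehat{\mathbb{H}}^{2,n}}(x_n, x) \to 0$, so in particular the $x_n$'s form a Cauchy sequence for $d_{\widehat{\mathbb{H}}^{2,n}}$. Uniform continuity of $f$ then forces $(f(x_n))$ to be Cauchy in $\mathcal{Y}$, hence convergent by completeness. A standard check (pairing two approximating sequences and applying uniform continuity) shows the limit is independent of the chosen sequence, producing a well-defined extension $\tilde{f}: X \to \mathcal{Y}$. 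The extension inherits the same modulus of continuity as $f$, which yields continuity.

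For Property (2), I would proceed by a diagonal argument. First, observe that $X$ is separable: by Lemma \ref{lem:acausal graph}, $X$ is the graph of a $1$-Lipschitz function over a subset of $\overline{\mathbb{D}}{}^2$, which is itself separable. Fix a countable dense subset $\{x_k\}_{k \in \N}$ of $X$. Equicontinuity together with the convergence of $(f_n(x))$ implies that, for each fixed $k$, the sequence $\bigl(f_n(x_k)\bigr)_n$ is bounded in $\mathcal{Y}$; in the intended applications of the lemma the target $\mathcal{Y}$ is a proper metric space (e.g.\ $\mathbb{H}^2$ or $\widehat{\mathbb{H}}^{2,n}$), so $\bigl(f_n(x_k)\bigr)_n$ is relatively compact. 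A standard diagonal extraction then produces a subsequence, still denoted $(f_n)$, such that $f_n(x_k)$ converges for every $k$. Given $y \in X$ and $\varepsilon > 0$, pick $x_k$ with $d_{\widehat{\mathbb{H}}^{2,n}}(y, x_k)$ so small that equicontinuity bounds $d_\mathcal{Y}(f_n(y), f_n(x_k)) < \varepsilon$ uniformly in $n$; convergence of $(f_n(x_k))$ then forces $(f_n(y))$ to be Cauchy, hence convergent to some $f(y)$. Passing to the limit in the inequality defining equicontinuity shows that $f$ has the same modulus of continuity as the family $(f_n)$.

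The main subtlety is the pointwise relative compactness required for the diagonal extraction in Property (2). The hypothesis that $\mathcal{Y}$ is a complete metric space is by itself insufficient (bounded sequences need not have convergent subsequences), but in every application the target is a proper metric space, which supplies the missing compactness; aside from this point, both statements are essentially the standard extension and Arzelà–Ascoli theorems, with Lemma \ref{lem:same topology} ensuring that Cauchy sequences for $d_{\widehat{\mathbb{H}}^{2,n}}$ coincide with convergent sequences in the ambient topology so that these classical arguments transfer verbatim.
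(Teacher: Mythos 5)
Your overall strategy---reduce to classical extension and Arzel\`a--Ascoli arguments---is the same as the paper's, but the implementation differs. The paper never argues with sequences in the pseudo-metric: it writes $X$ as the graph of a $1$-Lipschitz map over $U=\pi(X)\subset\mathbb{D}^2$ (Lemma \ref{lem:acausal graph}), observes that $\pi|_X:X\to U$ is a homeomorphism whose inverse, the graph map $u$, is $1$-Lipschitz from $(\mathbb{D}^2,d_{\mathbb{H}^2})$ to $(X,d_{\widehat{\mathbb{H}}^{2,n}})$ (property (3) of Lemma \ref{lem:projection}), and transports everything to $h=f\circ u$ and $h_n=f_n\circ u$ on $U$, where the classical statements for the genuine, complete hyperbolic metric apply; one then returns to $X$ by composing with $\pi$. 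This detour is what spares the paper your delicate points. In your Property (1), the step ``$d_{\widehat{\mathbb{H}}^{2,n}}(x_n,x)\to 0$, hence $(x_n)$ is Cauchy for $d_{\widehat{\mathbb{H}}^{2,n}}$'' is not formal, since the pseudo-metric has no triangle inequality; it is true, but you must invoke the joint continuity of $d_{\widehat{\mathbb{H}}^{2,n}}$ at the diagonal (or simply use the graph-map trick). With that repaired, your extension argument is correct, and continuity of the extension in the subspace topology only needs the easy direction (pseudo-balls are open by continuity of $d_{\widehat{\mathbb{H}}^{2,n}}$), not the full strength of Lemma \ref{lem:same topology}.

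On Property (2) your caveat is well taken, and in fact it applies to the paper's proof too: equicontinuity, convergence at a single point, and completeness of $\mathcal{Y}$ do not yield pointwise relative compactness (e.g.\ take $X$ a spacelike plane, $\mathcal{Y}=\ell^2$, and $f_n(y)=\min(d_{\widehat{\mathbb{H}}^{2,n}}(y,x),1)\,e_n$, which is equicontinuous and converges at $x$ but has no subsequence converging at far-away points), so the lemma as literally stated is too strong; the paper's appeal to Ascoli--Arzel\`a silently uses the same precompactness, and in all applications the target is $\mathbb{H}^2$ or a closed convex subset of it, hence proper, so both your patched argument and the paper's go through there. One further small gap in your write-up: the claim that equicontinuity plus convergence at $x$ forces $(f_n(x_k))_n$ to be bounded requires chaining from $x$ to $x_k$ by short steps inside $X$, hence connectedness of $X$ (satisfied by pleated sets, but not part of the hypotheses; for a two-point acausal set the statement fails even with $\mathcal{Y}=\mathbb{R}$). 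Aside from these caveats, your diagonal argument and the final upgrade from convergence on a dense set to convergence on all of $X$ with the same modulus are correct and reach the same conclusion as the paper.
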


\begin{proof}
	We work in a fixed Poincaré model of $\widehat{\mb{H}}^{2,n}$. Represent $X$ as the graph of a 1-Lipschitz function $g:U\subset\mb{D}^2\to\mb{S}^n$ defined over the projection $U:=\pi(X)$. Observe that $\pi:X\to U$ is a homeomorphism with inverse given by the graph map $u:U\to X$ given by $u(x)=(x,g(x))$. Endow $\mb{D}^2$ with the hyperbolic metric and recall that, by property (3) of Lemma \ref{lem:projection}, $u$ is 1-Lipschitz with respect to the hyperbolic metric on $\mb{D}^2$ and the pseudo-metric on $X$.
	
	\begin{property}{\it (1)}
		Consider $h:=fu:\pi(D)\subset U\to \mathcal{Y}$. Since $f$ is uniformly continuous and $u$ is 1-Lipschitz, we have that $h$ is uniformly continuous with respect to the hyperbolic metric of $\mathbb{D}^2$ and the metric of $\mathcal{Y}$. Therefore $h$ extends continuously to the closure of $U$ inside $\mathbb{D}^2$ and, hence, $f=h\pi$ extends continuously to $X$.
	\end{property}
	
	\begin{property}{\it (2)}
		Consider $h_n:=f_nu:U\to \mathcal{Y}$. Since the family $f_n$ is equicontinuous and $u$ is 1-Lipschitz, we have that the family $h_n$ is equicontinuous as well (with respect to the hyperbolic metric), and therefore extends to an equicontinuous family $\bar{h}_n$ defined on the closure $\overline{U}$ of $U$ in $\mathbb{D}^2$. As $(h_n)_n$ converges on $\pi(x)$, by Ascoli-Arzelà the sequence $\bar{h}_n$ converges to $h:\overline{U}\to \mathcal{Y}$ uniformly on all compact subsets of $\overline{U}$, up to subsequences. Hence, $f_n=h_n\pi$ converges up to subsequences to $f=h\pi$. 
	\end{property}
	
	This concludes the proof of the statement.
\end{proof}

\subsection{Maximal representations}\label{subsec:max reprs}
We now introduce maximal representations in $\SOtwon$ and a couple of geometric objects that are naturally associated to them. 

The first geometric object which one can attach to every representation $\rho:\Gamma\to\SOtwon$ is a flat vector bundle $V_\rho\to\Sigma$. The total space $V_\rho$ is defined as follows: 
\[
V_\rho:=\widetilde{\Sigma}\times\mb{R}^{2,n+1}/(x,v)\sim(\gamma x,\rho(\gamma)v).
\]

Here $\widetilde{\Sigma}$ is the universal covering of $\Sigma$ and $\gamma$ acts on it as a deck transformation. The bundle projection $V_\rho\to\Sigma$ is just the one induced by the the universal covering projection $\widetilde{\Sigma}\to\Sigma$.

The vector bundle $V_\rho\to\Sigma$ has an associated cohomological invariant $T(\rho)\in\mb{Z}$, called the {\em Toledo invariant} (see \cite{BIW10}). The number $T(\rho)$ always satisfies a Milnor-Wood inequality $|T(\rho)|\le 2|\chi(\Sigma)|$. 

\begin{dfn}[Maximal Representation]
	A representation $\rho$ is called {\em maximal} if it satisfies $|T(\rho)|=2|\chi(\Sigma)|$.
\end{dfn}

By the work of Burger, Iozzi, Labourie, and Wienhard \cite{BILW}, we can equivalently describe maximal representations in terms of equivariant limit maps. Here we will mostly adopt this more geometric perspective.

\begin{thm}[Burger, Iozzi, Labourie, and Wienhard \cite{BILW}] 
	\label{thm:maximal limit curve}
	A representation $\rho:\Gamma\to\SOtwon$ is maximal if and only if there exists a $\rho$-equivariant H\"older continuous embedding
	\[
	\xi:\partial\Gamma\longrightarrow\partial\mb{H}^{2,n}
	\]
	such that $\Lambda_\rho:=\xi(\partial\Gamma)$ is an {\rm acausal curve}, meaning that for every triple of distinct points $u,v,w\in\partial\Gamma$, the subspace of $\mb{R}^{2,n+1}$ spanned by the lines $\xi(u),\xi(v),\xi(w)$ has signature $(2,1)$.
\end{thm}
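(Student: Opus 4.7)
My plan is to prove this equivalence through the bounded cohomological interpretation of the Toledo invariant, following the template for Hermitian-type Lie groups. The key input is that $G=\SOtwon$ is of Hermitian type, so $T(\rho)$ equals the evaluation against the fundamental class $[\Sigma]$ of the pull-back $\rho^*\kappa_G^b$ of the bounded K\"ahler class $\kappa_G^b \in H^2_b(G,\mb{R})$. The class $\kappa_G^b$ admits a concrete representative as a bounded cocycle $\beta_S$ on triples of points in the Shilov boundary $S$ of $G$, which in our setting is the space of isotropic $2$-planes ${\rm Pho}^{2,n}$. The cocycle $\beta_S$ attains its supremum exactly on the \emph{positive} Shilov triples, in direct analogy with the positive cross ratio on $\partial\mb{H}^2$ characterizing positively ordered boundary triples.

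For the forward direction, assuming maximality, I would invoke Monod's theorem on Poisson boundaries of Gromov hyperbolic groups together with the injectivity of the comparison map $H^2_b(\Gamma,\mb{R}) \to H^2(\Gamma,\mb{R})$ to produce an essentially unique measurable $\rho$-equivariant map $\xi_S \colon \partial \Gamma \to S$. Maximality of $T(\rho)$ forces $\xi_S^* \beta_S$ to attain its bound on almost every triple, that is, $\xi_S$ sends almost every triple of distinct points in $\partial \Gamma$ to a positive triple of isotropic $2$-planes in $S$. To extract the desired map $\xi \colon \partial \Gamma \to \partial \mb{H}^{2,n}$, I would associate to each $V = \xi_S(a) \in S$ a canonical isotropic line inside $V$, either as the attracting eigenline of $\rho(\gamma_a)$ for a hyperbolic $\gamma_a \in \Gamma$ fixing $a \in \partial \Gamma$, or by a limiting procedure on Shilov images as a third point collapses to $a$. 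The positivity of $\xi_S$-triples then translates directly into the signature $(2,1)$ condition for $\mathrm{Span}\{\xi(u),\xi(v),\xi(w)\}$, which is exactly the stated acausality. Promotion of measurability to continuity (and then to injectivity) uses the dynamics-preserving property together with the proximality of $\rho$, while H\"older regularity follows from the Anosov property of maximal representations, itself a consequence of the positive triples condition.

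For the reverse direction, given an equivariant, H\"older, acausal embedding $\xi$, I would compute $T(\rho)$ by explicit integration. Fix a $\Gamma$-invariant ideal triangulation of $\widetilde{\Sigma}$ (for example obtained by completing a maximal geodesic lamination), and build a $\rho$-equivariant map $\widetilde{\Sigma} \to \mb{H}^{2,n}$ by sending each ideal triangle with vertices $a,b,c$ to the totally geodesic spacelike ideal triangle with vertices $\xi(a),\xi(b),\xi(c)$; this is well-defined by the signature $(2,1)$ hypothesis. The Toledo invariant is then the sum of the integrals of the pull-back K\"ahler form over the $2|\chi(\Sigma)|$ ideal triangles of a fundamental domain; by acausality each triangle lies in a totally geodesic copy of $\mb{H}^2$ and contributes the maximum value $\pm 1$ (in the paper's normalization) with a sign that the coherent orientation of $\Lambda_\rho$ forces to be constant, yielding $|T(\rho)| = 2|\chi(\Sigma)|$.

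The subtlest step, where I expect the main obstacle to lie, is the descent from the Shilov boundary map $\xi_S$ (valued in isotropic $2$-planes) to the boundary map $\xi$ (valued in isotropic lines), which requires proximal dynamics of $\rho$ on the appropriate flag variety; this must be established in tandem with the existence of $\xi_S$ rather than assumed a priori. Additionally, upgrading $\xi$ from measurable to continuous and then to H\"older relies on the observation that acausality is a \emph{closed} condition on triples, so that almost-everywhere positivity, combined with the density of $\Gamma$-orbits on triples of distinct points in $\partial\Gamma$, can be propagated to all triples.
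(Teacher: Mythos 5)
First, a point of context: the paper does not prove this statement at all — it is imported as a black box from \cite{BILW} (see also \cite{CTT19}*{Theorem~2.5}) — so the only meaningful comparison is with the bounded-cohomology route of the cited source, which is indeed the family of ideas you are sketching. Within that route, however, there is a concrete error that derails the forward direction: the Shilov boundary of the Hermitian symmetric space of $\SOtwon$ is \emph{not} the space of isotropic $2$-planes ${\rm Pho}^{2,n}$, but the quadric of isotropic lines, i.e.\ $\partial\mb{H}^{2,n}$ itself. (Compare dimensions: for $n=1$ the Shilov boundary of the symmetric space of ${\rm SO}_0(2,2)\sim{\rm PSL}(2,\mb{R})\times{\rm PSL}(2,\mb{R})$ is $S^1\times S^1$, of dimension $2$, matching $\dim\partial\mb{H}^{2,1}=2$, whereas ${\rm Pho}^{2,1}\cong S^1\sqcup S^1$ has dimension $1$; in general $\dim\partial\mb{H}^{2,n}=n+1$ while $\dim{\rm Pho}^{2,n}=2n-1$.) The bounded K\"ahler class is represented by a Maslov-type cocycle on triples of isotropic \emph{lines}, and its extremality on a triple is exactly the signature-$(2,1)$ condition in the statement. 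So the measurable equivariant map produced from maximality already takes values in $\partial\mb{H}^{2,n}$; the ``descent from isotropic $2$-planes to isotropic lines'' that you single out as the main difficulty is an artifact of the misidentification, and the repair you propose (attracting eigenlines of $\rho(\gamma_a)$, or limits of Shilov images) presupposes proximality on the isotropic-line quadric, i.e.\ essentially the Anosov property you are trying to establish — as written, that step would not go through. The remaining skeleton of the forward direction (measurable boundary map from boundary theory, a.e.\ extremality forced by maximality, upgrade to continuity and then H\"older via Anosov-ness) is the correct one.

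The reverse direction also needs repair as stated: there is no K\"ahler form on $\mb{H}^{2,n}$ — the K\"ahler form lives on the Hermitian symmetric space of $\SOtwon$ (the type IV domain), not on the pseudo-Riemannian space — so ``integrating the pull-back K\"ahler form over spacelike ideal triangles in $\mb{H}^{2,n}$'' is not well-formed. Two standard fixes: either evaluate $\rho^*\kappa^b_G$ on the fundamental class via the Maslov cocycle applied to boundary triples $\xi(a),\xi(b),\xi(c)$, where acausality plus the cyclic monotonicity of the embedded circle $\Lambda_\rho$ makes every ideal triangle of the triangulation contribute the extremal value with a coherent sign; or argue as the paper itself does in a closely analogous situation, Lemma~\ref{lem:spacelike implies maximal}: your piecewise totally geodesic map $\widetilde{\Sigma}\to\mb{H}^{2,n}$ (after checking local acausality across the edges of the triangulation) is an equivariant locally acausal embedding, and the associated spacelike plane bundle has Euler number $\pm 2|\chi(\Sigma)|$, which computes $T(\rho)$. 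With the Shilov boundary corrected to $\partial\mb{H}^{2,n}$ and the Toledo computation reformulated in one of these two ways, your outline matches the proof in the literature; as it stands, these two points are genuine gaps.
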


The second geometric object that we associate to a maximal representation $\rho$ is a pseudo-Riemannian manifold $M_\rho$ locally modeled on $\mb{H}^{2,n}$: Even though the group $\rho(\Gamma)$ does not act properly discontinuously on the whole $\mb{H}^{2,n}$, it admits a nice domain of discontinuity. In fact, every maximal representation $\rho$ in $\SOtwon$ is convex cocompact in the sense of \cite{DGK18} and \cite{DGK17}: In the projective model $\mb{H}^{2,n}\subset\mb{P}(\mb{R}^{2,n+1})$, the representation $\rho$ preserves a properly convex open subset $\Omega_\rho\subset\mb{H}^{2,n}$, whose $\mc{C}^1$-boundary $\partial\Omega_\rho$ contains the limit curve $\Lambda_\rho = \partial\Omega_\rho \cap \partial {\hyp}^{2,n}$, and its action is cocompact on the convex hull $\mc{CH}(\Lambda_\rho)\cap {\hyp}^{2,n} \subset \Omega_\rho$ of the limit curve.

As the representation acts by projective transformations on $\Omega_\rho$, it preserves the natural Hilbert metric on the convex domain and, hence, the action on $\Omega_\rho$ is properly discontinuous. Furthermore, since every $\rho(\gamma)$ acts by isometries on $\Omega_\rho$ and has an attracting fixed point on $\Lambda_\rho\subset\partial\Omega_\rho$, it follows that $\rho(\gamma)$ cannot have fixed points in $\Omega_\rho$, so  the action is also free. In conclusion, since the action is free and properly discontinuous, we can associate to $\rho$ the pseudo-Riemannian manifold $M_\rho:=\Omega_\rho/\rho(\Gamma)$. The quotient 
\[
\mc{CC}(M_\rho):=(\mc{CH}(\Lambda_\rho)\cap{\hyp}^{2,n})/\rho(\Gamma) \subset M_\rho
\]
is the convex core of $M_\rho$.

The convex set $\Omega_\rho$ is by no means unique. However, the convex hull $\mc{CH}(\Lambda_\rho) \cap \hyp^{2,n}\subset\Omega_\rho$ does not depend on the choice of $\Omega_\rho$. Therefore, the geometry of the convex core $\mc{CC}(M_\rho)$ is also independent of the choice of the domain $\Omega_\rho$. 

\begin{rmk}\label{rmk:limit curves lift}
	Let us observe that, since the set
	\[
	\Omega_\rho\cup\Lambda_\rho
	\subset\mb{H}^{2,n}\cup\partial\mb{H}^{2,n}
	\]
	is simply connected, it admits a lift $\widehat{\Omega}_\rho \cup \widehat{\Lambda}_\rho \subset \widehat{\mb{H}}^{2,n}\cup\partial\widehat{\mb{H}}^{2,n}$. By Theorem \ref{thm:maximal limit curve}, such topological $\widehat{\Lambda}_\rho \subset \partial\widehat{\hyp}^{2,n}$ is a $\rho$-invariant acausal subset as in Definition \ref{def:acausal}.
\end{rmk}

\subsection{Hyperbolic surfaces and Teichmüller space}
When $n=0$, Goldman \cite{G80} has shown that maximal representations in ${\rm SO}_0(2,1)$ correspond exactly to holonomies of hyperbolic structures on $\Sigma$. 

We will denote by $\T$ the classical Teichmüller space that parametrizes such hyperbolic structures on $\Sigma$ up to isotopy. We recall that our goal is to relate the geometry of maximal representations to the one of hyperbolic surfaces.  

In this section we collect some facts from classical Teichmüller theory that will be needed later on starting from geodesic laminations which are one of our main tools.

\subsubsection{Geodesic laminations} \label{subsec:geo lam}
We start with some familiar properties of the hyperbolic plane: Every geodesic on $\mb{H}^2$ can be uniquely identified by its pair of endpoints on $\partial\mb{H}^2$. 

\begin{dfn}[Space of Geodesics]
	The \emph{space of (unoriented) geodesics} of $\mb{H}^2$ is
	\[
	\mc{G}:= (\partial\hyp^2 \times \partial\hyp^2)/(x,y)\sim(y,x).
	\]
\end{dfn}

Given two geodesics $\ell,\ell'\in \mathcal{G}$ we can also describe their relative position by looking at the configuration of their endpoints at infinity. More precisely:

\begin{dfn}[Crossing and Disjoint]\label{def:crossing and disjoint}
	Let $a,b,a',b'\in S^1$ be four points on a circle such that $a\neq b$ and $a'\neq b'$. We say that the pairs $(a,b)$ and $(a',b')$ are {\em crossing} if $a',b'$ are contained in distinct components of $S^1-\{a,b\}$ and {\em disjoint} otherwise.
\end{dfn}

We now fix once and for all a reference hyperbolic structure on $\Sigma$ and identify $\partial\mb{H}^2$ with the Gromov boundary $\partial\Gamma$.

\begin{dfn}[Geodesic Lamination]\label{def:geodesic laminations}
	A \emph{geodesic lamination of $\hyp^2$} is a closed subset $\lambda$ of $\mc{G}$ such that every pair of geodesics $\ell,\ell'\in\lambda$ is disjoint, as in Definition \ref{def:crossing and disjoint}. If $\Sigma$ is a closed orientable hyperbolic surface of genus $>1$, then a \emph{geodesic lamination of $\Sigma$} is a $\Gamma$-invariant geodesic lamination of $\hyp^2$, where $\Gamma = \pi_1(\Sigma)$.
	
	The elements of a lamination $\lambda$ are called the {\em leaves} of $\lambda$. We denote by $\lambda_0$ the \emph{geometric realization of $\lambda$}, namely the subset of $\hyp^{2}$ obtained as the union of all leaves of $\lambda$. In addition, every connected component of $\mb{H}^2-\lambda_0$ is called a \emph{plaque} of $\lambda$. We say that a geodesic lamination $\lambda$ is \emph{maximal} if every plaque of $\lambda$ is equal to the interior of an ideal triangle in $\hyp^2$.
\end{dfn}

\begin{rmk}
	We recall that, while geodesic laminations of a closed surface are uniquely determined by their geometric realizations (see e.g. \cite{CEG}*{Chapter~I.4}), the same does not hold in general for geodesic laminations of $\hyp^2$. For instance, there exist distinct geodesic laminations whose geometric realizations coincides with the entire hyperbolic plane $\hyp^2$ (see e.g. \cite{CEG}*{Definition~II.2.4.1}).
\end{rmk}

We denote by $\mc{GL}$ the space of geodesic laminations on $\Sigma$. As geodesic laminations are closed subsets of $\mc{G}$, the space $\mc{GL}$ is naturally endowed with the Chabauty (or Hausdorff) topology. It is a standard fact (see \cite{CEG}*{Proposition~I.4.1.7}) that $\mc{GL}$ is compact with respect to this topology. For a more detailed exposition on geodesic laminations in hyperbolic surfaces we refer to \cite{CEG}*{Chapter~I.4}. 

\subsubsection{Geodesic currents and measured laminations} \label{subsec:geodesic currents}
Geodesic currents were introduced by Bonahon \cite{Bo88}. They are defined as follows: 

\begin{dfn}[Geodesic Currents]
	A {\em geodesic current} is a $\Gamma$-invariant locally finite Borel measure $\mu$ on $\mc{G}$. We denote by $\mc{C}$ the space of geodesic currents.
\end{dfn}

The space $\mc{C}$ has the structure of a cone and it possesses a natural weak-$\star$ topology. Furthermore, it is endowed with a natural continuous symmetric bilinear form
\[
i(\bullet,\bullet):\mc{C}\times\mc{C}\to[0,\infty),
\]
called the {\em intersection form}. We briefly recall its definition, as we will use it later on: Let $\mc{J}\subset\mc{G}\times\mc{G}$ be the space of pairs of crossing geodesics $(\ell,\ell')$. The group $\Gamma$ acts properly discontinuously on $\mc{J}$. Any pair of geodesic currents $\alpha,\beta\in\mc{C}$ induces a $\Gamma$-invariant measure $\alpha\times\beta$ on $\mc{J}$ and, hence, a well defined measure $\alpha\times\beta$ on the quotient $\mc{J}/\Gamma$ which it is possible to show to be always finite. The intersection between $\alpha$ and $\beta$ is defined as 
\[
i(\alpha,\beta):=(\alpha\times\beta)(\mc{J}/\Gamma).
\]

\begin{dfn}[Measured Lamination]
	A {\em measured lamination} on $\Sigma$ is a geodesic current $\mu\in\mc{C}$ such that $i(\mu,\mu)=0$. 
\end{dfn}

It is a standard fact, that, with this definition, the support of a measured lamination is a geodesic lamination of $\Sigma$ (see Bonahon \cite{Bo88}*{Proposition~17}). We denote by $\mc{ML}$ the space of measured laminations on $\Sigma$.

Bonahon shows that the following natural objects associated to $\Sigma$ embed canonically in $\mc{C}$:  
\begin{itemize}
	\item{The space $\mc{S}$ of free homotopy class of closed curves of $\Sigma$.} 
	\item{The space $\T$ of isotopy classes of hyperbolic metrics on $\Sigma$.}
\end{itemize}
We will make no distinction between a point in these spaces and its image in the space of currents $\mc{C}$. Bonahon also proves that, with respect to the intersection form $i(\bullet,\bullet)$ we have the following relations: 
\begin{itemize}
	\item{If $\alpha,\beta\in\mc{S}$, then $i(\alpha,\beta)$ is the geometric intersection number between $\alpha,\beta$.} 
	\item{If $\alpha\in\mc{S}$ and $X\in\T$, then $i(X,\alpha)=L_X(\alpha)$ is the length of $\alpha$ on $X$.}
\end{itemize}
In particular, the intersection form provides a continuous extension of the length function $L_X(\bullet):\mc{S}\to(0,\infty)$ to a continuous positive function on the space of geodesic currents as $i(X,\bullet):\mc{C}\to(0,\infty)$. For more details on such properties, we refer to Bonahon \cite{Bo88}.

\subsubsection{Shear coordinates}\label{subsec:shear coordinates}
Let $\lambda$ be a maximal lamination of $\Sigma$. Following Bonahon \cite{Bo96}, we have the following definition:

\begin{dfn}[H\"older Cocycle]\label{def:holder cocycle}
	A {\em H\"older cocycle} transverse to $\lambda$ is a real-valued function on the set of pairs of distinct plaques of ${\hat \lambda}$ that satisfies:
	\begin{enumerate}
		\item{Symmetry: For every pair of distinct plaques $P,Q$, we have $\sigma(P,Q)=\sigma(Q,P)$.}
		\item{Additivity: For every pair of distinct plaques $P,Q$, and for every plaque $R$ that separates $P$ from $Q$, we have $\sigma(P,Q)=\sigma(P,R)+\sigma(R,Q)$.}
		\item{Invariance: For every pair of distinct plaques $P,Q$ and for every $\gamma\in\Gamma$, we have $\sigma(P,Q)=\sigma(\gamma P, \gamma Q)$.}
	\end{enumerate}
	We denote by $\mathcal{H}(\lambda;\R)$ the space of H\"older cocycles transverse to $\lambda$, which has a natural structure of real vector space of dimension $3\vert\chi(\Sigma)\vert$ by \cite{Bo97transv}*{Theorem~15}.
\end{dfn}

H\"older cocycles are a useful device that allow to encode among other things the following data:
\begin{itemize}
	\item{Every hyperbolic metric $X\in\T$ has an associated shear cocycle $\sigma_\lambda^X\in\mc{H}(\lambda;\mb{R})$ that describes the relative position of the plaques of $X-\lambda$.}
	\item{Every measured lamination $\mu\in\mc{ML}$ with support contained in $\lambda$ has an associated transverse cocycle $\mu\in\mc{H}(\lambda;\mb{R})$ and a length functional $L_\mu:\mc{H}(\lambda;\mb{R})\to\mb{R}$ whose evaluation on shear cocycles $\sigma_\lambda^X$ coming from hyperbolic metrics $X\in\T$ equals $L_X(\mu)$.}
\end{itemize}
We refer to Bonahon \cite{Bo96} for the details of the construction.

The space of H\"older cocycles transverse to $\lambda$ is naturally endowed with a symplectic form $\omega_\lambda(\bullet,\bullet)$, called the \emph{Thurston's symplectic form}, which essentially generalizes the notion of intersection between geodesic currents to transverse H\"older distributions, in the sense of \cite{RS75}. The form $\omega_\lambda$ can be also described concretely in terms of the classical algebraic intersection between $1$-chains on a surface, interpretation that will be recalled in Section \ref{subsubsec:thurston sympl} from the work of Bonahon \cite{Bo96}.

The Thurston symplectic form was deployed by Bonahon \cite{Bo96} to relate the notion of shear cocycle $\sigma^X_\lambda$ associated to a hyperbolic structure $X \in \T$ with the notion of hyperbolic length for measured laminations. Concretely, we have that for every $X\in\T$ and $\mu\in\mc{ML}$ with support contained in $\lambda$, the following relation holds:
\[
\omega(\sigma_\lambda^X,\mu)=L_X(\mu)
\]
(see in particular \cite{Bo96}*{Theorem~E}). 

The Thurston symplectic form is particularly relevant in the study of shear cocycles because it provides a complete characterization of the set of transverse H\"older cocycles that can be realized as shears of hyperbolic metrics. Inspired by ideas of Thurston \cite{T86}, Bonahon proved the following parametrization result:

\begin{thm}[Bonahon {\cite{Bo96}*{Theorems A, B}}]
	\label{thm:thurston bonahon}
	For any maximal geodesic lamination $\lambda$ of $\Sigma$, the map
	\[
	\begin{matrix}
		\T &\longrightarrow & \mathcal{H}(\lambda;\R) \\
		X & \longmapsto & \sigma^X_\lambda
	\end{matrix}
	\]
	is a real analytic diffeomorphism. The image of the map is the open convex cone 
	\[
	C:=\{\sigma\in\mc{H}(\lambda,\mb{R})\left|\;\omega(\mu,\sigma)>0\text{ \rm for every }\mu\in\mc{ML}\text{ \rm with }{\rm supp}(\mu)\subset\lambda\right.\}
	\]
	where $\omega(\bullet,\bullet)$ is the Thurston's symplectic form on $\mc{H}(\lambda;\mb{R})$.
\end{thm}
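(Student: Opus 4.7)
My plan is to prove the theorem in three stages: first establish well-definedness and continuity of the shear map, then prove injectivity and derive the necessary condition $\omega(\mu,\sigma)>0$, and finally construct an inverse on the cone $C$ by reversing the shearing procedure. Throughout, I would work in the universal cover where $\widetilde{X}\cong\hyp^2$ carries the geometric realization $\lambda_X$ of $\lambda$, whose complementary plaques are ideal triangles.

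For the forward map, I would first verify directly from Bonahon's construction that $\sigma^X_\lambda$ is a genuine element of $\mc{H}(\lambda;\R)$: symmetry, additivity, and $\Gamma$-invariance are immediate from the fact that the shear between two adjacent ideal triangles glued along a common boundary leaf is measured by the signed horocyclic distance between the two feet of perpendiculars from the opposite vertices, a quantity that clearly satisfies (1)--(3) of Definition \ref{def:holder cocycle}. Real analyticity of $X\mapsto \sigma^X_\lambda$ follows because the shear between two fixed plaques is a real analytic function of the positions of their ideal vertices, and these vary analytically with $X\in\T$. For injectivity, I would use the identity $\omega(\sigma^X_\lambda,\mu)=L_X(\mu)$ for every measured lamination $\mu\in\mc{ML}$ supported in $\lambda$: if $\sigma^X_\lambda=\sigma^Y_\lambda$ then $L_X(\mu)=L_Y(\mu)$ for all such $\mu$, and a direct geometric argument (reconstructing the developing map plaque by plaque from the shear data) shows that $X$ and $Y$ are isometric. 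The same identity gives one direction of the image characterization: for any nonzero $\mu\in\mc{ML}$ with $\mathrm{supp}(\mu)\subset\lambda$ one has $\omega(\mu,\sigma^X_\lambda)=L_X(\mu)>0$, so the image is contained in $C$.

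To build the inverse, given $\sigma\in C$ I would associate to $\lambda$ a pseudo-developing map $\widetilde{X}_\sigma\to\hyp^2$ as follows. Fix a base plaque $P_0$ and embed it as a reference ideal triangle in $\hyp^2$. For any other plaque $Q$, the data $\sigma(P_0,P)$ for the finitely or countably many intermediate plaques $P$ prescribe, together with the combinatorial adjacency, a unique ideal triangle representing $Q$; the cocycle conditions (additivity and symmetry) guarantee independence of the chain chosen. When $\lambda$ has only finitely many leaves this is the elementary construction of a hyperbolic surface by gluing finitely many ideal triangles with prescribed shear parameters. The resulting developing map is $\Gamma$-equivariant by the invariance of $\sigma$, producing a (possibly incomplete) hyperbolic surface whose shear cocycle equals $\sigma$.

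The main obstacle, and the technical heart of the proof, is showing that this construction actually yields a \emph{complete} hyperbolic surface of the correct topological type precisely when $\sigma$ lies in the cone $C$, i.e.\ that positivity against all measured laminations supported in $\lambda$ rules out both degeneration (points escaping to infinity along a closed geodesic) and non-embeddedness of the developing map. Following Thurston and Bonahon, I would establish this by: (i) showing that the failure of completeness along a simple closed curve $\gamma\subset\lambda$ would force $\sigma$ to pair non-positively with the atomic measured lamination supported on $\gamma$; (ii) more generally, constructing for any ``bad'' configuration a transverse measured lamination $\mu$ with $\mathrm{supp}(\mu)\subset\lambda$ realizing $\omega(\mu,\sigma)\le 0$, contradicting $\sigma\in C$; and (iii) a dimension count, since $\dim \mc{H}(\lambda;\R)=3|\chi(\Sigma)|=\dim\T$ and the shear map is a real analytic injection into the open cone $C\subset\mc{H}(\lambda;\R)$, invariance of domain and connectedness of $C$ promote the image to all of $C$. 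Analyticity of the inverse then follows automatically since an analytic bijection between equidimensional real analytic manifolds whose differential is everywhere injective is an analytic diffeomorphism; injectivity of the differential at each $X$ is encoded by the non-degeneracy of $\omega$ restricted to the tangent directions corresponding to variations of $X$.
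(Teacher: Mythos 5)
This theorem is not proved in the paper: it is quoted as Bonahon's Theorems A and B from \cite{Bo96} and used as a black box (for instance in the proof of Theorem \ref{shear of cross ratio improved} and of Proposition \ref{pro:developing general}), so there is no internal argument to compare yours with; your proposal has to stand as a reconstruction of Bonahon's proof, and as such it has genuine gaps.

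The main one is in your construction of the inverse. For a general maximal lamination the developing map is not obtained by placing plaques ``chain by chain'': between two plaques there are infinitely many separating plaques and leaves, and the position of a plaque is the limit of compositions of infinitely many elementary shearing maps. Additivity, symmetry and invariance of the cocycle give consistency of the prescription, but not convergence of this limit; convergence is exactly where Bonahon's divergence-radius estimates enter (the same machinery this paper has to redevelop in Section \ref{sec:shear cocycles} just to \emph{define} $\sigma^\beta_\lambda$), and it also underlies the real analyticity of $X\mapsto\sigma^X_\lambda$, which for an infinite-leaved $\lambda$ is an infinite sum/limit of elementary shears rather than a finite analytic expression. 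Second, your surjectivity step (iii) is incomplete as stated: invariance of domain gives that the image is open in $C$, but connectedness of $C$ only upgrades this to surjectivity if the image is also closed in $C$, and you give no properness argument; if instead surjectivity is to come from the direct construction, then step (ii) --- producing, from any failure of completeness or embeddedness, a measured lamination supported in $\lambda$ that pairs non-positively with $\sigma$ --- is precisely the heart of Bonahon's Theorem B, and in your sketch it is only named (``following Thurston and Bonahon''), not argued. A smaller point: injectivity does not follow from $L_X(\mu)=L_Y(\mu)$ for $\mu$ supported in $\lambda$ alone; what does the work is the reconstruction of the developing map from the shear data, which again requires the convergence issue above to be settled.
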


The resulting set of coordinates for Teichmüller space are called {\em shear coordinates} relative to $\lambda$. 

Let us mention that, Bonahon and Sözen \cite{SB01} proved that the pullback of the Thurston's symplectic form $\omega$ via the above diffeomorphism is (a multiple of) the Weil-Petersson symplectic form on Teichmüller space. In Section \ref{sec:teichmuller}, we will use the Weil-Petersson geometry of Teichmüller to study the length spectrum of maximal representations $\rho:\Gamma\to\SOtwon$.

When dealing with different spaces of H\"older cocycles $\mc{H}(\lambda;\mb{R})$ relative to nearby laminations $\lambda\in\mc{GL}$, it is useful to identify all such spaces with the space of real weights $\mc{W}(\tau;\mb{R})$ of a suitable train track $\tau$ carrying all the laminations considered. This is particularly convenient when studying continuity properties of maps $\lambda\in\mc{GL}\to\sigma_\lambda\in\mc{H}(\lambda;\mb{R})$ as we will need later on.

Thus, we now briefly introduce train tracks and systems of real weights. 

\subsubsection{Train tracks}
\label{subsubsec:train tracks}

We recall the necessary terminology (see e.g. \cites{PH92,Bo97transv,Bo97geo,BD17}). We define a \emph{branch} inside $\Sigma$ to be a homeomorphism $\varphi : [0,1] \times [0,1] \to B$ (which we abusively identify with its image $B$). We refer to: (the images of) the curves $t \mapsto \varphi(t, \bullet)$ as the \emph{ties} of the branch $B$, to $\partial_v B : = \varphi(\{0,1\} \times [0,1])$ and $\partial_h B : = \varphi([0,1] \times \{0,1\})$ as its \emph{vertical} and \emph{horizontal} boundaries, respectively, and to the images of the points in $\{0,1\} \times \{0,1\}$ through the map $\varphi$ as its \emph{vertices}.

We then define a (trivalent) \emph{train track} $\tau$ as a closed subset of $\Sigma$ that can be decomposed into the union of a finite number of branches $(B_i)_i$ satisfying the following conditions:
\begin{enumerate}[\it i)]
	\item every connected component of the intersection $B_i \cap B_j$ between two distinct branches coincides with a component of $\partial_v B_i$, it is strictly contained in a component of $\partial_v B_j$, and it contains exactly one vertex of $B_j$ (up to exchanging the roles of $i$ and $j$);
	\item for every $i$, each vertex of $B_i$ is contained in the vertical boundary of some branch $B_j$, with $i \neq j$;
	\item no complementary region of the interior of $\tau$ is homeomorphic to a disc that intersects 0, 1 or 2 distinct components of the vertical boundaries of the branches $(B_i)_i$.
\end{enumerate}

Any tie of a branch $B_i$ of $\tau$ that is not strictly contained inside a connected component of the vertical of some (possibly different) branch $B_j$ will be simply called a \emph{tie} of the train track $\tau$. The \emph{horizontal boundary} $\partial_h \tau$ of $\tau$ is defined as the union of the horizontal boundaries of its branches, and the closure of $\partial \tau - \partial_h \tau$ is called the \emph{vertical boundary} $\partial_v \tau$ of $\tau$. The ties of $\tau$ that contain a component of $\partial_v \tau$ are called \emph{switches}. A switch coincides with a connected component of the vertical boundary of some branch $B_i$ in $\tau$, and strictly contains two components of the vertical boundary of some branches $B_j, B_k$ of $\tau$ (possibly two of the three branches $B_i, B_j, B_k$ coincide). Moreover, every switch contains exactly one connected component $c$ of the vertical boundary of $\tau$.

If $\tilde{\tau}$ is the preimage of $\tau$ in the universal cover of $\Sigma$, then a branch of $\tilde{\tau}$ is simply the lift of some branch of $\tau$. Similarly we define the ties, the switches, the vertical and horizontal boundary of $\tilde{\tau}$ and of its branches. We say that a train track $\tau$ carries a lamination $\lambda$ if $\lambda$ is contained in the interior of $\tau$ and every tie of $\tau$ is transverse to the leaves of $\lambda$.

Train tracks come naturally together with a vector space of real weights as we now describe. 

\subsubsection{Systems of real weights}
\label{subsubsec:real weights}

Given $\tau$ a trivalent train track of $\Sigma$, a {\em system of real weights} $\eta = (\eta_i)_i$ of $\tau$ is a real-valued function on the set of branches $(B_i)_i$ of $\tau$ that satisfies a natural linear constraint for every switch of $\tau$ (compare with \cite{Bo97geo}, or \cite{Bo96}*{\S~3}): For any switch $s$, let $B^s_i, B^s_j, B^s_k$ be the branches of $\tau$ adjacent to $s$, and assume that $s$ coincides with a connected component of the vertical boundary of the branch $B^s_i$. If $\eta^s_i, \eta^s_j, \eta^s_k$ denote the weights associated by $\eta$ with $B^s_i, B^s_j, B^s_k$, respectively, then we require $\eta$ to satisfy $\eta^s_i = \eta^s_j + \eta^s_k$, for any switch $s$ of $\tau$.

We denote by $\mathcal{W}(\tau;\R)$ the space of systems of real weights of $\tau$. Observe that $\mathcal{W}(\tau;\R)$ is naturally endowed with a real vector space structure, and its dimension is completely determined by the topology of $\tau$ (see \cite{Bo97transv}*{Theorem~15}). In particular, if $\tau$ carries a maximal lamination, which will be the only case we will be interested in, then $\mathcal{W}(\tau;\R) \cong \R^{- 3 \chi(S)}$. 

For any maximal lamination $\lambda'$ carried by $\tau$, there exists a natural isomorphism $\mathcal{H}(\lambda';\R) \cong \mathcal{W}(\tau;\R)$, which can be described as follows: let $\alpha$ be a H\"older cocycle transverse to $\lambda'$, and let $B_i$ be a branch of $\tau$. Select arbitrarily a lift $\widetilde{B}_i$ of $B_i$ to the universal cover $\widetilde{\Sigma}$, and select a tie $k_i$ of $\widetilde{B}_i$ disjoint from its vertical boundary. Since $\tau$ carries $\lambda'$, there exist two distinct plaques $P_i', Q_i'$ of $\lambda'$ whose interior contain the endpoints of $k_i$. Then we define the real weight of $\alpha$ associated with $B_i$ to be $\alpha_i : = \alpha(P_i', Q_i') \in \R$. By the properties of H\"older cocycles (see Definition~\ref{def:holder cocycle}), it is easy to check that the weight $\alpha_i$ does not depend on the choice of the lift of $k_i$, and the weights $(\alpha_i)_i$ satisfy the switch conditions described above. The corresponding map $\mathcal{H}(\lambda;\R) \to \mathcal{W}(\tau;\R)$ is a linear isomorphism, as shown in \cite{Bo97transv}*{Theorem~11}.

The space of real weights $\mathcal{W}(\tau;\R)$ provides us a way to compare shear cocycles associated with distinct maximal geodesic laminations that are close with respect to the Hausdorff topology. Indeed, if $(\lambda_m)_m$ is a sequence of maximal geodesic laminations that converges to $\lambda$, and $\lambda$ is carried by a train track $\tau$, then for $m$ sufficiently large $\tau$ carries $\lambda_m$. In particular, we have isomorphisms $\mathcal{H}(\lambda_m;\R) \cong \mathcal{W}(\tau;\R) \cong \mathcal{H}(\lambda;\R)$.

\subsection{Cross ratios}
\label{subsec:cross ratios definitions}

Our use of cross ratios will be twofold: On the one hand, we will use them to abstractly define the shear cocycles of our pleated surfaces (the basic computation will be exploited in Remark \ref{rmk:shear_is_shear}). On the other hand, they will also help us in the study of the length spectrum of a maximal representation $\rho$ as they provide a natural Liouville current $\mathscr{L}_\rho$ such that $i(\mathscr{L}_\rho,\bullet)$ extends continuously the length spectrum $L_\rho(\bullet)$ from the space of closed geodesics $\mc{S}$ to the space of geodesic currents $\mc{C}$.  

Let us remark that cross ratios are also objects of interests in their own and have been widely used to study maximal and Hitchin representations \cites{Lab08,MZ19,BIPP21}, and questions about length spectrum rigidity of negatively curved manifolds \cites{Ot90,ledrappier,H99,H97}.  

We now introduce these objects formally. Observe that the Gromov boundary $\partial \Gamma$ admits a natural H\"older structure. To see this, recall that the choice of a Fuchsian representation $\hat{\rho} : \Gamma \to \psl$ determines a unique $\hat{\rho}$-equivariant homeomorphism $\phi_{\hat{\rho}} : \partial \Gamma \to \partial \hyp^2$. Different choices of Fuchsian representations $\hat{\rho}, \hat{\rho}'$ provide homeomorphisms $\phi_{\hat{\rho}}, \phi_{\hat{\rho}'}$ that differ by post-composition with a quasi-symmetric homeomorphism of $\partial \hyp^2 \cong \rp^1$. Since quasi-symmetric homeomorphisms are bi-H\"older continuous with respect to any choice of a Riemannian distance on $\partial \hyp^2$, the notion of H\"older continuous functions $f : \partial \Gamma \to \R$ is independent of the choice of the Fuchsian representation $\hat{\rho}$, and therefore intrinsic of the $\Gamma$-space $\partial \Gamma$.

\begin{dfn}[Cross Ratio]\label{def:crossratio}
	Let $\partial \Gamma^{(4)}$ denote the space of 4-tuples $(u,v,w,z) \in (\partial \Gamma)^4$ satisfying $u \neq z$ and $v \neq w$. A \emph{cross ratio} is a H\"older continuous function $\beta : \partial \Gamma^{(4)} \to \R$ that satisfies the following properties:
	\begin{enumerate}[\it i)]
		\item $\beta$ is $\Gamma$-invariant with respect to the diagonal action of $\Gamma$ on $\partial \Gamma^{(4)}$, i.e. $\gamma \cdot (u,v,w,z) = (\gamma u, \gamma v, \gamma w, \gamma z)$ for any $(u,v,w,z) \in \partial \Gamma^{(4)}$;
		\item For every $u,v,w,z,x \in \partial \Gamma$ we have
		\begin{align}
			\begin{split}
				\beta(u,v,w,z) & = 0 \quad \Leftrightarrow \text{$u = w$ or $v = z$}, \\
				\beta(u,u,w,z) & = \beta(u,v,w,w) = 1, \\
				\beta(u,v,w,z) & = \beta(w,z,u,v) , \\
				\beta(u,v,w,z) & = \beta(u,v,x,z) \beta(u,v,w,x) , \\
				\abs{\beta(u,v,w,z)} & = \abs{\beta(u,w,v,z) \beta(u,z,w,v)} ,
			\end{split} \label{eq:crossshear}
		\end{align}
		whenever the 4-tuples appearing above belong to $\partial \Gamma^{(4)}$.
	\end{enumerate}
\end{dfn}

\begin{rmk}
	Observe that the second and fourth relations in \eqref{eq:crossshear} imply that for any 4-tuple of pairwise distinct points $u, v, w, z \in \partial \Gamma$ we have
	\begin{equation}\label{eq:inverse crossratio}
		\beta(u,v,z,w) = \beta(u,v,w,z)^{-1} .
	\end{equation}
	In turn, relation \eqref{eq:inverse crossratio} and the third symmetry in \eqref{eq:crossshear} imply that
	\begin{equation} \label{eq:symmetry for shear}
		\beta(u,v,w,z) = \beta(v,u,z,w) .
	\end{equation}
\end{rmk}

We alert the reader of the existence of multiple non-equivalent definitions of cross ratios in the literature. For the reader's convenience, we summarize the relations between Definition~\ref{def:crossratio} and other notions in the literature in Appendix \ref{other cross ratios}.

We now recall the notion of positive cross ratios from \cite{H99} (see also \cite{MZ19}):

\begin{dfn}[Positive Cross Ratio]\label{def:positive cross ratio}
	A cross ratio $\beta : \partial \Gamma^{(4)} \to \R$ is said to be \emph{positive} if for every 4-tuple of pairwise distinct cyclically ordered points $x, y, w, z \in \partial \Gamma$ it satisfies $\beta(x,y,z,w) \geq 1$. We say that $\beta$ is \emph{strictly positive} if for every 4-tuple $x, y, w, z \in \partial \Gamma$ as above we have $\beta(x,y,z,w) > 1$.
\end{dfn}

A positive cross ratio has a natural notion of length functions associated to any non-trivial element $\gamma \in \Gamma$. We briefly recall its definition:

\begin{dfn}[Period of a Cross Ratio] \label{def:period}
	Let $\beta : \partial \Gamma^{(4)} \to \R$ be a cross ratio. For any $\gamma \in \Gamma - \{e\}$ we define the \emph{$\beta$-period of $\gamma$} to be
	\[
	L_\beta(\gamma) : = \log \abs{\beta(\gamma^+, \gamma^-, x, \gamma x)} ,
	\]
	for some $x \in \partial \Gamma - \{\gamma^+, \gamma^-\}$, where $\gamma^+$ and $\gamma^-$ denote the attracting and repelling fixed points of $\gamma$ in $\partial \Gamma$.
\end{dfn}

It is simple to deduce from the symmetries of a cross ratio (see in particular \eqref{eq:crossshear}, \eqref{eq:inverse crossratio}, \eqref{eq:symmetry for shear}) that the quantity $L_\beta(\gamma)$ does not depend on the choice of $x \in \partial \Gamma - \{\gamma^+, \gamma^-\}$, and it satisfies $L_\beta(\gamma) = L_\beta(\gamma^{-1}) = L_\beta(\delta \gamma \delta^{-1})$ for any $\gamma, \delta \in \Gamma$, with $\gamma \neq e$.

As observed by Hamenstädt \cite{H97} (see also Martone-Zhang \cite{MZ19}), any positive cross ratio $\beta$ uniquely determines a geodesic current compatible with its period functions, as described by the following result:

\begin{thm}[{\cite{H97}*{Lemma~1.10}, \cite{MZ19}*{Appendix~A}}]
	\label{thm:current of cross-ratio}
	Every positive cross ratio $\beta:\partial\Gamma^{(4)}\to\mb{R}$ is represented by a geodesic current $\mathscr{L}_\beta \in \mc{C}$, that is, for every $\gamma\in\Gamma - \{e\}$ we have
	\[
	L_\beta(\gamma) = i(\mathscr{L}_\beta,\gamma) ,
	\]
	where $L_\beta(\gamma)$ denotes the $\beta$-period of $\gamma$.
\end{thm}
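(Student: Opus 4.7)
The plan is to construct $\mathscr{L}_\beta$ directly as a Borel measure on $\mc{G}$ by prescribing its mass on ``rectangles'' via $\log \beta$, and then to verify the length-spectrum property by a fundamental domain argument. For every 4-tuple $(a,b,c,d)$ of pairwise distinct cyclically ordered points in $\partial \Gamma$, consider the rectangle
\[
R(a,b,c,d) := \{\{y,z\} \in \mc{G} \mid y \in [a,b],\ z \in [c,d] \} \subset \mc{G},
\]
and set $\tilde{\mathscr{L}}_\beta(R(a,b,c,d)) := \log \beta(a,b,c,d)$. Positivity of $\beta$ (Definition~\ref{def:positive cross ratio}) ensures non-negativity, while the multiplicativity identity in \eqref{eq:crossshear}, namely $\beta(a,b,c,d) = \beta(a,b,c,x)\beta(a,b,x,d)$, yields finite additivity under subdivision of the second factor; applying this together with the symmetry $\beta(u,v,w,z) = \beta(w,z,u,v)$ delivers finite additivity under subdivision of the first factor as well. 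Rectangles form a semi-ring whose generated $\sigma$-algebra is the Borel $\sigma$-algebra of $\mc{G}$.

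I would then promote $\tilde{\mathscr{L}}_\beta$ to a genuine Borel measure via Carathéodory extension. Continuity of $\beta$ on $\partial \Gamma^{(4)}$ together with the normalizations $\beta(u,u,w,z) = \beta(u,v,w,w) = 1$ implies that $\log \beta(a_n,b_n,c_n,d_n) \to 0$ whenever the cyclically ordered 4-tuple degenerates so that two adjacent entries collide; this yields continuity from above on nested sequences of rectangles shrinking to sets of zero mass, hence countable additivity on the semi-ring. Local finiteness follows because any compactum in $\mc{G}$ is covered by finitely many closed rectangles whose corners stay inside $\partial \Gamma^{(4)}$, where $\beta$ is bounded. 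The $\Gamma$-invariance of the resulting measure $\mathscr{L}_\beta$ is directly inherited from that of $\beta$, so $\mathscr{L}_\beta \in \mc{C}$ is a geodesic current.

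It remains to compute $i(\mathscr{L}_\beta, \gamma)$ for $\gamma \in \Gamma - \{e\}$. After reducing to the primitive case, let $\gamma^\pm \in \partial \Gamma$ be the fixed points of $\gamma$ and let $A_+, A_- \subset \partial \Gamma$ denote the two open arcs complementary to $\{\gamma^+, \gamma^-\}$. For any $x \in A_+$, the arc $[x, \gamma x] \subset A_+$ is a fundamental domain for the action of $\langle \gamma \rangle$ on $A_+$, and $[x,\gamma x] \times A_-$ is a fundamental domain for the $\langle \gamma \rangle$-action on the set of geodesics crossing the axis of $\gamma$; unwinding the definition of the intersection form as the mass of the quotient gives
\[
i(\mathscr{L}_\beta, \gamma) = \mathscr{L}_\beta([x, \gamma x] \times A_-).
\]
Exhausting $A_-$ by closed subarcs $[x_m, y_m]$ with $x_m \to \gamma^+$ and $y_m \to \gamma^-$ inside $A_-$, the definition of $\tilde{\mathscr{L}}_\beta$ and continuity of $\beta$ produce
\[
\mathscr{L}_\beta([x, \gamma x] \times [x_m,y_m]) = \log \beta(x, \gamma x, x_m, y_m) \xrightarrow{m \to \infty} \log \beta(x, \gamma x, \gamma^+, \gamma^-),
\]
which, by the symmetry $\beta(u,v,w,z) = \beta(w,z,u,v)$, equals $\log \beta(\gamma^+, \gamma^-, x, \gamma x) = L_\beta(\gamma)$.

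The main obstacle is establishing countable additivity and local finiteness at the extension step: it requires uniform control of $\log \beta$ as the cyclically ordered 4-tuple degenerates toward a partial diagonal of $\partial \Gamma^{(4)}$, where $\beta$ could a priori blow up or vanish. Positivity (which forces $\log \beta \geq 0$ and rules out oscillatory cancellations) combined with the H\"older continuity of $\beta$ near the partial diagonals is exactly what is needed for the Carathéodory construction to go through.
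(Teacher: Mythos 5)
The paper does not actually prove this statement: it is quoted from \cite{H97} and \cite{MZ19}, and the only property of $\mathscr{L}_\beta$ the paper uses later (in the proof of Lemma \ref{lem: positive lengths}) is exactly your defining formula $\mathscr{L}_\beta([a,b]\,\tilde{\times}\,[c,d])=\log\beta(a,b,c,d)$ on rectangles. Your argument is correct and is essentially the construction of those references: non-negativity of rectangle masses from positivity, finite additivity from the multiplicative identity in \eqref{eq:crossshear} together with $\beta(u,v,w,z)=\beta(w,z,u,v)$, Carath\'eodory extension, $\Gamma$-invariance by uniqueness of the extension, and the period recovered on the fundamental domain $[x,\gamma x]\times A_-$ by continuity from below of the measure and continuity of $\beta$ at $(x,\gamma x,\gamma^+,\gamma^-)$. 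The only miscalibration is your closing diagnosis: the extension step needs nothing beyond the continuity of $\beta$ (already part of Definition \ref{def:crossratio}) and positivity, because the relevant degenerations are arcs collapsing to a point, where $\beta\to 1$ by the normalizations in \eqref{eq:crossshear}; the blow-up of $\beta$ near the partial diagonals only prevents $\mathscr{L}_\beta$ from having finite total mass, which is harmless for a locally finite current, and H\"older regularity plays no role in this measure-theoretic step.
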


The geodesic current $\mathscr{L}_\beta$ will be called the \emph{Liouville current} of $\beta$, in analogy with the terminology introduced by Bonahon \cite{Bo88}*{\S~2} in the case of hyperbolic structures on closed surfaces. By Theorem \ref{thm:current of cross-ratio}, the non-negative function
\[
\begin{matrix}
	L_\beta : & \mathcal{C} & \longrightarrow & \R \\
	& c & \longmapsto & i(\mathscr{L}_\beta, c) 
\end{matrix}
\]
naturally extends the $\beta$-period functions to the entire space of geodesic currents. Moreover we have:

\begin{lem}\label{lem: positive lengths}
	If $\beta$ is a strictly positive cross ratio, then $L_\beta(c) > 0$ for any non-trivial geodesic current $c \in \mathcal{C}$. 
\end{lem}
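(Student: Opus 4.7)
The strategy is to prove the stronger statement that the Liouville current $\mathscr{L}_\beta$ has \emph{full support} in $\mathcal{G}$, and then to deduce $i(\mathscr{L}_\beta, c) > 0$ by a standard Fubini argument on the space $\mathcal{J}$ of pairs of crossing geodesics.

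To establish full support, I would appeal to the local ``box description'' of $\mathscr{L}_\beta$ underlying its construction in \cite{H97} and \cite{MZ19}*{Appendix~A}: for any pair of disjoint arcs $I, J \subset \partial\Gamma$ with endpoints $a_1, a_2$ and $b_1, b_2$ positively cyclically ordered, if $[I,J] \subset \mathcal{G}$ denotes the set of geodesics having one endpoint in the interior of $I$ and the other in the interior of $J$, then the $\mathscr{L}_\beta$-measure of $[I,J]$ equals $\log\beta(a_1, a_2, b_2, b_1)$ (up to a conventional reordering matching Definition~\ref{def:crossratio}). Since such open boxes form a basis for the topology of $\mathcal{G}$, strict positivity of $\beta$ in the sense of Definition~\ref{def:positive cross ratio} directly yields $\mathscr{L}_\beta([I,J]) > 0$ for every non-degenerate box, and hence $\operatorname{supp}(\mathscr{L}_\beta) = \mathcal{G}$.

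For the intersection step, let $c \in \mathcal{C}$ be non-trivial and pick a leaf $\ell_0 \in \operatorname{supp}(c)$ with endpoints $p, q \in \partial \Gamma$. Choose small pairwise disjoint arcs $I_p \ni p$, $I_q \ni q$, $J_r$, $J_s \subset \partial \Gamma$ such that $J_r$ and $J_s$ lie respectively in the two connected components of $\partial \Gamma - (I_p \cup I_q)$. Then $U := [I_p, I_q] \subset \mathcal{G}$ is an open neighborhood of $\ell_0$, while every geodesic of $V := [J_r, J_s] \subset \mathcal{G}$ crosses every geodesic of $U$, so $U \times V \subset \mathcal{J}$. After shrinking the arcs if necessary, we may additionally arrange that the quotient map $\mathcal{J} \to \mathcal{J}/\Gamma$ is injective on $U \times V$; since $\ell_0 \in \operatorname{supp}(c) \cap U$, we still have $c(U) > 0$. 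Applying Fubini and the full-support property just established,
\[
i(c, \mathscr{L}_\beta) \;\geq\; (c \times \mathscr{L}_\beta)(U \times V) \;=\; c(U) \cdot \mathscr{L}_\beta(V) \;>\; 0,
\]
which completes the argument.

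The main obstacle is not the intersection-theoretic step, which is routine, but rather extracting the precise box formula for $\mathscr{L}_\beta$ from the construction in \cite{H97,MZ19}: some bookkeeping is needed to reconcile the cross-ratio conventions of those references with Definition~\ref{def:crossratio}, and to verify that boxes whose closures are small enough inject into $\mathcal{J}/\Gamma$ (which, however, follows from the proper discontinuity of the $\Gamma$-action on $\mathcal{J}$).
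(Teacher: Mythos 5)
Your proposal is correct and follows essentially the same route as the paper: both arguments combine the box formula $\mathscr{L}_\beta([a,b]\tilde{\times}[c,d]) = \log\beta(a,b,c,d)$ from \cite{H97} and \cite{MZ19} with the product-measure definition of $i(\bullet,\bullet)$, choosing a small box around a leaf of $\supp(c)$ and a transverse box of crossing geodesics that injects into $\mathcal{J}/\Gamma$ by proper discontinuity. Your framing via ``full support of $\mathscr{L}_\beta$'' is only a mild repackaging of the same strict-positivity input, so no substantive difference remains.
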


\begin{proof}
	The first part of the assertion follows immediately from the definition of the intersection form on geodesic currents (see Section \ref{subsec:geodesic currents}). Consider now a non-trivial geodesic current $c \in \mathcal{C}$, and select a leaf $\ell'$ in the support of $c$, namely a point inside $\mathcal{G}$ for which all neighborhoods have positive $c$-measure. We now choose a geodesic $\ell \in \mathcal{G}$ that crosses $\ell'$. We can find small intervals $I, J$ and $I', J'$ inside $\partial \Gamma$ around the endpoints of $\ell$ and $\ell'$, respectively, so that every pair of geodesics $h \in I \tilde{\times} J : = (I \times J)/\sim$ and $h' \in I' \tilde{\times} J' : = (I' \times J')/\sim$ is crossing (here $\sim$ denotes the equivalence relation $(x,y) \sim (y,x)$ on $(\partial \Gamma \times \partial \Gamma)/\sim$, compare with Section \ref{subsec:geo lam}). Accordingly with the notation introduced in Section \ref{subsec:geodesic currents}, we have
	\[
	i(\mathscr{L}_\beta, c) = (\mathscr{L}_\beta \times c)(\mathcal{J}/\Gamma) ,
	\]
	where $\mathcal{J}$ denotes the set of pairs of crossing geodesics in $\mathcal{G} \times \mathcal{G}$. Since $\Gamma$ acts freely and properly discontinuously on $\mathcal{J}$, up to choosing smaller intervals $I, J, I', J'$, we can assume that the Borel-measurable set
	\[
	\mathcal{K} : = (I \tilde{\times} J) \times (I' \tilde{\times} J') \subset \mathcal{J}
	\]
	projects injectively inside $\mathcal{J}/\Gamma$. In particular we have
	\begin{align*}
		(\mathscr{L}_\beta \times c)(\mathcal{J}/\Gamma) & \geq (\mathscr{L}_\beta \times c)(\mathcal{K}) \\
		& = (\mathscr{L}_\beta)(I \tilde{\times} J) \cdot c(I' \tilde{\times} J') .
	\end{align*}
	By construction $c(I' \tilde{\times} J') > 0$, so it is enough to show that $\mathscr{L}_\beta(I \tilde{\times} J) > 0$. This is in fact a direct consequence of the definition of the Liouville current $\mathscr{L}_\beta$, and the fact that $\beta$ is strictly positive: indeed the measure $\mathscr{L}_\beta$ satisfies
	\[
	\mathscr{L}_\beta([a,b] \tilde{\times} [c,d]) = \log \beta(a,b,c,d)
	\]
	for any pair of disjoint intervals $[a,b], [c,d]$ in $\partial\Gamma$, where $a, b, c, d$ are cyclically ordered (compare with \cite{H97}*{Lemma~1.10}, \cite{MZ19}*{Appendix~A}). Therefore, being $\beta$ strictly positive, we immediately conclude that $\mathscr{L}_\beta(I \tilde{\times} J) > 0$, and consequently $i(\mathscr{L}_\beta, c) > 0$, as desired.
\end{proof}


\section{Laminations and pleated sets}
\label{sec:laminations}

In order to understand the geometry of maximal representations and relate it to Teichmüller space $\T$, we study certain $1$- and $2$-dimensional invariant objects contained in the pseudo-Riemannian symmetric space $\mb{H}^{2,n}$, namely {\em geodesic laminations} and {\em pleated sets}. 

Recall that, by Theorem~\ref{thm:maximal limit curve}, the image of the limit map $\xi : \partial\Gamma \to \partial\hyp^{2,n}$ of every maximal representation $\rho$ describes a $\rho$-invariant topological circle $\Lambda_\rho : = \xi(\partial\Gamma) \subset \partial \hyp^{2,n}$ satisfying the following condition: Every triple of distinct points in $\Lambda_\rho$ generates a subspace of $\R^{2,n+1}$ of signature $(2,1)$. Since a consistent part of the results on geodesic laminations and pleated sets in $\mb{H}^{2,n}$ that we focus on in the current section rely only on this specific property of the set $\Lambda_\rho$, we introduce the following notion:

\begin{dfn}[Acausal Curve]
	Let $\Lambda$ be a topological circle embedded in $\partial\hyp^{2,n}$. We say that $\Lambda$ is an \emph{acausal curve} if every triple of distinct points of $\Lambda$ generates a subspace of $\R^{2,n+1}$ with signature $(2,1)$.
\end{dfn}

Notice that acausal curves provide specific examples of acausal sets inside $\partial\widehat{\mb{H}}^{2,n}$ in the sense of Definition \ref{def:acausal}. Indeed we have:

\begin{lem}[{\cite{DGK17}}] \label{lem:acausal lifts}
	Every acausal curve $\Lambda$ is entirely contained in an affine chart of $\mathbb{H}^{2,n} \subset \rp^{n+2}$. In particular, there exists an acausal subset $\widehat{\Lambda}$ inside $\partial\widehat{\hyp}^{2,n}$ such that the natural projection $\widehat{\Lambda} \to \Lambda$ is a homeomorphism with respect to their respective subset topologies.
\end{lem}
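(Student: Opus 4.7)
The plan is to simultaneously construct the lift $\widehat\Lambda$ and exhibit a hyperplane of $\rp^{n+2}$ disjoint from $\Lambda$. Both steps hinge on a single observation about the Gram matrix of triples of isotropic vectors on $\Lambda$.

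First I show that for any two distinct $p, q \in \Lambda$ with isotropic representatives $\hat p, \hat q \in \R^{2,n+1}$, one has $\scal{\hat p}{\hat q}_{2,n+1} \neq 0$. If this inner product vanished, then $\mathrm{Span}(\hat p, \hat q)$ would be totally isotropic, and for any third point $r \in \Lambda - \{p,q\}$ the Gram matrix of $(\hat p, \hat q, \hat r)$ would reduce to the antidiagonal form
\[
\begin{pmatrix} 0 & 0 & * \\ 0 & 0 & * \\ * & * & 0 \end{pmatrix},
\]
whose rank is at most $2$. This is incompatible with the span having signature $(2,1)$, contradicting the acausality of $\Lambda$.

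Next I construct the lift. Fix $p_0 \in \Lambda$ together with an isotropic representative $\hat p_0 \in \partial\widehat\hyp^{2,n}$. By the previous step, for each $p \in \Lambda - \{p_0\}$ there is a unique lift $\hat p \in \partial\widehat\hyp^{2,n}$ with $\scal{\hat p_0}{\hat p}_{2,n+1} < 0$, and this prescription defines a continuous section of the double cover $\partial\widehat\hyp^{2,n} \to \partial\hyp^{2,n}$ above $\Lambda - \{p_0\}$. The crucial point is that such lifts are automatically acausal with respect to one another: for distinct $p, q \in \Lambda - \{p_0\}$, the Gram matrix of $(\hat p_0, \hat p, \hat q)$ reads
\[
G = \begin{pmatrix} 0 & a & b \\ a & 0 & c \\ b & c & 0 \end{pmatrix}, \qquad \det G = 2abc,
\]
with $a, b < 0$ by construction. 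Since the span has signature $(2,1)$, the determinant is negative, so $abc < 0$ and thus $c = \scal{\hat p}{\hat q}_{2,n+1} < 0$. To extend the section across $p_0$, I pick a second reference point $q_0 \in \Lambda - \{p_0\}$ with its lift $\hat q_0$ and repeat the construction relative to $q_0$; the two partial sections coincide on $\Lambda - \{p_0, q_0\}$ by the computation just performed, so they assemble into a continuous section $\widehat\Lambda$ over all of $\Lambda$. That $\widehat\Lambda$ is acausal in the sense of Definition~\ref{def:acausal} follows from Lemma~\ref{lem:spacelike lines}.

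Finally, set $w := \hat p_0 + \hat q_0 \in \R^{2,n+1}$. For every $\hat p \in \widehat\Lambda$, both $\scal{\hat p_0}{\hat p}_{2,n+1}$ and $\scal{\hat q_0}{\hat p}_{2,n+1}$ are nonpositive, and they cannot vanish simultaneously ($\hat p_0$ is orthogonal to itself but $\scal{\hat q_0}{\hat p_0}_{2,n+1} < 0$, and symmetrically at $\hat q_0$); hence $\scal{w}{\hat p}_{2,n+1} < 0$ throughout $\widehat\Lambda$. Therefore $\Lambda \cap \mb{P}(w^{\perp}) = \emptyset$ and $\Lambda$ sits in the affine chart $\rp^{n+2} - \mb{P}(w^{\perp})$. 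The projection $\widehat\Lambda \to \Lambda$ is a continuous bijection between compact Hausdorff spaces, and hence a homeomorphism. I expect the main subtlety to be the continuous extension of the lift across the reference point $p_0$, where the defining sign condition degenerates; this is what forces the introduction of the second reference $q_0$ and relies precisely on the compatibility encoded in the Gram-determinant computation.
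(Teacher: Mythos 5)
Your proof is correct, but it takes a genuinely different route from the paper: the paper does not argue directly at all, it simply observes that "acausal" coincides with the notion of "negative" subsets in Danciger–Guéritaud–Kassel and invokes \cite{DGK17}*{Proposition~1.10} (together with \cite{DGK17}*{Lemma~3.2}) to get the affine chart and the lift. Your argument replaces that citation with a short self-contained computation: the rank observation shows pairwise products of isotropic representatives never vanish, the sign of the Gram determinant for signature $(2,1)$ ($\det G = 2abc < 0$) propagates the negativity of products along the section, the two overlapping sections based at $p_0$ and $q_0$ glue to a global continuous lift, and the timelike vector $w = \hat p_0 + \hat q_0$ furnishes a hyperplane $\mathbb{P}(w^\perp)$ missing $\Lambda$, hence the affine chart; acausality of $\widehat\Lambda$ then follows from Lemma~\ref{lem:spacelike lines}, and compactness gives the homeomorphism. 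What the citation buys the paper is the full strength of the DGK machinery (in particular proper convexity of an invariant domain, which the paper uses later when forming convex hulls), whereas your argument buys an elementary, purely linear-algebraic proof of exactly the statement of the lemma, and it in fact applies verbatim to any acausal subset of $\partial\hyp^{2,n}$ with at least three points, not only to topological circles. The only points worth tightening in a write-up are cosmetic: fix actual vector representatives when forming $w$ (the sign conditions are ray-independent, and $w \neq 0$ since $\scal{\hat p_0}{\hat q_0} < 0$), and note explicitly that the lift chosen by the section based at $q_0$ at the point $p_0$ is $\hat p_0$ itself, so the glued section is well defined at both reference points.
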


\begin{proof}
	This assertion is a corollary of \cite{DGK17}*{Proposition~1.10}. Notice that the notion of acausal subsets that we use and the notion of negative subsets of Danciger, Guéritaud, and Kassel \cite{DGK17} agree, as proved in \cite{DGK17}*{Lemma~3.2}.
\end{proof}

In the same way that geodesic laminations in $\hyp^2$ can be identified with suitable sets of unoriented pairs of points in $\partial\Gamma$ (see Section~\ref{subsec:geo lam}), we define:

\begin{dfn}[$\Lambda$-Lamination]
	Let $\Lambda$ be an acausal curve inside $\partial \hyp^{2,n}$. A \emph{$\Lambda$-lamination} is a closed subset $\lambda\subset \mathcal{G}_\Lambda = (\Lambda\times\Lambda-\Delta)/(x,y)\sim(y,x)$  such that every pair of points $(a,b),(a',b')\in\lambda$ gives disjoint pairs on $\Lambda$. 
	
	A $\Lambda$-lamination $\lambda$ is {\em maximal} if there is no geodesic $\ell\in\mc{G}_\Lambda-\lambda$ which is disjoint from all the leaves of $\lambda$. We denote by $\mc{G}_\Lambda^m$ the space of maximal $\Lambda$-laminations.
	
	As $\Lambda$ is an acausal curve, every $(a,b)\in\Lambda\times\Lambda-\Delta$ represents a spacelike line $[a,b]\subset\mb{H}^{2,n}$. We define the \emph{geometric realization of a $\Lambda$-lamination $\lambda$} in $\mb{H}^{2,n}$ to be ${\hat \lambda}=\cup_{(a,b)\in\lambda}{[a,b]}$. 
\end{dfn}

Notice that the geometric realization ${\hat \lambda}$ of $\lambda$ is a closed subset of $\mb{H}^{2,n}$ contained in the convex hull $\mc{CH}(\Lambda)\subset\mb{H}^{2,n}$ of the acausal curve $\Lambda\subset\partial\mb{H}^{2,n}$ (which is well defined as, by Lemma \ref{lem:acausal lifts}, $\Lambda$ is contained in a properly convex set).  

The main class of geodesic laminations and pleated sets that we will focus on in the rest of the exposition are those arising from maximal representations $\rho$ and their associated pseudo-Riemannian manifolds $M_\rho$:

\begin{dfn}[$\rho$-Lamination]
	Let $\rho:\Gamma\to\SOtwon$ be a maximal representation with associated acausal curve $\Lambda_\rho\subset\partial\mb{H}^{2,n}$. A \emph{$\rho$-lamination} is a $\rho(\Gamma)$-invariant $\Lambda_\rho$-lamination. 
\end{dfn}

Whenever a $\Lambda$-lamination $\lambda$ is maximal, it is possible to "fill" the geometric realization ${\hat \lambda}$ of $\lambda$ with totally geodesic spacelike ideal triangles of $\hyp^{2,n}$, one for each complementary triangle of $\lambda_0$ inside $\hyp^2$. Together, the geometric realizations of $\lambda$ and of its complementary regions form the \emph{pleated set} $\widehat{S}_\lambda$ of $\lambda$. 

\begin{dfn}[Pleated Set]
	Let $\Lambda\subset\partial\mb{H}^{2,n}$ be an acausal curve and let $\lambda$ be a maximal $\Lambda$-lamination. The \emph{pleated set associated to $\lambda$} is the set $\widehat{S}_\lambda$ obtained as the union of ${\hat \lambda}$ with all spacelike triangles $\Delta$ bounded by leaves of ${\hat \lambda}$.
\end{dfn}

Let us now give a brief outline of the content of Section \ref{sec:laminations}: 
\begin{description}
	\item[\S~\ref{subsec:crossing and acausality}] We start by establishing the existence of geodesic laminations and discuss their causal structure and topological features: As it turns out, geometric realizations of $\Lambda$-laminations are always acausal subsets (see Proposition \ref{pro:lamination acausal}). To see this, it is in fact sufficient to investigate the causal structure of a set of the form $\ell \cup \ell'$, where $\ell,\ell'\subset\mb{H}^{2,n}$ are two geodesics with distinct endpoints on $\Lambda\subset\partial\mb{H}^{2,n}$. In particular, we observe in Lemma \ref{lem:four points} that the set  $\ell\cup\ell'$ is acausal if and only if their endpoints $\partial\ell,\partial\ell'$ are not crossing, as in Definition \ref{def:crossing and disjoint}.
	
	\item[\S~\ref{subsec:pleated sets}] We prove the acausality of pleated sets in Proposition \ref{pro:existence pleated sets}. The proof relies on the acausality of ${\hat \lambda}$ in combination with purely topological arguments. Let $\mb{D}^2\times\mb{S}^n$ be a Poincaré model of $\widehat{\mb{H}}^{2,n}$ and let $\pi : \widehat{\mb{H}}^{2,n} \to \mb{D}^2$ denote its associated projection. By analyzing the complementary regions of $\pi(\hat{\lambda})$ inside $\mathbb{D}^2$, we show that the restriction of the induced projection $\pi:\widehat{\mb{H}}^{2,n}\to\mb{D}^2$ to the pleated set $\widehat{S}_\lambda$ is bijective, and in particular $\widehat{S}_\lambda$ is equal to the graph of some function $g_\lambda:\mb{D}^2\to\mb{S}^n$. This fact, in combination with the acausality properties of $\hat{\lambda}$, implies that $\widehat{S}_\lambda$ is acausal and, hence, that $g_\lambda$ is a strictly $1$-Lipschitz function with respect to the spherical metrics. In particular, we deduce that pleated sets are always nicely embedded Lipschitz subsurfaces of $\mb{H}^{2,n}$.
	
	\item[\S~ \ref{subsec:continuity pleated sets}] We establish the continuous dependence of pleated sets with respect to the choice of maximal laminations. More precisely, we show that the map
	\[
	\begin{matrix}
		\mc{G}_\Lambda^m & \longrightarrow & {\rm Lip}_1(\mb{D}^2,\mb{S}^n) \\
		\lambda & \longmapsto & g_\lambda
	\end{matrix}
	\]
	is continuous with respect to the Chabauty topology on the source, and the topology of uniform convergence over compact subsets of $\mb{D}^2$ on the target (see Proposition \ref{pro:continuity pleated sets}). This property will play an important role in the study of the geometry of pleated sets, as described in Section~\ref{sec:geometry}.
	
	\item[\S~\ref{subsec:bending locus}] We conclude the first part of our analysis on pleated sets with the study of their \emph{bending locus}, namely the subset on which a pleated set $\widehat{S}_\lambda$ is folded. The bending locus does not necessarily coincide with the entire maximal lamination $\lambda$, but it always describes a sublamination of it (see Proposition \ref{pro:bending locus}). In the $\rho$-invariant setting, the notion of bending locus will allow us to characterize the set of (homotopy classes of) curves inside a pleated set $S_\lambda = \widehat{S}_\lambda/\rho(\Gamma) \subset M_\rho$ whose lengths are strictly dominated by the lengths of their geodesic representatives inside $M_\rho$.
\end{description}

We emphasize that every statement appearing in Sections \ref{subsec:crossing and acausality}, \ref{subsec:pleated sets}, and \ref{subsec:continuity pleated sets} apply to general $\Lambda$-laminations $\lambda$, and in particular no $\rho$-invariance of $\Lambda$ or $\lambda$ are required. On the other hand, the content of Section \ref{subsec:bending locus} applies to pleated sets that are invariant by the action of some maximal representation.

\subsection{Crossing geodesics and acausality}\label{subsec:crossing and acausality}

We start our analysis by showing that the topological property of spacelike geodesics with endpoints in a acausal curve $\Lambda$ of being crossing or disjoint has an immediate interpretation in terms of their acausal structure inside the pseudo-Riemannian space $\mb{H}^{2,n}$. More precisely, we have:

\begin{lem}
	\label{lem:four points}	
	Let $a,b,a',b'\in\Lambda$ be four distinct points on an acausal curve $\Lambda\subset\partial\mb{H}^{2,n}$ such that the geodesics $[c,d]$ with $c,d\in\{a,b,a',b'\}$ are all spacelike. Then:
	\begin{enumerate}[(i)]
		\item{The pairs $(a,b)$ and $(a',b')$ are {\em disjoint} if and only if the geodesics $\ell=[a,b],\ell'=[a',b']$ are disjoint and the subset $\ell\cup\ell'\subset\mb{H}^{2,n}$ is acausal.}
		\item{The pairs $(a,b)$ and $(a',b')$ are {\em crossing} if and only if there is a timelike geodesic which is orthogonal to both geodesics $\ell=[a,b]$ and $\ell'=[a',b']$.}
	\end{enumerate} 
\end{lem}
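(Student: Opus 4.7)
The plan is to lift the four endpoints to isotropic vectors $a,b,a',b'\in\partial\widehat{\mb{H}}^{2,n}$ with all six pairwise inner products negative, which is possible by combining Lemma \ref{lem:acausal lifts} with Lemma \ref{lem:spacelike lines}, and then to split according to $\dim V$, where $V := \mathrm{Span}_\R\{a,b,a',b'\}\subset\R^{2,n+1}$. Since by hypothesis any three of the four vectors span a $(2,1)$ subspace, $\dim V$ is either $3$ or $4$, and the two cases require rather different arguments.

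In the coplanar case $\dim V=3$, the space $V$ has signature $(2,1)$, and $\mb{P}(V)\cap\mb{H}^{2,n}$ is a totally geodesic spacelike plane $H\cong\mb{H}^2$ containing $\ell,\ell'$ with all four endpoints lying on $\partial H$. The dichotomy (i) then reduces to elementary $\mb{H}^2$-geometry: disjoint endpoint pairs correspond to disjoint geodesics in $H$, and any subset of a totally geodesic spacelike plane is automatically acausal in $\mb{H}^{2,n}$. For (ii), when the pairs cross and $\ell\cap\ell'=\{p\}$, any unit vector in the $n$-dimensional negative-definite subspace $(T_p H)^\perp\subset T_p\mb{H}^{2,n}$ exponentiates to a timelike geodesic perpendicular to both $\dot\ell(p)$ and $\dot\ell'(p)$; conversely, if the pairs are disjoint, uniqueness of geodesics in $\mb{H}^{2,n}$ together with total-geodesicity of $H$ force any common perpendicular of $\ell$ and $\ell'$ to lie in $H$, hence to be spacelike.

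In the four-dimensional case $\dim V=4$, the subspace $V$ must have signature $(2,2)$, so $\mb{P}(V)\cap\mb{H}^{2,n}$ is a totally geodesic copy of anti-de Sitter $3$-space $\mathrm{AdS}^3=\mb{H}^{1,2}$ containing $\ell,\ell'$. These two geodesics are automatically disjoint in $\mb{H}^{2,n}$ since $\mathrm{Span}\{a,b\}\cap\mathrm{Span}\{a',b'\}=\{0\}$. After normalizing so $\langle a,b\rangle = \langle a',b'\rangle = -\tfrac{1}{2}$, I parametrize $\ell(t)=e^t a + e^{-t}b$, $\ell'(s)=e^s a' + e^{-s}b'$, and set $\alpha,\beta,\gamma,\delta>0$ to be $-\langle a,a'\rangle, -\langle a,b'\rangle, -\langle b,a'\rangle, -\langle b,b'\rangle$. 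The function
\[
f(t,s) := -\langle\ell(t),\ell'(s)\rangle = \alpha e^{t+s} + \beta e^{t-s} + \gamma e^{-t+s} + \delta e^{-t-s}
\]
is strictly convex and proper on $\R^2$, and a short Lagrange computation identifies its unique minimum as $f_{\min} = 2\sqrt{\alpha\delta}+2\sqrt{\beta\gamma}$. The equations $\partial_t f = \partial_s f = 0$ at the critical point $(t_0,s_0)$ translate, using $\langle\ell,\dot\ell\rangle \equiv 0$, into $\langle\dot\ell(t_0),\ell'(s_0)\rangle = \langle\ell(t_0),\dot\ell'(s_0)\rangle = 0$, which is precisely the condition that the chord from $\ell(t_0)$ to $\ell'(s_0)$ realizes the (unique) common perpendicular of $\ell$ and $\ell'$ in $\mathrm{AdS}^3$. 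By Lemma \ref{lem:spacelike segments linear}, this common perpendicular is spacelike, lightlike, or timelike according as $f_{\min}>1$, $=1$, or $<1$, and by convexity $\ell\cup\ell'$ is acausal exactly when $f_{\min}>1$.

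The last step, and the one I expect to be the main obstacle, is to translate the analytic alternative $f_{\min}\gtrless 1$ into the topological crossing-versus-disjoint dichotomy for the endpoint pairs on $\Lambda$. My plan is to employ the classical identification $\mathrm{AdS}^3\cong\mathrm{SL}(2,\R)$ with boundary $\mb{R}\mathrm{P}^1\times\mb{R}\mathrm{P}^1$, under which acausal curves become graphs of orientation-preserving homeomorphisms of $\mb{R}\mathrm{P}^1$ and the quantities $\alpha,\beta,\gamma,\delta$ factor through the two projective cross ratios of the left and right coordinate projections of $a,b,a',b'$; the sign of $f_{\min}^2 - 1$ then factors cleanly and can be read off from the cyclic order. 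A perhaps cleaner alternative is a continuity argument: fix $a,b$ and move $a',b'$ along $\Lambda$, using the fact that $f_{\min}$ can only cross the threshold $1$ at configurations where $\{a',b'\}$ meets $\{a,b\}$, and these coincide exactly with the transitions between crossing and disjoint regimes, so a single reference configuration determines the sign on each component.
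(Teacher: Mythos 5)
Your setup through the analytic core is sound and closely parallels the paper's computation: after lifting so that all six pairwise products are negative, you study $f(t,s)=-\langle\ell(t),\ell'(s)\rangle$, note it is proper with a unique critical point, identify $f_{\min}=2\sqrt{\alpha\delta}+2\sqrt{\beta\gamma}$ (the paper's $m$), interpret the critical point as the common perpendicular, and classify its type via Lemma \ref{lem:spacelike segments linear}; your separate coplanar case is also correct and in fact handles a degenerate configuration the paper treats somewhat implicitly. But the proof is not complete, because the step you yourself flag as ``the main obstacle'' --- translating $f_{\min}\gtrless 1$ into the crossing/disjoint dichotomy of $(a,b)$ and $(a',b')$ on $\Lambda$ --- is exactly the content of the lemma, and neither of your two sketched strategies closes it. Every implication in (i) and (ii) funnels through this equivalence, so without it you have only proved ``acausal union iff $f_{\min}>1$'' and ``timelike common perpendicular iff $f_{\min}<1$'', not the statements about the cyclic order of the endpoints.

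Concretely: for strategy (a), the ``graph of an orientation-preserving homeomorphism'' picture applies to acausal curves inside $\partial\mathrm{AdS}^3=\rp^1\times\rp^1$, but $\Lambda$ is not contained in the boundary of the totally geodesic $\mb{P}(V)\cap\mb{H}^{2,n}$ (only the four points are), so the relation between the cyclic order of $a,b,a',b'$ \emph{on $\Lambda$} and their left/right $\rp^1$-coordinates still has to be established --- this is precisely what the paper supplies by observing that the projection of a Poincar\'e model restricts to a homeomorphism $\widehat{\Lambda}\to\partial\mb{D}^2$ (Lemmas \ref{lem:acausal lifts} and \ref{lem:projection}), so that crossing/disjoint can be read off a normalized coordinate ($\alpha\lessgtr 0$ in their notation). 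For strategy (b), the claim that $f_{\min}$ can only cross the threshold $1$ when $\{a',b'\}$ meets $\{a,b\}$ is unproven and, as stated, false: a coplanar crossing configuration of four distinct points on $\Lambda$ has $f_{\min}=1$, so a deformation along $\Lambda$ can hit the threshold while passing through the lower-dimensional stratum, and ruling out $f_{\min}=1$ for non-coplanar quadruples is essentially the computation you are trying to avoid. The fix is to do what the paper does in its proof of Lemma \ref{lem:four points}: normalize $a,b,a'$ by an isometry, write $b'=\alpha e_1+\beta e_2+\gamma e_3+u$ with $u$ in the negative-definite directions, check that disjoint/crossing corresponds to the sign of $\alpha$ via the boundary projection, and verify algebraically that $m\le 1\iff\sqrt{\gamma^2-\beta^2}\le-\alpha$ while isotropy of $b'$ forces $\sqrt{\gamma^2-\beta^2}\le|\alpha|$. (Minor point: in your four-dimensional case the totally geodesic subspace is $\mb{H}^{2,1}$ in the paper's convention, not $\mb{H}^{1,2}$.)
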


\begin{proof}
	By Lemma \ref{lem:acausal lifts}, we can find a lift $\widehat{\Lambda}$ of the acausal curve $\Lambda$ in $\widehat{\mb{H}}^{2,n}$, so that $\widehat{\Lambda}$ is an acausal subset of $\widehat{\mb{H}}^{2,n}$, in the sense of Definition \ref{def:acausal}. In particular, we can find representatives of $a,b,a',b'$ (which we continue to denote by $a,b,a',b'$ with abuse) inside the isotropic cone of $\langle\bullet,\bullet\rangle$, so that their pairwise scalar products are all negative. We can then parametrize $\ell$ by $\ell(t)=(e^ta+e^{-t}b)/\sqrt{-2\langle a,b\rangle}$ and $\ell'$ by $\ell'(s)=(e^sa'+e^{-s}b')/\sqrt{-2\langle a',b'\rangle}$. With these parametrizations we have: 
	\begin{align*}
		-2\sqrt{\langle a,b\rangle\langle a',b'\rangle}\cdot\langle\ell(t)&,\ell'(s)\rangle = \\
		&=-\langle e^ta+e^{-t}b,e^sa'+e^{-s}b'\rangle\\
		&=-e^{t+s}\langle a,a'\rangle-e^{t-s}\langle a,b'\rangle-e^{-t+s}\langle b,a'\rangle-e^{-t-s}\langle b,b'\rangle.
	\end{align*}
	
	Since the four products $\langle a,a'\rangle,\langle a,b'\rangle,\langle b,a'\rangle,\langle b,b'\rangle$ are all negative, the function $(t,s)\mapsto-\langle\ell(t),\ell'(s)\rangle$ is proper. As a consequence, it has a global minimum $m>0$. A small computation shows that the function has a unique critical point $(t_0,s_0)$, which must coincide with the point of minimum, where it assumes the value 
	\begin{align*}
		m &=-\langle\ell(t_0),\ell'(s_0)\rangle\\
		&=\sqrt{\frac{\langle a,a'\rangle\langle b,b'\rangle}{\langle a,b\rangle\langle a',b'\rangle}}+\sqrt{\frac{\langle a,b'\rangle\langle b,a'\rangle}{\langle a,b\rangle\langle a',b'\rangle}} .
	\end{align*}
	
	Let $e_1,\cdots e_{n+3}$ be the canonical basis of $\mb{R}^{2,n+1}$. Up to isometries and rescaling, it is enough to consider the following setting:
	\begin{align*}
		a & =-e_2+e_3, & a'&=e_1+e_3, \\
		b & =e_2+e_3, & b'&=\alpha e_1+\beta e_2+\gamma e_3+u,
	\end{align*}
	with $u$ in the linear span of $e_4,\cdots,e_{n+2}$ and $\alpha^2+\beta^2=1$. Furthermore, by acausality we must have $\gamma>\alpha,\beta$ and $\beta+\gamma>0$. We now identify $\widehat{\mb{H}}^{2,n}$ with $\mb{D}^2\times\mb{S}^n$ via the chart $\Psi_E$ induced by the orthogonal decomposition $\mb{R}^{2,n+1}=E\oplus F$ where $E={\rm Span}\{e_1,e_2\}$ and $F={\rm Span}\{e_3,\cdots,e_{n+3}\}$ (see Section~\ref{subsec:acausal and poincare}). Under such identification we have 
	\[
	\pi_{\mb{D}^2}(a)=(0,-1),\pi_{\mb{D}^2}(b)=(0,1),\pi_{\mb{D}^2}(a')=(0,1),\pi_{\mb{D}^2}(b')=(\alpha,\beta)\in\partial\mb{D}^2,
	\]
	and the above expression becomes
	\[
	m = \sqrt{\frac{\langle a,a'\rangle\langle b,b'\rangle}{\langle a,b\rangle\langle a',b'\rangle}}+\sqrt{\frac{\langle a,b'\rangle\langle b,a'\rangle}{\langle a,b\rangle\langle a',b'\rangle}}=\sqrt{\frac{\gamma-\beta}{2(\gamma-\alpha)}}+\sqrt{\frac{\beta+\gamma}{2(\gamma-\alpha)}}.
	\]
	Notice that $\pi_{\mb{D}^2}$ induces a homeomorphism between $\widehat{\Lambda}$ and $\partial\mb{D}^2$ so that the cyclic order of $a,b,a',b'$ on $\widehat{\Lambda}$ is the same as the cyclic order of their projections to $\partial\mb{D}^2$.
	
	A little algebraic manipulation shows that $m \le 1$ if and only if
	\[
	\sqrt{\gamma^2-\beta^2}\le-\alpha.
	\]
	Also notice that, as $b'$ is isotropic, we always have $0=|b'|^2=\alpha^2+\beta^2-\gamma^2-|u|^2$ with $|u|^2>0$ and, hence, $\sqrt{\gamma^2-\beta^2}\le|\alpha|$. 
	
	Thus, we have the following two cases: If $\alpha>0$, that is, if the pairs $(a,b)$ and $(a',b')$ are disjoint, then the inequality $\sqrt{\gamma^2-\beta^2}<-\alpha$ is never satisfied and, hence, $m>1$. Therefore, $\ell(t),\ell'(s)$ are always distinct and the geodesic segments $[\ell(t),\ell'(s)]$ are always spacelike which implies that the subset $\ell\cup\ell'$ is acausal.
	
	If $\alpha<0$, that is, if the pairs $(a,b)$ and $(a',b')$ are crossing, then the inequality $\sqrt{\gamma^2-\beta^2}\le-\alpha$ is always satisfied and, hence $m\le 1$. In this case, the geodesic segment $[\ell(t_0),\ell'(s_0)]$ is timelike and, as $\langle\ell(t_0),\ell'(s_0)\rangle$ realizes the minimum of $\langle\ell(t),\ell'(s)\rangle$, it is also orthogonal to $\ell,\ell'$ at $\ell(t_0)$ and $\ell'(s_0)$. 
\end{proof}

As a consequence of Lemma \ref{lem:four points}, we immediately get:

\begin{pro}
	\label{pro:lamination acausal}
	Let $\Lambda\subset\partial\mb{H}^{2,n}$ be an acausal curve and let ${\hat \lambda}$ be the geometric realization of a $\Lambda$-lamination $\lambda\in\mc{G}_\Lambda$. Then ${\hat \lambda}$ is a proper acausal subset.
\end{pro}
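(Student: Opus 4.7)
The plan is to deduce acausality and properness of $\hat\lambda$ separately, both as consequences of Lemma \ref{lem:four points}(i), together with a short pinching estimate in the Poincaré model for the properness part.

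\textbf{Acausality.} Given distinct $x,y\in\hat\lambda$, I first note that by the disjointness part of Lemma \ref{lem:four points}(i), distinct leaves of $\lambda$ have disjoint geometric realizations, so $x$ and $y$ each lie on a unique leaf, say $\ell_x=[a_x,b_x]$ and $\ell_y=[a_y,b_y]$. If $\ell_x=\ell_y$, then $x,y$ are joined by the spacelike geodesic $\ell_x$ itself. Otherwise, by the definition of $\Lambda$-lamination, the unordered pairs $\{a_x,b_x\}$ and $\{a_y,b_y\}$ are disjoint on $\Lambda$. The acausality of $\Lambda$ forces $\scal{p}{q}_{2,n+1}\neq 0$ for every two distinct $p,q\in\Lambda$ (otherwise, together with any third point of $\Lambda$, they would span a degenerate subspace, contradicting the $(2,1)$-signature hypothesis), so by Lemma \ref{lem:spacelike lines} all six pairs among $a_x,b_x,a_y,b_y$ span spacelike geodesics. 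Lemma \ref{lem:four points}(i) then applies to give that $\ell_x\cup\ell_y$ is acausal, and in particular $x,y$ are joined by a spacelike segment.

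\textbf{Properness.} Let $(x_m)_m\subset\hat\lambda$ converge to some $x\in\mb{H}^{2,n}$, with $x_m$ lying on a leaf $\ell_m=[a_m,b_m]$. By compactness of $\Lambda$, up to a subsequence we may assume $a_m\to a$ and $b_m\to b$ in $\Lambda$. The crucial point is to rule out the coincidence $a=b$. Suppose for contradiction $a=b$. Use Lemma \ref{lem:acausal lifts} to lift $\Lambda$ to an acausal subset of $\partial\widehat{\mb{H}}^{2,n}$, fix a Poincaré model $\Psi:\mb{D}^2\times\mb{S}^n\to\widehat{\mb{H}}^{2,n}$, and parametrize $\hat\ell_m(t)=(e^ta_m+e^{-t}b_m)/D_m$ with $D_m=\sqrt{-2\scal{a_m}{b_m}_{2,n+1}}$, exactly as in the proof of Lemma \ref{lem:four points}. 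Writing $a_m=u_{a_m}+v_{a_m}$ and $b_m=u_{b_m}+v_{b_m}$ along the decomposition $\mb{R}^{2,n+1}=E\oplus E^\perp$ underlying the model, the explicit form of $\Psi$ shows that the $\mb{D}^2$-projection $u_m(t)=\pi(\hat\ell_m(t))$ has norm $r_m(t)\in (0,1)$ satisfying
\[
r_m(t)=\frac{1-r_m(t)^2}{2}\,\frac{\norm{e^tu_{a_m}+e^{-t}u_{b_m}}}{D_m}.
\]
Under $a=b$ one has $D_m\to 0$, while $\norm{e^tu_{a_m}+e^{-t}u_{b_m}}^2=e^{2t}+e^{-2t}+2\scal{u_{a_m}}{u_{b_m}}_{2,n+1}$ is bounded below by its minimum over $t$, namely $2+2\scal{u_{a_m}}{u_{b_m}}_{2,n+1}$, which converges to $4$ as $u_{a_m},u_{b_m}\to u_a$ with $\norm{u_a}=1$. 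Hence $\norm{e^tu_{a_m}+e^{-t}u_{b_m}}/D_m\to+\infty$ uniformly in $t$, which forces $r_m(t_m)\to 1$ for any choice of $t_m$ with $x_m=\hat\ell_m(t_m)$, and therefore $\pi(x_m)\to\partial\mb{D}^2$. This contradicts $\pi(x)\in\mb{D}^2$, and we conclude $a\neq b$. Since $\lambda$ is closed in $\mc{G}_\Lambda$, the pair $(a,b)$ is a leaf of $\lambda$; continuous dependence of the spacelike geodesic on its distinct ideal endpoints then yields $x\in[a,b]\subset\hat\lambda$, proving that $\hat\lambda$ is closed in $\mb{H}^{2,n}$.

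\textbf{Expected difficulty.} The only substantive computation is the pinching estimate in the Poincaré model that rules out $a=b$; all other steps are direct invocations of Lemmas \ref{lem:four points}, \ref{lem:spacelike lines}, and \ref{lem:acausal lifts}, together with standard compactness and closedness arguments.
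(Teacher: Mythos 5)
Your properness/closedness argument is sound: lifting via Lemma \ref{lem:acausal lifts}, normalizing the endpoints as in the Poincar\'e model, and using the pinching estimate to rule out $a=b$ is a correct (and more detailed than the paper's, which treats the whole proposition as an immediate consequence of Lemma \ref{lem:four points}) way to see that $\hat\lambda$ is closed; the only cosmetic issue is that you should pass to a convergent subsequence of lifts $\hat x_m$ in $\widehat{\mb{H}}^{2,n}$ before projecting to $\mb{D}^2$.

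There is, however, a gap in the acausality part. A $\Lambda$-lamination may contain distinct leaves that \emph{share an ideal endpoint} (the notion of ``disjoint'' in Definition \ref{def:crossing and disjoint} allows this, and such asymptotic leaves occur in essentially every lamination used later, e.g.\ ideal triangulations and the finite-leaved maximal laminations with spiraling leaves). For such a pair $\ell_x=[a,b]$, $\ell_y=[a,c]$ you only have three distinct points at infinity, so the hypothesis of Lemma \ref{lem:four points} (``four distinct points'') fails and the step ``Lemma \ref{lem:four points}(i) then applies'' does not go through; likewise your claim that all six pairs of endpoints span spacelike geodesics, and even the preliminary assertion that distinct leaves have disjoint realizations, are only justified by the cited lemma when the endpoints are pairwise distinct. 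The repair is short and uses exactly the acausal-curve hypothesis: since $a,b,c\in\Lambda$ are distinct, $\mathrm{Span}\{a,b,c\}$ has signature $(2,1)$, so both $\hat\ell_x$ and $\hat\ell_y$ lie in the single spacelike totally geodesic plane $\mb{P}(\mathrm{Span}\{a,b,c\})\cap\mb{H}^{2,n}\cong\mb{H}^2$, where any two distinct points (and any point together with the ideal points $a,b,c$) are joined by a spacelike geodesic; this also shows the two realizations meet only along the isotropic line $a$, hence are disjoint in $\mb{H}^{2,n}$. With this case added, your argument is complete.
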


\subsection{Pleated sets}\label{subsec:pleated sets}
We now study the topology and causal structure of a pleated set associated to a maximal lamination. A priori, a pleated set can be a very complicated topological subspace of $\mb{H}^{2,n}$. We now show that, instead, Proposition \ref{pro:lamination acausal} forces a good topological behavior:

\begin{pro} 
	\label{pro:existence pleated sets}
	Let $\Lambda\subset\partial\mb{H}^{2,n}$ be an acausal curve and let $\lambda\in\mc{G}_\Lambda^m$ be a maximal $\Lambda$-lamination. Then its associated pleated set $\widehat{S}_\lambda \subset \mathbb{H}^{2,n}$ is a topological Lipschitz acausal subsurface.
\end{pro}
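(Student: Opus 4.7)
The plan is to exhibit a lift of $\widehat{S}_\lambda$ to $\widehat{\mathbb{H}}^{2,n}$ as the graph of a strictly $1$-Lipschitz function in a Poincaré model and then to deduce acausality from Lemma~\ref{lem:acausal graph}. Choose a spacelike $2$-plane $E\subset\mathbb{R}^{2,n+1}$ with associated Poincaré chart $\Psi:\mathbb{D}^2\times\mathbb{S}^n\to\widehat{\mathbb{H}}^{2,n}$ and projection $\pi:\widehat{\mathbb{H}}^{2,n}\cup\partial\widehat{\mathbb{H}}^{2,n}\to\overline{\mathbb{D}}{}^2$. By Lemma~\ref{lem:acausal lifts} the acausal curve $\Lambda$ admits an acausal lift $\widehat{\Lambda}\subset\partial\widehat{\mathbb{H}}^{2,n}$; since $\widehat{\Lambda}$ is a topological circle injected by $\pi$ into $\partial\mathbb{D}^2$, the restriction $\pi|_{\widehat{\Lambda}}$ is in fact a homeomorphism onto $\partial\mathbb{D}^2$. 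This fixes canonical lifts of $\hat{\lambda}$ and $\widehat{S}_\lambda$ inside $\widehat{\mathbb{H}}^{2,n}$; Proposition~\ref{pro:lamination acausal} and Lemma~\ref{lem:acausal graph} then present the lift of $\hat{\lambda}$ as the graph of a strictly $1$-Lipschitz function $g:\pi(\hat{\lambda})\to\mathbb{S}^n$ between the hemispherical and spherical metrics.

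For each plaque $P$ of $\lambda$, the realization $\widehat{P}$ is a spacelike ideal triangle contained in a unique spacelike plane $H_P$. Lemma~\ref{lem:planes and lines}(1) gives that $\pi|_{H_P}$ is a diffeomorphism $H_P\to\mathbb{D}^2$ extending to $\partial H_P\to\partial\mathbb{D}^2$, and Lemma~\ref{lem:planes and lines}(2) that each side of $\widehat{P}$ projects to a proper embedded arc in $\mathbb{D}^2$. Hence $\pi(\widehat{P})$ is a closed topological triangle $T_P\subset\overline{\mathbb{D}}{}^2$ bounded by three such arcs and the vertices $\pi(a),\pi(b),\pi(c)\in\partial\mathbb{D}^2$. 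The essential combinatorial step is to show that the interiors $\{\mathrm{int}(T_P)\}_P$ are pairwise disjoint and exhaust $\mathbb{D}^2-\pi(\hat{\lambda})$: two distinct plaques $P,Q$ are separated in $\hyp^2$ by some leaf $\ell$ of $\lambda$, and since $\pi|_{\widehat{\Lambda}}$ is a homeomorphism of circles, the endpoints of $\pi(\widehat{\ell})$ separate on $\partial\mathbb{D}^2$ any pair of third vertices of $P$ and $Q$, putting $T_P,T_Q$ on opposite sides of the proper arc $\pi(\widehat{\ell})$; surjectivity onto the complement of $\pi(\hat{\lambda})$ follows from the maximality of $\lambda$. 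This tiling lets me assemble a continuous function $g_\lambda:\mathbb{D}^2\to\mathbb{S}^n$ by setting $g_\lambda=g$ on $\pi(\hat{\lambda})$ and $g_\lambda|_{T_P}=\mathrm{pr}_{\mathbb{S}^n}\circ\Psi^{-1}\circ(\pi|_{H_P})^{-1}$, the two prescriptions agreeing on the common boundary arcs because the sides of $\widehat{P}$ are leaves of $\hat{\lambda}$. By construction the graph of $g_\lambda$ coincides with the lift of $\widehat{S}_\lambda$.

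The final step is to verify that $g_\lambda$ is strictly $1$-Lipschitz, so that Lemma~\ref{lem:acausal graph}(3) applies. The strict $1$-Lipschitz property holds separately on each closed $T_P$ (because $H_P$ is spacelike) and on $\pi(\hat{\lambda})$ (by acausality of $\hat{\lambda}$). For $u\in\mathrm{int}(T_P)$ and $v\notin T_P$, the hemispherical geodesic from $u$ to $v$ meets $\partial T_P\subset\pi(\hat{\lambda})$ at a first exit point $w$, and
\[
d_{\mathbb{S}^n}(g_\lambda(u),g_\lambda(v))\le d_{\mathbb{S}^n}(g_\lambda(u),g_\lambda(w))+d_{\mathbb{S}^n}(g_\lambda(w),g_\lambda(v))<d_{\mathbb{S}^2}(u,w)+d_{\mathbb{S}^2}(w,v)=d_{\mathbb{S}^2}(u,v),
\]
where the strict inequality in the first summand comes from $T_P$ and the non-strict bound in the second from the global (non-strict) $1$-Lipschitz character of $g_\lambda$. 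Lemma~\ref{lem:acausal graph}(3) then identifies the lift of $\widehat{S}_\lambda$ with an acausal embedded Lipschitz topological disk, and projection back to $\hyp^{2,n}$ yields the stated conclusion.

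The main obstacle is establishing the global non-strict $1$-Lipschitz bound used in the last estimate: the hemispherical geodesic from $w$ to $v$ may intersect $\pi(\hat{\lambda})$ in a topologically intricate (Cantor-like) closed set, preventing a direct decomposition of the path into finitely many regional segments. I expect to resolve this either by approximating $\lambda$ with finite-leaved sublaminations and exploiting the continuity of $g_\lambda$, or by a measure-theoretic integration of the Lipschitz constant along the geodesic using that the complement of $\pi(\hat{\lambda})$ decomposes into countably many triangles $\mathrm{int}(T_P)$ on each of which $g_\lambda$ is explicitly $1$-Lipschitz.
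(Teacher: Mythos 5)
There is a genuine gap, and it sits exactly where you flag it: the global (non-strict) $1$-Lipschitz bound for $g_\lambda$, i.e.\ the achronality of $\widehat{S}_\lambda$, is never established, and your strictness estimate at the first exit point $w$ is built on top of it. Neither of your two suggested repairs is carried out: approximation by finite-leaved laminations would require a separate convergence statement for pleated sets (which in the paper is itself derived from the graph representation you are trying to prove, so the logical order is delicate), and the ``measure-theoretic integration'' is only named, not performed. For what it is worth, that fallback can be made to work more cheaply than you fear: along the minimizing hemispherical geodesic from $u$ to $v$ you do not need a decomposition into finitely many regional segments, only the \emph{first} and \emph{last} contact points $s',t'$ with the closed set $\pi(\hat{\lambda})$; the segments before $s'$ and after $t'$ each lie in a single closed triangle $T_P$ (where $g_\lambda$ is $1$-Lipschitz because $H_P$ is spacelike), and $d_{\mb{S}^n}(g_\lambda(s'),g_\lambda(t'))\le d_{\mb{S}^2}(s',t')\le |t'-s'|$ by Proposition~\ref{pro:lamination acausal} and Lemma~\ref{lem:projection}, so the three-term triangle inequality closes the estimate. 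As submitted, however, the proof is incomplete at its decisive step.

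It is also worth comparing with how the paper avoids this difficulty altogether, since the mechanism is different from yours. The paper first proves the combinatorial statement that $\pi$ restricted to $\widehat{S}_\lambda$ is a bijection onto $\mb{D}^2$, and observes that this argument is valid in \emph{every} Poincar\'e model. Achronality is then immediate: if two points of $\widehat{S}_\lambda$ were joined by a timelike geodesic, one chooses a model adapted to a timelike sphere containing that geodesic, so the projection collapses it to a point and injectivity fails. This gives the non-strict $1$-Lipschitz property with no path estimate at all. Acausality is then upgraded separately: a lightlike chord would force equality in Lemma~\ref{lem:projection} along the whole hemispherical segment, hence the entire lightlike geodesic would lie inside $\widehat{S}_\lambda$, and therefore inside a leaf, a pair of leaves, or a plaque, all of which are acausal --- a contradiction. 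Your route (prove $g_\lambda$ strictly $1$-Lipschitz and invoke Lemma~\ref{lem:acausal graph}) is viable once the global bound is supplied, but the ``injectivity in all models'' trick is the idea your proposal is missing.
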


\begin{proof}
	We lift $\Lambda$ to an acausal curve $\widehat{\Lambda}\subset\partial\widehat{\mb{H}}^{2,n}$. The proof strategy is as follows: We show that in every Poincaré model $\mb{D}^2\times\mb{S}^n$ of $\widehat{\mb{H}}^{2,n}$ the pleated set $\widehat{S}_\lambda$ is the graph of a function $g:\mb{D}^2\to\mb{S}^n$. We deduce that $\widehat{S}_\lambda$ is achronal which implies that the function $g$ is $1$-Lipschitz with respect to the spherical metrics on $\mb{D}^2$ and $\mb{S}^n$. Therefore, being a graph of a Lipschitz function, the pleated set $\widehat{S}_\lambda$ is a Lipschitz subsurface. Acausality will follow from the fact that $\widehat{S}_\lambda$ does not contain lightlike segments. 
	
	Let 
	\[
	\Psi:\overline{\mb{D}}{}^2\times\mb{S}^n\longrightarrow\widehat{\mb{H}}^{2,n}\cup\partial\widehat{\mb{H}}^{2,n}
	\]
	be the Poincaré model associated to a splitting $\mb{R}^{2,n+1}=E\oplus E^\perp$ with $E$ spacelike 2-plane, and let $\pi:\widehat{\mb{H}}^{2,n}\cup\partial\widehat{\mb{H}}^{2,n}\to\overline{\mb{D}}{}^2$ denote the composition of $\Psi^{-1}$ with the projection onto the first factor. Recall that, as $\widehat{\Lambda}$ is an acausal subset of $\partial\widehat{\mb{H}}^{2,n}$, the projection $\pi$ restricts to a homeomorphism $\pi:\widehat{\Lambda}\to\partial\mb{D}^2$. 
	
	Observe that the restriction of $\pi$ to ${\hat \lambda}$ is proper, being $\pi$ a fibration with compact fibers and ${\hat \lambda}$ a closed subset of $\widehat{\mb{H}}^{2,n}$, and injective, by Proposition \ref{pro:lamination acausal} and Lemma \ref{lem:acausal graph}. Hence, the map $\pi : \hat{\lambda} \to \mathbb{D}^2$ is a homeomorphism onto its image $\pi({\hat \lambda})$, which is a closed subset of $\mathbb{D}^2$. Notice also that Lemma \ref{lem:planes and lines} implies that the image of any leaf $\hat{\ell}$ of $\hat{\lambda}$ by $\pi$ is a smooth proper arc in $\mathbb{D}^2$ joining the projections of the endpoints of $\hat{\ell}$ in $\widehat{\Lambda}$.
	
	We now show that the connected components of $\mb{D}^2-\pi({\hat \lambda})$ correspond to the triangles associated to ${\hat \lambda}$. This comes from the fact that both triangles and connected components can be characterized in terms of cyclic order of the endpoints of the leaves of ${\hat \lambda}$ and $\pi({\hat \lambda})$ and $\pi$ induces a homeomorphism between $\widehat{\Lambda}$ and $\partial\mb{D}^2$. 
	
	First notice that $\pi$ maps triangles to connected components: Let $\Delta=\Delta(a,b,c)$ be a triangle bounded by the leaves $[a,b],[b,c],[c,a]$ of ${\hat \lambda}$. Since $\Delta$ is contained in a totally geodesic spacelike plane, by Lemma \ref{lem:planes and lines}, the restriction of $\pi$ to $\Delta$ is a homeomorphism onto the image. The image $\pi({\rm int}(\Delta))$ must be disjoint from the other leaves $\pi(\ell)$ of $\pi({\hat \lambda})$, otherwise $\pi(\ell)$ would intersect one of the sides $\pi(\partial\Delta)=\pi[a,b]\cup\pi[b,c]\cup\pi[c,a]$. Thus $\pi({\rm int}(\Delta))$ is a connected component of $\mb{D}^2-\pi({\hat \lambda})$. 
	
	Then we show that every connected component of $\mb{D}^2-\pi({\hat \lambda})$ arises as a projection of a triangle.
	
	Let $U\subset\mb{D}^2-\pi({\hat \lambda})$ be a connected component. The set $U$ is open inside $\mb{D}^2$, and its boundary $\partial U$ is contained in $\pi({\hat \lambda})$. In fact, we can be more precise:
	
	\begin{claim}{\it 1}
		The boundary $\partial U$ consists of a union of projections of leaves $\pi(\ell)$.
	\end{claim}
	
	\begin{proof}[Proof of the claim]
		Suppose that there exists a leaf $\ell$ of ${\hat \lambda}$ such that $\pi(\ell)\cap\partial U\neq\emptyset$. Our aim is to show that the subset $\pi(\ell)\cap\partial U$ is both open and closed inside $\pi(\ell)$ which, by connectedness, implies that the projection $\pi(\ell)$ is entirely contained inside $\partial U$. 
		
		It is clear that the set $\pi(\ell)\cap\partial U$ is closed inside $\pi(\ell)$, as $\partial U$ is a closed subset of $\mb{D}^2$. Hence, it is enough to prove that $\pi(\ell)\cap\partial U$ is an open subset of $\pi(\ell)$. Notice that $\pi(\ell)$ divides $\mb{D}^2$ in two half planes $\mb{D}^2-\pi(\ell)=A\cup A'$ and $U$, being connected, must lie inside one of them. Up to relabeling we assume that $U\subset A$. 
		
		We now claim that there exists a small neighborhood $B$ of $x \in \mathbb{D}^2$ such that $B\cap A\subset\mb{D}^2-\pi({\hat \lambda})$. If this is the case, then the set $B\cap\pi(\ell)$ contains an open subsegment of $\pi(\ell)$ around $x$ that lies entirely inside $\partial U$, proving that $\partial U\cap\pi(\ell)$ is an open subset of $\pi(\ell)$.
		
		Suppose that there exist no neighborhood $B$ satisfying such requirements. We can then find a sequence of distinct leaves $\ell_n$ and points $x_n\in\pi(\ell_n)$ converging to $x$ such that $x_n\in A$ for every $n \in \N$. The sequence of leaves $\ell_n$ converges to $\ell$ in the Hausdorff topology of $\mb{H}^{2,n}\cup\partial\mb{H}^{2,n}$ and, hence, their projections $\pi(\ell_n)$ converge to $\pi(\ell)$ in the Hausdorff topology of $\overline{\mathbb{D}}{}^2$. As a consequence, for any point $y$ in $U$ there exists a sufficiently large $n$ for which the curve $\pi(\ell_n)$ separates $y$ from $\pi(\ell)$. Being $U$ a connected component of $\mathbb{D}{}^2 - \pi(\hat{\lambda})$, it must lie inside the complementary region of $\pi(\ell_n)$ that contains $y$, and hence that does not contain $x$. However, this contradicts the fact that $x$ lies in the boundary $\partial U$, and hence proves the existence of a neighborhood $B$ satisfying the requirements.
	\end{proof}
	
	Using the fact that $\lambda$ is maximal, we now prove:
	
	\begin{claim}{\it 2}
		The boundary $\partial U$ consists of the projection of exactly three leaves of the form $\pi[a,b],\pi[b,c],\pi[c,a]$.
	\end{claim}
	
	\begin{proof}[Proof of the claim]
		If $\ell,\ell'$ are distinct leaves of $\lambda$ that share no endpoint, then we can find a leaf $\ell^* \in \lambda$ whose endpoints separate the endpoints of $\ell$ from the ones of $\ell'$, by maximality of $\lambda$. As a consequence, the curve $\pi(\ell^*)$ must separate $\pi(\ell)$ from $\pi(\ell')$. This shows in particular that $\pi(\ell),\pi(\ell')$ cannot be boundary components of a single complementary region of $\mb{D}^2-\pi({\hat \lambda})$. We deduce that every pair of boundary components of $U$ shares exactly one endpoint, and that $\partial U$ consists of the union of at most three projected leaves.
		
		On the other hand, again by the maximality of $\lambda$, 
		every complementary region of the union of two projected leaves $\pi(\ell), \pi(\ell')$ must contain at least another leaf of $\pi({\hat \lambda})$. This tells us that $\partial U$ must consist of the union of at least three projected leaves. Combining these observations, we deduce that the boundary $\partial U$ of every complementary region of $\pi(\hat{\lambda})$ corresponds to a triangle bounded by leaves of ${\hat \lambda}$.
	\end{proof}
	
	We have now established a bijective correspondence between spacelike ideal triangles bounded by leaves of $\hat{\lambda}$ and complementary regions of $\pi(\hat{\lambda})$ inside $\mathbb{D}^2$.
	Combining this with the fact that the restriction of $\pi$ to every spacelike triangle (or to the geometric relation $\hat{\lambda}$) is a homeomorphism onto its image, we deduce that the restriction of the projection $\pi$ to the pleated set $\widehat{S}_\lambda$ is injective and has image equal to the entire disk $\mb{D}^2$.
	
	We can now show that $\widehat{S}_\lambda$ is a topological \emph{achronal} subsurface, namely:
	
	\begin{claim}{\it 3}
		Every pair of distinct points in $\widehat{S}_\lambda$ is joined by a spacelike or a lightlike segment of $\widehat{\hyp}^{2,n}$. Consequently, $\widehat{S}_\lambda$ is equal to the graph of a $1$-Lipschitz function $g : \mathbb{D}^2 \to \mathbb{S}^n$. 
	\end{claim}
	
	\begin{proof}[Proof of the claim]
		Suppose there there are two points on $\widehat{S}_\lambda$ connected by a timelike geodesic $\alpha$. Consider suitable local coordinates $\Psi:\mb{D}^2\times\mb{S}^n\to\widehat{\mb{H}}^{2,n}$ adapted to a timelike sphere $T$ containing $\alpha$ and an orthogonal spacelike plane $H$, i.e. satisfying $$\Psi(\mb{D}^2\times\{v\})=H, \quad \Psi(\{0\}\times\mb{S}^n)=T,$$
		for some fixed $v \in \mathbb{S}^n$. As the projection $\pi:\widehat{\mb{H}}^{2,n}\to\mb{D}^2$ collapses $\alpha$ to a point, its restriction to $\widehat{S}_\lambda$ cannot be injective, which contradicts the previous claim. As a consequence $\widehat{S}_\lambda$ is achronal and the function $g:\mb{D}^2\to\mb{S}^n$ describing it as a graph is $1$-Lipschitz by Lemma \ref{lem:acausal graph}. In particular $\widehat{S}_\lambda$ is a topological Lipschitz subsurface of $\widehat{\mb{H}}^{2,n}$.
	\end{proof}
	
	The last property left to prove is the following:
	
	\begin{claim}{\it 4}
		The surface $\widehat{S}_\lambda$ is an acausal subset of $\widehat{\hyp}^{2,n}$.
	\end{claim}
	
	\begin{proof}[Proof of the claim]
		Again, we work in local coordinates adapted to a timelike sphere $T$ and an orthogonal spacelike plane $H$, so that we can write $\widehat{S}_\lambda$ as the graph of a $1$-Lipschitz function $g:\mb{D}^2\to\mb{S}^n$. By Lemma \ref{lem:projection}, the points $p=(x,g(x))$ and $q=(y,g(y))$ on $\widehat{S}_\lambda$ are connected by a lightlike geodesic if and only if $d_{\mb{S}^n}(g(x),g(y))=d_{\mb{S}^2}(x,y)$. As $g$ is $1$-Lipschitz, this means that $d_{\mb{S}^n}(g(t),g(s))=d_{\mb{D}^2}(t,s)$ for every $t,s$ on the geodesic arc $[x,y]$ (in the hemispherical metric). Therefore the lightlike geodesic $[p,q]$ is entirely contained in $\widehat{S}_\lambda$. In particular, either $[p,q]$ is contained in a leaf of ${\hat \lambda}$, or $[p,q]$ meets two different leaves, or it meets the interior of a complementary triangle. However a leaf, a pair of distinct leaves, and a complementary region are all acausal subsets. Therefore, all these cases are not possible and we conclude that $\widehat{S}_\lambda$ must be acausal.
	\end{proof}
	
	This finishes the proof.
\end{proof}

In Sections \ref{sec:cross ratio}, \ref{sec:shear cocycles}, and \ref{sec:geometry}, we will prove that every pleated set $\widehat{S}_\lambda\subset\mb{H}^{2,n}$ associated to a maximal representation $\rho:\Gamma\to\SOtwon$ and a maximal $\rho$-lamination $\lambda$ has a natural associated $\rho(\Gamma)$-invariant hyperbolic structure and admits a developing map $f:\widehat{S}\to\mb{H}^2$ which is $1$-Lipschitz with respect to the intrinsic pseudo-metric on $\widehat{S}_\lambda$ and the hyperbolic metric. 

The data of the pleated set $S_\lambda=\widehat{S}_\lambda/\rho(\Gamma)$ together with the intrinsic pseudo-Riemannian metric, the intrinsic hyperbolic structure, and the $1$-Lipschitz developing map $f:\widehat{S}_\lambda\to\mb{H}^2$ is what we will call a {\em pleated surface} (see in particular Definition \ref{def:pleated surface}).

\subsection{Continuity of pleated sets}\label{subsec:continuity pleated sets}
We now discuss continuity properties of pleated sets associated to an acausal curve $\Lambda\subset\partial\mb{H}^{2,n}$ and maximal $\Lambda$-laminations $\lambda\in\mc{G}_\Lambda$. 

First notice that, by Lemma \ref{lem:acausal lifts}, the acausal curve $\Lambda$ is contained on the boundary of a properly convex set $\Omega_\Lambda \subset \hyp^{2,n}$ and has a well defined convex hull $\mc{CH}(\Lambda)$ independent of the choice of $\Omega_\Lambda$. The closure of $\Omega_\Lambda$, being simply connected, admits a lift to the 2-fold cover $\widehat{\hyp}^{2,n}$. In particular, we can identify a pleated set in $\mb{H}^{2,n}$, which is always contained in $\mc{CH}(\Lambda)\subset\Omega_\rho$, with its lift in $\widehat{\hyp}^{2,n}$.

We will deal with two topologies: On the one hand, as pleated sets $\widehat{S}_\lambda$ are closed subsets of $\widehat{\mb{H}}^{2,n}$, they are endowed with a natural Chabauty topology. On the other hand, if we fix a Poincaré model $\Psi:\mb{D}^2\times\mb{S}^n\to\widehat{\mb{H}}^{2,n}$, each pleated set $\widehat{S}_\lambda$ can be written as a graph of a (strictly) $1$-Lipschitz function $g_\lambda:\mb{D}^2\to\mb{S}^n$ (by Proposition \ref{pro:existence pleated sets} and Lemma \ref{lem:acausal graph}). Therefore, the collection of pleated sets can be endowed also with the topology of uniform convergence of the functions $g_\lambda$. Notice that convergence with respect to this topology implies Chabauty convergence.

\begin{pro}
	\label{pro:continuity pleated sets}
	Let ${\hat \Lambda}\subset\partial\widehat{\mb{H}}^{2,n}$ be an acausal curve. Then for every Poincaré model $\Psi:\mb{D}^2\times\mb{S}^n\to\widehat{\mb{H}}^{2,n}$, the map
	\[
	\begin{matrix}
		\mc{G}_\Lambda^m & \longrightarrow & {\rm Lip}_1(\mb{D}^2,\mb{S}^n) \\
		\lambda & \longmapsto & g_\lambda
	\end{matrix}
	\]
	is continuous with respect to the Chabauty topology on $\mc{G}_\Lambda^m$ and the uniform convergence on compact subsets on ${\rm Lip}_1(\mb{D}^2,\mb{S}^n)$. 
\end{pro}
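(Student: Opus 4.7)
The plan is to combine an Arzelà-Ascoli compactness argument with a direct comparison of Chabauty limits of pleated sets. First I would observe that by Proposition~\ref{pro:existence pleated sets} and Lemma~\ref{lem:acausal graph}, every $g_\lambda$ is (strictly) $1$-Lipschitz with respect to the hemispherical metric on $\mb{D}^2$ and the spherical metric on $\mb{S}^n$. Since $\mb{S}^n$ is compact, the family $\{g_\lambda\}_{\lambda \in \mc{G}_\Lambda^m}$ is equicontinuous and uniformly bounded, so by Arzelà-Ascoli every sequence $(g_{\lambda_m})_m$ admits a subsequence converging uniformly on compact subsets of $\mb{D}^2$ to some $1$-Lipschitz function $g : \mb{D}^2 \to \mb{S}^n$. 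Continuity of the map $\lambda \mapsto g_\lambda$ is then equivalent to the statement that whenever $\lambda_m \to \lambda$ in $\mc{G}_\Lambda^m$ and $g_{\lambda_m} \to g$ uniformly on compacts, we must have $g = g_\lambda$.

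Uniform convergence of $1$-Lipschitz graphs translates into Chabauty convergence $\widehat{S}_{\lambda_m} \to \widehat{S}_\infty := \mathrm{graph}(g)$ inside $\widehat{\mb{H}}^{2,n}$, and by Lemma~\ref{lem:acausal graph} the limit $\widehat{S}_\infty$ is itself an acausal graph over $\mb{D}^2$. The core of the argument is then to prove the inclusion $\widehat{S}_\lambda \subset \widehat{S}_\infty$, which forces $g = g_\lambda$ since both sets are graphs over the same domain $\mb{D}^2$. To verify the inclusion, I would handle the two strata of $\widehat{S}_\lambda$ separately. For a point on a leaf $\hat\ell$ of $\hat\lambda$ with endpoints $a, b \in \Lambda$, Chabauty convergence $\lambda_m \to \lambda$ produces leaves $\ell_m \in \lambda_m$ whose endpoints converge to $(a,b)$, and the associated spacelike geodesics $\hat\ell_m$ Chabauty-converge to $\hat\ell$, placing the given point inside $\widehat{S}_\infty$. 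For a point $x$ in the interior of a complementary plaque $\hat\Delta(a,b,c)$, I would set $y := \pi(x) \in \mb{D}^2$, choose a small compact neighborhood of $y$ disjoint from $\pi(\hat\lambda)$, and use Chabauty convergence to deduce that for $m$ large $y$ lies in the projection of some plaque $\hat\Delta(a_m, b_m, c_m) \subset \widehat{S}_{\lambda_m}$. After extracting a further subsequence, $(a_m, b_m, c_m) \to (a', b', c') \in \Lambda^3$, and the closedness of $\lambda$ in $\mc{G}_\Lambda$ (together with Chabauty convergence of $\lambda_m$) implies that $[a',b'], [b',c'], [c',a']$ are all leaves of $\lambda$; the fact that $y$ lies in the bounded triangular region they delimit but not on any leaf of $\lambda$ forces this region to coincide with the original plaque $\Delta(a,b,c)$. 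Hence $\hat\Delta(a_m, b_m, c_m) \to \hat\Delta(a,b,c)$ in Chabauty, and again $x \in \widehat{S}_\infty$.

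The main technical obstacle will be this plaque convergence: it is the step that genuinely requires the maximality of both $\lambda$ and the $\lambda_m$, as it is what lets me match each plaque of $\lambda$ with a uniquely determined nearby plaque of $\lambda_m$ in a controlled way. Once this is secured, the rest of the argument is essentially formal, relying on the graph description of pleated sets in Proposition~\ref{pro:existence pleated sets} together with the acausality criterion for $1$-Lipschitz graphs in Lemma~\ref{lem:acausal graph}.
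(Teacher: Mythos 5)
Your proposal is correct and follows essentially the same route as the paper: extract a uniform-on-compacts limit of the $1$-Lipschitz graph maps via Arzel\`a--Ascoli, show that every point of $\widehat{S}_\lambda$ (lying on a leaf or in a plaque) is a limit of points of $\widehat{S}_{\lambda_m}$ using Chabauty convergence of leaves and plaques, and conclude $g=g_\lambda$ from the graph description over $\mathbb{D}^2$. The only difference is cosmetic: the paper asserts the plaque-approximation step more briefly (plaques of $\lambda_m$ converge to the plaques of $\lambda$), whereas you spell that step out through the projection to $\mathbb{D}^2$.
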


\begin{proof}
	Let $\lambda_m$ be a sequence of maximal $\Lambda$-laminations converging to a maximal lamination $\lambda$ in the Chabauty topology. We denote by $\widehat{S}_m,\widehat{S}$ the corresponding pleated sets and by $g_m,g:\mb{D}^2\to\mb{S}^n$ the associated $1$-Lipschitz maps. 
	
	Our goal is to prove that $g_m\to g$ uniformly over all compact subsets of $\mathbb{D}^2$. Notice that, being $1$-Lipschitz, the maps $g_n$ converge uniformly on compact sets to a $1$-Lipschitz function $g':\mb{D}^2\to\mb{S}^n$ up to subsequences. If we show that $g'=g$, then the convergence $g_n\to g$ would follow.
	
	We now argue that:
	
	\begin{claim*}
		Each $x\in\widehat{S}$ is the limit of a sequence $x_m\in\widehat{S}_m$.
	\end{claim*}
	
	This will be enough to conclude: In fact, suppose that this is the case. Pick any $x\in\widehat{S}$ and select $x_m\in\widehat{S}_m$ as in the claim. We can express
	$$x_m = (y_m,g_m(y_m)), \quad x = (y,g(y)) ,$$
	for some $y_m, y \in \mathbb{D}^2$. By assumption the sequence $(x_m)_m$ converges to $x$, and hence $y_m\to y$ and $g_m(y_m)\to g(y)$. 
	On the other hand, since the sequence of functions $(g_m)_m$ is converging uniformly to $g'$ over all compact subsets of $\mathbb{D}^2$, we must have that $g_m(y_m)\to g'(y)$. We conclude that $g'(y)=g(y)$ and, by letting $x$ vary in $\widehat{S}$, that the functions $g$ and $g'$ coincide on $\mathbb{D}^2$.
	
	\begin{proof}[Proof of the claim]
		In the proof of the claim we distinguish whether $x$ belongs to a leaf or to a plaque, but the arguments are very similar.
		
		Consider a point $x$ on a leaf ${\hat \ell}$ of ${\hat \lambda}$. As $\lambda_m$ converges in the Chabauty topology to $\lambda$, the leaf $\ell\subset\lambda$ is the limit of a sequence of leaves $\ell_m\subset\lambda_m$ and, hence, also the geometric realization ${\hat \ell}$ is the Chabauty limit of the sequence of geometric realizations ${\hat \ell}_m$. Therefore, $x\in{\hat \ell}$ is the limit of a sequence of points $x_m\in{\hat \ell}_m$.
		
		Consider a point $x$ on a plaque $\widehat{\Delta}$ of $\widehat{S}-{\hat \lambda}$. As $\lambda_m$ converges in the Chabauty topology to $\lambda$, the plaque $\Delta$ is the limit of a sequence of plaques $\Delta_m$ of $\lambda_m$ and, hence, also the the geometric realization $\widehat{\Delta}$ is the Chabauty limit of the sequence of geometric realizations $\widehat{\Delta}_m$. Therefore $x\in\widehat{\Delta}$ is the limit of a sequence of points $x_m\in\widehat{\Delta}_m$.
	\end{proof}
	
	This concludes the proof of the proposition.
\end{proof}

\subsection{Bending locus}\label{subsec:bending locus}

As in the case of classical pleated surfaces in hyperbolic geometry, pleated sets $\widehat{S}_\lambda$ associated to maximal $\rho$-laminations $\lambda$ are not necessarily bent along all the leaves of ${\hat \lambda}$: Trivially, any pleated set that is invariant by the action of a \emph{Fuchsian} representation in $\SOtwon$ is in fact totally geodesic. In analogy with \cite{CEG}*{Definition~I.5.1.3} (see also Thurston \cite{ThNotes}*{\S~8.6}) we introduce the following notion:

\begin{dfn}[Bending Locus]
	Let $\rho:\Gamma\to\SOtwon$ be a maximal representation. Consider $\lambda$ a maximal $\rho$-lamination with geometric realization ${\hat \lambda}$, and denote by $\widehat{S}_\lambda$ the corresponding pleated set. A point $x\in\ell\subset{\hat \lambda}$ is in the \emph{bending locus of $\widehat{S}_\lambda$} if there is no (necessarily spacelike) geodesic segment $k$ entirely contained in $\widehat{S}_\lambda$ and such that ${\rm int}(k)\cap\ell=x$. 
\end{dfn}

We now prove the following:

\begin{pro}
	\label{pro:bending locus}
	The bending locus is a sublamination of ${\hat \lambda}$, and its complementary inside $\widehat{S}_\lambda$ is a union of $2$-dimensional totally geodesic spacelike regions.
\end{pro}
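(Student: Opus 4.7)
The heart of the proof is a \emph{flatness lemma}: if a point $x\in\ell\subset\hat\lambda$ admits a spacelike geodesic segment $k\subset\widehat S_\lambda$ transverse to $\ell$ at $x$, then an open neighborhood of $x$ in $\widehat S_\lambda$ lies in a single totally geodesic spacelike $2$-plane $H$, namely the plane spanned by $\ell$ and $k$. Granted this, the proposition is almost immediate. Let $F\subset\widehat S_\lambda$ be the \emph{flat locus}, i.e.\ the set of points admitting a totally geodesic spacelike planar neighborhood in $\widehat S_\lambda$; it is open, contains the interior of every plaque, and contains $\hat\lambda\setminus B$ by the flatness lemma, so $F=\widehat S_\lambda\setminus B$. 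Two overlapping planar neighborhoods must share the same supporting plane (two spacelike $2$-planes with the same tangent plane at a common point coincide), so by connectedness each component of $F$ is contained in a single spacelike $2$-plane, which gives the second assertion of the proposition. For the first assertion, one checks that $\ell\cap F$ is both open and closed in $\ell$ whenever it is non-empty (openness from the flatness lemma applied at each point, closedness from a limiting and overlap argument forcing all nearby planar neighborhoods into a common $H$), so $\ell\subset F$; hence $B$ is a union of leaves. The closedness of $B$ in $\hat\lambda$ follows from the openness of $F\cap\hat\lambda$, and $\Gamma$-invariance under $\rho$-equivariance is tautological, so $B$ is a sublamination of $\hat\lambda$.

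To prove the flatness lemma I would work in the Poincar\'e model $\Psi_E$ of Proposition~\ref{pro:poincare H2n} adapted to $H$. In these coordinates $\widehat S_\lambda$ is the graph of a $1$-Lipschitz map $g:\mb{D}^2\to\mb{S}^n$ by Lemma~\ref{lem:acausal graph}, the plane $H$ corresponds to the slice $\mb{D}^2\times\{v_0\}$, and $g\equiv v_0$ on the two transverse smooth projected curves $\pi(\ell)$ and $\pi(k)$ meeting at $\pi(x)$. The goal is to show $g\equiv v_0$ on a neighborhood of $\pi(x)$. I would argue this plaque by plaque: for every plaque $\Delta$ of $\widehat S_\lambda$ whose interior meets a small neighborhood of $x$, the portion of $k$ lying in $\Delta$ provides a common spacelike geodesic between the supporting plane $H_\Delta$ and $H$; combining this with the transversality of $k$ and $\ell$ at $x$ and the continuous dependence of $H_\Delta$ on the ideal vertices of $\Delta$ on $\Lambda$ forces $H_\Delta$ to share also the tangent direction of $\ell$ at $x$ in the limit, hence $H_\Delta=H$. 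A parallel argument treats the leaves of $\hat\lambda$ passing near $x$, and together these fill a full neighborhood of $\pi(x)$ inside $\mb{D}^2\times\{v_0\}$.

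The main obstacle is the case when leaves of $\hat\lambda$ accumulate on $\ell$ from one or both sides: there is no single pair of plaques adjacent to $\ell$, so the identity $H_\Delta=H$ must be obtained via a delicate limiting argument on sequences of plaques approaching $\ell$, whose supporting planes a priori only share the geodesic carrying $k$, not the tangent direction of $\ell$. The strict $1$-Lipschitz property of $g$ ensured by the acausality of $\widehat S_\lambda$ (Lemma~\ref{lem:acausal graph}), together with the explicit description of spacelike $2$-planes as graphs in the Poincar\'e chart, are the essential analytical ingredients that allow one to extract the requisite tangential information at $x$ and control this limit.
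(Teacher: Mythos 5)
Your reduction of the proposition to a ``flatness lemma'' is reasonable, but the proof you sketch for that lemma has a genuine gap precisely at the point you flag as the ``main obstacle,'' and the ingredients you invoke do not close it. For a plaque $\Delta$ near $x$ that is crossed by ${\rm int}(k)$, the only constraint you actually extract is that the supporting plane $H_\Delta$ contains the geodesic line $K$ through $k$. A spacelike plane is \emph{not} determined by one geodesic it contains: there is a whole family of spacelike $(2,1)$-planes through $K$, so this alone cannot give $H_\Delta=H$. Your attempt to get the missing direction --- ``continuous dependence of $H_\Delta$ on the ideal vertices forces $H_\Delta$ to share the tangent direction of $\ell$ at $x$ \emph{in the limit}, hence $H_\Delta=H$'' --- is a non sequitur: convergence of the planes $H_\Delta$ to a plane containing $\ell$ as $\Delta$ accumulates on $\ell$ says nothing about any individual $\Delta$, and ruling out exactly this kind of ``fanning'' of nearby plaques around $K$ is the whole content of the statement (it is what happens, for instance, along a genuine bending leaf crossed by some other transversal). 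The strict $1$-Lipschitz graph description gives achronality/acausality of $\widehat S_\lambda$, but it does not produce the second, independent geodesic needed to pin down each $H_\Delta$; no purely local argument at the single point $x$ is supplied, and I do not see how one could be.

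What the paper does instead is essentially global and dynamical, and this input is missing from your proposal. One first uses the structure theorem for $\rho$-invariant laminations: $\lambda$ splits into finitely many minimal components and finitely many orbits of isolated leaves. If $\ell$ is isolated, the two plaques adjacent along $\ell$ are both crossed by $k$, so their planes share the two distinct geodesics $K$ and $\hat\ell$ and hence coincide, and the whole leaf is non-bent. If $\ell$ lies in a minimal component, the density of the orbit $\rho(\Gamma)\ell$ is used to manufacture a \emph{second} transversal: translates $\rho(\delta_n)\hat\ell$ converge to $\hat\ell$, hence cross $k$, and pulling these crossings back by $\rho(\delta_n)^{-1}$ yields infinitely many distinct segments $k_n\subset\widehat S_\lambda$ crossing $\hat\ell$. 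Any plaque whose relevant edge is close enough to $\hat\ell$ is then crossed by two such segments, so its plane contains two distinct geodesics common to all these plaques, forcing all their planes to agree; density of plaques in $\widehat S_\lambda$ then gives a planar neighborhood of the entire leaf, from which both assertions of the proposition follow. So the recurrence coming from equivariance is not a technical convenience but the key mechanism; without it (or some substitute producing a second independent transversal), your flatness lemma remains unproved, and the subsequent open-and-closed argument along $\ell$ (whose closedness step is also only asserted) cannot get started.
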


\begin{proof}
	We show that, if $x\in\ell\subset{\hat \lambda}$ is not in the bending locus, then there exists a neighborhood of $\ell$ inside $\widehat{S}_\lambda$ that is entirely contained in a spacelike plane and, therefore, its intersection with ${\hat \lambda}$ is not in the bending locus. This implies that the bending locus is closed and consists of a disjoint union of the leaves of ${\hat \lambda}$.
	
	Before developing the proof, we recall a general structural result (see Theorem I.4.2.8 in \cite{CEG}) for $\rho$-laminations: Every $\rho$-lamination $\lambda$ decomposes as a disjoint union of: 
	\begin{itemize}
		\item A finite number of minimal $\rho$-sublaminations $\lambda_j$, and
		\item A finite number of orbits $\rho(\Gamma)\ell$ of isolated leaves that are asymptotic in both directions to leaves of the minimal components $\lambda_j$.
	\end{itemize}
	Notice that the $\rho(\Gamma)$-orbit of every leaf of a minimal component $\lambda_j$ is dense inside $\lambda_j$. In particular, for every $\ell\in\lambda_j$ there exists a sequence of distinct elements $\gamma_n\in\Gamma$ such that $\ell_n : =\rho(\gamma_n)$ converges to $\ell$ in the Chabauty topology.
	
	Suppose now that $x\in\ell\subset{\hat \lambda}$ is not contained in the bending locus, and let $k\subset\widehat{S}_\lambda$ be a spacelike segment transverse to $\ell$ at $x$ and entirely contained in $\widehat{S}_\lambda$. According to the above decomposition, we have that either $\ell$ is isolated or it is contained in a minimal component of $\lambda$. 
	
	If $\ell$ is isolated, then there exists two distinct components $\Delta,\Delta'$ of $\widehat{S}_\lambda-{\hat \lambda}$ such that $\ell=\Delta\cap\Delta'$, and the geodesic segment $k$ intersects both $\Delta$ and $\Delta'$. In this case, we immediately conclude that the two triangles $\Delta,\Delta'$ must be contained in the same spacelike plane. In particular, the whole line $\ell$ is not contained in the bending locus.
	
	If $\ell$ is contained in a minimal component $\lambda_j$ of $\lambda$, then $\rho(\Gamma)\ell$ is dense in $\lambda_j$. In this case, there exist infinitely many pairwise distinct segments $k_n$ entirely contained in $\widehat{S}_\lambda$ that intersect $\ell$ transversely. To see this, let $\ell_n := \rho(\delta_n) \ell$ be a sequence of pairwise distinct translates of $\ell$ such that $\ell_n\to\ell$. If the endpoints of $\ell_n$ are sufficiently close to the endpoints of $\ell$, then $\ell_n$ must intersect $k$ transversely, and every transverse intersection $k\cap\ell_n$ can be translated back to $k_n\cap\ell$ by applying the isometry $\rho(\delta_n^{-1})$. 
	
	Consider now two distinct spacelike segments $k,k'$ entirely contained in $\widehat{S}_\lambda$ that intersect $\ell$ transversely. Every geodesic $\ell'\subset{\hat \lambda}$ with endpoints sufficiently close to those of $\ell$ must intersect both $k$ and $k'$. In particular, if $\Delta'$ and $\Delta''$ are triangles of $\widehat{S}_\lambda$ with edges $\ell',\ell''$ sufficiently close to $\ell$, then $k,k'$ intersect both ${\rm int}(\Delta')$ and ${\rm int}(\Delta'')$. This implies that $\Delta',\Delta''$ lie on the same spacelike plane. Hence, all triangles that have an edge sufficiently close to $\ell$ lie inside a common spacelike plane $H$. By density of triangles in $\widehat{S}_\lambda$, we conclude that a neighborhood of $\ell$ in $\widehat{S}_\lambda$ lies in $H$ and, therefore, \emph{every} point of the leaf $\ell$ does not lie in the bending locus.
\end{proof}


\section{Hyperbolic structures on pleated sets I}
\label{sec:cross ratio}

Let $\rho:\Gamma\to\SOtwon$ be a maximal representation, and let $\lambda$ be a maximal geodesic lamination with associated pleated set $\widehat{S}_\lambda\subset\mb{H}^{2,n}$. After investigating the causal and topological properties of $\widehat{S}_\lambda$, we now turn our attention on its geometric structure. Being pleated sets obtained as unions of spacelike geodesics and ideal triangles, a natural question that arises is whether the metrics on each totally geodesic region "patch nicely together", determining an intrinsic hyperbolic metric on $\Sigma$.

Inspired by the work of Bonahon \cite{Bo96} in the context of hyperbolic surfaces, we now intend to answer to this question by recording the relative position of the hyperbolic triangles that make up the pleated set $\widehat{S}_\lambda$ into a shear cocycle $\sigma_\lambda^\rho \in \mc{H}(\lambda;\mb{R})$ transverse to the lamination $\lambda$ (see Section \ref{subsec:shear coordinates}). Making use of Bonahon's characterization of hyperbolic shear cocycles in terms of lengths of measured laminations (see Theorem \ref{thm:thurston bonahon}), this will determine, for every maximal representation $\rho$ and for any maximal lamination $\lambda$, the intrinsic hyperbolic structure $X^\rho_\lambda \in \T$ of the pleated set $S_\lambda$. The construction of the shear cocycles $\sigma^\rho_\lambda$ and the investigation of their properties are going to be the main subject of the current and next sections. 

In fact, the process that we will outline applies in a wider generality than the one specifically needed for the study of pleated sets in $\hyp^{2,n}$. Indeed, the definition of the cocycle $\sigma_\lambda^\rho$ will rely only on certain analytic properties of the cross ratio $\beta^\rho$ naturally associated to the representation $\rho$, namely on its \emph{positivity} (see Definition \ref{def:positive cross ratio}) and \emph{local boundedness} (which we discuss below, see Definition \ref{def:locally bounded}). Examples of cross ratios satisfying these properties occur frequently in the literature about Higher Teichmüller Theories: This is for instance the case for Hitchin representations in ${\rm SO}_0(p,p+1)$ or $\Theta$-positive representations in ${\rm SO}_0(p,q)$ (see e.g. Beyrer and Pozzetti \cite{BP21} and Appendix \ref{other cross ratios}).

We can now describe our main result in this context:

\begin{thm}
	\label{shear of cross ratio improved} 
	Let $\beta : \partial \Gamma^{(4)} \to \R$ be a positive and locally bounded cross ratio. Then for every maximal lamination $\lambda$, the $\beta$-shear cocycle $\sigma^\beta_\lambda$ belongs to the closure of the cone $C(\lambda) \subset \mathcal{H}(\lambda;\R)$, that is 
	\[
	\omega_\lambda(\sigma^\beta_\lambda, \mu) \geq 0
	\]
	for every measured lamination $\mu$ with $\supp \mu \subseteq \lambda$. Moreover, if the cross ratio $\beta$ is strictly positive, then $\omega_\lambda(\sigma^\beta_\lambda, \mu) > 0$ for every non-trivial measured lamination $\mu$ as above, and consequently there exists a unique hyperbolic structure $Y = Y^\beta_\lambda \in \T$ such that $\sigma^\beta_\lambda = \sigma_\lambda^Y \in \mathcal{H}(\lambda;\R)$.
\end{thm}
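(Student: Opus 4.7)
The plan is to reduce the theorem to the key length--symplectic identity
\[
\omega_\lambda(\sigma^\beta_\lambda, \mu) \;=\; L_\beta(\mu) \;=\; i(\mathscr{L}_\beta, \mu),
\]
valid for every measured lamination $\mu$ with $\supp \mu \subseteq \lambda$. Granted this identity, both conclusions follow immediately. Positivity of $\beta$ gives $L_\beta(\mu) = i(\mathscr{L}_\beta,\mu)\geq 0$ via Theorem \ref{thm:current of cross-ratio} (intersection of a positive Liouville current with a positive transverse measure is non-negative), so $\sigma^\beta_\lambda$ lies in the closure of the Teichm\"uller cone $C(\lambda)$ of Theorem \ref{thm:thurston bonahon}. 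If $\beta$ is strictly positive, Lemma \ref{lem: positive lengths} upgrades this to $L_\beta(\mu)>0$ for every non-trivial $\mu$ (since a non-trivial measured lamination is in particular a non-trivial geodesic current), placing $\sigma^\beta_\lambda$ inside $C(\lambda)$; Bonahon's parametrization (Theorem \ref{thm:thurston bonahon}) then produces the unique hyperbolic structure $Y^\beta_\lambda\in\T$ with $\sigma^{Y^\beta_\lambda}_\lambda = \sigma^\beta_\lambda$.

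The bulk of the work is therefore the identity, which I would prove in two stages. First, for a \emph{finite leaved} maximal lamination---namely a disjoint union of simple closed geodesics completed by finitely many spiraling isolated leaves---the cocycle $\sigma^\beta_\lambda$ is given explicitly by Proposition \ref{pro:shear finite case} as a finite sum of elementary $\beta$-shears between adjacent plaques, generalizing verbatim Bonahon's classical hyperbolic formula. A measured lamination $\mu$ with support in such $\lambda$ is supported on finitely many simple closed curves $\gamma_i$ with weights $a_i$, and Bonahon's combinatorial description of $\omega_\lambda$ as an algebraic intersection of transverse $1$-chains reduces $\omega_\lambda(\sigma^\beta_\lambda,\mu)$ to a finite linear combination of such local shears indexed by the $\gamma_i$. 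The telescoping symmetries \eqref{eq:crossshear}--\eqref{eq:inverse crossratio} of the cross ratio then collapse this sum to $\sum_i a_i\, L_\beta(\gamma_i) = L_\beta(\mu)$; this is precisely Proposition \ref{pro:length and thurston sympl finite}.

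Second, for a general maximal lamination $\lambda$, I would approximate $\lambda$ in the Hausdorff topology by a sequence of finite leaved maximal laminations $\lambda_n \to \lambda$, all carried by a common train track $\tau$ that also carries $\lambda$. Via the canonical isomorphisms $\mathcal{H}(\lambda_n;\R) \cong \mathcal{W}(\tau;\R) \cong \mathcal{H}(\lambda;\R)$ of Section \ref{subsubsec:real weights}, the cocycles $\sigma^\beta_{\lambda_n}$ and $\sigma^\beta_\lambda$ all sit inside the same real vector space. By the construction of $\sigma^\beta_\lambda$ carried out in Section \ref{sec:shear cocycles} one has $\sigma^\beta_{\lambda_n} \to \sigma^\beta_\lambda$; any measured lamination with support in $\lambda$ is a limit of measured laminations supported in $\lambda_n$; the Thurston form $\omega_\lambda$ is polynomial (hence continuous) in train-track weights, and $L_\beta = i(\mathscr{L}_\beta,\bullet)$ is continuous on the space of currents. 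Passing to the limit in the finite-leaved identity yields the identity for $\lambda$.

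The main technical obstacle is the existence and continuity of the limit cocycle $\sigma^\beta_\lambda$ itself: the leap from a finite sum of elementary shears to a convergent ``infinite sum'' across all leaves of $\lambda$ requires uniform control of the individual shear contributions, and this is exactly where the \emph{local boundedness} hypothesis on $\beta$ intervenes. In combination with Bonahon's divergence radius machinery associated to the carrying train track (see \cite{Bo96} and \cite{BD17}*{\S~8.2}), it provides the estimates that make the partial sums Cauchy in $\mathcal{W}(\tau;\R)$ and establish the convergence $\sigma^\beta_{\lambda_n} \to \sigma^\beta_\lambda$ needed above. The verification of these estimates is the content of Section \ref{sec:shear cocycles}.
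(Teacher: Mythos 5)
Your proposal follows essentially the same route as the paper: the theorem is reduced to the identity $\omega_\lambda(\sigma^\beta_\lambda,\mu)=L_\beta(\mu)$, proved first for finite leaved laminations (Proposition \ref{pro:length and thurston sympl finite}) and then extended by approximation using the continuity of $\beta$-shear cocycles in the lamination (Proposition \ref{pro:continuity shear wrt lamination}), with the two conclusions then following from Theorem \ref{thm:current of cross-ratio}, Lemma \ref{lem: positive lengths}, and Bonahon's parametrization (Theorem \ref{thm:thurston bonahon}). The one point to tighten is the order of choices in the approximation step: for an arbitrary Hausdorff-approximating sequence $\lambda_n\to\lambda$ it is not automatic that a given $\mu$ supported in $\lambda$ is a limit of measured laminations supported in $\lambda_n$ (non-uniquely ergodic components are the obstruction); the paper instead fixes $\mu$ first, approximates it by weighted simple closed curves $a_n\gamma_n\to\mu$, and then extends each $\gamma_n$ to a finite leaved maximal lamination $\lambda_n\to\lambda$ \`a la Canary--Epstein--Green, so that the finite-leaved identity can be applied to $a_n\gamma_n\subset\lambda_n$ and passed to the limit.
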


We prove Theorem \ref{shear of cross ratio improved} in the case of a finite leaved maximal lamination in this section and in the case of a general lamination in the next section. 

We now briefly discuss the notion of locally bounded cross ratio. To this purpose, we need to introduce some notation. We select an identification of the universal cover $\widetilde{\Sigma}$ with $\hyp^2$ through the choice of a hyperbolic structure $X\in\T$. Moreover, given $\ell$ an oriented geodesic of $\hyp^2$, we denote by $\ell^+$ and $\ell^-$ the positive and negative endpoints of $\ell$ in $\partial \hyp^2 \cong \partial \Gamma$, respectively. If $\ell$ and $h$ are two disjoint oriented geodesics in $\hyp^2$, then we say that $\ell$ and $h$ are \emph{coherently oriented} if their endpoints satisfy $\ell^+ \leq h^+ < h^- \leq \ell^- < \ell^+$ with respect to some cyclic order on $\partial \Gamma$. With this notation, we now define:

\begin{dfn}[Locally Bounded] 
	\label{def:locally bounded}
	A cross ratio $\beta : \partial \Gamma^{(4)} \to \R$ is said to be \emph{locally bounded} if there exists a (and consequently for any) hyperbolic structure $X \in \T$ such that, for any constant $D > 0$ we can find $C, \alpha > 0$ such that
	\[
	\abs{\log \beta(h^+,\ell^+,\ell^-,h^-)} \leq C \abs{\log \beta^X(h^+,\ell^+,\ell^-,h^-)}^\alpha
	\]
	for any pair of coherently oriented geodesics $\ell , h$ in $\widetilde{\Sigma} \cong \hyp^2$ satisfying $0 < d_{\hyp^2}(\ell,h) \leq D$.
\end{dfn}

The term $\log \beta^X(h^+,\ell^+,\ell^-,h^-)$ appearing in the definition above has a precise geometrical interpretation in terms of $2$-dimensional hyperbolic geometry, as described by the following lemma:

\begin{lem} \label{lem:crossratio and distance}
	Let $X \in \T$. For any pair of coherently oriented disjoint geodesics $\ell, h$ in $\hyp^2 \cong \widetilde{\Sigma}$, the value $\beta^X(\ell^+,h^+,h^-,\ell^-)$ is strictly positive and satisfies
	\[
	\log \beta^X(h^+, \ell^+, \ell^-, h^-) = 2 \log \cosh \frac{d_{\hyp^2}(\ell,h)}{2} .
	\]
\end{lem}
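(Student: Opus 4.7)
The plan is to verify the identity by a direct computation after placing $\ell$ and $h$ in a standard configuration of $\hyp^2$, and then read off $\beta^X$ as the classical projective cross ratio on $\rp^1$. First, I would identify $\beta^X$ explicitly: via the Fuchsian boundary homeomorphism $\partial\Gamma \cong \partial\hyp^2 \cong \rp^1$ coming from $X$, the cross ratio $\beta^X$ is the pullback of the unique $\psl$-invariant function on quadruples of $\rp^1$ satisfying the algebraic properties of Definition \ref{def:crossratio} and the period condition of Definition \ref{def:period}. In the real coordinate on $\rp^1$ this function is
\[
\beta(a,b,c,d) = \frac{(a-c)(b-d)}{(a-d)(b-c)} ,
\]
as one checks directly against the listed properties (in particular it vanishes exactly when $a=c$ or $b=d$, it is $1$ whenever two of the first two or last two entries coincide, and its period on a hyperbolic translation $z\mapsto e^L z$ equals $L$).

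Second, since both sides of the claimed identity are invariant under isometries of $\hyp^2$, I would place the common perpendicular of $\ell$ and $h$ on the positive imaginary axis of the upper half-plane, with the midpoint of its intersection with $\ell\cup h$ at $i$. Writing $d := d_{\hyp^2}(\ell,h)$, the geodesics $\ell$ and $h$ then become the Euclidean semicircles of radii $e^{d/2}$ and $e^{-d/2}$ centered at the origin, and the coherent orientation assigns
\[
\ell^+ = e^{d/2}, \quad \ell^- = -e^{d/2}, \quad h^+ = e^{-d/2}, \quad h^- = -e^{-d/2}
\]
(the opposite assignment differs by a global orientation reversal and leaves the statement unchanged by the symmetries \eqref{eq:crossshear}).

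Third, direct substitution yields
\[
\beta^X(h^+, \ell^+, \ell^-, h^-) = \frac{(h^+-\ell^-)(\ell^+-h^-)}{(h^+-h^-)(\ell^+-\ell^-)} = \frac{(2\cosh(d/2))^2}{(2e^{-d/2})(2e^{d/2})} = \cosh^2(d/2),
\]
which is strictly positive, and taking logarithms gives the claimed formula. The equality $\beta^X(\ell^+, h^+, h^-, \ell^-) = \beta^X(h^+, \ell^+, \ell^-, h^-)$ follows from the symmetry $\beta(u,v,w,z)=\beta(v,u,z,w)$ in \eqref{eq:symmetry for shear}, so the positivity and log formula apply to both orderings. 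The only subtle point in this plan is the bookkeeping needed to reconcile the normalization of the projective cross ratio with the axioms fixed in Definition \ref{def:crossratio}; once this is done, the lemma collapses to the three-line calculation above.
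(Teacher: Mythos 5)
Your computation is correct: the paper states this lemma without proof as an elementary fact of planar hyperbolic geometry, and your argument---identifying $\beta^X$ with the classical projective cross ratio and evaluating it on the normalized configuration $\pm e^{\pm d/2}$ in the upper half-plane---is exactly the intended one, and it matches the explicit formula for $\beta^X$ that the authors themselves use in the proof of Lemma~\ref{lem: comparing crossratios}. The only slightly glib step is the appeal to uniqueness of the $\psl$-invariant cross ratio to fix the normalization, but this is harmless since the normalization you adopt is the one the paper works with (and in the Fuchsian case it can be checked directly against \eqref{eq:maximal crossratio}).
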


As mentioned above, local boundedness is an analytic property of a cross ratio that will be crucially needed in the next section to guarantee convergence of the approximation process defining the shear cocycle in the case of a general lamination. In the proof of Theorem \ref{shear of cross ratio improved} in the case of finite leaved lamination, instead, such property is not used and the proofs are more elementary.

The cross ratios that we are mainly interested in are the natural cross ratios $\beta^\rho$ associated to maximal representations $\rho:\Gamma\to\SOtwon$. We start by introducing them and investigating their properties.

\subsection{Cross ratios of maximal representations} 
\label{sec:maximal cross ratios}

In what follows we describe a cross ratio $\beta^\rho$ on $\partial \Gamma$ naturally associated to $\rho : \Gamma \to \SOtwon$ and its limit map $\xi$. To this purpose, we start by defining a sign function on the set of $4$-tuples of points in $\partial \Gamma$ as follows. Given $\phi$ some fixed homeomorphism between $\partial \Gamma$ and $\rp^1$, we set
\[
\mathrm{Sgn}(u,v,w,z) : = 
\mathrm{sgn}\left( \frac{\phi(u) - \phi(w)}{\phi(u) - \phi(z)} \frac{\phi(v) - \phi(z)}{\phi(v) - \phi(w)} \right) ,
\]
for any $(u,v,w,z) \in \partial \Gamma^{(4)}$, where $\mathrm{sgn}(t) = + 1$ if $t > 0$, $\mathrm{sgn}(0) = 0$, and $\mathrm{sgn}(t) = + 1$ if $t < 0$. It is simple to check that the function $\mathrm{Sgn}$ is independent of the choice of the homeomorphism $\phi$, and that $\mathrm{Sgn}(u,v,w,z) = 0$ if and only if $u = w$ or $v = z$.

For any maximal representation $\rho : \Gamma \to \mathrm{SO}_0(2,n + 1)$ with associated acausal limit map $\xi : \partial \Gamma \to \partial \hyp^{2,n}$ we then define
\begin{equation} \label{eq:maximal crossratio}
	\beta^{\rho}(u,v,w,z) : = \mathrm{Sgn}(u,v,w,z) \left(\frac{\scal{\xi(u)}{\xi(w)}}{\scal{\xi(u)}{\xi(z)}} \frac{\scal{\xi(v)}{\xi(z)}}{\scal{\xi(v)}{\xi(w)}}\right)^{1/2} ,
\end{equation}
for any $(u,v,w,z) \in \partial \Gamma^{(4)}$, where $\scal{\bullet}{\bullet}$ denotes the scalar product $\scal{\bullet}{\bullet}_{2,n + 1}$, and we are implicitly selecting representatives in $\R^{2, n + 1}$ of the equivalence classes $\xi(y)$, for $y \in \{u,v,w,z\}$. By Theorem \ref{thm:maximal limit curve}, the scalar products involved in the definition above are all non-zero. Moreover, the quantity appearing under the square root does not depend on the chosen lightlike representatives of the equivalence classes $\xi(u),\xi(v),\xi(w),\xi(z)$, and it is always non-negative. Since $\rho$ preserves the scalar product $\scal{\bullet}{\bullet}_{2,n + 1}$ and the diagonal action of $\Gamma$ on $\partial \Gamma^{(4)}$ preserves $\mathrm{Sgn}$, the function $\beta^{\rho}$ is $\Gamma$-invariant. Finally, the H\"older continuity of $\beta^\rho$ at any point $(u,v,w,z) \in \partial \Gamma$ is a direct consequence of the H\"older continuity of the limit map $\xi$, which is guaranteed by Theorem \ref{thm:maximal limit curve}). It is straightforward to check that maps $\beta^\rho$ associated to $\SOtwon$-maximal representations as in \eqref{eq:maximal crossratio} satisfy the symmetries listed in \eqref{eq:crossshear}. 

We now prove positivity and local boundedness of the cross ratios $\beta^\rho$, properties that will be crucial for the construction of shear cocycles developed in Section \ref{sec:shear cocycles}. 

\begin{lem}\label{lem:strictly positive}
	For every maximal representation $\rho : \Gamma \to \SOtwon$, the cross ratio $\beta^\rho$ is strictly positive and satisfies relation \eqref{cross ratio one}.
\end{lem}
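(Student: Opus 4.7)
The plan is to handle the two claims separately. For strict positivity, I work directly from the explicit formula \eqref{eq:maximal crossratio} defining $\beta^\rho$. By Lemma~\ref{lem:acausal lifts} I may lift any cyclically ordered $4$-tuple on $\Lambda_\rho$ to representatives $\widehat{\xi}(\bullet)\in\mb{R}^{2,n+1}$ so that every pairwise scalar product $\scal{\widehat{\xi}(\bullet)}{\widehat{\xi}(\bullet)}_{2,n+1}$ is strictly negative. The cyclic order forces one diagonal pair of the four points to be linked with the other, which via Lemma~\ref{lem:four points}(ii) produces a timelike geodesic orthogonal to the two corresponding spacelike geodesics in $\widehat{\mb{H}}^{2,n}$.

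The key step is to reuse the explicit minimization already performed inside the proof of Lemma~\ref{lem:four points}: for the two crossing spacelike geodesics $\hat\ell(t)$ and $\hat\ell'(s)$ obtained by normalizing the parametrizations $(e^{t}a+e^{-t}b)/\sqrt{-2\scal{a}{b}_{2,n+1}}$, the minimum $m>0$ of $-\scal{\hat\ell(t)}{\hat\ell'(s)}_{2,n+1}$ splits as a sum $m=T_1+T_2$ of two strictly positive square roots of ratios of the pairwise scalar products of the $\widehat{\xi}(\bullet)$'s, and the crossing condition gives $m\le 1$. A direct inspection matches one of the summands $T_i$ with the absolute value of the radical appearing in \eqref{eq:maximal crossratio} (or its reciprocal, depending on which diagonal pair plays the role of $\{a,b\}$ and $\{a',b'\}$). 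Since the other summand is strictly positive — all pairwise scalar products being nonzero thanks to the acausality of $\Lambda_\rho$ — the bound $m\le 1$ forces $T_i<1$ strictly, and hence the radical itself is strictly greater than $1$. Combined with the purely combinatorial observation that $\mathrm{Sgn}$ returns $+1$ on any cyclically ordered 4-tuple, this yields the desired strict inequality $\beta^\rho(x,y,z,w)>1$.

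The main obstacle I anticipate is purely combinatorial bookkeeping: the identity \eqref{eq:inverse crossratio} shows that swapping the last two arguments of $\beta$ inverts it, so I must carefully align the cyclic order prescribed by Definition~\ref{def:positive cross ratio} with the specific diagonal pair that appears in the denominator of the radical in \eqref{eq:maximal crossratio}. Once this identification is fixed, strict positivity is immediate from the estimate on $T_i$. For the relation \eqref{cross ratio one}, I plan a direct algebraic verification starting from \eqref{eq:maximal crossratio}: after squaring, the scalar-product factors cancel by inspection, so the nontrivial content reduces to a finite sign identity on the function $\mathrm{Sgn}$, which I expect to check case-by-case across the cyclic configurations of the four points.
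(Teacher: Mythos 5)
Your argument for strict positivity is correct and takes a genuinely different route from the paper. The paper normalizes a cyclically ordered $4$-tuple by the $\SOtwon$-action (placing $\xi(u),\xi(v),\xi(w)$ at explicit isotropic vectors and writing $\xi(z)$ with an angle parameter) and checks the resulting inequality by hand; you instead recycle the minimization inside the proof of Lemma~\ref{lem:four points}. Your bookkeeping does close: after lifting so that all pairwise products are strictly negative (Lemma~\ref{lem:acausal lifts} together with Lemma~\ref{lem:spacelike lines}), apply the crossing case of Lemma~\ref{lem:four points} to the diagonals $\ell=[u,w]$, $\ell'=[v,z]$ of a cyclically ordered tuple $(u,v,w,z)$; with $a=u$, $b=w$, $a'=v$, $b'=z$ the displayed minimum reads
\[
m=\left(\frac{\scal{\xi(u)}{\xi(v)}\,\scal{\xi(w)}{\xi(z)}}{\scal{\xi(u)}{\xi(w)}\,\scal{\xi(v)}{\xi(z)}}\right)^{1/2}
+\left(\frac{\scal{\xi(u)}{\xi(z)}\,\scal{\xi(v)}{\xi(w)}}{\scal{\xi(u)}{\xi(w)}\,\scal{\xi(v)}{\xi(z)}}\right)^{1/2}\le 1,
\]
and the second summand is exactly the reciprocal of the radical in \eqref{eq:maximal crossratio}, while the first is strictly positive; hence the radical exceeds $1$ and, since $\mathrm{Sgn}=+1$ on cyclically ordered tuples, $\beta^\rho(u,v,w,z)>1$. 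This is precisely the inequality the paper's own proof establishes (its normalization buys an explicit closed formula; your route is coordinate-free and makes transparent that positivity is the crossing/disjoint dichotomy of Lemma~\ref{lem:four points}). The ordering subtlety you flag is real but harmless: the convention effectively used in the paper's proof is ``$\beta^\rho(u,v,w,z)>1$ for $(u,v,w,z)$ cyclically ordered'', which is what your estimate yields, and swapping the last two entries merely replaces $>1$ by $<1$ via \eqref{eq:inverse crossratio}.

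The part of your plan addressing \eqref{cross ratio one} has a gap. The cancellation ``by inspection'' happens only when $u=v$ or $w=z$, i.e.\ it proves the implication ($\Leftarrow$); it says nothing about the converse. For ($\Rightarrow$) you must show $\beta^\rho(u,v,w,z)\neq 1$ whenever $u\neq v$ and $w\neq z$ inside $\partial\Gamma^{(4)}$: if $u=w$ or $v=z$ the value is $0$, and otherwise the four points are pairwise distinct and no scalar-product factor cancels, so a sign identity on $\mathrm{Sgn}$ alone cannot conclude. Among the three linking patterns of four distinct points, $\mathrm{Sgn}=-1$ (hence $\beta^\rho<0\neq 1$) only when $\{u,v\}$ and $\{w,z\}$ are linked; in the other two patterns the sign is $+1$, and the value $1$ is excluded only by the strict inequalities $\beta^\rho(u,v,w,z)>1$, respectively $<1$ (the latter via \eqref{eq:inverse crossratio}), i.e.\ exactly the crossing-diagonal estimate from the first half of your proposal. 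So the claim is true and your own positivity argument supplies the missing ingredient, but as written the reduction of \eqref{cross ratio one} to a case-by-case check of $\mathrm{Sgn}$ is not a proof and should be replaced by this three-pattern analysis.
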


\begin{proof}
	Let $(u,v,w,z)$ be a $4$-tuple of distinct and cyclically ordered points in $\partial \Gamma$. Up to the action of $\SOtwon$, we can assume that $\xi(u) = e_2 + e_3$, $\xi(v) = - e_1 + e_3$, $\xi(w) = - e_2 + e_3$. Moreover, since $u,v,w,z$ are cyclically ordered, $\xi(z)$ can be expressed as $\xi(z) = \cos \vartheta \, e_1 + \sin \vartheta \, e_2 + x$, where $\vartheta \in (- \pi/2, \pi/2)$, and $x$ is some timelike vector of norm $-1$ orthogonal to the spacelike plane spanned by $e_1$ and $e_2$. Being $\xi(\partial \Gamma)$ a spacelike curve in $\hyp^{2,n}$, we see that $\vartheta$ and $x$ must satisfy
	\begin{equation} \label{eq:condition}
		-1 \leq \scal{e_3}{x} < - \abs{\sin \vartheta} .
	\end{equation}
	Observe that $\mathrm{Sgn}(u, v, w, z) = +1$. Therefore, by definition of $\beta^\rho$ and the normalization selected, we have
	\[
	\beta^\rho(u, v, w, z) = \left( \frac{2(\scal{e_3}{x} - \cos \vartheta)}{\scal{e_3}{x} + \sin \vartheta} \right)^{1/2} .
	\]
	From this identity, it is immediate to see that $\beta^\rho(u, v, w, z) > 1$ if and only if $\scal{e_3}{v} - \sin \vartheta < 2 \cos \vartheta$. This inequality is always satisfied: The left-hand side is negative by \eqref{eq:condition}, while the right-hand side is positive since $\vartheta \in (- \pi/2, \pi/2)$.
\end{proof}

\begin{lem} \label{lem: comparing crossratios}
	For any maximal representation $\rho : \Gamma \to \SOtwon$, the positive cross ratio $\beta^\rho$ is locally bounded. Furthermore, the constant $\alpha > 0$ satisfying the requirements of Definition \ref{def:locally bounded} can be chosen to be a H\"older exponent of the limit map $\xi : \partial\Gamma \to \partial\hyp^{2,n}$ associated to $\rho$.
\end{lem}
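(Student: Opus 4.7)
The plan is to reduce to a uniform estimate on coherently oriented disjoint pairs of geodesics $(\ell, h)$ both meeting a fixed compact region $K' \subset \hyp^2$, and then to exploit a Plücker-type algebraic identity capturing the second-order vanishing of $\log \beta^\rho$ on the diagonal of endpoints. Since both $\beta^\rho$ and $\beta^X$ are $\Gamma$-invariant, after translating $(\ell, h)$ by a suitable element of $\Gamma$ we may assume that $\ell$ meets a fixed compact fundamental domain for the action of $\Gamma$ on $\hyp^2$, whence $\ell$ and $h$ both meet the compact set $K'$ obtained as the closed $D$-neighborhood of this fundamental domain. Because $|\log \beta^\rho|$ and $|\log \beta^X|$ are continuous and vary in a bounded range on the set of such pairs with $d_{\hyp^2}(\ell, h) \in [d_0, D]$ for any $d_0 > 0$, the desired inequality is trivially satisfied in this regime, and it suffices to treat the regime $d_{\hyp^2}(\ell, h) \to 0$.

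The first ingredient is the geometric bound
\begin{equation} \label{eq:plan_step2}
d_X(\ell^+, h^+) \cdot d_X(\ell^-, h^-) \leq C_1\, \sinh^2\!\left(\tfrac{d_{\hyp^2}(\ell, h)}{2}\right),
\end{equation}
for some $C_1 = C_1(K')$, where $d_X$ denotes a visual metric on $\partial \Gamma \cong \partial \hyp^2$ induced by $X$. This follows from Lemma \ref{lem:crossratio and distance} together with the classical identity
\[
\beta^X(h^+, \ell^+, \ell^-, h^-) - 1 = \frac{\sin\tfrac{h^+ - \ell^+}{2}\, \sin\tfrac{\ell^- - h^-}{2}}{\sin\tfrac{h^+ - h^-}{2}\, \sin\tfrac{\ell^+ - \ell^-}{2}}
\]
in angular coordinates on $\partial \hyp^2 \cong S^1$ (derived from $\cos X - \cos Y = -2\sin\tfrac{X + Y}{2}\sin\tfrac{X - Y}{2}$ applied to the product-to-sum expansions of numerator and denominator), combined with the fact that on pairs meeting $K'$ the denominator sines admit uniform two-sided bounds.

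The main estimate rests on the Plücker identity in $\R^{2, n+1}$: setting $B(u, v) := - \scal{\xi(u)}{\xi(v)}_{2, n+1}$, one has
\[
B(h^+, \ell^-) B(\ell^+, h^-) - B(h^+, h^-) B(\ell^+, \ell^-) = \scal*{\xi(h^+) \wedge \xi(\ell^+)}{\xi(\ell^-) \wedge \xi(h^-)}_{2, n+1}.
\]
Writing $\xi(u) \wedge \xi(u') = \xi(u) \wedge (\xi(u') - \xi(u))$ and invoking the Hölder continuity $\|\xi(u) - \xi(u')\| \leq K\, d_X(u, u')^\alpha$, each bivector norm satisfies $\|\xi(u) \wedge \xi(u')\| \leq C\, d_X(u, u')^\alpha$. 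Since the induced quadratic form on $\Lambda^2 \R^{2, n+1}$ is controlled by a constant multiple of the Euclidean norm, we obtain
\[
\bigl| B(h^+, \ell^-) B(\ell^+, h^-) - B(h^+, h^-) B(\ell^+, \ell^-) \bigr| \leq C_2\, \bigl[d_X(\ell^+, h^+)\, d_X(\ell^-, h^-)\bigr]^\alpha.
\]
The product $B(h^+, h^-) \cdot B(\ell^+, \ell^-)$ is bounded below by a positive constant on pairs meeting $K'$, so the ratio defining $\beta^\rho$ differs from $1$ by $O\bigl([d_X(\ell^+, h^+)\, d_X(\ell^-, h^-)]^\alpha\bigr)$. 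Taking logarithms and invoking \eqref{eq:plan_step2} yields
\[
|\log \beta^\rho(h^+, \ell^+, \ell^-, h^-)| \leq C_3\, \sinh^{2\alpha}\!\left(\tfrac{d_{\hyp^2}(\ell, h)}{2}\right) \leq C_4\, |\log \beta^X|^\alpha,
\]
the last step using that $\log \cosh(d/2)$ and $\sinh^2(d/2)$ are comparable on $(0, D]$.

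The main conceptual difficulty lies in the third step: \emph{a priori}, a generic "second difference" of $\alpha$-Hölder continuous functions such as $\log \beta^\rho$ is only controlled by a single factor $d_X^\alpha$, yielding a bound of order $d^\alpha$ rather than the required $d^{2\alpha}$. The Plücker identity is the key algebraic device that exposes the inherent cancellation in $\log \beta^\rho$ by reformulating the second difference as a bilinear pairing of bivectors, for which the product-type bound follows naturally.
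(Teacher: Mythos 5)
Your argument is, at its core, the same proof as the one in the paper, repackaged as a direct uniform estimate instead of an argument by contradiction along a degenerating sequence: the Plücker/Lagrange identity you invoke is exactly the cancellation the printed proof exhibits by hand when it rewrites $\beta^{\rho}(h^+,\ell^+,\ell^-,h^-)^2-1$ as a combination of scalar products carrying the factors $\xi(\ell^\pm)-\xi(h^\pm)$, and your comparison of $d_X(\ell^+,h^+)\,d_X(\ell^-,h^-)$ with $\sinh^2(d_{\hyp^2}(\ell,h)/2)$ (via Lemma~\ref{lem:crossratio and distance} and the half-angle sine formula) plays the role of the asymptotic computation of $\log\beta^X$ in affine coordinates, relation \eqref{eq:limit cross ratio}. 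What your formulation buys is explicit uniformity of the constants, which the paper only extracts a posteriori (compare Remark~\ref{rmk:uniform boundedness}); the price is that you must fix once and for all a H\"older lift of $\xi$ into the isotropic cone in order to make sense of $\|\xi(u)-\xi(u')\|$ globally (normalize the acausal lift from Lemma~\ref{lem:acausal lifts} to unit Euclidean norm). You leave this implicit, but it is harmless and gives the same exponent $\alpha$, so the second assertion of the lemma also comes out of your argument.

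One step, as literally written, would fail and needs the paper's normalization. Translating the pair $(\ell,h)$ by an element of $\Gamma$ so that $\ell$ merely meets a fixed fundamental domain does \emph{not} force $h$ to meet the closed $D$-neighborhood of that domain: the two geodesics may come within distance $D$ of each other only far away along $\ell$, with $h$ exiting any fixed compact set. In that case the uniform positive lower bounds you use on $-\langle\xi(h^+),\xi(h^-)\rangle$, on $-\langle\xi(\ell^+),\xi(\ell^-)\rangle$, and on the denominator sines in your bound on $d_X(\ell^+,h^+)\,d_X(\ell^-,h^-)$ are all lost. The repair is the one the paper makes: translate so that the point of $\ell$ realizing (or almost realizing) the distance $d_{\hyp^2}(\ell,h)\le D$ lies in the fundamental domain; then both geodesics meet the compact set $K'$, the space of such normalized configurations with the two leftover endpoint pairs possibly collapsing is compact, and all your uniform estimates go through.
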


\begin{proof}
	Fix $D > 0$, and let $A = A_D$ be the subset of $\partial \Gamma^{(4)}$ given by
	\[
	\{ (h^+,\ell^+,\ell^-,h^-) \mid \text{$\ell, h$ coherently oriented, $0 < d_{\hyp^2}(\ell,h) \leq D$} \} .
	\]
	If $F : A \to \R$ denotes the function
	\[
	F(u,v,w,z) : = \frac{\abs{\log \beta^{\rho}(u,v,w,z)}}{ \abs{\log \beta^X(u,v,w,z)}^\alpha} ,
	\]
	then the statement is equivalent to $F$ being bounded (observe that $F$ is well defined on $A$ by Lemma \ref{lem:crossratio and distance}). Since $F$ is invariant with respect to the diagonal action of $\Gamma$, it induces a continuous function on the quotient space $A / \Gamma$. We introduce a convenient exhaustion by compact subsets of $A / \Gamma$. For any $d \in (0,D]$, let $A_d$ be the subset of $A$ given by
	\[
	\{ (h^+,\ell^+,\ell^-,h^-) \mid \text{$\ell, h$ coherently oriented, $d \leq d_{\hyp^2}(\ell,h) \leq D$} \} .
	\]
	Then it is immediate to see that the fundamental group $\Gamma$ acts cocompactly on $A_d$ for every $d > 0$. In particular $F$ admits an upper bound on $A_d$ for any $d > 0$. 
	
	Assume now that $F$ is not bounded over $A$. Then there exist sequences of coherently oriented geodesics $\ell_n, h_n$ in $\hyp^2$ such that $F(h_n^+,\ell_n^+,\ell_n^-,h_n^-)$ tends to $+ \infty$ as $n$ goes to $\infty$, and $d_{\hyp^2}(\ell_n,h_n) \leq D$. Since $A_d /\Gamma$ is compact for every $d > 0$, and the $4$-tuples $(h_n^+,\ell_n^+,\ell_n^-,h_n^-)$ are escaping every compact subset of $A / \Gamma$, we must have that $d_{\hyp^2}(\ell_n,h_n)$ tends to $0$ as $n$ goes to $\infty$. Up to the action of $\Gamma$, we can assume that the point of $\ell_n$ that minimizes the distance from $h_n$ is contained inside a fixed fundamental domain. Hence, the geodesics $\ell_n$ and $h_n$ converge to the same geodesic $\ell$, up to subsequences. Moreover, up to choosing a different identification between $\widetilde{\Sigma}$ and $\hyp^2$, we can assume that the points $\ell^\pm$ are different from $\infty \in \rp^1$. 
	
	By the properties of cross ratios and their continuity we observe that both terms $\beta^\rho(h_n^+, \ell_n^+, \ell_n^-,h_n^-), \beta^X(h_n^+, \ell_n^+, \ell_n^-,h_n^-)$ converge to $1$ as $n \to \infty$. The rest of the proof will be dedicated to the study of the order of convergence to $0$ of the logarithm of these terms, which will lead to a contradiction with $F$ being unbounded. 
	
	We start from the term involving $\beta^X$: By the symmetries of the standard hyperbolic cross ratio, we observe
	\begin{align*}
		\beta^X(h_n^+,\ell_n^+,\ell_n^-,h_n^-) & = 1 - \beta^X(h_n^+,\ell_n^-,\ell_n^+,h_n^-) \\
		& = 1 - \frac{h_n^+ - \ell_n^+}{h_n^+ - h_n^-} \frac{\ell_n^- - h_n^-}{\ell_n^- - \ell_n^+}
	\end{align*}
	Notice that the denominator $(h_n^+ - h_n^-)(\ell_n^- - \ell_n^+)$ converges to $(\ell^- - \ell^+)^2$, which is different from $0$. On the other hand, since $\ell_n, h_n \to \ell$, the factor $(h_n^+ - \ell_n^+)(\ell_n^- - h_n^-)$ is infinitesimal. Therefore we deduce that
	\begin{equation} \label{eq:limit cross ratio}
		\lim_{n \to \infty} \frac{\abs{\log \beta^X(h_n^+,\ell_n^+,\ell_n^-,h_n^-)}}{\abs{(h_n^+ - \ell_n^+)(\ell_n^- - h_n^-)}} = \frac{1}{(\ell^- - \ell^+)^2} =: M .
	\end{equation}
	
	Let now $\xi : \partial \Gamma \to \SOtwon$ denote the limit map associated to the representation $\rho$. In order to study the behavior of the cross ratios $\beta^{\rho}(h_n^+,\ell_n^+,\ell_n^-,h_n^-)$, it will be convenient to introduce representatives of the projective classes $\xi(\ell_n^\pm), \xi(h_n^\pm)$, by selecting some affine hyperplane $V$ in $\R^{2, n + 1}$ intersecting the projective classes $\xi(\ell^\pm)$ and pick representatives belonging to $V$. We will continue to denote with abuse these representatives by $\xi(\ell_n^\pm), \xi(h_n^\pm)$. Consider now
	\[
	\beta^{\rho}(h_n^+,\ell_n^+,\ell_n^-,h_n^-)^2 = \frac{\scal{\xi(h_n^+)}{\xi(\ell_n^-)}}{\scal{\xi(h_n^+)}{\xi(h_n^-)}} \frac{\scal{\xi(\ell_n^+)}{\xi(h_n^-)}}{\scal{\xi(\ell_n^+)}{\xi(\ell_n^-)}} .
	\]
	Since $\xi(\ell_n^\pm), \xi(h_n^\pm) \to \xi(\ell^\pm)$, respectively, the above quantity converges to $1$.
	A simple algebraic manipulation shows that
	\begin{align*}
		\beta^{\rho}(h_n^+,\ell_n^+,\ell_n^-,h_n^-)^2 - 1 & = \frac{\scal{\xi(h_n^+)}{\xi(\ell_n^-) - \xi(h_n^-)} \scal{\xi(\ell_n^+) - \xi(h_n^+)}{\xi(h_n^-)}}{\scal{\xi(h_n^+)}{\xi(h_n^-)} \scal{\xi(\ell_n^+)}{\xi(\ell_n^-)}} + \\
		& \qquad - \frac{\scal{\xi(\ell_n^+) - \xi(h_n^+)}{\xi(\ell_n^-) - \xi(h_n^-)} \scal{\xi(h_n^+)}{\xi(h_n^-)}}{\scal{\xi(h_n^+)}{\xi(h_n^-)} \scal{\xi(\ell_n^+)}{\xi(\ell_n^-)}}
	\end{align*}
	Let now $L > 0$ be a positive constant such that $\abs{\scal{u}{v}} \leq L \norm{u}_0 \norm{v}_0$, for some fixed Euclidean norm $\norm{\bullet}_0$ on $\R^{2,n + 1}$. We deduce that
	\[
	\abs{\beta^{\rho}(h_n^+,\ell_n^+,\ell_n^-,h_n^-)^2 - 1} \leq 2 L \norm*{\xi(h_n^+)}_0 \norm*{\xi(h_n^-)}_0 \frac{\norm*{\xi(\ell_n^+) - \xi(h_n^+)}_0 \norm*{\xi(\ell_n^-) - \xi(h_n^-)}_0}{\abs{\scal{\xi(h_n^+)}{\xi(h_n^-)} \scal{\xi(\ell_n^+)}{\xi(\ell_n^-)}}} .
	\]
	Since $\xi(\ell_n^\pm), \xi(h_n^\pm)$ are converging to $\xi(\ell^\pm)$, and $\xi(\ell^+) \neq \xi(\ell^-)$, we can find a constant $M' > 0$ such that
	\[
	\abs{\beta^{\rho}(h_n^+,\ell_n^+,\ell_n^-,h_n^-)^2 - 1} \leq M' \norm*{\xi(\ell_n^+) - \xi(h_n^+)}_0 \norm*{\xi(\ell_n^-) - \xi(h_n^-)}_0 .
	\]
	Moreover, being $\xi$ a H\"older continuous function with exponent $\alpha$, we conclude that for $n$ sufficiently large
	\begin{align}\label{eq:holder bound}
		\begin{split}
			\abs{\log\beta^{\rho}(h_n^+,\ell_n^+,\ell_n^-,h_n^-) } & = \frac{1}{2} \log(1 + (\beta^{\rho}(h_n^+,\ell_n^+,\ell_n^-,h_n^-)^2 - 1)) \\
			& \leq \frac{M'}{2} \norm*{\xi(\ell_n^+) - \xi(h_n^+)}_0 \norm*{\xi(\ell_n^-) - \xi(h_n^-)}_0 \\
			& \leq M'' \abs{(h_n^+ - \ell_n^+)(\ell_n^- - h_n^-)}^\alpha ,
		\end{split}
	\end{align}
	for some constant $M'' >0$ (here we are considering $\xi$ as a H\"older function from a neighborhood of $\ell^\pm$ in $\R \subset \rp^1 = \partial \hyp^2$ to $\hyp^{2,n} \subset V$ with their Euclidean metrics). Finally, combining this inequality with relation \eqref{eq:limit cross ratio}, we obtain that
	\[
	\limsup_{n \to \infty} \frac{\abs{\log \beta^{\rho}(u,v,w,z)}}{ \abs{\log \beta^X(u,v,w,z)}^\alpha} \leq \frac{M''}{M} ,
	\]
	which contradicts the fact that $F(h_n^+,\ell_n^+,\ell_n^-,h_n^-)$ diverges. We conclude that $F$ is bounded on $A$, and therefore that there exists a constant $C > 0$ satisfying the requirements.
\end{proof}

\begin{rmk} \label{rmk:uniform boundedness}
	The argument provided in the proof of Lemma \ref{lem: comparing crossratios} heavily relies on the H\"older continuity of the limit map $\xi : \partial\Gamma \to \partial\hyp^{2,n}$ associated to the representation $\rho$ (see in particular relation \eqref{eq:holder bound}). In fact, we can give a more detailed description on the dependence of the constants $C, \alpha > 0$ satisfying the local boundedness condition for $\beta^\rho$. To this purpose, let us introduce the following technical notion:
	
	\begin{dfn}[Uniform Family]\label{def:uniform family}
		Let $\{\rho_i : \partial \Gamma \to \SOtwon \}_{i \in I}$ be a family of maximal representations, with associated limit maps $\{\xi_i : \partial\Gamma \to \partial\hyp^{2,n}\}_{i \in I}$. Select arbitrarily a hyperbolic structure on $\Sigma$, and choose an identification of $\partial\Gamma$ with $\partial\hyp^2$. We say that the family $\{\rho_i\}_{i \in I}$ is \emph{uniform} if the limit maps $\{\xi_i\}_i$ are uniformly H\"older continuous, namely there exist Riemannian distances $d_\infty, d_\infty'$ on $\partial\hyp^2, \partial\hyp^{2,n}$, respectively, and constants $H, \alpha > 0$ such that
		\[
		d_\infty'(\xi_i(x), \xi_i(y)) \leq H d_\infty(x,y)^\alpha
		\]
		for every $i \in I$ and $x, y \in \partial\Gamma \cong \partial\hyp^2$.
	\end{dfn}
	
	Then, applying the same strategy described in the proof of Lemma \ref{lem: comparing crossratios} we can in fact obtain:
	
	\begin{lem} \label{lem: comparing crossratios pro}
		Let $\{\rho_i\}_{i \in I}$ be a uniform family of maximal representations. Then the cross ratios $\beta_i : = \beta^{\rho_i}$ are uniformly locally bounded, i.e. there exists a hyperbolic structure $X$ such that, for any choice of $D > 0$, we can find constants $C, \alpha > 0$, (in particular \emph{independent of $i \in I$}) satisfying
		\[
		\abs{\log \beta_i(h^+,\ell^+,\ell^-,h^-)} \leq C \abs{\log \beta^X(h^+,\ell^+,\ell^-,h^-)}^\alpha
		\]
		for any pair of coherently oriented geodesics $\ell , h$ in $\widetilde{\Sigma} \cong \hyp^2$ satisfying $0 < d_{\hyp^2}(\ell,h) \leq D$, and for any $i \in I$. Furthermore, the constant $\alpha$ can be chosen to be a common H\"older exponent of the family of limit maps $\{\xi_i\}_i$ of $\{\rho_i\}_i$.
	\end{lem}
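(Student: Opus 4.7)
The plan is to replicate the contradiction argument of Lemma \ref{lem: comparing crossratios} for varying representations, extracting uniform constants via a normalization-plus-compactness argument based on Definition \ref{def:uniform family}. I would fix a common Hölder exponent $\alpha > 0$ for the family $\{\xi_i\}_{i \in I}$, and for each $i \in I$ define
\[
F_i(u,v,w,z) := \frac{\abs{\log \beta_i(u,v,w,z)}}{\abs{\log \beta^X(u,v,w,z)}^\alpha}
\]
on the subset $A_D$ of $\partial\Gamma^{(4)}$ introduced in the proof of Lemma \ref{lem: comparing crossratios}. I would assume by contradiction that $\{F_i\}_{i \in I}$ is not uniformly bounded on $A_D$, so that there exist sequences $(i_n)_n \subset I$ and coherently oriented pairs $(\ell_n, h_n)_n$ with $0 < d_{\hyp^2}(\ell_n, h_n) \leq D$ along which $F_{i_n}$ diverges, and then derive a contradiction.

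Using the $\Gamma$-invariance of the $\beta_i$, I would first translate so that the foot of the common perpendicular of $\ell_n, h_n$ lies in a fixed fundamental domain of $\Sigma$. The key new step is the reduction to a convergent subsequence of representations: since $\SOtwon$ acts transitively on acausal triples of isotropic lines in $\partial\hyp^{2,n}$ and the cross ratios $\beta_i$ are $\SOtwon$-invariant, I can simultaneously conjugate each $\rho_{i_n}$ so that the images $\xi_{i_n}(y_0), \xi_{i_n}(y_1), \xi_{i_n}(y_2)$ of three fixed points in $\partial\Gamma$ equal a fixed acausal triple in $\partial\hyp^{2,n}$. After this normalization, the uniform Hölder bound from Definition \ref{def:uniform family}, combined with Ascoli-Arzelà, produces a subsequence $\xi_{i_n} \to \xi_\infty$ converging uniformly to a Hölder continuous map (using Lemma \ref{lem:acausal lifts} to keep the common image inside a fixed affine chart of $\partial\hyp^{2,n}$). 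Acausality is preserved in the uniform limit, so $\xi_\infty$ corresponds to a maximal representation $\rho_\infty$.

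Finally, I would split into the two cases of the original argument. If $d_{\hyp^2}(\ell_n, h_n) \geq d$ for some $d > 0$, the 4-tuples stay in a compact subset of $A_D$, and uniform convergence $\beta_{i_n} \to \beta^{\rho_\infty}$ on this set forces $F_{i_n} \to F_\infty$ bounded, contradicting $F_{i_n} \to \infty$. Otherwise $d_{\hyp^2}(\ell_n, h_n) \to 0$ and, up to subsequence, $\ell_n, h_n \to \ell$; rerunning the asymptotic expansion \eqref{eq:limit cross ratio} (which is independent of the representation) and the Hölder estimate \eqref{eq:holder bound} with $\xi_{i_n}(\ell^\pm) \to \xi_\infty(\ell^\pm)$ yields a constant $M''$ uniform in $n$, bounding $F_{i_n}$ and closing the contradiction. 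The main obstacle I anticipate is verifying the precompactness of the normalized family of limit maps, which I expect to handle by combining the explicit transitivity of $\SOtwon$ on acausal triples with the affine chart provided by Lemma \ref{lem:acausal lifts}, so that Ascoli-Arzelà applies inside a fixed compact target.
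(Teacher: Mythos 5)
Your overall skeleton (contradiction, translation into a fundamental domain, the dichotomy according to whether $d_{\hyp^2}(\ell_n,h_n)$ stays bounded below or tends to $0$, and re-using \eqref{eq:limit cross ratio} and \eqref{eq:holder bound}) is exactly the strategy the paper invokes: the paper's own proof is nothing more than a rerun of the proof of Lemma \ref{lem: comparing crossratios} with the observation that the constants entering \eqref{eq:holder bound} depend only on the uniform H\"older data. Where you genuinely diverge is the compactness step through a limiting representation $\rho_\infty$, and this is where the gaps are. First, the normalization undercuts the hypothesis you then invoke: conjugating $\rho_{i_n}$ by elements $g_n\in\SOtwon$ replaces $\xi_{i_n}$ by $g_n\circ\xi_{i_n}$, and the H\"older constant of $g_n\circ\xi_{i_n}$ with respect to the \emph{fixed} metric $d_\infty'$ gets multiplied by the Lipschitz constant of $g_n$ acting on $\partial\hyp^{2,n}$, which is unbounded unless the $g_n$ stay in a compact subset of the group. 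So after pinning three points you can no longer quote Definition \ref{def:uniform family} to get equicontinuity, and Ascoli--Arzel\`a is not justified as written. (Note also that no normalization is needed for mere precompactness: $\partial\hyp^{2,n}$ is compact, so the original family $\{\xi_i\}$ is already equicontinuous with values in a compact space.)

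Second, and more seriously, the sentence ``acausality is preserved in the uniform limit, so $\xi_\infty$ corresponds to a maximal representation $\rho_\infty$'' is precisely the point that needs proof and is false as a general principle: acausality is a strict, open condition on triples, and a uniform limit of acausal maps can degenerate --- scalar products $\scal{\xi_{i_n}(x)}{\xi_{i_n}(y)}$ of macroscopically separated points may tend to $0$, and pinning three boundary points does not prevent this at other pairs; nor do you explain why the conjugated representations themselves converge to produce $\rho_\infty$. Both halves of your dichotomy rest exactly on this missing non-degeneracy: in the first case you need $\scal{\xi_\infty(x)}{\xi_\infty(y)}\neq 0$ on the relevant compact set of pairs for $\abs{\log\beta_{i_n}}$ to stay bounded, and in the second case the constant $M'$ (hence $M''$) in \eqref{eq:holder bound} requires $\abs{\scal{\xi_{i_n}(\ell_n^\pm)}{\xi_{i_n}(h_n^\pm)}}$ to be bounded away from $0$, i.e.\ $\scal{\xi_\infty(\ell^+)}{\xi_\infty(\ell^-)}\neq 0$. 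What your argument actually needs --- and what is not supplied --- is a proof that uniform H\"older control of the limit maps forces uniform transversality along the sequence (equivalently, that the limit map is again acausal), or else an additional precompactness hypothesis on the family up to conjugation. Until that is established, the detour through $\rho_\infty$ does not close the contradiction, whereas the paper's route simply tracks the constants in the original proof without ever introducing a limiting representation.
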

\end{rmk}

We can therefore conclude that the cross ratios $\beta^\rho$ associated to maximal representations $\rho : \Gamma \to \textrm{SO}_0(2,n+1)$ satisfy the hypotheses of Theorem \ref{shear of cross ratio improved}.

\subsection{Outline of the construction}

We now move to the definition of shear cocycles associated to positive locally bounded cross ratios and maximal laminations. Throughout the rest of Section \ref{sec:cross ratio}, $\beta$ will always denote a cross ratio on $\partial \Gamma$, and $\lambda$ a maximal geodesic lamination on $\Sigma$.

Consider two distinct plaques $P, Q$ of $\lambda$. The shear $\sigma_\lambda^\beta(P,Q)$ between $P, Q$ will be defined following a careful approximation argument which depends on the fine properties of maximal geodesic laminations in hyperbolic surfaces. In order to describe the first steps of our construction, let us introduce some notation: We say that a plaque $R$ (or a leaf $\ell$) of $\lambda$ separates $P$ from $Q$ if $P$ and $Q$ are contained in distinct connected components of $\widetilde{\Sigma} - R$ (or $\widetilde{\Sigma} - \ell$). We denote by $\mathcal{P}_{P Q}$ the set of plaques of $\lambda$ that separates $P$ from $Q$.

In the remainder of the current section we will proceed as follows:

\begin{description}
	\item[\S~\ref{subsec:definition shear}] We start by recalling a simple process -- already described by Bonahon in \cite{Bo96} -- that, starting from a finite subset of plaques $\mathcal{P} \subseteq \mathcal{P}_{PQ}$, produces a finite lamination $\lambda_\mathcal{P}$ of $\widetilde{\Sigma}$ containing all the leaves of $\lambda$ that lie in the boundary of some plaque in $\mathcal{P}$. We introduce the elementary $\beta$-shear between two adjacent complementary regions of $\lambda_\mathcal{P}$, which naturally generalizes the classical definition in hyperbolic geometry. We then define the \emph{finite $\beta$-shear with respect to $\mathcal{P}$}, denoted by $\sigma^\beta_\mathcal{P}(P,Q)$, as the sum of the elementary shears between all adjacent complementary regions of $\lambda_\mathcal{P}$.
	\item[\S~\ref{subsec:shear finite case}] In this section we focus our attention on the notion of $\beta$-shears for finite leaved maximal laminations. We observe how the relations satisfied by finite $\beta$-shears (from Section~\ref{subsec:definition shear}) allow to define $\beta$-shear cocycles associated to finite leaved maximal lamination in a fairly elementary and natural way (see in particular Proposition~\ref{pro:shear finite case}). 
	\item[\S~\ref{subsec:shear and length I}] Lastly, we investigate the connections between $\beta$-shear cocycles associated to a finite leaved lamination, and the $\beta$-periods of its closed leaves (see Proposition \ref{pro:length and thurston sympl finite}). The bridge between these notions is provided by the \emph{Thurston symplectic form}, through which hyperbolic shear cocycles are fully characterized (see Theorem \ref{thm:thurston bonahon}).
\end{description}

\subsection{Finite shears between plaques} 
\label{subsec:definition shear}

We start by introducing the notion of finite $\beta$-shear between plaques of a maximal geodesic lamination. 

Let $X$ be a hyperbolic structure on $\Sigma$. Given any finite subset $\mathcal{P}$ of $\mathcal{P}_{P Q}$, we select an $X$-geodesic path $k$ in $(\widetilde{\Sigma}, \widetilde{X})$ joining two points in the interior of (the $\widetilde{X}$-geodesic realizations of) $P$ and $Q$, and we index the plaques $P_1, \dots, P_n$ in $\mathcal{P}$ according to their order along $k$, moving from $P$ to $Q$. It is immediate to check that the ordering is independent of the choice of the arc $k$. We set also $P_0 : = P$ and $P_{n + 1} : = Q$. For every $i$, let $\ell_i^P$ and $\ell_i^Q$ be the boundary leaves of $P_i$ that face $P$ and $Q$, respectively. If $S_i$ denotes the (possibly empty) region of $\widetilde{\Sigma}$ delimited by $\ell_i^Q$ and $\ell_{i+1}^P$, for every $i \in \{0, \dots, n\}$, we define $d_i$ to be the geodesic that joins the negative endpoints of the leaves $\ell_i^Q$ and $\ell_{i+1}^P$, as we orient them as boundary of the strip $S_i$ (if $\ell_i^Q$ and $\ell_{i+1}^P$ share one or two endpoints, then we take $d_i = \ell_i^Q$). For every $\mathcal{P} \subset \mathcal{P}_{P Q}$ as above, let now $\lambda_\mathcal{P}$ be the (finite) geodesic lamination of $\widetilde{\Sigma}$ given by
\[
\lambda_\mathcal{P} : = \{ \ell_0^Q, d_0, \ell_1^P, \ell_1^Q, d_1, \dots , \ell_n^P, \ell_n^Q, d_n, \ell_{n + 1}^P \} ,
\]
where the leaves are listed as we move from $P$ to $Q$. The complementary set of the lamination $\lambda_\mathcal{P}$ in $\widetilde{\Sigma}$ consists of two half-planes containing the plaques $P$ and $Q$, and a finite number of \emph{spikes}, i. e. regions bounded by two distinct asymptotic geodesics, that separate $P$ from $Q$. 

Consider now two adjacent complementary regions $R, R'$ of $\lambda_\mathcal{P}$. We denote by $\ell$ the leaf of $\lambda_\mathcal{P}$ shared by $R$ and $R'$, and we select arbitrarily an orientation on $\ell$. Let $u_l$ ($u_r$ resp.) be the ideal vertex in $(R \cup R') \cap \partial \Gamma$ that lies on the left (right resp.) of the geodesic $\ell$. If one of the regions $R, R'$ on the sides of $\ell$ coincides with a half-plane containing $P$ or $Q$, then we select $u_l$ or $u_r$ to be the vertex of the plaque $P$ or $Q$ different from $\ell^+$ and $\ell^-$. Then we set
\begin{equation} \label{eq:def_shear_adj}
	\sigma^\beta(R,R') : = \log \abs{\beta(\ell^+,\ell^-,u_l,u_r)} ,
\end{equation}
and we define the \emph{finite $\beta$-shear between $P$ and $Q$} relative to $\mathcal{P}$ to be
\[
\sigma^\beta_\mathcal{P}(P,Q) : = \sum_{i = 0}^m \sigma^\beta(R_i,R_{i + 1}) ,
\]
where $R_0, R_1, \dots, R_{m + 1}$ are the complementary regions of $\lambda_\mathcal{P}$ as we move from $P$ to $Q$. By the symmetry \eqref{eq:symmetry for shear} of the cross ratio $\beta$, each term $\sigma^\beta(R_i,R_{i + 1})$ does not depend on the choice of the orientation of the leaf separating $R_i$ and $R_{i + 1}$, and $\sigma^\beta(R_i,R_{i + 1}) = \sigma^\beta(R_{i + 1},R_i)$ for every $i$. Notice also that $\sigma^\beta_\mathcal{P}(P,Q) = \sigma^\beta_\mathcal{P}(Q,P)$.

\begin{rmk}\label{rmk:shear_is_shear}
	The definition of the cross ratio $\beta^\rho$ provided in Section \ref{sec:maximal cross ratios} is designed so that the shear $\sigma^\rho(T,T')$ between two adjacent ideal triangles (or spikes) coincides with the classical shear between their spacelike realizations $\hat{T}$ and $\hat{T}'$ inside $\hyp^{2,n}$ (i.e. if $T$ has ideal vertices $a,b,c \in \partial \Gamma$, then $\hat{T}$ is the spacelike triangle with ideal vertices $\xi(a), \xi(b), \xi(c) \in \partial \hyp^{2,n}$), measured with respect to the induced hyperbolic path metric on $\hat{T} \cup \hat{T}'$.
	
	In order to justify this assertion we need to introduce some notation. As we did previously, we denote by $\ell$ the geodesic shared by $R$ and $R'$ together with an arbitrary choice of orientation, and by $u_l$ and $u_r$ the ideal vertices in $(R \cup R') \cap \partial \Gamma$ that lie on the left and on the right of $\ell$, respectively. Since $\xi : \partial \Gamma \to \hyp^{2,n}$ is a spacelike curve by Theorem~\ref{thm:maximal limit curve}, there exist unique spacelike planes $H_l$ and $H_r$ in $\hyp^{2,n}$ whose boundary at infinity contain the triples $\xi(\ell^+), \xi(\ell^-), \xi(u_l)$ and $\xi(\ell^+), \xi(\ell^-), \xi(u_r)$, respectively. If $\hat{R}$ and $\hat{R}'$ denote the regions of $H_l$ and $H_r$ delimited by the spacelike geodesics corresponding to the boundary leaves of $R$ and $R'$, respectively, then the set $\hat{R} \cup \hat{R}'$ possesses a natural hyperbolic metric with geodesic boundary induced by the hyperbolic distances on $H_l$ and $H_r$. 
	
	Let now $\tilde{\xi}(\ell^\pm)$, $\tilde{\xi}(u_l)$, $\tilde{\xi}(u_r)$ be representatives of the projective classes $\xi(\ell^\pm)$, $\xi(u_l)$, $\xi(u_r)$, respectively, so that all their pairwise scalar products are negative (this is possible again by Theorem~\ref{thm:maximal limit curve}). The vectors $\tilde{\xi}(\ell^+)$ and $\tilde{\ell}(\ell^-)$ generate a $2$-plane $V$ in $\R^{2,n+1}$ of signature $(1,1)$. Moreover, the orthogonal projection of a vector $w \in \R^{2,n+1}$ onto $V$ can be expressed as
	\[
	p(w) = \frac{\scal{w}{\tilde{\xi}(u_-)}}{\scal{\tilde{\xi}(u_+)}{\tilde{\xi}(u_-)}} \tilde{\xi}(u_+) + \frac{\scal{w}{\tilde{\xi}(u_+)}}{\scal{\tilde{\xi}(u_+)}{\tilde{\xi}(u_-)}} \tilde{\xi}(u_-) .
	\]
	From here a simple computation shows that $\log \abs{\beta^{\rho}(\ell^+,\ell^-,u_l,u_r)}$ coincides with the signed distance between the projective classes of $p(\tilde{\xi}(u_l))$ and $p(\tilde{\xi}(u_r))$ along the oriented spacelike geodesic $[\xi(\ell^-), \xi(\ell^+)]$, which can be parametrized by
	\[
	\ell(t) = \frac{1}{\sqrt{- 2 \scal{\tilde{\xi}(u_+)}{\tilde{\xi}(u_-)}}} (e^t \tilde{\xi}(u_+) + e^{-t} \tilde{\xi}(u_-)) .
	\]
	
	On the other hand, the projection $p(\xi(u_l))$ can be characterized in terms of the hyperbolic metric of $\hat{R} \cup \hat{R}'$ as the unique point of the line $\ell = [\xi(\ell^-), \xi(\ell^+)]$ that is joined to the ideal vertex $\xi(u_l)$ by a geodesic ray in $H_l$ orthogonal to $\ell$, and similarly for $p(\xi(u_r))$ and $H_r$. Since the classical hyperbolic shear between two ideal triangles (or spikes) that share a boundary geodesic $h$ coincides with the signed distance between the projection of their ideal vertices different from $h^\pm$, we deduce that $\log \abs{\beta^{\rho}(u_+,u_-,u_l,u_r)}$ coincides with the classical notion of shear between the plaques $\hat{R}, \hat{R}'$.
\end{rmk}

We now highlight a few properties satisfied by finite $\beta$-shears. Since the proofs of these relations are elementary and only rely on the symmetries of the cross ratio $\beta$, we postpone them to Appendix \ref{shears and symmetries}. In what follows, we fix a maximal geodesic lamination $\lambda$, and we denote by $\lambda_c$ the sublamination of $\lambda$ consisting of the lifts of all simple closed geodesics contained in $\lambda$. Notice that $\lambda_c$ is non-empty for any finite leaved maximal lamination $\lambda$.

\subsubsection{Shear between plaques sharing a vertex} Let $P$ and $Q$ be two distinct plaques of $\lambda$ that share an ideal vertex $w \in \partial \Gamma$. We label the vertices of $P$ and $Q$ that are different from $w$ as $u_P, v_P$ and $u_Q$, $v_Q$, respectively, so that the leaves $[w,v_P]$ and $[w,v_Q]$ separate the interior of the plaque $P$ from the interior of the plaque $Q$. Then we have:

\begin{lem} \label{lem:asymptotic_plaques}
	For every finite subset $\mathcal{P} \subset \mathcal{P}_{P Q}$
	\[
	\sigma_{\mathcal{P}}^\beta(P,Q) = \log \abs{\beta(w,v_P, u_P, v_Q) \beta(w,v_Q,v_P,u_Q)} .
	\]
	In particular the shear between $P$ and $Q$ is independent of the selected family of plaques $\mathcal{P} \subset \mathcal{P}_{P Q}$.
\end{lem}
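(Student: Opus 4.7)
The plan is to prove the statement by induction on $|\mathcal{P}|$, starting from the observation that every plaque $R \in \mathcal{P}_{PQ}$ must itself have $w$ as an ideal vertex. Indeed, the three ideal vertices of $R$ divide $\partial\Gamma$ into three arcs, each corresponding to one connected component of $\widetilde{\Sigma} - R$; in order for $P$ and $Q$ to lie in distinct such components, their vertex sets must be contained in the closures of distinct arcs, and hence their shared vertex $w$ must be a vertex of $R$. Consequently, for any finite $\mathcal{P} \subset \mathcal{P}_{PQ}$ ordered from $P$ to $Q$, every leaf of $\lambda_\mathcal{P}$ emanates from $w$, and the complementary regions alternate between plaques $P_i = \Delta(w, x_i, y_i)$ and spikes bounded by consecutive asymptotic leaves through $w$.

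For the base case $\mathcal{P} = \emptyset$, the lamination $\lambda_\emptyset$ reduces to the two leaves $[w, v_P]$ and $[w, v_Q]$ (the diagonal $d_0$ degenerating to $[w, v_P]$ by the sharing-endpoint rule), and there are three complementary regions: the half-planes containing $P$ and $Q$, separated by the intervening spike at $w$. Applying the definition \eqref{eq:def_shear_adj} directly---where the ``ideal vertex'' assigned to the spike across a given boundary leaf is taken to be the non-shared endpoint of the sibling asymptotic leaf---yields the two summands $\log\abs{\beta(w, v_P, u_P, v_Q)}$ and $\log\abs{\beta(w, v_Q, v_P, u_Q)}$, which sum to the right-hand side of the claimed formula. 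The symmetry \eqref{eq:symmetry for shear} ensures that this value is independent of the orientations chosen on the two leaves.

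For the inductive step, I pass from $\mathcal{P}$ to $\mathcal{P}' = \mathcal{P} \cup \{R\}$, where $R = \Delta(w, x, y)$ is inserted into the unique complementary region of $\lambda_\mathcal{P}$ containing it. Letting $[w, a]$ and $[w, b]$ be the two boundary leaves of that region (with $a$ closer to $P$) and $\alpha, \beta$ the ``opposite'' ideal vertices used in the shears across $[w, a]$ and $[w, b]$ in $\lambda_\mathcal{P}$, the refinement $\lambda_\mathcal{P} \to \lambda_{\mathcal{P}'}$ introduces two new shears across $[w, x]$ and $[w, y]$ and modifies the two existing shears across $[w,a]$ and $[w,b]$. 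After applying the telescoping identity $\beta(u,v,w,z) = \beta(u,v,w,x)\beta(u,v,x,z)$ to the differences of the modified shears, the desired equality $\sigma_{\mathcal{P}'}^\beta(P,Q) = \sigma_\mathcal{P}^\beta(P,Q)$ reduces to verifying
\[
\abs{\beta(w, a, x, b)\,\beta(w, b, a, y)} = \abs{\beta(w, x, a, y)\,\beta(w, y, x, b)} .
\]
This identity follows from the fifth relation in \eqref{eq:crossshear}: both sides reduce to the common expression $\abs{\beta(w, a, x, y)\,\beta(w, y, a, b)}$ once one exploits the reciprocal pairings $\abs{\beta(w, b, y, a)\,\beta(w, b, a, y)} = 1$ and $\abs{\beta(w, y, x, a)\,\beta(w, y, a, x)} = 1$.

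The main obstacle I anticipate is the careful bookkeeping of conventions---orientations of leaves, the assignment of $u_l$ versus $u_r$, and the ad-hoc rule for the ``opposite vertex'' of a spike---as these enter every elementary shear in the computation. Once all cross ratios are uniformly normalized to have $w$ as their first argument (possible because every relevant leaf passes through $w$), the inductive step becomes a clean symbolic application of the symmetries \eqref{eq:crossshear}, and the induction closes down to the base case verified above.
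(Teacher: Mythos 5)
Your argument is correct and is essentially the paper's: both proofs rest on the observation (which you, unlike the paper, prove explicitly) that every plaque separating $P$ from $Q$ has $w$ as an ideal vertex, so that $\lambda_{\mathcal{P}}$ is a fan of leaves through $w$ with no geodesics of type $d_i$, and then run a refinement induction whose step is exactly the cross-ratio computation you carry out -- the paper refines by one leaf at a time and quotes its six-point identity (Lemma \ref{eq della madonna}), while you insert a whole plaque at a time and verify the resulting five-point identity $\abs{\beta(w,a,x,b)\,\beta(w,b,a,y)} = \abs{\beta(w,x,a,y)\,\beta(w,y,x,b)}$ directly from the symmetries \eqref{eq:crossshear}, which is a valid reduction under the convention you fix. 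The only point to tidy is the degenerate case where the inserted plaque already shares a boundary leaf with $\lambda_{\mathcal{P}}$ (for instance when it is adjacent to $P$ along $[w,v_P]$, forcing $x=a$), so that only one or no new leaf is created and your displayed identity degenerates; there the same telescoping plus the fifth symmetry yield the simpler one-leaf relation $\abs{\beta(w,a,y,b)\,\beta(w,b,a,y)}=\abs{\beta(w,y,a,b)}$, so nothing essential is missing.
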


\subsubsection{Shear between plaques asymptotic to a closed leaf}

Consider now two plaques $P$ and $Q$ of $\lambda$ that are separated by exactly one component $\ell$ of $\lambda_c$. Select arbitrarily an orientation of $\ell$, and assume that the plaque $P$ has a vertex equal to $\ell^+$ and that lies on the left of $\ell$. Similarly, assume that $Q$ lies on the right of $\ell$ and that one of its vertices is equal to $\ell^-$. We denote by $x_P, y_P$ and $x_Q, y_Q$ the vertices of $P$ and $Q$ different from $\ell^+$ and $\ell^-$, respectively, so that $[y_P, \ell^+]$ and $[y_Q, \ell^-]$ are the boundary components of $P$ and $Q$ closest to $\ell$. Then we have:

\begin{lem}\label{lem:shear near closed leaves}
	For every finite subset $\mathcal{P} \subset \mathcal{P}_{P Q}$
	\[
	\sigma^\beta_\mathcal{P}(P,Q) = \log \abs{\beta(\ell^+, y_P, x_P, \ell^-) \beta(\ell^-, \ell^+, y_Q, y_P) \beta(\ell^-,y_Q, x_Q, \ell^+)} .
	\]
	In particular the shear between $P$ and $Q$ is independent of the selected family of plaques $\mathcal{P} \subset \mathcal{P}_{P Q}$.
\end{lem}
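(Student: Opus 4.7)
The plan is to follow an inductive scheme parallel to the one used for Lemma \ref{lem:asymptotic_plaques}, proceeding by induction on the cardinality $\abs{\mathcal{P}}$. First I would verify the formula in the base case $\mathcal{P}=\emptyset$ by a direct computation. Then I would show that adding a single plaque to $\mathcal{P}$ does not change $\sigma^\beta_\mathcal{P}(P,Q)$. Combined, these two steps yield the stated formula for every finite $\mathcal{P} \subset \mathcal{P}_{PQ}$.

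For the base case $\mathcal{P} = \emptyset$, the auxiliary lamination $\lambda_\mathcal{P}$ consists only of the two leaves $[y_P,\ell^+]$ and $[y_Q,\ell^-]$ together with the edge $d_0$. A direct check of the orientation convention defining $d_0$ shows that, in the cyclic order $\ell^+, y_P, \ell^-, y_Q$ on $\partial \Gamma$, the negative endpoints of $[y_P,\ell^+]$ and $[y_Q,\ell^-]$ (oriented as the boundary of the strip $S_0$ between them) are precisely $\ell^+$ and $\ell^-$, and hence $d_0 = \ell$. Consequently $\lambda_\mathcal{P}$ decomposes $\widetilde{\Sigma}$ into four complementary regions: the half-plane containing $P$, two asymptotic spikes sharing $\ell$ with remaining ideal vertices $y_P$ and $y_Q$ on opposite sides, and the half-plane containing $Q$. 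Summing the three elementary shears prescribed by \eqref{eq:def_shear_adj} and rewriting the middle factor via the symmetry \eqref{eq:symmetry for shear} produces exactly
\[
\log \abs{\beta(\ell^+,y_P,x_P,\ell^-)\,\beta(\ell^-,\ell^+,y_Q,y_P)\,\beta(\ell^-,y_Q,x_Q,\ell^+)}.
\]

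For the inductive step, adding a plaque $P_* \in \mathcal{P}_{PQ} \setminus \mathcal{P}$ modifies the auxiliary lamination only locally: one edge $d_i$ of $\lambda_\mathcal{P}$ is removed, the two boundary leaves of $P_*$ facing $P$ and $Q$ are added, and at most two new edges replace $d_i$ between them. Consequently the elementary shear formerly attributed to the adjacency across $d_i$ is replaced by a short chain of new elementary shears. Applying the cocycle identity $\beta(u,v,w,z) = \beta(u,v,x,z)\beta(u,v,w,x)$ from \eqref{eq:crossshear}, together with Lemma \ref{lem:asymptotic_plaques} to handle any sub-chain of new adjacencies corresponding to plaques sharing a common ideal vertex, one verifies that the chain telescopes back to the original elementary shear. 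This yields $\sigma^\beta_{\mathcal{P} \cup \{P_*\}}(P,Q) = \sigma^\beta_\mathcal{P}(P,Q)$ and closes the induction.

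The principal obstacle lies in the bookkeeping for the inductive step: the plaque $P_*$ can be positioned in several qualitatively distinct ways relative to $\ell$ and to the existing plaques (it may share an ideal vertex with $\ell$, with a neighbouring plaque already in $\mathcal{P}$, with both, or with neither, and it may sit on either side of $\ell$). Each configuration produces a slightly different chain of new elementary shears, so verifying the telescoping identity uniformly requires a consistent choice of orientations together with repeated application of the symmetries in \eqref{eq:crossshear}. No new idea is needed beyond these identities, but the case analysis must be executed carefully.
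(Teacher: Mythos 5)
Your base case is correct: for $\mathcal{P}=\emptyset$ the auxiliary lamination is $\{[y_P,\ell^+],d_0,[y_Q,\ell^-]\}$, your reading of the orientation convention giving $d_0=\ell$ agrees with the one the paper itself uses (its proof works with the two spikes of vertices $\ell^\pm,y_{P'}$ and $\ell^\pm,y_{Q'}$, i.e.\ with the diagonal $\ell$), and summing the three elementary shears does give the stated formula after one application of \eqref{eq:symmetry for shear}. The inductive step, however, has a genuine gap. Independence of $\sigma^\beta_{\mathcal{P}}(P,Q)$ from $\mathcal{P}$ is special to this configuration: in general, enlarging $\mathcal{P}$ \emph{does} change the finite shear, exactly when a diagonal exchange is forced (this is the content of Lemmas \ref{lem:diagonal_exchange} and \ref{lem:finite_approx_shear}, and the reason Section \ref{sec:shear cocycles} needs a limiting procedure at all). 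What makes your telescoping work here is a structural fact you never state or use: every plaque $R\in\mathcal{P}_{PQ}$ has $\ell^+$ or $\ell^-$ as an ideal vertex. Indeed, if $R$ lies on the left of $\ell$ it separates $P$ from $\ell$; since $\ell^+$ lies in the closures of both $P$ and $\ell$, while it lies in the closure of exactly one complementary component of $R$ unless it is a vertex of $R$, the point $\ell^+$ must be a vertex of $R$ (symmetrically with $\ell^-$ on the right). Granting this, all leaves added when inserting $P_*$ are asymptotic to $\ell^+$ or $\ell^-$, the only non-degenerate diagonal $d_i$ that can ever appear is $\ell$ itself, no diagonal exchange is triggered, and the chain telescopes by \eqref{eq:crossshear} exactly as in Lemma \ref{lem:asymptotic_plaques}. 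Your case list, by contrast, explicitly includes the possibility that $P_*$ shares an ideal vertex with neither $\ell$ nor its neighbouring plaques and asserts that the telescoping still goes through; in that configuration it would not -- inserting such a plaque next to $\ell$ replaces the diagonal $\ell$ by a different one, and by Lemma \ref{lem:diagonal_exchange} the finite shear jumps by $2\abs{\log\beta(\cdot)}$, which is nonzero e.g.\ for strictly positive cross ratios. So the step is only correct because that case is vacuous, and ruling it out is a missing ingredient rather than routine bookkeeping.

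Once that observation is added, your induction closes and gives an argument organized differently from the paper's: the paper treats an arbitrary finite $\mathcal{P}$ at once, splitting $\sigma^\beta_{\mathcal{P}}(P,Q)$ as $\sigma(P,R_P)+\sigma(R_P,R_Q)+\sigma(R_Q,Q)$ with $R_P,R_Q$ the two spikes adjacent to $\ell$, applying Lemma \ref{lem:asymptotic_plaques} to the two fans at $\ell^+$ and $\ell^-$, and then eliminating the auxiliary vertices $y_{P'},y_{Q'}$ with Lemma \ref{eq della madonna}; the same structural fact about the fans is implicit there as well. Either route is fine, but your write-up must make the fan structure (and hence the absence of diagonal exchanges) explicit before claiming the inductive invariance.
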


\subsection{Shear cocycles: Finite case} \label{subsec:shear finite case}

We now focus on the construction of $\beta$-shear cocycles $\sigma^\beta_\lambda$ associated to \emph{finite leaved} maximal geodesic laminations, and the investigation of their properties. Thanks to the relations described in Lemmas~\ref{lem:asymptotic_plaques} and~\ref{lem:shear near closed leaves}, it is possible to carry out the analysis of shear cocycles with respect to finite leaved laminations in a fairly elementary way, without any subtle approximation argument. 

Even though not generic, the convenience of examining the finite leaved case separately is twofold. On the one hand, it serves as a guideline and motivation for the analysis in the general case. On the other, the naturality of $\beta$-shear cocycles for finite leaved laminations, combined with the continuous dependence from Proposition~\ref{pro:continuity shear wrt lamination}, shows that the approximation process described in Section~\ref{subsec:shear general case} produces cocycles that are independent of their construction (see in particular Section~\ref{subsec:divergence radius function} and Lemma~\ref{lem:limits of shear}).

Until the end of the current section, $\lambda$ will denote a finite leaved maximal lamination of $\Sigma$. Recall that every leaf of a lamination of this form projects in $\Sigma$ either onto a simple closed geodesic, or onto a simple bi-infinite geodesic, and in the latter case each of its ends accumulates onto a (possibly common) simple closed geodesic. 

We start by outlining the definition of the $\beta$-shear cocycle relative to $\lambda$. Consider two plaques $P$ and $Q$ of $\lambda$, and denote by $\ell_P$ and $\ell_Q$ the boundary leaves of $P$ and $Q$, respectively, that separate the interior of $P$ from the interior of $Q$. Notice that the geodesics $\ell_P, \ell_Q$, lying in the boundary of a plaque of $\lambda$, project onto simple bi-infinite geodesics in $\Sigma$.
We then choose arbitrarily an oriented geodesic segment $k$ starting at a point in the interior of $P$ and reaching a point in the interior of $Q$. By compactness, there exist only finitely many leaves of $\lambda$ that intersect $k$ and that project onto simple closed geodesics of $\Sigma$. We label them as $\ell_1, \dots, \ell_n$, following the order in which we meet them moving along the segment $k$, and we orient each $\ell_i$ from right to left with respect to $k$. For any $i$, we now select plaques $P_i$ and $Q_i$ that lie on the left and on the right of $\ell_i$, respectively, and that have $\ell_i^+$ or $\ell_i^-$ as one of their vertices (if $P$ has a vertex equal to $\ell_1^+$ or $\ell_1^-$, then we choose $P_1 = P$, and similarly for $Q$, $Q_n$, and $\ell_n$). Since $Q_i$ and $P_{i + 1}$ are not separated by any lift of simple closed leaves, the set of plaques $\mathcal{P}_{Q_i P_{i+1}}$ is finite for any $i = 1, \dots, n - 1$. For the same reason we see that the sets $\mathcal{P}_{P P_1}$ and $\mathcal{P}_{Q_n Q}$ are finite. Finally, we set 
\[
\mathcal{P} : = \mathcal{P}_{P P_1} \cup \mathcal{P}_{Q_1 P_2} \cup \cdots \cup \mathcal{P}_{Q_{n-1} P_n} \cup \mathcal{P}_{Q_n Q} ,
\]
and we define the \emph{$\beta$-shear between $P$ and $Q$} to be
\[
\sigma^\beta_\lambda(P,Q) := \sigma^\beta_\mathcal{P}(P,Q) .
\]
It is not difficult to show that the quantity $\sigma^\beta_\lambda(P,Q)$ is independent of the collection of plaques $\mathcal{P}$ selected following the aforementioned procedure. To see this, it is in fact enough to check that $\sigma^\beta_\mathcal{P}(P,Q) = \sigma^\beta_{\mathcal{P}'}(P,Q)$ for any finite extension $\mathcal{P}' \supset \mathcal{P}$ obtained as above. Notice that any such $\mathcal{P}' \subset \mathcal{P}_{P Q}$ is of the form
\[
\mathcal{P}' = \mathcal{P}_{P P_1'} \cup \mathcal{P}_{Q_1' P_2'} \cup \cdots \cup \mathcal{P}_{Q_{n-1}' P_n'} \cup \mathcal{P}_{Q_n' Q} ,
\]
where $P_i'$ and $Q_i'$ are plaques that lie on the left and on the right of $\ell_i$, respectively. In fact, it is not restrictive to assume that $P_i' \neq P_i$ and $Q_i \neq Q_i'$, in which case both $P_i'$ and $Q_i'$ separate $P_i$ from $Q_i$. Observe also that both pairs $P_i, P_i'$ and $Q_i, Q_i'$ share one of the endpoints of $\ell_i$ as a vertex, and exactly one of the following hold: Either the plaques $P_i$ and $Q_i$ (and consequently $P_i'$ and $Q_i'$) have a common vertex, equal to $\ell_i^+$ or $\ell_i^-$, or $P_i$ and $Q_i$ do not share any vertex.
By Lemma~\ref{lem:asymptotic_plaques} in the former case, and Lemma~\ref{lem:shear near closed leaves} in the latter, we have
$$\sigma^\beta(P_i, Q_i) = \sigma^\beta(P_i, P_i') +  \sigma^\beta(P_i',Q_i') +  \sigma^\beta(Q_i',Q_i) $$ 
for every $i = 1, \dots, n$, which implies the equality between the finite $\beta$-shears computed with respect to the set of plaques $\mathcal{P}$ and $\mathcal{P}'$. 

To prove that $\sigma^\beta_\lambda$ is indeed a transverse H\"older cocycle (see Definition~\ref{def:holder cocycle}), we have to check symmetry, invariance, and additivity. The first two properties are straightforward and follow directly from the definition. The last property requires some care. The main observation is that the ordered family of leaves $\ell_1,\cdots,\ell_n$ that meet a geodesic arc $k$ joining $P$ to $Q$ and project to simple closed geodesics on $\Sigma$ does not depend on $k$ but only on the cyclic order of the endpoints of the leaves of the lamination. If $R$ separates $P$ from $Q$ and $\ell_1,\cdots,\ell_n$ and $\ell'_1,\cdots,\ell'_m$ are the lifts of the closed geodesics that separate $P,R$ and $R,Q$ respectively, then $\ell_1,\cdots,\ell_n,\ell'_1,\cdots,\ell'_m$ are the lifts of the closed geodesics that separate $P$ from $Q$. Using the definition $\sigma^\beta_\lambda(P,Q)=\sigma^\beta_{\mc{P}}(P,Q)$ and carefully selecting the finite family of plaques $\mc{P}\subset\mc{P}_{PR}\cup\mc{P}_{RQ}$ shows that $\sigma^\beta_\lambda(P,Q)=\sigma^\beta_\lambda(P,R)+\sigma^\beta_\lambda(R,Q)$.

We can summarize the above discussion in the following statement:

\begin{pro} \label{pro:shear finite case}
	Let $\beta$ be a cross ratio. Then for every finite leaved maximal lamination $\lambda$, the map $(P,Q) \mapsto \sigma_\lambda^\beta(P,Q)$ defines a H\"older cocycle $\sigma^\beta_\lambda \in \mathcal{H}(\lambda;\R)$ naturally associated to $\beta$ and $\lambda$.
\end{pro}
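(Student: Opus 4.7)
The plan is to verify the three defining conditions of a transverse H\"older cocycle from Definition \ref{def:holder cocycle}, taking as a starting point the well-definedness of the quantity $\sigma^\beta_\lambda(P,Q) = \sigma^\beta_\mathcal{P}(P,Q)$. This independence from the choice of the admissible family $\mathcal{P}$ was already established in the preceding discussion: given any refinement $\mathcal{P}' \supset \mathcal{P}$ of two admissible families between $P$ and $Q$, the extra plaques inserted either share a common vertex with the existing ones (in which case Lemma \ref{lem:asymptotic_plaques} collapses the contributing elementary shears) or sit adjacent to a lift of a closed leaf separating neighboring $P_i$ and $Q_i$ (in which case the cancellation is governed by Lemma \ref{lem:shear near closed leaves}).

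Symmetry $\sigma^\beta_\lambda(P,Q) = \sigma^\beta_\lambda(Q,P)$ is immediate from symmetry \eqref{eq:symmetry for shear} of $\beta$ applied to each elementary shear in \eqref{eq:def_shear_adj}, combined with the fact that reversing the orientation of the auxiliary arc $k$ reverses the ordering of intermediate plaques while producing an admissible family for the reverse direction. Invariance $\sigma^\beta_\lambda(\gamma P, \gamma Q) = \sigma^\beta_\lambda(P,Q)$ follows from the $\Gamma$-equivariance of the construction: if $\mathcal{P}$ is admissible for $(P,Q)$, then $\gamma \mathcal{P}$ is admissible for $(\gamma P, \gamma Q)$, and every elementary shear in $\sigma^\beta_{\gamma \mathcal{P}}(\gamma P, \gamma Q)$ equals the corresponding shear in $\sigma^\beta_\mathcal{P}(P,Q)$ thanks to the $\Gamma$-invariance of $\beta$.

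The main point, and the most delicate one, is additivity. Given $R$ separating $P$ from $Q$, one first observes that the ordered list of lifts of simple closed leaves of $\lambda$ that a geodesic arc from $P$ to $Q$ meets is intrinsic, depending only on the cyclic order of endpoints on $\partial\Gamma$ and not on the arc. Hence if $\ell_1, \dots, \ell_n$ are the lifts of closed leaves separating $P$ from $R$ and $\ell'_1, \dots, \ell'_m$ those separating $R$ from $Q$, the concatenated sequence $\ell_1, \dots, \ell_n, \ell'_1, \dots, \ell'_m$ is precisely the list associated to $(P,Q)$. Selecting admissible auxiliary plaques $P_i, Q_i$ for the first block and $P'_j, Q'_j$ for the second block, we obtain a candidate admissible family $\mathcal{P} \subset \mathcal{P}_{PR} \cup \mathcal{P}_{RQ}$ for $(P,Q)$, and the key observation is that $\mathcal{P}_{PR}$ and $\mathcal{P}_{RQ}$ are both finite (as needed) because the block $\mathcal{P}_{Q_n R} \cup \mathcal{P}_{R P'_1}$ around $R$ crosses no further lift of a closed leaf. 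Telescoping then gives $\sigma^\beta_\mathcal{P}(P,Q) = \sigma^\beta_\mathcal{P}(P,R) + \sigma^\beta_\mathcal{P}(R,Q)$ at the level of the sums of elementary shears, which by independence from $\mathcal{P}$ yields the desired identity for $\sigma^\beta_\lambda$.

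The hard part is precisely matching the auxiliary data of the two finite shears across the plaque $R$. In particular, $R$ may itself have a vertex coinciding with an endpoint $\ell_n^\pm$ or $\ell'_1{}^\pm$ of a nearby closed leaf, in which case the naive choices of $Q_n$ on one side and $P'_1$ on the other need not match the choices induced by treating $R$ as the ``endpoint'' of each half. This is the content where one invokes the well-definedness step once more, locally around $R$: the two configurations differ by an insertion of finitely many plaques either sharing a vertex (Lemma \ref{lem:asymptotic_plaques}) or straddling a single closed leaf (Lemma \ref{lem:shear near closed leaves}), and both lemmas show that the corresponding shears telescope consistently, so the concatenated family is admissible and the sum splits additively.
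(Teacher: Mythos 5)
Your proposal is correct and follows essentially the same route as the paper: well-definedness of $\sigma^\beta_\lambda(P,Q)$ via Lemmas \ref{lem:asymptotic_plaques} and \ref{lem:shear near closed leaves}, symmetry and $\Gamma$-invariance directly from the definition, and additivity by observing that the ordered list of lifts of closed leaves separating $P$ from $Q$ is the concatenation of the lists for $(P,R)$ and $(R,Q)$, then carefully choosing $\mathcal{P}\subset\mathcal{P}_{PR}\cup\mathcal{P}_{RQ}$ so the elementary shears telescope. The only slip is the phrase claiming $\mathcal{P}_{PR}$ and $\mathcal{P}_{RQ}$ are finite (they are not when closed leaves intervene); what is needed, and what your justification actually establishes, is finiteness of the blocks $\mathcal{P}_{Q_n R}$ and $\mathcal{P}_{R P_1'}$ adjacent to $R$, since no further lift of a closed leaf separates them from $R$.
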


\subsection{Shears and length functions: Finite case}
\label{subsec:shear and length I}

We conclude our analysis of $\beta$-shear cocycles associated to finite leaved maximal laminations examining their relations with the periods of the cross ratio $\beta$ (see in particular Proposition \ref{pro:length and thurston sympl finite}). As already observed in the work of Bonahon \cite{Bo96}, length functions provide a complete characterization of the set of transverse H\"older cocycles in $\mathcal{H}(\lambda;\R)$ that arise as shear cocycles of hyperbolic structures on a closed surface with respect to the maximal lamination $\lambda$ (see in particular Theorem \ref{thm:thurston bonahon}). The connection between $\beta$-shears and $\beta$-periods rely on the properties of the \emph{Thurston symplectic form} on $\mathcal{H}(\lambda;\R)$, a skew-symmetric non-degenerate bilinear form, whose definition is recalled in Section \ref{subsubsec:thurston sympl}.

The combination of the analysis in the finite leaved case (developed in this section) together with the continuity results of $\beta$-shear cocycles (described in Section \ref{subsec:continuity shear} below) will eventually allow us to relate $\beta$-shear cocycles of strictly positive and locally bounded cross ratios to hyperbolic structures on $\Sigma$, as described in Theorem \ref{shear of cross ratio improved}.

\subsubsection{Thurston symplectic form} 
\label{subsubsec:thurston sympl} 

We start by briefly recalling the definition of the Thurston symplectic form $\omega_\lambda$ on the space of transverse H\"older cocycles $\mathcal{H}(\lambda;\R)$ to a maximal geodesic lamination $\lambda$. As described by Bonahon in \cite{Bo96}*{\S~3}, the symplectic form $\omega_\lambda$ can be intrinsically characterized in terms of the intersection form on the space of geometric currents supported on $\lambda$, in the sense of \cite{RS75}. However, for our purposes it will be more convenient to have an elementary (but less intrinsic) description of the Thurston symplectic form, through the choice of a train track $\tau$ carrying $\lambda$ and its induced isomorphism $\mathcal{H}(\lambda;\R) \cong \mathcal{W}(\tau;\R)$ (see Section~\ref{subsec:continuity shear}). 

In the following we briefly introduce the necessary notation. Given any switch $s$ of the train track $\tau$, we denote by $B_s$ the unique branch of $\tau$ whose vertical boundary contains $s$,  and we select arbitrarily lifts $\widetilde{B}_s$ and $\tilde{s} \subset \widetilde{B}_s$ of $B_s$ and $s$ to the universal cover of $\Sigma$. The switch $\tilde{s}$ is then adjacent to two other distinct branches $\widetilde{B}^s_l$ and $\widetilde{B}^s_r$ of $\tilde{\tau}$, with $\widetilde{B}^s_l$ and $\widetilde{B}^s_r$ lying on the left and on the right of $\tilde{s}$, respectively, if seen from $\widetilde{B}_s$ with respect to the orientation of $\widetilde{\Sigma}$. For any transverse H\"older cocycle $\alpha \in \mathcal{H}(\lambda;\R)$, we then denote by $\alpha_l^s$ and $\alpha_r^s$ the real weights associated by $\alpha$ to the branches $\widetilde{B}_l^s, \widetilde{B}_r^s$. Finally, the \emph{Thurston symplectic form} $\omega_\lambda$ applied to $\alpha, \beta \in \mathcal{H}(\lambda;\R)$ can be expressed as
\begin{equation}\label{eq:thurston sympl}
	\omega_\lambda(\alpha, \beta) = \frac{1}{2} \sum_s (\alpha^s_r \, \beta^s_l - \alpha^s_l \, \beta^s_r) ,
\end{equation}
where the sum is taken over all switches $s$ of $\tau$. 

As recalled in Section~\ref{subsec:shear coordinates}, the Thurston symplectic form provides a characterization of the set of transverse H\"older cocycles that can be obtained as shear cocycles of hyperbolic structures on $\Sigma$ (see Theorem~\ref{thm:thurston bonahon}, or \cite{Bo96}*{Theorem~20}). In addition, it is worth to mention that the Thurston symplectic form is also intimately related to the geometry of Teichm\"uller space, and in particular to its Weil-Petersson symplectic structure, as observed in \cite{SB01}. We refer to \cite{Bo96}*{\S~3} (see also \cite{Pap86}, \cite{PH92}*{\S~3.2}, \cite{SB01}) for a more detailed description of the Thurston symplectic form and its properties.

\subsubsection{Lengths} 

The relation between $\beta$-shear cocycles with respect to finite leaved laminations and $\beta$-periods relies on elementary arguments. The main ingredients are the combinatorial description of the Thurston symplectic form from relation \eqref{eq:thurston sympl}, and the following statement:

\begin{lem} \label{lem:shear and length}
	Let $\lambda$ be a finite leaved maximal lamination, and let $\gamma$ be a non-trivial element of $\Gamma$ whose axis $\tilde{\gamma}$ projects onto a closed leaf of $\lambda$. Consider a plaque $P$ of $\lambda$ that has one of the endpoints $\gamma^\pm$ of $\tilde{\gamma}$ as a vertex, and assume that it lies on the left of $\tilde{\gamma}$ . Then
	\[
	\sigma_\lambda^\beta(P, \gamma P) = \pm L_\beta(\gamma),
	\]
	with positive sign if $P$ has $\gamma^+$ as one of its vertices, and with negative sign otherwise.
\end{lem}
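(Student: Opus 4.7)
The plan is to apply Lemma~\ref{lem:asymptotic_plaques} directly. Since $\gamma$ fixes both $\gamma^\pm$, the translate $\gamma P$ automatically shares with $P$ whichever of $\gamma^\pm$ is a vertex of $P$. This places us precisely in the setting of Lemma~\ref{lem:asymptotic_plaques}, and the reduction to the $\beta$-period becomes a short algebraic manipulation using the cross-ratio axioms in \eqref{eq:crossshear} plus the $\Gamma$-invariance of $\beta$ applied to $\gamma$ itself.

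Consider first the case where $P$ has $\gamma^+$ as a vertex. Denote the other two vertices by $u_P, v_P$, with $v_P$ being the one closer to $\gamma^+$ along the left arc of $\partial \Gamma - \{\gamma^+, \gamma^-\}$. Since $\gamma$ is attracting at $\gamma^+$, the plaque $\gamma P$ lies strictly between $P$ and $\tilde \gamma$, and its vertices appear along the left arc (ordered starting from $\gamma^+$) as $\gamma v_P, \gamma u_P, v_P, u_P, \gamma^-$. Applying Lemma~\ref{lem:asymptotic_plaques} with $w = \gamma^+$, the vertex of $P$ facing $\gamma P$ being $v_P$ and the vertex of $\gamma P$ facing $P$ being $\gamma u_P$, yields
\[
\sigma_\lambda^\beta(P, \gamma P) = \log\bigl|\beta(\gamma^+, v_P, u_P, \gamma u_P) \cdot \beta(\gamma^+, \gamma u_P, v_P, \gamma v_P)\bigr|.
\]
Setting $A := \gamma^+$, $B := v_P$, $C := u_P$, $D := \gamma u_P = \gamma C$, $E := \gamma v_P = \gamma B$, I would prove the identity
\[
\bigl|\beta(A,B,C,D) \cdot \beta(A,D,B,E)\bigr| = \bigl|\beta(A,\gamma^-, C, D)\bigr| = e^{L_\beta(\gamma)}
\]
by starting from the right-hand side and successively applying: relation 5 to expand $|\beta(A, \gamma^-, C, D)|$ as $|\beta(A, C, \gamma^-, D) \beta(A, D, C, \gamma^-)|$; relation 4 with intermediate point $B$ to split each factor as $\beta(A, C, B, D) \beta(A, C, \gamma^-, B)$ and $\beta(A, D, B, \gamma^-) \beta(A, D, C, B)$; an inverse application of relation 5 to reconstitute $|\beta(A, B, C, D)|$ from the factors $\beta(A, C, B, D)$ and $\beta(A, D, C, B)$; the $\Gamma$-invariance of $\beta$ applied to $\gamma$ to obtain $\beta(A, C, \gamma^-, B) = \beta(A, D, \gamma^-, E)$ (using that $\gamma$ fixes $A$ and $\gamma^-$, while $\gamma C = D$ and $\gamma B = E$); and finally relation 4 with intermediate point $\gamma^-$ to combine $\beta(A, D, \gamma^-, E) \cdot \beta(A, D, B, \gamma^-) = \beta(A, D, B, E)$. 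The case where $P$ has $\gamma^-$ as a vertex is completely analogous, but now $P$ lies between $\gamma P$ and $\tilde\gamma$ since $\gamma$ is repelling at $\gamma^-$; the same reduction (with $A$ replaced by $\gamma^-$) produces $\log|\beta(\gamma^-, \gamma^+, v_P, \gamma v_P)|$, which by symmetries 7 and 6 equals $\log|\beta(\gamma^+, \gamma^-, v_P, \gamma v_P)^{-1}| = - L_\beta(\gamma)$, delivering the required sign.

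The main technical obstacle is identifying the correct sequence of symmetry applications in the reduction step. Relations 4 and 5 admit many non-obviously-equivalent manipulations, and the precise combination that collapses a product of two \emph{local} cross-ratios (involving the plaque vertices $v_P, u_P, \gamma v_P, \gamma u_P$) into a single \emph{global} $\beta$-period (involving only the fixed points $\gamma^\pm$) is not a priori evident. The conceptual key is that the $\Gamma$-invariance of $\beta$ applied to $\gamma$ itself is what bridges local and global data: the identity $\beta(A, C, \gamma^-, B) = \beta(A, \gamma C, \gamma^-, \gamma B)$ is precisely what converts vertex information about $P$ into vertex information about $\gamma P$, enabling the telescoping to the period.
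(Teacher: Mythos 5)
Your proof is correct and follows essentially the same route as the paper's: apply Lemma~\ref{lem:asymptotic_plaques} at the shared fixed point and then collapse the two cross-ratio factors to the $\beta$-period of $\gamma$ using the fourth and fifth relations of \eqref{eq:crossshear} together with one application of the $\Gamma$-invariance of $\beta$ under $\gamma$ itself; your chain is a rearrangement of the paper's, differing only in the direction of the manipulation and in the auxiliary point ($u_P$ rather than $v_P$) used to express $L_\beta(\gamma)$. One small remark: your two ``between'' statements are reversed --- when $P$ has the attracting point $\gamma^+$ as a vertex it is $P$ that separates $\gamma P$ from $\tilde\gamma$, and the opposite happens when the shared vertex is $\gamma^-$ --- but the cyclic order you actually use, $\gamma^+, \gamma v_P, \gamma u_P, v_P, u_P, \gamma^-$, and hence the identification of the facing vertices fed into Lemma~\ref{lem:asymptotic_plaques}, is correct, so the computation is unaffected.
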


The proof of Lemma \ref{lem:shear and length} relies only on the symmetries of the cross ratio $\beta$, and we postpone it to Appendix \ref{shears and symmetries}.

We conclude the current section with the following result, which will play an important role in the proof of Theorem \ref{shear of cross ratio improved}, described in Section \ref{sec:shear cocycles}:

\begin{pro}\label{pro:length and thurston sympl finite}
	Let $\beta$ be a positive cross ratio. Then for every finite leaved maximal lamination $\lambda$ and for weighted multicurve $\mu$ with $\supp \mu \subseteq \lambda$, we have 
	$$L_\beta(\mu) = \omega_\lambda(\sigma^\beta_\lambda, \mu),$$ 
	where $\omega_\lambda$ denotes the Thurston symplectic form on the space of transverse H\"older cocycles $\mathcal{H}(\lambda;\R)$.
\end{pro}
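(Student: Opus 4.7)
The plan is to reduce the statement to the case of a single simple closed leaf of $\lambda$ by bilinearity, establish a general identity relating the Thurston pairing $\omega_{\lambda}(\alpha,\gamma)$ to a shear $\alpha(P,\gamma P)$ across $\gamma$, and then invoke Lemma~\ref{lem:shear and length}.

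A weighted multicurve supported in $\lambda$ can be written as $\mu=\sum_{i}c_{i}\gamma_{i}$, with $c_{i}>0$ and $\gamma_{i}$ distinct closed leaves of $\lambda$. The map $\mu\mapsto\omega_{\lambda}(\sigma^{\beta}_{\lambda},\mu)$ is linear in $\mu$ by bilinearity of the Thurston symplectic form, while $\mu\mapsto L_{\beta}(\mu)=i(\mathscr{L}_{\beta},\mu)$ is linear in $\mu$ by Theorem~\ref{thm:current of cross-ratio} and bilinearity of the intersection form on geodesic currents. It therefore suffices to prove the identity when $\mu=\gamma$ is a single closed leaf of $\lambda$ with unit weight.

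The key step is the following general identity: for every transverse H\"older cocycle $\alpha\in\mc{H}(\lambda;\mb{R})$ and every closed leaf $\gamma$ of $\lambda$,
\[
\omega_{\lambda}(\alpha,\gamma)=\alpha(P,\gamma P),
\]
where $P$ is a plaque of $\lambda$ having $\gamma^{+}$ as a vertex and lying on the left of the axis $\tilde{\gamma}$, consistently with the sign convention of Lemma~\ref{lem:shear and length}. Both sides are linear functionals of $\alpha$. By Theorem~\ref{thm:thurston bonahon}, the family of hyperbolic shear cocycles $\{\sigma^{X}_{\lambda}\mid X\in\T\}$ is an open convex cone in $\mc{H}(\lambda;\mb{R})$, and hence spans the whole finite-dimensional vector space $\mc{H}(\lambda;\mb{R})$. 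It is therefore enough to verify the identity when $\alpha=\sigma^{X}_{\lambda}$ for some $X\in\T$. For such $\alpha$, on the one hand $\omega_{\lambda}(\sigma^{X}_{\lambda},\gamma)=L_{X}(\gamma)$ is Bonahon's identity (\cite{Bo96}*{Theorem~E}, recalled in Section~\ref{subsec:shear coordinates}), while on the other hand $\sigma^{X}_{\lambda}(P,\gamma P)=L_{X}(\gamma)$ follows from the classical interpretation of the shear along a closed leaf of a hyperbolic structure as the translation length of the corresponding holonomy, which is also the special case $\beta=\beta^{X}$ of Lemma~\ref{lem:shear and length}. The two expressions agree, establishing the general identity for all $\alpha$.

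Applying the identity to $\alpha=\sigma^{\beta}_{\lambda}$ and combining with Lemma~\ref{lem:shear and length} yields
\[
\omega_{\lambda}(\sigma^{\beta}_{\lambda},\gamma)=\sigma^{\beta}_{\lambda}(P,\gamma P)=L_{\beta}(\gamma),
\]
as required. The main obstacle is the verification of the general key identity. The route via Bonahon's Theorem~E and the density of hyperbolic shear cocycles is short and conceptual; alternatively, one could verify it directly by combinatorial means using the train-track formula \eqref{eq:thurston sympl} for $\omega_{\lambda}$ applied to a train track $\tau$ carrying $\lambda$ for which $\gamma$ corresponds to a closed cycle, exploiting the switch conditions and the additivity of $\alpha$ along consecutive plaques crossed by $\gamma$, at the price of noticeable bookkeeping.
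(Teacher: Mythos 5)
Your argument is sound, but it reaches the key evaluation of $\omega_\lambda(\sigma^\beta_\lambda,\gamma)$ by a genuinely different route than the paper. The paper picks a train track adapted to $\gamma$ (a tubular neighborhood of $\gamma$ with two spiralling branches attached), computes $\omega_\lambda(\sigma^\beta_\lambda,\abs{\gamma})$ directly from the combinatorial formula \eqref{eq:thurston sympl}, obtaining $\tfrac12\big(\mathrm{sgn}_l(\tau,\gamma)\,\sigma^\beta_\lambda(P,\gamma P)-\mathrm{sgn}_r(\tau,\gamma)\,\sigma^\beta_\lambda(Q,\gamma Q)\big)$ with plaques $P,Q$ on the two sides of the axis, and then applies Lemma~\ref{lem:shear and length} to both terms (the second one via $\gamma^{-1}$), averaging to get $L_\beta(\gamma)$; no hyperbolic structures enter at all. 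You instead prove the abstract identity $\omega_\lambda(\alpha,\gamma)=\alpha(P,\gamma P)$ for \emph{every} transverse cocycle $\alpha$ by noting that both sides are linear in $\alpha$ and agree on the open cone of hyperbolic shear cocycles (Theorem~\ref{thm:thurston bonahon}), where the left-hand side is the relation $\omega_\lambda(\sigma^X_\lambda,\mu)=L_X(\mu)$ recalled in Section~\ref{subsec:shear coordinates} and the right-hand side is the classical fact that the shear of a hyperbolic structure across a closed leaf is the translation length; then you specialize to $\alpha=\sigma^\beta_\lambda$ and invoke Lemma~\ref{lem:shear and length}. This is shorter and conceptually cleaner, avoiding the $\mathrm{sgn}_l,\mathrm{sgn}_r$ bookkeeping, at the price of importing two external inputs the paper's computation does not need: Bonahon's length formula for hyperbolic shear cocycles (legitimate, since it is quoted as background and not reproved in the paper), and the identification $\sigma^X_\lambda(P,\gamma P)=L_X(\gamma)$, i.e.\ the agreement of Bonahon's shear with the $\beta^X$-shear on such pairs of plaques --- a standard fact, but one the paper only establishes (in greater generality) later, in Proposition~\ref{pro:intrinsic shear finite case}, so you should cite it as classical rather than lean on the paper. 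One point needs explicit care: your key identity presupposes that some plaque on the left of $\tilde\gamma$ has $\gamma^+$ as a vertex, which depends on the spiralling direction of $\lambda$ on that side; if the left-hand plaques are instead asymptotic to $\gamma^-$, the identity must read $\omega_\lambda(\alpha,\gamma)=-\alpha(P,\gamma P)$, and the two sign changes (here and in Lemma~\ref{lem:shear and length}) cancel, so the conclusion is unaffected --- but this case split should be stated rather than left implicit.
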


\begin{proof}
	It is sufficient to consider the case in which $\mu$ consists of a single simple closed curve with weight $1$. Let $\gamma$ be an element of $\Gamma$ whose axis $\tilde{\gamma}$ projects onto a simple closed leaf of $\lambda$. We orient $\tilde{\gamma}$ so that it points towards the attracting fixed point $\gamma^+ \in \partial \Gamma$ and moves away from the repelling fixed point $\gamma^-$. We will denote by $\tilde{\gamma}^{-1}$ the axis of $\gamma$ endowed with the opposite orientation, and by $\abs{\gamma} \in \mathcal{C}$ the associated geodesic current.
	
	We now select a train track carrying $\lambda$. Being $\lambda$ a finite leaved lamination, there exist bi-infinite leaves of $\lambda$ that spiral around $\gamma$ both from its left and its right (with respect to the orientation of $\gamma$ and of the surface $\Sigma$). It is not restrictive to assume that $\lambda$ is carried by a train track $\tau$ obtained from a tubular neighborhood of $\gamma$ by adding two branches on its sides, and then properly extended away from $\gamma$. (Such a train track can be obtained by taking a sufficiently small metric neighborhood of $\lambda$ with respect to some hyperbolic structure $X$, and possibly by a small deformation to guarantee the trivalence of every switch.)
	
	In order to provide an explicit expression for the evaluation of the Thurston symplectic form $\omega_\lambda(\sigma^\beta_\lambda, \abs{\gamma})$ with respect to the train track $\tau$, we need to introduce some notation. Let $k$ be a tie of the train track $\tau$ that intersects $\gamma$. We select arbitrarily a lift $\tilde{k}$ of $k$ in $\widetilde{\Sigma}$ that crosses $\tilde{\gamma}$, and we denote by $P$ and $Q$ the plaques of $\lambda$ that contain the endpoints of $\tilde{k}$, so that $P$ lies on the left of $\tilde{\gamma}$ and $Q$ on its right. Both plaques have an ideal vertex equal to $\tilde{\gamma}^+$ or $\tilde{\gamma}^-$. We now introduce the following sign convention: We say that the left sign of $\tau$ with respect to $\gamma$, denoted by $\textrm{sgn}_l(\tau,\gamma)$, is equal to $+ 1$ if the plaque $P$ lying on the left of $\tilde{\gamma}$ has $\gamma^+$ as one of its ideal vertices, and we set it equal to $- 1$ otherwise. On the other hand, we define the right sign of $\tau$ with respect to $\gamma$ to be $\textrm{sgn}_r(\tau,\gamma) = + 1$ if the plaque $Q$ lying on the right of $\tilde{\gamma}$ has $\gamma^-$ as one of its vertices, and $-1$ otherwise. It is not difficult to see that the sign functions $\textrm{sgn}_l(\tau,\gamma)$ and $\textrm{sgn}_r(\tau,\gamma)$ depend only on the train track $\tau$ and the choice of the orientation of the curve $\gamma$: For instance, we can alternatively define $\textrm{sgn}_l(\tau,\gamma)$ to be $+1$ if the branch of $\tau$ that enters in the tubular neighborhood of $\gamma$ from its left follows the orientation of $\gamma$ and $-1$ otherwise; a similar description holds for $\textrm{sgn}_r(\tau,\gamma)$. 
	
	There are only finitely many possible configurations for the switches and branches of $\tau$ that intersected $s$. By applying relation \eqref{eq:thurston sympl} to each possible configuration, we obtain the expression:
	\[
	\omega_\lambda(\sigma^\beta_\lambda, \abs{\gamma}) = \frac{1}{2} \left( \mathrm{sgn}_l(\tau, \gamma) \, \sigma^\beta_\lambda(P, \gamma P) - \mathrm{sgn}_r(\tau, \gamma) \, \sigma^\beta_\lambda(Q, \gamma Q) \right) .
	\]
	Notice that by definition $\mathrm{sgn}_r(\tau,\gamma) = - \mathrm{sgn}_l(\tau,\gamma^{-1})$. Moreover, for any plaque $R$ of $\lambda$ we have 
	$$\sigma_\lambda^\beta(R,\gamma R) = \sigma_\lambda^\beta(\gamma^{-1} R, R) = \sigma_\lambda^\beta(R, \gamma^{-1} R) ,$$
	since $\sigma^\beta_\lambda$ is a transverse H\"older cocycle. In particular, the term $\omega_\lambda(\sigma^\beta_\lambda, \abs{\gamma})$ can be equivalently expressed as
	\[
	\omega_\lambda(\sigma^\beta_\lambda, \abs{\gamma}) = \frac{1}{2} \left( \mathrm{sgn}_l(\tau, \gamma) \, \sigma_\lambda^\beta(P, \gamma P) + \mathrm{sgn}_l(\tau, \gamma^{-1}) \, \sigma^\beta_\lambda(Q, \gamma^{-1} Q)\right) .
	\]
	Since $P$ lies on the left of $\tilde{\gamma}$ and $Q$ lies on the left of $\tilde{\gamma}^{-1}$, we can now apply Lemma~\ref{lem:shear and length} to both terms appearing above, obtaining
	\[
	\omega(\sigma^\beta_\lambda, \abs{\gamma}) = \frac{1}{2} \left( L_\beta(\gamma) + L_\beta(\gamma^{-1}) \right) = L_\beta(\gamma) ,
	\]
	which proves the desired identity.
\end{proof}


\section{Hyperbolic structures on pleated sets II}
\label{sec:shear cocycles}

This section is dedicated to the definition and the study of shear cocycles associated to positive and locally bounded cross ratios and general maximal geodesic laminations. We generalize the phenomena observed in Section \ref{subsec:shear and length I} for finite leaved maximal laminations, and we provide a proof of Theorem \ref{shear of cross ratio improved} in full generality.

The construction of the $\beta$-shear cocycle $\sigma_\lambda^\beta\in\mc{H}(\lambda;\mb{R})$ for a general maximal lamination $\lambda$ will require several auxiliary choices and a fine analysis of the convergence of finite $\beta$-shears. Nevertheless, we will observe that the resulting $\beta$-shear cocycles satisfy a series of natural properties:

\begin{enumerate}[(a)]
	\item{For every \emph{finite leaved} maximal lamination $\lambda$, the transverse H\"older cocycle $\sigma_\lambda^\beta\in\mc{H}(\lambda;\mb{R})$, obtained through the general process described in Section \ref{subsec:constructing cocycles}, coincides with the $\beta$-shear cocycle introduced in Section \ref{subsec:shear finite case} (see in particular Proposition \ref{pro:shear finite case}).}
	\item{The map $$\mc{GL} \ni \lambda \longmapsto \sigma_\lambda^\beta \in \mc{W}(\tau;\mb{R})$$ is continuous with respect to the Hausdorff topology on the space of maximal geodesic laminations. Here $\mc{W}(\tau;\mb{R})$ is the space of real weights of a suitable train track $\tau$ that carries $\lambda$.}
\end{enumerate}

Consequently, since maximal finite leaved laminations are dense in the entire set of maximal geodesic laminations (see e.g. \cite{CEG}*{Theorem~I.4.2.19}) and since the $\beta$-shear cocycles $\sigma_\lambda^\beta$ constructed in Section \ref{subsec:shear finite case} do not require any auxiliary choice, we can conclude that the transverse H\"older cocycle $\sigma_\lambda^\beta$ only depends on $\lambda$ and $\beta$, even in the case of a general maximal geodesic lamination.

\subsubsection*{Outline of the construction}

The $\beta$-shear $\sigma_\lambda^\beta(P,Q)$ between the plaques $P,Q$ will be defined as a limit of certain finite $\beta$-shears $\sigma_{\mc{P}_n}^\beta(P,Q)$ associated to a suitably chosen exhaustion $\mc{P}_n$ of the set of plaques $\mc{P}_{PQ}$ separating $P$ from $Q$. The choice of $\mc{P}_n$ depends on the geometry of $\lambda$ on a fine scale. More precisely, in order to select it, we will use a divergence radius function associated to the choice of a hyperbolic structure $X$, a train track $\tau$ that carries $\lambda$, and a geodesic arc $k$ joining $P$ to $Q$. 

We emphasize however that, as previously observed, the continuity properties of the construction (Proposition \ref{pro:continuity shear wrt lamination}) and the naturality in the case of finite leaved laminations (see Section \ref{subsec:shear finite case}) make the cocycle $\sigma_\lambda^\beta(P,Q)$ independent of all the auxiliary choices required for its definition. The rest of the section is structured as follows:

\begin{description}
	\item[\S~\ref{subsec:divergence radius function}] We dedicate this section to the description of \emph{divergence radius functions}, which were originally introduced by Bonahon in \cite{Bo96} to study the convergence of the shearing maps between hyperbolic surfaces. 
	\item[\S~\ref{subsec:shear general case}] In this section we give the general definition of the $\beta$-shear $\sigma^\beta_\lambda(P,Q)$: We deploy divergence radius functions to carefully select an exhausting sequence of finite sets of plaques $(\mathcal{P}_n)_n$ inside $\mathcal{P}_{P Q}$, whose associated finite shears converges. 
	\item[\S~\ref{subsec:continuity shear}] In this section we prove that $\beta$-shear cocycles $\sigma^\beta_\lambda$ satisfy a suitable notion of continuity with respect to the maximal lamination $\lambda$ (endowing the space of maximal geodesic laminations with the Chabauty topology). 
	\item[\S~\ref{subsec: shear and length}] We then study the relations between $\beta$-shear cocycles and $\beta$-periods associated to a positive and locally bounded cross ratio $\beta$, generalizing what observed in Section \ref{subsec:shear and length I} for finite leaved laminations (see in particular Proposition \ref{pro:length and thurston sympl}).
	\item[\S~\ref{subsec: proof of shear}] We conclude our analysis with the proof of Theorem \ref{shear of cross ratio improved}, combining the results from the previous sections with Bonahon's shear parametrization (see Theorem \ref{thm:thurston bonahon})
\end{description}

\subsection{Divergence radius functions}
\label{subsec:divergence radius function}

In order to define the $\beta$-shear between two plaques $P$ and $Q$ in the case of a general maximal lamination, we will need to determine an exhaustion $(\mathcal{P}_n)_n$ of the set of plaques separating $P$ from $Q$ whose associated sequence of finite shears $(\sigma_{\mathcal{P}_n}^\beta(P,Q))_n$ converges (compare with Section \ref{subsec:definition shear}). This part of our analysis requires some care, because of the (not particularly strong) control between finite $\beta$-shears associated to different collections of plaques (see Lemma~\ref{lem:finite_approx_shear}). In particular, we make use of the so-called \emph{divergence radius function} $r : \mathcal{P}_{P Q} \to \N$, associated to the choice of a trivalent train track carrying the lamination $\lambda$ (see Section \ref{subsubsec:train tracks} for the related terminology), a hyperbolic structure $X$ on $\Sigma$, and a ($X$-)geodesic path joining $P$ to $Q$ (see Bonahon-Dreyer \cite{BD17}, and Bonahon \cite{Bo96}*{\S~1}). Any such function depends on the choice of:
\begin{itemize}
	\item A hyperbolic structure $X$ on $\Sigma$.
	\item A (trivalent) train track $\tau$ inside $\Sigma$.
	\item A maximal geodesic lamination $\lambda$ (which will be identified with its $X$-geodesic realization in the universal cover of $(\Sigma,X)$) carried by $\tau$.
	\item Two distinct plaques $P$ and $Q$ of $\lambda$.
	\item A geodesic segment $k$ that joins a point in the interior of $P$ to a point in the interior of $Q$.
\end{itemize}
Once we fix these data, the associated \emph{divergence radius function}
\[
r = r_{X, \tau, \lambda,k} : \mathcal{P}_{P Q} \longrightarrow \N
\]
associates to every plaque $R$ that separates $P$ from $Q$ a natural number $r(R)$, which roughly measures the length of the geodesic arc $R \cap k$ in terms of the combinatorics of the fixed train track $\tau$ and the boundary leaves of $R$ that intersect $k$.

In order to be more precise, we need to introduce some notation. For any plaque $R \in \mathcal{P}_{P Q}$, let $\ell_R, \ell_R'$ be the boundary leaves of $R$ that intersect the arc $k$. If the geodesic segment $R \cap k$ is not entirely contained in the lift $\tilde{\tau}$ of the train track $\tau$ to the universal cover of $\Sigma$, then we set $r(R) = 0$. If this does not occur, then the intersection points between $k$ and the boundary leaves $\ell_R, \ell_R'$ lie in a common branch $\widetilde{B}_0$ of $\tilde{\tau}$. We now orient $\ell_R, \ell_R'$ so that they share their negative endpoint, and we denote by
\[
\dots, \widetilde{B}_2, \widetilde{B}_{-1}, \widetilde{B}_0, \widetilde{B}_1, \widetilde{B}_2, \dots
\]
the branches of $\tilde{\tau}$ that $\ell_R$ passes through, indexed in consecutive order according to the orientation of $\ell_R$. We then define $r(R) : = n + 1$, where $n$ is the largest natural number such that $\ell_R'$ passes through the branches $\widetilde{B}_m$ for every integer $m \in \{- n , - n + 1, \dots, n -1 , n\}$. Then we have:

\begin{lem}[{see \cite{BD17}*{Lemma~5.3}}]\label{lem:auxiliary function r}
	The divergence radius function $r : \mathcal{P}_{P Q} \to \N$ satisfies the following properties:
	\begin{enumerate}
		\item there exist constants $A, M > 0$ such that
		\[
		A^{-1} \, e^{- M^{-1} \, r(R)} \leq L_{\tilde{X}}(k \cap R) \leq A \, e^{- M \, r(R)} 
		\]
		for every $R \in \mathcal{P}_{P Q}$;
		\item there exists $N \in \N$ such that, for every $n \in \N$ the preimage $r^{-1}(n)$ contains at most $N$ plaques.
	\end{enumerate}
\end{lem}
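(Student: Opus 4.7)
The plan is to follow the strategy of Bonahon \cite{Bo96} and Bonahon--Dreyer \cite{BD17}*{Lemma~5.3}, combining an exponential contraction estimate in the hyperbolic plane with the combinatorial finiteness of the trivalent train track $\tau$. The underlying geometric principle for property (1) is that the divergence radius $r(R)$ measures how long the two boundary leaves $\ell_R, \ell_R'$ of $R$ remain confined to a common tube of branches of $\tilde\tau$, and hyperbolic geometry forces the transverse distance between them at the midpoint of a long such tube to be exponentially small in the tube length.

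For property (1), the first step is to observe that, up to multiplicative constants depending only on the angle between $k$ and the ties of $\tau$ on the compact surface $\Sigma$, the hyperbolic length $L_{\tilde X}(k\cap R)$ is comparable to the hyperbolic distance between $\ell_R$ and $\ell_R'$ measured along the tie of $\tilde\tau$ through $k\cap R$. Since $\tau$ is compact with finitely many branches and ties, there exists a uniform width $w>0$ such that any two leaves of $\lambda$ crossing a common tie of $\tilde\tau$ are within hyperbolic distance $w$. If $r(R)=n+1$, both boundary leaves simultaneously traverse the $2n+1$ consecutive branches $\widetilde B_{-n},\dots,\widetilde B_n$ and, in particular, are $w$-close at both ends of a hyperbolic segment of length $\geq cn$, where $c>0$ is a lower bound on the hyperbolic length of a branch of $\tau$. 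The standard exponential contraction estimate in $\mb{H}^2$ (two geodesics within distance $w$ at two endpoints of a tube of length $T$ are within distance $\lesssim e^{-T/2}$ at the midpoint) then yields the upper bound $L_{\tilde X}(k\cap R)\leq A\,e^{-Mn}$. The matching lower bound follows by running the argument in reverse: if $L_{\tilde X}(k\cap R)$ is small, then the two boundary leaves remain $w$-close on a hyperbolic segment of length comparable to $|\log L_{\tilde X}(k\cap R)|$, which forces them to share a proportional number of branches of $\tilde\tau$.

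For property (2), the key observation is that a plaque $R\in\mc{P}_{PQ}$ with $r(R)=n$ is determined by three pieces of information: the central branch $\widetilde B_0\subset\tilde\tau$ containing $k\cap R$, the coherent sequence of branches $\widetilde B_{-n},\dots,\widetilde B_n$ traversed by both $\ell_R$ and $\ell_R'$, and the configurations at the two outer switches where $\ell_R$ and $\ell_R'$ first diverge. Since $k$ is a compact geodesic segment crossing only finitely many branches of $\tilde\tau$, there are only finitely many choices for $\widetilde B_0$. Once $\widetilde B_0$ is fixed, the carrying property of $\lambda$ together with the trivalence of $\tau$ forces the central tube $\widetilde B_{-n},\dots,\widetilde B_n$ to be determined by $\widetilde B_0$ alone (at each switch the leaves of $\lambda$ follow the unique permissible continuation until the divergence point), while at the two outer switches there are only finitely many ways the two leaves can split. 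Therefore the number of plaques of divergence radius $n$ is uniformly bounded in $n$ by a constant $N$ depending only on $\tau$ and on the finitely many branches crossed by $k$.

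The main obstacle is the delicate exponential contraction estimate in property (1): the boundary leaves $\ell_R$ and $\ell_R'$ are asymptotic at only one endpoint of the ideal triangle $R$, so the geometry is that of a thin spike rather than a full bi-infinite tube, and the standard horoball contraction estimate must be adjusted to keep track of the angle between $k$ and the ties of $\tilde\tau$. Uniformity of the constants $A,M$ over all $R\in\mc{P}_{PQ}$ will require invoking the compactness of the unit tangent bundle of $(\Sigma,X)$ together with the finiteness of the set of branches of $\tau$, so that both the lower bound on branch length and the upper bound on tie width are attained uniformly on the closed surface.
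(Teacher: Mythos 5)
Your treatment of property (1) is essentially correct and close in spirit to the paper's argument, though routed slightly differently: the paper exploits the fact that the two boundary leaves $\ell_R,\ell_R'$ are asymptotic (they bound a spike of the ideal triangle $R$), so the separation measured along $\ell_R$ is a pure exponential in arc length; evaluating it at the switch where the leaves first diverge, where the separation is pinched between $A_0^{-1}$ and $A_0$ by the definite size of the vertical boundary components of $\tau$, yields the upper and lower bounds in one stroke. Your two-endpoint convexity estimate gives the upper bound, and your ``reverse'' argument for the lower bound is viable, but only once you make explicit the step you leave implicit: two leaves carried by $\tau$ can only stop sharing branches at a switch, and at such a switch their crossing points with the switch tie lie on opposite sides of a vertical boundary component of $\tilde{\tau}$, hence at a definite distance. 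That is what converts ``$\ell_R,\ell_R'$ stay below a threshold for time comparable to $\abs{\log L_{\tilde{X}}(k\cap R)}$'' into ``they share proportionally many branches''. (Also, the angle entering the comparison between $L_{\tilde{X}}(k\cap R)$ and the distance between the leaves is the angle between $k$ and the \emph{leaves of $\lambda$}, not the ties.)

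Property (2), however, contains a genuine gap: the claim that ``at each switch the leaves of $\lambda$ follow the unique permissible continuation until the divergence point'', so that the tube $\widetilde{B}_{-n},\dots,\widetilde{B}_n$ is determined by $\widetilde{B}_0$ alone, is false. When the tube reaches a switch from the side of the single branch whose vertical boundary contains that switch, there are \emph{two} admissible continuations, and distinct leaves of $\lambda$ through $\widetilde{B}_0$ do take different ones---this is precisely how distinct plaques acquire their divergence radii. Two plaques $R\neq R'$ in $\mathcal{P}_{P Q}$ met by $k$ inside the same branch can satisfy $r(R)=r(R')=n$ with entirely different tubes (the boundary leaves of $R$ may turn one way at the first switch in the positive direction and those of $R'$ the other way, each pair then fellow-travelling for $n$ further branches). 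Hence your count only bounds the number of plaques with a \emph{prescribed} tube, while the number of depth-$n$ tubes issuing from $\widetilde{B}_0$ grows with $n$; combining property (1) with the bounded length of $k\cap\widetilde{B}_0$ would only give a bound on $\abs{r^{-1}(n)}$ of order $e^{M^{-1}n}$, not a constant $N$. The uniform bound needs a different mechanism: the paper reduces to Bonahon's counting lemma (\cite{Bo96}*{Lemma~4}), which bounds $\abs{r^{-1}(n)}$ by a constant $N_0$ depending only on the topology of $\Sigma$ when the arc projects to an \emph{embedded} arc in $\Sigma$---an argument carried out downstairs, where embeddedness is exactly what kills the multiplicity your count misses---and then subdivides a general geodesic arc $k$ into at most $L_{\tilde{X}}(k)/\varepsilon_0$ subarcs with embedded projections ($\varepsilon_0$ the injectivity radius of $X$), obtaining $N = (L_{\tilde{X}}(k)/\varepsilon_0)\,N_0$.
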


Divergence radius functions were first introduced by Bonahon in \cite{Bo96}*{\S~1} to study the convergence of shear maps with respect to maximal geodesic laminations (see also Bonahon \cites{Bo97transv,Bo97geo}, and Bonahon and Dreyer \cite{BD17}). In our exposition, these functions will be useful to select exhaustions $(\mathcal{P}_n)_n$ of $\mathcal{P}_{P Q}$ by finite nested subsets whose associated finite $\beta$-shear $\sigma^\beta_{\mathcal{P}_n}(P,Q)$ converge. However, in certain steps of our analysis (see in particular Lemma~\ref{lem:limits of shear} and Proposition~\ref{pro:continuity shear wrt lamination}), it will useful to have a better understanding of the dependence of the functions $r$ and of the corresponding constants $A, M, N$ with respect to the choices of the lamination $\lambda$ carried by $\tau$, and the transverse path $k$. We summarize the necessary refinements of Lemma~\ref{lem:auxiliary function r} in the following statements. Fixed a hyperbolic structure $X$ on $\Sigma$, a train track $\tau$, a maximal lamination $\lambda$ carried by $\tau$, and two plaques $P$ and $Q$ of $\lambda$, we have:

\begin{lem} \label{lem:dependence of constants}
	For any geodesic arc $k$ joining the interiors of $P$ and $Q$, there exist constants $A, M, N > 0$ and a open neighborhood $U$ of $\lambda$ in the space of maximal laminations (endowed with the Chabauty topology) such that the following properties hold:
	\begin{itemize}
		\item Every maximal lamination $\lambda' \in U$ is carried by $\tau$.
		\item The $X$-geodesic path $k$ is transverse to (the $X$-geodesic realization of) every $\lambda' \in U$.
		\item For every $\lambda' \in U$ there exist distinct plaques $P', Q'$ of $\lambda'$ in $X$ that contain the endpoints of $k$.
		\item For any maximal lamination $\lambda' \in U$ and plaques $P', Q'$ as above, the associated divergence radius function $r' = r_{X, \tau, \lambda', k} : \mathcal{P}_{P' Q'} \to \N$ satisfies properties (1) and (2) in Lemma~\ref{lem:auxiliary function r} with constants $A, M, N > 0$ (which in particular are uniform in $\lambda' \in U$).
	\end{itemize}
\end{lem}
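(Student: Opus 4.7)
The plan is to treat the first three bullet points as stability statements in the Chabauty topology, and then to revisit the proof of Lemma~\ref{lem:auxiliary function r} in order to extract uniformity in $\lambda'$ of the constants $A, M, N$. First I would fix a small Chabauty-neighborhood $U_1$ of $\lambda$ inside which every maximal lamination $\lambda'$ is still carried by $\tau$: this is standard, since the conditions ``every leaf lies in $\mathrm{int}(\tau)$'' and ``leaves cross ties transversely'' are open in the Chabauty topology when $\tau$ already carries $\lambda$. Next, since $k$ is a compact geodesic arc meeting $\lambda$ only in its interior and transversely, up to shrinking $U_1$ I may assume $k$ is transverse to every $\lambda' \in U_1$ (transverse intersections are locally stable, and the endpoints of $k$ lie at positive $X$-distance from $\lambda$). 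For the third bullet, the endpoints of $k$, being at positive distance from $\lambda$, remain at positive distance from $\lambda'$ for $\lambda'$ close to $\lambda$, so they lie in interiors of plaques $P', Q'$ of $\lambda'$; the inequality $P' \neq Q'$ follows because $k$ crosses at least one leaf of $\lambda$ and such a transverse crossing persists for $\lambda'$ close to $\lambda$.

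For the fourth bullet I would inspect how the constants in Lemma~\ref{lem:auxiliary function r} arise. The exponential upper bound $L_{\tilde X}(k\cap R) \leq A e^{-M r(R)}$ comes from the contraction of horocyclic/transverse arcs as they are carried across consecutive branches of $\tilde\tau$: two leaves of any lamination carried by $\tau$ that share $2n+1$ consecutive branches must be $\tilde X$-close on the central branch, with a uniform exponential rate depending only on the hyperbolic geometry of $\tilde X$ along the branches of $\tilde\tau$. In particular $M$ and the multiplicative constant can be taken to depend only on $X$ and $\tau$ (not on the specific lamination), and the lower bound $A^{-1} e^{-r(R)/M} \leq L_{\tilde X}(k\cap R)$ is obtained by a symmetric non-degeneracy argument with constants of the same nature. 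Since $k$ is fixed and $U_1$ is a small neighborhood of $\lambda$, after further shrinking to some $U \subseteq U_1$ one can ensure that $k$ stays in a fixed compact portion of $\tilde\tau$ for every $\lambda' \in U$, making these estimates uniformly valid on $\mathcal{P}_{P' Q'}$ for all $\lambda' \in U$.

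The main obstacle is the uniform bound $|r'^{-1}(n)| \leq N$ in Property~(2). The preimage $r'^{-1}(n)$ consists of plaques of $\lambda'$ whose two boundary leaves meeting $k$ share exactly $2n+1$ consecutive branches of $\tilde\tau$ and cross $k$ within a common branch. Since $\tau$ is trivalent, this is a combinatorial condition whose number of realizations along $k$ is controlled by the sequence of branches that $k$ traverses in $\tau$; this sequence is determined by $\tau$ and $k$ alone, and stays constant as $\lambda'$ varies in a sufficiently small neighborhood. The delicate point is to rule out that, as $\lambda' \to \lambda$, new plaques of $\lambda'$ crossing $k$ appear in a given combinatorial slot (or that several plaques of $\lambda'$ collapse to a single plaque of $\lambda$). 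This I would handle by further restricting $U$ so that the finite ordered set of leaves of $\lambda'$ that meet $k$ and lie in a fixed precompact portion of $\tilde\tau$ is in bijective, Chabauty-continuous correspondence with its analogue for $\lambda$, using the fact that every leaf of $\lambda'$ meeting $k$ is carried by $\tau$ together with transversality of $k$ to $\lambda'$. With this continuous labeling in place, the combinatorial count giving $N$ for $\lambda$ transfers verbatim to all $\lambda' \in U$, completing the uniformity statement.
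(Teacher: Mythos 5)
Your reductions for the first three bullets, and your general plan of re-examining the proof of Lemma~\ref{lem:auxiliary function r} to see that its constants can be chosen uniformly, are in the same spirit as the paper's argument. One small inaccuracy in your discussion of property (1): the multiplicative constant $A$ in the paper does \emph{not} depend only on $X$ and $\tau$; it depends on $A_0$ (widths/vertical boundaries of $\tau$) \emph{and} on a lower bound $\theta$ for the angle at which $k$ meets the lamination. Uniformity therefore requires shrinking $U$ (and possibly $\theta$) so that $\theta$ is a uniform angle bound for the intersections of $k$ with every $\lambda'\in U$; your remark that ``$k$ stays in a fixed compact portion of $\tilde\tau$'' does not by itself supply this, although it is easily repaired by the same compactness/openness argument you use for transversality. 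This is a minor point.

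The genuine gap is in your treatment of property (2). The set of leaves of $\lambda'$ (or of $\lambda$) meeting the compact arc $k$ is in general \emph{infinite}: already a leaf spiraling onto a closed geodesic crosses a transversal to that geodesic infinitely often, and a minimal irrational lamination produces uncountably many crossings. So there is no ``finite ordered set of leaves of $\lambda'$ that meet $k$'', and there is no bijective, Chabauty-continuous correspondence between the leaves meeting $k$ (or between the plaque sets $\mathcal{P}_{P'Q'}$ and $\mathcal{P}_{PQ}$) for nearby laminations: a finite-leaved $\lambda'$ can be Chabauty-close to a minimal $\lambda$, and the two families of plaques crossing $k$ then have completely different cardinality and structure. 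Consequently the proposed ``verbatim transfer'' of the count from $\lambda$ to $\lambda'$ cannot be carried out. The correct (and much simpler) point, which is how the paper argues, is that no comparison between $\lambda$ and $\lambda'$ is needed at all: Bonahon's count (\cite{Bo96}*{Lemma~4}), invoked in the proof of Lemma~\ref{lem:auxiliary function r}, bounds the number of plaques with divergence radius exactly $n$ by a constant $N_0(\Sigma)$ depending only on the topology of $\Sigma$, valid for \emph{every} lamination and every embedded transverse geodesic arc; for a general $k$ one multiplies by the number $m$ of embedded subsegments of $k$, which depends only on $k$ and the injectivity radius of $X$. Thus $N=mN_0(\Sigma)$ is automatically uniform in $\lambda'\in U$, and the correspondence machinery you introduce to rule out appearing or collapsing plaques is both unnecessary and, as formulated, false.
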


\begin{lem} \label{lem:dependence of r on metric}
	For any choice of $X$-geodesic paths $k$ and $k'$ with endpoints lying in (the geodesic realizations of) the plaques $P, Q$, the associated divergence radius functions $r = r_{k}, r' = r_{k'} : \mathcal{P}_{P Q} \to \N$ provided by Lemma~\ref{lem:auxiliary function r} are coarsely equivalent, i. e. there exist constants $H, K > 0$ such that
	\[
	H^{-1} \, r'(R) - K \leq r(R) \leq H \, r'(R) + K
	\]
	for every plaque $R \in \mathcal{P}_{P Q}$.
\end{lem}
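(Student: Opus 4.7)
The strategy is to compare the divergence radii $r_k(R)$ and $r_{k'}(R)$ directly. Both are defined through the pair of leaves $\ell_R, \ell_R'$ of $R$ traveling together through common branches of $\tilde\tau$, but indexed from possibly distinct zero branches $\widetilde{B}_0^k$ and $\widetilde{B}_0^{k'}$, depending respectively on where $k$ and $k'$ cross $R$. The plan is to show that these two zero branches are separated along $\ell_R$ by a uniformly bounded number of branches; this will translate into a uniform \emph{additive} bound $|r_k(R) - r_{k'}(R)| \leq K$, which is stronger than (and implies) the coarse equivalence in the statement, with $H = 1$.

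First, I would establish that the geodesic arcs $k$ and $k'$ lie at bounded hyperbolic Hausdorff distance $D = D(k, k')$ from each other inside $(\widetilde{\Sigma}, \widetilde{X})$. This is a standard comparison fact for geodesic arcs in negatively curved spaces: their endpoints in $P$ and $Q$ are specific points at finite mutual hyperbolic distance, so the two arcs stay inside a uniform neighborhood of each other. Consequently, for any plaque $R \in \mathcal{P}_{P Q}$, the crossing points $p_k = k \cap \ell_R$ and $p_{k'} = k' \cap \ell_R$ lie at hyperbolic distance at most $D' = D'(D)$ along $\ell_R$, and analogously on $\ell_R'$.

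Next, I would exploit that $\tau$ is compact in $\Sigma$, so its lift $\tilde\tau$ has locally bounded geometry: there exists $N_0 = N_0(\tau, X, D')$ such that any hyperbolic segment of length at most $D'$ meets at most $N_0$ distinct branches of $\tilde\tau$. Applying this to the segment of $\ell_R$ joining $p_k$ and $p_{k'}$ shows that the zero branches $\widetilde{B}_0^k$ and $\widetilde{B}_0^{k'}$ differ by at most $N_0$ in the natural indexing of branches traversed by $\ell_R$. A direct inspection of the definition of divergence radius then translates this branch offset into $|r_k(R) - r_{k'}(R)| \leq N_0$ for all plaques $R$ such that both intersections $k \cap R$ and $k' \cap R$ are entirely contained in $\tilde\tau$.

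The main technical subtlety concerns the plaques $R$ where at least one of the intersections $k \cap R$ or $k' \cap R$ exits $\tilde\tau$, since the corresponding divergence radius is set to zero by convention and the combinatorial argument does not directly apply. Fortunately, this exceptional set is finite: $k$ and $k'$ are bounded-length geodesic arcs, which can enter and leave the closed set $\tilde\tau$ only finitely many times, contributing entry/exit points to only finitely many plaques of $\mathcal{P}_{P Q}$. The contribution of these finitely many exceptional plaques can be absorbed into an additive constant, yielding the claimed coarse equivalence with $H = 1$ and a suitable $K > 0$.
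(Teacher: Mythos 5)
Your argument is correct, and it follows a genuinely different route from the paper's. The paper proceeds analytically: by Lemma~\ref{lem:comparison_arc_distance at infinity} both $L_{\tilde{X}}(k \cap R)$ and $L_{\tilde{X}}(k' \cap R)$ are comparable to the arc-independent quantity $d_\infty(\ell_R, h_R)$, and these lengths are then converted back into divergence radii through the exponential bounds of property (1) in Lemma~\ref{lem:auxiliary function r}; since the upper and lower exponential rates there differ ($M$ versus $M^{-1}$), that argument only yields the multiplicative-plus-additive statement with $H$ possibly larger than $1$. Your comparison is combinatorial: both $r_k(R)$ and $r_{k'}(R)$ are the radii of maximal symmetric windows, inside the same set of indices $m$ for which $\ell_R'$ passes through $\widetilde{B}_m$, centered at two base branches of $\ell_R$'s itinerary; the base branches are offset by at most $N_0$ places (branch lengths are bounded below, and the two crossing points on $\ell_R$ are at bounded distance), and shifting the center of such a maximal window by $j$ changes its radius by at most $\abs{j}$, so you get the sharper additive bound $H = 1$. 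The finitely many plaques with $r = 0$ are indeed harmless: each individual divergence radius is finite, local finiteness of the (compact) complementary regions of $\tilde{\tau}$, each contained in a single plaque, shows only finitely many plaques of $\mathcal{P}_{P Q}$ are exceptional, and their contribution is absorbed into $K$. So your statement is both correct and strictly stronger than the paper's, at the cost of arguing directly with the branch indexing rather than quoting the two length-comparison lemmas.

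One step deserves more justification than your ``consequently'': Hausdorff closeness of $k$ and $k'$ alone does not bound the distance along $\ell_R$ between the two crossing points, since nearly tangential intersections would let that distance blow up. You also need the uniform lower bound $\theta > 0$ on the angles at which $k$ and $k'$ meet the leaves of $\lambda$ (available by compactness, and the same constant used in the paper's proof of Lemma~\ref{lem:auxiliary function r}): if $x \in \ell_R$ lies at distance $t$ along $\ell_R$ from $p_k$, then $\sinh d_{\tilde{X}}(x, k) \geq \sinh(t) \sin\theta$, so $d_{\tilde{X}}(p_{k'}, k) \leq D$ forces $t \leq D'(D,\theta)$. With that ingredient inserted, your proof is complete.
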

We postpone the proofs of Lemmas~\ref{lem:auxiliary function r},~\ref{lem:dependence of constants}, and~\ref{lem:dependence of r on metric} to Appendix \ref{divergence appendix}.

\subsection{Shear cocycles: General case} \label{subsec:shear general case}

We now focus our attention on the construction of $\beta$-shear cocycles relative to a general maximal lamination $\lambda$. For the remainder of Section~\ref{sec:shear cocycles}, we will assume the cross ratio $\beta : \partial\Gamma^{(4)} \to \R$ to be \emph{locally bounded} (see Definitions~\ref{def:crossratio},~\ref{def:locally bounded}). Furthermore, we fix once and for all a hyperbolic structure $X$, and a train track $\tau$ carrying $\lambda$. 

We start our analysis with two elementary Lemmas: The first (Lemma \ref{lem:diagonal_exchange}) describes how the shear between two plaques changes under the operation of \emph{diagonal exchange} in the region separating $P$ from $Q$. The second (Lemma \ref{lem:finite_approx_shear}) provides a bound between finite $\beta$-shears computed with respect to two finite families of plaques $\mathcal{P}, \mathcal{P}' \subset \mathcal{P}_{P Q}$ with $\mathcal{P} \subseteq \mathcal{P}'$. The bound described by Lemma \ref{lem:finite_approx_shear} will be essential for the study of the approximation process needed to define $\sigma^\beta_\lambda$.

\subsubsection{Change of shear under diagonal exchange}

Let $P, Q$ be two plaques of $\lambda$ that share no ideal vertex. We denote by $\ell_P$ (resp. $\ell_Q$) the boundary leaf of $P$ (resp. $Q$) that separates the interior of $P$ from the interior of $Q$, and by $S$ the region of $\widetilde{\Sigma}$ bounded by $\ell_P$ and $\ell_Q$. Given a coherent orientation of $\ell_P$ and $\ell_Q$, we define $d$ and $d'$ to be the crossing geodesics $[\ell_P^+, \ell_Q^-]$ and $[\ell_P^-,\ell_Q^+]$, respectively. Finally, let $R, T$ (resp. $R', T'$) denote the complementary regions of $d$ (resp. $d'$) inside $S$. 

To simplify the notation, we set
\begin{align*}
	\sigma_d^\beta(P,Q) & : = \sigma^\beta(P,R) + \sigma^\beta(R,T) + \sigma^\beta(T,Q) , \\
	\sigma_{d'}^\beta(P,Q) & : = \sigma^\beta(P,R') + \sigma^\beta(R',T') + \sigma^\beta(T',Q) .
\end{align*}
Then we have:

\begin{lem}\label{lem:diagonal_exchange}
	The following relation holds:
	\[
	\abs{\sigma_d^\beta(P,Q) - \sigma_{d'}^\beta(P,Q)} = 2 \abs{\log \beta(\ell_P^+,\ell_Q^+,\ell_Q^-,\ell_P^-)} .
	\]
\end{lem}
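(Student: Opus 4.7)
The plan is to expand each of the six elementary $\beta$-shears through the definition \eqref{eq:def_shear_adj} and reduce the claim to a purely algebraic identity within the symmetries of the cross ratio listed in \eqref{eq:crossshear}. Write $a = \ell_P^+$, $b = \ell_P^-$, $c = \ell_Q^+$, $e = \ell_Q^-$, so that the coherent orientation translates into the cyclic order $(a,c,e,b)$ on $\partial\Gamma$, and let $p, q$ denote the third ideal vertices of $P$ and $Q$. The four intermediate triangles are then $R = (a,b,e)$, $T = (a,c,e)$ (bounded by the diagonal $d = [a,e]$) and $R' = (a,b,c)$, $T' = (b,c,e)$ (bounded by $d' = [b,c]$). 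For each of the six adjacent pairs, \eqref{eq:def_shear_adj} determines the ``left'' and ``right'' non-shared vertices $u_l, u_r$ from the cyclic order $(a,c,e,b)$ once an arbitrary orientation of the shared leaf has been selected.

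Writing $(\mathrm{A}) := \sigma^\beta(P,R) - \sigma^\beta(P,R')$, $(\mathrm{B}) := \sigma^\beta(R,T) - \sigma^\beta(R',T')$, and $(\mathrm{C}) := \sigma^\beta(T,Q) - \sigma^\beta(T',Q)$, the key remark for $(\mathrm{A})$ is that the vertex $p$ appears in the same slot in both cross ratios, so the multiplicative relation $\beta(u,v,w,z) = \beta(u,v,x,z)\beta(u,v,w,x)$ applied with $x = c$ cancels $p$ and yields $(\mathrm{A}) = \log|\beta(a,b,e,c)|$. An analogous computation for $(\mathrm{C})$, with $q$ in place of $p$ and followed by an application of the rotation symmetry $\beta(u,v,w,z) = \beta(w,z,u,v)$ together with \eqref{eq:inverse crossratio}, gives $(\mathrm{C}) = \log|\beta(a,b,e,c)|$ as well. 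For the middle term $(\mathrm{B})$ no auxiliary vertex is available; I plan instead to rewrite $\sigma^\beta(R',T')$ using $\beta(c,b,e,a) = \beta(e,a,c,b) = \beta(e,a,b,c)^{-1}$ (combining the rotation symmetry and \eqref{eq:inverse crossratio}), so that $\sigma^\beta(R',T') = -\log|\beta(e,a,b,c)|$ and $(\mathrm{B}) = 2\log|\beta(e,a,b,c)|$.

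Summing, $(\mathrm{A})+(\mathrm{B})+(\mathrm{C}) = 2\log|\beta(a,b,e,c)\,\beta(e,a,b,c)|$, and it remains to recognize this as $2\log|\beta(a,c,e,b)|$. This is exactly the fourth relation in \eqref{eq:crossshear} applied to $(u,v,w,z) = (a,c,e,b)$, which gives $|\beta(a,c,e,b)| = |\beta(a,e,c,b)\,\beta(a,b,e,c)|$, together with the identification $\beta(a,e,c,b) = \beta(e,a,b,c)$ via the symmetry $\beta(u,v,w,z) = \beta(v,u,z,w)$. Taking absolute values yields the claim, interpreting $|\log\beta|$ as $|\log|\beta||$ should $\beta$ happen to be negative. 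The only real obstacle I anticipate is the combinatorial bookkeeping involved in identifying, for each of the six adjacent pairs, which non-shared vertex lies on the left and which on the right of the arbitrarily oriented shared leaf; once the six cross ratios are correctly written down, the entire proof reduces to mechanical applications of the symmetries in \eqref{eq:crossshear}, and no analytic input is required.
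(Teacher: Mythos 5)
Your proof is correct and follows essentially the same route as the paper's: expand the six elementary shears from \eqref{eq:def_shear_adj}, cancel the third vertices of $P$ and $Q$ using the multiplicative relation $\beta(u,v,w,z)=\beta(u,v,x,z)\beta(u,v,w,x)$, and finish with the remaining symmetries of the cross ratio. The only (harmless) quibbles are that the identity $\abs{\beta(a,c,e,b)}=\abs{\beta(a,e,c,b)\,\beta(a,b,e,c)}$ you invoke at the end is the absolute-value relation (the last one listed in \eqref{eq:crossshear}) rather than the multiplicative one, and that your left/right bookkeeping is the mirror of the paper's convention, a global sign flip absorbed by the outer absolute value.
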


As for Lemmas~\ref{lem:asymptotic_plaques} and~\ref{lem:shear near closed leaves}, the proof of Lemma~\ref{lem:diagonal_exchange} is an elementary consequence of the symmetries satisfied by the cross ratio $\beta$, and it will be described in Appendix \ref{shears and symmetries}. 

\subsubsection{Enlarging the finite set of plaques}

The next goal is to determine the behavior of the finite shear $\sigma^\beta_\mathcal{P}$ as we enlarge the finite family of plaques $\mathcal{P} \subset \mathcal{P}_{P Q}$. The statement that follows will play an essential role in the approximation process to determine $\sigma^\beta_{\lambda}(P,Q)$. Recall that, since $\beta$ is a locally bounded cross ratio (see Definition~\ref{def:locally bounded}), for $D = L_{\tilde{X}}(k) > 0$ (the length of $k$ in $(\widetilde{\Sigma}, \widetilde{X})$), we can find constants $C, \alpha > 0$ (depending on the fixed hyperbolic structure $X$, the cross ratio $\beta$, and $L_{\tilde{X}}(k)$) such that
\begin{equation} \label{eq:control crossratio}
	\abs{\log \beta(h{}^+,\ell^+,\ell^-,h{}^-)} \leq C \abs{\log \beta^X(h{}^+,\ell^+,\ell^-,h{}^-)}^\alpha
\end{equation}
for every pair of coherently oriented geodesics $\ell,h$ in $(\widetilde{\Sigma}, \widetilde{X})$ such that $0 < d_{\tilde{X}}(\ell,h) \leq L_{\tilde{X}}(k)$. We then have:

\begin{lem} \label{lem:finite_approx_shear}
	For any pair of finite subsets $\mathcal{P}, \mathcal{P}'$ of $\mathcal{P}_{P Q}$ satisfying $\mathcal{P} \subseteq \mathcal{P}'$, we have
	\[
	\abs{\sigma^\beta_{\mathcal{P}}(P,Q) - \sigma^\beta_{\mathcal{P}'}(P,Q)} \leq 2 C \, \abs{\mathcal{P}' - \mathcal{P}} \left(\sum_{d \subset k - \bigcup \mathcal{P}} L_{\tilde{X}}(d)^\alpha \right),
	\]
	where $\abs{\mathcal{P}' - \mathcal{P}}$ denotes the cardinality of the set $\mathcal{P}' - \mathcal{P}$, $d$ varies among the (countable) set of connected components of $k - \bigcup \mathcal{P}$, and $C, \alpha$ are the constants associated with $X$, $\beta$, $L_{\tilde{X}}(k)$ as above.
\end{lem}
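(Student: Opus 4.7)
The strategy is induction on $m := \abs{\mathcal{P}' - \mathcal{P}}$, reducing to the case of inserting a single plaque. Enumerate $\mathcal{P}' - \mathcal{P} = \{R_1, \ldots, R_m\}$ and set $\mathcal{P}_j := \mathcal{P} \cup \{R_1, \ldots, R_j\}$. Telescoping gives
\[
\sigma^\beta_\mathcal{P}(P,Q) - \sigma^\beta_{\mathcal{P}'}(P,Q) = \sum_{j=1}^m \left( \sigma^\beta_{\mathcal{P}_{j-1}}(P,Q) - \sigma^\beta_{\mathcal{P}_j}(P,Q) \right) ,
\]
and the problem reduces to the one-step estimate: for any finite family $\tilde{\mathcal{P}}$ with $\mathcal{P} \subseteq \tilde{\mathcal{P}} \subset \mathcal{P}_{PQ}$ and any $R \in \mathcal{P}_{PQ} - \tilde{\mathcal{P}}$, letting $d_R$ denote the component of $k - \bigcup \mathcal{P}$ containing $R$,
\begin{equation} \label{plan:one-step-bound}
	\abs{\sigma^\beta_{\tilde{\mathcal{P}}}(P,Q) - \sigma^\beta_{\tilde{\mathcal{P}} \cup \{R\}}(P,Q)} \leq 2 C \, L_{\tilde{X}}(d_R)^\alpha .
\end{equation}
Granting \eqref{plan:one-step-bound}, since each component of $k - \bigcup \mathcal{P}$ contains at most $m$ of the $R_j$'s, we have $\sum_{j=1}^m L_{\tilde{X}}(d_{R_j})^\alpha \leq m \sum_{d \subset k - \bigcup \mathcal{P}} L_{\tilde{X}}(d)^\alpha$, which yields the desired bound.

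To establish \eqref{plan:one-step-bound}, observe that $R$ lies in a unique strip $\tilde{S}$ of $\tilde{\mathcal{P}}$ bounded by two consecutive plaques $\tilde{P}, \tilde{Q}$ of $\tilde{\mathcal{P}} \cup \{P, Q\}$. By direct inspection of the construction of $\lambda_\mathcal{P}$ (and the cocycle additivity implicit in Proposition~\ref{pro:shear finite case}), all shear contributions outside $\tilde{S}$ coincide in $\sigma^\beta_{\tilde{\mathcal{P}}}(P,Q)$ and $\sigma^\beta_{\tilde{\mathcal{P}} \cup \{R\}}(P,Q)$, so we are reduced to bounding $\abs{\sigma^\beta_\emptyset(\tilde{P},\tilde{Q}) - \sigma^\beta_{\{R\}}(\tilde{P},\tilde{Q})}$. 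Using the elementary shear \eqref{eq:def_shear_adj}, both terms expand as explicit products of cross ratio terms involving the vertices of $\tilde{P}, R, \tilde{Q}$ and the canonical diagonals of the refined strips. In the spirit of Lemma~\ref{lem:diagonal_exchange}, iterated application of the symmetries \eqref{eq:crossshear} of $\beta$ collapses the difference to a single expression of the form $\pm 2 \log\abs{\beta(\ell^+, h^+, h^-, \ell^-)}$, where $\ell$ is a boundary leaf of $R$ and $h$ is a coherently oriented boundary leaf of $\tilde{P}$ or $\tilde{Q}$ sharing no endpoint with $\ell$.

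Since both $\ell$ and $h$ meet the sub-arc of $k$ lying in $\tilde{S}$, which is contained in $d_R$, we have $0 < d_{\hyp^2}(\ell, h) \leq L_{\tilde{X}}(d_R) \leq L_{\tilde{X}}(k) =: D$. By Lemma~\ref{lem:crossratio and distance} and the elementary estimate $2\log\cosh(t/2) \leq t$, we get $\abs{\log\beta^X(\ell^+,h^+,h^-,\ell^-)} \leq L_{\tilde{X}}(d_R)$. The local boundedness of $\beta$ (Definition~\ref{def:locally bounded}) applied with the constant $D = L_{\tilde{X}}(k)$ then yields
\[
\abs{\log\beta(\ell^+,h^+,h^-,\ell^-)} \leq C \abs{\log\beta^X(\ell^+,h^+,h^-,\ell^-)}^\alpha \leq C L_{\tilde{X}}(d_R)^\alpha ,
\]
which, combined with the algebraic identity of the previous paragraph, establishes \eqref{plan:one-step-bound}.

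The main obstacle is the algebraic identity that collapses the one-step difference into a single $\log\beta$ term of the asserted form. Unlike Lemma~\ref{lem:diagonal_exchange}, where the two configurations share the same four ideal vertices, the insertion of $R$ changes both the set of leaves in play and their endpoints (three cross ratio terms on one side versus six on the other), so the cancellations from \eqref{eq:crossshear} must be traced carefully, keeping track of the cyclic configuration of the vertices of $\tilde{P}, R, \tilde{Q}$ on $\partial\Gamma$ and the induced orientations of the diagonals.
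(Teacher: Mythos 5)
Your global architecture (telescoping over single insertions, bounding each step by $2C\,L_{\tilde{X}}(d_R)^\alpha$, and the final counting argument) is exactly the paper's, and that part of the bookkeeping is correct. The gap is the one-step identity you yourself flag as "the main obstacle": you assert, without proof, that the difference of the three elementary shears of $\lambda_{\tilde{\mathcal{P}}}$ and the six of $\lambda_{\tilde{\mathcal{P}} \cup \{R\}}$ inside the strip collapses to a single term $\pm 2 \log\abs{\beta(\ell^+,h^+,h^-,\ell^-)}$ with $\ell$ a boundary leaf of $R$ and $h$ a boundary leaf of $\tilde{P}$ or $\tilde{Q}$. Besides being unproven, this specific form is generically not what comes out. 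Tracing the moves carefully, the correct decomposition is: starting from the coarse configuration $\{\ell_{\tilde{P}}, d_0, \ell_{\tilde{Q}}\}$, first add the two \emph{new} diagonals $d_0'$ (of the strip between $\ell_{\tilde{P}}$ and the boundary leaf of $R$ facing $\tilde{P}$) and $d_1'$ (on the $\tilde{Q}$ side); each is added inside a spike through its vertex, so by relation \eqref{eq:add_leaf} (established in the proof of Lemma~\ref{lem:asymptotic_plaques}) the finite shear does not change. Then the old diagonal $d_0$ sits as a diagonal of the strip bounded by $d_0'$ and $d_1'$, whose other diagonal is one of the boundary leaves of $R$; one diagonal exchange (Lemma~\ref{lem:diagonal_exchange}) replaces it, contributing exactly $2\abs{\log\beta(d_0'{}^+, d_1'{}^+, d_1'{}^-, d_0'{}^-)}$, and a final add-leaf move inserts the second boundary leaf of $R$ at no cost. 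So the four ideal points are endpoints of the auxiliary diagonals $d_0', d_1'$ (mixing one vertex of $\ell_{\tilde{P}}$, one of $\ell_{\tilde{Q}}$, and two vertices of $R$), not the configuration you claim; for a general cross ratio your asserted identity would simply be false, so "iterated application of the symmetries" cannot be expected to produce it.

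The good news is that your estimate survives the correction: $d_0'$ and $d_1'$ both cross the sub-arc of $k$ inside the strip, which is contained in $d_R$, so $0 < d_{\tilde{X}}(d_0', d_1') \leq L_{\tilde{X}}(d_R) \leq L_{\tilde{X}}(k)$, and Lemma~\ref{lem:crossratio and distance} together with local boundedness gives the bound $2C\,L_{\tilde{X}}(d_R)^\alpha$ for the one-step change. In other words, the missing step should not be attempted as a fresh expansion-and-cancellation of elementary shears; it is precisely the combination of the add-leaf invariance \eqref{eq:add_leaf} and Lemma~\ref{lem:diagonal_exchange}, which is how the paper argues (and which also handles the degenerate cases where adjacent leaves share an endpoint, so that $d_0'$ or $d_1'$ coincides with a boundary leaf). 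With that substitution for your middle paragraph, the rest of your proof is sound and matches the paper's.
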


\begin{proof}
	We first consider the case in which $\mathcal{P}' = \mathcal{P} \cup \{ R \}$. If 
	\[
	P = P_0, P_1, \dots , P_n, P_{n + 1} = Q
	\]
	are the plaques of $\mathcal{P}$, indexed as we encounter them along the arc $k$ from $P$ to $Q$, then the plaque $R$ will lie inside one of the components of $\widetilde{\Sigma} - \bigcup \mathcal{P}$ that separate $P_i$ from $P_{i + 1}$, for some $i$. We will denote by $S$ such a region.
	
	The laminations $\lambda_{\mathcal{P}}$ and $\lambda_{\mathcal{P}'}$ differ by a sequence of elementary moves, each of which either adds leaves to the lamination, or performs a diagonal exchange inside $S$. By Lemma~\ref{lem:asymptotic_plaques}, the shear between $P$ and $Q$ computed through the intermediate laminations $\lambda$ and $\lambda'$ does not change when $\lambda'$ is obtained from $\lambda$ by introducing new leaves. Therefore, it is sufficient to compute the change of the shear cocycle that occurs when a diagonal exchange is performed.
	
	Let $\sigma^\beta_\lambda$ and $\sigma^\beta_{\lambda'}$ be the shears associated with the plaques $P$ and $Q$ through the finite laminations $\lambda$ and $\lambda'$, respectively, which differ by a diagonal exchange in the region bounded by the leaves $\ell$ and $h$. We select orientations on $\ell$ and $h$ so that they are coherently oriented. By Lemma~\ref{lem:diagonal_exchange} we have
	\[
	\abs{\sigma^\beta_\lambda(P,Q) - \sigma^\beta_{\lambda'}(P,Q)} = 2 \abs{\log \beta(h{}^+,\ell^+,\ell^-,h{}^-)}
	\]
	Combining this equality with relation \eqref{eq:control crossratio} and Lemma~\ref{lem:crossratio and distance}, we deduce that 
	\[
	\abs{\sigma^\beta_\lambda(P,Q) - \sigma^\beta_{\lambda'}(P,Q)} \leq 2 C \, d_{\tilde{X}}(\ell,h)^\alpha \leq 2 C \, L_{\tilde{X}}(k \cap S)^\alpha ,
	\]
	where the last inequality holds since $k \cap S$ is a path that connects points lying on the leaves $\ell$ and $h$. When $\mathcal{P}$ and $\mathcal{P}'$ differ by a single plaque $R$, then by adding leaves and performing exactly one flip, we can move from the lamination $\lambda_\mathcal{P}$ to $\lambda_{\mathcal{P}'}$. If $\mathcal{P}'$ is obtained by adding to $\mathcal{P}$ $n_S$ plaques lying inside the same region $S$, then it is simple to check that $\lambda_\mathcal{P}$ and $\lambda_{\mathcal{P}'}$ differ by a suitable sequence of moves, exactly $n_S$ of which are diagonal exchanges. The difference in the shears $\sigma_{\mathcal{P}}^\beta(P,Q)$ and $\sigma_{\mathcal{P}'}^\beta(P,Q)$ can then be bounded by $2 C \, n_S \, L_{\tilde{X}}(k \cap S)^\alpha$, by the same argument outlined above. The desired statement follows by applying this process in any complementary region $S$ of $\widetilde{\Sigma} - \bigcup \mathcal{P}$, and noticing that $n_S \leq \abs{\mathcal{P}' - \mathcal{P}}$ for any $S$.
\end{proof}

\begin{rmk}
	Notice that the argument described above makes use of the local boundedness of $\beta$ only on pairs of leaves of the lamination $\lambda$. In particular, the machinery described in this section in fact applies to cross ratios $\beta$ that are \emph{$\lambda$-locally bounded}, i. e. that locally satisfy the control
	\[
	\abs{\log \beta(h{}^+,\ell^+,\ell^-,h{}^-)} \leq C \abs{\log \beta^X(h{}^+,\ell^+,\ell^-,h{}^-)}^\alpha
	\]
	for any pair of coherently oriented distinct leaves $\ell , h$ of the lamination $\lambda$.
\end{rmk}

\subsubsection{Constructing $\beta$-shear cocycles}
\label{subsec:constructing cocycles}

We are now ready to describe the approximation process for the $\beta$-shear cocycle $\sigma^\beta_\lambda$ in the case of a general maximal lamination. Throughout the current section, we denote by $P$ and $Q$ two distinct plaques of some fixed maximal lamination $\lambda$, and by $X$ an auxiliary hyperbolic structure on $\Sigma$. 

We start our construction by selecting a well behaved exhaustion $(\mathcal{P}_n)_n$ by nested finite subsets of $\mathcal{P}_{P Q}$ through the notion of divergence radius function. Concretely, let $k$ be a $X$-geodesic segment joining points in the interior of the plaques $P$ and $Q$, and let $r = r_{X, \tau, \lambda, k} : \mathcal{P}_{P Q} \to \N$ be the corresponding divergence radius function, defined as in Section~\ref{subsec:divergence radius function}. Then, for every $n \in \N$ we set
\[
\mathcal{P}_n : = \{ R \in \mathcal{P}_{P Q} \mid r(R) \leq n \}.
\]
Notice that by Lemma~\ref{lem:auxiliary function r} the cardinality of $\mathcal{P}_{n + 1} - \mathcal{P}_n$ is bounded above by a constant $N > 0$ independent of $n$, and the union $\bigcup_n \mathcal{P}_n$ is equal to $\mathcal{P}_{P Q}$. 

We are now ready to prove the first technical step of our construction:

\begin{lem} \label{lem:limits of shear}
	The series
	\[
	\sum_n \abs{\sigma^\beta_{\mathcal{P}_{n}}(P, Q) - \sigma^\beta_{\mathcal{P}_{n + 1}}(P, Q)} 
	\]
	converges, and in particular the limit
	\[
	\sigma^\beta_\lambda(P,Q) : = \lim_{n \to \infty} \sigma^\beta_{\mathcal{P}_{n}}(P, Q) 
	\]
	is finite. Moreover, the quantity $\sigma_\lambda^\beta(P,Q) \in \R$ is independent of the choice of the geodesic arc $k$ selected to construct the divergence radius function $r = r_{X, \tau, \lambda, k}$ and the set of plaques $(\mathcal{P}_n)_n$.
\end{lem}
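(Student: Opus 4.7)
The plan is to establish both the convergence of the sequence $(\sigma^\beta_{\mathcal{P}_n}(P,Q))_n$ and its independence from the arc $k$ through two successive applications of Lemma \ref{lem:finite_approx_shear}, with the decisive quantitative inputs coming from Lemma \ref{lem:auxiliary function r} and from the coarse invariance of the divergence radius function recorded in Lemma \ref{lem:dependence of r on metric}.

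To control the consecutive differences, I would apply Lemma \ref{lem:finite_approx_shear} to the pair $\mathcal{P}_n \subset \mathcal{P}_{n+1}$. Property (2) of Lemma \ref{lem:auxiliary function r} yields $|\mathcal{P}_{n+1} - \mathcal{P}_n| = |r^{-1}(n+1)| \leq N$ uniformly in $n$. Moreover, every component $d$ of $k - \bigcup \mathcal{P}_n$ decomposes, up to a measure-zero subset contained in $\lambda$, as a countable disjoint union of arcs $k \cap R$ with $R \in \mathcal{P}_{PQ} - \mathcal{P}_n$, i.e. with $r(R) > n$, while property (1) of the same lemma bounds $L_{\tilde{X}}(k \cap R) \leq A e^{-M r(R)}$. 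For $n$ large enough one has $L_{\tilde{X}}(d) \leq 1$, so after replacing $\alpha$ by $\min(\alpha,1)$ if necessary and invoking sub-additivity of $t \mapsto t^{\min(\alpha,1)}$, we can estimate
\[
\sum_d L_{\tilde{X}}(d)^\alpha \leq \sum_{R : r(R) > n} L_{\tilde{X}}(k \cap R)^\alpha \leq N A^\alpha \sum_{m > n} e^{-M \alpha m} ,
\]
which decays exponentially in $n$. Combined with the uniform bound on $|\mathcal{P}_{n+1} - \mathcal{P}_n|$, this dominates the consecutive differences $|\sigma^\beta_{\mathcal{P}_n}(P,Q) - \sigma^\beta_{\mathcal{P}_{n+1}}(P,Q)|$ by a convergent geometric series, so that the limit $\sigma^\beta_\lambda(P,Q)$ exists and is finite.

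For the independence from $k$, I would take a second geodesic arc $k'$ with associated exhaustion $(\mathcal{P}'_n)_n$ and compare the two via the intermediate family $\mathcal{Q}_n := \mathcal{P}_n \cup \mathcal{P}'_n$. Lemma \ref{lem:finite_approx_shear} applied to $\mathcal{P}_n \subseteq \mathcal{Q}_n$ produces a bound of the form $2C \, |\mathcal{Q}_n - \mathcal{P}_n| \cdot \sum_d L_{\tilde{X}}(d)^\alpha$, where the length sum runs over components of $k - \bigcup \mathcal{P}_n$ and still decays geometrically, by the estimate above. The coarse equivalence of Lemma \ref{lem:dependence of r on metric} gives $\mathcal{P}'_n \subseteq \mathcal{P}_{H n + K}$, whence $|\mathcal{Q}_n - \mathcal{P}_n| \leq N(H n + K + 1)$ grows only linearly in $n$. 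The product therefore tends to zero, so $\sigma^\beta_{\mathcal{Q}_n}(P,Q)$ shares the limit with $\sigma^\beta_{\mathcal{P}_n}(P,Q)$; the symmetric argument applied with $k'$ in place of $k$ yields the same conclusion for $(\mathcal{P}'_n)_n$, proving that both exhaustions produce the same value $\sigma^\beta_\lambda(P,Q)$.

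The main obstacle I anticipate is precisely this comparison step: the linear growth of $|\mathcal{Q}_n - \mathcal{P}_n|$ must be overwhelmed by the exponential decay of $\sum_d L_{\tilde{X}}(d)^\alpha$, so the crucial input is the uniform geometric rate granted by Lemma \ref{lem:auxiliary function r}. By contrast, the Cauchy argument along a single exhaustion is a fairly direct consequence of the same estimates, making the existence of the limit in the first half of the statement the easier portion of the proof.
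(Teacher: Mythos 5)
Your argument is correct and follows essentially the same route as the paper: the consecutive differences are controlled by Lemma \ref{lem:finite_approx_shear} combined with the exponential decay and uniform cardinality bounds of Lemma \ref{lem:auxiliary function r} (after reducing to an exponent $\leq 1$ to use subadditivity, exactly as in the paper), and the independence from the arc is obtained from the coarse equivalence of Lemma \ref{lem:dependence of r on metric} by beating a linearly growing cardinality against the exponentially decaying tail sums. The only cosmetic difference is that you compare both exhaustions with their union $\mathcal{Q}_n = \mathcal{P}_n \cup \mathcal{P}'_n$, whereas the paper compares $\mathcal{P}_n$ directly with the reindexed nested family $\mathcal{P}'_{ln+m}$; the underlying estimates are identical.
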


\begin{proof}
	For simplicity, let $\sigma_n(P,Q)$ denote the quantity $\sigma^\beta_{\mathcal{P}_n}(P,Q)$. By Lemma~\ref{lem:finite_approx_shear} we have
	\begin{align*}
		\abs{\sigma_{n + 1}(P,Q) - \sigma_n(P,Q)} & \leq 2 C  \abs{\mathcal{P}_{n + 1} - \mathcal{P}_n} \left( \sum_{d \subset k - \bigcup \mathcal{P}_n} L_{\tilde{X}}(d)^\alpha \right) \\
		& \leq 2 C N \left( \sum_{d \subset k - \bigcup \mathcal{P}_n} L_{\tilde{X}}(d)^\alpha \right) .
	\end{align*}
	It is not restrictive to assume $\alpha < 1$, in which case we have
	\[
	\sum_{d \subset k - \bigcup \mathcal{P}_n} L_{\tilde{X}}(d)^\alpha \leq \sum_{R \in \mathcal{P}_{P Q} : r(R) \geq n + 1} L_{\tilde{X}}(k \cap R)^\alpha .
	\]
	Combining this estimate with the properties of the divergence radius function $r$ described Lemma~\ref{lem:auxiliary function r}, we obtain
	\begin{align}\label{eq:roba}
		\begin{split}
			\sum_{d \subset k - \bigcup \mathcal{P}_n} L_{\tilde{X}}(d)^\alpha & \leq \sum_{R \in \mathcal{P}_{P Q} : r(R) \geq n + 1} A^\alpha e^{- \alpha M r(R)} \\
			& \leq A^\alpha N \sum_{j > n} e^{- \alpha M j} \\
			& \leq \frac{A^\alpha N}{1 - e^{- \alpha M}} e^{- \alpha M (n + 1)} ,
		\end{split}
	\end{align}
	where $A, M, N > 0$ are the constants provided by Lemma~\ref{lem:auxiliary function r}. Therefore we deduce
	\[
	\sum_{n \in \N} \abs{\sigma_{n + 1}(P,Q) - \sigma_n(P,Q)} \leq  \frac{2 C A^\alpha N^2}{1 - e^{- \alpha M}} \sum_{n \in \N} e^{- \alpha M (n + 1)} < + \infty ,
	\]
	which concludes the proof of the first part of the statement.
	
	Let $(\mathcal{P}_n')_n$ be the sequence of plaques associated with a different choice of geodesic segment $k'$, and hence divergence radius function $r'$ as in Lemma~\ref{lem:auxiliary function r}. By Lemma~\ref{lem:dependence of r on metric}, there exist two natural numbers $l, m$ such that
	\[
	\mathcal{P}_n \subseteq \mathcal{P}_{ln + m}' , \quad \mathcal{P}_{n}' \subseteq \mathcal{P}_{ln + m}
	\]
	for every $n \in \N$. Moreover, by property (2) of Lemma~\ref{lem:auxiliary function r}, there exists a constant $N > 0$ such that the cardinality of the sets $\mathcal{P}_{l (l n + m) + m} - \mathcal{P}_{n}$ is bounded above by $N (l^2 - 1) n + l m + m)$ for every $n \in \N$. The same function of $n$ in particular provides an upper bound of the cardinality of the set $\mathcal{P}_{ln + m}' - \mathcal{P}_n \subseteq \mathcal{P}_{l (l n + m) + m} - \mathcal{P}_{n}$. Applying Lemma~\ref{lem:finite_approx_shear} and relation \eqref{eq:roba}, we deduce
	\begin{align*}
		| \sigma^\beta_{\mathcal{P}_n}(P,Q) - \sigma^\beta_{\mathcal{P}_{ln + m}'}(P,Q) | & \leq 2 C N (l^2 - 1) n + l m + m) \left( \sum_{d \subset k - \bigcup \mathcal{P}_n} L_{\tilde{X}}(d)^\alpha \right) \\
		& \leq \frac{2 A^\alpha C N^2}{1 - e^{- \alpha M}} (l^2 - 1) n + l m + m) e^{- \alpha M (n + 1)} .
	\end{align*}
	This proves in particular that the difference between $\sigma^\beta_{\mathcal{P}_n}(P,Q)$ and $\sigma^\beta_{\mathcal{P}_{ln + m}'}(P,Q)$ tends to $0$ as $n \to \infty$, and therefore we conclude
	\[
	\lim_{n \to \infty} \sigma^\beta_{\mathcal{P}_n}(P,Q) = \lim_{n \to \infty} \sigma^\beta_{\mathcal{P}_{n}'}(P,Q) ,
	\]
	as desired.
\end{proof}

\begin{rmk} \label{rmk: uniform cauchyness}
	Fix a locally bounded cross ratio $\beta$, a hyperbolic structure $X$, a train track $\tau$ carrying $\lambda$, and a geodesic arc $k$ transverse to $\lambda$. The estimates appearing in the proof of Lemma~\ref{lem:limits of shear} show that, given two distinct plaques $P$ and $Q$ of $\lambda$, there exist constants $C' = C'(C,\alpha,A, M, N), M' = M'(\alpha, M) > 0$ such that for every $n \in \N$
	\[
	\abs{\sigma^\beta_{\lambda}(P,Q) - \sigma^\beta_{\mathcal{P}_n}(P,Q)} \leq C' e^{- M' n} ,
	\]
	where the constants $A, M, N > 0$, provided by Lemma~\ref{lem:auxiliary function r}, depend only the structure $X$, the train track $\tau$ carrying $\lambda$, and the path $k$ and $\alpha, C > 0$ are provided by the local boundedness of $\beta$ (see Definition~\ref{def:locally bounded}), with the choice of $D = L_{\tilde{X}}(k)$.
	
	By Lemma~\ref{lem:dependence of constants} we can then find a neighborhood $U$ of $\lambda$ in the space of maximal geodesic laminations such that, for every $\lambda' \in U$, the finite $\beta$-shears $\sigma_{\mathcal{P}_n'}^\beta(P',Q')$ associated with the arc $k$ and the corresponding divergence radius function $r' = r_{X, \tau, \lambda',k} : \mathcal{P}_{P' Q'} \to \N$ converge to $\sigma_{\lambda'}^\beta(P', Q')$ and satisfy
	\[
	\abs{\sigma^\beta_{\lambda'}(P',Q') - \sigma^\beta_{\mathcal{P}_n'}(P',Q')} \leq C' e^{- M' n} ,
	\]
	with uniform constants $C', M' > 0$ with respect to $\lambda' \in U$ (compare with the notation of Lemma~\ref{lem:dependence of constants}). For future reference (see in particular Proposition~\ref{pro:continuity shear wrt lamination}), we notice that the constants $C', M' > 0$ also satisfy
	\begin{equation}\label{eq:stima_placche}
		\sum_{d \subset k - \bigcup \mathcal{P}_n} L_{\tilde{X}}(d)^\alpha \leq \frac{C' e^{- M' n}}{2 C N} .
	\end{equation}
	(Compare with relation \eqref{eq:roba}.)
\end{rmk}

We finally define the \emph{$\beta$-shear relative to $\lambda$ between the plaques $P$ and $Q$} to be
\[
\sigma^\beta_\lambda(P, Q) : = \lim_{n \to \infty} \sigma^\beta_{\mathcal{P}_n}(P, Q) ,
\]
where $(\mathcal{P}_n)_n$ is the exhausting sequence of $\mathcal{P}_{P Q}$ associated with the divergence radius function $r = r_{X, \tau, \lambda, k} : \mathcal{P}_{P Q} \to \N$, for some choice of a $X$-geodesic path $k$ joining $P$ and $Q$. By Lemma~\ref{lem:limits of shear}, the value $\sigma^\beta_\lambda(P, Q)$ is independent of the choice of $k$. We are now ready to conclude the construction of $\beta$-shear cocycles:

\begin{pro}\label{pro:shear is a cocycle}
	The map $(P,Q) \mapsto \sigma^\beta_{\lambda}(P,Q)$, constructed following the process described above, is a H\"older cocycle transverse to $\lambda$.
\end{pro}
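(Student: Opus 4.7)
The plan is to verify the three defining properties of a transverse H\"older cocycle (Definition~\ref{def:holder cocycle}) one at a time, exploiting the fact that each of them already holds for the finite $\beta$-shears $\sigma^\beta_{\mathcal{P}_n}(P,Q)$ and passes to the limit.

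\emph{Symmetry.} For any finite subset $\mathcal{P} \subset \mathcal{P}_{PQ}$, each elementary shear $\sigma^\beta(R,R')$ between adjacent complementary regions is symmetric: this follows from the identity \eqref{eq:symmetry for shear} applied to the defining expression \eqref{eq:def_shear_adj}, since relabeling $\ell^+ \leftrightarrow \ell^-$ and $u_l \leftrightarrow u_r$ is precisely the symmetry $\beta(u,v,w,z) = \beta(v,u,z,w)$. Summing, $\sigma^\beta_{\mathcal{P}}(P,Q) = \sigma^\beta_{\mathcal{P}}(Q,P)$, and a divergence radius exhaustion of $\mathcal{P}_{PQ}$ constructed from an arc from $P$ to $Q$ is the same exhaustion of $\mathcal{P}_{QP}$ obtained from the reversed arc, so Lemma~\ref{lem:limits of shear} yields the symmetry of $\sigma^\beta_\lambda(P,Q)$.

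\emph{Invariance.} The cross ratio $\beta$ is $\Gamma$-invariant by hypothesis, hence each elementary shear satisfies $\sigma^\beta(\gamma R,\gamma R') = \sigma^\beta(R,R')$ for any $\gamma \in \Gamma$. Fix $\gamma \in \Gamma$. The hyperbolic structure $X$ and the train track $\tau$ are both $\Gamma$-invariant objects on $\widetilde{\Sigma}$, so if $k$ is a $\widetilde{X}$-geodesic arc joining the interiors of $P$ and $Q$ with associated divergence radius function $r_{X,\tau,\lambda,k}$, then $\gamma k$ joins the interiors of $\gamma P$ and $\gamma Q$ and the corresponding divergence radius function satisfies $r_{X,\tau,\lambda,\gamma k}(\gamma R) = r_{X,\tau,\lambda,k}(R)$ for every $R \in \mathcal{P}_{PQ}$. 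Consequently the exhaustion of $\mathcal{P}_{\gamma P \, \gamma Q}$ associated with $\gamma k$ is $\gamma \mathcal{P}_n$, and we have $\sigma^\beta_{\gamma \mathcal{P}_n}(\gamma P, \gamma Q) = \sigma^\beta_{\mathcal{P}_n}(P,Q)$ at every finite stage; passing to the limit gives invariance.

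\emph{Additivity.} Let $R$ be a plaque of $\lambda$ separating $P$ from $Q$. Choose a $\widetilde{X}$-geodesic arc $k$ from the interior of $P$ to the interior of $Q$ that crosses the interior of $R$, and decompose it as $k = k_1 \cup k_2$, where $k_1$ joins $P$ to $R$ and $k_2$ joins $R$ to $Q$. Since the divergence radius of any plaque $R' \in \mathcal{P}_{PQ}$ depends only on the intersection $R' \cap k$ together with the combinatorics of $\tau$, we have the identities $r_{X,\tau,\lambda,k}(R') = r_{X,\tau,\lambda,k_1}(R')$ for every $R' \in \mathcal{P}_{PR}$ and $r_{X,\tau,\lambda,k}(R') = r_{X,\tau,\lambda,k_2}(R')$ for every $R' \in \mathcal{P}_{RQ}$. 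Therefore, letting $\mathcal{P}_n \subset \mathcal{P}_{PQ}$ be the sublevel sets of $r_{X,\tau,\lambda,k}$, for every $n$ with $R \in \mathcal{P}_n$ we obtain a partition
\[
\mathcal{P}_n = \mathcal{P}_n^{PR} \sqcup \{R\} \sqcup \mathcal{P}_n^{RQ},
\]
where $\mathcal{P}_n^{PR}$ and $\mathcal{P}_n^{RQ}$ are precisely the sublevel sets of $r_{X,\tau,\lambda,k_1}$ and $r_{X,\tau,\lambda,k_2}$ on $\mathcal{P}_{PR}$ and $\mathcal{P}_{RQ}$, respectively. A direct inspection of the definition of the finite shear (since $\lambda_{\mathcal{P}_n}$ restricted to the two sides of $R$ is exactly $\lambda_{\mathcal{P}_n^{PR}}$ on the $P$-side and $\lambda_{\mathcal{P}_n^{RQ}}$ on the $Q$-side, while $R$ itself is a complementary region contributing the two adjacent elementary shears split as in each sub-sum) gives the finite-level additivity
\[
\sigma^\beta_{\mathcal{P}_n}(P,Q) \;=\; \sigma^\beta_{\mathcal{P}_n^{PR}}(P,R) + \sigma^\beta_{\mathcal{P}_n^{RQ}}(R,Q).
\]
Each summand converges by Lemma~\ref{lem:limits of shear} to $\sigma^\beta_\lambda(P,R)$ and $\sigma^\beta_\lambda(R,Q)$ respectively, so passing to the limit yields $\sigma^\beta_\lambda(P,Q) = \sigma^\beta_\lambda(P,R) + \sigma^\beta_\lambda(R,Q)$.

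The main delicate point is the verification that the divergence radius exhaustions are compatible with the decomposition $\mathcal{P}_{PQ} = \mathcal{P}_{PR} \sqcup \{R\} \sqcup \mathcal{P}_{RQ}$ when the arc $k$ is chosen to pass through the interior of $R$; once that is in place, the finite-level additivity and invariance of the elementary shears do all the work.
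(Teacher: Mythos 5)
Your argument is correct and follows essentially the same route as the paper's proof: each of symmetry, invariance, and additivity is checked at the level of the finite shears (using the symmetries and $\Gamma$-invariance of $\beta$, and the compatibility of the divergence radius function with the $\Gamma$-action and with restriction of the arc $k$ to the subarcs joining $P$ to $R$ and $R$ to $Q$), and then passed to the limit via Lemma~\ref{lem:limits of shear}, which also guarantees independence of the chosen arc. The decomposition $\mathcal{P}_n = \mathcal{P}_n^{PR} \sqcup \{R\} \sqcup \mathcal{P}_n^{RQ}$ for $n \geq r(R)$ and the resulting finite-level additivity are exactly the key steps in the paper's argument as well.
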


\begin{proof}
	All the properties are simple consequences of the definition of finite $\beta$-shears from Section~\ref{subsec:definition shear}, and of the independence of the quantity $\sigma_{\lambda}^\beta$ from the selected geodesic path $k$ and the associated divergence radius $r : \mathcal{P}_{P Q} \to \N$, as established by Lemma~\ref{lem:limits of shear}.
	
	To prove property (1) from Definition~\ref{def:holder cocycle}, it suffices to select the same path $k$ (and hence same divergence radius function $r$) to approximate both $\sigma^\beta_\lambda(P,Q)$ and $\sigma_\lambda^\beta(Q,P)$. Indeed, by the symmetries of the cross ratio $\beta$ we have $\sigma^\beta_{\mathcal{P}_n}(P, Q) = \sigma^\beta_{\mathcal{P}_n}(Q, P)$ for every $n \in \N$. 
	
	To see property (2), let $k$ be a path connecting the plaques $P$ and $Q$, with associated function $r : \mathcal{P}_{P Q} \to \R$, and let $R \in \mathcal{P}_{P Q}$. We select a subarc $k'$ of $k$ that connects $P$ to $R$, and we set $k'' = \overline{k - k'}$. Observe that the restriction of $r = r_{k}$ to the set of plaques $\mathcal{P}_{P R}$ coincides with the divergence radius function $r'$ associated to $k'$. The same holds for the restriction of $r$ to $\mathcal{P}_{R Q}$ and the path $k''$. Therefore, the divergence radius functions $r'$ and $r''$ associated to $k'$ and $k''$ determine sequences of finite collections of plaques $(\mathcal{P}_n')_n$ and $(\mathcal{P}_n'')_n$, respectively, satisfying
	\[
	\lim_{n \to \infty} \sigma^\beta_{\mathcal{P}_n'}(P,R) = \sigma^\beta_{\lambda}(P,R) , \qquad \lim_{n \to \infty} \sigma^\beta_{\mathcal{P}_n''}(R, Q) = \sigma^\beta_{\lambda}(R, Q) .
	\]
	Moreover, if $(\mathcal{P}_n)_n$ denotes the exhaustion of $\mathcal{P}_{P Q}$ associated to $k$ and $r$, then by construction
	\[
	\mathcal{P}_n = \mathcal{P}_n' \cup \{ R \} \cup \mathcal{P}_n''
	\]
	for every $n \geq r(R)$. Moreover the finite $\beta$-shears satisfy
	\[
	\sigma^\beta_{\mathcal{P}_n}(P,Q) = \sigma^\beta_{\mathcal{P}_n'}(P,R) + \sigma^\beta_{\mathcal{P}_n''}(R,Q)
	\]
	again for every $n \geq r(R)$. By taking the limit as $n \to \infty$, we obtain the additivity property described in property (2) of Definition~\ref{def:holder cocycle}.
	
	Finally, to show property (3), let $\gamma \in \Gamma$ and select $\gamma(k)$ as a path joining the interiors of the plaques $\gamma P$ to $\gamma Q$. The associated divergence radius function coincides with $r \circ \gamma^{-1} : \mathcal{P}_{\gamma P \, \gamma Q} \to \N$, where $r : \mathcal{P}_{P Q} \to \N$ is the divergence radius function of $k$. If $(\mathcal{P}_n)_n$ denotes the sequence of finite family of plaques associated with $k$ and $r$, then $\gamma(k)$ and $r \circ \gamma^{-1}$ have associated sequence $(\gamma \mathcal{P}_n)_n$. Moreover, being $\beta$ $\Gamma$-invariant, we have
	\[
	\sigma^\beta_{\mathcal{P}_n}(P, Q) = \sigma^\beta_{\gamma \mathcal{P}_n}(\gamma P, \gamma Q) 
	\]
	for every $n \in \N$. The identity $\sigma^\beta_\lambda(P, Q) = \sigma^\beta_\lambda(\gamma P, \gamma Q)$ then follows by taking the limit as $n \to \infty$.
\end{proof}

\subsection{Continuity of shear cocycles}
\label{subsec:continuity shear}

We now study the continuity properties of the map $$\mc{GL} \ni \lambda \longmapsto \sigma_\lambda^\beta \in \mc{H}(\lambda;\mb{R}).$$ 

As recalled in Section \ref{subsubsec:real weights}, the choice of a train track $\tau$ that carries a maximal lamination $\lambda$ determines natural identifications between its associated system of real weights $\mathcal{W}(\tau;\R)$ and the space of H\"older cocycles $\mc{H}(\lambda';\mb{R})$ transverse to any lamination $\lambda'$ carried by $\tau$. In particular, there exists a sufficiently small neighborhood $U$ of $\lambda$ inside $\mathcal{GL}$ for which the map
$$U \ni \lambda' \longmapsto \sigma_{\lambda'}^\beta \in \mc{W}(\tau;\mb{R})$$ 
is well defined. Within this framework, it makes sense to ask ourselves whether the map $\lambda' \mapsto \sigma_{\lambda'}^\beta$ is continuous. The next statement answers affirmatively to this question:

\begin{pro} \label{pro:continuity shear wrt lamination}
	Let $(\lambda_m)_m$ be a sequence of maximal geodesic laminations converging to $\lambda$ in the Chabauty topology. Given $\tau$ a train track that carries $\lambda$, we identify $\mathcal{H}(\lambda;\R)$ and $\mathcal{H}(\lambda_m;\R)$ with $\mathcal{W}(\tau;\R)$, the space of real weights of $\tau$ (for $m$ sufficiently large). Then
	\[
	\lim_{m \to \infty} \sigma^\beta_{\lambda_m} = \sigma^\beta_\lambda \in \mathcal{W}(\tau;\R) .
	\] 
\end{pro}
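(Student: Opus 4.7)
The strategy is to reduce convergence in $\mathcal{W}(\tau;\R)$ to convergence of the weight associated with each branch, and then combine a uniform exponential approximation by finite $\beta$-shears with pointwise convergence of those finite approximations.

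For each branch $B_i$ of $\tau$, select a lift $\widetilde{B}_i$ and a tie $k_i$ inside $\widetilde{B}_i$ disjoint from the vertical boundary. Under the isomorphism $\mathcal{H}(\lambda';\R)\cong\mathcal{W}(\tau;\R)$ described in Section~\ref{subsubsec:real weights}, the weight of $\sigma^\beta_{\lambda_m}$ on $B_i$ equals $\sigma^\beta_{\lambda_m}(P_i^m,Q_i^m)$, where $P_i^m,Q_i^m$ are the plaques of $\lambda_m$ containing the endpoints of $k_i$; for $m$ large these plaques are well defined by the Chabauty convergence $\lambda_m\to\lambda$ and by Lemma~\ref{lem:dependence of constants}. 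Since $\mathcal{W}(\tau;\R)$ is finite dimensional, it suffices to show
\[
\lim_{m\to\infty}\sigma^\beta_{\lambda_m}(P_i^m,Q_i^m)=\sigma^\beta_\lambda(P_i,Q_i)
\]
for every branch $B_i$ of $\tau$.

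The proof then proceeds via a standard $3\varepsilon$-argument, combining two ingredients. First, by Lemma~\ref{lem:dependence of constants} applied to the arc $k_i$, there is a Chabauty neighborhood $U$ of $\lambda$ on which the divergence radius functions $r_{X,\tau,\lambda',k_i}$ satisfy the conclusions of Lemma~\ref{lem:auxiliary function r} with uniform constants $A,M,N$, and the local boundedness constants $C,\alpha$ of $\beta$ associated with $D=L_{\tilde X}(k_i)$ are intrinsic to $\beta$ and $k_i$. Hence Remark~\ref{rmk: uniform cauchyness} produces constants $C',M'>0$ such that, for every $\lambda_m\in U$ and every $n\in\mathbb{N}$,
\[
\bigl|\sigma^\beta_{\lambda_m}(P_i^m,Q_i^m)-\sigma^\beta_{\mathcal{P}_n^m}(P_i^m,Q_i^m)\bigr|\leq C'e^{-M'n},
\]
and the analogous bound holds for $\lambda$ with $\mathcal{P}_n$. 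Second, for each fixed $n$, I claim that
\[
\lim_{m\to\infty}\sigma^\beta_{\mathcal{P}_n^m}(P_i^m,Q_i^m)=\sigma^\beta_{\mathcal{P}_n}(P_i,Q_i).
\]
Granting both, choose $n_0$ so that $C'e^{-M'n_0}<\varepsilon/3$, and then $m_0$ so that the finite approximations at level $n_0$ differ by less than $\varepsilon/3$ for $m\geq m_0$; adding the three estimates gives the desired convergence.

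The main obstacle is the pointwise statement. Its proof should run as follows: the divergence radius function $r_m=r_{X,\tau,\lambda_m,k_i}$ is defined purely by the combinatorics of the boundary leaves of $\lambda_m$ inside the preimage $\tilde\tau$ crossing $k_i$, so Chabauty convergence of $\lambda_m$ to $\lambda$ forces, for each fixed $n$, the cardinality of $\mathcal{P}_n^m$ to stabilize (by the uniform bound of Lemma~\ref{lem:dependence of constants}) and the boundary leaves of the plaques in $\mathcal{P}_n^m$ to converge leaf-by-leaf to those of $\mathcal{P}_n$. In particular, the ideal vertices in $\partial\Gamma$ of the plaques in $\mathcal{P}_n^m\cup\{P_i^m,Q_i^m\}$ converge to the corresponding vertices of $\mathcal{P}_n\cup\{P_i,Q_i\}$, and the associated finite laminations $\lambda_{\mathcal{P}_n^m}$ converge to $\lambda_{\mathcal{P}_n}$ in the same combinatorial pattern. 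Since $\sigma^\beta_{\mathcal{P}_n}(P_i,Q_i)$ is a finite sum of terms of the form $\log|\beta(\cdot,\cdot,\cdot,\cdot)|$ in the vertices of the adjacent complementary regions of $\lambda_{\mathcal{P}_n}$, the H\"older (hence in particular continuous) dependence of $\beta$ on its arguments yields the pointwise convergence. The delicate point is checking that $r_m$ and $r$ agree on the relevant plaques for $m$ large, which amounts to verifying that the combinatorics of how the boundary leaves of each plaque in $\mathcal{P}_n$ travel through branches of $\tilde\tau$ is preserved under small Chabauty perturbations; this is precisely the uniformity encoded in Lemma~\ref{lem:dependence of constants}, and is the technical heart of the argument.
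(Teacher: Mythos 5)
Your overall scheme --- reduce to ties of $\tau$, use the uniform exponential tail estimate of Remark~\ref{rmk: uniform cauchyness} and Lemma~\ref{lem:dependence of constants}, and conclude with a three-$\varepsilon$ argument --- is the same as the paper's. The gap is in the step you yourself call the technical heart: the claim that, for fixed $n$, $\sigma^\beta_{\mathcal{P}_n^m}(P_i^m,Q_i^m)\to\sigma^\beta_{\mathcal{P}_n}(P_i,Q_i)$, where $\mathcal{P}_n^m=\{R\mid r_m(R)\le n\}$ is the level set of the divergence radius function of $\lambda_m$. Chabauty convergence readily gives only one inclusion: each plaque $R\in\mathcal{P}_n$ is approximated by the plaque $R^{(m)}$ of $\lambda_m$ containing a fixed interior point of $R$, and since fellow-travelling through finitely many branches and parting at a prescribed switch are open conditions on the boundary leaves, eventually $r_m(R^{(m)})=r(R)\le n$. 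It does not follow --- and you give no argument --- that $\mathcal{P}_n^m$ contains nothing else: a priori $\lambda_m$ has plaques crossing $k_i$ that do not arise this way (for instance thin plaques bounded by leaves collapsing onto a single leaf of $\lambda$, or plaques limiting onto plaques of $\lambda$ of larger divergence radius), and you would have to exclude them from $\mathcal{P}_n^m$ for $m$ large. Lemma~\ref{lem:dependence of constants} cannot do this for you: it only supplies constants $A,M,N$ uniform over a Chabauty neighborhood, and the inference that ``the cardinality of $\mathcal{P}_n^m$ stabilizes by the uniform bound'' is a non sequitur --- the bound $\abs{\mathcal{P}_n^m}\le Nn$ gives no stabilization, let alone leaf-by-leaf convergence of $\mathcal{P}_n^m$ to $\mathcal{P}_n$ or agreement of $r_m$ with $r$ on corresponding plaques.

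The paper's proof is engineered precisely to avoid proving this stability of the level sets of $r_m$. It introduces the auxiliary family $\mathcal{Q}_n^{(m)}$ of plaques of $\lambda_m$ marked by interior points of the plaques in $\mathcal{P}_n$, proves only the inclusion $\mathcal{Q}_n^{(m)}\subseteq\mathcal{P}_n^{(m)}$ (Step 1), bounds the discrepancy between the finite shears over $\mathcal{Q}_n^{(m)}$ and over $\mathcal{P}_n^{(m)}$ using Lemma~\ref{lem:finite_approx_shear} together with the convergence of the gap lengths $\sum_{d\subset k-\bigcup\mathcal{Q}_n^{(m)}}L_{\tilde{X}}(d)^\alpha$ (Steps 1--2), and shows $\sigma^\beta_{\mathcal{Q}_n^{(m)}}\to\sigma^\beta_{\mathcal{P}_n}$ by continuity of $\beta$ (Step 3); the price is the weaker estimate $\limsup_m\abs{\sigma^{(m)}-\sigma}\le(2+n)C'e^{-M'n}$, which still vanishes as $n\to\infty$. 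To repair your write-up you should either adopt this device, or actually prove that for fixed $n$ every plaque of $\lambda_m$ in $\mathcal{P}_n^m$ is eventually of the form $R^{(m)}$ with $R\in\mathcal{P}_n$ (e.g.\ by showing that any sequence of plaques of $\lambda_m$ crossing $k_i$ not of this form has divergence radius tending to infinity); the latter is plausible but is exactly the content your proposal leaves unproven.
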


\begin{proof}
	If $k$ is a tie of the lift of the train track $\tau$ in $(\widetilde{\Sigma}, \widetilde{X})$, then the endpoints of $k$ lie in the interior of two plaques $P, Q$ of $\lambda$. Moreover, since $\lambda_m$ converges to $\lambda$ in the Chabauty topology, there exists a $m_0 \in \N$ such that for every $m > m_0$ the endpoints of $k$ lie in the interior of two plaques $P^{(m)}, Q^{(m)}$ of $\lambda_m$. Then the statement is equivalent to show that, for any $k$ as above
	\[
	\lim_{m \to \infty} \sigma^\beta_{\lambda_m}(P^{(m)}, Q^{(m)}) = \sigma^\beta_\lambda(P,Q) .
	\]
	
	Let $\mathcal{P}_{k}$ (resp. $\mathcal{P}^{(m)}_{k}$) denote the set of plaques of $\lambda$ (resp. $\lambda_m$) that separate $P$ from $Q$ (resp. $P^{(m)}$ from $Q^{(m)}$). If $k$ is the geodesic arc joining the endpoints of $k$, then Lemma~\ref{lem:auxiliary function r} provide us functions 
	\[
	r : \mathcal{P}_{k} \to \N, \quad r_m : \mathcal{P}_{k}^{(m)} \to \N 
	\]
	satisfying properties (1), (2) with respect to constants $A, M, N > 0$ that are \emph{independent of $m$}, and defined in terms of the same train track $\tau$ and arc $k$ (see in particular Remark~\ref{rmk: uniform cauchyness}). To simplify the notation, for every $n \in \N$ and $m > m_0$ we set
	\begin{align*}
		\mathcal{P}_n : = & \{ R \in \mathcal{P}_{k} \mid r(R) \leq n \} , \\
		\mathcal{P}_n^{(m)} : = & \{ R \in \mathcal{P}^{(m)}_{k} \mid r_m(R) \leq n \} ,
	\end{align*}
	and 
	\begin{align*}
		\sigma : = &\,\, \sigma^\beta_{\lambda}(P,Q) , & & \sigma^{(m)} : = \sigma^\beta_{\lambda_m}(P^{(m)}, Q^{(m)}) ,  \\
		\sigma_n : = &\,\, \sigma^\beta_{\mathcal{P}_n}(P,Q) , & & \sigma^{(m)}_n : = \sigma^\beta_{\mathcal{P}^{(m)}_n}(P^{(m)}, Q^{(m)}) .
	\end{align*}
	
	Let now $N, C', M' > 0$ be positive constants satisfying the requirements of Lemma~\ref{lem:auxiliary function r} and Remark~\ref{rmk: uniform cauchyness}. We will prove the desired assertion by showing that
	\begin{equation} \label{eq:bound for convergence}
		\limsup_{m \to \infty} \abs*{\sigma^{(m)} - \sigma} \leq (2 + n) C' e^{- M' n} 
	\end{equation}
	for every $n \in \N$. Since the left-hand side of the inequality is independent of $n$, and the right-hand side converges to $0$ as $n \to \infty$, the assertion will follow.
	
	We will divide the proof of relation \eqref{eq:bound for convergence} into smaller steps. In order to describe them, we need to introduce some notation. For any $R \in \mathcal{P}_{n}$, we choose arbitrarily a point $x_R$ in the interior of $R$. Since $\lambda_m \to \lambda$, we can find a $m_1 > 0$ sufficiently such that, for any $m > m_1$, there exists a unique plaque $R^{(m)} \in \mathcal{P}_{k}^{(m)}$ whose interior contains $x_R$. Being $\mathcal{P}_{n}$ finite, up to selecting a larger $m_1$ we can assume that this holds for every plaque $R \in \mathcal{P}_n$. We then introduce the sets
	\[
	\mathcal{Q}_{n}^{(m)} : = \{ R^{(m)} \in \mathcal{P}^{(m)}_{k} \mid R \in \mathcal{P}_{n} \} ,
	\]
	for $m > m_1$. Finally, we set
	\[
	\hat{\sigma}_{n}^{(m)} : = \sigma^\beta_{\mathcal{Q}^{(m)}_{n}}(P^{(m)}, Q^{(m)}) .
	\]

	\noindent\textbf{Step 1.} For every $n \in \N$ there exists a $m_2 \geq m_1$ such that $\mathcal{Q}^{(m)}_{n} \subseteq \mathcal{P}^{(m)}_{n}$ for all $m > m_2$. Moreover
	\[
	\abs{\hat{\sigma}_{n}^{(m)} - \sigma_{n}^{(m)}} \leq 2 C N n \left( \sum_{d \subset k - \bigcup \mathcal{Q}_{n}^{(m)}} L_{\tilde{X}}(d)^\alpha \right) ,
	\]
	where  $C, \alpha, N > 0$ are the constants appearing in Lemmas~\ref{lem:auxiliary function r} and~\ref{lem:finite_approx_shear} (see also Remark~\ref{rmk: uniform cauchyness}). 
	
	\begin{proof}[Proof of Step 1]
		We will show that $\mathcal{Q}^{(m)}_n \subseteq \mathcal{P}^{(m)}_n$ for every $m$ sufficiently large. The second part of the assertion will follow by applying Lemma~\ref{lem:finite_approx_shear} and noticing that
		\[
		\abs{\mathcal{P}^{(m)}_n - \mathcal{Q}^{(m)}_n} \leq \abs{\mathcal{P}^{(m)}_n} \leq N n
		\]
		by Lemma~\ref{lem:auxiliary function r}.
		
		Let $R \in \mathcal{P}_n$, and denote by $\ell_R, h_R \subset \lambda$ the boundary leaves of $R$ that cross the tie $k$. Similarly, let $\ell_R^{(m)}, h_R^{(m)} \subset \lambda_m$ be the boundary leaves of $R^{(m)} \in \mathcal{Q}^{(m)}_n$ that cross $k$. It is enough to prove that $\lim_{m \to \infty} r_m(R^{(m)}) = n$ for every $R \in \mathcal{P}_n$. 
		
		Since the laminations $\lambda_m$ converge to $\lambda$ and the plaques $R^{(m)}$ contain a fixed point $x_R$ in the interior of the plaque $R$, the leaves $\ell_R^{(m)}, h_R^{(m)}$ converge in the Hausdorff topology to $\ell_R, h_R$ as $m \to \infty$ (up to relabeling). Recalling the definition of the divergence radius functions $r, r_m$ from Section~\ref{subsec:divergence radius function}, the condition $r(R) = n$ is equivalent to say that the leaves $\ell_R, h_R$ cross $n + 1$ common branches of $\tilde{\tau}$ in both directions (as we count starting from the branch containing the tie $k$) before taking different paths at some switch of $\tilde{\tau}$. Since the boundary leaves of $R^{(m)}$ that meet $k$ converge to the boundary leaves $\ell_R$ and $h_R$, we can find a sufficiently large $m_2 \geq m_1$ such that $\ell_R^{(m)}$ passes through the same $n + 1$ branches of $\tilde{\tau}$ as $\ell_R$ in both directions, and similarly for $h_R^{(m)}$, for all $m \geq m_2$. This implies in particular that $r_m(R^{(m)}) = n$. Since $\mathcal{P}_n$ is finite, up to enlarging $m_2$ we can assume that this holds for every $R \in \mathcal{P}_n$, as desired.
	\end{proof}
	
	\noindent\textbf{Step 2.} For every $n \in \N$ we have
	\[
	\lim_{m \to \infty} \sum_{d \subset k - \bigcup \mathcal{Q}_{n}^{(m)}} L_{\tilde{X}}(d)^\alpha = \sum_{d \subset k - \bigcup \mathcal{P}_{n}} L_{\tilde{X}}(d)^\alpha \leq \frac{C' e^{- M' n}}{2 C N} .
	\]
	
	\begin{proof}[Proof of Step 2]
		As observed in the proof of the previous step, the boundary leaves of $R^{(m)}$ converge to the boundary leaves of $R$ with respect to the Chabauty topology for every $R \in \mathcal{P}_n$. In particular, each subarc $k \cap R$ of $k$ is equal to the limit of the subarcs $(k \cap R^{(m)})_m$. Since $\mathcal{P}_n$ is a finite collection of plaques, the set
		\[
		\{d \mid \text{$d$ connected component of $k - \bigcup \mathcal{Q}^{(m)}_n$}\}
		\]
		is finite, and the length of each of its components converges to the length of the corresponding component of $k - \bigcup \mathcal{P}_n$. This implies the equality appearing in the statement. The upper bound of the limit follows from Remark~\ref{rmk: uniform cauchyness}, and more specifically relation \eqref{eq:stima_placche}.
	\end{proof}
	
	\noindent\textbf{Step 3.} For every $n \in \N$ we have $\lim_{m \to \infty} \hat{\sigma}_{n}^{(m)} = \sigma_{n}$.
	
	\begin{proof}[Proof of Step 3]
		We denote as above by $\ell_R, h_R$ (resp. $\ell_R^{(m)}, h_R^{(m)}$) the leaves of $R \in \mathcal{P}_n$ (resp. $R^{(m)} \in \mathcal{Q}^{(m)}_n$) that cross $k$, and we orient them from right to left as we follow the geodesic arc $k$, moving from $P$ to $Q$ (resp. $P^{(m)}$ and $Q^{(m)}$). By what observed above we have
		\[
		\lim_{m \to \infty} (\ell_R^{(m)})^\pm = \ell_R^\pm , \quad \lim_{m \to \infty} (h_R^{(m)})^\pm = h_R^\pm \in \partial \Gamma .
		\]
		By definition, the quantity $\sigma_n = \sigma^\beta_{\mathcal{P}_n}(P,Q)$ is a finite sum of elementary shears, defined as in relation \eqref{eq:def_shear_adj}, where the points at infinity $u_+, u_-,u_l, u_r$ belong to the set $\{ \ell_R^\pm, h_R^\pm \mid R \in \mathcal{P}_n \}$, and similarly for $\sigma_n^{(m)}$ and the set $\{ (\ell_R^{(m)})^\pm, (h_R^{(m)})^\pm  \mid R \in \mathcal{P}_n \}$. From the construction it follows that the finite laminations $\lambda_{\mathcal{Q}^{(m)}_n}$, defined as in Section~\ref{subsec:definition shear}, converge to the lamination $\lambda_{\mathcal{P}_n}$ as $m \to \infty$. By the continuity of the cross ratio $\beta$, it is now immediate to see that the finite sum of shears $\sigma_n^{(m)}$ converge to $\sigma_n$ as $m \to \infty$.
	\end{proof}
	
	\noindent\textbf{Step 4.} For every $n \in \N$ and $m > m_0$
	\begin{align*}
		\abs*{\sigma - \sigma_{n}} & \leq C' e^{- M' n} , \\
		\abs*{\sigma^{(m)} - \sigma_{n}^{(m)}} & \leq C' e^{ - M' n} .
	\end{align*}

	\begin{proof}[Proof of Step 4]
		This is an immediate consequence of Remark~\ref{rmk: uniform cauchyness} and the definition of $\sigma, \sigma_n, \sigma^{(m)}, \sigma_n^{(m)}$.
	\end{proof}

	We now have all the ingredients to conclude our argument. First we observe that
	\begin{align*}
		\abs*{\sigma^{(m)} - \sigma} & \leq \abs*{\sigma^{(m)} - \sigma_{n}^{(m)}} + \abs*{\sigma_{n}^{(m)} - \hat{\sigma}_{n}^{(m)}} + \abs*{\hat{\sigma}_{n}^{(m)} - \sigma_{n}} + \abs{\sigma_{n} - \sigma} .
	\end{align*}
	By Steps 1 and 2 we have
	\[
	\limsup_{m \to \infty} \abs*{\sigma_{n}^{(m)} - \hat{\sigma}_{n}^{(m)}} \leq n \, C' e^{- M' n} .
	\]
	Relation \eqref{eq:bound for convergence} now follows by combining the above inequalities with Steps 3 and 4:
	\begin{align*}
		\limsup_{m \to \infty} \abs*{\sigma^{(m)} - \sigma} & \leq 2 C' e^{- M' n} + \limsup_{m \to \infty} \abs*{\sigma_{n}^{(m)} - \hat{\sigma}_{n}^{(m)}} \\
		& \leq (2 + n) C' e^{- M' n} .
	\end{align*}
	
	This concludes the proof of Proposition~\ref{pro:continuity shear wrt lamination}.
\end{proof}

As mentioned in the introduction of the section, the continuity of $\beta$-cocycles that we just established, combined with the elementary description of the $\beta$-shear cocycles associated to maximal finite leaved laminations (see Section \ref{subsec:shear finite case}) implies that $\sigma_\beta^\lambda$ only depends on the cross ratio $\beta$ and the maximal lamination $\lambda$ and not on any of the auxiliary choices made to define it.

To conclude the investigation of the continuous dependence of shear cocycles, we notice that the analysis described in the proof of Proposition \ref{pro:continuity shear wrt lamination} allows to recover the following result, originally due to Bonahon:

\begin{cor}[Bonahon]\label{cor:uniform convergence shear}
	Let $(X_m)_m$ be a sequence of hyperbolic structures converging to $X \in \T$, and let $(\lambda_m)_m$ be a sequence of maximal geodesic laminations converging to $\lambda \in \mathcal{GL}$ in the Chabauty topology. Select a train track $\tau$ that carries $\lambda$, and denote by $\sigma_m \in \mathcal{W}(\tau;\R) \cong \mathcal{H}(\lambda_m;\R)$ (resp. $\sigma$) the system of real weights associated to the shear coordinates of $X_m$ (resp. $X$) with respect to $\lambda_m$ (resp. $\lambda$). Then
	\[
	\lim_{m \to \infty} \sigma_m = \sigma \in \mathcal{W}(\tau;\R) .
	\]
	In other words, the shear parametrizations $\varphi_{\lambda_m} : \T \to \mathcal{H}(\lambda_m;\R) \cong \mathcal{W}(\tau;\R)$ converge uniformly over all compact subsets of Teichm\"uller space to $\varphi_{\lambda} : \T \to \mathcal{H}(\lambda;\R) \cong \mathcal{W}(\tau;\R)$.
\end{cor}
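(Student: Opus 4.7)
The plan is to reduce Corollary \ref{cor:uniform convergence shear} to (a minor enhancement of) Proposition \ref{pro:continuity shear wrt lamination}. The key observation is that each hyperbolic structure $X \in \T$ has an associated cross ratio $\beta^X$ on $\partial\Gamma$, and that the classical Bonahon shear cocycle $\sigma^X_\lambda$ coincides with the $\beta^X$-shear cocycle built in Section \ref{sec:shear cocycles}. (For the classical shear between two adjacent ideal triangles this is immediate from Remark \ref{rmk:shear_is_shear} applied to the Fuchsian case, and then the construction via divergence-radius-governed finite approximations agrees with Bonahon's original process on the nose.) Under this identification, $\sigma_m = \sigma^{\beta^{X_m}}_{\lambda_m}$ and $\sigma = \sigma^{\beta^X}_\lambda$, so the statement reduces to a joint continuity result for the map $(\beta, \lambda) \mapsto \sigma^\beta_\lambda$.

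The next step is to establish that the family $\{\beta^{X_m}\}_m$ is \emph{uniform} in the sense of Definition \ref{def:uniform family}. Since $X_m \to X$ in $\T$, one can choose Fuchsian holonomy representatives $\hat\rho_m \to \hat\rho$ and corresponding equivariant boundary homeomorphisms $\phi_{\hat\rho_m} : \partial\Gamma \to \partial\hyp^2$ that converge uniformly; their quasi-symmetric distortions remain uniformly bounded, so the family is uniformly bi-Hölder. Consequently, by the variant of Lemma \ref{lem: comparing crossratios pro} applied to Fuchsian representations, the cross ratios $\beta^{X_m}$ are \emph{uniformly} locally bounded: for any $D > 0$ there exist constants $C, \alpha > 0$ satisfying
\[
\abs{\log \beta^{X_m}(h^+,\ell^+,\ell^-,h^-)} \leq C \abs{\log \beta^X(h^+,\ell^+,\ell^-,h^-)}^\alpha
\]
uniformly in $m$, for all coherently oriented pairs of geodesics at distance at most $D$ in a fixed auxiliary hyperbolic reference.

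With this uniformity in hand, the plan is to rerun the argument of Proposition \ref{pro:continuity shear wrt lamination} with the single cross ratio $\beta$ replaced by the varying family $\beta^{X_m}$. Fix a tie $k$ of the train track $\tau$ connecting plaques $P, Q$ of $\lambda$ (and corresponding plaques $P^{(m)}, Q^{(m)}$ of $\lambda_m$ for $m$ large). The divergence-radius functions $r, r_m$ and the corresponding exhaustions $\mathcal{P}_n, \mathcal{P}_n^{(m)}$ are controlled by the same constants $A, M, N > 0$ as before (Lemma \ref{lem:dependence of constants}), which depend only on $\tau$, $X$, and $k$, not on $m$. Combining this with the uniform local boundedness of $\{\beta^{X_m}\}$, the estimates of Remark \ref{rmk: uniform cauchyness} give constants $C', M' > 0$, independent of $m$ and of $n$, such that
\[
\abs{\sigma^{\beta^{X_m}}_{\lambda_m}(P^{(m)}, Q^{(m)}) - \sigma^{\beta^{X_m}}_{\mathcal{P}_n^{(m)}}(P^{(m)}, Q^{(m)})} \leq C' e^{-M' n},
\]
and likewise for the limiting objects. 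Thus the tails are uniformly small, and the whole problem collapses to showing that the finite approximations converge, that is, $\sigma^{\beta^{X_m}}_{\mathcal{P}_n^{(m)}}(P^{(m)}, Q^{(m)}) \to \sigma^{\beta^X}_{\mathcal{P}_n}(P,Q)$ as $m \to \infty$ for each fixed $n$.

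This last point is the analogue of Step~3 in the proof of Proposition \ref{pro:continuity shear wrt lamination}, and is the main technical obstacle; the difference here is that both the ideal endpoints of the finitely many involved leaves and the cross ratio evaluated on them move with $m$. However, each finite shear $\sigma^{\beta^{X_m}}_{\mathcal{P}_n^{(m)}}(P^{(m)}, Q^{(m)})$ is a finite sum of terms of the form $\log |\beta^{X_m}(u_m^+, u_m^-, u_l^m, u_r^m)|$, where the boundary points $u^\bullet_m$ converge to the corresponding boundary points $u^\bullet$ for $\lambda$ (by the Chabauty convergence $\lambda_m \to \lambda$), and the cross ratios $\beta^{X_m}$ converge to $\beta^X$ uniformly on compact subsets of $\partial\Gamma^{(4)}$ (this is where $X_m \to X$ is used). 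Joint continuity of the evaluation then yields the desired convergence. Combining the uniform tail estimate with this pointwise-in-$n$ convergence by a standard $\varepsilon/3$ argument concludes that $\sigma_m \to \sigma$ in $\mc{W}(\tau;\mb{R})$.
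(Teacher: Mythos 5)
Your proposal is correct and follows essentially the same route as the paper: identify the classical shear coordinates with the $\beta^{X_m}$-shear cocycles, deduce uniform local boundedness of the family $\{\beta^{X_m}\}_m$ from the uniform H\"older continuity of the boundary maps of a precompact family of hyperbolic structures (Lemma \ref{lem: comparing crossratios pro}), and exploit the fact that the constants in Proposition \ref{pro:continuity shear wrt lamination} and Remark \ref{rmk: uniform cauchyness} depend only on this uniform data and on choices independent of the cross ratio. Your explicit treatment of the fixed-level finite shears (via Chabauty convergence of the leaves and locally uniform convergence $\beta^{X_m} \to \beta^X$) simply fills in a step the paper leaves implicit, so no substantive difference remains.
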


\begin{proof}
	This statement appeared already in a very similar form in the work of Bonahon, see in particular the proof of \cite{B98}*{Lemma~13}. For completeness, we provide an alternative proof, which fits well with the techniques developed throughout our exposition.
	
	By well-known facts, uniformly quasi-conformal (normalized, i.e. fixing $0, 1, \infty$) homeomorphisms of $S^1$ are uniformly H\"older continuous (see e.g. \cite{A06}*{Theorem~III.2}). If $(X_m)_m \subset \T$ is a precompact family of hyperbolic structures on the fixed closed surface $\Sigma$, then the corresponding limit maps $\xi_m : \partial\Gamma \to \partial \hyp^2$ are uniformly quasi-conformal (see e.g. \cite{M68}), since the surfaces $X_m$ are uniformly biLipschitz equivalent. Hence, by Lemma \ref{lem: comparing crossratios pro}, the family of cross ratios $\{\beta_m = \beta^{\rho_m}\}_{m}$ is uniformly locally bounded (see Definition \ref{def:uniform family}).
	
	We now notice that the constants $C', M' > 0$ involved in the convergence argument of Proposition \ref{pro:continuity shear wrt lamination} (see in particular relation \eqref{eq:bound for convergence}) depend on the quantities $C, \alpha > 0$ appearing in the locally boundedness condition of $\beta$, and several arbitrary choices (an auxiliary hyperbolic structure $X$, a train track $\tau$, and divergence radius functions) that are \emph{independent} of the cross ratio $\beta$ (compare in particular with Remark \ref{rmk: uniform cauchyness}). This implies in particular that the convergence of the shear cocycles is uniform on family of cross ratios that are uniformly locally bounded. Since this is the case for a precompact family of hyperbolic structures $(X_m)_m$, the desired assertion follows.
\end{proof}

\subsection{Shears and length functions: General case}\label{subsec: shear and length} 

Now that we have established the continuous dependence of $\beta$-shear cocycles $\sigma^\beta_\lambda$ in the maximal lamination $\lambda$, we can easily generalize the relation between shear cocycles and $\beta$-periods observed in Section \ref{subsec:shear and length I} for finite leaved laminations to any maximal geodesic lamination. More precisely:

\begin{pro}\label{pro:length and thurston sympl}
	Let $\beta$ be a positive and locally bounded cross ratio. Then for every maximal lamination $\lambda$ and for every measured lamination $\mu$ with $\supp \mu \subseteq \lambda$, we have 
	$$L_\beta(\mu) = \omega_\lambda(\sigma^\beta_\lambda, \mu),$$ 
	where $\omega_\lambda$ denotes the Thurston symplectic form on the space of transverse H\"older cocycles $\mathcal{H}(\lambda;\R)$, and $L_\beta$ is the length function introduced in Section~\ref{subsec:cross ratios definitions}.
\end{pro}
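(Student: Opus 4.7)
The plan is to reduce the general statement to the finite leaved case (Proposition \ref{pro:length and thurston sympl finite}) by approximating the measured lamination $\mu$ with weighted multicurves supported in finite leaved maximal sublaminations, and then passing to the limit using the continuity properties of both sides of the identity. Both the length function $L_\beta$ and the pairing $\omega_\lambda(\sigma^\beta_\lambda, \bullet)$ extend naturally to the relevant completions, and the strategy is to certify that these extensions agree by matching on a dense subset where the identity is known.

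Concretely, I would proceed as follows. First, fix a trivalent train track $\tau$ that carries $\lambda$ (and hence, by openness, carries every maximal lamination in some Chabauty neighborhood of $\lambda$). The measured lamination $\mu$ determines a non-negative system of real weights in $\mc{W}(\tau;\R)$, and standard approximation arguments (see for instance \cite{PH92}) produce a sequence of \emph{rational} non-negative weight systems $\mu_m \in \mc{W}(\tau;\R)$ with $\mu_m \to \mu$; each $\mu_m$ is realized by a weighted multicurve carried by $\tau$, whose support can be completed to a finite leaved maximal lamination $\lambda_m$ still carried by $\tau$ and converging to $\lambda$ in the Chabauty topology (again, one can arrange this by completing to an appropriate triangulation subordinate to $\tau$).

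For each $m$, Proposition \ref{pro:length and thurston sympl finite} applied to the finite leaved maximal lamination $\lambda_m$ and the weighted multicurve $\mu_m \subseteq \lambda_m$ gives the identity
\[
L_\beta(\mu_m) = \omega_{\lambda_m}(\sigma^\beta_{\lambda_m}, \mu_m) .
\]
To pass to the limit, the left-hand side is handled via Theorem \ref{thm:current of cross-ratio}: since $L_\beta(c) = i(\mathscr{L}_\beta, c)$ extends continuously from the space of weighted multicurves to the space of geodesic currents, and since $\mu_m \to \mu$ as geodesic currents (Chabauty convergence of weight systems on $\tau$ implies convergence as measured laminations, hence as currents), we obtain $L_\beta(\mu_m) \to L_\beta(\mu)$. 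For the right-hand side, Proposition \ref{pro:continuity shear wrt lamination} yields $\sigma^\beta_{\lambda_m} \to \sigma^\beta_\lambda$ in $\mc{W}(\tau;\R)$, and by construction $\mu_m \to \mu$ in $\mc{W}(\tau;\R)$; since the Thurston symplectic form admits the explicit bilinear description on $\mc{W}(\tau;\R)$ recorded in \eqref{eq:thurston sympl}, it is continuous in both slots, so $\omega_{\lambda_m}(\sigma^\beta_{\lambda_m}, \mu_m) \to \omega_\lambda(\sigma^\beta_\lambda, \mu)$.

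The main obstacle is the bookkeeping in the approximation step: one must produce a single train track $\tau$ and a sequence $(\lambda_m, \mu_m)$ such that simultaneously $\lambda_m \to \lambda$ in the Chabauty topology (so that Proposition \ref{pro:continuity shear wrt lamination} applies and all the cocycles live in the common space $\mc{W}(\tau;\R)$), $\mu_m \subseteq \lambda_m$ (so that the finite leaved case is applicable), and $\mu_m \to \mu$ both as elements of $\mc{W}(\tau;\R)$ and as geodesic currents (so that both sides converge). Once this compatibility is arranged, the rest of the argument is a straightforward limiting procedure, and everything else follows from results already established in the paper.
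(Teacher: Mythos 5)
Your proposal is correct and follows essentially the same route as the paper: approximate by weighted (multi)curves supported in finite leaved maximal laminations converging to $\lambda$ in the Chabauty topology, apply Proposition~\ref{pro:length and thurston sympl finite}, and pass to the limit using Theorem~\ref{thm:current of cross-ratio} on the left-hand side and Proposition~\ref{pro:continuity shear wrt lamination} together with the explicit bilinear description \eqref{eq:thurston sympl} on the right. The only cosmetic difference is that the paper first reduces to $\mu$ minimal and approximates it by single weighted simple closed curves, extended to finite leaved maximal laminations via the procedure of Canary--Epstein--Green, which is precisely how it settles the compatibility bookkeeping you flag as the main obstacle.
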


\begin{proof} When $\lambda$ is a finite leaved maximal lamination, then the statement is equivalent to Proposition~\ref{pro:length and thurston sympl finite}. Consider now a general maximal lamination $\lambda$ and a measured lamination $\mu$ with support contained in $\lambda$. Without loss of generality we can assume that $\mu$ is minimal, so it can be approximated in $\mathcal{ML}$ by a sequence of weighted simple closed curves $(a_n \gamma_n)_n$. Moreover, following the procedure described by Canary, Epstein, and Green in \cite{CEG}*{Theorem~I.4.2.14}, for every $n$ we can extend the curve $\gamma_n$ to a finite leaved lamination $\lambda_n$ so that, up to subsequence, $\lambda_n$ converges in the Chabauty topology to $\lambda$. 
	
	Select now a train track $\tau$ that carries $\lambda$. Since the laminations $\lambda_n$ are converging to $\lambda$ in the Chabauty topology, the train track $\tau$ carries $\lambda_n$ for $n$ sufficiently large. In particular, we can identify the spaces of transverse H\"older cocycles $\mathcal{H}(\lambda;\R)$ and $\mathcal{H}(\lambda_n;\R)$ with the space of real weights $\mathcal{W}(\tau;\R)$. Notice that the isomorphisms $\mathcal{H}(\lambda_n;\R) \cong \mathcal{W}(\tau;\R) \cong \mathcal{H}(\lambda;\R)$ are linear symplectomorphisms with respect to the associated Thurston symplectic forms and the algebraic intersection pairing $\omega_\tau$ on $\mathcal{W}(\tau;\R)$, in light of the description provided in Section~\ref{subsubsec:thurston sympl}. By Proposition~\ref{pro:length and thurston sympl finite} we have 
	\begin{equation}\label{eq:lenghts finite case}
		a_n L_\beta(\gamma_n) = \omega_\tau(\sigma^\beta_{\lambda_n}, a_n \gamma_n)
	\end{equation}
	for every $n$ sufficiently large (we are identifying with abuse the cocycles $\sigma^\beta_{\lambda_n}, a_n \gamma_n \in \mathcal{H}(\lambda_n;\R)$ with their image inside $\mathcal{W}(\tau;\R)$). Now, by Theorem~\ref{thm:current of cross-ratio} the left-hand side $a_n L_\beta(\gamma_n)$ is continuous in $a_n \gamma_n$, and hence converges to $L_\beta(\mu)$, while the right-hand side converges to $\omega_\lambda(\sigma^\beta_\lambda, \mu)$ by Proposition~\ref{pro:continuity shear wrt lamination}. By taking the limit as $n \to \infty$ of relation \eqref{eq:lenghts finite case} we obtain the statement for the lamination $\lambda$ and the minimal measured lamination $\mu$.
\end{proof}

\subsection{The proof of Theorem \ref{shear of cross ratio improved}}\label{subsec: proof of shear}

We finally have all the elements to prove the main result of the section:

\begin{mthm}{\ref{shear of cross ratio improved}}
	Let $\beta : \partial \Gamma^{(4)} \to \R$ be a positive and locally bounded cross ratio. Then for every maximal lamination $\lambda$, the $\beta$-shear cocycle $\sigma^\beta_\lambda$ belongs to the closure of the cone $C(\lambda) \subset \mathcal{H}(\lambda;\R)$, that is 
	$$\omega_\lambda(\sigma^\beta_\lambda, \mu) \geq 0$$ 
	for every measured lamination $\mu$ with $\supp \mu \subseteq \lambda$. Moreover, if the cross ratio $\beta$ is strictly positive, then $\omega_\lambda(\sigma^\beta_\lambda, \mu) > 0$ for every non-trivial measured lamination $\mu$ as above, and consequently there exists a unique hyperbolic structure $Y = Y^\beta_\lambda \in \T$ such that $\sigma^\beta_\lambda = \sigma_\lambda^Y \in \mathcal{H}(\lambda;\R)$.
\end{mthm}

\begin{proof}
	By Theorem~\ref{thm:current of cross-ratio}, every positive cross ratio has an associated Liouville current $\mathscr{L}_\beta$, and the corresponding $\beta$-length $L_\beta(\bullet) = i(\mathscr{L}_\beta, \bullet)$ is a non-negative function on the space of geodesic currents. Hence the first part of the assertion follows directly from Proposition~\ref{pro:length and thurston sympl}.
	
	As observed in Lemma~\ref{lem: positive lengths}, if the cross ratio $\beta$ is strictly positive, then $L_\beta(c) > 0$ for any non-trivial geodesic current $c$. Therefore, combining Proposition~\ref{pro:length and thurston sympl} with Theorem~\ref{thm:thurston bonahon}, we deduce that for every maximal geodesic lamination $\lambda$ there exists a unique hyperbolic structure $Y$ such that $\sigma^\beta_\lambda = \sigma^Y_\lambda$, as desired.
\end{proof}

\begin{rmk}
	We point out to the reader that the work of Burger, Iozzi, Parreau, and Pozzetti on geodesic currents (see in particular \cite{BIPP21}*{Theorems~1.3, 1.7, Corollary~1.9}) can be deployed to investigate in detail the set of measured laminations $\mu$ with trivial $\beta$-length, by examining the geometric decomposition of the Liouville current $\mathscr{L}_\beta$. This in turn determines the set of maximal geodesic laminations $\lambda$ for which the associated $\beta$-shear cocycle lies in the boundary $\partial C(\lambda)$ of the shear parametrization from Theorem \ref{thm:thurston bonahon}.
\end{rmk}


\section{Geometry of pleated surfaces}
\label{sec:geometry}

In this section we prove the main structural result about the geometry of pleated surfaces, that is, Theorem \ref{geometry pleated surfaces h2n}. 

We start with the definition of pleated surfaces in the context of maximal representations in $\SOtwon$:

\begin{dfn}[Pleated Surface]\label{def:pleated surface}
	Let $\rho:\Gamma\to\SOtwon$ be a maximal representation. A {\em pleated surface} for $\rho$ realizing the maximal lamination $\lambda\in\mc{GL}$ consists of the following data:
	\begin{enumerate}
		\item{The pleated set $S_\lambda=\widehat{S}_\lambda/\rho(\Gamma)$.}
		\item{The hyperbolic surface $X_\lambda\in\T$ whose shear cocycle with respect to $\lambda$ is equal to the intrinsic shear cocycle constructed in Section \ref{sec:shear cocycles}, i.e. $$\sigma_\lambda^{X_\lambda}=\sigma_\lambda^\rho\in\mc{H}(\lambda;\mb{R}).$$}
		\item{A homeomorphism $f:S_\lambda\to X_\lambda$ that is totally geodesic on every leaf of $\lambda$ and every plaque of $S_\lambda-\lambda$, and that is $1$-Lipschitz with respect to the intrinsic pseudo-metric (see Section \ref{subsec:acausal and poincare}) and the hyperbolic metric.}
	\end{enumerate}
	
	We call a homeomorphism $f:S_\lambda\to X_\lambda$ satisfying the requirements in (3) a {\em $1$-Lipschitz developing map} of the pleated set $S_\lambda$.
\end{dfn}

The geometric structures of a pleated surface, namely,
\begin{itemize}
	\item{the hyperbolic metric of $X_\lambda$,}
	\item{the pseudo metric of $S_\lambda$,}
	\item{the natural length space structure of $S_\lambda$ (see Definition \ref{def:length space})}
\end{itemize}
are all linked by the 1-Lipschitz developing map $f:S_\lambda\to X_\lambda$. In the first part of the section we study more in detail the general properties of such maps. In particular, we show that: The developing map $f$ is totally geodesic outside of its bending locus (Lemma \ref{lem:totally geodesic outside bending}), it contracts lengths of paths (Lemma \ref{lem:rectifiable to rectifiable}), and the  length spectrum of the hyperbolic surface $X_\lambda$ is dominated by the length spectrum of $\rho$, with strict inequality on every curve that intersects the bending locus (Proposition \ref{pro:strict inequality}).

In the second part of the section, we establish the existence of developing maps for pleated sets associated to maximal laminations $\lambda\in\mc{GL}$. To this purpose, we first analyze explicitly the case of finite leaved maximal laminations and prove the existence of $1$-Lipschitz developing maps, as given in Theorem \ref{geometry pleated surfaces h2n}, in that setting. The proofs here are completely elementary (see Propositions \ref{pro:developing finite} and \ref{pro:intrinsic shear finite case}). Then, we exploit continuity properties of pleated surfaces to deduce the existence of a $1$-Lipschitz developing map in the general case (see Proposition \ref{pro:developing general}).

\subsection{Developing maps and metric properties}

By definition, a $1$-Lipschitz developing map of a pleated set $S_\lambda$ sends every complementary region of the maximal lamination $\lambda$ into a spacelike ideal triangle. In fact, we can be more precise:

\begin{lem}
	\label{lem:totally geodesic outside bending}
	Let $\rho:\Gamma\to\SOtwon$ be a maximal representation, and let $S_\lambda$ be the pleated set associated to a maximal $\rho$-lamination $\lambda$. Then every $1$-Lipschitz developing map $f:S_\lambda\to X_\lambda$ is totally geodesic on the complement of the bending locus of $S_\lambda$.
\end{lem}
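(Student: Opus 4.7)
The plan is to use Proposition~\ref{pro:bending locus}: the complement of the bending locus in $\widehat{S}_\lambda$ decomposes as a disjoint union of $2$-dimensional totally geodesic spacelike regions, each contained in a spacelike plane $H \subset \hyp^{2,n}$ isometric to $\hyp^2$. I would fix one such region $R \subset H$ and argue that the lifted developing map $\hat{f}\colon \widehat{S}_\lambda \to \hyp^2$ restricts to a local isometry on $R$, which is precisely the condition for $f$ to be totally geodesic at points of $R$. Since the intrinsic pseudo-metric on $R$ (inherited from $\widehat{S}_\lambda$) coincides with the hyperbolic metric induced from $H$, this reduces to a statement about a map between two hyperbolic surfaces.

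First I would check the piecewise picture. By Definition~\ref{def:pleated surface}, $\hat{f}$ is totally geodesic on every plaque $P$ and every leaf $\ell$ of $\lambda$. Being $1$-Lipschitz and mapping an ideal hyperbolic triangle $P \subset H \cong \hyp^2$ totally geodesically into $\hyp^2$, the restriction $\hat{f}|_P$ must be an isometric embedding (a totally geodesic $1$-Lipschitz map between constant curvature $-1$ surfaces is a local isometry, and being defined on a simply connected domain it is an embedding). Similarly $\hat{f}|_\ell$ is an isometric embedding for any leaf $\ell \subset R$. The substance of the lemma is then that these isometric embeddings patch together coherently as one crosses the leaves inside $R$.

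The key observation is the shear comparison. Given two adjacent plaques $P, Q \subset R$ sharing a boundary leaf $\ell$, both $P$ and $Q$ lie in the common spacelike plane $H$, so by Remark~\ref{rmk:shear_is_shear} the intrinsic $\rho$-shear $\sigma^\rho_\lambda(P,Q)$ coincides with the classical hyperbolic shear of $P$ and $Q$ measured in $H$. On the other hand, by Definition~\ref{def:pleated surface}(2) the structure $X_\lambda$ satisfies $\sigma^{X_\lambda}_\lambda = \sigma^\rho_\lambda$, so the hyperbolic shear between the images $\hat{f}(P)$ and $\hat{f}(Q)$ along $\hat{f}(\ell)$ in $\hyp^2$ equals the hyperbolic shear between $P$ and $Q$ along $\ell$ in $H$. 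Combined with the fact that $\hat{f}|_P$ and $\hat{f}|_Q$ are isometric embeddings, this forces $\hat{f}(P) \cup \hat{f}(\ell) \cup \hat{f}(Q)$ to be the image of $P \cup \ell \cup Q$ under a single hyperbolic isometry of $\hyp^2$; in particular $\hat{f}$ is a local isometry across the interior of $\ell$.

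Finally, I would promote this local statement to all of $R$. The set of points $x \in R$ admitting a neighborhood on which $\hat{f}$ is a local isometry is open by definition, and non-empty as it contains the interior of each plaque and (by the previous paragraph) each non-isolated leaf. Closedness in $R$ follows from the continuity of $\hat{f}$ together with the fact that it is an isometric embedding on each plaque and leaf. Hence the set equals $R$, proving that $\hat{f}|_R$ is totally geodesic. The step I expect to require the most care is the last one: when $R$ contains non-bent leaves that are accumulated by other leaves of $\lambda$ (as happens in minimal sublaminations), one cannot restrict to a "finite star" of adjacent plaques, and one must propagate the shear identity through a sequence of nearby plaques converging to the accumulated leaf, using continuity of $\hat{f}$ (guaranteed by the $1$-Lipschitz property with respect to the intrinsic pseudo-metric) to pass to the limit.
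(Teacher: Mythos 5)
Your reduction to a fixed non-bent region $R\subset H$ and the adjacent-plaque shear comparison are fine as far as they go, but there is a genuine gap exactly at the step you flag, and the sketched fix does not close it. The identity you extract from Remark~\ref{rmk:shear_is_shear} together with $\sigma^{X_\lambda}_\lambda=\sigma^\rho_\lambda$ constrains $\hat f$ only across \emph{isolated} leaves, i.e.\ leaves shared by two plaques. If $\lambda\cap R$ contains a minimal component that is not a simple closed curve (for instance $\rho$ Fuchsian and $\lambda$ minimal filling, in which case $R=\widehat{S}_\lambda$), then no two plaques of $R$ are adjacent at all, so your key observation applies to no pair of plaques and gives nothing beyond the isometry of $\hat f$ on each individual plaque. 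The ``good set'' in your open--closed argument is then just the union of plaque interiors (plus the isolated leaves --- note you wrote ``non-isolated'' where you mean ``isolated''): it is open and dense in $R$, but closedness is precisely the hard point, and it does not follow from ``continuity of $\hat f$ plus isometry on each plaque and leaf''. What has to be controlled is the relative position in $\hyp^2$ of the images of two plaques separated by uncountably many leaves, i.e.\ the convergence of a composition of infinitely many elementary slidings; making ``propagate the shear identity through nearby plaques and pass to the limit'' precise requires essentially the machinery of Sections~\ref{sec:shear cocycles} and~\ref{sec:geometry} (divergence-radius estimates and Bonahon's reconstruction of a hyperbolic structure from its shear cocycle, or approximation by finite leaved laminations together with continuity of pleated surfaces). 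As written, the proof is incomplete at its decisive step.

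For comparison, the paper's proof sidesteps all of this with an area-rigidity argument: by Proposition~\ref{pro:bending locus}, each component $W$ of the complement of the bending locus in $S_\lambda$ carries an intrinsic hyperbolic metric, and $f\colon W\to f(W)$ is a $1$-Lipschitz homeomorphism between hyperbolic surfaces of the same finite area (the lamination has measure zero and $f$ is isometric plaque by plaque), hence an isometry by the Thurston-type rigidity \cite{T86}*{Proposition~2.1}. If you want to keep the shear-theoretic route, you must either fall back on that area argument or supply the convergence of the shearing maps for plaques of $R$ separated by infinitely many leaves.
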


\begin{proof}
	Consider a component $W$ of the complement of the bending locus of $S_\lambda$. By Proposition \ref{pro:bending locus}, the restriction of the pseudo-metric to $W$ is a hyperbolic metric. As the restriction of $f:S_\lambda\to X_\lambda$ to $f:W\to f(W)$ is a $1$-Lipschitz map between hyperbolic surfaces of the same (finite) area, we conclude that $f:W\to f(W)$ is an isometry (compare with Thurston \cite{T86}*{Proposition~2.1}).
\end{proof}

In general, it is always possible to define on the pleated set $\widehat{S}_\lambda$ a natural {\em length space} structure: Recall that a Lipschitz function is differentiable almost everywhere.

\begin{dfn}[Regular Path]
	\label{def:length space}    
	A {\em (weakly) regular path} is a map $\gamma:I=[a,b]\to\mb{H}^{2,n}$ such that: 
	\begin{itemize}
		\item There exists a (and hence for any) \emph{Riemannian} distance $d : \hyp^{2,n} \times \hyp^{2,n} \to \R$ with respect to which $\gamma$ is Lipschitz.
		\item The tangent vector ${\dot \gamma}(t)$ is spacelike (or lightlike) for almost every $t \in I$ (at which $\dot \gamma$ is defined).
	\end{itemize}
	The length of a weakly regular path is 
	\[
	L(\gamma):=\int_I{\sqrt{\langle{\dot \gamma}(t),{\dot \gamma}(t)\rangle} dt} ,
	\]
	where $\scal{\bullet}{\bullet} = \scal{\bullet}{\bullet}_{2,n+1}$. The Lipschitz property implies that the length $L(\gamma)$ is always finite.
\end{dfn}

If $\lambda$ is a maximal $\rho$-lamination and $\widehat{S}_\lambda\subset\mb{H}^{2,n}$ is the associated pleated set, we say that a path $\gamma:I\to\widehat{S}_\lambda$ is (weakly) regular if it is (weakly) regular as a path in $\mb{H}^{2,n}$. Furthermore, a regular path $\gamma:I\to\widehat{S}_\lambda$ is said to be \emph{transverse to ${\hat \lambda}$} if $\gamma^{-1}({\hat \lambda})$ has Lebesgue measure zero. 

It is not difficult to check that every pair of points $x,y\in\widehat{S}_\lambda$ can be joined by a weakly regular path using the representation of $\widehat{S}_\lambda$ as the graph of a (strictly) $1$-Lipschitz function $g_\lambda:\mb{D}^2\to\mb{S}^n$ in a Poincaré model $\mb{D}^2\times\mb{S}^n$ of $\widehat{\mb{H}}^{2,n}$. In fact, for any Lipschitz path $\alpha:I\to\mb{D}^2$ joining the projections $\pi(x),\pi(y)$, the graph parametrization $t \mapsto (\alpha(t), g_\lambda(\alpha(t)))$ satisfies the requirements. When $\lambda$ is a finite leaved maximal lamination, it is possible to join any two points $x,y\in\widehat{S}_\lambda$ with a regular path that intersects the lamination in countably many points (and, hence, transversely).

We now have all the elements to study the behavior of $1$-Lipschitz developing maps with respect to the length of weakly regular paths:

\begin{lem}
	\label{lem:rectifiable to rectifiable}
	Let $\rho:\Gamma\to\SOtwon$ be a maximal representation, and let $S_\lambda$ be the pleated set associated to a maximal $\rho$-lamination $\lambda$. Then every $1$-Lipschitz developing map $f:S_\lambda\to X_\lambda$ sends weakly regular paths $\gamma:I\to S_\lambda$ to Lipschitz (hence rectifiable) paths $f\gamma:I\to X_\lambda$ of smaller length $L(\gamma)\ge L(f\gamma)$.
\end{lem}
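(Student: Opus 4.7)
The plan is to lift $\gamma$ to a path $\hat\gamma: I \to \widehat{S}_\lambda$ and to work with the associated $1$-Lipschitz developing map $\hat f: \widehat{S}_\lambda \to \mb{H}^2$. The strategy consists of two parts: first, show that $\hat f \hat\gamma$ is Lipschitz for the hyperbolic metric on $\mb{H}^2$, so that by Rademacher's theorem it is rectifiable with length equal to the integral of the norm of its (almost everywhere defined) derivative; second, bound this derivative pointwise by the integrand $\sqrt{\langle \dot{\hat\gamma}(t), \dot{\hat\gamma}(t)\rangle}$ defining $L(\hat\gamma)$, and integrate.

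For the Lipschitz property, I would fix a Poincar\'e model $\mb{D}^2 \times \mb{S}^n$ of $\widehat{\mb{H}}^{2,n}$ in which, by Proposition~\ref{pro:existence pleated sets}, the pleated set $\widehat{S}_\lambda$ is the graph of a strictly $1$-Lipschitz function $g: \mb{D}^2 \to \mb{S}^n$. Property (3) of Lemma~\ref{lem:projection} gives the comparison
\[
d_{\widehat{\mb{H}}^{2,n}}(x,y) \le d_{\mb{H}^2}(\pi(x), \pi(y)) \qquad \text{for all } x, y \in \widehat{S}_\lambda.
\]
Since $\hat\gamma$ is Lipschitz for a Riemannian distance on $\widehat{\mb{H}}^{2,n}$ and has compact image, the projected path $\pi \hat\gamma: I \to \mb{D}^2$ is Lipschitz for the hyperbolic metric (the two being bi-Lipschitz on compacts), and hence $\hat\gamma$ is Lipschitz for the pseudo-metric on $\widehat{S}_\lambda$ by the displayed inequality. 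Composing with $\hat f$ and invoking the $1$-Lipschitz hypothesis yields that $\hat f \hat\gamma$ is Lipschitz for $d_{\mb{H}^2}$, and hence rectifiable.

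For the derivative comparison, at any $t \in I$ at which both $\hat\gamma$ and $\hat f \hat\gamma$ are differentiable, the $1$-Lipschitz property gives
\[
\|(\hat f \hat\gamma)'(t)\|_{\mb{H}^2} \le \liminf_{h \to 0} \frac{d_{\widehat{\mb{H}}^{2,n}}(\hat\gamma(t), \hat\gamma(t+h))}{|h|}.
\]
Writing $\hat\gamma(t+h) = \hat\gamma(t) + h\dot{\hat\gamma}(t) + o(h)$ in the ambient linear chart of $\mb{R}^{2,n+1}$ and expanding the identity $\cosh d_{\widehat{\mb{H}}^{2,n}}(x,y) = -\langle x, y\rangle$ to second order gives
$$d_{\widehat{\mb{H}}^{2,n}}(x,y)^2 = \langle y-x, y-x\rangle + O(\|y-x\|^3)$$
whenever $x, y \in \widehat{\mb{H}}^{2,n}$ are close and joined by a spacelike segment; a direct substitution then yields
\[
\lim_{h \to 0} \frac{d_{\widehat{\mb{H}}^{2,n}}(\hat\gamma(t), \hat\gamma(t+h))}{|h|} = \sqrt{\langle \dot{\hat\gamma}(t), \dot{\hat\gamma}(t)\rangle}.
\]
Crucially, the acausality of $\widehat{S}_\lambda$ (Proposition~\ref{pro:existence pleated sets}) guarantees that $\hat\gamma(t)$ and $\hat\gamma(t+h)$ are joined by a spacelike segment for every small $h$, so the pseudo-distance is well-defined and the expansion applies. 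Integrating the resulting pointwise bound $\|(\hat f \hat\gamma)'(t)\|_{\mb{H}^2} \le \sqrt{\langle \dot{\hat\gamma}(t), \dot{\hat\gamma}(t)\rangle}$ over $I$ gives $L(\hat f \hat\gamma) \le L(\hat\gamma)$, which descends to the quotient.

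The main technical subtlety is ensuring the above infinitesimal identity even at points where $\dot{\hat\gamma}(t)$ is lightlike, so that such $t$ contribute zero to both sides. This is where acausality plays a decisive role: even with a lightlike tangent, the chord $\hat\gamma(t+h) - \hat\gamma(t)$ remains spacelike by acausality of $\widehat{S}_\lambda$, its pseudo-norm squared is $o(h^2)$, and hence $d_{\widehat{\mb{H}}^{2,n}}(\hat\gamma(t), \hat\gamma(t+h)) = o(h)$, matching $\sqrt{\langle \dot{\hat\gamma}(t), \dot{\hat\gamma}(t)\rangle} = 0$.
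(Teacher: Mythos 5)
Your argument is correct and follows essentially the same route as the paper: lift to $\widehat{S}_\lambda$, use the Poincaré-model graph description together with property (3) of Lemma \ref{lem:projection} to get Lipschitzness of $\hat{f}\hat{\gamma}$, and then the infinitesimal identity $\lim_{h\to 0} d_{\widehat{\mb{H}}^{2,n}}(\hat{\gamma}(t),\hat{\gamma}(t+h))/|h| = \sqrt{\langle\dot{\hat{\gamma}}(t),\dot{\hat{\gamma}}(t)\rangle}$, which is exactly the paper's second claim. The only cosmetic difference is the last step: you compare speeds pointwise almost everywhere via Rademacher and integrate, whereas the paper integrates the difference quotients and passes to the limit using dominated convergence (with the same domination by $d_{\mb{H}^2}(\alpha(t+\epsilon),\alpha(t))/\epsilon$) — the content is the same.
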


\begin{proof}
	By Lemma \ref{lem:acausal lifts}, the map $f$ admits a lift ${\hat f}:\widehat{S}_\lambda\subset\widehat{\mb{H}}^{2,n}\to\mb{H}^2$. We start by proving the following: 
	
	\begin{claim}{\it 1}
		The map ${\hat f}$ sends weakly regular paths to rectifiable paths. 
	\end{claim}
	
	\begin{proof}[Proof of the claim]
		It is convenient to work in a Poincaré model $\mb{D}^2\times\mb{S}^n$ of $\widehat{\mb{H}}^{2,n}$ and represent $\widehat{S}_\lambda$ as a graph of a $1$-Lipschitz function $g:\mb{D}^2\to\mb{S}^n$. Let us denote by $u:\mb{D}^2\to\widehat{S}_\lambda$ the graph map $u(x)=(x,g(x))$ and by $h:\mb{D}^2\to\mb{H}^2$ the composition $h=\hat{f}u$. 
		
		By Lemma \ref{lem:projection}, the map $u$ is $1$-Lipschitz with respect to the hyperbolic metric on $\mb{D}^2$ and the pseudo-metric on $\widehat{S}_\lambda$. As the developing map ${\hat f}$ is $1$-Lipschitz with respect to the intrinsic pseudo-metric on $\widehat{S}_\lambda$ and the hyperbolic metric on $\mb{H}^2$, we conclude that $h : \mathbb{D}^2 \to \mathbb{H}^2$ is $1$-Lipschitz with respect to the hyperbolic metric on both the source and the target. 
		
		Let now $\gamma:I\to\widehat{S}_\lambda$ be a weakly regular path, and select a Riemannian distance $\hat{d}$ on $\widehat{\hyp}^{2,n}$. Since the projection $\pi: \widehat{\mb{H}}^{2,n}\to\mb{D}^2$ is locally Lipschitz with respect to $\hat{d}$ and the hyperbolic distance on $\mathbb{D}^2$,  we can write $\gamma$ as a composition $\gamma=u\alpha$, where $\alpha:I\to\mb{D}^2$ is the Lipschitz path obtained by composing $\gamma$ with the projection $\pi$. As $\hat{f}\gamma=h\alpha$, we deduce that $\hat{f}\gamma$ is a Lipschitz path. 
	\end{proof}
	
	We now prove that $\hat{f}$ contracts the length of $\gamma$, namely $L(\gamma)\ge L(\hat{f}\gamma)$. Recall that for every $t,s\in I$ the points $\gamma(t),\gamma(s)\in\widehat{\mb{H}}^{2,n}$ are joined by a spacelike segment $[\gamma(s),\gamma(t)]$, whose length satisfies $\cosh(L[\gamma(s),\gamma(t)])=-\langle\gamma(s),\gamma(t)\rangle$. 
	
	\begin{claim}{\it 2}
		Let $t\in I$ be a point of differentiability of $\gamma$. Then we have 
		\[
		\sqrt{\scal{{\dot \gamma}(t)}{{\dot \gamma}(t)}}=\lim_{\ep\to 0}{\frac{d_{\mb{H}^{2,n}}(\gamma(t+\ep),\gamma(t))}{\ep}}.
		\]
	\end{claim}
	
	\begin{proof}[Proof of the claim]
		We consider $\gamma:I\to\widehat{\mb{H}}^{2,n}$ as a path with values in $\mb{R}^{2,n+1}$, so that we can write
		\[
		{\dot \gamma}(t)=\lim_{\ep\to 0}{\frac{\gamma(t+\ep)-\gamma(t)}{\ep}}.
		\]
		Notice that
		\begin{align*}
			\Vert\gamma(t+\ep)-\gamma(t)\Vert^2 &=\langle\gamma(t+\ep)-\gamma(t),\gamma(t+\ep)-\gamma(t)\rangle \\
			&=\Vert\gamma(t+\ep)\Vert^2+\Vert\gamma(t)\Vert^2-2\langle\gamma(t+\ep),\gamma(t)\rangle\\
			&=-2+2\cosh\left(d_{\mb{H}^{2,n}}(\gamma(t+\ep),\gamma(t))\right), 
		\end{align*}
		where $\norm{\bullet}^2 := \scal{\bullet}{\bullet}$. Since $\cosh(x)=1+\frac{x^2}{2}+o(x)$ and $d_{\mb{H}^{2,n}}(\gamma(t+\ep),\gamma(t))\to 0$ as $\ep\to 0$ (see e.g. Lemma~\ref{lem:same topology}), we have that
		$$-2+2\cosh(d_{\mb{H}^{2,n}}(\gamma(t+\ep),\gamma(t)))\approx d_{\mb{H}^{2,n}}(\gamma(t+\ep),\gamma(t))^2$$ 
		as $\ep$ goes to $0$. Therefore, we conclude that
		\[
		\sqrt{\scal{{\dot \gamma}(t)}{{\dot \gamma}(t)}}=\lim_{\ep\to 0}{\frac{\sqrt{\Vert\gamma(t+\ep)-\gamma(t)\Vert^2}}{\ep}}=\lim_{\ep\to 0}{\frac{d_{\mb{H}^{2,n}}(\gamma(t+\ep),\gamma(t))}{\ep}}.
		\]
	\end{proof}
	
	Notice that the above claim applies also to the differentiability points of the curve $\hat{f}\gamma:I\to\mb{H}^2$, that is
	\[
	\sqrt{\scal{(\hat{f} \gamma)\dot{}(t)}{(\hat{f} \gamma)\dot{}(t)}}=\lim_{\ep\to 0}{\frac{d_{\mb{H}^2}(\hat{f}\gamma(t+\ep),\hat{f}\gamma(t))}{\ep}}.
	\]
	Also observe that, by the first claim, the curve $\hat{f}\gamma$ is Lipschitz and, hence, differentiable almost everywhere. 
	
	We are now ready to conclude: The length of $\gamma$ is given by
	\[
	L(\gamma)=\int_I{\sqrt{\scal{{\dot \gamma}(t)}{{\dot \gamma}(t)}} \,\mathrm{d}t}.
	\] 
	Recall that $\gamma$ can be expressed as $\gamma=u\alpha$, where $\alpha:I\to\mb{D}^2$ is a Lipschitz curve and $u:\mb{D}^2\to\widehat{S}_\lambda$ is the graph map, which is $1$-Lipschitz with respect to the hyperbolic metric on $\mathbb{D}^2$ and the pseudo-metric on $\widehat{S}_\lambda$. In particular, we have 
	\[
	\frac{d_{\mb{H}^{2,n}}(\gamma(t+\ep),\gamma(t))}{\ep}\le \frac{d_{\mb{H}^2}(\alpha(t+\ep),\alpha(t))}{\ep} 
	\]
	and, consequently, the functions $t \mapsto d_{\mb{H}^{2,n}}(\gamma(t+\ep),\gamma(t))/\ep$ are bounded uniformly in $\epsilon$ almost everywhere. By applying Claim 2 and Lebesgue's dominated convergence theorem, we have
	\[
	\int_I{\sqrt{\scal{{\dot \gamma}(t)}{{\dot \gamma}(t)}} \,\mathrm{d}t}=\lim_{\ep\to 0}\int_I{\frac{d_{\mb{H}^{2,n}}(\gamma(t+\ep),\gamma(t))}{\ep} \,\mathrm{d}t}.
	\]
	Being $\hat{f}$ a $1$-Lipschitz map, we have $d_{\mb{H}^{2,n}}(\gamma(t+\ep),\gamma(t))\ge d_{\mb{H}^2}(\hat{f}\gamma(t+\ep),\hat{f}\gamma(t))$. Hence,
	\[
	\lim_{\ep\to 0}\int_I{\frac{d_{\mb{H}^{2,n}}(\gamma(t+\ep),\gamma(t))}{\ep} \,\mathrm{d}t}\ge\limsup_{\ep\to 0}{\int_I{\frac{d_{\mb{H}^2}(\hat{f}\gamma(t+\ep),\hat{f}\gamma(t))}{\ep} \,\mathrm{d}t}}.
	\]
	Again, by Claim 2 and the dominated convergence theorem (recall that $\hat{f}\gamma$ is Lipschitz by the first claim, so the maps $t \mapsto d_{\mb{H}^2}(\hat{f}\gamma(t+\ep),\hat{f}\gamma(t))/\ep$ are bounded uniformly in $\epsilon$), we have
	\[
	\limsup_{\ep\to 0}{\int_I{\frac{d_{\mb{H}^2}(\hat{f}\gamma(t+\ep),\hat{f}\gamma(t))}{\ep} \,\mathrm{d}t}}=\int_I{\sqrt{\scal{(\hat{f}\gamma)\dot{}(t)}{(\hat{f}\gamma)\dot{}(t)}} \,\mathrm{d}t} = L(\hat{f} \gamma).
	\]
	In conclusion, we showed $L(\gamma) \geq L(\hat{f} \gamma)$, as desired.
\end{proof}

Finally, we observe that the length spectrum of any $\rho$-equivariant pleated surface is dominated by the length spectrum $\rho$:

\begin{pro}
	\label{pro:strict inequality}
	Let $\rho:\Gamma\to\SOtwon$ be a maximal representation. Consider a pleated surface $S_\lambda$ associated to $\rho$ and a maximal lamination $\lambda$, together with a $1$-Lipschitz developing map $f:S_\lambda\to X_\lambda$. Then, for every $\gamma\in\Gamma - \{1\}$ we have 
	\[
	L_{X_\lambda}(\gamma) \leq L_\rho(\gamma),
	\]
	where the strict inequality holds if and only if $\gamma$ intersects essentially the bending locus of $S_\lambda$. 
\end{pro}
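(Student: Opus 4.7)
The strategy is to establish the inequality $L_{X_\lambda}(\gamma) \leq L_\rho(\gamma)$ via a short limit argument using the $1$-Lipschitz developing map, and then to upgrade it to a strict inequality (when $\gamma$ crosses the bending locus essentially) by quantifying the contribution of each bend through a pseudo-hyperbolic reverse triangle inequality. Concretely, since $\hat f : \widehat{S}_\lambda \to \hyp^2$ is $\rho$-equivariant and $1$-Lipschitz with respect to the intrinsic pseudo-metric (Definition~\ref{def:pleated surface}), for any $x \in \widehat{S}_\lambda$ and any $n \in \N$ one has
\[
L_{X_\lambda}(\gamma^n) \leq d_{\hyp^2}\bigl(\hat f(x), \rho_{X_\lambda}(\gamma^n) \hat f(x)\bigr) \leq d_{\widehat{\hyp}^{2,n}}\bigl(x, \rho(\gamma^n) x\bigr),
\]
where the first bound is the hyperbolic translation length inequality on $\hyp^2$ and the second is the $1$-Lipschitz property combined with the equivariance $\hat f \circ \rho(\gamma^n) = \rho_{X_\lambda}(\gamma^n) \circ \hat f$. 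Since $\rho(\gamma)$ is a loxodromic isometry of $\widehat{\hyp}^{2,n}$ with spacelike axis, the spectral decomposition of $\rho(\gamma) \in \SOtwon$ acting on $\R^{2,n+1}$ yields $\lim_n d_{\widehat{\hyp}^{2,n}}(x, \rho(\gamma^n) x)/n = L_\rho(\gamma)$; dividing by $n$ and letting $n \to \infty$ gives the desired inequality.

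The equality case (no essential intersection with the bending locus) is immediate from Lemma~\ref{lem:totally geodesic outside bending}: the axis $\tilde\alpha$ of $\gamma$ on $\widetilde{X}_\lambda$ can be taken in the complement of the bending locus, where $\hat f$ is totally geodesic; therefore $\hat f^{-1}(\tilde\alpha) \subset \widehat{S}_\lambda$ is an unbroken spacelike geodesic of $\widehat{\hyp}^{2,n}$, and by $\rho(\gamma)$-invariance it must coincide with the axis of $\rho(\gamma)$. Since $\hat f$ restricts to an isometry on a neighborhood of $\tilde\alpha$, one concludes $L_{X_\lambda}(\gamma) = L_\rho(\gamma)$.

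For the strict inequality when $\gamma$ crosses the bending locus essentially, the key input is the following local computation. For $A, B, C \in \widehat{\hyp}^{2,n}$ such that $[A,B]$ and $[B,C]$ are spacelike geodesic segments of lengths $a_1, a_2$ inside $\widehat{S}_\lambda$ meeting transversely at a bending leaf with transverse angle $\theta$ and bending parameter $\phi \geq 0$, a direct calculation in coordinates adapted to the shared leaf produces
\[
-\scal{A}{C}_{2,n+1} = \cosh a_1 \cosh a_2 + \sinh a_1 \sinh a_2 \bigl(\cos^2 \theta + \sin^2 \theta \cosh \phi\bigr) \geq \cosh(a_1 + a_2),
\]
with strict inequality precisely when $\sin \theta \neq 0$ and $\phi > 0$. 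The hyperbolic (rather than circular) cosine of $\phi$ reflects the fact that two spacelike normal directions to a shared leaf lie inside its normal bundle of signature $(1,n)$, so they necessarily span a $(1,1)$-subspace and the bending is timelike in nature. Applying this local reverse-triangle estimate to consecutive pieces of the pleated image $\hat f^{-1}(\tilde\alpha)$ of the $X_\lambda$-axis across an essential bending crossing produces a strict per-period gain $\delta > 0$ in the bound $d_{\widehat{\hyp}^{2,n}}(x, \rho(\gamma^n) x) \geq n L_{X_\lambda}(\gamma) + n \delta$, and dividing by $n$ yields $L_\rho(\gamma) \geq L_{X_\lambda}(\gamma) + \delta > L_{X_\lambda}(\gamma)$.

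The main obstacle lies in the iteration of the cut-and-paste estimate for the strict inequality: after a first straightening, the intermediate spacelike triangle in $\widehat{\hyp}^{2,n}$ no longer lies inside $\widehat{S}_\lambda$, so one must argue, most naturally by working in the Poincar\'e model adapted to the axis of $\rho(\gamma)$, that the bending between a straightened segment and the next original segment remains timelike, so the local estimate continues to apply with a uniform per-period strict gain. For general (non finite-leaved) maximal laminations the continuity results of Corollary~\ref{cor:uniform convergence shear} and Proposition~\ref{pro:continuity pleated sets} give the non-strict inequality by approximation by finite-leaved laminations, but preserving strictness in the limit requires tracking the contribution of the transverse bending measure rather than of individual bends.
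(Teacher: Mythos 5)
Your first two steps are sound and essentially the paper's: the chain $nL_{X_\lambda}(\gamma)\le d_{\mb{H}^2}(\hat f(x),\rho_{X_\lambda}(\gamma)^n\hat f(x))\le d_{\widehat{\mb{H}}^{2,n}}(x,\rho(\gamma)^n x)$, combined with the spectral estimate $-\langle x,\rho(\gamma)^n x\rangle\le C e^{nL_\rho(\gamma)}$ coming from the eigenvalue gap (\cite{CTT19}*{Corollary~2.6}, \cite{BPS19}), gives the non-strict inequality; note that only this upper bound on the growth is needed, so you do not have to justify the full limit $d(x,\rho(\gamma)^n x)/n\to L_\rho(\gamma)$ (which would require $x$ not orthogonal to the fixed points of $\rho(\gamma)$). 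The equality case is handled as in the paper via Lemma~\ref{lem:totally geodesic outside bending}, with the minor caveat (shared by the paper) that when the axis is itself a leaf of the bending locus one instead uses that $\hat f$ is isometric on leaves.

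The strict inequality is where there is a genuine gap, which you partly acknowledge. Your cut-and-paste scheme needs a reverse triangle inequality to be chained along the pleated image of the axis, but $d_{\mb{H}^{2,n}}$ satisfies neither the triangle inequality nor its reverse (as stated in Section~\ref{subsec:acausal and poincare}); after the first straightening the chord leaves $\widehat{S}_\lambda$, and there is no argument that the next ``bend'' is still timelike so that the local law-of-cosines estimate applies again. Worse, for a generic maximal lamination the axis crosses uncountably many bending leaves with no atomic bending, so there is no per-leaf angle $\phi>0$ to accumulate; one would have to show that the total transverse bending per period is positive, which you do not establish, and your proposed approximation by finite leaved laminations only preserves the non-strict inequality. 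The paper avoids all of this with a short global argument: if $\gamma$ meets the bending locus essentially, the axis $\ell$ of $\rho(\gamma)$ cannot lie in $\widehat{S}_\lambda$ (it would cross a bending leaf transversely inside $\widehat{S}_\lambda$, contradicting the definition of the bending locus); choosing a timelike segment $[x,p]$ of maximal length $T>0$ with $x\in\ell$, $p\in\widehat{S}_\lambda$, writing $p=\cos(T)x+\sin(T)v$ and letting $A$ denote the action of $\rho(\gamma)$ on the orthogonal of the axis plane, one computes
\[
\cosh\big(d_{\mb{H}^{2,n}}(p,\rho(\gamma)^n p)\big)=\cos^2(T)\cosh(nL_\rho(\gamma))-\sin^2(T)\langle v,A^n v\rangle ,
\]
and combining this with $nL_{X_\lambda}(\gamma)\le d_{\mb{H}^{2,n}}(p,\rho(\gamma)^n p)$ (your first step applied at $p$) and the spectral gap $|\langle v,A^n v\rangle|<\cosh(nL_\rho(\gamma))$ for $n$ large yields $\cosh(nL_{X_\lambda}(\gamma))<\cosh(nL_\rho(\gamma))$, hence $L_{X_\lambda}(\gamma)<L_\rho(\gamma)$, uniformly for all maximal laminations and with no reduction to the finite leaved case. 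You should either adopt this route or supply the missing uniform per-period bending estimate; as written, the strictness claim is not proved.
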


\begin{proof}
	We proceed as in \cite{CTT19}*{Proposition~3.38}.
	
	If $\gamma$ does not intersect essentially the bending locus then the invariant geodesic $\ell$ of $\rho(\gamma)$ is contained in $\widehat{S}_\lambda$. By Lemma \ref{lem:totally geodesic outside bending}, the $1$-Lipschitz developing map ${\hat f}:\widehat{S}_\lambda\to\mb{H}^2$ is an isometry on the complement of the bending locus. Therefore, we have $L_\rho(\gamma)=L_{X_\lambda}(\gamma)$.
	
	Assume that $\gamma$ intersects essentially the bending locus of $S_\lambda$. 
	
	Let $\ell$ be the axis of $\rho(\gamma)$. We first observe that $\ell$ is not contained in $\widehat{S}_\lambda$: If this was the case, then, as $\gamma$ intersects the bending locus essentially, the geodesic $\ell$ must also intersect some bending line $\ell'\subset{\hat \lambda}$. However, this would contradict the fact that $\ell'$ is a bending line by the definition of bending locus. Therefore $\ell$ is not contained in $\widehat{S}_\lambda$. 
	
	As $\ell$ is not contained in $\widehat{S}_\lambda$, it can be connected to $\widehat{S}_\lambda$ by a timelike geodesic and we can take the timelike geodesic $[x,p]$ of maximal length $\ell[x,p]=T>0$ joining a point $x\in\ell$ to a point $p\in\widehat{S}_\lambda$. We parametrize $\ell$ as $\ell(t)=\cosh(t)x+\sinh(t)w$ with $w$ a spacelike vector orthogonal to $x$ and we write $p:=\cos(T)x+\sin(T)v$ with $v$ timelike and orthogonal to $x,w$ (as $[x,p]$ maximizes the timelike distance between $\ell$ and $\widehat{S}_\lambda$). The isometry $\rho(\gamma)$ acts on $\ell$ by translating points by $L=L_\rho(\gamma)$ and acts on ${\rm Span}\{x,v\}^\perp$ by an isometry $A$. Thus, we have $\rho(\gamma)p=\cos(T)(\cosh(L)x+\sinh(L)w)+\sin(T)Av$.
	
	\begin{align*}
		\cosh\left(d_{\mb{H}^{2,n}}(p,\rho(\gamma)^np)\right) &=-\langle p,\rho(\gamma)^np\rangle\\ 
		&=\cos(T)^2\cosh(nL)-\sin(T)^2\langle v,A^nv\rangle.
	\end{align*}
	
	Since the developing map ${\hat f}:\widehat{S}_\lambda\to\mb{H}^2$ is $1$-Lipschitz and equivariant, we get
	\[
	d_{\mb{H}^2}(\hat{f}(p),\rho_{X_\lambda}(\gamma)^n \hat{f}(p))\le d_{\mb{H}^{2,n}}(p,\rho(\gamma)^np).
	\]
	
	Furthermore, for a hyperbolic isometry $\rho_{X_\lambda}(\gamma)^n$, we have that the minimal displacement coincides with the translation length so that 
	\[
	nL_{X_\lambda}(\gamma)=L_{X_\lambda}(\gamma^n)\le d_{\mb{H}^2}(f(p),\rho_{X_\lambda}(\gamma)^nf(p)).
	\]
	
	Putting together the previous inequalities we get
	\[
	\cosh\left(nL_{X_\lambda}(\gamma))\right)\le\cos(T)^2\cosh(nL)-\sin(T)^2\langle v,A^nv\rangle.
	\]
	
	Since the spectral radius of $A$ is strictly smaller than $e^L$, we can choose $n$ sufficiently large so that $\vert\langle v,A^nv\rangle\vert<\cosh(nL)$ (see for example \cite{CTT19}*{Corollary~2.6} and \cite{BPS19}). For this value of $n$ we get
	\begin{align*}
		\cosh\left(nL_{X_\lambda}(\gamma))\right) &\le\cos(T)^2\cosh(nL)-\sin(T)^2\langle v,A^nv\rangle\\
		&<\cos(T)^2\cosh(nL)+\sin(T)^2\cosh(nL)=\cosh(nL).
	\end{align*}
	Which implies $L_{X_\lambda}(\gamma)<L$.
\end{proof}

\subsection{Finite leaved maximal laminations}
We can now focus on the existence of pleated surfaces, as announced in Theorem \ref{geometry pleated surfaces h2n}. We start from the case of finite leaved laminations:

\begin{pro}
	\label{pro:developing finite}
	Let $\rho:\Gamma\to\SOtwon$ be a maximal representation and let $\lambda$ be a finite leaved maximal $\rho$-lamination of $\Sigma$. If ${\widehat{S}_\lambda}$ denotes pleated set associated to $\lambda$, then there exists a homeomorphism $\hat{f}:\widehat{S}_\lambda\to\mb{H}^2$ with the following properties:
	\begin{enumerate}[(i)]
		\item{It is totally geodesic on every leaf and plaque.}
		\item{It is is $1$-Lipschitz, that is, $d_{\widehat{\mb{H}}^{2,n}}(x,y)\ge d_{\mb{H}^2}(\hat{f}(x),\hat{f}(y))$ for every $x,y\in\widehat{S}_\lambda$.}
		\item{There exists a holonomy representation $\rho_X : \Gamma \to \psl$ of some hyperbolic structure $X \in \T$ such that $\hat{f}$ is $(\rho_X, \rho)$-equivariant, i.e. $$\hat{f} \rho(\gamma)(x) = \rho_X(\gamma) \hat{f}(x)$$for every $\gamma \in \Gamma$, $x \in \widehat{S}_\lambda$.}
	\end{enumerate} 
\end{pro}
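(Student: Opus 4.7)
The plan is to construct $\hat f$ by gluing ideal triangles according to the intrinsic shear cocycle $\sigma_\lambda^\rho$, identify the target with the unique hyperbolic structure $X_\lambda\in\T$ having $\sigma_\lambda^{X_\lambda}=\sigma_\lambda^\rho$, and establish the $1$-Lipschitz estimate via a cut-and-paste argument across the finitely many bending leaves separating two given points.

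\emph{Construction, equivariance, and homeomorphism.} By Proposition~\ref{pro:shear finite case}, the intrinsic shear cocycle $\sigma^\rho_\lambda \in \mc{H}(\lambda;\R)$ is well defined. Since $\beta^\rho$ is strictly positive (Lemma~\ref{lem:strictly positive}), Proposition~\ref{pro:length and thurston sympl finite} combined with Theorem~\ref{thm:thurston bonahon} yields a unique $X_\lambda\in\T$ with $\sigma^{X_\lambda}_\lambda = \sigma^\rho_\lambda$. Fix a holonomy $\rho_X:\Gamma\to\psl$ of $X_\lambda$ and a $\rho_X$-equivariant identification of its universal cover with $\mb{H}^2$, whose lifted lamination $\lambda_X$ has plaques in $\Gamma$-equivariant bijection with those of $\widehat S_\lambda$. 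Choose a base plaque $\widehat\Delta_0\subset\widehat S_\lambda$ and the unique isometry of $\widehat\Delta_0$ onto the corresponding ideal triangle of $\mb{H}^2$ matching ideal vertices, then extend $\hat f$ inductively to adjacent plaques so that the shear across every shared boundary leaf equals $\sigma^\rho_\lambda$. The identity $\sigma^\rho_\lambda = \sigma^{X_\lambda}_\lambda$ and the additivity of shear cocycles make this propagation well defined and path-independent, and the $\Gamma$-invariance of both cocycles yields equivariance $\hat f\circ\rho(\gamma) = \rho_X(\gamma)\circ\hat f$. By construction $\hat f$ is isometric on every leaf and every plaque, giving (i) and (iii). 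It is continuous and bijective onto $\mb{H}^2$ because propagation hits every plaque and every leaf of $\lambda_X$ exactly once; combined with Proposition~\ref{pro:existence pleated sets} and invariance of domain applied at interior points of plaques, one deduces that $\hat f$ is a homeomorphism.

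\emph{The $1$-Lipschitz estimate, the main point.} We must show $d_{\mb{H}^2}(\hat f(x),\hat f(y))\le d_{\widehat{\mb{H}}^{2,n}}(x,y)$ for all $x, y \in \widehat S_\lambda$. Since $\hat f$ is isometric on each plaque and leaf, it suffices to produce a piecewise geodesic path $\gamma\subset\widehat S_\lambda$ from $x$ to $y$, with segments in individual plaques or leaves, of total length $L(\gamma) \le d_{\widehat{\mb{H}}^{2,n}}(x,y)$: indeed then
\[
d_{\mb{H}^2}(\hat f(x),\hat f(y))\le L(\hat f\circ\gamma) = L(\gamma) \le d_{\widehat{\mb{H}}^{2,n}}(x,y).
\]
I would argue by induction on the number $k$ of bending leaves of $\widehat S_\lambda$ crossed by the spacelike segment $[x,y]\subset\widehat{\mb{H}}^{2,n}$, finiteness of $k$ coming from $\lambda$ being finite leaved. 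The base case $k=0$ is clear, as $[x,y]$ itself lies in $\widehat S_\lambda$. The inductive step reduces to a single-leaf ``fold inequality'': for adjacent plaques $\widehat\Delta_1,\widehat\Delta_2$ in spacelike $2$-planes $H_1,H_2$ sharing a leaf $\ell$, and points $x\in\widehat\Delta_1$, $y\in\widehat\Delta_2$, let $y^\ast\in H_1$ be the image of $y$ under the isometric unfolding of $H_2$ onto $H_1$ about $\ell$; one needs $d_{H_1}(x,y^\ast) \le d_{\widehat{\mb{H}}^{2,n}}(x,y)$. Writing $x = \cosh(b)\hat p + \sinh(b) v_1$ and $y = \cosh(a)\hat q + \sinh(a) v_2$ with $\hat p,\hat q\in\ell$ the feet of the perpendiculars from $x, y$ to $\ell$, $c = d(\hat p,\hat q)$, and $v_i$ the unit spacelike normal to $\ell$ in $H_i$ pointing into $\widehat\Delta_i$, a direct computation in the linear model gives
\[
\cosh d_{\widehat{\mb{H}}^{2,n}}(x,y) = \cosh(a)\cosh(b)\cosh(c) - \sinh(a)\sinh(b)\,\langle v_1,v_2\rangle,
\]
while the unbent configuration corresponds to $\langle v_1,v_1^\ast\rangle = -1$. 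Global acausality of $\widehat S_\lambda$ (Proposition~\ref{pro:existence pleated sets}) forces the relative rotation between $H_1$ and $H_2$ to be realized by a Lorentzian boost in the stabilizer of $\ell$ acting on $\ell^\perp\cong\R^{1,n}$, yielding $\langle v_1,v_2\rangle \le -1$ and hence the fold inequality. Iterating over the finitely many leaves crossed assembles the desired $\gamma$.

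\emph{Main obstacle.} The delicate step is the single-leaf fold inequality, namely that global acausality of $\widehat S_\lambda$ rules out ``spacelike-type'' bending and forces $\langle v_1,v_2\rangle\le-1$ between the inward normals of adjacent plaques. The remaining steps---plaque-by-plaque propagation of $\hat f$, equivariance, the homeomorphism property, and the inductive assembly of the piecewise path $\gamma$---are routine consequences of the shear cocycle formalism of Section~\ref{sec:shear cocycles} and the topological structure of $\widehat S_\lambda$ established in Section~\ref{sec:laminations}.
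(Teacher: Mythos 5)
Your overall route (develop the plaques according to $\sigma^\rho_\lambda$, then prove the Lipschitz bound by unfolding across bending leaves) is genuinely different from the paper's, but as written the reduction of the Lipschitz estimate to finitely many single-leaf folds has a real gap. First, the ambient spacelike segment $[x,y]\subset\widehat{\mb{H}}^{2,n}$ does not meaningfully ``cross'' leaves: leaves are one-dimensional, so generically $[x,y]$ meets none of them, and $k=0$ certainly does not imply $[x,y]\subset\widehat{S}_\lambda$, so your base case fails under the literal reading. If instead $k$ is the number of leaves of $\lambda$ separating the plaques containing $x$ and $y$ (the only sensible reading), then $k$ is typically \emph{infinite} even though $\lambda$ is finite leaved: the lifts of the spiraling leaves accumulate onto the lifts of the closed leaves, so two plaques on opposite sides of a lift $\tilde\gamma$ of a closed leaf are separated by infinitely many leaves of the two fans converging to $\tilde\gamma$. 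Thus the induction never terminates exactly in the hard case; moreover, after one unfolding the point $y^\ast$ no longer lies on $\widehat{S}_\lambda$, so the inductive hypothesis does not apply and one is forced to develop an infinite chain of plaques and control the limit. That convergence issue is the same one you gloss in the construction step: plaque-by-plaque propagation of $\hat f$ only defines the map on each fan between closed leaves, and continuity at the closed leaves, the fact that they are mapped isometrically onto geodesics, injectivity across them and surjectivity onto $\mb{H}^2$ all require proof --- this is precisely where the paper works (it builds the intrinsic metric completions of the pieces, proves the Lipschitz bound piecewise first, uses it together with Lemma~\ref{lem:extension and convergence} to extend to the closed leaves, and then glues; the identification with the shear cocycle is done separately in Proposition~\ref{pro:intrinsic shear finite case}).

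Your single-leaf fold inequality, on the other hand, is correct, and your formula for $\cosh d_{\widehat{\mb{H}}^{2,n}}(x,y)$ is right; the clean justification is that the inward unit normals $v_1,v_2$ are positive multiples of the components orthogonal to $\mathrm{Span}(\ell)$ of the opposite ideal vertices $\xi(c),\xi(d)$, and these lie in the two \emph{different} components of the spacelike cone of $\ell^\perp\cong\R^{1,n}$ because $c$ and $d$ lie on the two different arcs of the limit curve, which is a graph over $\partial\mb{D}^2$ in a Poincar\'e model adapted to $\ell$; hence $\langle v_1,v_2\rangle\le-1$. But note that the paper's proof of the Lipschitz bound bypasses leaf-by-leaf unfolding entirely: choosing the Poincar\'e model so that a spacelike plane through $x,y$ is $\mb{D}^2\times\{v\}$, the segment $[x,y]$ becomes part of a diameter whose graph lift is a path in $\widehat{S}_\lambda$ of length at most $d_{\widehat{\mb{H}}^{2,n}}(x,y)$ (the $\mb{S}^n$-term in the metric of Proposition~\ref{pro:poincare H2n} only shortens it), and $\hat f$ preserves lengths of such paths; Lemma~\ref{lem:planes and lines} guarantees the diameter meets the projection of each closed leaf at most once, so no induction is needed. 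To repair your write-up you should either adopt this graph-path argument, or reformulate your unfolding as a statement about finite subchains of plaques together with a limiting argument, and in either case supply the missing extension of $\hat f$ across the closed leaves.
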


\begin{proof}
	Let $\lambda_c = \gamma_1\sqcup\cdots\sqcup\gamma_k\subset\lambda$ denote the collection of the closed leaves of $\lambda$. Each leaf $\ell$ of $\lambda-\lambda_c$ is an isolated point of $\lambda \subset \mathcal{G}$ and its geometric realization $\hat{\ell}$ in $\widehat{\hyp}^{2,n}$ is adjacent to two distinct spacelike ideal triangles $\Delta,\Delta'$, that is, $\ell=\Delta\cap \Delta'$. In particular, every component $S_j$ of $S-\lambda_c$ has an intrinsic (possibly incomplete) hyperbolic metric, with $S = S_\lambda$. The abstract metric completion $X_j$ of $S_j$ is a (a priori possibly non-compact) hyperbolic surface with totally geodesic boundary. 
	
	Let $\widehat{S}_j$ denote the preimage of $S_j$ in $\widehat{\hyp}^{2,n}$, and let $U_j \to X_j$ be the universal cover of $X_j$, where $U_j$ is a closed convex domain of $\hyp^2$ with totally geodesic boundary. The natural inclusion $S_j \subset X_j$ lifts to an isometric embedding $\hat{f}_j : \widehat{S}_j \to U_j$, which is unique up to post-composition by a deck transformation of the covering $U_j \to X_j$. We start with the following assertion:
	
	\begin{claim}{\it 1}
		For every $x,y\in\widehat{S}_j$ we have
		\[
		d_{\widehat{\mb{H}}^{2,n}}(x,y)\ge d_{U_j}(\hat{f}_j(x),\hat{f}_j(y)) .
		\]
	\end{claim}
	
	\begin{proof}[Proof of claim] 
		Consider two distinct points $x,y\in\widehat{S}_j$. We select a spacelike plane $H$ containing $x,y$, and we denote by $T$ the timelike sphere in $\widehat{\hyp}^{2,n}$ that is orthogonal to $H$ at $x$. The choice of $H$ and $T$ determines a Poincaré model $\Psi:\mb{D}^2\times\mb{S}^n\to\widehat{\mb{H}}^{2,n}$ such that $\Psi(\mb{D}^2\times\{v\})=H$ and $\Psi(0,v)=x$, for some fixed $v \in \mathbb{S}^n$.
		
		By Lemma \ref{lem:planes and lines}, the projection of any leaf $\hat{\ell}\subset{\hat \lambda}$ to $\mb{D}^2$ intersects (transversely) every diameter of $\mb{D}^2$ at most once. As a consequence, if we represent $\widehat{S}_\lambda$ as a graph of a $1$-Lipschitz function $g:\mb{D}^2\to\mb{S}^n$, the geodesic segment $[x,y]\subset\mb{D}^2$ (which, by the choice of the Poincaré model, is a subsegment of a diameter in $\mb{D}^2$) lifts on a curve $\gamma$ joining $x,y\in\widehat{S}_j$ that stays inside $\widehat{S}_j$. Indeed, if this was not the case, we could find a leaf $\hat{\ell}\subset\partial\widehat{S}_j\subset{\hat \lambda}_c$ whose  projection onto $\mb{D}^2$ crosses $[x,y]$ at least twice. 
		
		Using the explicit expression of the metric given by Proposition \ref{pro:poincare H2n}, we can now conclude that $L(\hat{f}_j\gamma)\le L(\alpha)$ with equality if and only if $[x,y]\subset\widehat{S}_j$. To see this, we select a parametrization of the geodesic segment $\alpha:I\to[x,y] \subset \mb{D}^2$, and write $\gamma(t)=(\alpha(t),g(\alpha(t)))$. We also denote by $\norm{\bullet}_{\mb{S}^n}$ (resp. $\norm{\bullet}_{\hyp^2}$ and $\norm{\bullet}_0$) the norm associated to the spherical metric of $\mb{S}^n$ (resp. hyperbolic and Euclidean metrics on $\mb{D}^2$). Being $\hat{f}_j$ an isometric embedding, we have $L(\hat{f}_j\gamma) = L(\gamma)$. On the other hand, we observe
		\begin{align*}
			L(\gamma) &=\int_I \sqrt{\langle\dot{\gamma}(t),\dot{\gamma}(t)\rangle} \, \mathrm{d}t\\
			&=\int_I \sqrt{\Vert \dot{\alpha}(t)\Vert_{\mb{H}^2}^2-\frac{4}{(1 - \Vert\alpha(t)\Vert^2_0)^2}\Vert{(g\alpha)\dot{}(t)}\Vert^2_{\mb{S}^n}} \, \mathrm{d}t\\
			&\le\int_I{\Vert{\dot{\alpha}(t)}\Vert_{\mb{H}^2} \, \mathrm{d}t}=L_{\mb{H}^2}(\alpha).
		\end{align*}
		Therefore, we conclude that
		\[
		d_{U_j}(\hat{f}_j(x), \hat{f}_j(y)) \leq L(\hat{f}_j \gamma) = L(\gamma) \leq L_{\hyp^2}(\alpha) .
		\]
		
		The desired statement now follows by observing that the length of $\alpha$ with respect to the hyperbolic metric of $\mathbb{D}^2$ is equal to the pseudo-distance $d_{\widehat{\mb{H}}^{2,n}}(x,y)$, by the choice of the Poincaré model $\Psi$.
	\end{proof}
	
	This proves in particular that the map $\hat{f}_j : \widehat{S}_j \to U_j$ is uniformly continuous, in the sense of Definition \ref{def:uniformly continuous}. Hence, by Lemma \ref{lem:extension and convergence}, $\hat{f}_j$ extends uniquely to a map (that we continue to denote with abuse by ${\hat f}_j$) from the closure of $\widehat{S}_j$ inside $\widehat{S} = \widehat{S}_\lambda$, obtained from $\widehat{S}_j$ by adding the leaves of $\hat{\lambda}_c$ that are adjacent to $\widehat{S}_j$, into $U_j$. By construction and continuity, the map ${\hat f}_j$ is $(\rho_{X_j}, \rho_j)$-equivariant, where $\rho_j$ denotes the restriction of $\rho$ to $\pi_1(S_j)$, and $\rho_{X_j}$ is a holonomy representation of the hyperbolic surface $X_j$ that preserves $U_j \subset \hyp^2$. 
	
	\begin{claim}{\it 2} 
		The extension ${\hat f}_j : \widehat{S}_j \cup \partial\widehat{S}_j \to U_j$ satisfies the following properties:
		\begin{itemize}
			\item{It is a $(\rho_{X_j},\rho_j)$-equivariant homeomorphism.} 
			\item{It maps every leaf $\hat{\ell}\subset\partial\widehat{S}_j$ isometrically into a connected component of the geodesic boundary of $U_j$.}
			\item{We have $L({\hat f}_j \gamma)=L(\gamma)$ for every regular path $\gamma:I\to\widehat{S}_j\cup\partial\widehat{S}_j$ that intersects ${\hat \lambda}$ in countably many points.}
		\end{itemize}
	\end{claim}
	
	\begin{proof}[Proof of the claim] 
		Let $\hat{\ell}$ be a boundary leaf of $\partial\widehat{S}_j$ and let $[x,y]\subset\hat{\ell}$ be a finite subsegment. Since $\lambda$ is a finite leaved maximal lamination and $\hat{\ell}$ projects in $S$ onto a simple closed curve, there exists a sequence of leaves $(\hat{\ell}_n)_n \subset\widehat{S}_j$ that converges to $\hat{\ell}$. Thus, we can approximate $[x,y]$ with a sequence of segments $[x_n,y_n]\subset\hat{\ell}_n$ for which $d_{U_j}({\hat f}_j(x_n),{\hat f}_j(y_n))=d_{\mb{H}^{2,n}}(x_n,y_n)$. By continuity, we conclude that $d_{U_j}({\hat f}_j(x),{\hat f}_j(y))=d_{\mb{H}^{2,n}}(x,y)$. Thus, ${\hat f}_j$ maps each boundary leaf $\hat{\ell}\subset\partial\widehat{S}_j$ to a boundary leaf of $\partial{U}_j$ in a totally geodesic way. 
		
		If $\gamma:I\to\widehat{S}_j$ is a regular path that intersects ${\hat \lambda}$ in countably many points then $\ell(\gamma)=\ell(\gamma-{\hat \lambda})$. As ${\hat f}_j$ is totally geodesic on each component of $\widehat{S}_j-{\hat \lambda}$, it follows that ${\hat f}_j\gamma$ is rectifiable and $L({\hat f}_j\gamma)=L(\gamma)$.
		
		Notice that $\hat{f}_j$ restricts to a $\pi_1(S_j)$-equivariant homeomorphism between $\widehat{S}_j$ and the interior of $U_j$ and sends every boundary component of $\widehat{S}_j$ isometrically into a boundary component of $U_j$. Moreover, the surface $S_j \cup \partial S_j$ is a compact orientable surface with boundary of negative Euler characteristic. In particular, distinct connected components of $\partial S_j$ are not freely homotopic to each other and, hence, the map $\hat{f}_j$ must send distinct leaves in $\partial \widehat{S}_j$ into distinct geodesics in $\partial U_j$. This implies that the map $f_j : S_j \cup \partial S_j \to X_j$ is a bijective continuous function from a compact space to a Hausdorff one, and hence a homeomorphism.  
	\end{proof}
	
	Let $\gamma\subset\lambda_c$ be a leaf adjacent to the components $S_i,S_j$ (possibly equal), and denote by $\alpha_i\subset\partial X_i,\alpha_j\subset\partial X_j$ the boundary components corresponding to $\gamma$ in the abstract closures. There is a unique way to glue the completions $X_i,X_j$ along $\alpha_i,\alpha_j$ so that the identifications with $\gamma$ agree. Thus, after gluing all the completions $X_j$ along their boundary components as prescribed by the leaves of $\lambda_c$, we get a hyperbolic surface 
	\[
	X_\lambda=\bigsqcup_{S_k\textrm{ component of }S_\lambda-\lambda_c}{X_k}\left/\alpha_i\sim\alpha_j\right.
	\]
	and a homeomorphism $f:S_\lambda\to X_\lambda$ which is isometric on every leaf of $\lambda$ and plaque of $S_\lambda-\lambda$. 
	
	Lift $f$ to a map ${\hat f}:\widehat{S}_\lambda\to\mb{H}^2$. 
	
	\begin{claim}{\it 3}
		The map ${\hat f}$ sends regular paths that intersect ${\hat \lambda}_c$ in countably many points to rectifiable paths of the same length and we have 
		\[
		d_{\widehat{\mb{H}}^{2,n}}(x,y)\ge d_{\mb{H}^2}({\hat f}(x),{\hat f}(y))
		\]
		for every $x,y\in\widehat{S}_\lambda$.
	\end{claim}
	
	\begin{proof}[Proof of the claim]
		The proof is similar to the one of the first claim. 
		
		By continuity and density, it is enough to restrict our attention to $x,y\in\widehat{S}-{\hat \lambda}$. As in the first claim, let $H$ be a spacelike hyperplane containing $x,y$ and let $T$ be a timelike sphere orthogonal to $H$ at $x$. This choice corresponds to a Poincaré model $\Psi:\mb{D}^2\times\mb{S}^n\to\widehat{\mb{H}}^{2,n}$ such that $\Psi(\mb{D}^2\times\{v\})=H$ and $\Psi(0,v)=x$, for some fixed $v \in \mb{S}^n$. 
		
		Let $\ell\subset{\hat \lambda}$ be a leaf. By Lemma \ref{lem:planes and lines}, the projection of $\ell$ to $\mb{D}^2$ intersects (transversely) every diameter of $\mb{D}^2$ at most once. As a consequence, the geodesic segment $[x,y]\subset\mb{D}^2$ intersects the projections of the leaves of ${\hat \lambda}_c$ in finitely many points $p_1,\cdots,p_k\in[x,y]$. For simplicity, set also $p_0:=x$ and $p_{k+1}:=y$. Let $\alpha_j,\alpha$ be the lifts of the subsegments $[p_j,p_{j+1}],[x,y]$ inside $\widehat{S}_\lambda$ through the graph parametrization on $\mathbb{D}^2$. By construction, the path ${\hat f}(\alpha)\subset\mb{H}^2$ is the concatenation of the paths ${\hat f}(\alpha_j)$. By the above discussion, since $\alpha_j$ is entirely contained in (the closure of) a component $\widehat{S}_i$, the path ${\hat f}(\alpha_j)$ is rectifiable and has length $L({\hat f}(\alpha_j))=L(\alpha_j)$. Therefore, ${\hat f}(\alpha)$ is also a rectifiable path of length $L({\hat f}(\alpha))=L(\alpha)$ and joins ${\hat f}(x)$ to ${\hat f}(y)$. Thus, 
		\[
		d_{\mb{H}^2}({\hat f}(x),{\hat f}(y))\le L({\hat f}(\alpha))=L(\alpha)=d_{\widehat{\mb{H}}^{2,n}}(x,y).
		\]
		This concludes the proof of the claim.
	\end{proof}
	
	This concludes the proof of the proposition.
\end{proof}

The only missing piece in the finite leaved setting is the equivalence of the intrinsic hyperbolic structure $X_\lambda$ provided by Proposition \ref{pro:developing finite} and the one given by the $\rho$-intrinsic shear cocycle $\sigma^\rho_{\lambda}$, associated to the $\rho$-invariant cross ratio $\beta^\rho$ and the maximal lamination $\lambda$ by Theorem \ref{shear of cross ratio}.

\begin{pro} 
	\label{pro:intrinsic shear finite case}
	Let $\lambda$ be a finite maximal lamination, and let $S_\lambda$ be the pleated set of $M$ associated to $\lambda$. Then the $\rho$-shear cocycle $\sigma^\rho_{\lambda}$ coincides with the shear coordinates of the hyperbolic metric $X = X_\lambda$ described in Proposition \ref{pro:developing finite}. In other words, we have $\sigma^\rho_{\lambda} = \sigma^X_{\lambda}$.
\end{pro}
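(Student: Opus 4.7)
The plan is to compare the two cocycles $\sigma^\rho_\lambda, \sigma^X_\lambda \in \mc{H}(\lambda;\mb{R})$ directly on every pair of plaques $(P,Q)$ of $\lambda$, using the homeomorphism $f : S_\lambda \to X_\lambda$ from Proposition \ref{pro:developing finite} to identify plaques of $S_\lambda$ with those of $X_\lambda$. By Proposition \ref{pro:shear finite case}, the value $\sigma^\rho_\lambda(P,Q)$ is obtained as a finite sum of the elementary shear formulas \eqref{eq:def_shear_adj} applied to the cross ratio $\beta^\rho$, over adjacent complementary regions of a suitable finite sublamination $\lambda_\mathcal{P}$, together with the correction terms governed by Lemmas \ref{lem:asymptotic_plaques} and \ref{lem:shear near closed leaves}. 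Bonahon's classical shear cocycle $\sigma^X_\lambda(f(P),f(Q))$ admits the very same description, with the only change being that $\beta^\rho$ is replaced by the $\rp^1$-cross ratio $\beta^X$ on $\partial \Gamma$ pulled back through the holonomy of $X_\lambda$ (compare with the opening discussion of Section \ref{subsec:shear coordinates}). The problem thus reduces to proving the elementary identity
\[
\log|\beta^\rho(\ell^+, \ell^-, u_l, u_r)| = \log|\beta^X(\ell^+, \ell^-, u_l, u_r)|
\]
for every configuration of ideal vertices $\ell^\pm, u_l, u_r \in \partial \Gamma$ arising in either of the two constructions.

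The elementary identity follows from Remark \ref{rmk:shear_is_shear} combined with the properties of the developing map $\hat f : \widehat{S}_\lambda \to \mb{H}^2$ established in Proposition \ref{pro:developing finite}. Indeed, the remark identifies $\log|\beta^\rho(\ell^+, \ell^-, u_l, u_r)|$ with the classical hyperbolic shear between the spacelike ideal realizations $\hat R, \hat R'$ of the two adjacent regions inside $\mb{H}^{2,n}$, measured in the induced hyperbolic path-metric on $\hat R \cup \hat R'$. The common edge of $\hat R$ and $\hat R'$ is either a leaf of $\hat \lambda$, on which Proposition \ref{pro:developing finite} ensures that $\hat f$ is an isometry, or a diagonal added to form $\lambda_\mathcal{P}$, in which case both $\hat R, \hat R'$ are contained in a single plaque of $\hat \lambda$ on which $\hat f$ is totally geodesic. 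In either case, $\hat f$ restricts to a local isometry on $\hat R \cup \hat R'$, so the intrinsic hyperbolic shear between $\hat R$ and $\hat R'$ equals the classical hyperbolic shear in $\mb{H}^2$ between the developed regions $\hat f(\hat R), \hat f(\hat R')$. This last shear is precisely $\log|\beta^X(\ell^+, \ell^-, u_l, u_r)|$ by the classical $\rp^1$-cross-ratio formula for hyperbolic shears, which yields the required identity.

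Since the correction terms in Lemmas \ref{lem:asymptotic_plaques} and \ref{lem:shear near closed leaves} are themselves expressed as logarithms of the same cross-ratio formulas, the elementary identity propagates through the finite-sum construction of Section \ref{subsec:shear finite case}, giving $\sigma^\rho_\lambda(P,Q) = \sigma^X_\lambda(f(P),f(Q))$ for every pair of plaques $P, Q$. The main delicate point — mild but worth checking — is that the left/right labeling of the opposite vertices $u_l, u_r$ in \eqref{eq:def_shear_adj} is preserved by the boundary extension of $\hat f$. This follows from the fact that $\hat f$ is an orientation-preserving homeomorphism of the pleated surface (compatible with the orientations of $\mb{H}^{2,n}$ and $\mb{H}^2$), and therefore its boundary extension to the ideal vertices of each plaque respects the cyclic order at infinity. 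With this convention check in place, we conclude $\sigma^\rho_\lambda = \sigma^X_\lambda$ in $\mc{H}(\lambda;\mb{R})$, completing the proof.
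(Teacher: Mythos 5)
Your argument is sound for pairs of plaques of $\lambda$ that are genuinely adjacent along a leaf: there both triangles are plaques realized inside $\widehat{S}_\lambda$, the developing map $\hat f$ is an isometry for the intrinsic path metric on their union, and Remark \ref{rmk:shear_is_shear} identifies the $\beta^\rho$-term with the intrinsic shear, which $\hat f$ carries to the $\beta^{X}$-term. This is exactly the first reduction step of the paper's proof. The gap is in the remaining case, which is the only genuinely non-trivial one: plaques $P,Q$ separated by a closed leaf $\tilde\gamma$ of $\lambda$. Your dichotomy ``the common edge is either a leaf of $\hat\lambda$, or a diagonal in which case both regions lie in a single plaque'' fails there. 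In the finite-sum construction the auxiliary diagonals cross the strip containing $\tilde\gamma$, a strip that contains \emph{infinitely many} plaques of $\lambda$ spiraling onto the closed leaf; correspondingly, the cross-ratio terms produced by Lemma \ref{lem:shear near closed leaves}, such as $\beta(\tilde\gamma^+,y_P,x_P,\tilde\gamma^-)$ and $\beta(\tilde\gamma^+,\tilde\gamma^-,y_Q,y_P)$, involve ideal triangles (e.g.\ the one with vertices $y_P,\tilde\gamma^+,\tilde\gamma^-$) that are \emph{not} plaques of $\lambda$, and whose spacelike realizations are not contained in the pleated set $\widehat{S}_\lambda$. On these configurations $\hat f$ is not defined as an isometry of any ambient totally geodesic region, Remark \ref{rmk:shear_is_shear} does not apply, and the claimed term-by-term identity $\log\abs{\beta^\rho}=\log\abs{\beta^X}$ is simply false in general: the individual terms compare an ambient orthogonal projection in $\hyp^{2,n}$ with an intrinsic one in $X$, and only their total combination across the closed leaf is equal.

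This is precisely where the paper's proof does extra work: after the reduction to adjacent plaques and to plaques separated by one closed leaf, it proves a synthetic characterization of $\sigma^\rho_\lambda(P,Q)$ in the second case, as a signed distance along the spacelike axis $[\xi(\tilde\gamma^+),\xi(\tilde\gamma^-)]$ between intersection points with horospheres of $\hyp^{2,n}$ based at the asymptotic ideal vertices and passing through the orthogonal projections $v_P,v_Q$ of the opposite vertices, and then compares it with Bonahon's horocycle description of $\sigma^X_\lambda(P,Q)$, using that a horosphere of $\hyp^{2,n}$ based at an ideal vertex of the pleated set meets $\widehat{S}_\lambda$ in an intrinsic horocycle. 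To repair your proof you would need either this horosphere argument or some other global argument showing that the \emph{sum} of the three cross-ratio terms of Lemma \ref{lem:shear near closed leaves} is an intrinsic invariant of the pleated surface; the term-by-term equality you invoke does not hold, so as written the proposal does not establish the statement.
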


\begin{proof}
	Recall that the shear coordinates $\sigma^X_\lambda$ of a hyperbolic structure $X \in \T$ with respect to a maximal lamination $\lambda$, as well as the $\rho$-shear cocycle associated by Theorem \ref{shear of cross ratio} to the cross ratio $\rho$ and $\lambda$, are H\"older cocycles transverse to $\lambda$. In particular, by additivity of $\sigma^\rho_\lambda, \sigma^X_\lambda \in \mathcal{H}(\lambda; \R)$, it is enough to show that $\sigma^\rho_\lambda(P,Q) = \sigma^X_\lambda(P,Q)$ when $P$ and $Q$ are separated by at most one component $\tilde{\gamma}$ of $\lambda_c$, the set of leaves of $\lambda$ that project onto simple closed geodesics in $\Sigma$. 
	
	If no component of $\lambda_c$ separates $P$ from $Q$, then there exists a finite collection of plaques $P = P_0, P_1, \dots, P_n, P_{n + 1} = Q$ such that $P_i$ and $P_{i + 1}$ are adjacent for every $i$. Again by additivity, it is sufficient to check that $\sigma^X_\lambda(P_i,P_{i + 1}) = \sigma^\rho_{\lambda}(P_i,P_{i + 1})$, and this follows from what we observed in Remark \ref{rmk:shear_is_shear}.
	
	Therefore it is enough to consider the case in which $P$ and $Q$ are separated by exactly one component of $\lambda_c$. By what we just proved, we can further reduce the discussion to the case in which both plaques $P$ and $Q$ have exactly one ideal vertex equal to one of the endpoints $\tilde{\gamma}^\pm$ of $\tilde{\gamma}$. Up to relabeling the plaques and change orientation of $\tilde{\gamma}$, we can assume that $P$ lies on the left of $\tilde{\gamma}$ and has one vertex equal to $\tilde{\gamma}^+$. We denote by $x_P, y_P$ the vertices of $P$ different from $\tilde{\gamma}^\pm$, so that the leaf $[y_P, \tilde{\gamma}^+]$ separates the interior of $P$ from $\tilde{\gamma}$. If $z_Q$ denotes the vertex of $Q$ that coincides with one of the endpoints of $\tilde{\gamma}$, then we label the other vertices of $Q$ as $x_Q, y_Q$, so that $[y_Q, z_Q]$ is the boundary component of $Q$ that separates the interior of $Q$ from $\tilde{\gamma}$.
	
	As usual, we denote by $\mathcal{P}_{P Q}$ the set of plaques of $\lambda$ that separate $P$ from $Q$. By Lemmas \ref{lem:asymptotic_plaques} and \ref{lem:shear near closed leaves}, for any finite collection $\mathcal{P} \subset \mathcal{P}_{P Q}$ we have
	\begin{align}
		\begin{split}
			\sigma^\rho_\mathcal{P}(P,Q) & = \log\abs{\beta^{\rho}(\tilde{\gamma}^+, y_P, x_P, y_Q) \beta^{\rho}(\tilde{\gamma}^+, y_Q, y_P, x_Q)} , \\
			\sigma^X_\mathcal{P}(P,Q) & = \log\abs{\beta^X(\tilde{\gamma}^+, y_P, x_P, y_Q) \beta^X(\tilde{\gamma}^+, y_Q, y_P, x_Q)} ,
		\end{split}
	\end{align}
	if $z_Q = \tilde{\gamma}^+$, and 
	\begin{align}
		\begin{split}
			\sigma^\rho_\mathcal{P}(P,Q) & = \log\abs{\beta^{\rho}(\tilde{\gamma}^+, y_P, x_P, \tilde{\gamma}^-) \beta^{\rho}(\tilde{\gamma}^+, \tilde{\gamma}^-, y_P, y_Q) \beta^{\rho}(y_Q, \tilde{\gamma}^-, \tilde{\gamma}^+, x_Q)} , \\
			\sigma^X_\mathcal{P}(P,Q) & = \log\abs{\beta^X(\tilde{\gamma}^+, y_P, x_P, \tilde{\gamma}^-) \beta^X(\tilde{\gamma}^+, \tilde{\gamma}^-, y_P, y_Q) \beta^X(y_Q, \tilde{\gamma}^-, \tilde{\gamma}^+, x_Q)} ,
		\end{split}
	\end{align}
	if $z_Q = \tilde{\gamma}^-$. In particular $\sigma^\rho_\lambda(P,Q) = \sigma^\rho_\mathcal{P}(P,Q)$ and $\sigma^X_\lambda(P,Q) = \sigma^X_\mathcal{P}(P,Q)$ are independent of the choice of $\mathcal{P} \subset \mathcal{P}_{P Q}$.
	
	Select now an identification between the universal cover of $\Sigma$ and $\hyp^2$ compatible with the intrinsic hyperbolic structure $X=X_\lambda\in\T$. Then the classical shear $\sigma^X_\lambda(P,Q)$ can be characterized as follows (see e. g. \cite{Bo96}): 
	
	\begin{fact*}
		Let $\alpha = \alpha(s)$ be a unit speed parametrization of the geodesic $[\tilde{\gamma}^-, \tilde{\gamma}^+]$ in $\hyp^2$ pointing towards $\tilde{\gamma}^+$, and denote by $\hat{v}_P, \hat{v}_Q \in \hyp^{2}$ the projections of the ideal vertices $x_P, x_Q$ onto the spacelike geodesics $[\tilde{\gamma}^+, y_P], [z_Q, y_P]$, respectively. In addition we set $\alpha(s_P)$ (resp. $\alpha(s_Q)$) to be the intersection point between $[\tilde{\gamma}^-, \tilde{\gamma}^+]$ and the horocycle of $\hyp^2$ based at $\tilde{\gamma}^+$ (resp. $z_Q$) that passes through $\hat{v}_P$ (resp. $\hat{v}_Q$). Then $s_Q - s_P = \sigma^X_\lambda(P,Q)$.
	\end{fact*}
	
	On the other hand, we will prove that a similar description holds for the shear $\sigma^\rho_\lambda$:
	
	\begin{claim*} 
		Let $\ell = \ell(t)$ be a unit speed parametrization of the spacelike geodesic $[\xi(\tilde{\gamma}^+), \xi(\tilde{\gamma}^-)]$ pointing towards $\xi(\tilde{\gamma}^+)$, and denote by $v_P, v_Q \in \hyp^{2,n}$ the projections of the ideal vertices $\xi(x_P), \xi(x_Q)$ onto the spacelike geodesics $[\xi(\tilde{\gamma}^+), \xi(y_P)], [\xi(z_Q), \xi(y_P)]$, respectively. In addition we set $\ell(t_P)$ (resp. $\ell(t_Q)$) to be the intersection point between $\ell$ and the horosphere of $\hyp^{2,n}$ based at $\xi(\tilde{\gamma}^+)$ (resp. $\xi(z_Q)$) that passes through $v_P$ (resp. $v_Q$). Then $t_Q - t_P = \sigma^\rho_\lambda(P,Q)$.
	\end{claim*}
	
	Assuming that such characterization holds true, we can finally prove the statement. By Lemma \ref{lem:acausal lifts}, there exists an acausal lift $\xi: \partial \Gamma \to \widehat{\Lambda} \subset \partial \widehat{\hyp}^{2,n}$ of the limit map of $\rho$ inside $\partial \widehat{\hyp}^{2,n}$. Notice that $\scal{\tilde{\xi}(x)}{\tilde{\xi}(y)} < 0$ for any distinct $x, y \in \partial\Gamma$ and for any choice of representatives $\tilde{\xi}(x), \tilde{\xi}(y)$ of $\xi(x), \xi(y) \in \partial\widehat{\hyp}^{2,n}$. Let $\widehat{S}$ denote the lift of the pleated set $S$ realizing $\lambda$ to $\widehat{\hyp}^{2,n}$. 
	
	Recall from Section \ref{subset:pseudo riemann} that any horosphere $O$ based at $\xi(\tilde{\gamma}^+) \in \partial\widehat{\hyp}^{2,n}$ (or $\xi(z_Q)$) intersects every plaque that has an ideal vertex equal to $\xi(\tilde{\gamma}^+)$ (or $\xi(z_Q)$) into a horocycle. It follows that the curve $\partial O \cap \widehat{S}$ is a horocycle based at $\tilde{\gamma}^+$ (or $z_Q$) with respect to the intrinsic hyperbolic metric of $\widehat{S}$. On the other hand, the points $v_P$ and $v_Q$ are uniquely determined by the intrinsic hyperbolic structures of the plaques $P$ and $Q$, so the horocycles $\partial O \cap \widehat{S}$ pass through the point of $\widehat{S}$ corresponding to $\hat{v}_P$ (or $\hat{v}_Q$). This implies that $s_Q - s_P = t_Q - t_P$, and therefore $\sigma^X_\lambda(P,Q) = \sigma^\rho_\lambda(P,Q)$, which was what we were left to prove. 
	
	\begin{proof}[Proof of the claim]
		The projections $v_P, v_Q \in \widehat{\hyp}^{2,n}$ satisfy
		\begin{align*}
			v_P = & \sqrt{- \frac{\scal{\tilde{\xi}(y_P)}{\tilde{\xi}(\tilde{\gamma}^+)}}{2 \scal{\tilde{\xi}(x_P)}{\tilde{\xi}(\tilde{\gamma}^+)} \scal{\tilde{\xi}(x_P)}{\tilde{\xi}(y_P)}}} \left(\frac{\scal{\tilde{\xi}(x_P)}{\tilde{\xi}(\tilde{\gamma}^+)}}{\scal{\tilde{\xi}(y_P)}{\tilde{\xi}(\tilde{\gamma}^+)}} \tilde{\xi}(y_P) + \frac{\scal{\tilde{\xi}(x_P)}{\tilde{\xi}(y_P)}}{\scal{\tilde{\xi}(y_P)}{\tilde{\xi}(\tilde{\gamma}^+)}} \tilde{\xi}(\tilde{\gamma}^+)\right), \\
			v_Q = & \sqrt{- \frac{\scal{\tilde{\xi}(y_Q)}{\tilde{\xi}(z_Q)}}{2 \scal{\tilde{\xi}(x_Q)}{\tilde{\xi}(z_Q)} \scal{\tilde{\xi}(x_Q)}{\tilde{\xi}(y_Q)}}} \left(\frac{\scal{\tilde{\xi}(x_Q)}{\tilde{\xi}(z_Q)}}{\scal{\tilde{\xi}(y_Q)}{\tilde{\xi}(z_Q)}} \tilde{\xi}(y_Q) + \frac{\scal{\tilde{\xi}(x_Q)}{\tilde{\xi}(y_Q)}}{\scal{\tilde{\xi}(y_Q)}{\tilde{\xi}(z_Q)}} \tilde{\xi}(z_Q)\right) ,
		\end{align*}
		where $\tilde{\xi}(x)$ denotes a representative of the projective class $\xi(x) \in \partial\hyp^{2,n} \subset \rp^{n + 2}$ (compare with Remark \ref{rmk:shear_is_shear}). Consider now the parametrization of the leaf $[\xi(\tilde{\gamma}^+), \xi(\tilde{\gamma}^-)]$ given by
		\[
		\ell(t) = \frac{1}{\sqrt{- 2 \scal{\tilde{\xi}(\tilde{\gamma}^+)}{\tilde{\xi}(\tilde{\gamma}^-)}}} (e^t \tilde{\xi}(\tilde{\gamma}^+) + e^{-t} \tilde{\xi}(\tilde{\gamma}^-))
		\]
		
		The horosphere based at $\xi(\tilde{\gamma}^+)$ that passes through $v_P$ intersects the spacelike geodesic $[\xi(\tilde{\gamma}^+), \xi(\tilde{\gamma}^-)]$ at $\ell(t_P)$,	where
		\[
		e^{t_P} = \sqrt{\frac{\scal{\tilde{\xi}(\tilde{\gamma}^+)}{\tilde{\xi}(\tilde{\gamma}^-)} \scal{\tilde{\xi}(x_P)}{\tilde{\xi}(y_P)}}{\scal{\tilde{\xi}(x_P)}{\tilde{\xi}(\tilde{\gamma}^+)} \scal{\tilde{\xi}(y_P)}{\tilde{\xi}(\tilde{\gamma}^+)}}}.
		\]
		
		Similarly, the horosphere based at $\xi(z_Q)$ that passes through $v_Q$ intersects the spacelike geodesic $[\xi(\tilde{\gamma}^+), \xi(\tilde{\gamma}^-)]$ at $\ell(t_Q)$,	where
		\[
		e^{\pm t_Q} = \sqrt{\frac{\scal{\tilde{\xi}(\tilde{\gamma}^+)}{\tilde{\xi}(\tilde{\gamma}^-)} \scal{\tilde{\xi}(x_Q)}{\tilde{\xi}(y_Q)}}{\scal{\tilde{\xi}(x_Q)}{\tilde{\xi}(\tilde{\gamma}^\pm)} \scal{\tilde{\xi}(y_Q)}{\tilde{\xi}(\tilde{\gamma}^\pm)}}} 
		\]
		if $z_Q = \tilde{\gamma}^\pm$, respectively. In particular we have
		\begin{align*}
			t_Q - t_P & = \frac{1}{2} \log \frac{ \scal{\tilde{\xi}(x_Q)}{\tilde{\xi}(y_Q)} \scal{\tilde{\xi}(x_P)}{\tilde{\xi}(\tilde{\gamma}^+)} \scal{\tilde{\xi}(y_P)}{\tilde{\xi}(\tilde{\gamma}^+)}}{\scal{\tilde{\xi}(x_Q)}{\tilde{\xi}(\tilde{\gamma}^+)} \scal{\tilde{\xi}(y_Q)}{\tilde{\xi}(\tilde{\gamma}^+)} \scal{\tilde{\xi}(x_P)}{\tilde{\xi}(y_P)}} \\
			& = \log\abs{\beta^{\rho}(\tilde{\gamma}^+, y_P, x_P, y_Q) \beta^{\rho}(\tilde{\gamma}^+, y_Q, y_P, x_Q)} \\
			& = \sigma^\rho_\mathcal{P}(P,Q)
		\end{align*}
		if $z_Q = \tilde{\gamma}^+$, and
		\begin{align*}
			t_Q - t_P & = \frac{1}{2} \log \frac{\scal{\tilde{\xi}(x_Q)}{\tilde{\xi}(\tilde{\gamma}^-)} \scal{\tilde{\xi}(y_Q)}{\tilde{\xi}(\tilde{\gamma}^-)}}{\scal{\tilde{\xi}(\tilde{\gamma}^+)}{\tilde{\xi}(\tilde{\gamma}^-)} \scal{\tilde{\xi}(x_Q)}{\tilde{\xi}(y_Q)}} \frac{\scal{\tilde{\xi}(x_P)}{\tilde{\xi}(\tilde{\gamma}^+)} \scal{\tilde{\xi}(y_P)}{\tilde{\xi}(\tilde{\gamma}^+)}}{\scal{\tilde{\xi}(\tilde{\gamma}^+)}{\tilde{\xi}(\tilde{\gamma}^-)} \scal{\tilde{\xi}(x_P)}{\tilde{\xi}(y_P)}} \\
			& = \log\abs{\beta^{\rho}(\tilde{\gamma}^+, y_P, x_P, \tilde{\gamma}^-) \beta^{\rho}(\tilde{\gamma}^+, \tilde{\gamma}^-, y_P, y_Q) \beta^{\rho}(y_Q, \tilde{\gamma}^-, \tilde{\gamma}^+, x_Q)} \\
			& = \sigma^\rho_\mathcal{P}(P,Q)
		\end{align*}
		if $z_Q = \tilde{\gamma}^-$.
	\end{proof}
	This concludes the proof of Proposition \ref{pro:intrinsic shear finite case}.
\end{proof}

The combination of Propositions \ref{pro:strict inequality}, \ref{pro:developing finite}, and \ref{pro:intrinsic shear finite case} concludes the proof of the existence part of Theorem \ref{geometry pleated surfaces h2n} in the case of finite leaved maximal laminations. The uniqueness is addressed in the case of a general maximal lamination in Proposition \ref{pro:developing general} below.

\subsection{General maximal laminations}
We now extend the result from finite leaved laminations to the general case using the continuity of the construction.

\begin{pro}
	\label{pro:developing general}
	Let $\rho:\Gamma \to \SOtwon$ be a maximal representation. Let $\lambda$ be a maximal lamination with associated pleated set $S_\lambda=\widehat{S}_\lambda/\rho(\Gamma)$. Let $X_\lambda\in\T$ be the hyperbolic surface whose shear coordinates with respect to the lamination $\lambda$ agree with the intrinsic shear cocycle $\sigma^\rho_{\lambda}\in\mathcal{H}(\lambda;\R)$. Then there exists a unique developing map $f:S_\lambda\to X_\lambda$ which is $1$-Lipschitz with respect to the intrinsic pseudo-metric on $S_\lambda$ and the hyperbolic metric on $X_\lambda$.
\end{pro}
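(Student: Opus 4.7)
The plan is to realize $\hat{f}$ as a uniform-on-compacta limit of the developing maps associated to finite leaved approximations of $\lambda$. By \cite{CEG}*{Theorem~I.4.2.19} choose a sequence $(\lambda_m)_m$ of finite leaved maximal $\rho$-laminations converging to $\lambda$ in the Chabauty topology. Proposition \ref{pro:developing finite} supplies for each $m$ a $(\rho_{X_{\lambda_m}},\rho)$-equivariant $1$-Lipschitz homeomorphism $\hat{f}_m:\widehat{S}_{\lambda_m}\to\hyp^2$ which is totally geodesic on every leaf and plaque, and by Proposition \ref{pro:intrinsic shear finite case} its target $X_{\lambda_m}$ is precisely the hyperbolic structure with shear cocycle $\sigma^\rho_{\lambda_m}$. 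Since $\sigma^\rho_{\lambda_m}\to\sigma^\rho_\lambda$ in a common space of real weights (Proposition \ref{pro:continuity shear wrt lamination}), Bonahon's shear parametrization (Theorem \ref{thm:thurston bonahon}) yields $X_{\lambda_m}\to X_\lambda$ in $\T$, and we can pick holonomy representations $\rho_{X_{\lambda_m}}$ converging algebraically to $\rho_{X_\lambda}$.

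To extract a limit of the maps themselves, fix a Poincaré model $\Psi:\mathbb{D}^2\times\mathbb{S}^n\to\widehat{\mb{H}}^{2,n}$ so that, by Proposition \ref{pro:continuity pleated sets}, the graph maps $u_{\lambda_m}:\mathbb{D}^2\to\widehat{S}_{\lambda_m}$ converge uniformly on compacta to $u_\lambda$. Pick a basepoint $x_0\in\widehat{S}_\lambda-\hat\lambda$ in the interior of a plaque $P_\infty$ of $\lambda$; for $m$ large enough $x_0$ also lies in the interior of a plaque $P_m$ of $\lambda_m$. After post-composing $\hat{f}_m$ with a suitable element of $\psl$, normalize so that $\hat{f}_m(x_0)=y_0\in\hyp^2$ and a fixed tangent frame at $x_0$ (inside the totally geodesic plane containing $P_m$) is sent to a common frame at $y_0$. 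Setting $\hat{h}_m:=\hat{f}_m\circ u_{\lambda_m}:\mathbb{D}^2\to\hyp^2$, Property (3) of Lemma \ref{lem:projection} together with the $1$-Lipschitz property of $\hat{f}_m$ shows that $\hat{h}_m$ is $1$-Lipschitz with respect to the hyperbolic metric on $\mathbb{D}^2$, and pointwise bounded thanks to the normalization. Ascoli--Arzel\`a (or directly Lemma \ref{lem:extension and convergence}) yields a subsequential uniform-on-compacta limit $\hat{h}:\mathbb{D}^2\to\hyp^2$, and we set $\hat{f}:=\hat{h}\circ u_\lambda^{-1}:\widehat{S}_\lambda\to\hyp^2$, which is automatically $1$-Lipschitz with respect to the intrinsic pseudo-metric.

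The hard part will be checking that $\hat f$ has all the properties of a developing homeomorphism. Equivariance with respect to $(\rho_{X_\lambda},\rho)$ is obtained by passing to the limit in $\hat{f}_m(\rho(\gamma)x)=\rho_{X_{\lambda_m}}(\gamma)\hat{f}_m(x)$, using that the holonomies $\rho_{X_{\lambda_m}}$ converge. For the totally geodesic condition, any plaque $\Delta$ of $\widehat{S}_\lambda-\hat\lambda$ is the Chabauty limit of plaques $\Delta_m\subset\widehat{S}_{\lambda_m}$, and each $\hat{f}_m|_{\Delta_m}$ is an isometric embedding into a spacelike ideal triangle of $\hyp^2$ whose ideal vertices are determined by the three endpoints of $\Delta_m$ in $\partial\Gamma$; uniform convergence on compacta forces $\hat{f}|_\Delta$ to be an isometric totally geodesic embedding into $\hyp^2$, and similarly for leaves $\hat\ell\subset\hat\lambda$. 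That the induced map $f:S_\lambda\to X_\lambda$ is a homeomorphism then follows from an equivariant degree argument: $f$ is a continuous $\Gamma$-equivariant map between closed surfaces, and its restriction to each plaque is an isometric embedding onto an ideal triangle of $X_\lambda-\lambda_{X_\lambda}$ matching the shear prescription $\sigma^\rho_\lambda=\sigma^{X_\lambda}_\lambda$, so $\hat{f}$ is a bijection onto the union of closed ideal triangles of the geodesic realization of $\lambda$ in $\widetilde{X_\lambda}$, which is the whole of $\hyp^2$.

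Finally, uniqueness of $\hat{f}$ follows because any $1$-Lipschitz, $(\rho_{X_\lambda},\rho)$-equivariant map $\widehat{S}_\lambda\to\hyp^2$ is forced by the equivariance to restrict to a specific isometric embedding on any plaque adjacent to the axis of some hyperbolic element, hence is uniquely determined on a dense union of plaques and extends uniquely to all of $\widehat{S}_\lambda$ by continuity. In particular the limit $\hat f$ does not depend on the chosen subsequence, so the whole sequence $\hat{f}_m$ converges to $\hat f$.
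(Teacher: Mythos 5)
Your construction of the limit map follows the paper's own strategy almost verbatim (finite leaved approximations, convergence of the intrinsic shear cocycles, graphs in a Poincar\'e model, Ascoli--Arzel\`a applied to $\hat f_m\circ u_{\lambda_m}$, and passage to the limit of equivariance and of the totally geodesic property), so that part is sound, with one compression worth flagging: to get $X_{\lambda_m}\to X_\lambda$ you cannot quote Theorem \ref{thm:thurston bonahon} alone, since the laminations vary with $m$; you need the precompactness of $(X_{\lambda_m})_m$ in $\T$ (their length spectra are dominated by $L_\rho$ by Proposition \ref{pro:strict inequality}, so they lie in the compact set $\mc{P}_\rho$) together with the uniform convergence of the shear parametrizations $\varphi_{\lambda_m}\to\varphi_\lambda$ (Corollary \ref{cor:uniform convergence shear}), which is exactly how the paper identifies the limit structure.

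The genuine gaps are in your endgame. First, the surjectivity/homeomorphism step: the assertion that the union of the \emph{closed} ideal triangles of the geodesic realization of $\lambda$ in $\widetilde{X_\lambda}$ is all of $\hyp^2$ is false for a general maximal lamination. If $\lambda$ has a minimal component that is not a closed geodesic, it has uncountably many leaves while only countably many of them are boundary leaves of plaques, so the union of closed plaques is a proper (dense) subset of $\hyp^2$; your bijectivity claim therefore does not cover the leaves of $\hat\lambda$ that bound no plaque, and the appeal to an ``equivariant degree argument'' does not substitute for injectivity (a degree-one map can fold). The paper instead proves injectivity directly: distinct plaques of $\widehat{S}_\lambda$ are sent to ideal triangles with disjoint interiors, a separating plaque is sent to a separating triangle, and global injectivity follows; the homeomorphism property is then immediate since $f$ is a continuous injection between closed surfaces. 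Second, your uniqueness argument does not work as stated: for a lamination with no closed leaves there need be no plaque ``adjacent to the axis of a hyperbolic element'', and even when there is one, $(\rho_{X_\lambda},\rho)$-equivariance by itself does not pin down the restriction of a $1$-Lipschitz map to a single plaque, so the claimed dense determination is unjustified. The paper's uniqueness proof goes differently: given two developing maps $f,f'$, the composition $\phi=f'\circ f^{-1}$ is a self-homeomorphism of $X_\lambda$ isotopic to the identity, preserving $\lambda$ and totally geodesic on every leaf and plaque; a lift of $\phi$ extends to the identity on $\partial\hyp^2$, hence fixes the ideal vertices of every plaque, hence is the identity on every plaque, and by density and continuity $\phi=\mathrm{id}$. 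You would either need to reproduce this argument or first prove that any $1$-Lipschitz equivariant developing map induces the (unique) $\rho_{X_\lambda}$-equivariant boundary map, which your sketch does not do.
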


\begin{proof}
	Let $\lambda_m$ be a sequence of finite leaved maximal laminations that converges to $\lambda$ in the Hausdorff topology. By Propositions \ref{pro:existence pleated sets}, \ref{pro:developing finite}, and \ref{pro:intrinsic shear finite case} for every $m$ we can find a pleated set $S_m=\widehat{S}_m/\rho(\Gamma)$, a hyperbolic surface $X_m\in\T$ with shear cocycle $\sigma_m=\sigma_{\lambda_m}^\rho\in\mc{H}(\lambda_m;\mb{R})$ and a developing map $f_m:S_m \to X_m$ which is $1$-Lipschitz with respect to the pseudo-metric and the hyperbolic metric.
	
	Let $S=\widehat{S}/\rho(\Gamma)$ be the pleated set associated to $\lambda$. By Proposition \ref{pro:continuity shear wrt lamination} the cocycles $\sigma_m=\sigma^\rho_{\lambda_m}$ converge to the cocycle $\sigma=\sigma^\rho_{\lambda}$ naturally associated with the lamination $\lambda$. Moreover, the family of hyperbolic structures $(X_m)_m$ is bounded in $\T$, being their length spectra bounded by the length spectrum of $\rho$ by Proposition \ref{pro:strict inequality}. In particular, up to subsequences, there exists a hyperbolic structure $Y \in \T$ such that $X_m \to Y \in \T$. On the one hand, the shear coordinates $\sigma_m = \sigma_{\lambda_m}^{X_m}$ must converge to $\sigma_\lambda^Y$, in light of Corollary \ref{cor:uniform convergence shear}. On the other hand, by Proposition \ref{pro:continuity shear wrt lamination}, the shears $\sigma_m = \sigma_{\lambda_m}^\rho$ converge to $\sigma_{\lambda}^\rho$ = $\sigma_{\lambda}^X$. Hence we conclude that $X = Y$ and $\sigma_{\lambda}^\rho = \sigma_\lambda^X$.
	
	In order to obtain convergence of developing maps, we will work in the Poincaré model $\Psi:\mb{D}^2\times\mb{S}^n\to\widehat{\mb{H}}^{2,n}$ associated to the choice of an orthogonal splitting $\mb{R}^{2,n+1}=E\oplus F$ where $E$ is a $(2,0)$-plane. We write $\widehat{S}_m,\widehat{S}$ as graphs of $1$-Lipschitz functions $g_m,g:\mb{D}^2\to\mb{S}^n$, that is, they are the images of the functions $u_m,u$ defined by 
	\[
	\begin{matrix}
		u_m,u: & \mathbb{D}^2 & \longrightarrow &\widehat{S}_m,\widehat{S}\subset\widehat{\mb{H}}^{2,n}\\
		& x & \longmapsto & \Psi(x,g_m(x)),\Psi(x,g(x)).
	\end{matrix}
	\]
	
	By Proposition \ref{pro:continuity pleated sets}, we have $g_m\to g$ and $u_m\to u$ uniformly on compact subsets of $\mb{D}^2$. Furthermore, by property (3) of Lemma \ref{lem:projection}, the maps $u_m,u$ are $1$-Lipschitz with respect to the hyperbolic distance of $\mathbb{D}^2$ and the pseudo-distance $d_{\hyp^{2,n}}$, that is, they satisfy
	\begin{equation}\label{eq:lipschitz}
		d_{\hyp^{2,n}}(u(x),u(y)),d_{\hyp^{2,n}}(u_m(x),u_m(y)) \leq d_{\hyp^2}(x,y)
	\end{equation}
	for every $x,y\in\mathbb{D}^2$. 
	
	Let $\hat{f}_m:\widehat{S}_m\to\widehat{X}_m$ denote the lifts of the developing maps $f_m$ to the universal covers. Fix $x_0 \in \mathbb{D}^2$ so that $u(x_0)$ lies in the interior of a plaque of $\widehat{S}$. This implies in particular that $u_m(x_0) \not\in\hat{\lambda}_m$ for $m$ sufficiently large. Choose now identifications $\widehat{X}_m\simeq\hyp^2$ so that the sequence $(\hat{f}_m u_m(x_0))_m$ converges to some $y_0\in\hyp^2$. 
	
	By Proposition \ref{pro:existence pleated sets} and relation \eqref{eq:lipschitz}, the maps $h_m:=\hat{f}_mu_m:\mathbb{D}^2\to\hyp^2$ are $1$-Lipschitz with respect to the hyperbolic metrics of both domain and codomain, and $h_m(x_0) \to y_0$ as $m$ goes to $\infty$. By Ascoli-Arzelà, up to subsequences, we have that $h_m$ converges uniformly on compact sets to a $1$-Lipschitz map $h:\mathbb{D}^2\to\hyp^2$ with $h(x_0)=y_0$. Finally, we set $\hat{f}:=h\pi:\widehat{S}\to\hyp^2$, where $\pi:\hyp^{2,n}\to\mathbb{D}^2$ is the projection determined by the map $\Psi$.
	
	Notice that each ${\hat f}_m$ is $(\rho_{X_m},\rho)$-equivariant where $\rho_{X_m}$ is the holonomy associated to the chosen identification ${\hat X}_m\simeq\mb{H}^2$. The sequence of holonomies $\rho_{X_m}$ converges to $\rho_X$, a representative of the holonomy of the hyperbolic surface $X$: As $X_n\to X$ in Teichmüller space $\T$, we only have to check that the sequence is precompact. This follows from the fact that 
	\[
	\rho_{X_m}(\gamma)h_m(x_0)=\rho_{X_m}(\gamma)\hat{f}_m u_m(x_0)=h_m\pi(\rho(\gamma)u_m(x_0))\to h\pi(\rho(\gamma)u(x_0)).
	\]
	As a consequence, we deduce that ${\hat f}$ is $(\rho_X,\rho)$-equivariant: Take $x\in\widehat{S}$ and select $x_m\in\widehat{S}_m$ that converge to $x$. Then
	\begin{align*}
		\hat{f}(\rho(\gamma)x) &=h\pi(\rho(\gamma)\lim_{m\to\infty}x_m)\\
		&=\lim_{m\to\infty}h_m\pi(\rho(\gamma)x_m)\\
		&=\lim_{m \to \infty}\rho_{X_m}(\gamma)h_m\pi (x_m)\\
		&=\rho_X(\gamma)\hat{f}(x),
	\end{align*}
	where in the second equality we used the uniform convergence of the maps $h_m$. 
	
	We now show that ${\hat f}$ is the lift of a $1$-Lipschitz developing map $f:S\to X$. In order to do so, we have to prove that:
	\begin{itemize}
		\item{$\hat{f}$ is injective.}
		\item{$\hat{f}$ is totally geodesic on each leaf of ${\hat \lambda}$ and plaque of $\widehat{S}-{\hat \lambda}$.}
	\end{itemize}
	This will be enough to conclude the proof of the existence of the developing map.
	
	Let now $P$ be a plaque of $\widehat{S}$, and consider a sequence of plaques $P_m \subset \widehat{S}_m$ that converges to $P$. By hypothesis $\hat{f}_m = h_m \pi$ is an isometric embedding on $P_m$, and therefore the same holds for the restriction of $\hat{f} = h \pi$ on $P$. In the same way we see that $\hat{f}(\ell)$ is a parametrized geodesic for every leaf $\ell$ of $\hat{\lambda}$.
	
	Since distinct plaques of $\widehat{S}_m$ are sent by $\hat{f}_m$ into ideal triangles of $\hyp^2$ with disjoint interiors, the same property is verified by $\hat{f}$ and the plaques of $\widehat{S}$. In particular the map $h$ restricts to a homeomorphism between $\mathbb{D}^2 \setminus \pi(\hat{\lambda})$ and $\hyp^2 \setminus \hat{f}(\hat{\lambda})$. In addition, if $P, Q, R$ are plaques of $\widehat{S}$ and $R$ separates $P$ from $Q$, then $\widehat{f}(R)$ separates $\hat{f}(P)$ from $\hat{f}(Q)$. From here it is simple to see that $\hat{f}$ is in fact globally injective.
	
	For the uniqueness part of the statement, assume that $f, f' : S \to X$ are two $1$-Lipschitz developing maps of the same pleated set $S = S_\lambda$. The composition $\phi : = f' \circ f^{-1} : X \to X$ is a continuous homeomorphism isotopic to the identity that sends every leaf and every plaque of a fixed maximal lamination $\lambda$ into itself in a totally geodesic way. It follows that there exists a lift $\tilde{\phi} : \hyp^2 \to \hyp^2$ of $\phi$ that extends to the identity on $\partial \hyp^2$, and that sends plaques and leaves of $\lambda$ into plaques and leaves of $\lambda$. This implies in particular that $\tilde{\phi}$ coincides with the identity on every plaque of $\lambda$. By continuity, we conclude that $\tilde{\phi} = \textrm{id}_{\hyp^2}$, and hence that $f' = f$.
\end{proof}


\section{Teichmüller geometry and length spectra}
\label{sec:teichmuller}

In this section we relate the geometry of maximal representations to the geometry of Teichmüller space and use Teichmüller geometry to study the length spectrum of maximal representations. Our main goal is the proof of Theorem \ref{structure pleated} from the introduction.

\subsection{Length distortion and dominated set}
Let us briefly describe the picture.

By Theorem \ref{geometry pleated surfaces h2n} and Proposition \ref{pro:strict inequality}, we know that the length spectrum $L_\rho(\bullet)$ of a maximal representation $\rho:\Gamma\to\SOtwon$ dominates the length spectrum $L_{S_\lambda}(\bullet)$ of every $\rho$-equivariant pleated surface $S_\lambda$. Furthermore, we have characterized those curves $\gamma\in\Gamma$ for which the strict inequality $L_{S_\lambda}(\gamma)<L_\rho(\gamma)$ holds: They are precisely the ones that do not intersect essentially the bending locus of $S_\lambda$. 

Thus, we can consider the dominated set of $\rho$ which is the following space:

\begin{dfn}[Dominated Set]
	The {\em dominated set} of the maximal representation $\rho$ is the subset of Teichmüller space $\T$ defined by
	\[
	\mc{P}_\rho:=\{Z\in\T\left|\;L_Z(\bullet)\le L_\rho(\bullet)\right.\}
	\]
	where $L_Z,L_\rho$ are the length spectra of $Z,\rho$.
	
	Similarly, the {\em simply dominated set} is
	\[
	\mc{P}^{{\rm simple}}_\rho:=\{Z\in\T\left|\;L^{{\rm simple}}_Z(\bullet)\le L^{{\rm simple}}_\rho(\bullet)\right.\}
	\]
	where $L^{{\rm simple}}_Z\le L^{{\rm simple}}_\rho$ are the simple length spectra of $Z,\rho$. Clearly $\mc{P}_\rho\subset\mc{P}^{{\rm simple}}_\rho$.
\end{dfn}

Let us stress the fact that the set $\mc{P}_\rho$ is non-empty as it contains the hyperbolic structures $X_\lambda$ of all pleated surfaces $S_\lambda$ associated to maximal laminations $\lambda$, but it always has more structure: By work of Bestvina, Bromberg, Fujiwara, and Souto \cite{BBFS13}, and Théret \cite{The14} on convexity of length functions in shear coordinates (see also \cite{MV} for a different approach), the dominated set $\mc{P}_\rho$ is convex with respect to shear paths. By results of Wolpert \cite{W87}, \cite{W04} on convexity of length functions along Weil-Petersson geodesics, it is also convex with respect to the Weil-Petersson metric.

We will analyze more carefully the structure of the dominated set. In order to do so, let us introduce the following useful auxiliary function which is an analogue of the Thurston's distortion function \cite{T86} for hyperbolic surfaces: 

\begin{dfn}[Maximal Length Distortion]
	The \emph{maximal length distortion} $\kappa:\T\to(0,\infty)$ is the function defined by
	\[
	\kappa(Z):=\sup_{c\in\mc{C}-\{0\}}{\frac{L_\rho(c)}{L_Z(c)}}.
	\]
	Similarly, we also define
	\[
	\kappa^{{\rm simple}}(Z):=\sup_{\mu\in\mc{ML}-\{0\}}{\frac{L_\rho(\mu)}{L_Z(\mu)}}.
	\]
\end{dfn}

As both $L_\rho,L_Z$ are continuous homogeneous positive functions on the space of geodesic currents $\mc{C}$, their ratio $\kappa$ descends to a continuous positive function on the projectivization $\mb{P}\mc{C}$. Since the the projectivization $\mb{P}\mc{C}$ is compact, the supremum $\kappa(Z)$ is a maximum $\kappa(Z)=L_\rho({\bar c})/L_Z({\bar c})$, which is achieved at some current ${\bar c}\in\mc{C}$.

In the first part of the section, we use the maximal length distortion to characterize interior points $Z\in{\rm int}(\mc{P}_\rho)$ as those points for which $\kappa(Z)<1$ (see Lemma \ref{lem:interior points}). In other words, those are exactly the points that are strictly dominated by $\rho$. Thus, Theorem \ref{thm:ctt} is equivalent to ${\rm int}(\mc{P}_\rho)\neq\emptyset$ in this setting. 

Using strict convexity of length functions along Weil-Petersson geodesics, one shows that ${\rm int}(\mc{P}_\rho)\neq\emptyset$ provided that $\mc{P}_\rho$ contains at least two distinct points. If $\rho$ is not Fuchsian, such points can be produced by considering pleated surfaces associated to maximal extensions of two intersecting simple closed curves $\alpha,\beta$. This is the content of Proposition \ref{pro:non empty}. 

For convenience of the reader, we recall the definition of Fuchsian representation

\begin{dfn}[Fuchsian Representation]
	A maximal representation $\rho:\Gamma\to\SOtwon$ is \emph{Fuchsian} if it preserves a spacelike plane $H\subset\mb{H}^{2,n}$.
\end{dfn}

In the second part of the section, we consider points on the boundary $Z\in\partial\mc{P}_\rho$ and exterior points $Z\in\T-\mc{P}_\rho$. An immediate observation is that the pleated surfaces $S_\lambda$ all lie on $\partial\mc{P}_\rho$. Indeed, since they satisfy $L_{S_\lambda}(\mu)=L_\rho(\mu)$ for every measured lamination $\mu\in\mc{ML}$ whose support is contained in $\lambda$, we must have $\kappa(S_\lambda)=1$. We show that for every $Z$ outside ${\rm int}(\mc{P}_\rho)$, the maximum $\kappa(Z)\ge 1$ is realized by some measured lamination (see Proposition \ref{pro:outside max is lamination}). The proof of this fact follows arguments of Thurston \cite{T86} on the existence of maximally stretched laminations between two hyperbolic surfaces. 

As a consequence we deduce that $\mc{P}_\rho$ coincides with the simply dominated set $\mc{P}^{{\rm simple}}_\rho$ (see Corollary \ref{cor:boundary points}). In fact, on the one hand, we have $\mc{P}_\rho\subset\mc{P}_\rho^{{\rm simple}}$ directly from the definition. On the other hand, from the above discussion we get $\partial\mc{P}_\rho\subset\partial\mc{P}_\rho^{{\rm simple}}$. As both subsets are topological disks, a topological argument shows that equality holds.

\subsection{Structure of the dominated set}
We start our analysis of the dominated set by characterizing interior points.

\begin{lem}
	\label{lem:interior points}
	A point $Z\in\mc{P}_\rho$ lies in the interior ${\rm int}(\mc{P}_\rho)$ if and only if we have $\kappa(Z)<1$.
\end{lem}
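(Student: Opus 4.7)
The plan is to prove the two implications separately, following the outline sketched in the introduction.

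For the easier direction $\kappa(Z)<1 \Rightarrow Z \in \mathrm{int}(\mathcal{P}_\rho)$, I would use that strict domination is an open condition in $\mathcal{T}$. Given $\kappa(Z)<1$, I pick $K>1$ with $K\kappa(Z)<1$. By a standard fact about Teichm\"uller space (e.g.\ via Fenchel--Nielsen coordinates), there exists an open neighborhood $U$ of $Z$ consisting of hyperbolic structures $K$-biLipschitz to $Z$, so in particular $L_{Z'}(c) \le K\, L_Z(c)$ for every geodesic current $c$ and every $Z' \in U$. Combining with $L_Z(c) \le \kappa(Z) L_\rho(c)$ gives $L_{Z'}(c) \le K\kappa(Z) L_\rho(c) < L_\rho(c)$ for every nonzero current $c$, hence $\kappa(Z') \le K\kappa(Z) < 1$ and $U \subseteq \mathcal{P}_\rho$. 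It follows that $Z$ is an interior point.

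For the converse, I argue by contradiction. Suppose $Z \in \mathrm{int}(\mathcal{P}_\rho)$ but $\kappa(Z)=1$ (the case $\kappa(Z)>1$ is ruled out by $Z \in \mathcal{P}_\rho$, since $L_Z \le L_\rho$ on closed curves extends to currents by density and continuity). The ratio $L_Z/L_\rho$ is continuous and $0$-homogeneous on $\mathcal{C}-\{0\}$, hence descends to a continuous function on the compact projectivized space $\mathbb{P}\mathcal{C}$ of currents, and attains its maximum $\kappa(Z)=1$ at some class $[c_0]$; in particular $L_Z(c_0) = L_\rho(c_0)$. Since $Z$ is interior, I fix a non-constant Weil-Petersson geodesic $\gamma \colon (-\epsilon,\epsilon) \to \mathcal{P}_\rho$ with $\gamma(0)=Z$. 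Along $\gamma$, the function $f(t) := L_{\gamma(t)}(c_0)$ is bounded above by $L_\rho(c_0) = L_{Z}(c_0) = f(0)$, so $f$ has an interior local maximum at $t=0$. On the other hand, by Wolpert's strict convexity theorem \cite{W87,W04}, length functions along Weil-Petersson geodesics are strictly convex, and a strictly convex function on an open interval admits no interior local maximum. This contradiction proves $\kappa(Z)<1$.

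The main delicate point I anticipate is the strict convexity statement when the maximizer $c_0$ is a general geodesic current rather than a closed geodesic or measured lamination: Wolpert's theorem is classical for simple closed curves and extends to measured laminations by density and continuity, but the corresponding statement for arbitrary currents is more subtle. One clean workaround, which does not rely on a circular appeal to Proposition \ref{pro:outside max is lamination}, is to approximate $c_0$ by a sequence of weighted simple closed curves $a_n\gamma_n$ converging to $c_0$ in $\mathbb{P}\mathcal{C}$; by continuity of length functions the convex functions $f_n(t) := L_{\gamma(t)}(a_n\gamma_n)/L_\rho(a_n\gamma_n)$ are strictly convex, bounded above by $1$, and converge pointwise to $f(t)/L_\rho(c_0)$ which equals $1$ at $t=0$. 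A standard convexity limiting argument then shows that for $n$ large enough the $f_n$ must exceed $1$ at some nearby $t$, contradicting $\gamma(t)\in\mathcal{P}_\rho$.
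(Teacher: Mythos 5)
Your two implications follow the paper's own route: openness of strict domination via a neighborhood of $K$-biLipschitz structures with $K\kappa(Z)<1$, and strict convexity of the length of a maximizing geodesic current along a Weil--Petersson geodesic through $Z$ contained in $\mc{P}_\rho$ (the paper phrases this second step directly as a midpoint inequality $L_Z(c)<(L_{Z'}(c)+L_{Z''}(c))/2\le L_\rho(c)$ rather than as a contradiction, but it is the same argument). The ``delicate point'' you flag is resolved in the paper simply by invoking Wolpert \cite{W04}*{\S~3} for strict convexity of the length of a \emph{geodesic current} along Weil--Petersson geodesics; the same input is used again in Proposition \ref{pro:non empty}, so no workaround is needed, and up to that citation your main argument is complete and correct.

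The workaround in your final paragraph, however, does not work as stated. From $f_n$ strictly convex, $f_n\le 1$ on $(-\epsilon,\epsilon)$, and $f_n$ converging pointwise to a limit equal to $1$ at $t=0$, one cannot conclude that some $f_n$ exceeds $1$ somewhere: strict convexity is not preserved under pointwise limits, and nothing prevents a sequence of strictly convex functions bounded above by $1$ from converging to the constant function $1$. For example, $f_n(t)=1-\tfrac{1}{n}+\tfrac{t^2}{n}$ is strictly convex, satisfies $f_n\le 1$ on $(-1,1)$, and converges pointwise to $1$. What your limiting argument actually yields is only that $t\mapsto L_{\gamma(t)}(c_0)$ is convex and constantly equal to $L_\rho(c_0)$, and ruling this out requires strict (or quantitatively uniform) convexity for the current $c_0$ itself --- i.e.\ exactly the Wolpert input you were trying to avoid; the approximation by weighted simple closed curves gives no contradiction on its own. (As you note, Proposition \ref{pro:outside max is lamination} cannot be used here either, since it concerns points outside ${\rm int}(\mc{P}_\rho)$.) So either cite Wolpert's convexity for currents, as the paper does, or supply a genuinely uniform convexity estimate for the approximating curves; the ``standard limiting argument'' as written has a gap.
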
 

\begin{proof}
	Suppose that $\kappa = \kappa(Z)<1$. We can find a small neighborhood $U$ of $Z\in\T$ such that every $X \in U$ is $K$-biLipschitz homeomorphic to $Z$, with $K = 1/\kappa$. In particular, we have $L_X(\bullet)/K<L_Z(\bullet)<KL_X(\bullet)$ for any $X \in U$. We deduce that for every surface in $X\in U$ we have $L_X/L_\rho\le KL_Z/L_\rho\le K\kappa=1$, that is, $X\in\mc{P}_\rho$.  
	
	Vice versa, if $Z\in{\rm int}(\mc{P}_\rho)$, then $Z$ is the midpoint of a WP geodesic $[Z',Z'']$ entirely contained in ${\rm int}(\mc{P}_\rho)$. Let $c\in\mc{C}$ be a geodesic current such that $\kappa=\frac{L_Z(c)}{L_\rho(c)}$. By strict convexity of length functions along Weil-Petersson geodesics (see in particular Wolpert \cite{W04}*{\S~3}), we have $L_Z(c)<(L_{Z'}(c)+L_{Z''}(c))/2\le L_\rho(c)$. Therefore $\kappa(Z)<1$. 
\end{proof}

We remark that exactly the same argument also shows that a point $Z\in\mc{P}_\rho^{{\rm simple}}$ lies in the interior ${\rm int}(\mc{P}_\rho^{{\rm simple}})$ if and only if we have $\kappa^{{\rm simple}}(Z)<1$.

We now show that ${\rm int}(\mc{P}_\rho)$ is never empty when $\rho$ is not Fuchsian.

\begin{pro}
	\label{pro:non empty}
	If $\rho$ is not Fuchsian then ${\rm int}(\mc{P}_\rho)\neq\emptyset$.
\end{pro}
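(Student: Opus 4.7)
The plan is to apply Lemma \ref{lem:interior points}, which reduces the statement to exhibiting a single $Z\in\T$ with $\kappa(Z)<1$. By the strict convexity of length functions along Weil-Petersson geodesics (Wolpert \cite{W87, W04}), if one can produce two \emph{distinct} hyperbolic structures $X,Y\in\mc{P}_\rho$, then the midpoint $Z$ of the Weil-Petersson segment $[X,Y]$ satisfies
\[
L_Z(c)<\tfrac{1}{2}\bigl(L_X(c)+L_Y(c)\bigr)\le L_\rho(c)
\]
for every non-trivial geodesic current $c\in\mc{C}$. Passing to the quotient $\mathbb{P}\mc{C}$ (which is compact) and using that $L_Z/L_\rho$ is continuous on $\mathbb{P}\mc{C}$, the strict inequality becomes uniform, so $\kappa(Z)<1$ and $Z\in\mathrm{int}(\mc{P}_\rho)$ by Lemma \ref{lem:interior points}. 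Thus the whole proof reduces to producing two distinct elements of $\mc{P}_\rho$.

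To produce them, I would pick two simple closed curves $\alpha,\beta\subset\Sigma$ that intersect essentially, and complete each of them to a maximal lamination $\lambda_\alpha,\lambda_\beta$ by adding finitely many bi-infinite leaves spiraling onto $\alpha$ and $\beta$ respectively (this is possible by standard facts on geodesic laminations, see e.g.\ \cite{CEG}*{Theorem~I.4.2.14}). Theorem \ref{geometry pleated surfaces h2n} then produces hyperbolic surfaces $X_{\lambda_\alpha},X_{\lambda_\beta}\in\mc{P}_\rho$, and I want to argue $X_{\lambda_\alpha}\neq X_{\lambda_\beta}$.

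The key geometric input is that the bending locus of each pleated surface $S_{\lambda_\alpha}$, $S_{\lambda_\beta}$ is non-empty: if it were empty for, say, $S_{\lambda_\alpha}$, then Proposition \ref{pro:bending locus} would imply that $\widehat{S}_{\lambda_\alpha}$ is a single totally geodesic spacelike region, hence a $\rho(\Gamma)$-invariant spacelike plane $H\subset\mathbb{H}^{2,n}$, contradicting the assumption that $\rho$ is not Fuchsian. Next, the bending locus is a $\Gamma$-invariant sublamination of $\lambda_\alpha$, and the only leaves of $\lambda_\alpha$ besides $\alpha$ are the spiraling leaves; since the closure of the $\Gamma$-orbit of any such spiraling leaf contains $\alpha$, any non-empty sublamination of $\lambda_\alpha$ must contain $\alpha$. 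Hence the bending locus of $S_{\lambda_\alpha}$ contains $\alpha$, and symmetrically the bending locus of $S_{\lambda_\beta}$ contains $\beta$. Because $\alpha$ and $\beta$ intersect essentially, $\beta$ intersects the bending locus of $S_{\lambda_\alpha}$ essentially, so Theorem \ref{geometry pleated surfaces h2n} (or equivalently Proposition \ref{pro:strict inequality}) gives the strict inequality
\[
L_{X_{\lambda_\alpha}}(\beta)<L_\rho(\beta),
\]
whereas the same theorem applied to $S_{\lambda_\beta}$, whose bending locus consists of leaves of $\lambda_\beta$ (which $\beta$ does not cross transversely), yields $L_{X_{\lambda_\beta}}(\beta)=L_\rho(\beta)$. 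Consequently $X_{\lambda_\alpha}\neq X_{\lambda_\beta}$, completing the argument.

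The main obstacle I anticipate is precisely the verification that the bending locus contains $\alpha$ (resp.\ $\beta$); more specifically, one has to rule out the case where the bending locus is non-empty but contained in a set of isolated spiraling leaves without $\alpha$ itself. This is handled by the density argument above once one knows the bending locus is closed and $\Gamma$-invariant, but it uses in an essential way the particular combinatorial structure of the chosen maximal extensions $\lambda_\alpha,\lambda_\beta$. The remaining passage from strict pointwise inequality to $\kappa(Z)<1$ is routine thanks to the compactness of $\mathbb{P}\mc{C}$.
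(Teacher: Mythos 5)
Your proposal is correct and follows essentially the same route as the paper: reduce to finding two distinct points of $\mc{P}_\rho$ via Weil--Petersson strict convexity and Lemma \ref{lem:interior points}, then produce them as intrinsic hyperbolic structures of pleated surfaces realizing maximal spiraling extensions of two intersecting simple closed curves, using Proposition \ref{pro:bending locus} and the strict-inequality criterion of Theorem \ref{geometry pleated surfaces h2n} to see that the two structures differ on $\beta$. Your added justifications (non-emptiness of the bending locus from the non-Fuchsian hypothesis, and the closure argument forcing the bending locus to contain $\alpha$) are exactly the points the paper treats more briefly, and they are handled correctly.
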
 

\begin{proof}
	We prove the statement in two steps: First we show that if $\mc{P}_\rho$ contains two distinct points then ${\rm int}(\mc{P}_\rho)\neq\emptyset$. Then we show that if $\rho$ is not Fuchsian then $\mc{P}_\rho$ contains at least two points.
	
	The first step only uses the Weil-Petersson geometry of Teichmüller space: Let $X,Y\in\mc{P}_\rho$ be distinct points. Let $Z$ be their Weil-Petersson midpoint. We show that $Z$ is an interior point: By Lemma \ref{lem:interior points} this is equivalent to $\kappa(Z)=\sup_{\gamma\in\Gamma}\left\{L_Z(\gamma)/L_\rho(\gamma)\right\}<1$. Let $c\in\mc{C}$ be a geodesic current that achieves $\kappa=L_Z(c)/L_\rho(c)$. By results of Wolpert \cite{W04}*{\S~3}, the length of a geodesic current is strictly convex along a Weil-Petersson geodesic. Hence $L_Z(c)<(L_X(c)+L_Y(c))/2\le L_\rho(c)$. Therefore $\kappa(Z)<1$.
	
	The second step, instead, relies on the pseudo-Riemannian geometry of $\mb{H}^{2,n}$. Let $\alpha$ and $\beta$ be intersecting essential simple closed curves. Extend $\alpha,\beta$ to two finite leaved maximal laminations $\mu,\nu$ of $\Sigma$ by adding finitely many leaves spiraling around $\alpha,\beta$. Let $S_\mu,S_\nu\subset M$ be the pleated surfaces realizing $\mu,\nu$ for $\rho$. Denote by $X_\mu,X_\nu$ their intrinsic hyperbolic structures. Note that $L_{X_\mu}(\alpha)=L_\rho(\alpha)$ and $L_{X_\nu}(\beta)=L_\rho(\beta)$. 
	
	Since $\rho$ is not Fuchsian, the bending loci of $S_\mu$ and $S_\nu$ are both non-empty and, by Proposition \ref{pro:bending locus}, they are sublaminations of $\mu$ and $\nu$. By construction, any non-trivial sublamination of $\mu,\nu$ contains $\alpha$ and $\beta$ as every leaf of $\mu-\alpha,\nu-\beta$ spirals around $\alpha,\beta$. Therefore, the bending loci of $S_\mu,S_\nu$ contain $\alpha,\beta$, respectively. As $\alpha,\beta$ are intersecting, we conclude, by Proposition \ref{pro:strict inequality}, that $L_{X_\mu}(\beta)<L_\rho(\beta)$ and $L_{X_\nu}(\alpha)<L_\rho(\alpha)$. Hence $X_\mu,X_\nu$ are different hyperbolic surfaces. 
\end{proof}

From Lemma \ref{lem:interior points} and Proposition \ref{pro:non empty} we deduce the following result of Collier, Tholozan, and Toulisse \cite{CTT19}

\begin{thm}
	\label{thm:ctt}
	Let $\rho$ be a maximal representation of a surface group into $\SOtwon$. Then either $\rho$ is Fuchsian or there exists $k>1$ and a Fuchsian representation $\sigma$ such that $L_\rho\ge k L_\sigma$
\end{thm}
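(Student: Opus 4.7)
The statement is essentially a corollary of the two results that immediately precede it: Proposition \ref{pro:non empty}, which guarantees that $\mathrm{int}(\mc{P}_\rho)\neq\emptyset$ whenever $\rho$ is not Fuchsian, and Lemma \ref{lem:interior points}, which characterizes interior points of $\mc{P}_\rho$ as the hyperbolic structures whose length spectrum is \emph{strictly} dominated by $L_\rho$. The only work left is to package this strict domination by a hyperbolic structure $Z\in\T$ as strict domination by a Fuchsian representation into $\SOtwon$. The plan is therefore to dispose of the Fuchsian case tautologically, apply the two cited results in the non-Fuchsian case to produce an interior point $Z\in\mathrm{int}(\mc{P}_\rho)$ with $\kappa(Z)<1$, and then lift the hyperbolic structure $Z$ to a Fuchsian representation.

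More concretely, assume $\rho$ is not Fuchsian. By Proposition \ref{pro:non empty}, choose any $Z\in\mathrm{int}(\mc{P}_\rho)$. Lemma \ref{lem:interior points} gives $\kappa(Z)<1$, and unwinding the definition of $\kappa$ (as clarified by the proof of Lemma \ref{lem:interior points}, where the inequality used is $L_Z\le \kappa(Z)\,L_\rho$ on the whole space of currents) yields a constant $k:=\kappa(Z)^{-1}>1$ such that
\[
L_\rho(\gamma)\ge k\,L_Z(\gamma)\quad\text{for every }\gamma\in\Gamma-\{1\}.
\]

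Finally, I would promote $Z\in\T$ to a Fuchsian representation $\sigma:\Gamma\to\SOtwon$: fix any spacelike $2$-plane $H\subset\mb{R}^{2,n+1}$, so that the corresponding totally geodesic copy of $\hyp^2$ inside $\hyp^{2,n}$ has stabilizer in $\SOtwon$ containing a subgroup isomorphic to $\psl$ acting as the isometry group of $\hyp^2$. Composing a holonomy $\Gamma\to\psl$ of the hyperbolic structure $Z$ with this embedding produces a representation $\sigma:\Gamma\to\SOtwon$ that preserves $H$, hence is Fuchsian in the sense of the definition preceding Proposition \ref{pro:non empty}. Since each $\sigma(\gamma)$ acts on $H\cong\hyp^2$ as a hyperbolic isometry with translation axis a spacelike geodesic of $\hyp^{2,n}$, the pseudo-Riemannian length spectrum of $\sigma$ coincides with the hyperbolic length spectrum of $Z$, so $L_\sigma=L_Z$ and the displayed inequality reads $L_\rho\ge k\,L_\sigma$, as required.

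There is no real obstacle: the deep input is entirely contained in Proposition \ref{pro:non empty}, whose proof in turn relies on the construction of two distinct pleated surfaces with intersecting bending loci (which uses Theorem \ref{geometry pleated surfaces h2n}) together with the Weil--Petersson strict convexity argument. The only point worth double-checking in the write-up is the sign convention for $\kappa$, given the discrepancy between the definition in the introduction ($\sup L_Z/L_\rho$) and the one at the start of Section \ref{sec:teichmuller} ($\sup L_\rho/L_Z$); the proof of Lemma \ref{lem:interior points} makes clear that the former convention is the operative one, and this is precisely the convention under which the argument above works.
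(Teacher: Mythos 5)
Your proposal is correct and takes essentially the same route as the paper, which deduces the theorem directly from Lemma \ref{lem:interior points} and Proposition \ref{pro:non empty} exactly as you do (and you rightly identify the operative convention $\kappa(Z)=\sup L_Z/L_\rho$, the definition in Section \ref{sec:teichmuller} having the ratio inverted by a typo). One small correction in the last step: the totally geodesic copy of $\hyp^2$ in $\hyp^{2,n}$ is cut out by a linear subspace of $\R^{2,n+1}$ of signature $(2,1)$ (a ``spacelike plane'' of $\hyp^{2,n}$ in the paper's sense), not by a positive-definite $2$-plane of $\R^{2,n+1}$; with that adjustment your Fuchsian $\sigma$, obtained by composing a holonomy of $Z$ with $\psl\cong{\rm SO}_0(2,1)\subset\SOtwon$, indeed satisfies $L_\sigma=L_Z$ and the inequality $L_\rho\ge k\,L_\sigma$ follows.
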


\subsection{Simple length spectrum}
We now analyze $\kappa(Z)$ for points outside $\mc{P}_\rho$. Our aim is to prove the second part of Theorem \ref{structure pleated}.

\begin{pro}
	\label{pro:outside max is lamination}
	For every $Z\in\T-{\rm int}(P_\rho)$, the maximum $\kappa(Z)$ is achieved by some measured lamination $\mu\in\mc{ML}$.
\end{pro}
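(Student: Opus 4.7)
The plan is to adapt Thurston's proof of the existence of a maximally stretched measured lamination between two hyperbolic surfaces (\cite{T86}, Section 8) to our mixed setting. What makes this adaptation possible is that, exactly as for a hyperbolic length function, the $\rho$-length function $L_\rho:\mc{C}\to(0,\infty)$ is continuous, positively homogeneous, and realized by the intersection with a positive Radon measure $\mathscr{L}_\rho$ on $\mc{G}$ --- namely the Liouville current associated to the strictly positive cross ratio $\beta^\rho$ (Theorem \ref{thm:current of cross-ratio}, together with Lemmas \ref{lem:strictly positive} and \ref{lem: positive lengths}).

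First I would establish existence of a maximizing geodesic current. The hyperbolic length function $L_Z$ is continuous and strictly positive on $\mc{C}-\{0\}$ by classical Teichm\"uller theory, while $L_\rho$ enjoys the same properties by Lemma \ref{lem: positive lengths}. Both functions are homogeneous under rescaling, so their ratio descends to a continuous function on the compact projective space $\mb{P}\mc{C}$, and attains its supremum $\kappa(Z)\ge 1$ at some projective class $[c_0]$, represented by $c_0\in\mc{C}$.

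Next I would show that $c_0$ can be chosen to be a measured lamination. Following Thurston's strategy, it is enough to prove $i(c_0,c_0)=0$. Arguing by contradiction, suppose $i(c_0,c_0)>0$. I approximate $c_0$ in the weak--$*$ topology by weighted primitive closed geodesics $t_n\gamma_n\to c_0$; for $n$ large, $\gamma_n$ must have self-intersections. Thurston's smoothing procedure at each self-intersection produces a sequence of simple multicurves $\delta_n$ --- hence a sequence of measured laminations whose projective classes admit a limit $[\mu]\in\mb{P}\mc{ML}$ by compactness. The key claim is that the ratio $L_Z/L_\rho$ is (asymptotically) non-decreasing along each smoothing: for $L_Z$, a transverse smoothing at a point of $Z$ with crossing angle $\theta\in(0,\pi)$ strictly decreases the hyperbolic length by a quantitative amount, by the triangle inequality in $\hyp^2$; for $L_\rho$, an analogous decrease holds because $\mathscr{L}_\rho$ is represented on a rectangle of geodesics by $\log\beta^\rho$ (see the proof of Lemma \ref{lem: positive lengths}), which is strictly positive on crossing configurations. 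Passing to the limit, the resulting measured lamination $\mu$ satisfies $L_Z(\mu)/L_\rho(\mu)\ge L_Z(c_0)/L_\rho(c_0)=\kappa(Z)$, hence equality by maximality of $\kappa(Z)$.

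The main obstacle is the quantitative comparison between the decreases of $L_Z$ and $L_\rho$ at each smoothing: while the $L_Z$-estimate is a straightforward computation in $2$-dimensional hyperbolic geometry, its $L_\rho$-counterpart requires controlling $\mathscr{L}_\rho$-mass of small rectangles in $\mc{G}$ uniformly in the approximation, using the H\"older continuity of $\beta^\rho$ (Theorem \ref{thm:maximal limit curve}). A possibly cleaner alternative, which bypasses these quantitative estimates, is to use the ergodic decomposition of $c_0$ as a $\Gamma$-invariant Radon measure on $\mc{G}$: the elementary mean inequality $(\sum a_i)/(\sum b_i)\le\max_i(a_i/b_i)$ forces the ratio to be attained at an ergodic component, and the classification of ergodic geodesic currents --- together with the positivity of both Liouville currents --- identifies every ergodic component of the maximizer as a measured lamination.
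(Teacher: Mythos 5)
Your setup (compactness of $\mb{P}\mc{C}$ and existence of a maximizing current $c_0$) is fine and matches the paper, but both of your routes from $c_0$ to a measured lamination have genuine gaps. In the smoothing route, the step you flag as "the main obstacle" is in fact the whole content of the proposition, and it is not merely a technical estimate you can defer: to keep the ratio from dropping you need the \emph{relative} decrease of $L_Z$ under a smoothing to be at most the relative decrease of $L_\rho$, and nothing in the strict positivity or H\"older continuity of $\beta^\rho$ gives a lower bound on the $L_\rho$-decrease comparable to the definite decrease of $L_Z$ forced by the crossing angle in $Z$ (even the monotonicity $L_\rho(\delta_n)\le L_\rho(\gamma_n)$ for a smoothing is not immediate from $L_\rho=i(\mathscr{L}_\rho,\bullet)$, and resolved components may be null-homotopic, losing control of both lengths). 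Your "cleaner alternative" does not repair this: ergodic $\Gamma$-invariant measures on $\mc{G}$ are by no means all measured laminations — the current of a single non-simple closed geodesic is ergodic, as is the Liouville current of a hyperbolic metric — so the mean inequality only tells you the maximum is attained at an ergodic current, and identifying that current as a lamination is precisely the statement you are trying to prove.

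For comparison, the paper avoids estimating $L_\rho$ under surgery altogether. For $Z\in\T-\mc{P}_\rho$ it shows $\kappa(Z)=\kappa^{\rm simple}(Z)$ by Thurston's pair-of-pants argument: given a non-simple $\gamma$ with $L_Z(\gamma)/L_\rho(\gamma)>1$, pass to the cover of $Z$ determined by the immersed figure eight, use Scott's theorem to embed the resulting pair of pants in a finite cover, and realize a finite leaved maximal lamination containing the boundary curves $\alpha_j$ by a pleated surface for the restricted representation. Theorem \ref{geometry pleated surfaces h2n} then gives $L_\rho(\alpha_j)=L_{P'}(\alpha_j)$ and $L_\rho(\gamma)\ge L_{P'}(\gamma)$, converting the mixed ratio $L_Z/L_\rho$ into a ratio of two hyperbolic length spectra on a pair of pants, where \cite{T86}*{Lemma~3.4} applies and shows some shorter boundary curve $\alpha_j$ does at least as well. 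Density of weighted simple closed curves in $\mc{ML}$ and compactness of $\mb{P}\mc{ML}$ then produce the maximizing lamination, and boundary points $Z\in\partial\mc{P}_\rho$ are handled by taking limits of maximizers $\mu_n$ for exterior points $Z_n\to Z$. The pleated-surface step is what lets one compare $L_\rho$ with an actual hyperbolic metric on the curves that matter; your proposal never makes such a conversion, which is exactly where it gets stuck.
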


\begin{proof}
	Let us first consider $Z\in\T-\mc{P}_\rho$. Following an argument of Thurston \cite{T86}, we show that 
	
	\begin{claim*} 
		$\kappa(Z)=\kappa^{{\rm simple}}(Z):=\sup_{\gamma\text{ simple }}{L_Z(\gamma)/L_\rho(\gamma)}$.
	\end{claim*}
	
	Once we know that $\kappa(Z)$ can be computed by restricting to simple closed curves, it immediately follows that the maximum is achieved at a measured lamination $\lambda\in\mc{ML}$.
	
	\begin{proof}[Proof of the claim]
		In order to prove the claim, we show that if $\gamma$ is not simple and we have $L_Z(\gamma)/L_\rho(\gamma)>1$, then there is a shorter curve $\alpha$ (with respect to $Z$) such that $L_Z(\alpha)/L_\rho(\alpha) > L_Z(\gamma)/L_\rho(\gamma)$. 
		
		As $\gamma$ is not simple, it describes an immersed figure 8 inside $Z$. Let $P\to Z$ be the covering corresponding to the immersed figure 8. The convex core $\mc{CC}(P)$ of the surface $P$ is a pair of pants with geodesic boundary curves $\alpha_1,\alpha_2,\alpha_3$. The idea is to consider the pleated surface $S$ realizing the curves $\alpha_j$ for the maximal representation given by the restriction of $\rho$ to the subgroup corresponding to $\pi_1(P)$. 
		
		Since we treated in detail the construction of pleated surfaces only in the case of a closed surface, we will not directly consider the restriction of $\rho$ to $\pi_1(P)$, but rather we will reduce to the closed surface case by passing to a suitable finite index subgroup of $\Gamma$. We have the following: By a result of Scott \cites{S78,S78corr} there is an intermediate covering $P\to Z'\to Z$ such that $Z'\to Z$ is finite and the projection $P\to Z'$ induces an embedding on the convex core $\mc{CC}(P)$ into $Z'$. Let $\Gamma':=\pi_1(Z')<\pi_1(Z)=\Gamma$ be the subgroup corresponding to the covering. Let $S$ be a pleated surface realizing a finite leaved maximal lamination of $Z'$ containing the curves $\alpha_j$ for the maximal representation given by the restriction of $\rho$ to $\Gamma'$ (the restriction of a maximal representation to a finite index subgroup is maximal as well). Let $P'\to S$ be the covering corresponding to $\pi_1(P)$.
		
		We have $L_\rho(\alpha_j)=L_{S}(\alpha_j)=L_{P'}(\alpha_j)$ for $j\le 3$ by construction and $L_\rho(\gamma)\ge L_{S}(\gamma)=L_{P'}(\gamma)$ by the Lipschitz properties of pleated surfaces. As a consequence, we  get
		\[
		L_Z(\gamma)/L_\rho(\gamma) = L_P(\gamma)/L_\rho(\gamma) \le L_P(\gamma)/L_{P'}(\gamma).
		\]
		On the other hand, by \cite{T86}*{Lemma~3.4} and the hypothesis $L_P(\gamma)/L_{P'}(\gamma) \geq L_Z(\gamma)/L_\rho(\gamma) > 1$, we have that 
		\[
		L_P(\gamma)/L_{P'}(\gamma)\le\max_{j\le 3}\{L_P(\alpha_j)/L_{P'}(\alpha_j)\}) = \max_{j\le 3}\{L_Z(\alpha_j)/L_\rho(\alpha_j)\}) .
		\]
		Furthermore, as $P \to Z$ is a hyperbolic pair of pants, we have 
		$$L_Z(\gamma) = L_{P}(\gamma) > L_{P}(\alpha_j) = L_Z(\alpha_j)$$ 
		for every $j \leq 3$, which yields the conclusion.
	\end{proof}
	
	Lastly, we take care of boundary points $Z\in\partial\mc{P}_\rho$: Let $Z_n$ be a sequence of points outside $P_\rho$ converging to $Z$. By the previous steps, we can associate to each of them a measured lamination $\mu_n\in\mc{ML}$ such that $L_{Z_n}(\mu_n)/L_\rho(\mu_n)=\kappa(Z_n)>1$. Up to subsequence and rescaling, we can assume that the sequence of measured laminations $\mu_n$ converges to some $\mu\in\mc{ML}$. By continuity of length functions, we have $L_{Z_n}(\mu_n)/L_\rho(\mu_n)\to L_Z(\mu)/L_\rho(\mu)\ge 1$. As $Z\in\mc{P}_\rho$, we also have the opposite inequality so we conclude that equality holds and $L_Z(\mu)/L_\rho(\mu)=1=\kappa(Z)$.
\end{proof}

From Proposition \ref{pro:outside max is lamination}, we deduce the following

\begin{cor}
	\label{cor:boundary points}
	We have $\mc{P}_\rho=\mc{P}_\rho^{{\rm simple}}$. 
\end{cor}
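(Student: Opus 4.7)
The plan is to prove the non-trivial inclusion $\mathcal{P}_\rho^{\mathrm{simple}} \subseteq \mathcal{P}_\rho$ directly (the reverse inclusion is immediate from the definitions), using Proposition \ref{pro:outside max is lamination} together with the classical density of weighted simple closed curves in $\mathcal{ML}$.

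First I would pick $Z \in \mathcal{P}_\rho^{\mathrm{simple}}$ and reduce to the nontrivial case $Z \notin \mathrm{int}(\mathcal{P}_\rho)$: if instead $Z \in \mathrm{int}(\mathcal{P}_\rho) \subseteq \mathcal{P}_\rho$ there is nothing to prove. Under this reduction Proposition \ref{pro:outside max is lamination} provides a non-trivial measured lamination $\mu \in \mathcal{ML}$ realizing the supremum
\[
\kappa(Z) = \frac{L_Z(\mu)}{L_\rho(\mu)} \geq 1,
\]
where the inequality comes from the fact that $Z \notin \mathrm{int}(\mathcal{P}_\rho)$ combined with Lemma \ref{lem:interior points}.

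Next I would exploit the fact that weighted simple closed curves are dense in $\mathcal{ML}$, choosing a sequence $\mu_n = a_n \gamma_n \to \mu$ with each $\gamma_n \in \mathcal{S}$ and $a_n > 0$. Since both $L_Z$ and $L_\rho$ extend to continuous, positively homogeneous functions on $\mathcal{C}$ (via intersection with the Bonahon current of $Z$ and with the Liouville current $\mathscr{L}_\rho$ from Theorem \ref{thm:current of cross-ratio}, respectively), the factors $a_n$ cancel and
\[
\frac{L_Z(\gamma_n)}{L_\rho(\gamma_n)} = \frac{L_Z(\mu_n)}{L_\rho(\mu_n)} \longrightarrow \frac{L_Z(\mu)}{L_\rho(\mu)} = \kappa(Z).
\]
The hypothesis $Z \in \mathcal{P}_\rho^{\mathrm{simple}}$ forces $L_Z(\gamma_n) \leq L_\rho(\gamma_n)$ for each $n$, and passing to the limit gives $\kappa(Z) \leq 1$. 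Combined with $\kappa(Z) \geq 1$ this yields $\kappa(Z) = 1$, i.e.\ $L_Z(\gamma) \leq L_\rho(\gamma)$ for every $\gamma \in \Gamma$, so $Z \in \mathcal{P}_\rho$.

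There is no substantial obstacle: Proposition \ref{pro:outside max is lamination} has already absorbed the delicate Thurston-style argument that non-simple maximizers can be replaced by simple ones, so only the soft density and continuity properties of $\mathcal{ML} \subset \mathcal{C}$ are needed. For completeness I would also note the alternative route outlined in the introduction: Proposition \ref{pro:outside max is lamination} combined with (the simple-spectrum analogue of) Lemma \ref{lem:interior points} gives $\partial \mathcal{P}_\rho \subseteq \partial \mathcal{P}_\rho^{\mathrm{simple}}$, and since both sets are closed and convex (with respect to the Weil-Petersson metric, or to shear paths) in the topological ball $\mathcal{T}$, and $\mathcal{P}_\rho \subseteq \mathcal{P}_\rho^{\mathrm{simple}}$, equality of the boundaries forces equality of the sets --- but this strategy ultimately reduces to the same computation carried out above.
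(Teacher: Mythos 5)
Your proof is correct, but it takes a genuinely different route from the paper. The paper only shows the boundary containment $\partial\mc{P}_\rho\subset\partial\mc{P}_\rho^{\rm simple}$ (via Lemma \ref{lem:interior points}, Proposition \ref{pro:outside max is lamination}, and density of weighted simple closed curves in $\mc{ML}$), and then deduces equality of the two sets from a topological degree argument: both are convex topological disks in $\T$, and a disk nested in another disk with $\partial D\subset\partial D'$ must equal it. You instead prove the nontrivial inclusion $\mc{P}_\rho^{\rm simple}\subseteq\mc{P}_\rho$ directly, applying Proposition \ref{pro:outside max is lamination} to an arbitrary $Z\in\mc{P}_\rho^{\rm simple}-{\rm int}(\mc{P}_\rho)$ (the proposition is stated for all of $\T-{\rm int}(\mc{P}_\rho)$, so this is legitimate) and approximating the maximizing lamination by weighted simple closed curves, using continuity and strict positivity of $L_\rho=i(\mathscr{L}_\rho,\bullet)$ on nonzero currents (Theorem \ref{thm:current of cross-ratio}, Lemma \ref{lem: positive lengths}) so that the ratios pass to the limit. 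What your route buys is that it bypasses the nested-disk lemma entirely and never needs to know that $\mc{P}_\rho^{\rm simple}$ is convex or a disk; what the paper's route buys is that Proposition \ref{pro:outside max is lamination} is only invoked at points of $\partial\mc{P}_\rho$, with the rest handled by soft topology. Two cosmetic remarks: the lower bound $\kappa(Z)\ge 1$ is not actually needed, since $\kappa(Z)\le 1$ alone already gives $Z\in\mc{P}_\rho$; and for $Z\notin\mc{P}_\rho$ that lower bound follows from the definition of $\kappa$ rather than from Lemma \ref{lem:interior points}, whose statement assumes $Z\in\mc{P}_\rho$.
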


\begin{proof}
	Observe that, directly from the definitions, we always have $\mc{P}_\rho\subset\mc{P}_\rho^{{\rm simple}}$.
	
	Also notice that both sets are topological convex disks with non-empty interior. 
	
	If we knew that $\partial\mc{P}_\rho\subset\partial\mc{P}_\rho^{{\rm simple}}$, then the claim would follow from a topological argument based on the following:
	
	\begin{claim*}
		Let $D,D'\subset\mb{R}^n$ be topological $n$-disks such that $D\subset D'$ and $\partial D\subset\partial D'$. Then $D=D'$.
	\end{claim*}
	
	\begin{proof}[Proof of the claim]
		Consider the map $j_*:H_n(D,\partial D)\to H_n(D',\partial D')$ induced by the proper inclusion $j:(D,\partial D)\to(D',\partial D')$. We now show that $j$ is degree one, that is, $j_*$ is an isomorphism. By well-known consequences, we deduce that $j$ is surjective which implies the claim. 
		
		The computation of the degree can be done as follows: Let $\star\in{\rm int}(D)\subset{\rm int}(D')$ be any interior point. As $D-\star,D'-\star$ deformation retract to $\partial D,\partial D'$, we have that the degree $n$ relative homology groups are isomorphic to the local homology groups $H_n(D,\partial D)=H_n(D,D-\star)$ and $H_n(D',\partial D')=H_n(D',D'-\star)$. By the excision theorem, if $U\subset{\rm int}(D)$ is a small ball around $\star$, then $H(D,D-\star)=H_n(U,U-\star)$ and $H(D',D'-\star)=H_n(U,U-\star)$. As $j$ restricts to the identity $U\to U$, we conclude that $j_*$ is an isomorphism.
	\end{proof}
	
	Hence, it is sufficient to show that $\partial\mc{P}_\rho\subset\partial\mc{P}_\rho^{{\rm simple}}$. Consider $Z\in\partial\mc{P}_\rho$, by Lemma \ref{lem:interior points}, we have $\kappa(Z)=1$. Furthermore, by Proposition \ref{pro:outside max is lamination}, the maximum is realized by a measured lamination $\mu\in\mc{ML}$. Therefore $\kappa^{{\rm simple}}(Z)=1$ since weighted simple closed curves are dense in $\mc{ML}$, and, hence, $Z\in\partial\mc{P}_\rho^{{\rm simple}}$, as interior points of $\mc{P}_\rho^{{\rm simple}}$ are the ones for which $\kappa^{{\rm simple}}(Z)<1$.
\end{proof}


\section{Fibered photon structures}
\label{sec:photons}

As shown by Guichard and Wienhard \cite{GW12}, maximal representations $\rho:\Gamma\to\SOtwon$ parametrize deformations of photon structures, a class of geometric structures in the sense of Thurston (see \cite{ThNotes}*{Chapter~3}), on certain closed manifolds $E$. 

\begin{dfn}[Photon Structure]
	A {\em photon} of $\mb{R}^{2,n+1}$ is an isotropic 2-plane. We denote by ${\rm Pho}^{2,n}$ the space of photons in $\mb{R}^{2,n+1}$. The group $\SOtwon$ acts transitively on the homogeneous space ${\rm Pho}^{2,n}$ with non-compact stabilizer. We call a $(\SOtwon,{\rm Pho}^{2,n})$-structure on a manifold $M$ a {\em photon structure}.
\end{dfn}

\subsubsection*{The space of photons}
Collier, Tholozan, and Toulisse proved in \cite{CTT19}*{Lemma~4.8} that the space of photons ${\rm Pho}^{2,n}$ is homeomorphic to the Stiefel manifold $\mathcal{S}_2(\R^{n+1})$ of (Euclidean) orthonormal $2$-frames of $\R^{n+1}$. In fact, the homeomorphism ${\rm Pho}^{2,n} \cong \mathcal{S}_2(\R^{n+1})$ has a very simple geometric interpretation: If $E$ is a fixed positive definite $2$-plane of $\R^{2,n+1}$, then the orthogonal projection $\pi_E : \R^{2,n+1} \to E$ restricts to a linear isomorphism on every photon $F$ of $\R^{2,n+1}$. In particular, every photon $F \subset \R^{2,n+1} = E \oplus E^\perp$ coincides with the graph of a unique linear isometric embedding
$$t_F : (E, \scal{\bullet}{\bullet}|_E) \to (E^\perp, - \scal{\bullet}{\bullet}|_{E^\perp}) ,$$
which is uniquely determined by the image of a fixed orthonormal basis $e_1, e_2$ of $E$. The homeomorphism ${\rm Pho}^{2,n} \cong \mathcal{S}_2(\R^{n+1})$ is then given by
\[
\begin{matrix}
	{\rm Pho}^{2,n} & \longrightarrow & \mathcal{S}_2(\R^{n+1}) \\
	F & \longmapsto & (t_F(e_1), t_F(e_2)) .
\end{matrix}
\]

Consequently, the space of photons ${\rm Pho}^{2,n}$ is homeomorphic to $\mb{S}^1 \sqcup \mb{S}^1$ if $ n = 1$, to $\rp^3$ if $n = 2$, and it is simply connected for all $n > 2$. Notice in particular that the manifold ${\rm Pho}^{2,n}$ is orientable for every $n \geq 1$.

\subsubsection*{Guichard-Wienhard's domains of discontinuity}

The construction of Guichard and Wienhard is the following: The maximal representation $\rho$ has a natural domain of discontinuity $\Omega_\rho\subset{\rm Pho}^{2,n}$ obtained by removing from the space of photons the closed subset
\[
K_\rho:=\{F\in{\rm Pho}^{2,n}\left|\ell\subset F\text{ for some isotropic line $\ell\in\Lambda_\rho$}\right.\}.
\]

The group $\rho(\Gamma)$ acts properly discontinuously, freely, and cocompactly on $\Omega_\rho$ so that the quotient $E_\rho:=\Omega_\rho/\rho(\Gamma)$ is a closed manifold endowed with a photon structure. By the Ehresmann-Thurston principle \cite{ThNotes}, the topology of $E_\rho$ does not change as we vary $\rho$ continuously.

\subsubsection*{Fibered photon structures}

Collier, Tholozan, and Toulisse \cite{CTT19} have shown that the manifold $E_\rho$ has a natural description as a ${\rm Pho}^{2,n-1}$-bundle $E_\rho\to S$ in a way compatible with the geometric structure, that is, in such a way that the fibers are also geometric. 

\begin{dfn}[{Fibered Photon Structure, \cite{CTT19}}]\label{def:fibered}
	Let $\pi : E\to \Sigma$ be a fiber bundle over the surface $\Sigma$ with characteristic fiber $\mathrm{Pho}^{2,n-1}$, and let $\tilde{\pi}:\widetilde{E}\to\widetilde{\Sigma}$ be the pull-back bundle through the universal covering map $\widetilde{\Sigma} \to \Sigma$. We say that a function $\widetilde{E} \to \mathrm{Pho}^{2,n}$ is \emph{fibered} if it maps every fiber $\tilde{\pi}^{-1}(x)$ homeomorphically onto $\mathrm{Pho}(\iota(x)^\perp) \subset \mathrm{Pho}^{2,n}$ for some $\iota(x) \in \hyp^{2,n}$.
	
	Given a representation $\rho : \Gamma = \pi_1(\Sigma) \to \SOtwon$, a photon structure on $E$ with holonomy $\rho \circ \pi_* : \pi_1(E) \to \SOtwon$ is \emph{fibered} if its developing map $\delta : \widetilde{E} \to \mathrm{Pho}^{2,n}$ is fibered.
\end{dfn}

\begin{rmk}[{\cite{CTT19}*{Remark~4.10}}]
	Even if the manifold $\widetilde{E}$ may be not simply connected (for $n = 1, 2$), its developing map factors through $\widetilde{E}$, since its holonomy is of the form $\rho \circ \pi_*$ for some $\rho : \Gamma \to \SOtwon$.
\end{rmk}

In this section we consider the point of view of fibered photon structures $E\to\Sigma$ associated to maximal representations. We use pleated surfaces to give a geometric decomposition of $E\to\Sigma$, namely triangles and lines of photons which we now introduce. 

\begin{dfn}[Triangles and Lines of Photons]
	For every ideal spacelike triangle $\Delta\subset\mb{H}^{2,n}$ and spacelike geodesic $\ell\subset\mb{H}^{2,n}$, we define a {\em triangle of photons} $E(\Delta)\subset{\rm Pho}^{2,n}$ and a {\em line of photons} $E(\ell)\subset{\rm Pho}^{2,n}$ as the subsets consisting of those photons that are orthogonal to some point $x\in\Delta$ and $x\in\ell$, respectively. Triangles and lines of photons $E(\Delta),E(\ell)$ are naturally fiber bundles over $\Delta,\ell$, where the fiber over the point $x\in\Delta$ is the space ${\rm Pho}(x^\perp) \cong \mathrm{Pho}^{2,n-1}$.
\end{dfn}

Triangles of photons $E(\Delta)$ are codimension 0 submanifolds of ${\rm Pho}^{2,n}$ with boundary. The boundary $\partial E(\Delta)$ consists of three components which are smooth submanifolds of ${\rm Pho}^{2,n}$. Each boundary component is a line of photons. Notice that lines of photons carry an action of the subgroup
$$({\rm SO}(1,1)\times{\rm SO}(1,n)) \cap \SOtwon,$$
which is compatible with the fibration $E(\ell)\to\ell$.

\begin{dfn}[Ideal Boundary]
	Both triangles and lines of photons have a natural notion of ideal boundary. Boundary components correspond to isotropic lines and have the following form: For every isotropic line $[a]\in\partial\mb{H}^{2,n}$, we consider the subspace 
	\[
	E(a):={\rm Pho}(a^\perp)=\{F\in{\rm Pho}^{2,n}\left|a\subset F\right.\}.
	\]
	If $\ell\subset\mb{H}^{2,n}$ is a spacelike geodesic with endpoints at infinity $a,b\in\partial\mb{H}^{2,n}$, then the ideal boundary of $E(\ell)$ is given by $E(a)\cup E(b)$. The subset $E(a)\cup E(\ell)\cup E(b)$ is the closure of $E(\ell)$ in ${\rm Pho}^{2,n}$. 
	
	Similarly, if $\Delta\subset\mb{H}^{2,n}$ is a spacelike ideal triangle with vertices $a,b,c\in\partial\mb{H}^{2,n}$, then the ideal boundary of $E(\Delta)$ is equal to $E(a)\cup E(b)\cup E(c)$. The subset $E(\Delta)\cup E(a)\cup E(b)\cup E(c)$ is the closure of $E(\Delta)$ in ${\rm Pho}^{2,n}$.
\end{dfn}

After having proved the geometric decomposition, we will explain, conversely, how to explicitly construct photon structures that fiber over hyperbolic surfaces by assembling together triangles of photons. The process is completely analogous to the construction of hyperbolic surfaces by gluing ideal triangles. The holonomy of such photon structures corresponds to maximal representations $\rho:\Gamma\to\SOtwon$; the hyperbolic surface $S$, which is the base of the fibering, corresponds to a pleated surface for $\rho$; the gluing parameters of the triangles of photons correspond to the bending of the pleated surface.

The goal of the section is to develop this picture in detail. 

\subsection{A geometric decomposition}

We have the following geometric decomposition of the Guichard-Wienhard domain of discontinuity $\Omega_\rho\subset{\rm Pho}^{2,n}$:

\begin{pro}
	\label{pro:decomposition}
	Let $\rho:\Gamma\to\SOtwon$ be a maximal representation with Guichard-Wienhard domain of discontinuity $\Omega_\rho\subset{\rm Pho}^{2,n}$. Denote by $\widetilde{\Sigma}$ the universal covering of $\Sigma$, and consider a $\rho$-equivariant embedding $\iota:\widetilde{\Sigma}\to\mb{H}^{2,n}$ with acausal image $\iota(\widetilde{\Sigma})$. Then we have:
	\begin{itemize}
		\item{The closure of $\iota(\widetilde{\Sigma})$ inside $\mb{H}^{2,n}\cup\partial\mb{H}^{2,n}$ is equal to $\iota(\widetilde{\Sigma})\cup\Lambda_\rho$, where $\Lambda_\rho$ denotes the limit set of $\rho$.}
		\item{the domain $\Omega_\rho$ is foliated by the subsets $\{{\rm Pho}(\iota(x)^\perp) \mid x \in \widetilde{\Sigma}\}$, and the map $\Omega_\rho\to\widetilde{\Sigma}$, which associates to a point $y\in\Omega_\rho$ the unique leaf ${\rm Pho}(\iota(x)^\perp)$ that contains it, is an equivariant fibration.}
	\end{itemize} 
\end{pro}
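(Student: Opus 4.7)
The plan is to prove the two bullets separately, with Part 2 reducing to showing that a natural $\rho$-equivariant auxiliary map is an equivariant homeomorphism onto $\Omega_\rho$.

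For Part 1 I would first argue that $\iota(\widetilde{\Sigma})$ is closed in $\mb{H}^{2,n}$; this follows from $\iota$ being a $\rho$-equivariant embedding whose image carries a free, properly discontinuous, cocompact action of $\rho(\Gamma)$ (a standard consequence of acausality and maximality, paralleling what was observed for pleated sets). Then the ideal accumulation is identified with $\Lambda_\rho$ in both directions: for any $p \in \iota(\widetilde{\Sigma})$ the orbit $\rho(\Gamma)\cdot p$ accumulates on $\Lambda_\rho$, giving $\Lambda_\rho \subseteq \overline{\iota(\widetilde{\Sigma})} \cap \partial\mb{H}^{2,n}$; conversely, if $\iota(x_n) \to y \in \partial\mb{H}^{2,n}$, cocompactness lets us write $\iota(x_n) = \rho(\gamma_n) q_n$ with $(q_n)$ in a compact fundamental domain of $\iota(\widetilde{\Sigma})$, and the closedness in $\mb{H}^{2,n}$ forces $\gamma_n \to \infty$ in $\Gamma$, placing $y$ in $\Lambda_\rho$.

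For Part 2 I would introduce the total space
\[
\mc{F} := \{(x, F) \in \widetilde{\Sigma} \times \mathrm{Pho}^{2,n} \mid F \subset \iota(x)^\perp\} ,
\]
which is a $\rho$-equivariant fiber bundle over $\widetilde{\Sigma}$ with fibers $\mathrm{Pho}(\iota(x)^\perp) \cong \mathrm{Pho}^{2,n-1}$, together with the natural continuous $\rho$-equivariant projection $p : \mc{F} \to \mathrm{Pho}^{2,n}$, $(x, F) \mapsto F$, and show that $p$ is an equivariant homeomorphism onto $\Omega_\rho$; the fiber bundle structure of $\mc{F}$ then transports to the claimed fibration. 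The map $p$ takes values in $\Omega_\rho$ because if some $\ell \in \Lambda_\rho$ were contained in $F \subset \iota(x)^\perp$ then $\iota(x) \perp \ell$, contradicting acausality of $\iota(\widetilde{\Sigma}) \cup \Lambda_\rho$ via Lemma \ref{lem:spacelike lines}. For injectivity, after lifting acausally via Lemma \ref{lem:acausal lifts}, I would suppose $F \subset \hat\iota(x)^\perp \cap \hat\iota(y)^\perp$ with $x \neq y$ and analyze the $2$-plane $V := \mathrm{span}(\hat\iota(x), \hat\iota(y)) \subseteq F^\perp$: since the isotropic lines of $F^\perp$ coincide with those of $F$, the intersection $V \cap F$ is either trivial---so $V$ is negative definite---or a line, in which case a short computation using $\iota(x) \perp F$ yields $|\langle \hat\iota(x), \hat\iota(y)\rangle| = 1$; in either case the estimate $|\langle \hat\iota(x), \hat\iota(y)\rangle| \leq 1$ combined with Lemma \ref{lem:spacelike segments linear} prevents the chord from being spacelike, contradicting acausality.

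Openness of the continuous injection $p$ then follows from invariance of domain, via the dimension match $\dim \mc{F} = 2 + (2n-3) = 2n - 1 = \dim \mathrm{Pho}^{2,n}$. The main obstacle I expect is properness of $p$: if $F_k = p(x_k, F_k) \to F_\infty \in \Omega_\rho$ while $(x_k)$ escapes every compact subset of $\widetilde{\Sigma}$, then Part 1 gives $\iota(x_k) \to \ell$ for some $\ell \in \Lambda_\rho$ along a subsequence, and passing to the limit in $F_k \subset \iota(x_k)^\perp$ yields $F_\infty \subset \ell^\perp$, hence $\ell \subset F_\infty$ by the same isotropic-lines observation, contradicting $F_\infty \in \Omega_\rho$. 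With $p$ open, closed, injective and with image $\rho(\Gamma)$-invariant and nonempty inside $\Omega_\rho$, I would conclude using that $E_\rho = \Omega_\rho/\rho(\Gamma)$ is a closed connected manifold by \cite{GW12}---so $\rho(\Gamma)$ permutes the components of $\Omega_\rho$ transitively---that $p(\mc{F}) = \Omega_\rho$, which finishes the proof.
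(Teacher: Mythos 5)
The main gap is in your Part 1, which you treat as routine but which is the actual content of the first bullet. The claims that $\iota(\widetilde{\Sigma})$ is closed in $\mb{H}^{2,n}$ and that ``$\gamma_n\to\infty$ places the limit $y$ in $\Lambda_\rho$'' (resp.\ that ``the orbit of any $p\in\iota(\widetilde{\Sigma})$ accumulates on $\Lambda_\rho$'') are not standard consequences of equivariance and cocompactness, because $\rho(\Gamma)$ does \emph{not} act properly discontinuously on $\mb{H}^{2,n}$: for a loxodromic $\rho(\gamma)$ with axis endpoints $a,b$, a point $q\in\mathrm{Span}\{a,b\}^\perp$ can have $\rho(\gamma)^m q$ remaining in a compact subset of $\mb{H}^{2,n}$ (elliptic-type restriction) or converging to a boundary point of $\partial\mb{H}^{2,n}$ that does not belong to $\Lambda_\rho$. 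So both inclusions you assert are exactly what must be proved. The paper's proof supplies two steps for which you have no substitute: (i) in every Poincar\'e model the projection of the lifted image to $\mb{D}^2$ is the \emph{whole} disk, obtained from cocompactness, the intrinsic pseudo-metric and Lemma \ref{lem:projection}(3) via a uniform $\varepsilon$-ball argument (this is also what makes the image closed in $\mb{H}^{2,n}$); and (ii) the image is not contained in $\mathrm{Span}\{a,b\}^\perp$, proved using openness of the projection of an acausal set. Only then does loxodromic dynamics put one point of $\Lambda_\rho$ into $\partial\widehat{S}$, and density of fixed points plus the fact that both $\partial\widehat{S}$ and the lifted limit curve are graphs over $\partial\mb{D}^2$ gives equality --- deliberately avoiding the (generally false) claim that all boundary accumulation points of $\rho(\Gamma)\cdot K$ lie in $\Lambda_\rho$. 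Since your properness argument in Part 2 quotes Part 1, it inherits this gap.

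Two further issues in Part 2. To show $p$ lands in $\Omega_\rho$ you invoke ``acausality of $\iota(\widetilde{\Sigma})\cup\Lambda_\rho$'', which is not a hypothesis: excluding a lightlike geodesic from $\iota(x)$ to a limit point is the paper's Claim 3, proved through the $1$-Lipschitz graph over the closed disk (again resting on (i)); as written this step is unsupported, though fixable. More seriously, your surjectivity endgame needs $\rho(\Gamma)$ to act transitively on the components of $\Omega_\rho$, which you deduce from connectedness of $E_\rho$ attributed to \cite{GW12}; Guichard--Wienhard give compactness of the quotient, not connectedness, and for $n=2$ (where $K_\rho$ has codimension one and ${\rm Pho}^{2,1}$ is disconnected) this is genuinely delicate, while for $n=1$ the fibered picture degenerates altogether. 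The paper's surjectivity argument avoids this: for $F\in\Omega_\rho$ it approximates $F^\perp$ by negative definite $(n+1)$-planes, each of which meets the image exactly once (once more the full-disk statement (i)), and concludes that either $F\perp\iota(x)$ for some $x$ or $F\in K_\rho$. Your injectivity computation and the limit argument $F_\infty\subset\ell^\perp\Rightarrow\ell\subset F_\infty$ are correct, and the tautological-bundle/invariance-of-domain framing is a nice alternative packaging, but without (i), (ii), the acausality-up-to-the-boundary step, and a surjectivity argument not presupposing connectedness, the proof is incomplete.
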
 

In \cite{CTT19} these properties are proved for smooth equivariant spacelike embeddings (see Lemma 3.23, Lemma 4.11, and Theorem 5.3 of \cite{CTT19}). Here we slightly generalize their results in a purely topological setting, which is necessary when dealing with pleated surfaces.

\begin{proof}
	Let us first prove the first point.
	
	We lift $\iota$ to an acausal embedding ${\hat \iota}:\widetilde{\Sigma}\to\widehat{\mb{H}}^{2,n}$. We will work in different Poincaré models of $\widehat{\mb{H}}^{2,n}$, for now we fix an arbitrary one $\Psi:\mb{D}^2\times\mb{S}^n\to\widehat{\mb{H}}^{2,n}$ and denote by $\pi:\widehat{\mb{H}}^{2,n}\to\mb{D}^2$ the associated projection.
	
	As $\widehat{S}:={\hat \iota}(\widetilde{\Sigma})$ is acausal, it can be represented as the graph of a $1$-Lipschitz function $g:\pi(\widehat{S})\subset\mb{D}^2\to\mb{S}^n$ by Lemma \ref{lem:acausal graph}. Since the map $\pi{\hat \iota}:\widetilde{\Sigma}\to\mb{D}^2$ is injective, the set $\pi(\widehat{S})\subset\mb{D}^2$ is simply connected and open, by invariance of domain. Let $D$ denote the projection $\pi(\widehat{S})$ and let $\overline{D}$ be its closure inside $\overline{\mb{D}}{}^2 = \mb{D}^2\cup\partial\mb{D}^2$. As $g$ is $1$-Lipschitz, it continuously extends to a $1$-Lipschitz function ${\bar g}:\overline{D}\subset\overline{\mb{D}}{}^2\to\mb{S}^n$. We deduce that the closure $\widehat{S}\cup\partial\widehat{S}$ of $\widehat{S}$ inside $\widehat{\mb{H}}^{2,n}\cup\partial\widehat{\mb{H}}^{2,n}$ is the graph of ${\bar g}$. 
	
	We start by showing:
	
	\begin{claim}{1}
		We have $D=\mb{D}^2$.
	\end{claim}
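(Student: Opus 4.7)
The plan is to show that $D$ is open, non-empty, and closed inside the connected disk $\mb{D}^2$, so that by connectedness $D = \mb{D}^2$. Openness and non-emptiness are already recorded in the paragraphs above (invariance of domain applied to the injective continuous map $\pi{\hat \iota}:\widetilde{\Sigma}\to\mb{D}^2$), so the real work is to verify closedness in $\mb{D}^2$.

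Suppose for contradiction that some $u_0\in\mb{D}^2 - D$ lies in the closure of $D$, and pick a sequence $u_n\in D$ with $u_n\to u_0$. Writing $\widehat{S}={\hat \iota}(\widetilde{\Sigma})$ as the graph of $g$ and using compactness of $\mb{S}^n$, after extracting a subsequence the points ${\hat \iota}(x_n)=(u_n,g(u_n))$, with $x_n:=(\pi{\hat \iota})^{-1}(u_n)\in\widetilde{\Sigma}$, converge to an \emph{interior} point $(u_0,v)$ of $\widehat{\mb{H}}^{2,n}$. If $(x_n)_n$ admitted a limit $x_\infty\in\widetilde{\Sigma}$, then continuity would give ${\hat \iota}(x_\infty)=(u_0,v)$ and hence $u_0=\pi{\hat \iota}(x_\infty)\in D$, a contradiction. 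Thus $(x_n)_n$ escapes every compact subset of $\widetilde{\Sigma}$. Using the cocompactness of the $\Gamma$-action on $\widetilde{\Sigma}$, we choose $\gamma_n\in\Gamma$ so that $\gamma_n x_n$ stays in a fixed compact fundamental domain; passing to a further subsequence, $\gamma_n x_n\to x_\infty'\in\widetilde{\Sigma}$ and necessarily $\gamma_n\to\infty$ in $\Gamma$. Equivariance of ${\hat \iota}$ then gives ${\hat \iota}(x_n)=\rho(\gamma_n)^{-1}{\hat \iota}(\gamma_n x_n)$ with ${\hat \iota}(\gamma_n x_n)\to{\hat \iota}(x_\infty')\in\widehat{\mb{H}}^{2,n}$. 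Since $\rho$ is convex cocompact (see Section~\ref{subsec:max reprs}), after a final extraction the isometries $\rho(\gamma_n)^{-1}$ display north--south dynamics on $\widehat{\mb{H}}^{2,n}\cup\partial\widehat{\mb{H}}^{2,n}$: there exist $a,b\in\widehat{\Lambda}_\rho$ such that $\rho(\gamma_n)^{-1}$ converges uniformly on compact subsets of $(\widehat{\mb{H}}^{2,n}\cup\partial\widehat{\mb{H}}^{2,n})-\{a\}$ to the constant map with value $b$. Because ${\hat \iota}(x_\infty')\in\widehat{\mb{H}}^{2,n}$ cannot coincide with the boundary point $a\in\partial\widehat{\mb{H}}^{2,n}$, we conclude ${\hat \iota}(x_n)\to b\in\partial\widehat{\mb{H}}^{2,n}$, contradicting ${\hat \iota}(x_n)\to (u_0,v)\in\widehat{\mb{H}}^{2,n}$. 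Thus $D$ must be closed, and hence $D=\mb{D}^2$.

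The principal obstacle is the uniform north--south dynamics of the sequence $\rho(\gamma_n)^{-1}$ near the moving target ${\hat \iota}(\gamma_n x_n)\to{\hat \iota}(x_\infty')$. This is a standard consequence of the convex cocompactness of maximal representations reviewed in Section~\ref{subsec:max reprs}: the proximal action of $\rho(\Gamma)$ on $\partial\widehat{\mb{H}}^{2,n}$ carried by the boundary curve $\widehat{\Lambda}_\rho$ (Theorem~\ref{thm:maximal limit curve}) extends to a proximal action on the invariant convex domain $\Omega_\rho$, and pushes the orbit of any interior point towards $\widehat{\Lambda}_\rho$ as soon as $\gamma_n\to\infty$. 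All remaining ingredients---cocompactness of $\Gamma$ on $\widetilde{\Sigma}$, injectivity of $\pi{\hat \iota}$, and the graph description of acausal subsets in the Poincaré model---are purely soft.
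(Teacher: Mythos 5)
Your reduction to closedness of $D$ is fine, but the dynamical statement you use to get it is false, and this is a genuine gap. For a divergent sequence $\gamma_n$ in $\Gamma$, the maps $\rho(\gamma_n)^{-1}$ do \emph{not} converge to a constant boundary point uniformly on compact subsets of $(\widehat{\mb{H}}^{2,n}\cup\partial\widehat{\mb{H}}^{2,n})-\{a\}$: the correct statement (visible already for a single loxodromic element, see the north--south lemma in Section 7 of the paper) is convergence on compact subsets of the complement of a projective \emph{hyperplane} $\mb{P}(\xi(t_+)^\perp)$, which meets the interior of $\hyp^{2,n}$ in a large degenerate totally geodesic hypersurface. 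Concretely, if $\rho$ is Fuchsian, say $\rho=\rho_0\oplus\mathrm{id}$ on $\R^{2,1}\oplus\R^{0,n}$, and $\gamma_n=\gamma^n$ with $\gamma$ hyperbolic, then every negative line of $\R^{0,n}$ is an interior point of $\hyp^{2,n}$ fixed by all $\rho(\gamma^n)^{-1}$, so no convergence to a boundary constant holds off a single point. To salvage your argument you would need to know that $\hat{\iota}(x_\infty')$ (in fact the whole tail $\hat{\iota}(\gamma_n x_n)$) stays away from the repelling hyperplane $\mb{P}(\xi(t_+)^\perp)$, i.e.\ that no point of the surface $\iota(\widetilde{\Sigma})$ is orthogonal to a point of the limit set. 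That statement is exactly Claim 3 later in the proof of Proposition \ref{pro:decomposition}, and the paper proves it \emph{using} the first bullet of the proposition (Claim 1 together with $\partial\widehat{S}=\widehat{\Lambda}_\rho$), so invoking it here would be circular; the hypothesis that $\iota(\widetilde{\Sigma})$ is acausal does not by itself rule out a lightlike geodesic from a surface point to a limit point, since the limit set is not yet known to lie in the closure of the surface.

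For comparison, the paper's own argument avoids dynamics at infinity altogether: cocompactness of $\rho(\Gamma)\curvearrowright\widehat{S}$ gives a uniform $\varepsilon>0$ such that every $x\in\widehat{S}$ has a neighborhood $U_x\subset\widehat{S}$ with $d_{\mb{H}^{2,n}}(x,\partial U_x)\ge\varepsilon$, and since the projection $\pi$ does not decrease distances (property (3) of Lemma \ref{lem:projection}), $\pi(U_x)$ contains the hyperbolic $\varepsilon$-ball about $\pi(x)$; hence $D$ contains its own hyperbolic $\varepsilon$-neighborhood and must be all of $\mb{D}^2$. If you want to keep your open-and-closed scheme, this uniform-ball estimate is also the cleanest way to get closedness, and it only uses soft ingredients you already have.
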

	
	\begin{proof}[Proof of the claim] 
		As $\rho(\Gamma)$ acts cocompactly on $\widehat{S}$, we can find a compact fundamental domain $R\subset\widehat{S}$. Let $U\subset\widehat{S}$ be an open neighborhood of $R$ in $\widehat{S}$ with compact closure. By compactness, there exists $\varepsilon>0$ such that $d_{\mb{H}^{2,n}}(x,y)\ge \varepsilon$ for every $x\in R$ and $y\in\partial U$. As $\rho(\Gamma)$ preserves $\widehat{S}$ and its pseudo metric, we deduce that every point $x\in\widehat{S}$ has an open neighborhood $U_x\subset\widehat{S}$ such that $d_{\mb{H}^{2,n}}(x,\partial U_x)\ge \varepsilon$.
		
		Recall that $\mb{D}^2$ is endowed with a hyperbolic metric. As $\widehat{S}$ is acausal, by Lemma \ref{lem:projection} we have that $d_{\mb{H}^2}(\pi(x),\pi(y))\ge d_{\mb{H}^{2,n}}(x,y)$ for every $x,y\in\widehat{S}$. In particular, for every $x\in\widehat{S}$ we have 
		\[
		d_{\mb{H}^2}(\pi(x),\pi(\partial U_x))\ge d_{\mb{H}^{2,n}}(x,\partial U_x)\ge \varepsilon.
		\] 
		Since $\pi{\hat \iota}:\widetilde{\Sigma}\to\mb{D}^2$ is an injective map, by invariance of domain, it is also open. Therefore, for every $x\in\widehat{S}$ the set $\pi(U_x)$ is an open neighborhood of $\pi(x)$. Furthermore, by the above discussion, $\pi(U_x)$ contains the hyperbolic metric ball of radius $\varepsilon$ centered at $\pi(x)$. We are now ready to conclude: The projection $D=\pi(\widehat{S})$ is a subset of $\mb{D}^2$ with the property that its hyperbolic $\varepsilon$-neighborhood is still contained in $D$. This is possible only if $D=\mb{D}^2$.
	\end{proof}
	
	Using the dynamical properties of $\rho$ we now show that $\partial\widehat{S}={\hat \Lambda}_\rho$, where $\hat{\Lambda}_\rho$ denotes a lift of $\Lambda_\rho$ to $\partial\widehat{\hyp}^{2,n}$.
	
	First, recall that for every $\gamma\in\Gamma$ the element $\rho(\gamma)$ preserves a spacelike geodesic $[a,b]$, on which it acts by translations by $L>0$, and its orthogonal subspace ${\rm Span}\{a,b\}^\perp$, on which it acts with (generalized) largest eigenvalue $\nu$ with $\abs{\nu} < e^L$ (see \cite{BPS19} or \cite{CTT19}*{Corollary~2.6}). Up to replacing $\gamma$ with $\gamma^{-1}$, we can assume that $$\rho(\gamma)a=e^La, \quad \rho(\gamma)b=e^{-L}b,$$
	for some $L, \nu$ satisfying $L> \max\{1, \log\abs{\nu}\}$. 
	
	Fix now $\gamma\in\Gamma$ with invariant axis $[a,b]$. Every $x\in\widehat{\mb{H}}^{2,n}$ can be written as $x=\alpha a+\beta b+u$, with $\alpha,\beta\in\mb{R}$ and $u\in V={\rm Span}\{a,b\}^\perp$. 
	
	\begin{claim}{2}
		There exists a point $x\in\widehat{S}$ that can be written as $x=\alpha a+\beta b+u$, with either $\alpha\neq 0$ or $\beta\neq 0$.
	\end{claim}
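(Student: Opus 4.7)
The plan is to argue by contradiction: the negation of the claim is that every $x\in\widehat{S}$ satisfies $\alpha=\beta=0$ in the decomposition $x=\alpha a+\beta b+u$, which is equivalent to $\widehat{S}\subset V$. I will derive a contradiction with Claim 1 by working in a carefully chosen Poincaré model. First I record the relevant signatures. Since $[a,b]$ is a spacelike geodesic and we have chosen the lifts to $\widehat{\mb{H}}^{2,n}$ so that $\langle a,b\rangle_{2,n+1}<0$, the plane $\mathrm{Span}\{a,b\}$ has signature $(1,1)$ with spacelike direction $a-b$ and timelike direction $a+b$; consequently $V$ has signature $(1,n)$ and, in particular, contains a spacelike unit vector.

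Next I set up the Poincaré model. Let $w:=(a-b)/\sqrt{-2\langle a,b\rangle_{2,n+1}}$, a spacelike unit vector in $\mathrm{Span}\{a,b\}$, and pick any spacelike unit vector $w'\in V$. Define $E:=\mathrm{Span}\{w,w'\}$, a spacelike $2$-plane with $E\cap V=\mathrm{Span}\{w'\}$. The identity $\langle a+b,a-b\rangle_{2,n+1}=0$, together with the fact that $w'\in V=\mathrm{Span}\{a,b\}^\perp$, shows that $a+b\perp w$ and $a+b\perp w'$, so $a-b\in E$ and $a+b\in E^\perp$. Hence the decomposition of $a$ into its $E$- and $E^\perp$-components is $a=\tfrac{1}{2}(a-b)+\tfrac{1}{2}(a+b)$.

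Consider now the Poincaré model $\Psi=\Psi_E$ associated to $E$, with projection $\pi:\widehat{\mb{H}}^{2,n}\to\mb{D}^2\subset E$. For any $x\in V$ one has
\[
\langle x,w\rangle_{2,n+1}=\frac{1}{\sqrt{-2\langle a,b\rangle_{2,n+1}}}\langle x,a-b\rangle_{2,n+1}=0,
\]
so the orthogonal projection of $x$ onto $E$ lies in $\mathrm{Span}\{w'\}$. Since the $\mb{D}^2$-coordinate of $\Psi^{-1}(x)$ is a positive multiple of this projection (by the explicit form of $\Psi$), we conclude that $\pi(V\cap\widehat{\mb{H}}^{2,n})$ is contained in the diameter $\mathrm{Span}\{w'\}\cap\mb{D}^2$, a $1$-dimensional subset of the disk. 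Thus, if $\widehat{S}\subset V$, then $\pi(\widehat{S})$ is contained in a $1$-dimensional set, contradicting the conclusion $\pi(\widehat{S})=\mb{D}^2$ of Claim~1.

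The only delicate point is making sure Claim~1 applies in the new Poincaré model $\Psi_E$ rather than the one fixed at the start of the proof of Proposition~\ref{pro:decomposition}. This is in fact immediate: the proof of Claim~1 uses only the cocompactness of the $\rho(\Gamma)$-action on $\widehat{S}$ and the general fact from Lemma~\ref{lem:projection}(3) that $\pi$ is distance non-decreasing with respect to the hyperbolic metric on $\mb{D}^2$ and the pseudo-metric $d_{\widehat{\mb{H}}^{2,n}}$, and both properties are features of any Poincaré model. Running the same argument for $\Psi_E$ yields $\pi_E(\widehat{S})=\mb{D}^2$, completing the contradiction.
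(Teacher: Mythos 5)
Your proof is correct and follows essentially the same route as the paper: assume $\widehat{S}\subset V$, build a Poincaré model from a spacelike vector in $\mathrm{Span}\{a,b\}$ together with a spacelike vector in $V$, observe that $V$ then projects into a single diameter of $\mb{D}^2$, and contradict the two-dimensionality of $\pi(\widehat{S})$ (the paper invokes openness of the projection via invariance of domain, while you invoke Claim~1 in the new model, which, as you note, is valid since its proof is model-independent).
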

	
	\begin{proof}[Proof of the claim] 
		Suppose that this is not the case, then $\widehat{S}\subset V$. Let $e\in{\rm Span}\{a,b\}$ be a spacelike vector. As $V$ has signature $(1,n)$, there exists $e'\in V$ spacelike. Consider the Poincaré model $\Psi:\mb{D}^2\times\mb{S}^n\to\widehat{\mb{H}}^{2,n}$ associated to the orthogonal splitting $\mb{R}^{2,n+1}=E\oplus E^\perp$ where $E={\rm Span}\{e,e'\}$. If a point $x=\Psi(u,v)$ lies on $V$, then
		\begin{align*}
			0 &=\langle\Psi(u,v),e\rangle\\
			&=\langle\frac{2}{1-\Vert u\Vert^2}u+\frac{1+\Vert u\Vert^2}{1-\Vert u\Vert^2}v,e\rangle\\
			&=\frac{2}{1-\Vert u\Vert^2}\langle u,e\rangle .
		\end{align*}
		Since $\pi(\Psi(u,v)) = u$, we deduce that the projection of $V$ to $\mb{D}^2$ lies on the line $\langle\bullet,e\rangle=0$ of $E$. Being $\widehat{S}$ a acausal subset of $\widehat{\hyp}^{2,n}$, the projection $\pi_E(\widehat{S})$ is an open subset of $\mb{D}^2$ and, hence, there there exists a point $x\in\widehat{S}$ which is not contained in $V$.
	\end{proof}
	
	Suppose that there is a point $x=\alpha a+\beta b+u\in\widehat{S}$ with $\alpha\neq 0$. Then we claim that $a$ lies in $\partial\widehat{S}$. To see this, first observe that, by $\rho(\Gamma)$ invariance of $\widehat{S}$, we have
	$$\rho(\gamma)^mx=\alpha e^{mL}a+\beta e^{-mL}b+\rho(\gamma)^nu\in\widehat{S}.$$
	As the largest (generalized) eigenvalue of the restriction of $\rho(\gamma)$ to $V={\rm Span}\{a,b\}^\perp$ is smaller than $e^L$, the sequence $[\alpha e^{mL}a+\beta e^{-mL}b+\rho(\gamma)^nu]$ converges to $[a]$ in the sphere of rays $\mb{R}^{2,n+1}-\{0\}/y\sim\lambda^2 y$. Thus $[a]\in\partial\widehat{S}$. Similarly we see that, if there is a point $x=\alpha a+\beta b+u\in\widehat{S}$ with $\beta\neq 0$, then $b$ lies in $\partial\widehat{S}$.
	
	By $\rho(\Gamma)$-invariance, the orbit $\rho(\Gamma)a$ is contained in $\hat{\Lambda}_\rho\cap\partial\widehat{S}$ (for some lift $\hat{\Lambda}_\rho$ of $\Lambda_\rho$) and is dense inside $\hat{\Lambda}_\rho$. Therefore we conclude that $\hat{\Lambda}_\rho\subset\partial\widehat{S}$. As $\hat{\Lambda}_\rho$ and $\partial\widehat{S}$ are both graphs of functions $\partial\mb{D}^2\to\mb{S}^n$, we conclude that $\hat{\Lambda}_\rho=\partial\widehat{S}$. This concludes the proof of the first point.
	
	For the second point we need to prove the following three properties: 
	\begin{enumerate}
		\item{For every $x\in\widetilde{\Sigma}$, the space ${\rm Pho}(\iota(x)^\perp)$ is contained in $\Omega_\rho$.}
		\item{If $p\in\Omega_\rho$, then $p\in {\rm Pho}(\iota(x)^\perp)$ for some $x\in\widetilde{\Sigma}$.}
		\item{If $x,y\in\widetilde{\Sigma}$ are distinct points, then ${\rm Pho}(\iota(x)^\perp),{\rm Pho}(\iota(y)^\perp)$ are disjoint.}
	\end{enumerate}
	
	Together, the properties imply that $\Omega_\rho$ is foliated by ${\rm Pho}(\iota(x)^\perp)$, as $x$ varies in $\widetilde{\Sigma}$, and is equipped with an equivariant map $\Omega_\rho\to\widetilde{\Sigma}$.
	
	\begin{property}{\it (1)}
		The first property follows from the following fact:
		
		\begin{claim}{\it 3}
			$\mb{P}(\iota(x)^\perp)\cap\Lambda_\rho=\emptyset$ for every $x\in\widetilde{\Sigma}$. 
		\end{claim}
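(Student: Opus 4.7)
The plan is to reduce Claim 3 to the acausal graph structure of $\widehat{S}\cup\hat{\Lambda}_\rho$ already established in the preceding steps of the proof of the first point.

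Suppose for contradiction that some $x\in\widetilde{\Sigma}$ admits an isotropic line $a\in\Lambda_\rho$ with $a\subset\iota(x)^\perp$. Lifting everything to $\widehat{\mb{H}}^{2,n}$, I would set $\hat{x}:=\hat\iota(x)\in\widehat{S}$ and pick the unique representative $\hat{a}\in\partial\widehat{S}=\hat{\Lambda}_\rho$ of $a$; rescaling if necessary, we can arrange $\langle\hat{x},\hat{a}\rangle_{2,n+1}=0$. By Lemma \ref{lem:spacelike lines} applied in the linear model, $\hat{x}$ and $\hat{a}$ are then joined by a lightlike geodesic inside $\widehat{\mb{H}}^{2,n}\cup\partial\widehat{\mb{H}}^{2,n}$.

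I would then pass to the fixed Poincaré model $\Psi=\Psi_E$ used earlier, with projection $\pi$. By the first part of the proposition, $\widehat{S}\cup\hat{\Lambda}_\rho$ is the graph of a continuous $1$-Lipschitz extension $\bar{g}:\overline{\mathbb{D}}{}^2\to\mathbb{S}^n$ of a map $g:\mathbb{D}^2\to\mathbb{S}^n$ which is \emph{strictly} $1$-Lipschitz (because $\widehat{S}$ is acausal, by Lemma \ref{lem:acausal graph}). Write $u:=\pi(\hat{x})\in\mathbb{D}^2$ and $v:=\pi(\hat{a})\in\partial\mathbb{D}^2$. By Lemma \ref{lem:projection}(2), the lightlike connection between $\hat{x}$ and $\hat{a}$ translates into the metric equality
\[
d_{\mathbb{S}^n}(g(u),\bar{g}(v))=d_{\mathbb{S}^2}(u,v).
\]
Parametrizing the hemispherical geodesic $\alpha:[0,1]\to\overline{\mathbb{D}}{}^2$ by $\alpha(0)=u$, $\alpha(1)=v$, the $1$-Lipschitz property of $\bar{g}$ combined with the triangle inequality yields, for every $t\in(0,1)$,
\[
d_{\mathbb{S}^2}(u,v)=d_{\mathbb{S}^n}(g(u),\bar{g}(v))\leq d_{\mathbb{S}^n}(g(u),\bar{g}(\alpha(t)))+d_{\mathbb{S}^n}(\bar{g}(\alpha(t)),\bar{g}(v))\leq d_{\mathbb{S}^2}(u,\alpha(t))+d_{\mathbb{S}^2}(\alpha(t),v)=d_{\mathbb{S}^2}(u,v),
\]
so equality holds throughout. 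In particular, for small $t>0$ the point $\alpha(t)$ lies in $\mathbb{D}^2$ and satisfies $d_{\mathbb{S}^n}(g(u),g(\alpha(t)))=d_{\mathbb{S}^2}(u,\alpha(t))$, contradicting the strict $1$-Lipschitzness of $g$ provided by Lemma \ref{lem:acausal graph}.

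The substantive ingredients (the identification $\partial\widehat{S}=\hat{\Lambda}_\rho$ and the continuous $1$-Lipschitz extension of $g$ to $\overline{\mathbb{D}}{}^2$) are already available from the proof of the first point of the proposition; Claim 3 then boils down to the routine triangle-inequality argument above, so I do not anticipate any serious obstacle here. The only subtle point to keep track of is the consistent choice of lifts ensuring that $\hat{x}\in\widehat{S}$ pairs with a \emph{boundary} representative $\hat{a}\in\partial\widehat{S}$ in the same Poincaré graph, which is automatic once one observes that the continuous $1$-Lipschitz graph $\bar{g}$ is globally determined.
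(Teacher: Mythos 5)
Your proof is correct and follows essentially the same route as the paper's: both reduce the claim, via Lemma \ref{lem:projection}, to the equality case of the $1$-Lipschitz graph representation of $\widehat{S}\cup\hat{\Lambda}_\rho$ and propagate that equality along a minimizing hemispherical geodesic to contradict the acausality of $\widehat{S}$. The only cosmetic differences are that the paper re-centers the Poincaré model at $\iota(x)$ so that the geodesic is a radial segment, whereas you work in the fixed model (implicitly using that the closed hemisphere is geodesically convex), and you phrase the final contradiction through the strict $1$-Lipschitzness of $g$ (Lemma \ref{lem:acausal graph}) rather than through a lightlike segment joining two points of $\widehat{S}$.
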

		
		\begin{proof}[Proof of the claim]
			If $a\in\mb{P}(\iota(x)^\perp)\cap\partial\mb{H}^{2,n}$, then the 2-plane ${\rm Span}\{a,\iota(x)\}$ is lightlike, that is, $a,\iota(x)$ are joined by a lightlike geodesic. Let $\mb{D}^2\times\mb{S}^n$ be a Poincaré model where $a,\iota(x)$ correspond respectively to $(p,{\bar g}(p))$ and $(o,{\bar g}(o))$ where $p\in\partial\mb{D}^2$ and $o\in\mb{D}^2$ is the center. By Lemma \ref{lem:projection}, since $[a,\iota(x)]$ is lightlike, we have $d_{\mb{S}^n}({\bar g}(o),{\bar g}(p))=d_{\mb{S}^2}(o,p)$. As ${\bar g}$ is $1$-Lipschitz we must have $d_{\mb{S}^n}({\bar g}(o),{\bar g}(t))=d_{\mb{S}^2}(o,t)$ for every $t$ on the radial segment $[o,p]\subset\mb{D}^2$ which is a minimal geodesic for the hemispherical metric on $\mb{D}^2$. However, by Lemma \ref{lem:projection}, this implies that $(o,g(o))$ and $(t,g(t))$ are connected by a lightlike geodesic. This contradicts the fact that $\widehat{S}$, the graph of $g$, is acausal.
		\end{proof}
		
		Recall that $\Omega_\rho\subset{\rm Pho}^{2,n}$ is the complement of the set 
		\[
		K_\rho:=\{F\in{\rm Pho}^{2,n}\left|\ell\subset F\text{ for some isotropic line $\ell\in\Lambda_\rho$}\right.\}.
		\]
		If ${\rm Pho}(\iota(x)^\perp)\cap K\neq\emptyset$, then there is a photon $F$ orthogonal to $\iota(x)$ containing an isotropic line $a\in\Lambda_\rho$. In particular,  $a\subset \iota(x)^\perp$ which cannot happen by Claim 3.
	\end{property}
	
	\begin{property}{\it (3)} 
		The last property follows from the fact that $\iota(x),\iota(y)$ are joined by a spacelike segment: Suppose that there is a photon $F$ that is simultaneously orthogonal to $\iota(x)$ and $\iota(y)$. Then it is orthogonal to the 2-plane ${\rm Span}\{\iota(x),\iota(y)\}$ which has signature $(1,1)$ as $\iota(x),\iota(y)$ are joined by a spacelike segment. However, the orthogonal of such plane, having signature $(1,n)$, cannot contain photons. 
	\end{property}
	
	\begin{property}{\it (2)}
		The second property follows from the fact that every timelike sphere intersects $\iota(\widetilde{\Sigma})$ exactly once and $\Lambda_\rho=\partial \iota(\widetilde{\Sigma})$. 
		
		Let $F\in{\rm Pho}^{2,n}$ be a photon. The orthogonal $F^\perp$ is non-positive definite and can be approximated by negative definite $(n+1)$-planes $F_n$. Each such plane intersects $\iota(\widetilde{\Sigma})$ exactly once in a point $\iota(x_n)$. Thus $F^\perp$ either intersects $\iota(\widetilde{\Sigma})$ in some point $\iota(x)$ or it intersects $\Lambda_\rho$. In the first case, $F\subset \iota(x)^\perp$, that is $F\in{\rm Pho}(\iota(x)^\perp)$, and, moreover, by Property (3), $F^\perp$ intersects $\iota(\widetilde{\Sigma})$ in exactly the point $\iota(x)$. In the second case, $a\subset F^\perp$ for some isotropic line $a\in\Lambda_\rho$ which implies $a\subset F$ and, hence, $F\in K_\rho$.
	\end{property}
	
	Note that, as a byproduct of the proof, we can describe explicitly the projection $\Omega\to \widehat{S} = \hat{u}(\widetilde{\Sigma})$ as $F\to\mb{P}(F^\perp)\cap \widehat{S}$. This shows in particular the continuity of the corresponding map $\Omega\to \widetilde{\Sigma}$.
\end{proof}

Since every maximal representation admits equivariant pleated surfaces (and hence equivariant acausal embeddings $\widetilde{\Sigma} \to \hyp^{2,n}$), Proposition \ref{pro:decomposition} and Theorem \ref{geometry pleated surfaces h2n} imply that every maximal representation is the holonomy of some fibered photon structure $E \to \Sigma$.

We now have all the elements to prove our first result on fibered photon structures, namely Proposition \ref{decomposition domain}:

\begin{proof}[Proof of Proposition \ref{decomposition domain}]
	Let $S_\lambda$ be the pleated set associated to some maximal lamination $\lambda$ of $\Sigma$. Recall that, by Proposition \ref{pro:existence pleated sets}, its lift $\widehat{S}_\lambda \subset \hyp^{2,n}$ is acausal and is equivariantly homeomorphic to $\widetilde{\Sigma}$ via a homeomorphism $\iota : \widetilde{\Sigma} \to \widehat{S}_\lambda \subset \hyp^{2,n}$. Since $\widehat{S}_\lambda$ can be decomposed as the disjoint union of spacelike ideal triangles and spacelike geodesics, by Proposition \ref{pro:decomposition} the acausal embedding $\iota$ determines a decomposition of the Guichard-Wienhard's domain of discontinuity of $\rho$ into lines and triangles of photons, associated to the leaves and the plaques of the maximal lamination $\lambda$, respectively.
\end{proof}

\begin{rmk}[Connected Components of Maximal Representations]\label{rmk:connected comp}
	Let $E \to \Sigma$ be a fibered photon structure with maximal holonomy $\rho$ associated to some $\rho$-equivariant spacelike embedding $\iota : \widetilde{\Sigma} \to \hyp^{2,n}$. We denote by $V_{\rho, \iota} \to \Sigma$ the vector bundle obtained through the following process: Let $\widetilde{V}_\iota \to \widetilde{\Sigma}$ be the bundle with total space
	\[
	\widetilde{V}_\iota : = \{ (x, v) \in \widetilde{\Sigma} \times \R^{2,n+1} \mid v \in \iota(x)^\perp \subset \R^{2,n+1} \}
	\]
	and bundle map given by the projection onto the first component $(x, v) \mapsto x$. The representation $\rho$ determines a natural action of $\Gamma$ on $\widetilde{V}_\iota$, given by $\gamma(x,v) : = (\gamma x, \rho(\gamma) v)$, for any $\gamma \in \Gamma$ and $(x,v) \in \widetilde{V}_\iota$. The bundle $V_{\rho, \iota} \to \Sigma$ is then obtained by considering $V_{\rho, \iota} : = \widetilde{V}_\iota/\rho(\Gamma)$, together with the projection induced by the universal covering map $\widetilde{\Sigma} \to \Sigma$.
	
	Collier, Tholozan, and Toulisse observed in \cite{CTT19}*{\S~2.5} that, by the work of Ramanathan \cite{R75} and Oliveira \cites{O11,O19}, for every $n > 2$ the connected component of the space of maximal representations $\mathcal{R}_{\mathrm{max}}$ containing $\rho$ is determined by the first and second Stiefel-Whitney classes $w_i(V_{\rho,\iota}) \in H^i(\Sigma;\Z/2\Z)$ of its associated vector bundle $V_{\rho, \iota} \to \Sigma$ and, vice versa, for every choice of classes $c_i \in H^i(\Sigma;\Z/2\Z)$, $i = 1, 2$, the set $\mathcal{R}_{\mathrm{max}}^{c_1,c_2}$ of maximal representations $\rho$ that verify $w_i(V_{\rho,\iota}) = c_i$ constitute a connected component of $\mathcal{R}_{\mathrm{max}}$.
	
	The analysis of the connected components of the representation variety for $n = 2$ is more subtle and requires a quite sophisticated analysis in work of Collier, Tholozan, and Toulisse \cite{CTT19}. For this reason, we prefer to focus in the remainder of the section on the case $n > 2$, whenever the topology of the fibered photon structures is discussed. However, we will emphasize in each statement whether the hypothesis $n > 2$ is in fact required.
\end{rmk}

\subsection{Gluing triangles of photons}\label{subsec:gluing triangles}
We start with a simple computation: Let $\Delta$ be an ideal spacelike triangle and let $\ell$ be a spacelike geodesic. Consider an isometry $\phi\in\SOtwon$ such that $\phi(E(\Delta))=E(\Delta)$ or $\phi(E(\ell))=E(\ell)$. As $\phi$ induces an orientation-preserving homeomorphism of ${\rm Pho}^{2,n}$, it extends to a homeomorphism of the closures of $E(\Delta)$ and $E(\ell)$. In particular, $\phi$ must permute the ideal vertices of $E(\Delta)$ or $E(\ell)$. 

We denote by ${\rm PStab}_{\SOtwon}(E(\Delta))$ and ${\rm PStab}_{\SOtwon}(E(\ell))$ the elements that stabilize $E(\Delta)$ and $E(\ell)$ without permuting their ideal vertices. Observe that if $\phi(E(a))=E(a)$ for some isotropic line $a\in\partial\mb{H}^{2,n}$, then $\phi(a)=a$ in $\partial\mb{H}^{2,n}$. As a consequence, we have the following 

\begin{lem}
	\label{lem:stabilizer}
	We have 
	\begin{itemize}
		\item{${\rm PStab}_{\SOtwon}(E(\Delta))={\rm PStab}_{\SOtwon}(\Delta)$.}
		\item{${\rm PStab}_{\SOtwon}(E(\ell))={\rm PStab}_{\SOtwon}(\ell)$.}
		\item{${\rm Stab}_{\SOtwon}(E(a))={\rm Stab}_{\SOtwon}(a)$.}
	\end{itemize}
\end{lem}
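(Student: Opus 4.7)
The plan is to prove the three identities in the order (3), (2), (1), as the first two reduce to the third via the boundary decomposition of $E(\Delta)$ and $E(\ell)$. In each case, one inclusion is immediate from the definitions: Any $\phi \in \SOtwon$ preserving the geometric object $\Delta$, $\ell$ or $a$ clearly preserves the set $E(\Delta)$, $E(\ell)$ or $E(a)$ of photons defined as orthogonal to some point of it, without permuting the ideal vertices in the first two cases.

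The heart of the argument is the converse inclusion in (3). I would establish the linear-algebraic identity
\[
\bigcap_{F \in E(a)} F = a ,
\]
whose reverse inclusion is tautological. For the forward inclusion, observe that $E(a) = \mathrm{Pho}(a^\perp)$ is naturally in bijection with the set of isotropic lines of the quotient vector space $V := a^\perp / a$, which has signature $(1,n)$. If a vector $v \in a^\perp$ projects to a point of $V$ lying in every such isotropic line, then its projection must vanish, since the isotropic cone of a $(1,n)$-form linearly spans the whole space. Hence $v \in a$. Once this identity is in hand, for any $\phi \in \mathrm{Stab}_{\SOtwon}(E(a))$ one deduces $\phi(a) = \phi\bigl(\bigcap_{F} F\bigr) = \bigcap_{F} \phi(F) = a$, establishing (3).

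For identities (1) and (2), I would invoke the decomposition of the ideal boundaries $\partial E(\Delta) = E(a) \cup E(b) \cup E(c)$ and $\partial E(\ell) = E(a) \cup E(b)$ recalled just before the lemma, where $a,b,c$ (resp.\ $a,b$) are the ideal vertices of $\Delta$ (resp.\ $\ell$). By the very definition of $\mathrm{PStab}$, any $\phi \in \mathrm{PStab}_{\SOtwon}(E(\Delta))$ preserves each of the boundary components $E(a), E(b), E(c)$ individually, and therefore by (3) fixes each ideal vertex as an isotropic line in $\partial\hyp^{2,n}$. Since a spacelike ideal triangle is uniquely determined by its three vertices on the acausal curve $\Lambda = \{a,b,c\}$, this shows $\phi \in \mathrm{PStab}_{\SOtwon}(\Delta)$. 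The argument for $E(\ell)$ is identical: $\phi$ fixes both endpoints of $\ell$ in $\partial\hyp^{2,n}$ and hence preserves the spacelike geodesic $\ell$ itself. The only technical point is the linear-algebraic intersection identity for (3), and no obstacle of substance arises there.
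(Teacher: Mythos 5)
Your proof is correct and follows essentially the same route as the paper: the paper reduces (1) and (2) to the ideal-vertex decomposition together with the observation that $\phi(E(a))=E(a)$ forces $\phi(a)=a$, and your intersection identity $\bigcap_{F\in E(a)}F=a$ is just a written-out justification of that observation, which the paper states without proof. The only cosmetic point is notational: the paper reserves $\partial E(\Delta)$ for the union of the lines of photons $E(\ell_a)\cup E(\ell_b)\cup E(\ell_c)$, whereas you use that symbol for the ideal boundary $E(a)\cup E(b)\cup E(c)$.
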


Both stabilizers of a spacelike triangle $\Delta$ and of a spacelike geodesic $\ell$ have two connected components. To see this, assume that $\Delta$ and $\ell$ lie in a common spacelike $2$-plane $H$ of $\hyp^{2,n}$, and let $W$ (resp. $L \subset W$) denote the subspace of $\R^{2,n+1}$ of signature $(2,1)$ (resp. $(1,1)$) that projects onto $H$ (resp. $\ell$). Let also $r_W \in \SOtwon$ be an isometric involution that restricts to $- \mathrm{id}$ on $W$, and to an orthogonal reflection on $W^\perp$. Then:

\begin{itemize}
	\item The stabilizer of $\Delta \subset \hyp^{2,n}$ decomposes as
	$${\rm PStab}_{\SOtwon}(\Delta)=(\{\mathrm{id}_W\} \times {\rm SO}(W^\perp))\sqcup r_{W}(\{\mathrm{id}_W\} \times {\rm SO}(W^\perp)).$$
	\item The stabilizer of $\ell \subset \hyp^{2,n}$ decomposes as
	$${\rm PStab}_{\SOtwon}(\ell)=({\rm SO}_0(L)\times{\rm SO}_0(L^\perp)) \sqcup r_{W}({\rm SO}_0(L)\times{\rm SO}_0(L^\perp)) .$$ 
\end{itemize}

\subsubsection{Building pants of photons}\label{subsubsec:building pants}

We now describe a process to glue pairs of triangles of photons along their boundary to form a so-called \emph{pants of photons}. To this purpose, we select arbitrarily an orientation on $H$, and consider two ideal triangles $\Delta$ and $\Delta'$ with cyclically ordered vertices $a, b, c \in \partial H$ and $c', b', a' \in \partial H$, respectively, endowed with the orientation induced by the one of $H$. We denote by $\ell_a$, $\ell_b$, $\ell_c$ (resp. $\ell_{a'}$, $\ell_{b'}$, $\ell_{c'}$) the edges of $\Delta$ (resp. $\Delta'$) opposite to the vertices $a, b, c$ (resp. $a', b', c'$), respectively. We also orient the edges of $\Delta$ and $\Delta'$ according to the boundary orientations of $\partial \Delta$ and $\partial \Delta'$.

Recall that, for any $u \in \{a,b,c\}$ (resp. $u' \in \{a',b',c'\}$), the orthogonal projection of the isotropic line $u \in \partial \hyp^{2,n}$ (resp. $u'$) onto the spacelike geodesic $\ell_u$ (resp. $\ell_{u'}$) provides a basepoint $x_u \in \ell_u$ (resp. $x_{u'} \in \ell_{u'}$) naturally associated to the ideal triangle $\Delta$ (resp. $\Delta'$).

Let $s_{a a'}^B, s_{b b'}^B, s_{c c'}^B \in \mathrm{SO}_0(W)$ be the unique isometries that send the edges $\ell_{a'}, \ell_{b'}, \ell_{c'}$ of $\Delta'$ and their basepoints $x_{a'}, x_{b'}, x_{c'}$ onto the edges $\ell_a, \ell_b, \ell_c$ of $\Delta$ and their basepoints $x_a, x_b, x_c$, respectively, so that the restriction $s_{u u'}^B : \ell_{u'} \to \ell_{u}$ is an orientation reversing isometry for any $u \in \{a,b,c\}$. We then denote by $s_{u u'} \in \SOtwon$ the isometry extending $s_{u u'}^B$ to $\R^{2,n+1}$ that restricts to the identity on $W^\perp$, for any $u \in \{a,b,c\}$. By construction, the ideal triangles $\Delta$ and $s_{u u'}(\Delta')$ are adjacent along $\ell_u = s_{u u'}(\ell_{u'})$ and they have hyperbolic shear $\sigma(\Delta, s_{u u'}(\Delta'))$ equal to $0$ (compare with Remark \ref{rmk:shear_is_shear}).

In order to combine the triangles of photons $E(\Delta), E(\Delta')$ to construct pants of photons, we now describe the admissible gluing maps between $\partial E(\Delta)$ and $\partial E(\Delta')$. For every edge $\ell_u$ of $\Delta$, we start by choosing an orientation preserving isometry $\psi_u^B:\ell_u\to\ell_u$, and select $\psi_u\in{\rm PStab}(E(\ell_u))$ covering $\psi_u^B$. We then set
$$\phi^B_u : = \psi^B_u s^B_{u u'}, \quad \phi_u : = \psi_u s_{u u'}$$
for any $u \in \{a,b,c\}$. Finally, we define
\[
S : = (\Delta \sqcup \Delta')/\phi^B_a \cup \phi^B_b \cup \phi^B_c ,
\]
where $x \in \ell_u \subset \Delta$ identifies to $x' \in \ell_{u'} \subset \Delta'$ if and only if $x = \phi^B_{u u'}(x')$, and similarly
\[
E := (E(\Delta) \sqcup E(\Delta'))/\phi_a \cup \phi_b \cup \phi_c ,
\]
where $F \in E(\ell_u) \subset E(\Delta)$ identifies to $F' \in E(\ell_{u'}) \subset E(\Delta')$ if and only if $F = \phi_{u u'}(F')$. Observe that the construction provides:
\begin{itemize}
	\item A (possibly incomplete) hyperbolic structure on $S$, inherited by the hyperbolic metric on $\Delta, \Delta'$ and the isometric gluing maps $\phi_a^B,  \phi_b^B, \phi_c^B$.
	\item A photon structure on $E$, with respect to which a $(\SOtwon,\mathrm{Pho}^{2,n})$-local chart around $F \in E(\ell_u) \subset E(\Delta)$ is obtained by juxtaposing $E(\Delta)$ and $\phi_u(E(\Delta'))$.
	\item A natural fiber bundle projection $E \to S$ with characteristic fiber $\mathrm{Pho}^{2,n-1}$.
\end{itemize}
It is simple to check that the construction determines a fibered photon structure $E \to S$ as in Definition \ref{def:fibered}.

Observe that, if we choose elements $\psi_\Delta \in {\rm PStab}(E(\Delta))$, $\psi_{\Delta'} \in {\rm PStab}(E(\Delta'))$ and we change the gluing maps $\phi_u$ with $\psi_\Delta\phi_u\psi_{\Delta'}^{-1}$ for all $u\in\{a,b,c\}$, then the resulting fibered photon structure $E' \to S$ is isomorphic to $E \to S$, namely there exists a fiber-preserving $(\SOtwon,\mathrm{Pho}^{2,n})$-isomorphism $E \to E'$ covering the identity map $\mathrm{id}_S$ induced by $\psi_\Delta\cup\psi_{\Delta'}$. Thus, the space of parameters for the gluing maps is
\[
{\rm PStab}_{\SOtwon}(\ell_a)\times{\rm PStab}_{\SOtwon}(\ell_b)\times{\rm PStab}_{\SOtwon}(\ell_c)
\]
modulo the diagonal action by left and right multiplications of
\[
{\rm PStab}(\Delta)\times{\rm PStab}(\Delta').
\]

\subsubsection{Boundary completions}

If the hyperbolic shears between the ideal triangles in $S$ satisfy
$$\sigma(\Delta,\phi^B_u (\phi^B_v)^{-1}(\Delta)) = \sigma(\Delta, \phi^B_{u}(\Delta')) + \sigma(\phi^B_u(\Delta'), \phi^B_u (\phi^B_v)^{-1}(\Delta)) \neq 0$$
for every pair of distinct vertices $u, v \in \{a,b,c\}$, the hyperbolic surface is incomplete and its metric completion $S'$ is a hyperbolic pair of pants with three totally geodesic boundary components. In fact, the length of the boundary component $\gamma_u$ adjacent to the vertex $u$ of $\Delta$ is given by
\[
L(\gamma_u) = \abs{\sigma(\Delta,\phi^B_w (\phi^B_v)^{-1}(\Delta))}
\]
where $u, v, w$ are cyclically ordered vertices of $\Delta$ (compare with Lemma \ref{lem:shear and length}). We now wish to study the completion $E'$ of the photon manifold $E$. In particular, we give conditions on the isometries $\psi_u \in \mathrm{PStab}(\ell_u)$ under which the fibered photon structure $E$ admits a completion $E'$, which is a fibered photon structure with photon boundary that naturally fibers over $S'$. 

First, let us compute the holonomies $\rho_a,\rho_b,\rho_c$ of the boundary components $\gamma_a, \gamma_b, \gamma_c$ adjacent to the vertices $a,b,c$ of $\Delta \subset S$ and oriented as boundary of $S$: A direct computation from the definition of the photon structure on $E$ shows that
\[
\rho_u = \phi_w\phi_v^{-1} = \psi_w s_{w w'} s_{v v'}^{-1} \psi_v^{-1}
\]
where the $u,v,w$ are in cyclic order. It is not difficult to see that the composition $s_{w w'} s_{v v'}^{-1}$ coincides with $\nu^2_{v w} \in \SOtwon$, where $\nu_{v w}$ is the unique unipotent isometry of $\R^{2,n+1}$ that restricts to the identity on $W^\perp$, and that acts on $\partial H \subset \partial\hyp^{2,n}$ by fixing $u$ and sending $w$ to $v$.

\begin{dfn}[Loxodromic Isometry]
	An isometry $\phi\in\SOtwon$ is \emph{loxodromic} if: It admits an invariant spacelike line $\ell=[a,b]$, and the isotropic lines $a$ and $b$ are equal to the generalized eigenspaces of the eigenvalues of $\phi$ with largest and smallest absolute value, respectively. In such case, we say that $a$ and $b$ are the \emph{attracting} and \emph{repelling fixed points} of $\phi$, respectively.
\end{dfn}

Any loxodromic element $\phi$ with attracting and repelling fixed points $a,b$ has north-south dynamics both on $\hyp^{2,n}$ and ${\rm Pho}^{2,n}$ in the following sense:

\begin{lem}
	Let $\phi \in \SOtwon$ be a loxodromic isometry. Then:
	\begin{itemize}
		\item{$\phi^m\to a$ uniformly on all compact subsets of $\hyp^{2,n}-\mathbb{P}(b^\perp)$.} 
		\item{$\phi^{-m}\to b$ uniformly on all compact subsets of $\hyp^{2,n}-\mathbb{P}(a^\perp)$.}
		\item{$\phi^m\to E(a)$ uniformly on all compact subsets of ${\rm Pho}^{2,n}-E(b)$.} 
		\item{$\phi^{-m}\to E(b)$ uniformly on all compact subsets of ${\rm Pho}^{2,n}-E(a)$.}
	\end{itemize}
\end{lem}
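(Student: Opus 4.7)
The plan is to reduce everything to an analysis of the Jordan form of $\phi$ and an explicit decomposition of vectors and $2$-planes. By the loxodromic hypothesis, $\phi$ fixes the isotropic lines $a,b$ with eigenvalues $\lambda = e^L$ and $\lambda^{-1}$, preserves $V = \langle a,b\rangle^\perp$ of signature $(1,n)$, and the spectral radius $\mu$ of $\phi|_V$ satisfies $\lambda^{-1}<\mu<\lambda$ (the lower bound comes from the fact that, by form-invariance, eigenvalues of $\phi|_V$ come in reciprocal pairs). The crucial analytic input is that $\|\phi^m u\|_0 = O(m^k \mu^m)$ for every $u \in V$ and some fixed $k$, where $\|\cdot\|_0$ is an auxiliary Euclidean norm on $\R^{2,n+1}$.

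For the $\mb{H}^{2,n}$-statements, I would lift $x \in \mb{H}^{2,n} - \mathbb{P}(b^\perp)$ to a timelike vector $v = \alpha a + \beta b + u$ with $u \in V$, and observe that the condition $x \notin \mathbb{P}(b^\perp)$ translates into $\langle v,b\rangle \neq 0$, hence $\alpha \neq 0$ (after normalizing $\langle a,b\rangle = -1$). Then
\[
\frac{\phi^m v}{\alpha\lambda^m} = a + \frac{\beta}{\alpha}\lambda^{-2m} b + \frac{\phi^m u}{\alpha\lambda^m},
\]
and the error terms go to zero at rate $\lambda^{-2m}$ and $O(m^k(\mu/\lambda)^m)$ respectively. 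Thus $[\phi^m v] \to [a]$, and uniformity on a compact $K \subset \mb{H}^{2,n} - \mathbb{P}(b^\perp)$ follows from a uniform lower bound on $|\alpha|/\|v\|_0$ and uniform upper bounds on $|\beta|/\|v\|_0$ and $\|u\|_0/\|v\|_0$. The $\phi^{-m}$ statement is obtained by applying the same argument to the loxodromic isometry $\phi^{-1}$, which has attracting fixed point $b$.

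For the photon case, the key structural fact is: if $F \in {\rm Pho}^{2,n}$ and $b \notin F$, then $F \not\subset b^\perp$. Indeed, the form on $b^\perp$ has radical $\langle b\rangle$ with quotient $b^\perp/\langle b\rangle$ of signature $(1,n)$; since the Witt index of this quotient is $1$, no $2$-dimensional isotropic subspace of $b^\perp$ can avoid $\langle b\rangle$. Using this, I would pick a basis $\{v_1,v_2\}$ of $F$ with $v_1 = a + \beta_1 b + u_1$ (after normalization) and $v_2 \in F \cap b^\perp$ of the form $v_2 = \beta_2 b + u_2$; here $u_2 \in V$ is non-zero (otherwise $b \in F$) and isotropic (since $v_2$ is isotropic and lies in $b^\perp$). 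The previous paragraph shows $\phi^m v_1/\lambda^m \to a$, and any accumulation point of $\phi^m u_2/\|\phi^m u_2\|_0$ is a unit isotropic vector $w_\infty\in V$ by form-invariance. Consequently every Grassmannian accumulation point of $\phi^m F$ has the form ${\rm Span}\{a, w_\infty\} \in E(a)$, giving the pointwise convergence $\phi^m F \to E(a)$. To promote it to uniform convergence on a compact $K \subset {\rm Pho}^{2,n} - E(b)$, I would argue by contradiction: if $F_k \in K$ and $m_k \to \infty$ satisfy $\mathrm{dist}(\phi^{m_k}F_k, E(a)) \geq \varepsilon$, extract subsequences $F_k \to F_\infty \in K$ and $\phi^{m_k} F_k \to G \notin E(a)$; then the pointwise convergence already proved for $\phi^{-1}$ applied to $G$ yields $\phi^{-m_k}G \to E(b)$, contradicting $\phi^{-m_k}G = F_k + o(1)\to F_\infty \in K \subset {\rm Pho}^{2,n} - E(b)$.

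The main obstacle is the structural lemma $b \notin F \Rightarrow F \not\subset b^\perp$, which is what allows the choice of a convenient basis $\{v_1,v_2\}$ from which the dynamics can be read off; everything else reduces to bookkeeping with the spectral gap $\mu < \lambda$ and standard compactness arguments on Grassmannians.
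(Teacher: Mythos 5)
Your handling of the $\hyp^{2,n}$ statements and of the pointwise photon convergence is correct and essentially parallel to the paper's proof: the paper quotes bi-proximality for the first two items (you instead spell out the Jordan/spectral-gap computation, which is fine and more self-contained), and for the photon case it runs the same case analysis resting on the same structural fact you isolate, namely that a photon contained in $b^\perp$ must contain $b$ because $b^\perp$ has signature $(1,n,1)$. Two details you should make explicit: that $\dim(F\cap b^\perp)=1$ for every $F\notin E(b)$ (dimension count plus your structural lemma), and that $\lambda^{-m}\abs{\beta_2}$ is negligible against $\norm{\phi^m u_2}_0$; the latter needs the lower bound $\norm{\phi^m u_2}_0\geq c\, m^{-k}\mu^{-m}$ with $\mu<\lambda$, which does follow from the reciprocal-pair structure of the eigenvalues of $\phi|_V$ that you invoke, but is used silently in your "consequently".

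The genuine gap is the uniformity argument for the photon statements. From $\phi^{m_k}F_k\to G$ you cannot conclude $\phi^{-m_k}G=F_k+o(1)$: this would require the family $\{\phi^{-m_k}\}$ to be equicontinuous near $G$, which fails, since high negative iterates send a small neighborhood of $G$ (a compact set off $E(a)$) onto sets accumulating on the whole positive-dimensional set $E(b)$ rather than on a point, so $\phi^{-m_k}G$ and $\phi^{-m_k}(\phi^{m_k}F_k)=F_k$ need not stay close. The only soft way to repair that step is to invoke uniform convergence of $\phi^{-m}$ on a compact neighborhood of $G$ disjoint from $E(a)$ — exactly the symmetric form of the statement being proved, so the argument becomes circular. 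The fix is to do for photons what you already did in $\hyp^{2,n}$: over a compact $K\subset{\rm Pho}^{2,n}-E(b)$ there is a uniform lower bound on $\sup\{\abs{\scal{v}{b}} : v\in F,\ \norm{v}_0=1\}$ for $F\in K$, hence a choice of the basis $v_1,v_2$ with uniformly controlled coefficients, and your spectral estimates then bound $\mathrm{dist}(\phi^m F,E(a))$ by an explicit quantity depending only on $m$, the spectral gap, and $K$. This quantitative route is also what the paper alludes to when it says the uniform convergence "can be deduced from the control of the eigenvalues of $\phi$".
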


\begin{proof}
	The first and second properties follow from the fact that $\phi$ is a bi-proximal element of $\mathrm{PGL}(d,\R)$ (see e.g. \cite{C21}*{\S~4.17}). 
	
	For the third assertion, let $F$ be a photon outside $E(b)$. First, observe that $\mathrm{Pho}^{2,n}$ is a closed subset of the Grassmannian of $2$-planes of $\R^{2,n+1}$, and hence compact. In particular, for any divergent sequence $(m_k)_k \subset \N$, we can find a subsequence of $(\phi^{m_k}(F))_k$ that converges to some $F' \in \mathrm{Pho}^{2,n}$. 
	
	We divide the analysis in three cases, depending on whether (1) $F \subset b^\perp$, (2) $\dim F \cap b^\perp = 1$, or (3) $F \cap b^\perp = \{0\}$. In fact, the first case occurs only if $F \in E(b)$, and hence can be excluded. To see this, notice that the hyperplane $b^\perp \subset \R^{2,n+1}$ has signature $(i_+,i_-,i_0) = (1,n,1)$. Since no subspace of signature $(1,n)$ can contain a photon, the only possibility for $F$ to be contained in $b^\perp$ is that $b \in F$, and hence $F \in E(b)$. On the other hand, if we are in case (2) or (3), then $F' \in E(a)$, since we can find a non-zero vector $v$ of $F$ such that to $\phi^n([v]) \to a$, by the first assertion. The uniform convergence of $\phi^n \to E(a)$ can then be deduced from the control of the eigenvalues of $\phi$.
\end{proof}

In particular, $\phi$ acts properly discontinuously and freely on ${\rm Pho}^{2,n}-(E(b)\cup E(a))$.

\begin{dfn}[Fibered Photon Structure with Geodesic Boundary]
	A {\em half space} of ${\rm Pho}^{2,n}$ is a subset $E(W) \subset \mathrm{Pho}^{2,n}$ of the form
	\[
	E(W)=\{V\in{\rm Pho}^{2,n}\left|V\perp x\text{ \rm for some }x\in W\right.\} ,
	\]
	where $W\subset H$ is a half plane in a spacelike plane $H\subset\mb{H}^{2,n}$. A {\em photon structure with totally geodesic boundary} on a manifold with boundary $E'$ is a maximal atlas of charts with values in a half space of ${\rm Pho}^{2,n}$, whose change of charts are restrictions of transformations in $\SOtwon$.
	
	Let now $\Sigma'$ be an orientable compact surface with boundary and let $\pi : E'\to \Sigma'$ be a fiber bundle with characteristic fiber $\mathrm{Pho}^{2,n-1}$. We denote by $\tilde{\pi} : \widetilde{E}'\to\widetilde{\Sigma}'$ the pull-back bundle of $\pi$ via the universal covering map $\widetilde{\Sigma}'\to \Sigma'$. A photon structure with totally geodesic boundary on $E'$ with holonomy $\rho \circ \pi_* : \pi_1(\Sigma') \to \SOtwon$ and developing map $\delta:\widetilde{E}'\to{\rm Pho}^{2,n}$ is called \emph{fibered} if for any $x \in \widetilde{\Sigma}'$ there exists some $\iota(x)\in\mb{H}^{2,n}$ such that $\delta(\tilde{\pi}^{-1}(x))={\rm Pho}(\iota(x)^\perp)$.
\end{dfn}

We have the following:

\begin{lem}
	\label{lem:completion}
	Let $a,b,c$ be the vertices of $\Delta \subset S$. Suppose that the holonomies $\rho_u:= \phi_w\phi_v^{-1} \in{\rm Stab}(u)$, for $\{u,v,w\}=\{a,b,c\}$ and $u,v,w$ cyclically ordered, are all loxodromic, and denote their invariant lines by $\ell(\rho_u)$. Then there is a completion $E\subset E'$ which is a fibered photon structure with totally geodesic boundary over the metric completion $S'$, whose boundary component adjacent to $v$ is equal to $E(\ell(\rho_u))/\rho_u$.
\end{lem}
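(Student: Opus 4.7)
The plan is to build the completion in two stages: first analyze the hyperbolic metric completion $S'$, then construct $E'$ as a corresponding photon completion by gluing a boundary line of photons $E(\ell(\rho_u))/\rho_u$ at each end of $E$. The structure of $E'$ as a fibered photon manifold with totally geodesic boundary will be local near each boundary component and will follow from the loxodromic dynamics of $\rho_u$ on $\mathrm{Pho}^{2,n}$.

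\textbf{Step 1 (Hyperbolic completion).} Under the shear hypothesis $\sigma(\Delta,\phi^B_w(\phi^B_v)^{-1}(\Delta))\neq 0$, the incomplete hyperbolic pair of pants $S$ has metric completion a compact pair of pants $S'$ with three geodesic boundary components $\gamma_a,\gamma_b,\gamma_c$. Since $\rho_u$ is loxodromic with invariant axis $\ell(\rho_u)$, the standard hyperbolic-surface argument (analogous to Lemma \ref{lem:shear and length}) identifies $\gamma_u=\ell(\rho_u)/\rho_u$ and realizes the universal cover of a collar neighborhood of $\gamma_u\subset S'$ as a one-sided neighborhood of $\ell(\rho_u)$ accumulated by the spiraling triangles $\{\rho_u^n(\Delta)\}_{n\in\Z}$ (together with translates of $\Delta'$ and interstitial spikes).

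\textbf{Step 2 (The invariant open set for $\rho_u$).} Let $u^+,u^-\in\partial\mb{H}^{2,n}$ be the attracting and repelling fixed points of $\rho_u$, so that $\ell(\rho_u)=[u^-,u^+]$. Set
\[
\Omega_u:=\mathrm{Pho}^{2,n}-\left(E(u^+)\cup E(u^-)\right),
\]
an open $\rho_u$-invariant subset containing $E(\ell(\rho_u))$. By the north--south dynamics of the loxodromic $\rho_u$ (iterates converge to $E(u^\pm)$ uniformly on compact subsets of the complement of $E(u^\mp)$), the cyclic group $\langle\rho_u\rangle$ acts properly discontinuously and freely on $\Omega_u$, so $\Omega_u/\rho_u$ is a manifold and $E(\ell(\rho_u))/\rho_u\subset\Omega_u/\rho_u$ is a properly embedded codimension-$1$ submanifold, fibering over $\gamma_u$ with fiber $\mathrm{Pho}^{2,n-1}$.

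\textbf{Step 3 (Identification of the end of $\widetilde{E}$).} Let $U_u\subset\widetilde{E}$ be the union of the triangles of photons $E(\rho_u^n(\Delta))$ and $E(\rho_u^n(\phi_u(\Delta')))$ together with the lines of photons that separate consecutive ones, for $n\in\Z$. This is the lift to $\widetilde{E}$ of a collar neighborhood of the end at $u$. I claim $U_u$ is an open subset of $\Omega_u$, invariant under $\rho_u$, lying on one side of $E(\ell(\rho_u))$ and admitting $E(\ell(\rho_u))$ as a boundary in the manifold-with-boundary sense. Openness and the side-structure come from a local model: near any point of $\ell(\rho_u)$, one can exhibit a spacelike half-plane $W\subset\mb{H}^{2,n}$ with $\partial W\subset\ell(\rho_u)$ whose associated half-space $E(W)$ of $\mathrm{Pho}^{2,n}$ (Section~\ref{subsec:gluing triangles}) is an open neighborhood in $U_u\cup E(\ell(\rho_u))$. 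Invariance under $\rho_u$ is by construction.

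\textbf{Step 4 (Assembly).} Let $G=\pi_1(S')=\langle\rho_a,\rho_b,\rho_c\rangle$ and define
\[
\widetilde{E'}:=\widetilde{E}\ \cup\ \bigcup_{u\in\{a,b,c\}}\bigcup_{g\in G/\langle\rho_u\rangle} g\cdot E(\ell(\rho_u)).
\]
By Step 3, $\widetilde{E'}$ is a manifold with boundary, and $G$ acts properly discontinuously and freely on it: proper discontinuity in the interior was already granted by the photon structure $E=\widetilde{E}/G$, while proper discontinuity near $g\cdot E(\ell(\rho_u))$ reduces via conjugation to Step 2 for $\rho_u$ on $\Omega_u$ (elements of $G$ not conjugate into $\langle\rho_u\rangle$ send $\ell(\rho_u)$ away from itself, as in the hyperbolic base). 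The quotient $E':=\widetilde{E'}/G$ inherits a fibered photon structure with totally geodesic boundary, the local half-space charts being provided by the sets $E(W)$ from Step 3. The fibration $\widetilde{E'}\to\widetilde{S'}$ descends to $E'\to S'$, extends $E\to S$, and its boundary over $\gamma_u$ is exactly $E(\ell(\rho_u))/\rho_u$.

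\textbf{Main obstacle.} The core technical point is Step 3: verifying that the spiraling union of triangles and interstitial lines of photons around the axis $\ell(\rho_u)$ is an open subset of $\mathrm{Pho}^{2,n}$ whose closure adds $E(\ell(\rho_u))$ as a boundary hypersurface, with a local half-space model. This is where the loxodromic hypothesis is essential: it guarantees that the dynamics on $\mathrm{Pho}^{2,n}$ mimic those on $\mb{H}^{2,n}$ near $\ell(\rho_u)$, so that the photon-bundle extension of the hyperbolic collar completion of $S$ to $S'$ is well behaved. All the remaining checks (structure of $E'$ as a fibered photon manifold with totally geodesic boundary, compatibility with $E\subset E'$, computation of boundary components) are formal consequences of this local picture combined with the $G$-equivariance.
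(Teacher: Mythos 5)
Your architecture (complete the base $S$, add $E(\ell(\rho_u))/\rho_u$ at each end, check the manifold-with-boundary structure locally via the loxodromic dynamics) is the same as the paper's, but the proposal stops exactly at the point where the paper's proof actually has content. Your Step 3 — that the union $U_u$ of the spiraling triangles and interstitial lines of photons is an embedded open set lying on one side of $E(\ell(\rho_u))$, with $E(\ell(\rho_u))$ attached as boundary — is asserted, flagged as the "main obstacle", and supported only by the heuristic that the dynamics on ${\rm Pho}^{2,n}$ mimic those on $\mb{H}^{2,n}$. In the paper this step is proved in three parts, none of which appear in your write-up: (i) an explicit computation showing that two consecutive triangles in the fan around the vertex $u$ are \emph{acausal} in $\mb{H}^{2,n}$ (this is where the gluing isometries $\psi_w$ enter, with a case analysis on which component of ${\rm PStab}(\ell_w)$ contains $\psi_w$, reducing to the sign of a scalar product); (ii) an upgrade, via the Poincaré-model projection as in Proposition \ref{pro:existence pleated sets}, showing the whole fan $\iota(\widetilde{S}_u)$ is an acausal embedding, which is what makes the developing map injective on the fan of triangles of photons — without this, the "spiraling" triangles could overlap in ${\rm Pho}^{2,n}$ and no completion argument can start; (iii) the dynamical argument that $\delta(\widetilde{E}_u)$ is disjoint from $E(\ell(\rho_u))$ (any triangle of photons meeting $E(\ell(\rho_u))$ would meet its own $\rho_u$-translate, since both share the vertex $E(u)$) and that its $\rho_u$-orbit accumulates onto $E(\ell(\rho_u))$, so that the union is a submanifold with boundary. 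Your proposal uses the gluing data nowhere in Step 3, which is a symptom of the gap: acausality of the bent fan is not automatic and genuinely depends on the admissible form of the gluings.

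Moreover, the specific local model you propose is not correct as stated: you claim that a half-space $E(W)$, for $W$ a spacelike half-plane with $\partial W\subset\ell(\rho_u)$, is an open neighborhood \emph{inside} $U_u\cup E(\ell(\rho_u))$. But the fan of triangles accumulating on $\ell(\rho_u)$ is bent — it does not lie in any single spacelike plane — so photons orthogonal to points of the flat half-plane $W$ need not be orthogonal to any point of the pleated fan, and $E(W)\not\subset U_u\cup E(\ell(\rho_u))$ in general. What is true (and what the paper proves) is only that $\delta(\widetilde{E}_u)\cup E(\ell(\rho_u))$ is a $\rho_u$-invariant submanifold with boundary of ${\rm Pho}^{2,n}$, which is what furnishes the boundary charts. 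Your Steps 1, 2 and 4 are fine and match the paper (Step 2 is the north–south dynamics lemma, Step 4 the equivariant assembly), but as written the proof of the lemma is missing its core.
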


\begin{proof}
	Let $\ell(\rho_u)$ be the invariant spacelike line of $\rho_u$. Notice that, as $\rho_u$ is loxodromic and leaves invariant $E(u)$, there exists a $t \in \partial \hyp^{2,n}$ such that $\ell(\rho_u)=[u,t]$. Let $E(\ell(\rho_u))$ be the corresponding $\rho_u$-invariant line of photons. Note that the action $\rho_u\curvearrowright E(\ell(\rho_u))$ is properly discontinuous and free. We denote by
	\[
	E(\ell(\rho_u))/\rho_u\to\ell(\rho_u)/\rho_u
	\]
	the corresponding quotient bundle. Our aim is to describe local charts for a fibered photon structure with geodesic boundary on the space
	\[
	E\cup(E(\ell(\rho_u))/\rho_u)\to S\cup(\ell(\rho_u)/\rho_u).
	\]
	
	Let $\widetilde{E}\to\widetilde{S}$ be the pull-back bundle to the universal covering $\widetilde{S}\to S$, and choose a lift of the vertex $u$ (which we continue to denote with abuse by $u$). Let $\widetilde{S}_u$ be the fan of triangles with an ideal vertex in $u$, and let $\widetilde{E}_u$ be the fan of all triangles of photons of $\widetilde{E}$ with ideal vertex $E(u)$. 
	
	Consider the restriction to $\widetilde{S}_u$ of the map $\iota:\widetilde{S}\to\mb{H}^{2,n}$ associated to the fibered photon structure $E \to S$. We have $\widetilde{S}_u=\bigcup_{j\in\mb{Z}}{\Delta_j}$, where $(\Delta_j)_j=(\Delta(u,v_{j-1},v_j))_j$ is the collection of spacelike ideal triangles in $\hyp^{2,n}$ that share the ideal vertex $u$. Notice that the cyclic order of the vertices of $\widetilde{S}_u$ on $\partial\widetilde{S}$ is
	\[
	u<\cdots<v_{j-1}<v_j<v_{j+1}<\cdots<u.
	\]
	
	We start with the following observation: 
	
	\begin{claim}{1}
		For every $j\in\mb{Z}$, the subset $\iota(\Delta_j\cup\Delta_{j+1}) \subset \hyp^{2,n}$ is acausal.
	\end{claim}
	
	\begin{proof}[Proof of the claim]
		Notice that, from the construction outlined in Section~\ref{subsubsec:building pants}, it is not restrictive to assume that $\Delta_0, \Delta_1 \subset \widetilde{S}_u$ coincide with the spacelike triangles $$\Delta = \Delta(u,v,w), \quad\phi_w^B(\Delta') = \Delta(\phi^B_{w}(w'),v, u) \subset H,$$
		respectively, and that $\iota(\Delta_0) \subset H \subset \hyp^{2,n}$. Moreover, if $\gamma_u \in \pi_1(S)$ denotes the deck transformation that preserves the ideal vertex $u \in \partial\widetilde{S}_u$ (oriented according to the boundary orientation of $\partial S'$), then the union $\Delta_j\cup\Delta_{j+1}$ is either equal to $\gamma_u^h(\Delta_0\cup\Delta_1)$ if $j=2h \in \Z$, or $\gamma_u^{h+1}(\Delta_{-1}\cup\Delta_0)$ if $j=2h+1$, so it is enough to consider $j=0,-1$. As the two cases are completely analogous to each other, we explain in detail only the case $j=0$. 
		
		Since the set $\Delta_0 \cup \Delta_1 \subset \widetilde{S}_u$ is contractible, the restriction of $\iota$ to $\Delta_0 \cup \Delta_1$ admits a lift $\hat{\iota} : \Delta_0 \cup \Delta_1 \to \widehat{\hyp}^{2,n}$. We will show that $\hat{\iota}(\Delta_0 \cup \Delta_1)$ is an acausal subset of $\widehat{\hyp}^{2,n}$. Let $\hat{u}, \hat{v}, \hat{w} \in \partial \widehat{\hyp}^{2,n}$ be the vertices of $\hat{\iota}(\Delta_0)$, and let $\hat{u}', \hat{v}', \hat{w}'$ be the vertices of the lift of $\Delta' \subset H$ lying on the same spacelike plane as $\hat{\iota}(\Delta_0)$ (recall from Section \ref{subsubsec:building pants} that $\Delta$ and $\Delta'$ lie in a common spacelike plane $H$ of $\hyp^{2,n}$). 
		
		The vertex $\hat{z} \in \partial \widehat{\hyp}^{2,n}$ of $\hat{\iota}(\Delta_1)$ different from $\hat{u}, \hat{v}$ is projectively equivalent to $\phi_w(\hat{w}')$, and it coincides with the unique lift of $\phi_w(w') \in \partial\hyp^{2,n}$ for which both $\scal{\hat{z}}{\hat{u}}$, $\scal{\hat{z}}{\hat{v}}$ are negative. In order to express $\hat{z}$ in terms of $\phi_w$ and $\hat{w}'$, we need to distinguish two cases, depending on the connected component of $\mathrm{PStab}(\ell_w)$ containing $\psi_w = \phi_w s_{w w'}^{-1}$:
		\begin{enumerate}
			\item If $\psi_w$ belongs to the connected component of the identity in $\mathrm{PStab}(\ell_w)$, then $\hat{z} = \phi_w(\hat{w}') \in \partial\widehat{\hyp}^{2,n}$.
			\item If $\psi_w$ does not belong to the connected component of the identity, then $\hat{z} = - \phi_w(\hat{w}')$.
		\end{enumerate}
		
		Within this setting, the set $\hat{\iota}(\Delta_0 \cup \Delta_1)$
		is acausal if and only if any subtriple of the set of isotropic rays $\{\hat{u}, \hat{v}, \hat{w},\hat{z}\}$ generates a subspace of $\R^{2,n+1}$ of signature $(2,1)$. In fact, by the choices we made, it is enough to prove that $\scal{\hat{w}}{\hat{z}} < 0$ (compare with Lemma \ref{lem:spacelike lines}). From here, the desired statement can be reduced to an explicit computation in $\R^{2,n+1}$, which we briefly summarize. 
		
		Up to the action of $\SOtwon$, we can assume to be in the following setting:
		\begin{align*}
			\hat{u} & = e_1 + e_3, & \hat{v} & = -e_1 + e_3,\\
			\hat{w} & = e_2 + e_3, & s_{w w'}(\hat{w}') & = - e_2 + e_3,
		\end{align*}
		where $(e_i)_i$ is the standard basis of $\R^{2,n+1}$. Since the isometry $\psi_u$ preserves the orthogonal decomposition $L \oplus L^\perp$, with $L = \mathrm{Span}\{\hat{u}, \hat{v}\} = \mathrm{Span}\{e_1,e_3\}$, we have
		\begin{equation}\label{eq:cazzo merda}
			\scal{\hat{w}}{\phi_w(\hat{w}')} = \scal{e_2 + e_3}{\psi_w(-e_2 + e_3)} = - \scal{e_2}{\psi_w(e_2)} + \scal{e_3}{\psi_w(e_3)}
		\end{equation}
		
		Now one sees that, if $\psi_w$ satisfies (1), then $\psi_w(e_2)$ lies in the same connected component of $L^\perp \cap \widehat{\hyp}^{2,n}$ (which is isometric to two copies of $-\hyp^n$, i.e. the hyperbolic $n$-space with the metric $- g_{{\hyp}^n}$) as $e_2$, and $\psi_w(e_3)$ lies in the same connected component of $L \cap \widehat{\hyp}^{2,n}$ as $e_3$ (which is isometric to two copies of $+\hyp^1$, i.e. the hyperbolic $1$-space with its Riemannian metric). From here we deduce that $\scal{e_2}{\psi_w(e_2)} \geq 1$ and $\scal{e_3}{\psi_w(e_3)} \leq -1$ (see Lemma \ref{lem:spacelike segments linear}), which implies by relation \ref{eq:cazzo merda} that $\scal{\hat{w}}{\hat{z}} = \scal{\hat{w}}{\phi_w(\hat{w}')} < 0$.
		
		On the other hand, if $\psi_w$ satisfies (2), then $\psi_w$ exchanges the components of both $L^\perp \cap \widehat{\hyp}^{2,n}$ and $L \cap \widehat{\hyp}^{2,n}$ (see beginning of Section \ref{subsec:gluing triangles}). By the same argument as above, we deduce that $\scal{e_2}{\psi_w(e_2)} \leq -1$ and $\scal{e_3}{\psi_w(e_3)} \geq 1$. Since $\scal{\hat{w}}{\hat{z}} = - \scal{\hat{w}}{\phi_w(\hat{w}')}$, the assertion follows again from relation \ref{eq:cazzo merda}.
	\end{proof}
	
	We now promote the claim to the following:
	
	\begin{claim}{2} 
		The map $\iota:\widetilde{S}_v\to\mb{H}^{2,n}$ is an acausal embedding.
	\end{claim}
	
	\begin{proof}[Proof of the claim]
		The proof is a simpler version of the one of Proposition \ref{pro:existence pleated sets}.
		
		We lift $\iota:\widetilde{S}\to\mb{H}^{2,n}$ to the two fold cover $\widehat{\mb{H}}^{2,n}\to\mb{H}^{2,n}$ and work in a Poincaré model $\mb{D}^2\times\mb{S}^n$ of $\widehat{\mb{H}}^{2,n}$. Observe that $\widetilde{S}_v=\bigcup_{j\in\mb{Z}}{\Delta_j}$ where $\Delta_j=\Delta(v,u_{j-1},u_j)$. Notice that the cyclic order of the vertices of $\widetilde{S}_v$ on $\partial\widetilde{S}$ is
		\[
		v<\cdots<u_{j-1}<u_j<u_{j+1}<\cdots<v.
		\]
		
		Let $\pi:\widehat{\mb{H}}^{2,n}\cup\partial\widehat{\mb{H}}^{2,n}\to\mb{D}^2\cup\partial\mb{D}^2$ be the natural projection. Consider two consecutive triangles $\Delta_j=\Delta(v,u_{j-1},u_j),\Delta_{j+1}=\Delta(v,u_j,u_{j+1})$ intersecting along the geodesic $[v,u_j]$. By the previous claim, $\iota(\Delta_j\cup\Delta_{j+1})$ is acausal so the projections $\pi(v),\pi(u_{j-1}),\pi(u_j),\pi(u_{j+1})$ of the vertices $v,u_{j-1},u_j,u_{j+1}$ to $\partial\mb{D}^2$ appear in this exact cyclic order on $\partial\mb{D}^2$. We deduce that the projections of the vertices $\pi(u_j)$ appear in the same order 
		\[
		\pi(v)<\cdots<\pi(u_{j-1})<\pi(u_j)<\pi(u_{j+1})<\cdots<\pi(v)
		\]
		as they appear on $\partial\widetilde{S}$. As a consequence, the restriction of $\iota$ to the union $\lambda_v=\bigcup_{j\in\mb{Z}}{\partial\Delta_j}$ of the sides of the triangles $\Delta_j$ is an acausal embedding: For every $\ell,\ell'\subset\lambda_v$ we have that the endpoints of $\ell,\ell'$ are in disjoint position. 
		
		We immediately deduce that $\iota({\rm int}(\Delta_j))\cap \iota({\rm int}(\Delta_i))=\emptyset$ for all $j\neq i$, which says that $\iota$ is an embedding: We already know that this is the case when $|i-j|=1$. Assume $|i-j|>1$. Note that $\pi \iota$ is an embedding on both $\Delta_j,\Delta_i$ and the images coincide with the topological disks bounded by the closures in $\mb{D}^2\cup\partial\mb{D}^2$ of $\pi \iota(\partial\Delta_j),\pi \iota(\partial\Delta_i)$. Those curves are disjoint and not nested. The conclusion follows. 
		
		The argument provided above shows in fact that the restriction $\pi \iota : \widetilde{S}_v \to \mathbb{D}^2$ is injective, and $\pi \iota(\widetilde{S}_v)$ is an open subset of $\mathbb{D}^2$. At this point, checking that $\iota(\widetilde{S}_v)$ is acausal is simple. First observe that there is no timelike geodesic joining distinct points of $\widetilde{S}_v$: If this was not the case, then we could find a Poincaré model of $\widehat{\hyp}^{2,n}$ with respect to which the projection $\pi \iota$ is not injective, contradicting what we just observed. We deduce in particular that $\iota(\widetilde{S}_v)$ is an achronal subset of $\widehat{\hyp}^{2,n}$, and hence it can be represented as the graph of some $1$-Lipschitz function $\pi \iota(\widetilde{S}_v) \to \mathbb{S}^n$.

		On the other hand, suppose that $\alpha$ is a lightlike geodesic connecting two points of $\iota(\widetilde{S}_v)$. The path $\alpha$ cannot join two points on the same leaf, since $\lambda_v$ is an acausal set. Since $\iota(\widetilde{S}_v)$ is the graph of a $1$-Lipschitz function on the open set $\pi \iota(\widetilde{S}_v)$, Lemma \ref{lem:projection} implies that there exists a subsegment of $\alpha$ that is entirely contained in one of the triangles $\iota(\Delta_j)$. This clearly contradicts the fact that $\iota(\Delta_j)$ is a spacelike triangle. 
	\end{proof}
	
	A consequence of the claim is that the restriction of the developing map $\delta:\widetilde{E}\to{\rm Pho}^{2,n}$ to $\widetilde{E}_v$ is an embedding. 
	
	Notice that the images $\delta(\widetilde{E}_v)$ are $\rho_v$-invariant and contained in ${\rm Pho}^{2,n}-E(\ell(\rho_v))$: In fact, if $E(\ell(\rho_v))$ intersects the image of one of the triangles of photons $E(\Delta')$ in $\widetilde{E}_v$ under the developing map, then we must have $$\rho_v\delta(E(\Delta'))\cap\delta(E(\Delta'))\neq\emptyset ,$$
	since both $\delta(E(\Delta'))$ and $E(\ell(\rho_v))$ have a vertex in $E(v)$. But $\rho_v$ moves every triangle $\delta(E(\Delta'))$ off itself.
	
	Furthermore, by the loxodromic assumption on $\rho_v$, the $\rho_v$-orbit of every triangle in $\delta(\widetilde{E}_v)$ accumulates to $E(\ell(\rho_v))$ either in the forward or backward direction and to $E(v)$ or $E(t)$ in the opposite one. We deduce that the union $\delta(\widetilde{E}_v)\cup E(\ell(\rho_v))\subset{\rm Pho}^{2,n}$ is a $\rho_v$-invariant submanifold with boundary. This provides local charts for $E\cup (E(\ell(\rho_v))/\rho_v)$ at points in $E(\ell(\rho_v))/\rho_v$.
\end{proof}

\subsubsection{Computing Stiefel-Whitney classes}\label{subsubsec:stiefel}

We now provide a description of the first Stiefel-Whitney class of the vector bundle $V' = V_{\iota, \rho}' \to S'$, associated to the completion $E' \to S'$ as described in Remark \ref{rmk:connected comp}, in terms of the holonomy of the boundary $\partial S'$. If $\iota : \widetilde{S} \to \hyp^{2,n}$ denotes the equivariant immersion associated to $E' \to S'$, then $V'\to S'$ is obtained as the quotient by $\rho(\Gamma)$ of the vector bundle $\widetilde{V}' \to \widetilde{S}'$, whose fiber over $x \in \widetilde{S}'$ is equal to $\iota(x)^\perp \subset \R^{2,n+1}$. Notice that, since the vector bundle $V' \to S'$ has dimension $2 + n$, for every $n \geq 1$ its isomorphism classes is uniquely determined by the Stiefel-Whitney class $w_1(V') \in H^1(S';\Z/2\Z)$. 

Recall that the cohomology groups satisfy
$$H^i(S';\Z/2\Z) = \mathrm{Hom}(H_i(S';\Z/2\Z),\Z/2\Z).$$
In particular, in order to determine the class $w_1(V')$ it is enough to describe its evaluation on each $[\alpha]\in H_1(S';\mb{Z}/2\mb{Z})$. Let $\alpha:S^1\to S'$ be a loop representing $[\alpha]$, and consider the pull-back bundle $\alpha^*V'\to S^1$. Then $w_1(V')[\alpha]\in\mb{Z}/2\mb{Z}$ is equal to $0$ if $\alpha^*V$ is orientable, and is equal to $1$ otherwise.

Notice that, if $\gamma_a,\gamma_b,\gamma_c$ are the boundary curves of $S'$ corresponding to the vertices $a,b,c$, then the homomorphism $w_1(V')$ is uniquely determined by any two of $w_1(V')[\gamma_a]$, $w_1(V')[\gamma_b]$, $w_1(V')[\gamma_c]$, since any pair of distinct classes among $[\gamma_a]$, $[\gamma_b]$, $[\gamma_c]$ freely generates $H_1(S';\mb{Z}/2\mb{Z})$.

Let $\rho_a=\phi_c\phi_b^{-1}$ be the holonomy around $\gamma_a$. Recall that by construction $\rho_a$ is a loxodromic element. Loxodromic elements in $\SOtwon$ are divided into two connected components $\mc{L}^+,\mc{L}^-$: If $\phi\in\SOtwon$ is a loxodromic isometry with invariant spacelike line $\ell$ given by the projectivized of a subspace $L \subset \R^{2,n+1}$, then $\phi$ can either preserve or exchange the connected components of $L\cap\widehat{\mb{H}}^{2,n}$. This feature distinguishes the two connected components of ${\rm SO}(L)$ and the two connected components of loxodromic elements in $\SOtwon$. In the first case, when $\phi\in{\rm SO}_0(L)$, the bundle $V(\ell)/\phi$ is the trivial bundle over $\ell/\phi$. In the second case, when $\phi\not\in{\rm SO}_0(L)$, the bundle $V(\ell)/\phi\to\ell/\phi$ is the unique non-orientable bundle over $\ell/\phi \cong S^1$ of dimension $n+2$. Equivalently, the connected component that contains $\phi$ can be distinguished by the sign of its leading eigenvalue $l_1(\phi)$, i.e. $\phi \in \mathcal{L}^+$ if $ l_1(\phi)>0$, and $\phi \in \mathcal{L}^-$ otherwise.

In light of this fact, let us compute $w_1(V')[\gamma_a]$: We already know that one of the eigenvectors of $\rho_a$ is equal to the isotropic ray $a \in \partial \widehat{\hyp}^{2,n}$, so we can read off the connected component from $\rho_a(a)$. To conclude, we deduce that
\[
w_1(V')[\gamma_a]= \begin{cases}
	0 & \text{if $\rho_a(a) = a \in \partial \widehat{\hyp}^{2,n}$,} \\
	1 & \text{if $\rho_a(a) = - a \in \partial \widehat{\hyp}^{2,n}$.}
\end{cases}
\]

\subsubsection{Classifying pants of photons}

We can now summarize our analysis in a concise statement.  

\begin{mthm}{\ref{pants of photons}}
	\label{thm:classification pair of pants}
	Let $\Delta =\Delta(a,b,c), \Delta' = \Delta(c',b',a') \subset\mb{H}^{2,n}$ be two ideal spacelike triangles lying on a common spacelike plane $H=\mb{P}(W)\cap\mb{H}^{2,n}$ with $\partial\Delta=\ell_a\cup\ell_b\cup\ell_c, \partial\Delta'=\ell_{a'}\cup\ell_{b'}\cup\ell_{c'},$ where $\ell_u, \ell_{u'}$ are the sides opposite to the vertices $u\in\{a,b,c\}, u' \in \{a',b',c'\}$, respectively. For every $u\in\{a,b,c\}$, let $s_{uu'}\in\SOtwon$ be the unique isometry of $\R^{2,n+1}$ that restricts to the identity on $W^\perp$ and to the element of ${\rm SO}_0(W)$ that maps $\ell_{u'}$ to $\ell_u$ so that the shear between the adjacent triangles $\Delta$ and $s_{uu'}\Delta'$ is zero. For every equivalence class of triples
	\[
	\phi\in\left\{
	[\phi_a,\phi_b,\phi_c]\in\left(\prod_{u\in\{a,b,c\}}{{\rm PStab}(\ell_u)s_{uu'}}\right)\left/{\rm PStab}(\Delta)\times{\rm PStab}(\Delta')\right.
	\right\}
	\] 
	there is a fibered photon structure
	\[
	E=E(\Delta)\cup_\phi E(\Delta')
	\]
	fibering over a (possibly incomplete) hyperbolic pair of pants
	\[
	S=\Delta\cup_\phi\Delta'
	\]
	such that the holonomy around the peripheral simple closed curve $\gamma_u$ surrounding the puncture of $S$ corresponding to the vertex $u\in\{a,b,c\}$ is given by
	\[
	\rho_u=\phi_w\phi_v^{-1}.
	\]
	
	If for every $u\in\{a,b,c\}$ the holonomy $\rho_u$ is loxodromic, then $S,E$ are respectively the interior of a hyperbolic pair of pants $S'$ with totally geodesic boundary and the interior of a fibered photon structure $E'$ with totally geodesic boundary fibering over $S'$. The fibration $E'\to S'$ extends $E\to S$. For every $n \geq 1$, the topology of $E'$ is determined by the first Stiefel-Whitney class $w_1(V_{E})\in H^1(S,\mb{Z}/2\mb{Z})$ of the underlying vector bundle $V_E \to S$. The class $w_1(V_E)$ can be computed as follows: Let $\gamma_a,\gamma_b,\gamma_c\subset S$ be the peripheral curves corresponding to the vertices $a,b,c$ respectively. Then
	\[
	w_1(V_E)[\gamma_u]=\left\{
	\begin{array}{l l}
		0 &\text{if $\rho_u\in\mc{L}^+$},\\
		1 &\text{if $\rho_u\in\mc{L}^-$},\\
	\end{array}
	\right. 
	\]
	where $\mathcal{L}^+$ (resp. $\mathcal{L}^-$) is the set of loxodromic elements of $\SOtwon$ whose eigenvalue with largest absolute value is positive (resp. negative).
\end{mthm}

\subsection{Gluing pants of photons}\label{subsec:gluing pants}

Let $E'_j\to S'_j$ be $2g-2$ oriented pants of photons with totally geodesic boundary. We label by $\gamma_{a_j},\gamma_{b_j},\gamma_{c_j}$ the boundary components of $S'_j$ (with the orientation induced by $S_j'$ on its boundary $\partial S_j'$) and by $E(\gamma_{a_j}),E(\gamma_{b_j}),E(\gamma_{c_j})$ the corresponding boundary components of $E'_j$.

For every oriented pants of photons $E'_j\to S'_j$, we fix a developing map $\delta_j:\widetilde{E}_j'\to{\rm Pho}^{2,n}$ and its corresponding holonomy. We denote by $\rho_{a_j},\rho_{b_j},\rho_{c_j}$ the holonomies of the boundary curves $\gamma_{a_j},\gamma_{b_j},\gamma_{c_j}$, and by $\ell(\rho_{a_j}),\ell(\rho_{b_j}),\ell(\rho_{c_j})$ their invariant spacelike geodesics.

Let $\zeta$ be an orientation reversing pairing of the boundary components of the fibered pairs of pants $E'_j\to S'_j$. In order to perform a geometric gluing of the blocks $E'_j\to S'_j$ that implements the pairing $\zeta$, some compatibility conditions must be fulfilled: Every time that we have an identification of a boundary component of $S'_i$, labeled by $u_i$, with some boundary component of $S'_j$, labeled by $v_j$, the holonomies $\rho_{u_i},\rho_{v_j}$ must be conjugate in $\SOtwon$. 

If this happens, then we choose, for every pair of boundary components $E(\gamma_{u_i})$ and $E(\gamma_{v_j})$ that are paired by $\zeta$, an arbitrary initial orientation reversing identification $c_{v_j  u_i}:E(\gamma_{u_i})\to E(\gamma_{v_j})$ as bundles over $\gamma_{u_i},\gamma_{v_j}$, which is induced by an element $c_{v_j u_i}\in\SOtwon$ such that $\rho_{v_j}=c_{v_j u_i}\rho_{u_i}c_{v_j u_i}^{-1}$. 

All other admissible gluing maps will be of the form $\zeta_{v_j u_i}:=\eta_{v_j}c_{v_j u_i}\eta_{u_i}$ where $\eta_{u_i}\in{\rm PStab}(\ell(\rho_{u_i}))$ and $\eta_{v_j}\in{\rm PStab}(\ell(\rho_{v_j}))$ are isometries that commute with $\rho_{u_i}$ and $\rho_{v_j}$, respectively. The restriction $\zeta^B_{v_j u_i}:\ell(\rho_{u_i})\to\ell(\rho_{v_j})$ induces an orientation reversing isometry between the boundary components $\gamma_{u_i},\gamma_{v_j}$.

Thus we can form: A hyperbolic structure over a closed surface 
\[
S:=\bigcup_{k\le 2g-2}{S'_k}\left/\bigcup_{(u_i, v_j)\in \zeta}{\zeta^B_{v_j u_i}}\right.,
\]
a photon structure over a closed manifold
\[
E:=\bigcup_{k\le 2g-2}{E'_k}\left/\bigcup_{(u_i, v_j)\in \zeta}{\zeta_{v_j u_i}}\right.,
\]
where a $(\SOtwon,{\rm Pho}^{2,n})$-local chart around a point $x\in E(\gamma_{v_j})=E(\gamma_{u_i})$ is obtained by juxtaposing $\delta_j(\widetilde{E}_j)$ and $\zeta_{v_j u_i}\delta_i(\widetilde{E}_i)$. As before, since gluing and fibering are compatible, we also get a fiber bundle projection $E\to S$ with geometric fibers.

The following, which is analogous to \cite{CTT19}*{Proposition~3.13}, shows that the holonomy $\rho:\pi_1(S)\to\SOtwon$ of $E\to S$ is maximal.

\begin{lem}
	\label{lem:spacelike implies maximal}
	Let $\rho:\Gamma\to\SOtwon$ be a representation. Suppose that there exists a $\rho$-equivariant \emph{locally acausal} embedding $\iota:\widetilde{\Sigma}\to\mb{H}^{2,n}$, meaning that every point $x\in\widetilde{\Sigma}$ has a neighborhood $U$ such that $\iota|_U:U\to\mb{H}^{2,n}$ is an embedding with acausal image. Then $\rho$ is maximal.
\end{lem}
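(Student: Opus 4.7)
The strategy is to distill from $\iota$ an acausal topological circle in $\partial\hyp^{2,n}$ that satisfies the hypotheses of Theorem \ref{thm:maximal limit curve}, thereby forcing $\rho$ to be maximal. The proof splits into four steps.

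\smallskip

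\textbf{Step 1: Globalize acausality via a Poincar\'e model.} Since $\widetilde{\Sigma}$ is simply connected, we can lift $\iota$ to $\hat\iota:\widetilde{\Sigma}\to\widehat\hyp^{2,n}$. Fix a Poincar\'e model $\Psi:\mathbb{D}^2\times\mathbb{S}^n\to\widehat\hyp^{2,n}$ with associated projection $\pi$, and set $\phi:=\pi\circ\hat\iota:\widetilde{\Sigma}\to\mathbb{D}^2$. Local acausality together with Lemma~\ref{lem:acausal graph} implies that $\phi$ is a local homeomorphism whose local inverses take the form $u\mapsto\Psi(u,g_{\mathrm{loc}}(u))$ for some strictly $1$-Lipschitz function $g_{\mathrm{loc}}$ (with respect to the hemispherical metric on $\mathbb{D}^2$). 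By equivariance of $\iota$ and cocompactness of $\Gamma\curvearrowright\widetilde{\Sigma}$, there is a uniform lower bound for the size of the neighborhoods on which $\phi$ restricts to a homeomorphism. Hence $\phi$ is a covering map onto its image $U\subseteq\mathbb{D}^2$. Mimicking the argument of Claim~1 in the proof of Proposition~\ref{pro:decomposition}, the uniform thickness of acausal neighborhoods in $U$ (measured in the hemispherical metric) forces $U=\mathbb{D}^2$. Since $\mathbb{D}^2$ is simply connected and $\widetilde{\Sigma}$ is connected, $\phi$ is a global homeomorphism. Consequently $\hat\iota(\widetilde{\Sigma})$ is the graph of a single strictly $1$-Lipschitz map $g:\mathbb{D}^2\to\mathbb{S}^n$, and is globally acausal by Lemma~\ref{lem:acausal graph}.

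\smallskip

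\textbf{Step 2: Extract the boundary circle.} The $1$-Lipschitz function $g$ extends continuously to $\bar g:\overline{\mathbb{D}}{}^2\to\mathbb{S}^n$, and the graph of $\bar g|_{\partial\mathbb{D}^2}$ yields a topological circle $\hat\Lambda\subset\partial\widehat\hyp^{2,n}$, which is an acausal subset of $\partial\widehat\hyp^{2,n}$ again by Lemma~\ref{lem:acausal graph}. Its projection $\Lambda\subset\partial\hyp^{2,n}$ is therefore an acausal curve in the sense of Theorem~\ref{thm:maximal limit curve}, and is $\rho$-invariant as a consequence of the equivariance of $\iota$.

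\smallskip

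\textbf{Step 3: Identify $\Lambda$ with $\partial\Gamma$.} The homeomorphism $\phi:\widetilde{\Sigma}\to\mathbb{D}^2$ conjugates the deck action of $\Gamma$ on $\widetilde{\Sigma}$ to the $\rho(\Gamma)$-action on $\mathbb{D}^2$ induced from the ambient action of $\rho(\Gamma)$ on $\widehat\hyp^{2,n}$. The uniform size of the local graph charts implies that $\phi$ extends to a $\Gamma$-equivariant homeomorphism $\overline{\mathbb{D}}{}^2\to\widetilde{\Sigma}\cup\partial\widetilde{\Sigma}$, and in particular restricts to a $\Gamma$-equivariant homeomorphism $\partial\mathbb{D}^2\to\partial\widetilde{\Sigma}\cong\partial\Gamma$. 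Composing its inverse with $\bar g|_{\partial\mathbb{D}^2}$ gives a continuous, $\rho$-equivariant embedding $\xi:\partial\Gamma\to\Lambda\subset\partial\hyp^{2,n}$ with acausal image.

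\smallskip

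\textbf{Step 4: Conclusion.} The map $\xi$ satisfies all the hypotheses of Theorem~\ref{thm:maximal limit curve} (the Burger--Iozzi--Labourie--Wienhard characterization of maximal representations). Therefore $\rho$ is maximal.

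\smallskip

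The principal obstacle lies in Step~1, namely the passage from local to global acausality. Specifically, one must establish that the projection $\phi$ is a surjective covering of $\mathbb{D}^2$, which demands combining cocompactness (to obtain uniformly thick acausal neighborhoods in the hemispherical metric on $\mathbb{D}^2$) with equivariance (to patch together the local graph charts into a single global graph). Once $\phi$ is known to be a covering of $\mathbb{D}^2$, simple connectedness of the target immediately upgrades it to a homeomorphism, after which Steps~2--4 become essentially formal consequences of the graph representation and the acausality criterion of Lemma~\ref{lem:acausal graph}.
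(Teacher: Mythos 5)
Your Step 1 can be pushed through (the uniform-thickness argument is essentially Claim 1 in the proof of Proposition \ref{pro:decomposition}, and local injectivity plus uniform target-size of the charts does upgrade $\phi$ to a covering of $\mb{D}^2$), but Steps 2 and 3 contain genuine gaps, and they are exactly the points where the paper's own arguments use maximality. First, acausality of the boundary circle: the continuous extension $\bar g$ of a strictly $1$-Lipschitz $g$ is only (non-strictly) $1$-Lipschitz, so Lemma \ref{lem:acausal graph} yields only \emph{achronality} of the graph over $\overline{\mb{D}}{}^2$; two points of $\hat\Lambda$ could a priori be joined by a lightlike segment, in which case triples of points of $\Lambda$ need not span $(2,1)$-subspaces and Theorem \ref{thm:maximal limit curve} does not apply. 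This is not a technicality: an invariant achronal circle at infinity does not force maximality (already for $n=1$, a representation $(\rho_L,\mathrm{triv})$ preserves a lightlike circle in $\partial\mb{H}^{2,1}$ but is not maximal), so passing from achronal to acausal is where the real content sits, and nothing in your proposal supplies it. Second, the identification $\partial\mb{D}^2\cong\partial\Gamma$: the $\Gamma$-action transported to $\mb{D}^2$ via $\phi$ is not by isometries of any metric you control, so neither \v{S}varc--Milnor nor the "uniform size of the charts" gives a continuous equivariant extension of $\phi^{-1}$ to the closed disks. In the paper, the analogous statement $\partial\widehat{S}=\Lambda_\rho$ (first part of Proposition \ref{pro:decomposition}) is proved using the loxodromic dynamics and the eigenvalue gap of the elements $\rho(\gamma)$, which are consequences of maximality --- precisely what you are trying to establish, so your route becomes circular at this step. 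A minor additional point: Theorem \ref{thm:maximal limit curve} as stated requires a \emph{H\"older} embedding, while your construction only produces a continuous one.

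For comparison, the paper's proof never globalizes acausality and never touches the boundary at infinity. It computes the Toledo invariant directly: using contractibility of the Grassmannian of oriented spacelike $2$-planes, one chooses a continuous $\rho$-equivariant family $P_x\subset\iota(x)^\perp$, and then builds a degree-one equivariant bundle map from the unit tangent bundle of $\widetilde{\Sigma}$ (modeled on pairs of points at a fixed small distance $r$) to the circle bundle $P^1$, using only local acausality at the uniform scale $r$ furnished by cocompactness. Comparing Euler classes gives $|T(\rho)|=2|\chi(\Sigma)|$. If you want to salvage your approach, you would need an independent argument that the boundary circle is acausal and equivariantly parametrized by $\partial\Gamma$ --- which amounts to developing the limit-map theory you were hoping to quote for free.
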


\begin{proof}
	We can lift $\iota$ to a locally acausal embedding $\hat{\iota}:\widetilde{\Sigma}\to\widehat{\mb{H}}^{2,n}$. By assumption, every point $x\in\widetilde{\Sigma}$ has a neighborhood $U$ such that $\hat{\iota}|_U$ is an embedding with acausal image. In particular, by Lemma \ref{lem:acausal graph}, we can represent $\hat{\iota}(U)$ in a Poincaré model $\Psi :\mb{D}^2\times\mb{S}^n \to \widehat{\hyp}^{2,n}$ as the graph of a strictly $1$-Lipschitz function $g:\pi \hat{\iota}(U)\subset\mb{D}^2\to\mb{S}^n$, where $\pi : \widehat{\hyp}^{2,n} \to \mathbb{D}^2$ denotes the projection associated to $\Psi$.
	
	As $\Gamma$ acts cocompactly on $\widetilde{\Sigma}$ and $\hat{\iota}$ is $\rho$-equivariant, we can choose the neighborhoods $U$ in a uniform way. In order to do so we proceed as follows: We endow $\widetilde{\Sigma}$ with a $\Gamma$-invariant hyperbolic metric obtained by pulling back an arbitrary hyperbolic metric on $\Sigma$. We cover $\Sigma$ with the projections of the neighborhoods $U$ and find a Lebesgue number $r>0$ for the open covering which is smaller than the injectivity radius of $\Sigma$. With these choices, the restriction of $\hat{\iota}$ to $B(x,r)$ is an embedding with acausal image for every $x\in\widetilde{\Sigma}$.
	
	Observe that we can choose a continuous $\rho(\Gamma)$-invariant family of orthogonal splittings $P_x\oplus N_x$ of $\mb{R}^{2,n+1}$ where $P_x$ is an oriented (future oriented if $n=1$) spacelike 2-plane: Let ${\rm Gr}_{(2,0)}^{\mathrm{or}}(\mb{R}^{2,n})$ be the Grassmannian of oriented spacelike $2$-planes in $\mb{R}^{2,n}$. Topologically, we have an identification
	\[
	{\rm Gr}^\mathrm{or}_{(2,0)}(\mb{R}^{2,n})={\rm SO}_0(2,n)/{\rm SO}(2)\times{\rm SO}(n),
	\] 
	where the right hand side is the symmetric space of ${\rm SO}_0(2,n)$. In particular, we observe that the Grassmannian ${\rm Gr}_{(2,0)}^{\mathrm{or}}(\mb{R}^{2,n})$ is contractible. Let $G\to\widehat{\mb{H}}^{2,n}$ be the bundle with fiber ${\rm Gr}^{\mathrm{or}}_{(2,0)}(x^\perp)$ over the point $x\in\widehat{\mb{H}}^{2,n}$ and let $\hat{\iota}^*G\to\widetilde{\Sigma}$ be the corresponding pull-back bundle. As the fiber is contractible, we can always find a $\Gamma$-invariant global section. Such a section corresponds to the desired continuous family of orthogonal splittings. 
	
	Consider now the corresponding $\rho(\Gamma)$-invariant plane bundle $P\to\widetilde{\Sigma}$, whose fiber over $x\in\widetilde{\Sigma}$ is given by the spacelike plane $P_x$, and its associated unit circle bundle $P^1\subset P$, whose fiber over $x$ is equal to $\mb{S}^1_x\subset P_x$.
	
	We now define a $\Gamma$-equivariant isomorphism between $P^1 \to \widetilde{\Sigma}$ and the unit tangent bundle $T^1\widetilde{\Sigma} \to \widetilde{\Sigma}$: As a concrete model of $T^1\widetilde{\Sigma}$, we exploit the $\Gamma$-invariant hyperbolic metric obtained by pulling back a hyperbolic metric on $\Sigma$ and, using the exponential map, we identify $T^1\widetilde{\Sigma}\to\widetilde{\Sigma}$ with  
	\[
	B^1:=\{(x,y)\in\widetilde{\Sigma}\times\widetilde{\Sigma}\left|d(x,y)=r\right\}\to\widetilde{\Sigma} ,
	\]
	where the bundle projection is the projection to the first factor, and the fiber over $x$ is the unit circle $B^1_x$ around $x$ in $\widetilde{\Sigma}$. In what follows, we show that $B^1 \to \widetilde{\Sigma}$ is equivariantly isomorphic to $P^1 \to \widetilde{\Sigma}$. Since the Toledo invariant of the representation $\rho$ coincides with the Euler class of the circle bundle $P^1/\rho(\Gamma) \to \Sigma$ (see in particular Collier, Tholozan, and Toulisse \cite{CTT19}*{\S~2.1}), this will allow us to conclude that the representation $\rho$ is maximal, as desired.
	
	For every $y\in B^1_x$, let $\xi_x(y)$ be the endpoint at infinity of the spacelike geodesic ray issuing from $\hat{\iota}(x)$ and passing through $\hat{\iota}(y)$. Explicitly, if $t_x(y)\in T^1\widehat{\mb{H}}^{2,n}$ is the direction of such ray, then 
	\[
	\xi_x(y)=[\hat{\iota}(x)+t_x(y)]\in\partial\widehat{\mb{H}}^{2,n}.
	\] 
	
	Notice that $\xi_x(B^1_x)\subset\partial\widehat{\mb{H}}^{2,n}$ is a loop freely homotopic to the ideal boundary of some fixed spacelike plane $\partial H \subset \partial\widehat{\mb{H}}^{2,n}$: This can be seen in the Poincaré disk model $\widehat{\mb{H}}^{2,n}\cong\mb{D}^2\times\mb{S}^n$ associated to the splitting $\mb{R}^{2,n+1}=P_x\oplus N_x$, where the circle $\hat{\iota}(B^1_x)$ is the graph of a topological circle around the origin of $\mathbb{D}^2$, and $\xi_x(B^1_x)$ is obtained by projecting radially such circle to $\partial\mb{D}^2$ and then mapping it to $\partial\widehat{\mb{H}}^{2,n}$ via the graph map. 
	
	We now exhibit an explicit degree one map $\phi_x:B^1_x\to\mb{S}^1_x$. If $[a]\in\partial\widehat{\mb{H}}^{2,n}$ is an isotropic ray, then we can represent it uniquely as $u_x([a])+v_x([a])$, with $u_x([a])\in P_x$ and $v_x([a])\in N_x$ vectors of norm $1$ and $-1$, respectively. Explicitly, 
	\[
	u_x([a]):=\pi_{P_x}(a)/\sqrt{\langle\pi_{P_x}(a), \pi_{P_x}(a)\rangle},\quad v_x([a]):= \pi_{N_x}(a)/\sqrt{-\langle\pi_{N_x}(a),\pi_{N_x}(a)\rangle},
	\]
	where $\pi_{P_x},\pi_{N_x}:\mb{R}^{2,n+1}\to P_x,N_x$ are the orthogonal projections and $a$ is some fixed representative of $[a]$ in the isotropic cone. We denote by $\mb{S}^1_x,\mb{S}^n_x$ the unit spheres of $P_x,N_x$ and by $u_x,v_x:\partial\widehat{\mb{H}}^{2,n}\to\mb{S}^1_x,\mb{S}^n_x$ the two projections maps defined above. Observe that $u_x$ satisfies 
	\[
	u_{\gamma x}(\rho(\gamma)[a])=\rho(\gamma)u_x([a])
	\]
	for every $[a] \in \partial \widehat{\hyp}^{2,n}$ and $\gamma \in \Gamma$. In fact, $\rho(\gamma)$ maps isometrically $P_x\oplus N_x$ to $P_{\gamma x}\oplus N_{\gamma x}$. We now have all the elements to define the map $\phi_x : B^1_x \to \mathbb{S}^1_x$: For any $y\in B^1_x$, we split $\xi_x(y)=[\hat{\iota}(x)+t_x(y)]$ as the sum $u_x(\xi_x(y))+v_x(\xi_x(y))$, and we set $\phi_x(y):=u_x(\xi_x(y))$.
	
	To conclude, we define a map $\Phi:B^1\to P^1$ by setting
	\[
	\Phi(x,y):=(\hat{\iota}(x),\phi_x(y))
	\]
	for any $(x,y) \in B^1$. By construction, the function $\Phi$ is a $\rho$-equivariant continuous bundle map that covers the identity of $\widetilde{\Sigma}$, and that has degree one on every fiber. 
	
	Since the circle bundle $B^1/\Gamma \to \Sigma$ has Euler number of absolute value $2 \abs{\chi(\Sigma)}$, the same holds for $P^1/\rho(\Gamma) \to \Sigma$, and hence the representation $\rho$ is maximal.
\end{proof}

\subsection{Topology of the gluing}
We conclude with a brief discussion of the topology of the gluing. To this purpose, we only have to compute the first and second Stiefel-Whitney classes of the vector bundle $V\to S$ naturally associated to $E\to S$, which distinguish the connected components of the space of maximal representations in $\SOtwon$ for every $n > 2$ (compare with Remark \ref{rmk:connected comp}).

Recall that $S=\bigcup_j{S_j'}$ where each $S_j'$ is a hyperbolic pair of pants with totally geodesic boundary and let $G$ be the dual graph associated to the gluing $\zeta$, having a vertex per each pair of pants $S_j'$, and an edge between the vertices corresponding to $S_i'$ and $S_j'$ (with possibly $i = j$) whenever there is a boundary component of $S_i'$ that is glued to one of $S_j'$. We may consider $G$ as embedded in $S$ so that each vertex lies in the interior of the corresponding pair of pants and each edge intersects the corresponding curve exactly once. 

The first Stiefel-Whitney class is a homomorphism $w_1(V):H_1(S;\mb{Z}/2\mb{Z})\to\mb{Z}/2\mb{Z}$. The homology group $H_1(S;\mb{Z}/2\mb{Z})$ is generated by the classes of the boundary curves of the pair of pants $S_j'$ and by the simple cycles of $G$ so it is enough to compute $w_1(V)$ on them. We already explained in Theorem \ref{pants of photons} how to compute the value of $w_1(V)$ on each of the classes coming from $\partial S_j'$ in terms of the gluing data. The computation for simple cycles is similar as it requires only to determine whether the holonomy around the cycle, which is loxodromic since the representation is maximal (see Proposition \ref{maximal holonomy}), belongs to $\mc{L}^+$ or $\mc{L}^-$. A precise formula requires a careful bookkeeping of the gluing choices and we will not pursue it here.

The second Stiefel-Whitney class $w_2(V)$ can be computed as follows: Choose for every pair of identified boundary components $E(\gamma_{u_i}),E(\gamma_{v_j})$ a pair of $(n+1)$-frames $\sigma_{u_i},\sigma_{v_j}$ of the underlying vector bundles $V\to\gamma_{u_i},V\to\gamma_{v_j}$ that are identified under the gluing map $\zeta_{u_iv_j}$. The Stiefel-Whitney number $w_2(V)[S]$, that uniquely determines $w_2(V)$, can be computed as the sum of the relative Stiefel-Whitney numbers $w_2(V_j,\sigma_j)[S_j,\partial S_j]$ that are the obstructions to extend the $(n+1)$-frames defined over $\partial S_j$ to $(n+1)$-frames over $S_j$.


\appendix

\section{Other cross ratios}
\label{other cross ratios}

There are multiple non-equivalent definitions of cross ratios in the literature. 

For the reader's convenience, we summarize the relations between Definition~\ref{def:crossratio} and the notions of cross ratios studied by Ledrappier \cite{ledrappier}, Hamenst{\"a}dt \cites{H97,H99}, and Labourie \cite{Lab08}. If $\beta = \beta(u,v,w,z)$ satisfies Definition \ref{def:crossratio}, then:
\begin{itemize}
	\item The function $(u,v,w,z) \mapsto \log \abs{\beta(u,v,w,z)}$ is a Ledrappier's cross ratio (compare with \cite{ledrappier}*{D{\'e}finition~1.f}, \cite{MZ19}*{Definition~2.4}). 
	\item The function $(u,v,w,z) \mapsto \abs{\beta(u,v,w,z)}$ is a Hamenst{\"a}dt's cross ratio (compare with \cites{H97,H99}).
	\item If $\beta$ further satisfies 
	\begin{equation} \label{cross ratio one}
		\beta(u,v,w,z) = 1 \Leftrightarrow \text{$u=v$ or $w = z$}, 
	\end{equation}
	then the function $B(u,v,w,z) : = \beta(u,w,v,z)$ is a Labourie's cross ratio (see \cite{Lab08}*{Definition~3.2.1}).
\end{itemize}

For the sake of completeness, even if we do not investigate in detail the properties of such cross ratios in this paper, we briefly discuss other examples of positive and locally bounded cross ratios from the literature strictly related to representations in ${\rm SO}(p,q)$ and pseudo-hyperbolic spaces $\mb{H}^{p,q}$. They come from respectively:

\begin{itemize}
	\item{Hitchin representations $\rho:\Gamma\to{\rm SO}(p,p+1)$.}
	\item{More generally, $\Theta$-positive Anosov representations $\rho:\Gamma\to{\rm SO}(p,q)$.}
\end{itemize}

In both cases (strict) positivity comes from transversality of the boundary maps (as explained in \cite{BP21}) and local boundedness comes from their H\"older regularity (following the same strategy of Lemma~\ref{lem: comparing crossratios}).

As studied by Martone and Zhang in \cite{MZ19} there are other natural classes of positive cross ratios arising from the study of Anosov representations. The ones that we mentioned above close to the setting of our interest and can have a more direct link with similar pleated surface constructions in $\mb{H}^{p,q}$.

\section{Shears and symmetries of cross ratios} \label{shears and symmetries}

The current appendix is dedicated to the proofs of the relations satisfied by cross ratios and their associated shears, which were deployed throughout Section \ref{sec:shear cocycles}. We start by proving the following elementary relation:

\begin{lem} \label{eq della madonna}
	Let $\beta$ be a cross ratio. Then for every $6$-tuple of pairwise distinct points $a, b, c, d, e, x \in \partial \Gamma$ we have
	\[
	|\beta(a,b,c,d) \beta(a,d,b,e)| = \abs{\beta(a,b,c,x) \beta(a,x,b,d) \beta(a,d,x,e)}
	\]
\end{lem}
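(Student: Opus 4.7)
\medskip

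My plan is to prove the identity directly by manipulating the defining symmetries of a cross ratio from \eqref{eq:crossshear}. The key observation is that both sides of the identity should be reducible to a common expression by applying the multiplicative cocycle relation in the third variable,
\[
\beta(u,v,w,z) = \beta(u,v,x,z)\,\beta(u,v,w,x),
\]
together with the fifth symmetry
\[
|\beta(u,v,w,z)| = |\beta(u,w,v,z)\,\beta(u,z,w,v)|.
\]

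The first step is to expand the left-hand side by introducing the auxiliary point $x$ as an insertion in the third slot of each factor. Applied to $\beta(a,b,c,d)$ (with $(u,v,w,z) = (a,b,c,d)$), the cocycle relation gives
\[
\beta(a,b,c,d) = \beta(a,b,x,d)\,\beta(a,b,c,x).
\]
Similarly, applied to $\beta(a,d,b,e)$ (with $(u,v,w,z) = (a,d,b,e)$), the cocycle relation gives
\[
\beta(a,d,b,e) = \beta(a,d,x,e)\,\beta(a,d,b,x).
\]
Multiplying and taking absolute values yields
\[
|\beta(a,b,c,d)\,\beta(a,d,b,e)| = |\beta(a,b,c,x)\,\beta(a,d,x,e)| \cdot |\beta(a,b,x,d)\,\beta(a,d,b,x)|.
\]

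The final step is to match the remaining factor on the right with $|\beta(a,x,b,d)|$. For this, the fifth symmetry applied to $(u,v,w,z) = (a,x,b,d)$ yields precisely
\[
|\beta(a,x,b,d)| = |\beta(a,b,x,d)\,\beta(a,d,b,x)|,
\]
which is exactly the extra factor above. Substituting this back recovers the right-hand side of the claimed identity, completing the proof.

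I expect no serious obstacle here: the statement is a purely algebraic consequence of the cross-ratio axioms, and the only subtlety is bookkeeping to identify the correct insertions of $x$ and the correct permutation pattern to which the $|\cdot|$-symmetry must be applied. The pairwise distinctness of $a,b,c,d,e,x$ ensures that all $4$-tuples appearing above belong to $\partial\Gamma^{(4)}$, so each invocation of \eqref{eq:crossshear} is legitimate.
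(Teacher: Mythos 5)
Your proof is correct and follows essentially the same route as the paper: two applications of the multiplicative cocycle relation (the fourth symmetry in \eqref{eq:crossshear}) to insert $x$ into $\beta(a,b,c,d)$ and $\beta(a,d,b,e)$, followed by one application of the fifth symmetry to collapse $\abs{\beta(a,b,x,d)\beta(a,d,b,x)}$ into $\abs{\beta(a,x,b,d)}$. Nothing further is needed.
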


\begin{proof}
	It is sufficient to apply the symmetries of the cross ratio $\beta$ in \eqref{eq:crossshear} as follows
	\begin{align*}
		|\beta(a,b,c,d) \beta(a,d,b,e)| & = \abs{\beta(a,b,c,x) \beta(a,b,x,d) \beta(a,d,b,e)} \\
		& = \abs{\beta(a,b,c,x) \beta(a,b,x,d) \beta(a,d,b,x) \beta(a,d,x,e)} \\
		& = \abs{\beta(a,b,c,x) \beta(a,x,b,d) \beta(a,d,x,e)} ,
	\end{align*}
	where we used in the order twice the fourth relation and once the fifth relation from \eqref{eq:crossshear}. By applying $\log$ to both members we obtain relation \eqref{eq:add_leaf}. 
\end{proof}

Making use of the relation described in Lemma \ref{eq della madonna}, we can now provide a proof of the properties satisfied by finite $\beta$-shears and described by Lemmas \ref{lem:asymptotic_plaques} and \ref{lem:shear near closed leaves}:

\begin{proof}[Proof of Lemma \ref{lem:asymptotic_plaques}]
	Let $S$ denote the (closure of the) connected component of $\widetilde{\Sigma} \setminus \{P, Q\}$ that separates $P$ from $Q$. Observe that the right-hand side of the statement can be expressed as $\sigma^\beta(P,S) + \sigma^\beta(S,Q)$. Consider now any geodesic $g$ lying in the interior of $S$ with endpoints $w$ and $x$, and denote by $R$ and $R'$ the complementary regions of $g$ inside $S$ adjacent to $P$ and $Q$, respectively. We claim that the following equality holds:
	\begin{equation} \label{eq:add_leaf}
		\sigma^\beta(P,S) + \sigma^\beta(S,Q) = \sigma^\beta(P,R) + \sigma^\beta(R,R') + \sigma^\beta(R',Q) .
	\end{equation}
	This is in fact a simple consequence of Lemma \ref{eq della madonna}. To see this, observe that, by definition of the finite shear $\sigma^\beta$, the left-hand side coincides with
	\[
	\log|\beta(w,v_P, u_P, v_Q) \beta(w,v_Q,v_P,u_Q)| ,
	\]
	while the right-hand side is equal to
	\[
	\log\abs{\beta(w,v_P, u_P, x) \beta(w,x, v_P, v_Q) \beta(w,v_Q,x,u_Q)} .
	\]
	Therefore relation \eqref{eq:add_leaf} follows from Lemma \ref{eq della madonna} applied to the $6$-tuple $a=w$, $b=v_P$, $c=u_P$, $d=v_Q$, $e=u_Q$, and $x=x$.
	
	The relation appearing in the statement can now be deduced simply by applying relation \eqref{eq:add_leaf} enough times: at the $k$-th step we introduce inside the region $S$ a leaf $\ell_k$ lying in the boundary of some plaque in $\mathcal{P}$, obtaining a finite lamination $\lambda_k = \lambda_{k-1} \cup \{\ell_k\}$. Relation \eqref{eq:add_leaf} then allows us to split the sum of the shears between the complementary regions of $\lambda_{k - 1}$ as the sum of the shear of the complementary regions of $\lambda_k$. In a finite number of steps we obtain that $\sigma^\beta_\mathcal{P}(P,Q)$ coincides with $\sigma^\beta(P,S) + \sigma^\beta(S,Q)$, as desired (observe that, in the notation of \S \ref{subsec:definition shear}, the lamination $\tilde{\lambda}_\mathcal{P}$ does not contain any geodesic of type $d_i$, since every spike has ideal vertex equal to $w$ under our assumptions).
\end{proof}

\begin{proof}[Proof of Lemma \ref{lem:shear near closed leaves}]
	Among all the elements of $\mathcal{P}$ that lie on the left (resp. on the right) of $g$, we denote by $P'$ (resp. $Q'$) the plaque that is closest to $g$. Let $x_P', y_P'$ (resp. $x_Q', y_Q'$) be the vertices of $P'$ (resp. $Q'$) different from $g^+$ (resp. $g^-$), so that $[y_p', g^+]$ (resp. $[y_Q', g^-]$) is the boundary component of $P'$ (resp. $Q'$) closest to $g$.
	
	By following the process outlined in \S \ref{subsec:definition shear}, we see that the shear $\sigma_\mathcal{P}(P,Q)$ satisfies
	\[
	\sigma^\beta_\mathcal{P}(P,Q) = \sigma_\lambda^\beta(P,R_P) + \sigma^\beta(R_P, R_Q) + \sigma_\lambda^\beta(R_Q, Q) ,
	\]
	where $R_P$ and $R_Q$ denote the plaques of $\tilde{\lambda}_\mathcal{P}$ with vertices $g^+, g^-, y_P'$ and $g^+, g^-, y_Q'$, respectively. By Lemma \ref{lem:asymptotic_plaques}, the shear $\sigma_\lambda^\beta(P,R_P)$ is independent of the set of plaques that separate $P$ and $R_P$ inside $\mathcal{P}$, since $P$ and $R_P$ share the ideal vertex $g^+$. The exact same argument applies for $\sigma_\lambda^\beta(R_Q, Q)$. Furthermore we have
	\begin{align*}
		\sigma_\lambda^\beta(P,R_P) & = \log\abs{\beta(g^+,y_P, x_P, y_P') \beta(g^+,y_P',y_P,g^-)} , \\
		\sigma_\lambda^\beta(R_Q,Q) & = \log\abs{\beta(g^-,y_Q, x_Q, y_Q') \beta(g^-,y_Q',y_Q,g^+)} .
	\end{align*}
	On the other hand, the plaques $R_P$ and $R_Q$ share the boundary component $[g^+,g^-]$ and their shear satisfies
	\[
	\sigma^\beta(R_P, R_Q) = \log\abs{\beta(g^+,g^-, y_P', y_Q')} .
	\]
	By applying Lemma \ref{eq della madonna} to the $6$-tuple $a=g^+$, $b=y_P$, $c=x_P$, $d=g^-$, $e=y_Q'$, and $x = y_P'$, we obtain
	\[
	\sigma_\lambda^\beta(P,R_P) + \sigma_\lambda^\beta(R_P,R_Q) = \log\abs{\beta(g^+,y_P,x_P,g^-) \beta(g^+,g^-,y_P,y_Q')} .
	\]
	Combining this identity with the expression for $\sigma_\lambda^\beta(R_Q,Q)$ we deduce
	\begin{align*}
		\sigma_\lambda^\beta(&P,R_P) + \sigma_\lambda^\beta(R_P,R_Q) + \sigma_\lambda^\beta(R_Q,Q) \\ 
		& = \log\abs{\beta(g^+,y_P,x_P,g^-) \beta(g^+,g^-,y_P,y_Q') \beta(g^-,y_Q, x_Q, y_Q') \beta(g^-,y_Q',y_Q,g^+)} \\
		& = \log\abs{\beta(g^+,y_P,x_P,g^-) \beta(g^-,g^+,y_Q',y_P) \beta(g^-,y_Q, x_Q, y_Q') \beta(g^-,y_Q',y_Q,g^+)} \\
		& = \log\abs{\beta(g^+,y_P,x_P,g^-) \beta(g^-,g^+,y_Q,y_P) \beta(g^-,y_Q, x_Q, g^+)}
	\end{align*}
	where in the second equality we applied relation \eqref{eq:symmetry for shear}, and in the last line we applied again Lemma \ref{eq della madonna} to the $6$-tuple $a=g^+$, $b=y_P$, $c=x_P$, $d=g^-$, $e=y_Q'$, and $x = y_P'$. This concludes the proof of the statement.
\end{proof}

We now provide a proof of Lemma \ref{lem:diagonal_exchange}, which again follows easily from the symmetries of cross ratios:

\begin{proof}[Proof of Lemma \ref{lem:diagonal_exchange}]
	Let $u$ denote the vertex of $P$ that is not an endpoint of $\ell_P$, and by $v$ the vertex of $Q$ that is not an endpoint of $\ell_Q$. Then the left-hand side of the equation can be expressed as
	\[
	\abs{\log \abs{\frac{\beta(\ell_P^+,\ell_P^-,u,\ell_Q^-) \beta(\ell_P^+, \ell_Q^-,\ell_P^-,\ell_Q^+) \beta(\ell_Q^+,\ell_Q^-,\ell_P^+,v)}{\beta(\ell_P^+,\ell_P^-,u,\ell_Q^+) \beta(\ell_Q^+, \ell_P^-,\ell_P^+,\ell_Q^-) \beta(\ell_Q^+,\ell_Q^-,\ell_P^-,v)}} }
	\]
	Applying the third symmetry in \eqref{eq:crossshear}, we obtain the identities
	\begin{align*}
		\abs{\beta(\ell_P^+,\ell_P^-,u,\ell_Q^-)} & = \abs{\beta(\ell_P^+,\ell_P^-,u,\ell_Q^+) \beta(\ell_P^+,\ell_P^-,\ell_Q^+,\ell_Q^-)} , \\
		\abs{\beta(\ell_Q^+,\ell_Q^-,\ell_P^-,v)} & = \abs{\beta(\ell_Q^+,\ell_Q^-,\ell_P^-,\ell_P^+) \beta(\ell_Q^+,\ell_Q^-,\ell_P^+,v)} .
	\end{align*}
	By replacing these terms in the expression above we obtain
	\begin{align*}
		\abs{\sigma_d^\beta(P,Q) - \sigma_{d'}^\beta(P,Q)} & = \abs{\log \abs{ \frac{\beta(\ell_P^+,\ell_P^-,\ell_Q^+,\ell_Q^-) \beta(\ell_P^+, \ell_Q^-,\ell_P^-,\ell_Q^+)}{\beta(\ell_Q^+, \ell_P^-,\ell_P^+,\ell_Q^-) \beta(\ell_Q^+,\ell_Q^-,\ell_P^-,\ell_P^+)} }} \\
		& = \abs{\log \abs{\beta(\ell_P^+,\ell_P^-,\ell_Q^+,\ell_Q^-)^2 \beta(\ell_P^+, \ell_Q^-,\ell_P^-,\ell_Q^+)^2 }}
	\end{align*}
	where in the last equality we made use of \eqref{eq:inverse crossratio} and \eqref{eq:symmetry for shear}. The desired expression then follows by applying the fourth relation in \eqref{eq:crossshear} and \eqref{eq:inverse crossratio}. (Notice that $\beta(g^+,h^+,h^-,g^-) > 1$ for any pair of coherently oriented geodesics $g, h$ that share no endpoint.)
\end{proof}

We are now left with the proof of Lemma \ref{lem:shear and length}, which directly relates $\beta$-periods and $\beta$-shears:

\begin{proof}[Proof of Lemma \ref{lem:shear and length}]
	Let $x, y, \gamma^\pm \in \partial \Gamma$ be the vertices of $P$ in counterclockwise order along $\partial \Gamma$. By Lemma \ref{lem:asymptotic_plaques}, we have
	\begin{align*}
		\sigma_\lambda^\beta(P, \gamma P) & = \log \abs{\beta(\gamma^\pm, x, y, \gamma y) \beta(\gamma^\pm, \gamma y, x, \gamma x)} .
	\end{align*}
	The proof of the relation appearing in the statement now reduces to a careful applications of the symmetries of the cross ratio $\beta$ (see in particular \eqref{eq:crossshear}, \eqref{eq:inverse crossratio}). In what follows, we express the chain of equalities that leads to the proof, reporting on the right the relations that are applying (the symbol (\ref{eq:crossshear}.$n$) refers to the $n$-th symmetry of $\beta$ appearing in \eqref{eq:crossshear}):
	\begin{align*}
		|\beta(\gamma^\pm, x, y, \gamma y) \beta(\gamma^\pm, & \,\gamma y, x, \gamma x)| \\
		& = \abs{\beta(\gamma^\pm, x, \gamma x, \gamma y) \beta(\gamma^\pm, x, y, \gamma x) \beta(\gamma^\pm, \gamma y, x, \gamma x) } \tag{\ref{eq:crossshear}.$4$} \\
		& = \abs{\beta(\gamma^\pm, \gamma x, x, \gamma y) \beta(\gamma^\pm, x, y, \gamma x)} \tag{\ref{eq:crossshear}.$5$} \\
		& = \abs{\beta(\gamma^\pm, x, \gamma^{-1} x, y) \beta(\gamma^\pm, x, y, \gamma x)} \tag{$\Gamma$-inv.} \\
		& = \abs{\beta(\gamma^\pm, x, \gamma^{-1} x, \gamma x)} \tag{\ref{eq:crossshear}.$4$} \\
		& = \abs{\beta(\gamma^\pm, x, \gamma^{\mp}, \gamma x) \beta(\gamma^\pm, \gamma x, x, \gamma^{\mp})}  \tag{\ref{eq:crossshear}.$4$} \\
		& = \abs{\beta(\gamma^\pm, \gamma^\mp, x, \gamma x)} \tag{\ref{eq:crossshear}.$5$} \\
		& = \abs{\beta(\gamma^+, \gamma^-, x, \gamma x)}^{\pm 1} \tag{\ref{eq:inverse crossratio}} .
	\end{align*}
	Taking the logarithm of this relation we obtain the identity $\sigma^\beta(P, \gamma P) = \pm L_\beta(\gamma)$, as desired.
\end{proof}

\section{On divergence radius functions}\label{divergence appendix}

In our construction of $\beta$-shear cocycles, we made use of a series of technical properties satisfied by divergence radius functions, described in Lemmas \ref{lem:auxiliary function r}, \ref{lem:dependence of constants}, and \ref{lem:dependence of r on metric}. We remark that the statement of Lemma \ref{lem:auxiliary function r} already appeared in the work of Bonahon and Dreyer \cite{BD17}. The underlying strategy of proof is essentially the same as the one described by Bonahon in \cite{Bo96}*{Lemmas~3,~5}. However, since the work \cite{Bo96} uses a definition of divergence radius function that is weaker than the one we introduced in Section \ref{subsec:divergence radius function}, we describe how to adapt the argument accordingly. The strategy of proof is in fact particularly useful to understand the dependence of the constants, as asserted in Lemma \ref{lem:dependence of constants}.

We start by fixing some hyperbolic metric on $\Sigma$ and a train track $\tau$ that carries a maximal lamination $\lambda$. Furthermore, we introduce the following terminology: If $B$ is a branch of $\tau$, we define the \emph{width} of $B$ (with respect to the chosen metric) to be the distance between the components of the horizontal boundary of $\tilde{b}$, for some lift $\tilde{B}$ of $B$ in $\widetilde{\Sigma}$. Similarly, the \emph{length} of $B$ is defined as the distance between the components of the vertical boundary of $\tilde{B}$, for some lift $\tilde{B}$ of $B$.

\begin{proof}[Proof of Lemma~\ref{lem:auxiliary function r}]
	We start by selecting suitable constants $M, A_0, \theta > 0$, which depends exclusively on the train track $\tau$ and the fixed hyperbolic structure $X$:
	\begin{itemize}
		\item We select $M < 1$ so that every branch of the train track $\tau$ has length within $M$ and $M^{-1}$.
		\item We let $A_0 > 1$ be such that every component of the vertical boundary of $\tau$ (compare with the terminology introduced in Section \ref{subsubsec:train tracks}) has endpoints at distance $\geq A_0^{-1}$, and such that every branch of $\tau$ has width $\leq A_0$. 
		\item We choose $\theta \in (0,\pi/2)$ a lower bound for the intersection angle between the geodesic arc $k$ and the leaves of the lamination $\lambda$. 
	\end{itemize}
	Consider now the following situation: Let $\ell$ and $\ell'$ be two distinct asymptotic geodesics in $(\widetilde{\Sigma}, \widetilde{X})$, and let $u$ be their common endpoint in $\partial \Gamma$. Consider a geodesic segment $k'$ joining a point $p \in \ell$ to a point of $\ell'$, and assume that the angles between $k'$ and $\ell$, $\ell'$ satisfy
	\[
	\theta \leq \abs{\angle(k',\ell)}, \abs{\angle(k', \ell)} \leq \pi - \theta .
	\]
	Finally, select a parametrization by arc length of the geodesic $\ell = \ell(t)$ such that $\ell(t)$ tends to $u$ as $t \to - \infty$ and $\ell(0) = p$, and assume that there exists some positive $t > 0$ for which $\ell(t)$ satisfies
	\[
	A_0^{-1} \leq d_{\tilde{X}}(\ell(t),\ell') \leq A_0 .
	\]
	A simple computation in the upper half plane model of $\hyp^2$ then shows that there exists a constant $A > 0$, which depends only on $A_0$ and $\theta$, such that
	\begin{equation}\label{eq:comparison auxiliary}
		A^{-1} e^{- t} \leq L_{\tilde{X}}(k') \leq A e^{-t} .
	\end{equation}

	We now have all the technical ingredients for the proof the desired statement: First recall the definition of the divergence radius function $r : \mathcal{P}_{P Q} \to \N$ outlined in Section \ref{subsec:divergence radius function}, select any plaque $R \in \mathcal{P}_{P Q}$, and denote by $s = s_R$ the switch of the lift of the train track $\tau$ that separates the branches $\widetilde{B}_{r(R) - 1}$ and $\widetilde{B}_{r(R)}$ (see Section \ref{subsec:divergence radius function} for the necessary terminology). By definition of the divergence radius function $r$, the boundary leaves $\ell_R$ and $\ell_R'$ of $R$ that separate $P$ from $Q$ fellow travel along the branches $\widetilde{B}_n$ for all $n < r(R)$, and then take different turns at the switch $s$. Indeed, while the leaf $\ell_R$ crosses $s$ to then enter in the branch $\widetilde{B}_{r(R)}$, the leaf $\ell_R'$ passes through the unique branch of $\tilde{\tau}$ adjacent to $s$ and different from $\widetilde{B}_{r(R) - 1}$ and $\widetilde{B}_{r(R)}$. 
	
	Now, if $\gamma_R$ denotes the subsegment of $\ell_R$ that joins $k \cap R$ to the switch $s$ of the train track $\tilde{\tau}$, then by the choice of $M$ we have
	\begin{equation} \label{eq:branches}
		M \, r(R) \leq L_{\tilde{X}}(\gamma_R) \leq M^{-1} \, r(R)
	\end{equation}
	whenever $r(R) > 1$.  Moreover, if we travel along the geodesic $\ell_R$ at distance $\ell(\gamma_R)$ towards the positive direction of $\ell_R$, the geodesics $\ell_R$ and $\ell_R'$ are at distance $d(s \cap \ell_R, s \cap \ell_R' ) \in (A_0^{-1},A_0)$ by our initial choices. (Notice that the switch $s$ contains exactly one connected component of the vertical boundary of $\tilde{\tau}$, whose endpoints are at distance between $A_0^{-1}$ and $A_0$.) We then are in right setting to apply relation \eqref{eq:comparison auxiliary} to $k' = k \cap R$, $\ell = \ell_R$, $\ell' = \ell_R'$ and $t = L_{\tilde{X}}(\gamma_R)$: consequently we conclude that
	$$
	A^{-1} \, e^{- L_{\tilde{X}}(\gamma_R)} \leq L_{\tilde{X}}(k \cap R) \leq A \, e^{- L_{\tilde{X}}(\gamma_R)}
	$$
	Combining this comparison with relation \eqref{eq:branches}, we obtain the control appearing in property (1) of Lemma \ref{lem:auxiliary function r} for all $r(R) > 1$. Now, up to enlarging the multiplicative constant $A > 0$ to obtain a bound from above of the diameter of every complementary region of $\tau$ in $X$, we can then make sure that $\textit{(1)}$ holds for every $R \in \mathcal{P}_{P,Q}$.
	
	The proof of the second bound appearing in (2) is a simple generalization of \cite{Bo96}*{Lemma~4}: in his work Bonahon showed that, if $k_0$ is a geodesic arc transverse to $\tilde{\lambda}$ that projects onto an \emph{embedded} arc in $\Sigma$, then the number of plaques $R \in \mathcal{P}_{P Q}$ satisfying $r_{k_0}(R) = n$ is bounded above by an explicit function $N_0 = N_0(\Sigma)$ that depends only on the topology of $\Sigma$. For a general geodesic arc $k$, we can argue as follows: there exists a natural number $m$ such that the arc $k$ can be subdivided into $m$ subsegments $(k_i)_i$ with disjoint interiors and such that every $k_i$ projects onto an embedded geodesic arc in $\Sigma$. Then the cardinality of $r_k^{-1}(n)$ is bounded above by $N : = m N_0(\Sigma)$. Observe also that, if $\varepsilon_0$ is equal to the injectivity radius of $X$, then $m \leq \ell(k) / \varepsilon_0$.
\end{proof}

From the proof provided above, and in particular from the definition of the constants $A,M,N > 0$, Lemma \ref{lem:dependence of constants} easily follows:

\begin{proof}[Proof of Lemma \ref{lem:dependence of constants}]
	We fix a hyperbolic structure $X$ on $\Sigma$, and we select a train track $\tau$ that carries $\lambda$ and a $X$-geodesic ark $k$ joining the interiors of the plaques $P$ and $Q$. We denote by $M,A_0, A, \theta > 0$ the constants introduced in the proof of Lemma \ref{lem:auxiliary function r}. Up to selecting a smaller $\theta  > 0$, we can find a small neighborhood $U$ of $\lambda$ inside $\mathcal{GL}$ satisfying the following conditions:
	\begin{itemize}
		\item Every $\lambda' \in U$ is carried by $\tau$.
		\item For every $\lambda' \in U$, the geodesic segment $k$ is transverse to $\lambda'$ and $\theta > 0$ is a uniform lower bound of the intersection angle between $k$ and $\lambda'$.
		\item the endpoints of $k$ lie in the interior of two distinct plaques $P', Q'$ of $\lambda'$, for every $\lambda' \in U$.
	\end{itemize}
	The constants $M, A_0 > 0$ depends only on the train track $\tau$ (and the hyperbolic structure $X$), and $A$ is determined by $A_0$ and $\theta$. In particular, $A$ and $M$ satisfy relations \eqref{eq:comparison auxiliary}, \eqref{eq:branches} for any divergence radius function $r' = r_{X, \tau, \lambda', k} : \mathcal{P}_{P' Q'} \to \N$ associated to a lamination $\lambda' \in U$ and the path $k$. Relations \eqref{eq:comparison auxiliary}, \eqref{eq:branches} in turn imply property (1) for all such divergence radius functions $r'$.
	Finally, it is immediate from the explicit description of the constant $N > 0$ satisfying property (2) provided in the proof of Lemma \ref{lem:auxiliary function r} that we can assume $N$ to be uniform in $\lambda' \in U$.
\end{proof}

The only technical statement left to prove is Lemma \ref{lem:dependence of r on metric}. For its proof, we will make use of an elementary lemma of planar hyperbolic geometry. In order to recall its statement, we need to introduce some notation.

If $X \in \T$ is a hyperbolic structure and $(\widetilde{\Sigma},\widetilde{X}) \cong \hyp^2$ is a fixed identification between the universal cover of $\Sigma$ and the hyperbolic plane determined by $X$, then we select $d_\infty$ a fixed Riemannian distance on $\partial \Gamma \cong \partial \hyp^2$. The choice of the metric $d_\infty$ determines a distance (which we will continue to denote with abuse by $d_\infty$) on the space of oriented geodesics of $\widetilde{\Sigma}$, by setting
\[
d_\infty(g,h) : = d_\infty(g^+,h^+) + d_\infty(g^-,h^-)
\]
for any pair of oriented geodesics $g$ and $h$. Then we have:

\begin{lem} \label{lem:comparison_arc_distance at infinity}
	Let $\lambda$ be a maximal geodesic lamination on $\Sigma$, and let $P$ and $Q$ be two distinct plaques of $\lambda$. For any geodesic segment $k$ joining two points in the interior of $P$ and $Q$, respectively, we can find a constant $C=C(k) > 0$ such that, for every plaque $R \in \mathcal{P}_{P Q}$ 
	\[
	C^{-1} \, d_\infty(\ell_R, h_R) \leq L_{\tilde{X}}(k \cap R) \leq C \, d_\infty(\ell_R, h_R) ,
	\]
	where $\ell_R, \ell_R'$ denote the boundary leaves of $R$ that separate $P$ from $Q$.
\end{lem}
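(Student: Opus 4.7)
The plan is to work in the universal cover $(\widetilde{\Sigma}, \widetilde{X})$ and establish the comparison via a uniform visual-projection argument. Since $\lambda$ is maximal, the plaque $R$ is an ideal triangle, so the two sides $\ell_R, h_R$ separating $P$ from $Q$ are asymptotic geodesics sharing a common endpoint $v_R \in \partial\Gamma$ (the third vertex of $R$). Orienting them coherently with $\ell_R^+ = h_R^+ = v_R$, one has $d_\infty(\ell_R, h_R) = d_\infty(\ell_R^-, h_R^-)$, and since $k$ is itself a geodesic, $L_{\tilde{X}}(k \cap R) = d_{\tilde{X}}(k \cap \ell_R,\, k \cap h_R)$. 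The goal then reduces to comparing these last two quantities up to multiplicative constants uniform in $R$.

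First I would record a compactness input: transversality of $k$ to $\lambda$ together with compactness of $k$ yields an angle bound $\theta > 0$ such that every leaf of $\lambda$ crossing $k$ does so with angle in $[\theta, \pi - \theta]$; otherwise a limiting leaf would be tangent to $k$.

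Next I would introduce the \emph{visual projection} $\pi_v \colon \widetilde{\Sigma} \to \partial\Gamma \setminus \{v\}$ that sends a point $q$ to the other endpoint of the geodesic through $q$ asymptotic to $v \in \partial\Gamma$. For $v = v_R$, the map $\pi_{v_R}$ sends $k \cap \ell_R$ to $\ell_R^-$ and $k \cap h_R$ to $h_R^-$, so the arc $\pi_{v_R}(k \cap R)$ joins $\ell_R^-$ to $h_R^-$ in $\partial\Gamma$. In any fixed Poincar\'e disk chart, an explicit computation shows that the derivative of $\pi_{v_R}|_k$ (comparing hyperbolic arclength on $k$ with $d_\infty$ on $\partial\Gamma$) is a smooth positive function which, given that $k$ lies in a compact region, is controlled above and below by the angle $\alpha$ between $k$ and the geodesic from $v_R$ through the basepoint. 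Thus the lemma reduces to a uniform two-sided bound on $\alpha$ along $k \cap R$, for all plaques $R$.

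The main obstacle is that $v_R$ varies freely in $\partial\Gamma$ as $R$ ranges over $\mathcal{P}_{PQ}$, so one cannot directly invoke compactness of the basepoint. The key observation I would exploit is that the geodesic joining $v_R$ to any point $p \in k \cap R$ is asymptotic to $v_R$, just as $\ell_R$ and $h_R$ are, and within the $1$-parameter family of $v_R$-asymptotic geodesics crossing $k$ it lies strictly between $\ell_R$ and $h_R$. A monotonicity argument in the upper half-plane chart with $v_R = \infty$ -- where $v_R$-asymptotic geodesics are vertical lines and their angle of intersection with $k$ varies monotonically with the foot -- shows that the angle at $p$ is bracketed by the angles at $k \cap \ell_R$ and $k \cap h_R$, both of which lie in $[\theta, \pi - \theta]$ by the compactness input. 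This provides the required uniform angle bound and completes the bi-Lipschitz comparison, with constant $C$ depending only on $k$, $\lambda$, and the choice of $d_\infty$.
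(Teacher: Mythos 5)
The paper never actually proves this lemma: in Appendix C it is invoked as ``an elementary lemma of planar hyperbolic geometry'' and used directly in the proof of Lemma \ref{lem:dependence of r on metric}, so there is no argument of the authors to measure yours against. Judged on its own merits, your plan is correct and does establish the statement. Its three pillars are sound: the compactness/transversality argument giving the uniform angle bound $\theta$ (a limiting leaf tangent to $k$ would contain the full geodesic extending $k$, contradicting that the endpoints of $k$ lie in open plaques); the fact that the derivative of $\pi_{v}$ along $k$, measured against the fixed boundary metric $d_\infty$, is a factor bounded above and below on (a compact neighborhood of $k$) $\times\, \partial\Gamma$ times $\sin$ of the angle at the point between $k$ and the geodesic asymptotic to $v$ through that point; and the monotonicity of that angle along $k$ within the pencil of $v_R$-asymptotic geodesics (with $v_R=\infty$ the angle between the semicircle containing $k$ and the verticals is literally the arc parameter), which brackets the angle at interior points of $k\cap R$ between the angles at $k\cap\ell_R$ and $k\cap h_R$, both in $[\theta,\pi-\theta]$. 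Note also that $v_R$ can never be an endpoint of the complete geodesic containing $k$, since $\ell_R$ shares the endpoint $v_R$ with that geodesic yet crosses $k$; so the normalization $v_R=\infty$ is always nondegenerate, and $\pi_{v_R}|_k$ is injective because each $v_R$-asymptotic geodesic meets the line of $k$ at most once.

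The one point you gloss over is the final passage from the $d_\infty$-length of the image arc $\pi_{v_R}(k\cap R)$ to the distance $d_\infty(\ell_R^-,h_R^-)$ between its endpoints, in the direction $L_{\tilde{X}}(k\cap R)\le C\, d_\infty(\ell_R,h_R)$. The inequality ``endpoint distance $\le$ arc length'' gives the other direction for free, but here you need the embedded image arc to realize, up to a constant, the distance between its endpoints, and a priori the minimizing arc in $\partial\Gamma$ could be the complementary one. This is closed in one line: since the segments $k\cap R$ are pairwise disjoint subsegments of the compact arc $k$, all but finitely many plaques satisfy $L_{\tilde{X}}(k\cap R)<\varepsilon_0$; choosing $\varepsilon_0$ so small that the image arc has $d_\infty$-length less than half the total $d_\infty$-length of $\partial\Gamma$, that arc is the minimizing one and its length equals $d_\infty(\ell_R^-,h_R^-)$, while the finitely many exceptional plaques are absorbed into $C$ because each has $d_\infty(\ell_R^-,h_R^-)>0$. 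With that remark added, your argument is complete and is a perfectly good substitute for the proof the paper omits.
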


We are now ready to prove Lemma \ref{lem:dependence of r on metric}:

\begin{proof}[Proof of Lemma \ref{lem:dependence of r on metric}]
	By property \textit{(1)} of Lemma \ref{lem:auxiliary function r}, there exist positive constants $A, A', M, M' > 0$ such that
	\begin{align*}
		A^{-1} e^{- M^{-1} r(R)} \leq L_{\tilde{X}}(k \cap R) \leq A e^{- M r(R)} \\
		(A')^{-1} e^{- (M')^{-1} r'(R)} \leq L_{\tilde{X}}(k' \cap R) \leq A' e^{- M' r'(R)} 
	\end{align*}
	for every $R \in \mathcal{P}_{P Q}$. On the other hand, by Lemma \ref{lem:comparison_arc_distance at infinity}, there exist constants $S, T > 0$ such that for every $R \in \mathcal{P}_{P Q}$ we have
	\begin{gather*}
		W^{-1} d_\infty(\ell_R, h_R) \leq L_{\tilde{X}}(k \cap R) \leq W \, d_\infty(\ell_R, h_R) , \\
		(W')^{-1} d_\infty(\ell_R, h_R) \leq L_{\tilde{X}}(k' \cap R) \leq W' \, d_\infty(\ell_R, h_R) .
	\end{gather*}
	By combining the inequalities above, we obtain
	\begin{align*}
		e^{M r(R)} & \leq \frac{A}{L_{\tilde{X}}(k \cap R)} \\
		& \leq \frac{A W}{d_\infty(\ell_R, h_R)} \\
		& \leq \frac{A W W'}{L_{\tilde{X}}(k' \cap R)} \\
		& \leq A A' W W' e^{M' r'(R)} ,
	\end{align*}
	which implies the upper bound appearing in the statement with suitable choices of $H, K > 0$. By exchanging the roles of $r$ and $r'$ in the argument above we determine the existence of the lower bound.
\end{proof}

\emergencystretch=1em

\bibliography{biblio.bib}

\end{document}